\tikzset{
  dotted/.style={pattern=dots,pattern color=#1},
  dotted/.default=black
}
\tikzset{
  fdotted/.style={pattern=crosshatch dots,pattern color=#1},
  fdotted/.default=black
}
\tikzset{
  scopedlines/.style={pattern=north east lines,pattern color=#1},
  scopedlines/.default=black
}
\tikzset{
  hrlines/.style={pattern=horizontal lines,pattern color=#1},
  hrlines/.default=black
}
\def\A{\ensuremath{\mathbb{A}}}
\def\C{\ensuremath{\mathbb{C}}}
\def\N{\ensuremath{\mathbb{N}}}
\def\P{\ensuremath{\mathbb{P}}}
\def\R{\ensuremath{\mathbb{R}}}
\def\V{\ensuremath{\mathbb{V}}}
\def\Z{\ensuremath{\mathbb{Z}}}
\def\cH{\ensuremath{\mathcal H}}
\def\aa{\ensuremath{\mathbf a}}
\def\uu{\ensuremath{\mathbf u}}
\def\vv{\ensuremath{\mathbf v}}
\def\ee{\ensuremath{\mathbf e}}
\def\deg{\mathop{\mathrm{deg}}\nolimits}
\def\dim{\mathop{\mathrm{dim}}\nolimits}
\def\gr{\mathop{\mathrm{Gr}}\nolimits}
\def\Hilb{\mathop{\mathrm{Hilb}}\nolimits}
\def\sHilb{\mathop{\mathcal{Hilb}}\nolimits}
\def\Sym{\mathop{\mathrm{Sym}}\nolimits}
\def\sp{\mathrm{Spec}}
\newtheorem{Thm}{Theorem}[section]
\newtheorem{Prop}[Thm]{Proposition}
\newtheorem{PropDef}[Thm]{Proposition and Definition}
\newtheorem{Lem}[Thm]{Lemma}
\newtheorem{Cor}[Thm]{Corollary}
\newtheorem{Ques}{Question}
\newtheorem*{Ques*}{Question}
\newtheorem{ThmLetter}{Theorem}
\theoremstyle{definition}
\newtheorem{Def}[Thm]{Definition}
\newtheorem{Rem}[Thm]{Remark}
\newtheorem{Ex}[Thm]{Example}
\newtheoremstyle{italicsname}
 {3pt}
 {3pt}
 {\itshape}
 {}
 {\itshape}
 {.}
 {.5em}
 {\thmname{#1}\thmnumber{\@ifnotempty{#1}{ }#2}%
 \thmnote{ {\the\thm@notefont(#3)}}}
\theoremstyle{italicsname}
\title[Hilbert schemes of points on fold-like curves and their combinatorics]{Hilbert schemes of points on fold-like curves and their combinatorics}
\author[Ángel David Ríos Ortiz, Javier Sendra Arranz]{Ángel David Ríos Ortiz, Javier Sendra Arranz}
\address{Université Paris Cité and Sorbonne Université, CNRS, IMJ-PRG, F-75013 Paris, France}
\email{riosortiz@imj-prg.fr}
\address{Department of Mathematics, CUNEF Universidad, Madrid, Spain}
\email{ javier.sendraarranz@cunef.edu}
\pgfplotsset{
compat=newest,
every axis plot/.append style={no marks,thick},
every axis/.style={
  axis lines=middle,
  width=7cm,
  height=3cm,
  }
}
\begin{document}

\begin{abstract} \noindent 
We investigate the Hilbert scheme of points on curves with 
n-fold singularities, that is curves that look locally around their singular points as the axis in an affine space. We describe the structure and number of its irreducible components, and provide a detailed analysis of their singularities, revealing rich combinatorial patterns governing its geometry.
\end{abstract}
\maketitle
\tableofcontents

\section*{Introduction}

The Hilbert scheme of $m$ points on a variety $X$ over $\C$, which in this paper will be denoted by $\sHilb^m(X)$, is one of the most studied objects in algebraic geometry because it not only parametrizes how points in $X$ behave when they start to collide, but because of its beauty and complexity. Starting from curves, the geometry of Hilbert schemes of points tends to be very rich. In the case of smooth curves, these Hilbert schemes are isomorphic to the  symmetric product of the curve, which is smooth. However, when we allow singularities on the curves, their geometry becomes much more complicated, cf. for example \cite{KivinenReducibleLocallyPlanar,luan2023irreduciblecomponentshilbertscheme} and references therein. If an integral curve has \textit{locally planar singularities}, meaning that it is contained in a smooth surface, then the Hilbert scheme of points is irreducible \cite{AltmanIarrobinoKleinman,BGS1981}, and its singularities are in a deep relation with the compactified Jacobian of the curve, see e.g. \cite{Esteves2001,MRV2017} and the survey \cite{Migliorini2020} for recent applications to knot theory. 

When the curve ceases to be locally planar, there has been growing interest in obtaining new \emph{invariants} \cite{AgostonNemethiLatticeCohomology, NemethiScheflerLatticeCohomology} that might serve as substitutes for the topological invariants which are effective in the case of plane curve singularities. In this vein, the study of the geometry of the Hilbert scheme of points associated with these more general singularities becomes particularly meaningful.

One of the main features of Hilbert schemes of points is the so called \emph{Murphy's law} \cite{VakilMurphy,Jelisiejew2020} that essentially says that, in general, it is out of reach to understand them. Therefore, finding explicit descriptions for the Hilbert schemes of points for specific varieties usually yields to a good amount of geometry. Ran in \cite{Ran2005} studied the case of nodal singularities on curves, he gave a very precise description of the Hilbert scheme of points of an irreducible curve with nodal singularities, describing completely their structure. The case of curves which are not contained in smooth surfaces, to the knowledge of the authors, has not been explored yet. 

In this work, we address the Hilbert scheme of points for a class of curves with \textit{rational $n$--fold singularities}, for which we found fascinating geometry and combinatorics. Given a reduced curve $C$, a point $p\in C$ is a rational $n$--fold singularity of $C$ if locally around $p$, the curve $C$ is analytically isomorphic to the union of the axis in $\C^n$. Nodal singularities are the case $n=2$ and when $n\geq 3$ they are no longer locally planar. Rational $n$--fold singularities have been studied because, as nodal singularities, they are \emph{semi-normal} \cite{BombieriSeminormality,DavisSeminormality}. In a very recent work \cite{HanKassSatrianoExtendingTorelli} the authors construct an alternative compactification of the moduli space of curves by adding \emph{stable and separating} fold-like curves, cf. \cite[Theorem 1.2]{HanKassSatrianoExtendingTorelli}, see also \cite{SmythModularCompactifications}.

With this motivation, and with the aim of describing explicitly their compactified Jacobians, we need to first study their Hilbert scheme of points. Suppose now that $C$ is an irreducible curve whose unique singularity is a rational $n$--fold singularity. One of our main results is a precise characterization of the irreducible components of its Hilbert scheme of points.

\begin{ThmLetter}\label{thm:irred comp}
    Let $C$ be an irreducible curve with a unique rational $n$--fold singularity and denote by $C_{\mathrm{sm}}$ its smooth locus. The irreducible components of $\sHilb^m(C)$ are birational to
    \[
        \begin{array}{ccc}
            \sHilb^m(C_\mathrm{sm})&  \text{ and } & 
            \sHilb^{m-m'}(C_{\mathrm{sm}}) \times \mathrm{Gr}(n+1-m',n)\,\,\, \text{ for } 2\leq m'\leq\min\{m,n-1\}.
        \end{array}
    \]    
In particular, the number of irreducible components of $\sHilb^m(C)$ is $\mathrm{min}\{n-1,m\}$. 
\end{ThmLetter}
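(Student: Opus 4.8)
The plan is to stratify $\sHilb^m(C)$ according to how much length sits at the singular point $p$, to detect the irreducible components by a Zariski tangent space computation along certain ``Grassmannian'' loci, and then to rule out any further components by explicit degenerations.

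\emph{Stratification.} For $[Z]\in\sHilb^m(C)$ put $m'=\dim_\C\cO_{Z,p}$ and $W_{m'}=\{[Z]:\dim_\C\cO_{Z,p}=m'\}$. Every finite subscheme splits uniquely as $Z=Z_p\sqcup Z_{\mathrm{sm}}$ with $Z_p$ supported at $p$ and $Z_{\mathrm{sm}}$ on $C_{\mathrm{sm}}$, and $\sHilb$ of a smooth curve is its symmetric product, so there is a natural decomposition $W_{m'}\cong \Sym^{m-m'}(C_{\mathrm{sm}})\times\sHilb^{m'}_p$, where $\sHilb^{m'}_p$ is the punctual Hilbert scheme of the singularity, depending only on $R:=\widehat\cO_{C,p}\cong\C[[x_1,\dots,x_n]]/(x_ix_j:i\neq j)$. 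Here $W_0$ is open and $\overline{W_0}=\sHilb^m(C_{\mathrm{sm}})=\Sym^m(C_{\mathrm{sm}})$ is an irreducible component. Inside $\sHilb^{m'}_p$ consider the ``fat locus'' $G_{m'}=\{J:\mathfrak m^2\subseteq J\}$; sending $J$ to $J/\mathfrak m^2\subseteq\mathfrak m/\mathfrak m^2\cong\C^n$ identifies $G_{m'}\cong\mathrm{Gr}(n+1-m',n)$, of dimension $(n+1-m')(m'-1)$, non-empty for $1\le m'\le n+1$; set $W^{\mathrm{fat}}_{m'}=\Sym^{m-m'}(C_{\mathrm{sm}})\times G_{m'}$.

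\emph{Existence of the components.} The heart of the argument is: for $2\le m'\le\min\{m,n-1\}$ and $[Z]$ a generic point of $W^{\mathrm{fat}}_{m'}$ (so $Z_{\mathrm{sm}}$ is $m-m'$ distinct smooth points and $W:=J/\mathfrak m^2$ a generic subspace of $\C^n$ of codimension $m'-1$), one has
\[
\dim_\C T_{[Z]}\sHilb^m(C)=\dim_\C\Hom_{\cO_C}(\mathcal I_Z,\cO_Z)=(m-m')+(n+1-m')(m'-1),
\]
which equals $\dim W^{\mathrm{fat}}_{m'}$. Each of the $m-m'$ reduced smooth points contributes $1$; for the local term $\Hom_R(J,A)$, $A:=R/J$, one uses that generically $J$ is generated by the $N:=n+1-m'$ linear forms $\ell_k$ spanning $W$, that every syzygy among them has all coefficients in $\mathfrak m$ (compare $\mathfrak m$-adic order-one parts, which vanish by linear independence of the $\ell_k$), and that $\mathfrak m_A^2=0$, to obtain $\Hom_R(J,A)\cong\mathfrak m_A^{\oplus N}$; genericity of $W$ enters only to kill the constant-term solutions, via the syzygies forced by the relations $x_ix_j=0$ together with the facts that the $\overline x_i$ are nonzero in $A$ and that the $n$ columns of the coefficient matrix of the $\ell_k$ are pairwise non-proportional for generic $W$. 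Since this dimension equals $\dim W^{\mathrm{fat}}_{m'}$, such $[Z]$ are smooth points of $\sHilb^m(C)$, so $\overline{W^{\mathrm{fat}}_{m'}}$ is an irreducible component, birational to $\sHilb^{m-m'}(C_{\mathrm{sm}})\times\mathrm{Gr}(n+1-m',n)$; and $\overline{W_0}$ is the component birational to $\sHilb^m(C_{\mathrm{sm}})$.

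\emph{No other components, and the count.} It remains to show $\sHilb^m(C)=\overline{W_0}\cup\bigcup_{2\le m'\le\min\{m,n-1\}}\overline{W^{\mathrm{fat}}_{m'}}$, which I would prove by induction on $m'=\dim_\C\cO_{Z,p}$ using the lengths $a_j\ge 1$ of the traces $Z_p\cap C_j$ of $Z_p$ on the branches $C_j$ through $p$. If some $a_j\ge 2$, choose $\phi\in J$ reducing to $x_j^{a_j}$ on $C_j$ and deform $\phi\rightsquigarrow\phi-t\,x_j^{a_j-1}$: this gives a flat family in $\sHilb^m(C)$ whose general member has $\dim_\C\cO_{Z,p}=m'-1$, so $[Z]\in\overline{W_{m'-1}}$ and induction applies. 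If $a_j=1$ for every $j$, then expressing each $\overline x_j$ through the $\overline x_k$ ($k\neq j$) in $A$ yields $\mathfrak m_A^2=0$ and $m'\le n$; for $m'\le n-1$ this puts $[Z]$ in $W^{\mathrm{fat}}_{m'}$, hence in a component above, and for $m'=n$ (and trivially for $m'\le 1$) the scheme $Z_p$ is smoothable in $C$ — it is the flat limit of $n$ distinct points, one on each branch, coalescing at suitable relative rates — so $[Z]\in\overline{W_0}$. This exhausts all cases, so the components are exactly the listed ones, giving $1+(\min\{m,n-1\}-1)=\min\{n-1,m\}$ of them. The step I expect to be the main obstacle is this last one: verifying flatness of the ``peeling'' deformation and identifying the flat limit of coalescing points on a reducible germ in general — the reducibility of $C$ at $p$ is exactly what makes these degenerations exist, but it is also what makes the branchwise bookkeeping delicate.
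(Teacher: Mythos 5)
Your proposal is correct and reaches the same stratification and the same list of components as the paper, but the step that certifies that the closures $\overline{W^{\mathrm{fat}}_{m'}}$ really are irreducible components is genuinely different. The paper (part (2) of \cref{theo: irred comp reduced structure}) shows that $\Sigma(m',n+1-m',\mathbf{1})$ is not contained in the closure of any locus $Y^m(n,k)$ of subschemes having a point away from the singularity, via a geometric argument: a generic $(m'-1)$--plane through the origin can neither contain a coordinate axis nor lie in the union of the coordinate subspaces spanned by the branches. You instead bound $\dim_\C\Hom_R(J,R/J)$ at a generic $J$ generated by $N=n+1-m'$ linear forms: since $\mathfrak{m}_A^2=0$ the $\mathfrak{m}_A$--valued parts of the images of the generators are unconstrained by the syzygies $a_{j,i}x_ie_k-a_{k,i}x_ie_j$, and genericity (pairwise independent rows of the coefficient matrix, no $x_i$ in $J$) kills the constant terms, giving exactly $(n+1-m')(m'-1)$; the tangent-space bound on the dimension of any component through $[Z]$ then forces $\overline{W^{\mathrm{fat}}_{m'}}$ to be a component. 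This computation is correct (it is the case $\uu=\mathbf{1}$ of the paper's later \cref{prop:singlocus}, which the paper uses only to describe the singular locus, and even a partial set of syzygies would suffice since you only need an upper bound on the tangent space), and it buys a shorter, purely infinitesimal existence proof; the paper's route buys an explicit description of which degenerations fail, which it reuses for the smoothability analysis. Your exhaustion step essentially coincides with the paper's: the branchwise normal form you invoke (and the fact that $a_j=1$ for all $j$ forces $\mathfrak{m}^2\subseteq J$) is \cref{ideals punctual}, the ``peeling'' family is the family $J_\lambda$ in part (3) of \cref{theo: irred comp reduced structure}, and the smoothing of a linear section when $m'=n$ is part (1). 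The only point needing care is the one you flag: the peeling should be run at a generic point of each component of the punctual Hilbert scheme after normalizing the generators so that a single one carries $x_j^{a_j}$ (so the colength is visibly constant along the family), and then propagated to all of $W_{m'}$ by taking closures, exactly as in \cref{prop:limits}. Since the statement concerns only the components and their birational types, the reducedness of $\sHilb^m(C)$ established later in the paper is not needed here.
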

A direct consequence of \cref{thm:irred comp} is that the number of irreducible components of $\sHilb^m(C)$ is $n-1$ as long as $m\geq n-1$ (see  \cref{fig:graph irred comp}).
As far as the authors are aware, this phenomenon is rather unexpected, since it is typically observed that as the number of points increases, a non-irreducible Hilbert scheme of points tends to have an increasing  number of components.  Notice also that there exists components of different dimensions whenever $n\geq 4$ and $m\geq 2$. So in these cases, the Hilbert scheme of points is not  Cohen-Macaulay by \cite[Corollary 18.11]{Eisenbud1995}.

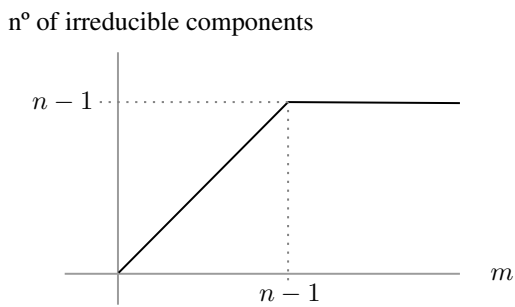
\begin{figure}[h]
    \centering
\tikzset{every picture/.style={line width=0.75pt}} 

\begin{tikzpicture}[x=0.65pt,y=0.65pt,yscale=-1,xscale=1]

\draw [color={rgb, 255:red, 155; green, 155; blue, 155 }  ,draw opacity=1 ]   (80,200) -- (310,200) ;
\draw [color={rgb, 255:red, 155; green, 155; blue, 155 }  ,draw opacity=1 ]   (111,71) -- (111,218) ;
\draw [color={rgb, 255:red, 0; green, 0; blue, 0 }  ,draw opacity=1 ]   (210,100) -- (111.4,199.4) ;
\draw [color={rgb, 255:red, 0; green, 0; blue, 0 }  ,draw opacity=1 ]   (210,100) -- (310,100.6) ;
\draw [color={rgb, 255:red, 130; green, 130; blue, 130 }  ,draw opacity=1 ] [dash pattern={on 0.84pt off 2.51pt}]  (100.4,100) -- (210,100) ;
\draw [color={rgb, 255:red, 130; green, 130; blue, 130 }  ,draw opacity=1 ] [dash pattern={on 0.84pt off 2.51pt}]  (210,203.8) -- (210,100) ;

\draw (191.5,204.4) node [anchor=north west][inner sep=0.75pt]    {$n-1$};
\draw (59.3,92.4) node [anchor=north west][inner sep=0.75pt]    {$n-1$};
\draw (326,195.4) node [anchor=north west][inner sep=0.75pt]    {$m$};
\draw (45.5,45.6) node [anchor=north west][inner sep=0.75pt]    {nº of irreducible components};

\end{tikzpicture}

    \caption{Graph of the number of irreducible components of the Hilbert scheme of $m$ points on a curve with an $n$-fold singularity.}
    \label{fig:graph irred comp}
\end{figure}

The strategy to prove \cref{thm:irred comp} is to calculate the \emph{elementary components} of $\sHilb^m(C)$. To do so, we first study the locus in $\sHilb^m(C)$ of those subschemes supported at the singularity, the so-called \emph{punctual Hilbert scheme} $\sHilb_p^m(C)$. Since this is a local problem we can assume that we are studying the axis in $\C^n$, which we denote by $X_n$, and the singular point to be the origin $\mathbf{0}$. We classify the possible ideals that appear in $\sHilb_\mathbf{0}^m(X_n)$. From there, we obtain the irreducible components of $\sHilb^m_\mathbf{0}(X_n)$ and also the identification with the corresponding Grassmannians.

With this observation in place, we proceed to examine the structure of the Hilbert scheme of points. For this purpose, we make use of the combinatorial structures associated with these curves; in particular, we derive the following result concerning $\sHilb_\mathbf{0}^m(X_n)$. 

\begin{ThmLetter}\label{thm:moment map and Grassmannians}
The punctual Hilbert scheme $\sHilb_\mathbf{0}^m(X_n)$ is a union of Grassmanians of the form $\gr(l,n)$ for $\max\{1,n+1-m\}\leq l\leq n-1$, where $\gr(l,n)$ appears $\binom{l+m-2}{n-1}$--times. Moreover,
there is a well-defined moment map     
        $$\mu_m:\sHilb^m_{\mathbf{0}}(X_n)\longrightarrow (m-1)\cdot\Delta_{n-1}. $$ 
\end{ThmLetter}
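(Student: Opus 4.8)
The plan is to analyze $\sHilb^m_{\mathbf 0}(X_n)$ by classifying, up to the torus action, all the ideals $I\subseteq \C[x_1,\dots,x_n]/(x_ix_j : i\neq j)=:R$ of colength $m$ supported at the origin. Since $X_n$ is the union of the $n$ coordinate axes, an element $f\in R$ can be written uniquely as $f = c + \sum_{i=1}^n f_i(x_i)$ with $c\in\C$ and $f_i$ a polynomial with zero constant term; this makes $R = \C \oplus \bigoplus_i x_i\C[x_i]$ as a vector space, and an ideal $I$ with $\dim_\C R/I = m$ decomposes in a way controlled by the "truncation levels" along each axis together with the lengths of the jets that survive. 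Concretely I expect to show: there is a partition of the data recording, for each axis $i$, an integer $a_i\geq 0$ (the order to which the axis is ``visible'' in $R/I$) together with a subspace of the space of degree-$\leq$ something jets; the locus of ideals with a fixed combinatorial type $(a_1,\dots,a_n)$ is isomorphic to a Grassmannian $\gr(l,n)$ where $l$ is the number of axes that are cut at the generic (maximal) level, and the constraint $\max\{1,n+1-m\}\le l\le n-1$ together with the count $\binom{l+m-2}{n-1}$ should fall out of counting the number of such combinatorial types realizing a given $l$ — this is the same bookkeeping that underlies Theorem A, so I would reuse the ideal classification cited there. I would first do the cases $n=2$ (where $\sHilb^m_{\mathbf 0}(X_2)$ is a single point, matching $\gr(1,2)=\mathrm{pt}$ appearing once) and $m=2$ as sanity checks, then handle the general case.

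For the moment map, the key input is that $X_n$ carries the torus $T=(\C^*)^n$ acting by scaling the coordinates, hence $\sHilb^m(X_n)$ and its punctual locus $\sHilb^m_{\mathbf 0}(X_n)$ inherit a $T$-action. One cannot literally invoke the symplectic/GIT moment map since these Hilbert schemes are singular and not obviously projective-with-polarization in a canonical way, so instead I would \emph{define} $\mu_m$ combinatorially: for an ideal $I$, look at its initial ideal / the $T$-fixed points in the closure of its orbit, or more simply at the multigraded Hilbert function, and send $I$ to the normalized barycenter of the exponents appearing in a monomial basis of $R/I$. Since $R/I$ always contains $1$ and the monomials along the axes are $x_i^k$, a basis of $R/I$ consists of $1$ together with $x_i^{k}$ for $1\le k\le b_i$ (suitable $b_i$, with $\sum b_i = m-1$ on the dense stratum, and $\le m-1$ in general after accounting for relations); mapping $I$ to $\tfrac{1}{?}\sum_{i,k} k\, e_i$ rescaled to land in $(m-1)\Delta_{n-1}$ gives the map, and I would check it is well-defined (independent of choices), $T$-equivariant for the standard $T$-action on the simplex, and continuous/algebraic in families — the last point being where one must verify that the relevant Hilbert function is constant on the strata and only jumps compatibly across them, i.e. upper semicontinuity plus the explicit stratification from the first part. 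The image lies in $(m-1)\Delta_{n-1}$ precisely because a colength-$m$ subscheme supported at $\mathbf 0$ has $m-1$ ``units of length'' to distribute among the $n$ axes.

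The main obstacle I anticipate is the ideal classification itself: although $R$ is a very concrete ring, an ideal of $R$ need not be monomial, and the subtlety is exactly the presence of the ``diagonal'' jets — elements like $x_i - x_j$ modulo higher order — which is what produces the Grassmannian directions rather than a discrete set of monomial ideals. Pinning down that the stratum of a fixed type is \emph{exactly} a Grassmannian (and not merely stratified by one), with the correct $l$ and the correct multiplicity $\binom{l+m-2}{n-1}$, will require a careful normal-form argument for these ideals; I expect this to be done by row-reducing the ``linear part'' of the ideal against the flag $0\subset x_1\C[x_1]\oplus\cdots$, showing the moduli reduces to choosing an $l$-dimensional subspace of an $n$-dimensional space of leading coefficients, and then counting the admissible discrete data $(a_1,\dots,a_n)$ by a stars-and-bars argument that yields the binomial coefficient. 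Once the stratification is in hand, both the Grassmannian decomposition and the well-definedness of $\mu_m$ follow, the latter essentially formally.
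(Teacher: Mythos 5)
Your classification strategy for the strata is essentially the paper's: normal forms for colength-$m$ ideals supported at $\mathbf{0}$ show that each ideal is generated by $l$ independent elements of $\Lambda_{\uu}=\langle x_1^{u_1},\dots,x_n^{u_n}\rangle_{\C}$ with $\uu\in\Z_{\geq1}^n$ and $|\uu|=m+l-1$, each stratum is a Grassmannian $\gr(l,n)$, and stars-and-bars on $\uu$ gives the multiplicity $\binom{m+l-2}{n-1}$. However, your sanity check is wrong and reveals a misconception: $\gr(1,2)$ is $\P^1$, not a point, and $\sHilb^m_{\mathbf 0}(X_2)$ is not a single point — it is a chain of $m-1$ rational curves (the $m-1$ components $\Sigma(m,1,(i,m-i))\simeq\P^1$ glued at the monomial ideals $\langle x_1^{i+1},x_2^{m-i}\rangle$). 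Relatedly, you never address why $l\leq n-1$ (the case $l=n$ gives only the monomial ideal $\langle x_1^{u_1},\dots,x_n^{u_n}\rangle$, which is absorbed into a larger stratum) nor why $l\geq n+1-m$ (positivity of $\uu$), and you do not address the scheme structure at all: Theorem B is about $\sHilb^m_{\mathbf 0}(X_n)$ as a scheme, and showing it is reduced — hence literally a union of Grassmannians glued along sub-Grassmannians — requires the deformation-theoretic computation of the complete local rings at the torus-fixed points together with a degeneration-to-fixed-points argument; a set-theoretic classification of ideals does not give this.

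The moment map is where the proposal would fail as written. Sending $I$ to the barycenter of the exponents of ``a monomial basis of $R/I$'' is not well-defined: for a non-monomial ideal the choice of which monomials among $x_1^{u_1},\dots,x_n^{u_n}$ survive into a basis is not canonical, and neither the initial ideal nor a multigrading exists canonically for such $I$. The map one actually needs is the classical moment map of each Grassmannian component in its Plücker embedding, $\Gamma\mapsto\bigl(\sum_A|q_A|^2\bigr)^{-1}\sum_A|q_A|^2\,\ee_A$, translated by $\uu-\mathbf{1}$; this is real-analytic, not a barycenter of a chosen basis, and it interpolates continuously between the torus-fixed monomial ideals. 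The genuinely nontrivial step — which you dismiss as ``essentially formal'' — is verifying that the maps defined on two different components $\Sigma(m,l,\uu)$ and $\Sigma(m,l',\vv)$ agree on their intersection; this requires first computing that the intersection is nonempty iff $\uu-\vv\in\{0,\pm1\}^n\setminus\{\pm\mathbf{1}\}$, identifying which Plücker coordinates vanish there, and then an explicit calculation showing the two formulas differ exactly by the translation $\uu-\vv=\ee_{\kappa(\uu-\vv,1)}-\ee_{\kappa(\uu-\vv,-1)}$. Without that computation the map $\mu_m$ is not glued.
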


The moment map allows us to study the geometry of $\sHilb_\mathbf{0}^m(X_n)$ through the combinatorics of a \emph{hypersimplicial complex} in $(m-1)\cdot\Delta_{n-1}$.
The strategy for the proof of \cref{thm:moment map and Grassmannians} is motivated by the natural toric action on  $X_n$. We identify which irreducible components of $\sHilb_\mathbf{0}^m(X_n)$ are lifted to \emph{elementary components} of $\sHilb^m(X_n)$ and relate them to the combinatorics of the moment map. To this end, we carry out a detailed analysis of the singularities and scheme structure of the local Hilbert scheme, employing deformation theory and the natural torus action to extend our results to the global setting. This leads to our other main theorem, which provides a detailed description of the singularities occurring in its components. Here, the combinatorics developed previously play a prominent role.

\begin{ThmLetter}\label{thm: singularities HilbX_n}
The Hilbert scheme $\sHilb^m(X_n)$ is reduced. Moreover, each smoothable component of $\sHilb^m(X_n)$ is normal and has toric singularities. Each non-smoothable component is smooth.  
\end{ThmLetter}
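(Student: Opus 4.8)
The plan is to proceed in three stages: first establish reducedness of $\sHilb^m(X_n)$, then analyze the smoothable components, and finally handle the non-smoothable ones.

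For reducedness, I would exploit the torus action of $(\C^*)^n$ on $X_n$ (acting diagonally, rescaling each axis), which induces an action on $\sHilb^m(X_n)$. Reducedness can be checked at the torus-fixed points, which are precisely the monomial ideals; these were classified in the analysis leading to Theorem B. Concretely, I would show that at each monomial ideal $I$ the tangent space $\Hom_{\cO_{X_n}}(I, \cO_{X_n}/I)$ has dimension equal to the local dimension of the component(s) of $\sHilb^m(X_n)$ through $[I]$. This is where the combinatorics of the hypersimplicial complex and the moment map $\mu_m$ from Theorem B do the work: the monomial ideals supported at $\mathbf{0}$ correspond to lattice points of $(m-1)\cdot\Delta_{n-1}$ together with a choice of subset recording which axes are "fully cut", and the tangent space computation reduces to counting first-order deformations axis by axis. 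The subtle point is that a monomial ideal may lie on several components simultaneously; one must verify that the Zariski tangent space at such a point is still spanned by the tangent spaces of those components, i.e. that the scheme is \emph{seminormal}-like at the crossing. Because $X_n$ is a union of lines, away from the origin $\sHilb^m(X_n)$ is a disjoint product of $\Sym$ factors and of the punctual strata, so reducedness globally follows once it is known along $\sHilb^m_{\mathbf{0}}(X_n)$, and for the latter it suffices by the torus action to check at monomial ideals.

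For the smoothable components — by Theorem A these are birational to $\sHilb^{m-m'}(C_{\mathrm{sm}})\times\gr(n+1-m',n)$ with the "main" component $\sHilb^m(C_{\mathrm{sm}})$ being the case $m'=0$ or $1$ — normality would be obtained via Serre's criterion: such a component is a complete intersection or Cohen–Macaulay in codimension... actually the cleaner route is to show each smoothable component is itself a toric variety, or at least has a dense torus orbit whose complement has the right codimension, so that normality reduces to the combinatorial normality of the associated fan/polytope. The torus $(\C^*)^n$ acts with finitely many orbits on the relevant strata, and I would identify an affine toric chart around each monomial ideal by explicitly writing down the deformations (the local equations of $\sHilb^m(X_n)$ near $[I]$ are binomial — this is the key structural input, provable from the explicit generators of the ideals classified earlier). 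Once the local model is binomial, "toric singularities" is immediate, and normality of each component follows from saturating the corresponding monoid; Cohen–Macaulayness of individual components (even though the whole scheme is not CM when $n\geq 4$) comes for free from normal toric varieties (Hochster).

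For the non-smoothable components: by Theorem A these are birational to $\sHilb^{m-m'}(C_{\mathrm{sm}})\times\gr(n+1-m',n)$ for $2\leq m'$, and the mechanism is that such a component is an \emph{elementary component} whose generic point is a fat point at $\mathbf{0}$ of a fixed "type". I would argue that, étale-locally, this component is isomorphic to $\sHilb^{m-m'}(C_{\mathrm{sm}})\times(\text{elementary punctual component})$, and the punctual elementary component is a Grassmannian $\gr(n+1-m',n)$ \emph{as a scheme} — smooth — which is exactly the content extractable from the proof of Theorem B (the elementary components of $\sHilb^m_{\mathbf{0}}(X_n)$ are honest Grassmannians, not just birational to them). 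The point is that for an elementary component the tangent space has constant dimension along it (no jumping, since there are no other components meeting it in the interesting locus once $m'\geq 2$ forces the support to be entirely at the origin with the "wrong" colength distribution), so it is smooth. I expect the main obstacle to be precisely the \textbf{local analysis at the monomial ideals lying on the intersection of a smoothable and a non-smoothable component} (or on several components at once): one must pin down the exact equations of $\sHilb^m(X_n)$ there and check that the tangent space dimension equals $\max$ of the component dimensions (for reducedness) while each individual component branch is smooth resp. normal-toric. Managing the bookkeeping of \emph{which} monomial ideals sit on \emph{which} components — governed by the hypersimplicial complex — and doing the deformation-theoretic $\Hom$/$\Ext$ computation uniformly is the technical heart; everything else (generic smoothness of the Grassmannian factors, the product structure away from the origin, descent of reducedness/normality from the fixed locus via the torus) is comparatively routine.
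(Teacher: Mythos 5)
Your overall architecture --- use the $(\C^{*})^n$-action to reduce everything to the monomial ideals $\langle x_1^{u_1},\dots,x_n^{u_n}\rangle$, do a local deformation-theoretic computation there, and read off reducedness, toric normality of the smoothable branches and smoothness of the non-smoothable ones --- is the paper's (cf.\ \cref{lemma:one parameter family}, \cref{theo:def theory}, \cref{prop:sing smooth comp}, \cref{prop: nonsmooth comp Xn}). But the two mechanisms you propose in place of the local computation do not deliver the conclusions. For reducedness you want to check at each monomial ideal that $\dim\Hom(I,R/I)$ matches the local dimension, "verifying that the tangent space is spanned by the tangent spaces of the components" at crossings. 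This is not a criterion for reducedness: tangent spaces cannot distinguish $k[x,y]/(xy)$ from $k[x,y]/(x^2y^2)$, both of which have two-dimensional tangent space at the origin equal to the span of the tangent lines of the two branches. Since the relevant monomial ideals lie on up to $2^k$ components, no tangent-space count detects nilpotents there. The paper instead computes the entire prorepresenting ring of the local Hilbert functor at such a point (\cref{theo:def theory}): it writes the universal deformation explicitly, extracts the relations among its coefficients from the syzygies $x_ix_j^{u_j}=0$, and obtains an explicit ideal $\mathcal{J}_k$; reducedness follows because the primary decomposition of $\mathcal{J}_k$ consists of prime ideals (\cref{lemma: primary k1}, \cref{lemma:primary k2}), so $\mathcal{J}_k$ is radical. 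This full local-ring computation is the technical heart your proposal omits.

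The same omission undermines your argument for the non-smoothable components. You claim the tangent space is constant along an elementary component because "no other components meet it in the interesting locus"; that premise is false --- the closure of $\sHilb^{m,m'}(X_n)$ meets the smoothable component and the other non-smoothable components exactly at the degenerate monomial ideals, where the ambient tangent space of $\sHilb^m(X_n)$ jumps. What must be shown is that the \emph{branch} of the completed local ring corresponding to that component is regular, and this again requires locating the branch in the primary decomposition: it is the linear ideal $\mathcal{J}_S$, whose vanishing locus is an affine space (\cref{lemma:primary k2}, \cref{prop: nonsmooth comp Xn}). Your treatment of the smoothable components is closer to the mark --- the relevant branch is indeed binomial (the toric ideal $\mathcal{Q}_i$ of \cref{Lem: toric ideal}) and normality follows from normality of the associated polytope (\cref{lem:normal polytope}) --- but the binomial local model is asserted rather than derived, and deriving it is precisely the deformation computation above; one also needs excellence to descend normality from the completion to the local ring.
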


When we replace $X_n$ by an \emph{irreducible} curve $C$ with a unique rational $n$--fold singularity, the smoothable component is no longer normal and its singularities are not toric. Moreover, the non-smoothable components are singular. However, their structure can be made explicit. This yields our final main result.

\begin{ThmLetter}\label{thm: singularities HilbC}
Let $C$ be an irreducible curve with a unique rational $n$--fold singularity. Then, $\sHilb^m(C)$ is reduced.  The singularities of the smoothable components are locally unions of normal toric varieties, while those of the non-smoothable components are locally unions of affine spaces. 
In addition, there is an explicit description of the normalization of the non-smoothable components.
\end{ThmLetter}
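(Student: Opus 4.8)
The plan is to deduce Theorem~D from the local statement, Theorem~C, by transporting the analysis of $\sHilb^m(X_n)$ to $\sHilb^m(C)$ via the étale-local structure of $C$ near its unique rational $n$--fold singularity $p$. Since $C$ is analytically isomorphic to $X_n$ in a neighborhood of $p$, and is smooth elsewhere, the Hilbert scheme $\sHilb^m(C)$ decomposes étale-locally: a length-$m$ subscheme $Z$ either avoids $p$ (contributing the smooth, irreducible piece $\sHilb^m(C_{\mathrm{sm}})$) or splits as a subscheme of length $m'$ supported at $p$ together with $m-m'$ points on $C_{\mathrm{sm}}$, and the former factor is controlled by $\sHilb^{m'}_{\mathbf 0}(X_n)$ by the formal/étale invariance of punctual Hilbert schemes. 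Concretely, one writes $\sHilb^m(C) = \bigcup_{m'=0}^{m} \left( \sHilb^{m'}_{p}(C) \times \sHilb^{m-m'}(C_{\mathrm{sm}}) \right)$ as the image of the obvious gluing morphism from the relative Hilbert scheme over the punctual one, and identifies $\sHilb^{m'}_p(C)\cong \sHilb^{m'}_{\mathbf 0}(X_n)$. Combined with Theorem~A this already pins down which strata are the smoothable versus non-smoothable components.

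Next I would establish reducedness and the local description of singularities. For reducedness: away from $p$ everything is smooth, and near the locus of subschemes meeting $p$ one uses that $\sHilb^m(C)$ is, formally locally at a point $Z = Z_p \sqcup Z'$, isomorphic to the product of the formal neighborhood of $Z_p$ in $\sHilb^{m'}_{\mathbf 0}(X_n)$ (reduced, even better, normal toric or smooth, by Theorem~C — but recall $X_n$ is \emph{affine} so $\sHilb^{m'}(X_n)$ and $\sHilb^{m'}_{\mathbf 0}(X_n)$ share the same local rings along the punctual locus) with a smooth factor coming from the free points. This product of a reduced scheme with a smooth scheme is reduced, giving reducedness of $\sHilb^m(C)$. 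For the singularities of the \textbf{smoothable} component: here one must be careful — the statement of Theorem~D warns that, unlike in the $X_n$ case, the smoothable component of $\sHilb^m(C)$ is \emph{not} normal and its singularities are \emph{not} toric; the mechanism is that several local branches (indexed by how the $m'$ points at $p$ distribute and how the remaining $m-m'$ free points can limit onto $p$) meet along the smoothable component, so the local model becomes a union of the normal toric varieties from Theorem~C glued along common strata. I would make this precise by analyzing the incidence of the closures $\overline{\sHilb^{m'}_p(C)\times\sHilb^{m-m'}(C_{\mathrm{sm}})}$ inside the smoothable component and showing the local ring at a generic point of each intersection stratum is a fiber product of the toric local rings over their common quotient.

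For the \textbf{non-smoothable} components I would argue similarly but with the cleaner input that, by Theorem~C, each non-smoothable component of $\sHilb^m(X_n)$ is smooth; Theorem~A identifies these with $\sHilb^{m-m'}(C_{\mathrm{sm}})\times\mathrm{Gr}(n+1-m',n)$ birationally. The non-smoothability is a local condition at $p$, so a subscheme lying on such a component has its length-$m'$ part at $p$ rigidly of the "fat Grassmannian" type and its other $m-m'$ points free on $C_{\mathrm{sm}}$; when some of those free points collide with $p$, the component meets other strata, and the local model becomes a transverse union of affine spaces (smooth branches) — I would verify this by a tangent-space / $T^1$ computation at the relevant boundary points, using deformation theory as in the proof of Theorem~C, and read off that each branch is smooth, hence the union is a union of affine spaces locally. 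The normalization of a non-smoothable component is then obtained by separating these smooth branches: concretely it is the fiber product $\widetilde{\sHilb^{m-m'}(C_{\mathrm{sm}})}\times\mathrm{Gr}(n+1-m',n)$ where the first factor is the normalization of the closure of $\sHilb^{m-m'}(C_{\mathrm{sm}})$ inside $\sHilb^{m-m'}(C)$ — or, more explicitly, a suitable symmetric-product-type space over $C$ mapping finitely to it — and I would spell out this map and check it is finite and birational with normal source.

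The main obstacle I expect is the smoothable-component singularity analysis: controlling exactly \emph{how} the toric local models from Theorem~C glue along $C$ (as opposed to $X_n$), i.e. describing the intersection strata of the various $\overline{\sHilb^{m'}_p(C)\times \sHilb^{m-m'}(C_{\mathrm{sm}})}$ and proving the local rings there are precisely the claimed fiber products rather than something larger. This requires the combinatorial bookkeeping of the hypersimplicial complex from Theorem~B to track which faces correspond to limits of free points onto $p$, and a careful deformation-theoretic comparison between the local Hilbert functor of $C$ and that of $X_n$ at these non-generic points — the global geometry of $C$ (in particular the behavior of $\sHilb^{m-m'}(C_{\mathrm{sm}})$ near $p$, which unlike the $X_n$ case is not a product situation) is what breaks normality and toricity and must be handled by hand.
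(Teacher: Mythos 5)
Your overall route is the paper's: reduce to the formal local structure at a subscheme $Z=Z_p\sqcup Z'$, identify the factor at $p$ with the corresponding local ring on $X_n$, import the branch analysis of Theorem~C (the ideals $\mathcal{Q}_i$ give normal toric branches, the ideals $\mathcal{J}_S$ give linear branches), and obtain the normalization of a non-smoothable component as an explicit finite birational map from a smooth product. However, there is one step that is wrong as written: you claim that the formal neighborhood of $Z_p$ is that of $Z_p$ in $\sHilb^{m'}_{\mathbf 0}(X_n)$, justified by the assertion that ``$\sHilb^{m'}(X_n)$ and $\sHilb^{m'}_{\mathbf 0}(X_n)$ share the same local rings along the punctual locus.'' This is false: the punctual Hilbert scheme is a proper closed subscheme of $\sHilb^{m'}(X_n)$, and its local ring at $[Z_p]$ omits all deformations that move part of $Z_p$ off the origin along the branches (e.g.\ $\sHilb^2_{\mathbf 0}(X_n)\simeq\P^{n-1}$ is smooth at every point, while $\sHilb^2(X_n)$ has the two-dimensional smoothable branches meeting $\P^{n-1}$ along the tangent star). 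Using the punctual factor would give the wrong local model --- in particular it cannot see the smoothable component at all, so it cannot yield the toric description of its singularities, and it only proves reducedness of a closed subscheme rather than of $\sHilb^m(C)$. The correct statement, which is what the paper uses, is that $\widehat{\mathcal{O}}_{\sHilb^m(C),[Z]}$ is the completed tensor product of $\widehat{\mathcal{O}}_{\sHilb^{m'}(X_n),[Z_p]}$ (the \emph{full} Hilbert scheme, computed in \cref{theo:def theory}) with a smooth factor; reducedness and the branch descriptions then follow from \cref{theo: reduced} and Propositions~\ref{prop: ideal translation} and~\ref{prop: ideal translation 2}.

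Two smaller points. First, for the normalization you propose ``the normalization of the closure of $\sHilb^{m-m'}(C_{\mathrm{sm}})$ inside $\sHilb^{m-m'}(C)$'' as the first factor; this is neither identified nor obviously equal to the paper's answer $\Sym^{m-m'}(\widetilde C)\times\Sigma(m',n+1-m',\mathbf 1)$ with $\widetilde C\to C$ the normalization of the curve. The substance of \cref{thm:normalization} is the construction of the finite extension of the birational map, which requires the explicit limit computation of \cref{prop:limits} (what happens when free points on a branch collide with $p$: the exponents $u_i$ get absorbed into the generators via $x_i\mapsto x_i^{u_i+1}$) together with the stratification of $\Sym^{m-m'}(\widetilde C)$ by the multiplicities at the preimages $p_1,\dots,p_n$; your plan acknowledges the need to ``spell out this map'' but this is where the actual work lies. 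Second, the reduction of the singularity analysis to finitely many local models is done in the paper not by a stratum-by-stratum incidence study but by the torus action (\cref{lemma:one parameter family}): every point degenerates to an ideal whose moment-map image is a vertex of $\mathcal{K}_n^{[m]}$, and only there does one compute the completed local ring. Your incidence-of-closures approach could in principle be made to work, but as stated it does not explain how to control the local rings at non-generic points of the intersection strata.
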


\subsection*{Structure of the paper}  
In \cref{sec:irred comp punctual} the main result is \cref{punctual irred comp}, where we classify the irreducible components of $\sHilb^m_{\mathbf{0}}(X_n)$. In \cref{sec:moment map} we construct a moment map for $\sHilb^m_{\mathbf{0}}(X_n)$ leading to \cref{thm: moment map}, obtaining \cref{thm:moment map and Grassmannians} for the reduced structure of $\sHilb_\mathbf{0}^m(X_n)$. In this section, we also explore the relation between the geometry of $\sHilb^m_{\mathbf{0}}(X_n)$ and a hypersimplicial complex. Some of the combinatoric lemmas needed for this purpose are given in \cref{app: The hypersimplicial complex}. In \cref{sec:localtoglobal}, we prove \cref{co: irred comp reduced structure} that establishes \cref{thm:irred comp} for the reduced structure of $\sHilb^m(C)$. In \cref{sec:local hilbert scheme} we deduce \cref{theo: reduced} that shows that the punctual Hilbert scheme is reduced. This completes the proof of \cref{thm:moment map and Grassmannians}. Then by \cref{sec:localtoglobal} we obtain that the same happens for the whole Hilbert scheme of points, completing the proof of \cref{thm:irred comp}. This makes use of some amount of commutative algebra computations which are given in \cref{app: commutative algebra}. From there, in \cref{sec:singularites and local hilbert scheme} we start studying the singular locus of $\sHilb^m(X_n)$ where, by using combinatoric methods explained in \cref{app: The hypersimplicial complex}, we characterize it completely. The main result of this section is \cref{theo:sing locus}. 
Afterwards in \cref{sec:sm and non sm} we focus on the description of the smoothable and non-smoothable components.  Propositions \ref{prop: nonsmooth comp Xn} and \ref{prop:sing smooth comp} complete the proof of Theorem \ref{thm: singularities HilbX_n}. Theorem \ref{thm:normalization} and Corollaries \ref{cor: sing nonsmooth} and \ref{cor: sing smooth irred} lead to Theorem \ref{thm: singularities HilbC}.
Finally in \cref{sec:ongoing work and future questions} we report some ongoing work and state some open questions and future research directions.

The theory presented in this paper is complemented by a variety of examples, intended to offer deeper insight into the problems under consideration and to illustrate the geometric structures that emerge from them.

\subsection*{Acknowledgments} The authors are grateful to Daniele Agostini, Marie Brandenburg, Michele Graffeo, Joachim Jelisiejew, Christian Lehn, Bernd Sturmfels, and to the people at the MPI-MiS Leipzig for their interest, useful conversations, remarks and for pointing out relevant literature on the topics treated in this paper.
\subsection*{Funding} 
Rios Ortiz was supported by the European Research Council (ERC) under the European Union’s Horizon 2020 research and innovation programme (ERC-2020-SyG-854361-HyperK).

\subsection*{Notations} We will work over $\C$. All arguments remain valid over any algebraically closed field of characteristic zero. The case of positive characteristic is unknown to the authors. Vectors in $\C^n$ are written in boldface. The origin in $\C^n$ is denoted by $\mathbf{0}$ and the vector $(1,\dots,1)$ as $\mathbf{1}$. The standard basis for $\C^n$ is $\mathbf{e}_1,\dots,\mathbf{e}_n$. For a subset $S\subset [n]$, we set
\[
\mathbf{e}_S:=\sum_{i\in S}\mathbf{e}_i.
\]
As is customary, $\binom{[n]}{l}$ denotes the set of subsets of $\{1,\ldots,n\}$ with $l$ elements.

\begin{itemize}
    \item $X_n$: The union of the axis $L_1,\ldots,L_n$, where $L_i:=\mathbb{V}(x_j:j\neq i)\subseteq \C^n$ (\cref{def: X_n}).
    \item $R_n$: The coordinate ring of $X_n$ (\cref{def:R_n}).
    \item $\Sym^m(C)$: The $m$--th symmetric product of $C$ (\cref{rem: number com X_n non smooth}).
    \item $\Hilb^m(X_n)$: The reduced Hilbert scheme of $m$ points of $X_n$ (\cref{def:punctualHilbertscheme}).
    \item $\sHilb^m_{\mathbf{0}}(X_n)$: The  punctual Hilbert scheme at the origin (\cref{def:punctualHilbertscheme}).
    \item $\Hilb^m_{\mathbf{0}}(X_n)$: The reduced  punctual Hilbert scheme at the origin (\cref{def:punctualHilbertscheme}).
    \item $\Sigma(m,l,\uu)$: A subvariety of $\Hilb^m_{\mathbf{0}}(X_n)$ constructed by an integer $l\leq n$ and a partition $\uu$ (\cref{def:irred comp punct}).
    \item $\Lambda_{\uu}$: The vector subspace generated by monomials indexed by $\uu$ (\cref{def:irred comp punct}).
    \item $\langle\Gamma\rangle$: The ideal generated by the vector subspace $ \Gamma\subseteq\Lambda_{\uu}$ (\cref{eq:grass map}). 
    \item $\kappa(\mathbf{w},k)$: The function that gives the indexes $i$ for which $\mathbf{w}_i=k$ (\cref{def:kappa indexes}).
    \item $\mu_{l,n}$: The moment map (\cref{def:momentmap}).
    \item $\mu_{\uu,l}$: The moment map defined on $\Sigma(m,l,\uu)$  (\cref{eq:piece of moment map}).
    \item $\mu_m$: The moment map defined on $\Hilb^m_{\mathbf{0}}(X_n)$ (\cref{thm: moment map}).
    \item $\mathcal{K}_n^{[m]}$: The $(n,m)$--hypersimplicial complex (\cref{prop: hypersimplicial complex}).
    \item $\mathcal{K}_n^{[m]}(S_1,S_2,l,\uu)$: The faces of the complex $\mathcal{K}_n^{[m]}$ (\cref{eq:hyper faces}).
    \item $\mathcal{K}_{l,n}^{[m]}$: subcomplex of $\mathcal{K}_n^{[m]}$ formed by hypersimplices of the form $\Delta_{l,n}$.
    \item $\mathcal{G}_n^m$: The variety obtained by gluing Grassmannians following $\mathcal{K}_n^{[m]}$ (\cref{eq:disjoint union}).
\item $\mathcal{G}_{l,n}^m$: The subvariety of  $\mathcal{G}_n^m$ formed by the Grassmannians of the form $\gr(l,n)$.
\item $\Hilb^{m,m'}(X_n)$: The non-smoothable components of $\Hilb^m(X_n)$ (\cref{eq:induc map gen}).
\item $\mathbb{S}_k$ and $\widehat{\mathbb{S}}_k$: The simplicial complexes describing the singularities of $\sHilb^m(X_n)$ (\cref{prop:complex singularity}).
    \item $\sHilb^{m,m',u}(C)$: The strata of the non-smoothable components of $\sHilb^m(C)$ (\cref{eq:strata component}).

\end{itemize}

\section{Irreducible components of the punctual Hilbert scheme}\label{sec:irred comp punctual}
A classical strategy to analyze the irreducible components of Hilbert schemes of points is to focus on elementary components. Following \cite{IarrobinoElementaryComponents}, an elementary component is an irreducible component of the Hilbert scheme of $m$ points that parameterizes subschemes supported at a single point.  In the case of a curve $C$ with a rational $n$--fold singularity $p\in C$, an elementary component must parametrize length $m$ subschemes supported at the singularity of $p$. We start this section defining this type of singularities.

\begin{Def}\label{def: X_n}
Let $X_n$ be the union of the axis $L_1,\ldots,L_n$ of $\mathbb{C}^n$, where $L_i:=\mathbb{V}(x_j:j\neq i)$ and let $\mathbf{0} = (0,\ldots,0)\in\C^n$ be the singular point of $X_n$. A curve $C$ has a rational $n$--fold singularity at $p\in C$ if, locally around $p$, $C$ is analytically isomorphic to $X_n$ around $\mathbf{0}$.
\end{Def}
Algebraically, $p\in C$ is a rational $n$--fold singularity if there exists an isomorphism between the completed stalks
$
\widehat{\mathcal{O}}_{C,p}\simeq \widehat{\mathcal{O}}_{X_n,\mathbf{0}}.
$
We denote the coordinate ring of $X_n$ by $R_n$, which is defined by
\begin{equation}\label{def:R_n}
R_n = \C[x_1,\ldots,x_n]/\langle x_ix_j:1\leq i<j\leq n\rangle.  
\end{equation}

A rational $2$--fold singularity is a nodal singularity. However, for $n\geq 3$ such singularities have embedding dimension $n$; in particular, they can no longer be embedded in a smooth surface.
By the very definition of elementary components, we can replace $C$ and $p$ by $X_n$ and $\mathbf{0}$, respectively. In this section, we analyze ideals in $\sHilb^m(X_n)$ supported at $\mathbf{0}$, and then extend the analysis to the full Hilbert scheme. We first perform this analysis on the reduced Hilbert scheme of points $\Hilb^m(X_n)$, which is $\sHilb^m(X_n)$ endowed with its reduced structure.

\begin{Def}\label{def:punctualHilbertscheme}
    The punctual Hilbert scheme at $\mathbf{0}$, denoted by $\sHilb^m_{\mathbf{0}}(X_n)$, is the locus in $\sHilb^m(X_n)$ of length $m$ ideals in $R_n$ supported at $\mathbf{0}$. The variety $\Hilb^m_{\mathbf{0}}(X_n)$ is the punctual Hilbert scheme endowed with its \emph{reduced structure}.
\end{Def}

Notice that an elementary component of  $\sHilb^m(X_n)$ is an irreducible component of 
$\sHilb^m_{\mathbf{0}}(X_n)$. We first study the irreducible components of $\sHilb^m_{\mathbf{0}}(X_n)$, which can be done within $\Hilb^m_{\mathbf{0}}(X_n)$. In \cref{sec:localtoglobal}, we determine which of these lift to elementary components of $\sHilb^m(X_n)$.

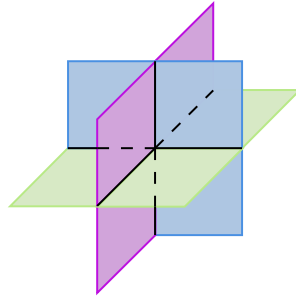
\begin{figure}
    \centering
    
\tikzset{every picture/.style={line width=0.75pt}} 

\begin{tikzpicture}[x=0.55pt,y=0.55pt,yscale=-1,xscale=1]

\draw  [color={rgb, 255:red, 184; green, 233; blue, 134 }  ,draw opacity=1 ][fill={rgb, 255:red, 215; green, 233; blue, 197 }  ,fill opacity=1 ][line width=0.75]  (209.61,100) -- (269.53,100) -- (229.8,140.02) -- (169.88,140.02) -- cycle ;
\draw  [color={rgb, 255:red, 74; green, 144; blue, 226 }  ,draw opacity=1 ][fill={rgb, 255:red, 174; green, 198; blue, 226 }  ,fill opacity=0.99 ] (209.76,40.16) -- (269.6,40.16) -- (269.6,100) -- (209.76,100) -- cycle ;
\draw  [color={rgb, 255:red, 74; green, 144; blue, 226 }  ,draw opacity=1 ][fill={rgb, 255:red, 174; green, 198; blue, 226 }  ,fill opacity=0.99 ] (269.6,100) -- (329.44,100) -- (329.44,159.84) -- (269.6,159.84) -- cycle ;
\draw  [color={rgb, 255:red, 184; green, 233; blue, 134 }  ,draw opacity=1 ][fill={rgb, 255:red, 215; green, 233; blue, 197 }  ,fill opacity=1 ][line width=0.75]  (309.33,59.98) -- (369.24,59.98) -- (329.52,100) -- (269.6,100) -- cycle ;
\draw  [color={rgb, 255:red, 189; green, 16; blue, 224 }  ,draw opacity=1 ][fill={rgb, 255:red, 209; green, 164; blue, 218 }  ,fill opacity=1 ] (309.6,0.36) -- (309.6,60.24) -- (269.6,100) -- (269.6,40.13) -- cycle ;
\draw  [color={rgb, 255:red, 189; green, 16; blue, 224 }  ,draw opacity=1 ][fill={rgb, 255:red, 209; green, 164; blue, 218 }  ,fill opacity=1 ] (269.8,100) -- (269.8,159.87) -- (229.8,199.64) -- (229.8,139.76) -- cycle ;
\draw  [color={rgb, 255:red, 189; green, 16; blue, 224 }  ,draw opacity=1 ][fill={rgb, 255:red, 209; green, 164; blue, 218 }  ,fill opacity=1 ] (269.8,40.36) -- (269.8,100.24) -- (229.8,140) -- (229.8,80.13) -- cycle ;
\draw  [color={rgb, 255:red, 74; green, 144; blue, 226 }  ,draw opacity=1 ][fill={rgb, 255:red, 174; green, 198; blue, 226 }  ,fill opacity=0.99 ] (269.6,40.16) -- (329.44,40.16) -- (329.44,100) -- (269.6,100) -- cycle ;
\draw  [color={rgb, 255:red, 184; green, 233; blue, 134 }  ,draw opacity=1 ][fill={rgb, 255:red, 215; green, 233; blue, 197 }  ,fill opacity=1 ][line width=0.75]  (269.8,100.24) -- (329.72,100.24) -- (289.99,140.25) -- (230.07,140.25) -- cycle ;
\draw    (229.8,139.76) -- (269.6,100) ;
\draw    (269.6,100) -- (269.6,40.16) ;
\draw    (269.6,100) -- (329.44,100) ;
\draw    (269.6,140) -- (269.6,159.81) ;
\draw    (209.61,100) -- (229.8,100) ;
\draw  [dash pattern={on 4.5pt off 4.5pt}]  (229.8,100) -- (269.6,100) ;
\draw  [dash pattern={on 4.5pt off 4.5pt}]  (269.6,100) -- (269.6,140) ;
\draw  [dash pattern={on 4.5pt off 4.5pt}]  (269.6,100) -- (309.6,60.24) ;

\end{tikzpicture}

    \caption{Planes spanned by each pairs of lines in $X_3$.}
    \label{fig:tangent star}
\end{figure}

\begin{Ex}\label{ex:m=2}
    For $m=2$, $\Hilb^2_\mathbf{0}(X_n)=\sHilb^2_\mathbf{0}(X_n)$ is the projectivization of the tangent space of $X_{n}$ at $\mathbf{0}$. In particular, $\Hilb^2_\mathbf{0}(X_n)$  is  $\P(T_{\mathbf{0}}X_n)\simeq\P^{n-1}$ and we can identify ideals in $\Hilb^2_\mathbf{0}(X_n)$ with the tangent directions at $\mathbf{0}$. The intersection of $\Hilb^2_\mathbf{0}(X_n)$  and the smoothable component consists of all tangent directions at $\mathbf{0}$ that lie in the planes spanned by two of the lines $L_1,\ldots,L_n$. This is also called the \emph{tangent star} of $X_{n}$ (see \cref{fig:tangent star}).  We refer to \cite[Chapter 1]{RussoOnTheGeometryOfSomeSpecialVarieties} for the general definition of the tangent star.
    In particular,  $\Hilb^2_\mathbf{0}(X_n)\simeq\P^{n-1}$ must be an irreducible component of $\Hilb^2(X_n)$. This fact can also be derived by a dimension argument.
\end{Ex}

We now give a description of the generators of the ideals in $\Hilb^m_{\mathbf{0}}(X_n)$.

\begin{Prop}\label{ideals punctual}
    Let $l\in[n]$ and $\uu\in\Z_{\geq 1}^n$ be a strictly positive partition of $m+l-1$. Consider a full rank matrix $A$ of size $l\times n$ with no vanishing column. Then, the ideal generated by the polynomials
    \[
    \begin{pmatrix}
    f_1\\ \vdots \\ f_l
    \end{pmatrix} = A\,\,\begin{pmatrix}
    x_1^{u_1}\\ \vdots\\ x_n^{u_n}
    \end{pmatrix}.
    \]
    lies in $\Hilb^m_{\mathbf{0}}(X_n)$. Moreover, all ideals in $\Hilb^m_{\mathbf{0}}(X_n)$ are of this form.
\end{Prop}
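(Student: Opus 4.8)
The plan is to work directly with the explicit vector-space decomposition
$R_n = \C \oplus \bigoplus_{j=1}^n\bigoplus_{k\geq 1}\C x_j^k$ and the multiplication rule $x_j^a x_{j'}^b = 0$ for $j\neq j'$, which turns every ideal-theoretic computation in $R_n$ into a ``line-by-line'' linear algebra problem. For the first assertion, set $V := \bigoplus_{j=1}^n\C x_j^{u_j}$ and let $\Gamma := \langle f_1,\dots,f_l\rangle_\C\subseteq V$; since $A$ has full rank, $\dim_\C\Gamma = l$. Because $\mathfrak m\cdot x_j^{u_j} = \C x_j^{u_j+1}$ and all cross products vanish, one gets $\mathfrak m^k\Gamma = \bigoplus_{j=1}^n\C x_j^{u_j+k}$ for all $k\geq 1$, the hypothesis that $A$ has no vanishing column being exactly what ensures that each summand really occurs. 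Hence the ideal $J\subseteq R_n$ generated by $\Gamma$ is $J = \Gamma\oplus\bigoplus_{j}\bigoplus_{k\geq 1}\C x_j^{u_j+k}$, so $R_n/J$ has $\C$-basis given by the class of $1$, the classes $x_j^k$ with $1\leq k\leq u_j-1$, and a basis of $V/\Gamma$; counting, $\dim_\C R_n/J = 1 + \sum_j(u_j-1) + (n-l) = 1 + \sum_j u_j - l = m$ as $\uu$ is a partition of $m+l-1$. Since $x_j^{u_j+1}\in J$ for all $j$, $J$ is $\mathfrak m$-primary, so $J\in\Hilb^m_{\mathbf 0}(X_n)$.

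For the converse, let $I\subseteq R_n$ be an arbitrary $\mathfrak m$-primary ideal with $\dim_\C R_n/I = m$. For each $j$ let $q_j\colon R_n\to x_j\C[x_j]$ be the projection onto the $x_j$-component; the relations of $R_n$ make $q_j|_{\mathfrak m}$ an $R_n$-module homomorphism onto the $R_n$-module $x_j\C[x_j]$ (on which $R_n$ acts through $R_n\twoheadrightarrow\C[x_j]$). Hence $q_j(I)$ is a submodule of $x_j\C[x_j]$, so $q_j(I) = \bigoplus_{k\geq u_j}\C x_j^k$ for some $u_j\geq 1$ (it is nonzero because $\mathfrak m^N\subseteq I$ for $N\gg 0$, and $u_j\geq 1$ because $I\subseteq\mathfrak m$). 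Using again that $x_jf = x_j q_j(f)$ for $f\in I$, a short computation gives $\mathfrak m I = \bigoplus_{j=1}^n\bigoplus_{k\geq u_j+1}\C x_j^k$, whence $\dim_\C R_n/\mathfrak m I = 1 + \sum_j u_j$ and $l := \dim_\C I/\mathfrak m I = 1 + \sum_j u_j - m$; by Nakayama $l\geq 1$, and $l\leq n$ will follow below, so $\uu$ is a strictly positive partition of $m+l-1$ with $l\in[n]$.

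The crucial observation is that in $R_n/\mathfrak m I = \C\oplus\bigoplus_j\bigoplus_{k=1}^{u_j}\C x_j^k$ the image of $I$ lies inside $V := \bigoplus_j\C x_j^{u_j}$: indeed $q_j(f)\in\bigoplus_{k\geq u_j}\C x_j^k$ for every $f\in I$, so the $x_j^k$-coefficient of $f$ vanishes for $1\leq k<u_j$ (and its constant term is $0$). Let $\Gamma\subseteq V$ be this image; then $\dim_\C\Gamma = l$ (so $l\leq n$), and writing a basis of $\Gamma$ in the basis $x_1^{u_1},\dots,x_n^{u_n}$ of $V$ produces a full rank $l\times n$ matrix $A$. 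Since $q_j(I) = \bigoplus_{k\geq u_j}\C x_j^k$ and not $\bigoplus_{k\geq u_j+1}\C x_j^k$, some $f\in I$ has nonzero $x_j^{u_j}$-coefficient, so the $j$-th column of $A$ is nonzero; thus $A$ has no vanishing column. Finally, lifting a basis of $\Gamma$ from $R_n/\mathfrak m I$ back to $I$ changes it only by an element of $\mathfrak m I\subseteq I$, so $\Gamma\subseteq I$ and hence $\langle\Gamma\rangle\subseteq I$; but by the first part $\langle\Gamma\rangle$ has colength $m = \dim_\C R_n/I$, forcing $I = \langle\Gamma\rangle = \langle f_1,\dots,f_l\rangle$ with the $f_i = \sum_j A_{ij}x_j^{u_j}$ of the asserted shape.

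I expect the only genuine work to be the two structural identities $\mathfrak m I = \bigoplus_j\bigoplus_{k>u_j}\C x_j^k$ and ``$I/\mathfrak m I$ lands in $V$''; once these are in hand, the rest is Nakayama plus dimension counting, and the condition that $A$ has no vanishing column is read off directly from $q_j(I)$. The subtle point is conceptual rather than computational: although $I$ need not be (multi)homogeneous, the peculiar multiplication of $R_n$ forces its minimal generators to be representable by elements of the single ``degree layer'' $V$, which is exactly what makes the fibre of the classification a Grassmannian of subspaces of $V$.
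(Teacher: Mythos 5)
Your proof is correct, and while the forward direction (compute $\mathfrak m\Gamma$, count dimensions) matches the paper's, your converse takes a genuinely different and in fact tighter route. The paper argues on the level of a chosen minimal generating set: it decomposes each generator as $f_i=\sum_j f_{i,j}(x_j)$ and uses multiplication by the variables together with the condition that $J$ is supported at the origin to force each $f_{i,j}$ to be a monomial of a common degree, after row operations. You instead exploit the module structure: the projections $q_j\colon\mathfrak m\to x_j\C[x_j]$ are $R_n$-linear, so $q_j(I)$ is an ideal of $\C[x_j]$ containing a power of $x_j$ and hence equals $x_j^{u_j}\C[x_j]$; this pins down $\mathfrak m I$ exactly, and Nakayama plus the observation that $I/\mathfrak m I$ lands in $V=\bigoplus_j\C x_j^{u_j}$ produces the matrix $A$. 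Your version buys two things: the invariants $\uu$ and $l$ are defined intrinsically (as the orders of the $q_j(I)$ and $\dim_\C I/\mathfrak m I$), so uniqueness of the presentation is automatic, and you avoid the paper's somewhat delicate ``the only root of $x_1f_{1,1}$ is $0$'' step, which as literally stated only gives an inclusion of zero loci in the wrong direction. Two small points to tighten: (i) the identity should read $\mathfrak m^k\Gamma=\bigoplus_j\bigoplus_{a\geq k}\C x_j^{u_j+a}$ (you only listed the single power $u_j+k$), though your formula for $J$ itself is correct; (ii) being a nonzero $\C[x_j]$-submodule of $x_j\C[x_j]$ does not by itself force $q_j(I)$ to be a monomial ideal (e.g.\ $(x_j^2+x_j^3)$ is not) --- you must say explicitly that $q_j(I)$ is a principal ideal containing $x_j^N$, hence generated by a divisor of $x_j^N$, i.e.\ by a power of $x_j$. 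Both ingredients are already on your page, so these are expository fixes, not gaps.
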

\begin{proof}
 Let $J$ be an ideal as above. First we check that $J$ is supported at $\mathbf{0}$. Since $A$ has not vanishing columns, for every $1\leq i\leq n$  there exists  $1\leq j\leq l$ such that the entry $A_{j,i}$ of $A$ is nonzero. Therefore it holds  $x_if_j=A_{j,i}x_i^{u_i+1}$. We deduce that $J$ contains $x_1^{u_1+1},\ldots,x_n^{u_n+1}$ and therefore, $J$ is supported at $\mathbf{0}$. We are left to show that $J$ has length $m$. Since $x_1^{u_1+1},\ldots,x_n^{u_n+1}\in J$, the quotient $R_n/J$ is generated by $1,x_1,\ldots,x_1^{u_1},\ldots,x_n,\ldots,x_n^{u_n}$. The generators of $J$ induce $l$ linearly independent linear relations among  $x_1^{u_1},\ldots,x_n^{u_n}$. We deduce that 
\[
\dim R_n/J = 1+u_1+\cdots+u_n-l=|\uu|+1-l=m
\]
as claimed. 

We will now prove the second part of the proposition. Let $J$ be an ideal of $R_n$ supported at $\mathbf{0}$ of length $m$, and let $f_1,\ldots,f_l$ be minimal generators of $J$. Since $J$ is supported at $\mathbf{0}$, then $f_1,\ldots,f_l$ do not have independent term. We write each $f_i$ as 
    \[
    f_i= f_{i,1}(x_1) +\cdots +f_{i,n}(x_n),
    \]
    where $f_{i,j}(x_i)$ is a polynomial in $x_i$ such that $f_{i,j}(0)=0$. In the case of $x_1$ we get
    \[
    f_{i,1}=\sum_{k=1}^{d_{i}} 
    a_{i,j}x_1^j,
    \]
     where $d_{i}=\deg f_{i,1}$ and $a_{i,j}\in\C$. Notice that if $f_{i,1}$ vanishes for all $i$, then the dimension of $R_n/J$ as a $\C$--vector space is infinite, and this implies that $J$ does not have finite length. Therefore we can assume that $a_{1,d_{1}}$ is nonzero.  Now, in $R_n$ we have that
    \[
    x_1f_1=x_1f_{i,1}=\sum_{j=1}^{d_1} a_{1,j}x_1^{j+1}
    \]
    is in $J$. Since $J$ is supported at $\mathbf{0}$, the only root of $x_1f_{i,1}$ is $0$. We conclude that $f_{i,1}$ is the monomial $a_{i,d_i}x_1^{d_i+1}$. We can assume  $d_1$ is the minimum between those $d_1,\ldots,d_n$ that are nonzero. By replacing $f_i$ by $a_{1,d_1}f_i-a_{i,d_i}x_1^{d_i-d_1}f_1$, we may assume that $d_1=d_2=\cdots=d_n$.

    Let $d_i$ be the degree of $f_{i,1}$ and suppose that there exists $2\leq i\leq l$ such that $d_{i}>d_{1}$. In $R_n$ we have that the monomial
    \[
    x_1^{d_{i}-d_{1}}f_1=x_1^{d_{i}-d_{1}}f_{1,1}=\sum_{j=1}^{d_1} a_{1,j}x_1^{j+d_{i}-d_{1}}
    \]
    is in $J$. Since $J$ is supported at $\mathbf{0}$, the only root of $x_1^{d_{i}-d_{1}}f_{1,1}$ is $0$. We conclude that $f_{i,1}$ is the monomial $a_{1,d_1}x_1^{d_1}$. Repeating this process with the variables $x_2,\cdots,x_n$ we get that there exist $d_1,\ldots,d_n$ such that $f_1,\ldots,f_n$ are a linear combination of $x_1^{d_1},\ldots,x_n^{d_n}$.
    
\end{proof}

Notice that in the extremal case $l=n$ in \cref{ideals punctual} leads to the ideal $\langle x_1^{u_1}, \ldots, x_n^{u_n}\rangle$.

\begin{Def}\label{def:irred comp punct}
    Given $\uu\in\Z_{\geq 1}^n$, let $\Lambda_{\uu}:= \langle x_1^{u_1},\ldots,x_n^{u_n}\rangle_{\C}$ be the $\C$--vector space generated by $ x_1^{u_1},\ldots,x_n^{u_n}$. 
    Fix $m\geq 1$. For $l\in [n]$ such that $|\uu|=m+l-1$, the subvariety $\Sigma(m,l,\uu)\subseteq \Hilb^m_{\mathbf{0}}(X_n)$ is the closure of the ideals of the form $\langle f_1,\ldots,f_l\rangle$ where $f_1,\ldots,f_l$ are linearly independent elements of $\Lambda_{\uu}$.
\end{Def}

With the notation introduced above we immediately obtain the following.

\begin{Cor}\label{cor:big union}
    Let $m,n$ be positive integers, then there is a decomposition 
\[
\displaystyle \Hilb^m_{\mathbf{0}}(X_n)=\bigcup_{l=1}^{n}\bigcup_{{\tiny\begin{array}{c}\uu\in\Z_{\geq 1}^n\\ |\uu|=m+l-1\end{array}}} \Sigma(m,l,\uu).
\]
\end{Cor}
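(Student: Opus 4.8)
The plan is to deduce \cref{cor:big union} directly from \cref{ideals punctual}, in which the real work has already been done; the corollary is essentially a repackaging of the classification of ideals. The inclusion of the right-hand side in the left-hand side is immediate: by \cref{def:irred comp punct}, each $\Sigma(m,l,\uu)$ is, by construction, a closed subvariety of $\Hilb^m_{\mathbf{0}}(X_n)$, so the union over all admissible pairs $(l,\uu)$ with $l\in[n]$ and $|\uu|=m+l-1$ is contained in $\Hilb^m_{\mathbf{0}}(X_n)$.

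For the reverse inclusion I would argue pointwise on closed points. Let $[J]$ be a closed point of $\Hilb^m_{\mathbf{0}}(X_n)$, so that $J\subseteq R_n$ is a length-$m$ ideal supported at $\mathbf{0}$. By the second part of \cref{ideals punctual}, there exist an integer $l\in[n]$, a strictly positive partition $\uu\in\Z_{\geq 1}^n$ of $m+l-1$, and a full-rank $l\times n$ matrix $A$ with no vanishing column such that $J=\langle f_1,\ldots,f_l\rangle$ with $(f_1,\ldots,f_l)^{\mathsf T}=A\,(x_1^{u_1},\ldots,x_n^{u_n})^{\mathsf T}$. Since $A$ has rank $l$, the elements $f_1,\ldots,f_l$ are linearly independent vectors of $\Lambda_{\uu}=\langle x_1^{u_1},\ldots,x_n^{u_n}\rangle_{\C}$, so $[J]$ lies in the locus of ideals $\langle g_1,\ldots,g_l\rangle$ with $g_1,\ldots,g_l$ linearly independent in $\Lambda_{\uu}$, hence a fortiori in its closure $\Sigma(m,l,\uu)$. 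As $|\uu|=m+l-1$, the pair $(l,\uu)$ is one of those indexing the union, so $[J]$ belongs to the right-hand side.

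Combining the two inclusions yields an equality of sets of closed points, and since $\Hilb^m_{\mathbf{0}}(X_n)$ is taken with its reduced structure and each $\Sigma(m,l,\uu)$ is reduced, this set-theoretic equality upgrades to the asserted scheme-theoretic decomposition. I do not expect any genuine obstacle: everything of substance is already contained in \cref{ideals punctual}, and the only point deserving a moment's care is that the constraint $u_i\geq 1$ for all $i$ forces $|\uu|\geq n$, so the terms with $l<n+1-m$ are simply empty — this is precisely why the effective range of $l$ that will appear later is $\max\{1,n+1-m\}\leq l\leq n$ rather than all of $[n]$.
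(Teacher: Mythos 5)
Your proposal is correct and follows exactly the route the paper intends: the paper offers no separate argument for \cref{cor:big union}, treating it as immediate from the classification in \cref{ideals punctual} together with \cref{def:irred comp punct}, which is precisely what you spell out. Your closing remark that the terms with $l<n+1-m$ are empty (since $\uu\in\Z_{\geq 1}^n$ forces $|\uu|\geq n$) is a correct and useful observation that anticipates the effective range of $l$ used in \cref{punctual irred comp}.
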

Hence the varieties $ \Sigma(m,l,\uu)$ are the candidates to be irreducible components of the punctual Hilbert scheme. Next, we will describe the geometry of these varieties. For any $\Gamma\in\gr(l,\Lambda_{\uu})$, let $\langle\Gamma\rangle$ be the ideal generated by $\Gamma$ in $R_n$. Define the rational map 
\begin{equation}\label{eq:grass map}
    \begin{array}{cccc}
     \varphi_{l,\uu}:&\gr(l,\Lambda_{\uu})&\dashrightarrow& \Sigma(m,l,\uu)\\
      & \Gamma&\mapsto&\langle\Gamma\rangle.
    \end{array}
\end{equation}  
Since $m+l-1=|\uu|$, by \cref{ideals punctual} the map $\varphi_{l,\uu}$ is well--defined in an open subset of the Grassmannian.

\begin{Lem}\label{lem:base locus}
    The base locus of $\varphi_{l,\uu}$ is contained in the union
    \begin{equation}\label{eq:base locus}
    \bigcup_{i=1}^n\mathcal{H}_i,    
    \end{equation}
    where $\mathcal{H}_i := \{\Gamma\in\gr(l,\Lambda_{\uu}):\Gamma\subseteq \langle x_j^{u_j}:j\neq i\rangle_{\C}\}$.
\end{Lem}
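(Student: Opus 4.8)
The plan is to identify precisely when the ideal $\langle\Gamma\rangle$ generated by an $l$-dimensional subspace $\Gamma\subseteq\Lambda_{\uu}$ fails to define a length-$m$ subscheme supported at $\mathbf{0}$, and to show such failure forces $\Gamma$ into one of the $\mathcal{H}_i$. The key observation is the dimension count from the proof of \cref{ideals punctual}: if $\Gamma=\langle f_1,\dots,f_l\rangle_{\C}$ with the $f_j$ linearly independent in $\Lambda_{\uu}$, then writing $f_j = \sum_{i=1}^n A_{j,i}x_i^{u_i}$ produces a matrix $A\in\C^{l\times n}$ of full rank $l$. When $A$ has \emph{no} vanishing column, the argument in \cref{ideals punctual} shows $x_i^{u_i+1}\in\langle\Gamma\rangle$ for every $i$, hence $R_n/\langle\Gamma\rangle$ has dimension exactly $1 + \sum u_i - l = m$, so $\langle\Gamma\rangle\in\Sigma(m,l,\uu)$ and $\varphi_{l,\uu}$ is defined at $\Gamma$. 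Therefore the base locus is contained in the locus where $A$ has a vanishing column.

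Next I would translate "the $i$-th column of $A$ vanishes" back into an intrinsic condition on $\Gamma$. The $i$-th column of $A$ is zero precisely when the coefficient of $x_i^{u_i}$ in each $f_j$ is zero, i.e. when every $f_j$ lies in $\langle x_j^{u_j} : j\neq i\rangle_{\C}$; equivalently $\Gamma\subseteq \langle x_j^{u_j}:j\neq i\rangle_{\C}$, which is exactly the defining condition of $\mathcal{H}_i$. This identification is basis-independent since the span of the $f_j$ is $\Gamma$ itself. Combining the two steps: if $\Gamma$ is in the base locus of $\varphi_{l,\uu}$, then $A$ has some vanishing $i$-th column, hence $\Gamma\in\mathcal{H}_i$, giving the claimed containment $\mathrm{Bs}(\varphi_{l,\uu})\subseteq\bigcup_{i=1}^n\mathcal{H}_i$.

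I do not expect a serious obstacle here; the only point requiring a little care is to make sure the "no vanishing column" hypothesis is genuinely what drives the length computation in \cref{ideals punctual}, so that its failure is the only way $\varphi_{l,\uu}$ can be undefined. One should double-check that $\Gamma\notin\bigcup_i\mathcal{H}_i$ does not a priori guarantee the $f_j$ remain linearly independent \emph{in $R_n/\langle\text{lower-degree relations}\rangle$}, but since the $x_i^{u_i}$ are linearly independent monomials in $R_n$ and $A$ has rank $l$, linear independence of $f_1,\dots,f_l$ in $\Lambda_{\uu}\subseteq R_n$ is automatic, and the proof of \cref{ideals punctual} then applies verbatim. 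Thus the statement follows directly, and one may note (though it is not needed for the lemma) that the containment is in fact an equality, since any $\Gamma\in\mathcal{H}_i$ contains no element whose product with $x_i$ is a nonzero power of $x_i$, forcing $x_i^{u_i+1}\notin\langle\Gamma\rangle$ and hence $\dim R_n/\langle\Gamma\rangle = \infty$.
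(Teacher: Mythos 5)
Your proposal is correct and follows essentially the same route as the paper: both arguments reduce the base locus to the locus where the coefficient matrix of a basis of $\Gamma$ has a vanishing column, observe that the absence of vanishing columns yields $x_i^{u_i+1}\in\langle\Gamma\rangle$ for all $i$ and hence length exactly $|\uu|+1-l=m$, and identify the vanishing of the $i$-th column with the condition $\Gamma\in\mathcal{H}_i$. Your closing remark that the containment is in fact an equality is also implicit in the paper's proof, which notes that $\Gamma\in\mathcal{H}_i$ forces $\langle\Gamma\rangle$ to have infinite length.
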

\begin{proof}
    If $\Gamma\in \gr(l,\Lambda_{\uu})$, then $\langle\Gamma\rangle$ is an ideal of $R_n$ supported at $\mathbf{0}$. Hence, the base locus of $\varphi_{l,\uu}$ coincide with the locus of $\Gamma\in \gr(l,\Lambda_{\uu})$ such that $\langle\Gamma\rangle$ is not a length $m$ ideal. Now, if $\Gamma\in \cH_i$ for $1\leq i\leq n$, then  $\langle\Gamma\rangle$ has no finite length. Assume now that $\Gamma\not \in\cH_i$ for all $1\leq i\leq n$. Then, $\langle\Gamma\rangle$ is generated by $l$ polynomials $f_1,\ldots,f_l$ of the form 
    \[
    f_j=\sum_{i=1}^na_{i,j}x_j^{u_i}.
    \]
    Since $\Gamma\not \in\cH_i$ for all $i$, we deduce that for any $1\leq i\leq n$, there exists $1\leq j\leq l$ such that $a_{i,j}\neq 0$. In particular,  $x_1^{u_1+1},\ldots,x_n^{u_n+1}$ are contained in $\langle\Gamma\rangle$, and hence, $\langle\Gamma\rangle$ has finite length. Moreover, this length is given by $u_1+\cdots+u_n+1-l=|\uu|+1-l=m$. We conclude that $\varphi_{l,\uu}$ is well--defined away from \eqref{eq:base locus}.
\end{proof}

In the next lemma we will show that the map $\varphi_{l,\uu}$ can be extended along \eqref{eq:base locus}.

\begin{Lem}\label{lem:extension}
    With the notation of \cref{lem:base locus}, let $1\leq k\leq n$ and assume $\Gamma\in\cH_{i_1}\cap\cdots\cap\cH_{i_k}$ where $1\leq i_1<\cdots<i_k\leq n$ and $\Gamma\not\in\cH_j$ for $j\neq i_1,\ldots,i_k$. Then 
    \[
    \varphi_{l,\uu}(\Gamma) := \langle\Gamma\rangle+\langle x_{i_1}^{u_{i_1}+1},\ldots,x_{i_k}^{u_{i_k}+1}\rangle
    \]
    extends $\varphi_{l,\uu}$ to all $\gr(l,\Lambda_{\uu})$.
\end{Lem}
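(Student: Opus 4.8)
The plan is to show that the formula given in the lemma produces a genuine morphism $\varphi_{l,\uu}\colon\gr(l,\Lambda_\uu)\to\Sigma(m,l,\uu)$ that agrees with the rational map \eqref{eq:grass map} on the open locus where the latter is defined, and in particular is independent of the chosen basis $f_1,\dots,f_l$ of $\Gamma$. First I would verify that for $\Gamma$ in the stratum $\cH_{i_1}\cap\cdots\cap\cH_{i_k}$ but in no other $\cH_j$, the ideal $\langle\Gamma\rangle+\langle x_{i_1}^{u_{i_1}+1},\dots,x_{i_k}^{u_{i_k}+1}\rangle$ is supported at $\mathbf 0$ and has colength exactly $m$. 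Since $\Gamma\not\subseteq\langle x_j^{u_j}:j\neq r\rangle_\C$ for $r\notin\{i_1,\dots,i_k\}$, arguing as in \cref{lem:base locus} the ideal $\langle\Gamma\rangle$ already contains $x_r^{u_r+1}$ for all such $r$; adding the generators $x_{i_s}^{u_{i_s}+1}$ then forces support at $\mathbf 0$. For the colength, one computes in $R_n$ that $R_n/(\langle\Gamma\rangle+\langle x_{i_1}^{u_{i_1}+1},\dots\rangle)$ is spanned by $1$ together with $x_r,\dots,x_r^{u_r}$ for $r\notin\{i_1,\dots,i_k\}$ and $x_{i_s},\dots,x_{i_s}^{u_{i_s}}$ for each $s$ (the extra generators $x_{i_s}^{u_{i_s}+1}$ cut the $x_{i_s}$-tower off at exponent $u_{i_s}$), while the $l$ linearly independent elements of $\Gamma$, which lie in $\Lambda_\uu=\langle x_1^{u_1},\dots,x_n^{u_n}\rangle_\C$, impose $l$ independent relations; this gives colength $1+\sum_j u_j-l=|\uu|+1-l=m$, so the ideal indeed defines a point of $\sHilb^m_{\mathbf 0}(X_n)$.

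Next I would check continuity/regularity: the assignment $\Gamma\mapsto\varphi_{l,\uu}(\Gamma)$ should be a morphism near the base locus. The clean way is to work locally. Over a Schubert-type open chart of $\gr(l,\Lambda_\uu)$ adapted to a stratum, $\Gamma$ has a basis of the shape $f_s=x_{i_s}^{u_{i_s}}+\sum_{j\notin\{i_1,\dots,i_k\}}c_{s,j}\,x_j^{u_j}$ (for $s=1,\dots,k$) together with $l-k$ further generators supported off the $x_{i_s}$'s, with the $c_{s,j}$ as coordinates and the stratum itself cut out by $c_{s,j}=0$. One then exhibits a flat family of ideals over this chart whose fibre over each point is precisely $\langle\Gamma\rangle+\langle x_{i_1}^{u_{i_1}+1},\dots,x_{i_k}^{u_{i_k}+1}\rangle$: the point is that $x_{i_s}^{u_{i_s}+1}=x_{i_s}\cdot x_{i_s}^{u_{i_s}}=x_{i_s}f_s$ already lies in $\langle\Gamma\rangle$ once some $c_{s,j}\neq 0$, so the ideal of the family can be generated uniformly by the $f_s$, the other $l-k$ generators, and all the $x_r^{u_r+1}$ with $r\notin\{i_1,\dots,i_k\}$ plus the $x_{i_s}^{u_{i_s}+1}$. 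By the colength computation above this family is finite flat of constant degree $m$, hence defines a morphism from the chart to $\sHilb^m_{\mathbf 0}(X_n)$; on the open dense sublocus $c_{s,j}$ not all zero it recovers $\langle\Gamma\rangle$, i.e. the original $\varphi_{l,\uu}$. Gluing these local morphisms over the charts covering $\gr(l,\Lambda_\uu)$ — they agree on overlaps because they agree on the dense well-defined locus and $\gr(l,\Lambda_\uu)$ is reduced and separated — yields the desired extension, which lands in $\Sigma(m,l,\uu)$ because $\Sigma(m,l,\uu)$ was defined as the closure of the image of the generically-defined $\varphi_{l,\uu}$ and the extended map is continuous with dense image in that closure.

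Finally I would note well-definedness of the displayed formula itself (it refers to $\langle\Gamma\rangle$, not to a choice of basis, so this is automatic) and record that distinct strata $\cH_{i_1}\cap\cdots\cap\cH_{i_k}$ are handled by the same computation with the index set varying, so the extension is consistent across the stratification $\bigcup_i\cH_i$ described in \cref{lem:base locus}.

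I expect the main obstacle to be the flatness/constancy-of-colength step across the stratum boundary: one must make sure that the ``jump'' built into the formula — replacing the disappearing generators $x_{i_s}f_s$ by the monomials $x_{i_s}^{u_{i_s}+1}$ — is exactly the right correction so that the total colength stays $m$ rather than dropping, and that the resulting family is flat (equivalently, that the Hilbert function is constant along the chart). The bookkeeping of which monomials survive in $R_n/J$ on each stratum, together with care about the relations in $R_n$ (where $x_ix_j=0$ for $i\neq j$, so multiplying a generator by $x_{i_s}$ kills all its off-$i_s$ terms), is the heart of the argument; once the uniform generating set and the constant colength are in hand, regularity of the map and the fact that it extends the original rational map are formal.
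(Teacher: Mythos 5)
Your argument is correct in substance but takes a genuinely different route from the paper. The paper extends $\varphi_{l,\uu}$ by a valuative/curve-limit argument: it first observes that containment of $\langle x_1^{u_1+1},\dots,x_n^{u_n+1}\rangle$ is a closed condition, so any limit ideal $I$ along a curve through $\Gamma$ contains both $\langle\Gamma\rangle$ and the monomials $x_{i_s}^{u_{i_s}+1}$, and then concludes $I=\langle\Gamma\rangle+\langle x_{i_1}^{u_{i_1}+1},\dots,x_{i_k}^{u_{i_k}+1}\rangle$ by comparing lengths (both are $m$), independently of the curve. You instead build an explicit finite flat family over each Schubert chart with the uniform generating set $\langle f_1,\dots,f_l\rangle+\langle x_1^{u_1+1},\dots,x_n^{u_n+1}\rangle$, check that the colength is constantly $m$, and invoke the universal property of the Hilbert scheme. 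Your route buys more: it produces the extension as a morphism directly (the paper's curve argument a priori only pins down the set-theoretic value at each point), and it packages all strata at once via the single formula $\Gamma\mapsto\langle\Gamma\rangle+\langle x_1^{u_1+1},\dots,x_n^{u_n+1}\rangle$, which reduces to the lemma's formula stratum by stratum. The colength-$m$ computation you carry out explicitly is also needed, implicitly, in the paper's length comparison. The paper's proof is shorter because it outsources regularity to the general fact that a rational map from a smooth variety to a proper scheme extends along curves and then argues well-definedness by hand.

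One bookkeeping slip to fix: your chart is set up backwards. If the stratum $\cH_{i_1}\cap\cdots\cap\cH_{i_k}$ were cut out by $c_{s,j}=0$ with $f_s=x_{i_s}^{u_{i_s}}+\sum_j c_{s,j}x_j^{u_j}$, then on the stratum $\Gamma$ would \emph{contain} $x_{i_s}^{u_{i_s}}$, which is the negation of $\Gamma\in\cH_{i_s}$ (and then $x_{i_s}^{u_{i_s}+1}=x_{i_s}f_s$ would always lie in $\langle\Gamma\rangle$, leaving nothing to extend). The correct chart around a point of the stratum has pivots $x_{j_1}^{u_{j_1}},\dots,x_{j_l}^{u_{j_l}}$ with $j_t\notin\{i_1,\dots,i_k\}$, so $f_t=x_{j_t}^{u_{j_t}}+\sum_r c_{t,r}x_r^{u_r}$, and $\cH_{i_s}$ is cut out by $c_{t,i_s}=0$ for all $t$; the monomial $x_{i_s}^{u_{i_s}+1}$ lies in $\langle\Gamma\rangle$ exactly when some $c_{t,i_s}\neq 0$. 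With this correction your flat family and constant-colength argument go through verbatim.
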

\begin{proof}
    Let $J$ be an ideal in the image of $\varphi_{l,\uu}$. By construction, $x_1^{u_1+1},\ldots,x_n^{u_n+1}$ are contained in $J$. In other words, $\langle x_1^{u_1+1},\ldots,x_n^{u_n+1}\rangle$ is contained in $J$. Since this containment is a closed condition in $\Hilb^m(X_n)$, we deduce that for any $J$ in the closure of the image of $\varphi_{l,\uu}$, we have that $\langle x_1^{u_1+1},\ldots,x_n^{u_n+1}\rangle\subseteq J$.  
    
    Let $\Gamma$ be an element of the base locus of $\varphi_{l,\uu}$. By  \cref{lem:base locus}, there exist $1\leq i_1<\cdots<i_k\leq n$ such that $\Gamma\in\cH_{i_1}\cap\cdots\cap\cH_{i_k}$ for $1\leq i_1<\cdots<i_k\leq n$ such that $\Gamma\not\in\cH_j$ for $j\neq i_1,\ldots,i_k$. In particular, $x_j^{u_j+1}$ is contained in $\langle\Gamma\rangle$ for $j\neq i_1,\ldots,i_k$. 
    Now, let $C$ be a smooth curve in $\gr(l,\Lambda_{\uu})$ passing through $\Gamma$ and not contained in the base locus of $\varphi_{l,\uu}$. Then, the restriction of  $\varphi_{l,\uu}$ to $C$ extents to all $C$. Let $I$ be the image of $\Gamma$ via this extension. Note that by construction, $\langle\Gamma\rangle$ is contained in $I$. Therefore, we deduce that 
    \begin{equation}\label{eq:inclusion}
    \langle\Gamma\rangle+\langle x_{i_1}^{u_{i_1}+1},\ldots,x_{i_k}^{u_{i_k}+1}\rangle\subseteq I.
    \end{equation}
    Both ideals in \eqref{eq:inclusion} are the same since both have length $m$. Then the proof follows from the fact that the ideal $I$ does not depend on the curve $C$.
\end{proof}

\begin{Rem}\label{rem:boundary}
    The varieties $\Sigma(m,l,\uu)$ in  \cref{def:irred comp punct} are given as the closure of the ideals minimally generated by $f_1,\ldots,f_l\in \Lambda_{\uu}$. Using \cref{lem:extension}, there is a complete description of the elements in the boundary of this closure. This boundary is exactly the image of \eqref{eq:base locus} through $\varphi_{l,\uu}$. 
\end{Rem}

\begin{Prop}\label{irr comp Grass}
    The map $\varphi_{l,\uu}$ extends uniquely to an isomorphism $\mathrm{Gr}(l,\Lambda_{\uu})\cong \Sigma(m,l,\uu)$.
\end{Prop}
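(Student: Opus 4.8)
The plan is to show that $\varphi_{l,\uu}$ is a morphism (already done: by \cref{lem:extension} it extends to all of $\gr(l,\Lambda_{\uu})$, which is complete, so the extension is a morphism onto the closed subvariety $\Sigma(m,l,\uu)$), and then that it is bijective with an inverse morphism. The key structural fact to exploit is that an ideal $J$ in $\Sigma(m,l,\uu)$ (or its boundary) always contains $\langle x_1^{u_1+1},\dots,x_n^{u_n+1}\rangle$, so $R_n/J$ is a quotient of the fixed finite-dimensional algebra $B_{\uu}:=R_n/\langle x_i^{u_i+1}\rangle$, which has basis $\{1,x_1,\dots,x_1^{u_1},\dots,x_n^{u_n}\}$. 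Thus every such $J$ is recovered from its degree-$\le u_i$ part, and in fact from a single $l$-dimensional subspace of $\Lambda_{\uu}=\langle x_1^{u_1},\dots,x_n^{u_n}\rangle_\C$: namely $\Gamma(J):= J\cap \Lambda_{\uu}$.

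First I would check that $\Gamma(J)$ is $l$-dimensional for every $J$ parametrized by a point of $\Sigma(m,l,\uu)$. For $J$ in the open locus this is \cref{ideals punctual}; for $J$ a boundary point, \cref{lem:extension} gives $J=\langle\Gamma\rangle+\langle x_{i_1}^{u_{i_1}+1},\dots,x_{i_k}^{u_{i_k}+1}\rangle$ with $\dim\Gamma=l$, and since each $x_{i_r}^{u_{i_r}+1}\in\langle x_{i_r}^{u_{i_r}}\rangle$ already lies in $\langle\Gamma\rangle$? — no, precisely not, so one must verify $J\cap\Lambda_{\uu}=\Gamma$, i.e. the added monomials $x_{i_r}^{u_{i_r}+1}$, which have degree $u_{i_r}+1$, together with products from $\langle\Gamma\rangle$ contribute nothing new in degrees $u_j$. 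This is the degree bookkeeping that makes $\Gamma(J)$ well-defined and of the right dimension. Then $J\mapsto\Gamma(J)$ defines a set-theoretic inverse to $\varphi_{l,\uu}$: indeed $\Gamma(\varphi_{l,\uu}(\Gamma))=\Gamma$ from the above, and conversely $\varphi_{l,\uu}(\Gamma(J))=J$ because both sides contain $\langle\Gamma(J)\rangle$ and all the $x_i^{u_i+1}$, hence agree with the unique length-$m$ ideal as in the proof of \cref{lem:extension} (both are colength $m$ and one is contained in the other).

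Next I would promote this bijection to an isomorphism of varieties. Since $\gr(l,\Lambda_{\uu})$ is smooth and $\Sigma(m,l,\uu)$ is a variety, and $\varphi_{l,\uu}$ is a bijective morphism in characteristic zero, it suffices to exhibit $J\mapsto\Gamma(J)$ as a morphism — equivalently, to show $\varphi_{l,\uu}$ is an immersion, or to invoke Zariski's main theorem once we know $\Sigma(m,l,\uu)$ is normal; but the cleanest route is direct. Over $\Hilb^m(X_n)$ there is the universal quotient, and intersecting the universal ideal sheaf with the constant sheaf $\Lambda_{\uu}$ — more carefully, composing $R_n\to$ (universal quotient) and looking at the images of $x_1^{u_1},\dots,x_n^{u_n}$ — produces, over $\Sigma(m,l,\uu)$, a rank-$(n-l)$ quotient of the trivial rank-$n$ bundle with fibre $\Lambda_{\uu}/\Gamma(J)$ (the $l$ relations among the $x_i^{u_i}$ being exactly $\Gamma(J)$), hence a classifying morphism $\Sigma(m,l,\uu)\to\gr(l,\Lambda_{\uu})$ inverse to $\varphi_{l,\uu}$. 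One must check the rank is constant equal to $n-l$ on all of $\Sigma(m,l,\uu)$, including the boundary — this uses the colength computation above.

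The main obstacle I expect is the boundary analysis: away from the base locus everything is transparent, but along $\bigcup\cH_i$ one must confirm that the extended ideal of \cref{lem:extension} still meets $\Lambda_{\uu}$ in exactly the $l$-dimensional space $\Gamma$ and has colength exactly $m$, so that $\Gamma(J)$ has locally constant dimension and the classifying map is a genuine morphism there; equivalently, that adding the monomials $x_{i_r}^{u_{i_r}+1}$ does not further cut down the length or enlarge $J\cap\Lambda_{\uu}$. Once the degree bookkeeping confirms this (the relevant monomials sit in degree one higher than $\Lambda_{\uu}$, so they cannot lie in $\Lambda_{\uu}$, and the length count $|\uu|+1-l=m$ is forced), the rest is formal: a bijective morphism from a smooth variety onto a variety, with an explicit morphism inverse coming from the universal family, is an isomorphism, and uniqueness of the extension follows from density of the open locus where $\varphi_{l,\uu}$ was originally defined.
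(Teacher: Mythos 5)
Your proposal is correct and follows essentially the same route as the paper: the inverse is the map sending $J$ to the $l$-dimensional subspace of $\Lambda_{\uu}$ spanned by its minimal generators (your $\Gamma(J)=J\cap\Lambda_{\uu}$), extended over the boundary using the explicit form of boundary ideals from \cref{lem:extension}/\cref{rem:boundary}, where the degree bookkeeping you describe is exactly what makes the extension well defined. The paper's proof is terser and leaves the morphism property of the inverse implicit, whereas you supply it via the universal quotient and the classifying map to the Grassmannian; this is a legitimate filling-in of detail rather than a different argument.
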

\begin{proof}
    The inverse of $\varphi_{l,\uu}$ is the map 
    \[
    \psi_{l,\uu}\colon \Sigma(m,l,\uu)\dashrightarrow \mathrm{Gr}(l,\Lambda_{\uu})
    \]
    that associates to an ideal $J$ in  $\Sigma(m,l,\uu)$ the linear subspace in $\Lambda_{\uu}$ generated by its minimal set of generators, whenever this has dimension $l$. The only case where this does not happen is in the boundary of $\Sigma(m,l,\uu)$. By \cref{rem:boundary}, such an ideal is, up to labeling, of the form
    \begin{equation}\label{eq:ideals boundary}
    \langle f_1,\ldots,f_l\rangle+\langle x_k^{u_k+1},\ldots,x_n^{u_n+1}\rangle,  
    \end{equation}
    where $f_1,\ldots,f_l\in\langle x_1^{u_1},\ldots,x_{k-1}^{u_{k-1}}\rangle_{\C}$ are linearly independent and $1\leq k\leq n$. Then, the extension of $\psi_{l,\uu}$ sends $J$ to  $\langle f_1,\ldots,f_l\rangle_\C$.
\end{proof}

Using the map $\varphi_{l,\uu}$, we can understand the intersection of two varieties of the form $\Sigma(m,l,\uu)$ and $\Sigma(m,l',\vv)$. Let $k\in\Z$ and $\mathbf{w}\in\Z^n$. Define 
\begin{equation}\label{def:kappa indexes}
    \kappa(\mathbf{w},k) := \{i\in [n] : \mathbf{w}_i=k\}\subseteq [n].    
\end{equation}

\begin{Prop}\label{prop: intersections}
  Let $l,l'\in [n-1]$ and $\uu,\vv\in\Z_{\geq 1}^n$ such that $|\uu|=m+l-1$ and $|\vv|=m+l'-1$. Then, $\Sigma(m,l,\uu)\cap \Sigma(m,l',\vv)$ is nonempty if and only if $\uu-\vv\in\{0,1,-1\}^n\setminus\{\mathbf{1},-\mathbf{1}\}$. In this case, the intersection $\Sigma(m,l,\uu)\cap \Sigma(m,l',\vv)$ consists on the ideals of the form:
  \begin{equation}\label{eq:ideal intersect}
  \langle f_1,\ldots, f_r\rangle+\langle x_i^{u_i}:i\in \kappa(\uu-\vv,1)\rangle+\langle  x_i^{u_i+1}:i\in\kappa(\uu-\vv,-1)\rangle,
   \end{equation}
  with $f_1,\ldots, f_r\in\langle x_i^{u_i}:i\in\kappa(\uu-\vv,0)\rangle_\C$ linearly independent and  $r=l-|\kappa(\uu-\vv,1)|$.
\end{Prop}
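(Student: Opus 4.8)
The plan is to push everything through the Grassmannian models of \cref{irr comp Grass}. By that proposition every ideal of $\Sigma(m,l,\uu)$ is $\varphi_{l,\uu}(\Gamma)$ for a unique $\Gamma\in\gr(l,\Lambda_{\uu})$, and by \cref{lem:extension} it equals $\langle\Gamma\rangle+\langle x_i^{u_i+1}:i\in I(\Gamma)\rangle$ with $I(\Gamma)=\{i:\text{the }x_i^{u_i}\text{-coordinate is zero on }\Gamma\}$. The first step I would take is to record a colength invariant read off directly from this formula, using that the only monomials of $R_n$ divisible by $x_i$ are the pure powers $x_i^k$: for $J=\varphi_{l,\uu}(\Gamma)$ one has $x_i^{u_i+1}\in J$ for all $i$, while $x_i^{u_i}\in J$ iff $x_i^{u_i}\in\Gamma$; so, writing $a_i(J):=\min\{k\ge 1:x_i^k\in J\}-1$, one gets $a_i(J)\in\{u_i-1,u_i\}$, with $a_i(J)=u_i-1$ precisely when $x_i^{u_i}\in\Gamma$.

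For the ``only if'' part, take $J=\varphi_{l,\uu}(\Gamma)=\varphi_{l',\vv}(\Gamma')$ in the intersection. The invariant gives $u_i-a_i(J)\in\{0,1\}$ and $v_i-a_i(J)\in\{0,1\}$, hence $u_i-v_i\in\{-1,0,1\}$ for all $i$; and if $\uu-\vv=\mathbf 1$ then $a_i(J)=u_i-1$ for all $i$, forcing $\Gamma=\Lambda_{\uu}$, impossible since $\dim\Gamma=l\le n-1$; likewise $\uu-\vv\ne-\mathbf 1$. (This is exactly where the hypothesis $l,l'\le n-1$ is used.)

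Conversely, assume $\uu-\vv\in\{0,1,-1\}^n\setminus\{\mathbf 1,-\mathbf 1\}$, set $S_\bullet:=\kappa(\uu-\vv,\bullet)$ for $\bullet\in\{0,1,-1\}$, and note $|S_+|-|S_-|=|\uu|-|\vv|=l-l'$, so $r:=l-|S_+|=l'-|S_-|$. For $J$ in the intersection, the invariant forces $x_i^{u_i}\in\Gamma$ for $i\in S_+$ and $x_i^{u_i}\notin J$, $x_i^{u_i+1}\in J$ for $i\in S_-$ (equivalently $x_i^{v_i}\in\Gamma'$). Put $J_0:=\langle x_i^{u_i}:i\in S_+\rangle+\langle x_i^{u_i+1}:i\in S_-\cup S_0\rangle\subseteq J$, and observe that this is literally the same ideal when computed from the $\vv$-data, since $x_i^{v_i}=x_i^{u_i+1}$ for $i\in S_-$ and $x_i^{v_i+1}=x_i^{u_i}$ for $i\in S_+$. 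In $R_n/J_0$ every $x_j$ annihilates the (linearly independent) images of $x_i^{u_i}$ $(i\in S_0\cup S_-)$ and of $x_i^{u_i-1}$ $(i\in S_+)$, and one checks that $J/J_0$ is at once the image of the rank-$r$ complement of $\langle x_i^{u_i}:i\in S_+\rangle_{\C}$ in $\Gamma$ (landing in $\mathrm{span}(x_i^{u_i}:i\in S_0\cup S_-)$) and the image of the rank-$r$ complement of $\langle x_i^{v_i}:i\in S_-\rangle_{\C}$ in $\Gamma'$ (landing in $\mathrm{span}(x_i^{u_i}:i\in S_0)+\mathrm{span}(x_i^{u_i-1}:i\in S_+)$). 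These two ambient subspaces meet exactly in $\mathrm{span}(x_i^{u_i}:i\in S_0)$, so the complement is forced to lie in $\langle x_i^{u_i}:i\in S_0\rangle_{\C}$; picking a basis $f_1,\dots,f_r$ gives $J=J_0+\langle f_1,\dots,f_r\rangle$, which for $f_1,\dots,f_r$ involving every $x_i^{u_i}$ $(i\in S_0)$ — the only case in which this has colength $m$ without extra monomial generators — is exactly \eqref{eq:ideal intersect}. For the reverse inclusion (hence nonemptiness) one checks conversely that every such ideal has colength $m$ and equals both $\varphi_{l,\uu}(\langle f_1,\dots,f_r\rangle_{\C}\oplus\langle x_i^{u_i}:i\in S_+\rangle_{\C})$ and $\varphi_{l',\vv}(\langle f_1,\dots,f_r\rangle_{\C}\oplus\langle x_i^{v_i}:i\in S_-\rangle_{\C})$ — the point being that $x_i^{u_i}=x_i^{v_i}$ for $i\in S_0$, so the same $f_j$ serve on both sides.

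I expect the real work to be this simultaneous bookkeeping across the two parametrizations — tracking which pure powers are forced into $J$, which generators become redundant, and why the ``moving part'' of $J$ normalizes into $\langle x_i^{u_i}:i\in S_0\rangle_{\C}$. The device that makes it mechanical is the descent to the common quotient $R_n/J_0$, where all the relevant monomials are independent and all cross-terms vanish, so the two complements sit in explicit coordinate subspaces with a transparent intersection; the verification that the two ideals produced really agree can then be shortened, as in the proof of \cref{lem:extension}, by noting both are manifestly of colength $m$. The degenerate cases $S_0=\emptyset$ and $S_+=\emptyset$ or $S_-=\emptyset$ should be dispatched separately but fall out of the same computation.
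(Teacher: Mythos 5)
Your proof is correct and follows the same overall strategy as the paper's — both directions ultimately rest on reading off which pure powers $x_i^k$ lie in $J$ and comparing the two presentations — but your bookkeeping is organized differently in a way worth noting. Where the paper puts $[I]$ into the normal forms \eqref{eq: gen intersect} and \eqref{eq: gen intersect 2} and then asserts that $r=r'$, $\langle f_1,\ldots,f_r\rangle=\langle f_1',\ldots,f_{r'}'\rangle$ and $S\cup T=S'\cup T'$, your invariant $a_i(J)$ together with the descent to $R_n/J_0$ supplies the justification for exactly that matching: the multigraded structure of $R_n$ (pure powers are the only monomials divisible by $x_i$) makes $a_i(J)$ well defined, pins down $u_i-v_i\in\{0,\pm1\}$, and forces the moving part of $J$ into $\langle x_i^{u_i}:i\in\kappa(\uu-\vv,0)\rangle_{\C}$. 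Your exclusion of $\uu-\vv=\pm\mathbf{1}$ (via $\Gamma=\Lambda_{\uu}$ forcing $l=n$) is equivalent to the paper's remark that $S\neq[n]$, and your converse is identical to the paper's. The one caveat — which you correctly flag and which the statement of the proposition itself shares — is that when some $x_i^{u_i}$ with $i\in\kappa(\uu-\vv,0)$ appears in no $f_j$, the ideal \eqref{eq:ideal intersect} as literally written must be supplemented by the corresponding $x_i^{u_i+1}$, i.e., read as the image under the extended map of \cref{lem:extension}; this affects neither argument.
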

\begin{proof}
    Let $[I]\in \Sigma(m,l,\uu)$. Then by \eqref{eq:ideals boundary} there exist $S\subseteq [n]$ such that
    \[
    I = \langle f_1,\ldots,f_l\rangle+\langle x_i^{u_i+1}:i\in S\rangle,
    \]
    with $f_1,\ldots,f_l\in\langle x_i^{u_i}:i\not\in S\rangle_\C$ linearly independent. With this presentation some of the generators $f_1,\ldots,f_l$ might be equal to $x_i^{u_i}$ for $i\not\in S$. Rewrite $I$ as 
    \begin{equation}\label{eq: gen intersect}
    I=\langle x_i^{u_i+1}:i\in S\rangle + \langle x_i^{u_i}:i\in T\rangle  +\langle f_1,\ldots,f_r\rangle,
    \end{equation}
    where $T\subset[n]$ is such that $S\cap T=\emptyset$, $r = l-|T|$ and $f_1,\ldots,f_r\in\langle x_i^{u_i}:i\not \in S\cup T\rangle.$
    Analogously, if $[I]\in \Sigma(m,l',\vv)$, then there exists $S',T'\subsetneq[n]$ disjoint such that
    \begin{equation}\label{eq: gen intersect 2}
    I = \langle x_i^{v_i+1}:i\in S'\rangle + \langle x_i^{v_i}:i\in T'\rangle  +\langle f_1',\ldots,f_{r'}'\rangle,
    \end{equation}
    with $r' = l'-|T'|$ and $f_1,\ldots,f_{r'}\in\langle x_i^{v_i}:i\not \in S'\cup T'\rangle.$ From \eqref{eq: gen intersect} and \eqref{eq: gen intersect 2} we deduce that $r=r'$, $\langle f_1',\ldots,f_{r'}'\rangle=\langle f_1,\ldots,f_{r}\rangle$ and $S\cup T = S'\cup T'$. Therefore, $u_i=v_i$ for $i\not \in S\cup T$. For $i\in S$ we get that $u_i+1=v_i$ or $u_i+1=v_i+1$. Similarly, $u_i=v_i$ or $u_i=v_i+1$ for $i\in T$. Hence, $u_i-v_i\in \{0,1,-1\}$ for any $i\in [n]$. Finally, notice that $S\neq [n]$, since the ideal $\langle x_1^{u_1+1},\ldots,x_n^{u_n+1}\rangle$ has length $|\uu|+n+1-n=m+l$ which is greater than $m$ and the same holds for $S'$. This shows the first inclusion. 
    
    For the other inclusion, assume that $\uu-\vv\in\{0,1,-1\}^n\setminus\{\mathbf{1},-\mathbf{1}\}$ and let $J$ be an ideal as in \eqref{eq:ideal intersect}. Then $[J]\in\Sigma(m,l,\uu)$ since it is the image of 
    \[ 
    \langle f_1,\ldots, f_r\rangle_\C+\langle x_i^{u_i}:i\in \kappa(\uu-\vv,1)\rangle_\C\subseteq \Lambda_{\uu}
    \]
    via $\varphi_{l,\uu}$. Similarly, we can rewrite $J$ as
    \[
     \langle f_1,\ldots, f_r\rangle+\langle x_i^{v_i+1}:i\in \kappa(\uu-\vv,1)\rangle+\langle  x_i^{v_i}:i\in\kappa(\uu-\vv,-1)\rangle,
    \]
    which is the image of 
    \[ 
    \langle f_1,\ldots, f_r\rangle_\C+\langle x_i^{v_i}:i\in \kappa(\uu-\vv,-1)\rangle_\C\subseteq \Lambda_{\vv}
    \]
    via $\varphi_ {\vv,l'}$. We conclude that $[J]\in \Sigma(m,l,\uu)\cap \Sigma(m,l',\vv)$.
\end{proof}

\begin{Cor}\label{Cor:intersect}
      Let $l,l'\in [n-1]$ and $\uu,\vv\in\Z_{\geq 1}^n$ such that $|\uu|=m+l-1$ and $|\vv|=m+l'-1$ and $\uu-\vv\in\{0,1,-1\}^n\setminus\{\mathbf{1},-\mathbf{1}\}$. Then
      \[
      \Sigma(m,l,\uu)\cap \Sigma(m,l',\vv)\cong \gr(l-|\kappa(\uu-\vv,1)|,|\kappa(\uu-\vv,0)|),
      \]
      where the intersection is taken with the reduced structure.
\end{Cor}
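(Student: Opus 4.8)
The plan is to deduce the corollary directly from \cref{prop: intersections}, which already pins down exactly which ideals lie in $\Sigma(m,l,\uu)\cap\Sigma(m,l',\vv)$: they are the ideals
\[
J_\Gamma = \langle\Gamma\rangle+\langle x_i^{u_i}:i\in\kappa(\uu-\vv,1)\rangle+\langle x_i^{u_i+1}:i\in\kappa(\uu-\vv,-1)\rangle,
\]
where $\Gamma$ ranges over $r$--dimensional subspaces of $\Lambda' := \langle x_i^{u_i}:i\in\kappa(\uu-\vv,0)\rangle_\C$ and $r = l-|\kappa(\uu-\vv,1)|$. First I would set $K:=\kappa(\uu-\vv,0)$, so that $\dim_\C\Lambda' = |K|$, and exhibit the obvious parametrizing map
\[
\Phi\colon \gr(r,\Lambda')=\gr\bigl(l-|\kappa(\uu-\vv,1)|,\,|\kappa(\uu-\vv,0)|\bigr)\longrightarrow \Sigma(m,l,\uu)\cap\Sigma(m,l',\vv),\qquad \Gamma\mapsto J_\Gamma.
\]
By \cref{prop: intersections} this map is surjective on $\C$--points. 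The content of the corollary is that $\Phi$ is an isomorphism of reduced schemes, and the natural way to prove this is to produce an inverse.

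For the inverse, I would invoke \cref{irr comp Grass}: the restriction of $\psi_{l,\uu}$ (the inverse of $\varphi_{l,\uu}$) to the intersection sends $J_\Gamma$ to its subspace of degree--lowering generators, namely $\Gamma + \langle x_i^{u_i}:i\in\kappa(\uu-\vv,1)\rangle_\C \in \gr(l,\Lambda_{\uu})$; projecting off the fixed coordinate directions $x_i^{u_i}$ for $i\in\kappa(\uu-\vv,1)$ recovers $\Gamma$. Since $\psi_{l,\uu}$ is a morphism on $\Sigma(m,l,\uu)$ by \cref{irr comp Grass}, and the intersection $\Sigma(m,l,\uu)\cap\Sigma(m,l',\vv)$ is a closed subvariety of $\Sigma(m,l,\uu)$ on which the image of $\psi_{l,\uu}$ lands in the sub-Grassmannian $\{\Gamma'\in\gr(l,\Lambda_{\uu}) : \langle x_i^{u_i}:i\in\kappa(\uu-\vv,1)\rangle_\C\subseteq\Gamma'\}\cong\gr(r,\Lambda')$, this restriction is a morphism; it is inverse to $\Phi$ on $\C$--points, hence — as both sides are reduced — an isomorphism.

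The one point requiring care, and the likely main obstacle, is making precise that the correspondence $\Gamma\mapsto J_\Gamma$ is a \emph{morphism} of schemes and not merely a bijection on points: one must check that $\Phi$ is the composite of the Plücker-type embedding $\gr(r,\Lambda')\hookrightarrow\gr(l,\Lambda_{\uu})$, $\Gamma\mapsto\Gamma\oplus\langle x_i^{u_i}:i\in\kappa(\uu-\vv,1)\rangle_\C$, with the morphism $\varphi_{l,\uu}$ from \cref{irr comp Grass}, and that its image is precisely the closed subscheme $\Sigma(m,l,\uu)\cap\Sigma(m,l',\vv)$ with its reduced structure. The first is immediate from the explicit formulas for $\varphi_{l,\uu}$ and its extension in \cref{lem:extension}; for the second one simply notes that \cref{prop: intersections} identifies the underlying sets, and the corollary only claims an isomorphism for the reduced structure, so no scheme-theoretic subtlety about the intersection itself is needed. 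Thus the proof reduces to bookkeeping with the maps $\varphi_{l,\uu}$, $\psi_{l,\uu}$ already established, together with the elementary fact that $\gr(l,\Lambda_{\uu})$ restricted to subspaces containing a fixed $(l-r)$--dimensional subspace is isomorphic to $\gr(r,\Lambda_{\uu}/\,\text{that subspace})\cong\gr(r,|\kappa(\uu-\vv,0)|)$.
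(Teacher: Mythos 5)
Your proposal is correct and follows essentially the same route as the paper: the paper's proof is exactly the composition of the closed sub-Grassmannian embedding $\Gamma\mapsto\Gamma+\langle x_i^{u_i}:i\in\kappa(\uu-\vv,1)\rangle_\C$ into $\gr(l,\Lambda_{\uu})$ with the isomorphism $\varphi_{l,\uu}$ of \cref{irr comp Grass}, with the image identified by \cref{prop: intersections}. Your additional construction of the inverse via $\psi_{l,\uu}$ is a harmless elaboration of the same argument.
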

\begin{proof}
    Let $U=\langle x_i^{u_i}:i\in |\kappa(\uu-\vv,0)|\rangle_\C$. We get a closed embedding of $\mathrm{Gr}(l-|\kappa(\uu-\vv,1)|,U)$ into $\mathrm{Gr}(l,\Lambda_{\uu})$ by sending $\Gamma$ to $\Gamma+\langle x_i^{u_i}:i\in \kappa(\uu-\vv,1)\rangle_\C$. The composition of this closed embedding with $\varphi_{\uu,l}$ gives the isomorphism.
\end{proof}

Having described the varieties $\Sigma(m,l,\uu)$, we can finally identify which of these varieties are the irreducible components of $\Hilb^m_{\mathbf{0}}(X_n)$. 

\begin{Thm}\label{punctual irred comp}
The irreducible components of $\Hilb^m_{\mathbf{0}}(X_n)$ are such $\Sigma(m,l,\uu)$ 
for which $\max\{1,n+1-m\}\leq l\leq n-1$ and $\uu\in\Z_{\geq1}^{n}$ with $|\uu|=m+l-1$.
\end{Thm}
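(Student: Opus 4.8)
The plan is to use the combinatorial decomposition from \cref{cor:big union} together with the intersection description in \cref{prop: intersections} and \cref{Cor:intersect}, and then identify exactly which strata $\Sigma(m,l,\uu)$ fail to be contained in another, larger one. By \cref{irr comp Grass} each $\Sigma(m,l,\uu)$ is isomorphic to $\gr(l,\Lambda_{\uu})\cong\gr(l,n)$, hence irreducible of dimension $l(n-l)$; so the irreducible components are precisely the maximal members of the poset of strata ordered by inclusion. The first task is therefore to rule out $l=n$: when $l=n$ the only stratum is the monomial ideal $\langle x_1^{u_1},\dots,x_n^{u_n}\rangle$ (a single point, as noted after \cref{ideals punctual}), and this point lies in the boundary of $\Sigma(m,n-1,\vv)$ for a suitable $\vv$ with $\vv-\uu\in\{0,1\}^n$ (drop one generator $x_j^{u_j}$ and replace $u_j$ by $u_j+1$, keeping $|\vv|=m+n-2$, which is possible as long as $m\geq 1$); thus no $l=n$ stratum is a component. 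Similarly one must check $l\geq n+1-m$: if $l<n+1-m$, i.e. $m+l-1<n$, then $|\uu|=m+l-1<n$, which is impossible for $\uu\in\Z_{\geq1}^n$, so in fact the constraint $l\geq\max\{1,n+1-m\}$ is automatic from $|\uu|=m+l-1$ and $\uu\in\Z_{\geq1}^n$ — this is the easy direction and should be dispatched first.

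Next I would prove that no stratum with $1\leq l\leq n-1$ is contained in another. Suppose $\Sigma(m,l,\uu)\subseteq\Sigma(m,l',\vv)$ with both in the allowed range. A general ideal of $\Sigma(m,l,\uu)$ is $\langle f_1,\dots,f_l\rangle$ with the $f_i\in\Lambda_{\uu}$ in general position, so in particular $\Gamma=\langle f_1,\dots,f_l\rangle_\C$ avoids every $\cH_i$; by \cref{prop: intersections} (applied with the roles symmetric) such an ideal lies in $\Sigma(m,l',\vv)$ only if it is of the form \eqref{eq:ideal intersect}, which forces $\uu-\vv\in\{0,1,-1\}^n\setminus\{\mathbf 1,-\mathbf 1\}$ and $r=l-|\kappa(\uu-\vv,1)|=l'-|\kappa(\uu-\vv,-1)|$, while a \emph{general} such ideal has no generator of the monomial shape $x_i^{u_i}$ or $x_i^{u_i+1}$, forcing $\kappa(\uu-\vv,1)=\kappa(\uu-\vv,-1)=\emptyset$, hence $\uu=\vv$ and $l=l'$. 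So the strata in range $1\leq l\leq n-1$ are pairwise incomparable, and each is a maximal element once we have shown (above) that the $l=n$ strata are not maximal: they are absorbed into the $l=n-1$ strata.

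Finally I would assemble the conclusion: $\Hilb^m_{\mathbf 0}(X_n)$ is the union over $l\in\{1,\dots,n\}$ and $\uu$ of the $\Sigma(m,l,\uu)$; the $l=n$ pieces are contained in $l=n-1$ pieces; the remaining pieces are irreducible, closed, and pairwise incomparable, hence they are exactly the irreducible components. It remains only to note that the constraint on $l$ in the statement, $\max\{1,n+1-m\}\leq l\leq n-1$, matches: the lower bound is forced by $|\uu|=m+l-1\geq n$ (needed for $\uu\in\Z_{\geq1}^n$) together with $l\geq1$, and the upper bound $l\leq n-1$ is what we just proved. I expect the main obstacle to be the incomparability argument: one must be careful that ``a general ideal of $\Sigma(m,l,\uu)$ avoids all $\cH_i$'' genuinely holds (i.e. that $\bigcup_i\cH_i$ is a proper closed subvariety of $\gr(l,\Lambda_{\uu})$, which is clear since each $\cH_i$ is a sub-Grassmannian $\gr(l,n-1)$ of smaller dimension, valid because $l\leq n-1$), and that the presentation \eqref{eq:ideals boundary}/\eqref{eq: gen intersect} is used correctly to detect the monomial generators — the bookkeeping with $S$, $T$, and the $\kappa$-sets is where a slip is most likely.
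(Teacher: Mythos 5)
Your proposal is correct and follows essentially the same route as the paper: decompose via \cref{cor:big union}, absorb the $l=n$ point into an $l=n-1$ stratum, and use \cref{prop: intersections} together with \cref{Cor:intersect} to show the strata with $l\leq n-1$ are pairwise incomparable (your "a general ideal has no monomial minimal generator" is the same mechanism as the paper's observation that the intersection is a strictly smaller Grassmannian unless $\uu=\vv$). Two cosmetic slips to fix: for the $l=n$ step the correct choice is $\vv=\uu-\mathbf{e}_j$ with $v_j+1=u_j$ (you wrote the translation backwards), and this requires some $u_j\geq 2$, i.e.\ $m\geq 2$ rather than $m\geq 1$.
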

\begin{proof}
\cref{cor:big union} decomposes $\Hilb^m_{\mathbf{0}}(X_n)$ as the union of closed subvarieties, each of them irreducible, hence it suffices to show which $\Sigma(m,l,\uu)$ are irreducible components of $\Hilb^m_{\mathbf{0}}(X_n)$. 

We will first show that $\Sigma(m,n,\uu)$ is not an irreducible component. If $[J]\in\Sigma(m,n,\uu)$ is generic, then it is minimally generated by $f_1,\ldots,f_n$ where $f_1,\ldots,f_n$ are linearly independent elements of $\langle x_1^{u_1},\ldots,x_n^{u_n}\rangle_\C$. Since the latter is an $n$--dimensional vector space we deduce that $J=\langle x_1^{u_1},\ldots,x_n^{u_n}\rangle$. Since $|\uu|= m+n-1$ and $m\geq 2$, there exists $1\leq i\leq n$ such that $u_i\geq 2$. We claim that $[J]\in \Sigma(m,n-1,\uu-\mathbf{e}_i)$. Indeed, the family of length $m$ ideals $$J_\lambda := \langle x_k^{u_k}+\lambda x_i^{u_i-1}:k\in[n]\setminus\{i\}\rangle$$ with $\lambda\in\C^{*}$ satisfies $[J_\lambda]\in \Sigma(m,n-1,\uu-\mathbf{e}_i)$ and for $\lambda= 0$ this family extends uniquely to $J_0=J$. We get that $\Sigma(m,n,\uu)$ is contained in $\Sigma(m,n-1,\uu-\mathbf{e}_i)$ and hence, it is not an irreducible component of $\Hilb^m_{\mathbf{0}}(X_n)$.

Now assume that  $\Sigma(m,l,\uu)$ is not an irreducible component of $\Hilb^m_{\mathbf{0}}(X_n)$ for $1\leq l \leq n-1$. Then, there exists $\vv$ and $l'$ such that $\Sigma(m,l',\vv)$ is an irreducible component and $\Sigma(m,l,\uu)$ is contained in $\Sigma(m,l',\vv)$. By  \cref{prop: intersections}, $\uu-\vv\in\{0,1,-1\}^n\setminus\{\mathbf{1},-\mathbf{1}\}$. Moreover, by \cref{irr comp Grass} and  \cref{Cor:intersect} we get that 
\[
\mathrm{Gr}(l,n)\simeq \Sigma(m,l,\uu)=\Sigma(m,l,\uu)\cap \Sigma(m,l',\vv) \simeq \mathrm{Gr}(l-|\kappa(\uu-\vv,1)|,|\kappa(\uu-\vv,0)|).
\]
We deduce that $|\kappa(\uu-\vv,0)|=n$. Hence, $\uu=\vv$ and $l=l'$ which is a contradiction.  Therefore, $\Sigma(m,l,\uu)$ is an irreducible component of $\Hilb^m_{\mathbf{0}}(X_n)$.
\end{proof} 
\begin{Cor}
    The number of irreducible components of $\sHilb_{\mathbf{0}}^{m}(X_n)$ is 
    \[
    \begin{cases}
    \displaystyle \sum_{l=n-m+1}^{n-1}\binom{l+m-2}{n-1} = \frac{(m-1)}{n}\binom{m+n-2}{n-1} & \text{if $n\geq m$},\\
    \displaystyle \sum_{l=1}^{n-1}\binom{m+l-2}{n-1} = \frac{(m-1)}{n}\binom{m+n-2}{n-1} + \frac{(n-m)}{n}\binom{m + n - 2}{-1 + n} & \text{if $n\leq m$.}
    \end{cases}
    \]
    
\end{Cor}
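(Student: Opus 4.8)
The plan is to read the count directly off the classification in \cref{punctual irred comp} and then evaluate the resulting sum of binomial coefficients. By \cref{punctual irred comp} (together with \cref{cor:big union}), the irreducible components of $\sHilb^m_{\mathbf{0}}(X_n)$ are exactly the $\Sigma(m,l,\uu)$ with $\max\{1,n+1-m\}\le l\le n-1$ and $\uu\in\Z_{\ge 1}^n$, $|\uu|=m+l-1$, and by \cref{irr comp Grass} each such $\Sigma(m,l,\uu)$ is isomorphic to $\gr(l,n)$. First I would check that distinct admissible pairs $(l,\uu)$ give distinct components: if $\Sigma(m,l,\uu)=\Sigma(m,l',\vv)$, then these two subvarieties meet, so $\uu-\vv\in\{0,1,-1\}^n\setminus\{\mathbf{1},-\mathbf{1}\}$ by \cref{prop: intersections}, and \cref{Cor:intersect} identifies their intersection with $\gr(l-|\kappa(\uu-\vv,1)|,|\kappa(\uu-\vv,0)|)$; comparing this with $\gr(l,n)$ forces $|\kappa(\uu-\vv,0)|=n$, hence $\uu=\vv$ and $l=l'$ --- this is the computation already carried out at the end of the proof of \cref{punctual irred comp}. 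Thus the number of components equals $\sum_{l}\#\{\uu\in\Z_{\ge 1}^n:|\uu|=m+l-1\}$, with $l$ ranging over $\max\{1,n+1-m\}\le l\le n-1$, and a standard stars-and-bars count gives $\#\{\uu\in\Z_{\ge 1}^n:|\uu|=m+l-1\}=\binom{m+l-2}{n-1}$, the number of compositions of $m+l-1$ into $n$ positive parts.

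Next I would split according to the two regimes created by the $\max$. If $n\ge m$ then $\max\{1,n+1-m\}=n-m+1$, so the total is $\sum_{l=n-m+1}^{n-1}\binom{l+m-2}{n-1}$; if $n\le m$ then $\max\{1,n+1-m\}=1$, so the total is $\sum_{l=1}^{n-1}\binom{m+l-2}{n-1}$. The two expressions agree when $n=m$, since the additional terms $\binom{m+l-2}{n-1}$ with $l<n-m+1$ vanish.

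It then remains to evaluate each sum, for which I would use the hockey-stick identity $\sum_{j=r}^{N}\binom{j}{r}=\binom{N+1}{r+1}$ together with the absorption identity $\binom{a}{b}=\frac{a-b+1}{b}\binom{a}{b-1}$. In the case $n\ge m$, the substitution $j=l+m-2$ turns the sum into $\sum_{j=n-1}^{m+n-3}\binom{j}{n-1}=\binom{m+n-2}{n}$, which absorption with $a=m+n-2$, $b=n$ rewrites as $\frac{m-1}{n}\binom{m+n-2}{n-1}$. In the case $n\le m$, the substitution $j=m+l-2$ presents the sum as the difference of two hockey-stick sums, $\sum_{j=m-1}^{m+n-3}\binom{j}{n-1}=\binom{m+n-2}{n}-\binom{m-1}{n}$, and applying absorption to both terms rewrites it as $\frac{m-1}{n}\binom{m+n-2}{n-1}+\frac{n-m}{n}\binom{m-1}{n-1}$, the asserted closed form. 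There is no real obstacle in any of this: once \cref{punctual irred comp} is granted, the corollary is pure enumeration, and the only steps demanding a bit of care are the bookkeeping of the index shifts in the hockey-stick identity and the verification that the admissible $\Sigma(m,l,\uu)$ are pairwise distinct.
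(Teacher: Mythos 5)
Your proof is correct and takes essentially the same route as the paper, which simply invokes \cref{punctual irred comp} for the sum on the left and ``classical combinatorial identities'' for the closed form; you supply the details (pairwise distinctness of the $\Sigma(m,l,\uu)$, stars-and-bars, hockey-stick, absorption) that the paper leaves implicit. The only step you pass over silently is the reduction from $\sHilb^m_{\mathbf 0}(X_n)$ to $\Hilb^m_{\mathbf 0}(X_n)$ --- the number of irreducible components is a topological invariant, so one may work with the reduced structure, which is what \cref{punctual irred comp} classifies; the paper records this in one sentence.

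One discrepancy is worth flagging. In the case $n\le m$ your evaluation gives
\[
\frac{m-1}{n}\binom{m+n-2}{n-1}+\frac{n-m}{n}\binom{m-1}{n-1},
\]
which you describe as ``the asserted closed form,'' but the corollary as printed has $\binom{m+n-2}{n-1}$ in the second term. Your expression is the correct one and the printed one is a typo: for $n=3$, $m=5$ the sum is $\binom{4}{2}+\binom{5}{2}=16$, matching the $16$ components of $\sHilb^5_{\mathbf 0}(X_3)$ exhibited in \cref{ex: n 3 part 1} and \cref{fig:n 3 and m 5} and matching your formula ($20-4=16$), whereas the printed right-hand side collapses to $\frac{n-1}{n}\binom{m+n-2}{n-1}=10$. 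So your derivation is sound, but it proves a corrected version of the displayed identity rather than the one literally printed.
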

\begin{proof}
    Since the number of irreducible components is topological we may work in $\Hilb^m_{\mathbf{0}}(X_n)$. By \cref{punctual irred comp}, the quantity on the left-hand side is determined, while the expression on the right-hand side follows from classical combinatorial identities.
\end{proof}

From the above reasoning we deduce that $\Hilb^m_{\mathbf{0}}(X_n)$ is the union of some Grassmannians glued together through closed subvarieties. \cref{ex:n=2 part 1} shows how this gluing is done for $n=2$.

\begin{Ex}\label{ex:n=2 part 1}
    For $n=2$, the only possible value of $l$ is $1$, and $\uu=(u_1,u_2)$ is a strictly positive partition of $m$. Hence
    $$\Hilb^m_{\mathbf{0}}(X_2)=\bigcup_{i=1}^{m-1} \Sigma(m,1,(i,m-i))$$
    has $m-1$ irreducible components.
    A generic point in $\Sigma(m,1,(i,m-i))$ corresponds to an ideal of the form $\langle \lambda_1x_1^{i}+\lambda_2x_2^{m-i}\rangle$ for $\lambda_1,\lambda_2\neq 0$. One can check that $\Sigma(m,1,(i,m-i))$ is isomorphic to $\P^1$, whose torus is identified with the ideals of the above form, and the torus invariant points are the ideals $\langle x_1^{i+1},x_2^{m-i}\rangle$ and $\langle x_1^{i},x_2^{m-i+1}\rangle$. Moreover, the ideal $\langle x_1^{i+1},x_2^{m-i}\rangle$ is the intersection of $\Sigma(m,1,(i,m-i))$ and $\Sigma(m,1,(i+1,m-i_1))$. With further work, it can be shown that $\Hilb^m_{\mathbf{0}}(X_2)$ is a chain of rational curves with nodal singularities obtained by gluing consecutively  $\Sigma(m,1,(i,m-i))$ and $\Sigma(m,1,(i+1,m-i_1))$ through the point associated to the ideal $\langle x_1^{i+1},x_2^{m-i}\rangle$ (see \cref{fig:n2 rational nodal}). This is precisely \cite[Theorem 1]{Ran2005}. 
\end{Ex}

\begin{figure}[h]
    \centering

\tikzset{every picture/.style={line width=0.75pt}} 

\begin{tikzpicture}[x=0.85pt,y=0.85pt,yscale=-1,xscale=1]

\draw    (101.5,150.25) -- (170,80.75) ;
\draw    (210.5,149.75) -- (140.5,80.25) ;
\draw    (180.5,149.75) -- (249,80.25) ;
\draw  [dash pattern={on 0.84pt off 2.51pt}]  (260.5,120) -- (391,120.25) ;
\draw    (450,149.75) -- (380.5,79.75) ;
\draw    (421.5,150.75) -- (489.5,80.25) ;
\draw  [color={rgb, 255:red, 208; green, 2; blue, 27 }  ,draw opacity=1 ][fill={rgb, 255:red, 208; green, 2; blue, 27 }  ,fill opacity=1 ] (153.5,95.88) .. controls (153.5,94.7) and (154.45,93.75) .. (155.63,93.75) .. controls (156.8,93.75) and (157.75,94.7) .. (157.75,95.88) .. controls (157.75,97.05) and (156.8,98) .. (155.63,98) .. controls (154.45,98) and (153.5,97.05) .. (153.5,95.88) -- cycle ;
\draw  [color={rgb, 255:red, 208; green, 2; blue, 27 }  ,draw opacity=1 ][fill={rgb, 255:red, 208; green, 2; blue, 27 }  ,fill opacity=1 ] (193.5,134.54) .. controls (193.5,133.37) and (194.45,132.42) .. (195.63,132.42) .. controls (196.8,132.42) and (197.75,133.37) .. (197.75,134.54) .. controls (197.75,135.72) and (196.8,136.67) .. (195.63,136.67) .. controls (194.45,136.67) and (193.5,135.72) .. (193.5,134.54) -- cycle ;
\draw  [color={rgb, 255:red, 208; green, 2; blue, 27 }  ,draw opacity=1 ][fill={rgb, 255:red, 208; green, 2; blue, 27 }  ,fill opacity=1 ] (433.83,136.21) .. controls (433.83,135.03) and (434.78,134.08) .. (435.96,134.08) .. controls (437.13,134.08) and (438.08,135.03) .. (438.08,136.21) .. controls (438.08,137.38) and (437.13,138.33) .. (435.96,138.33) .. controls (434.78,138.33) and (433.83,137.38) .. (433.83,136.21) -- cycle ;

\draw (55.5,102.4) node [anchor=north west][inner sep=0.75pt]    {{\footnotesize  $\Sigma(m,1,(1,m-1))$}};
\draw (159,91) node [anchor=north west][inner sep=0.75pt]    {{\tiny$\Sigma(m,1,(2,m-2))$}};
\draw (460,110.4) node [anchor=north west][inner sep=0.75pt]    {{\footnotesize $\Sigma(m,1,( m-1,1))$}};

\end{tikzpicture}

    \caption{The $m$ rational irreducible components of $\Hilb^m_{\mathbf{0}}(X_2)$.}
    \label{fig:n2 rational nodal}
\end{figure}
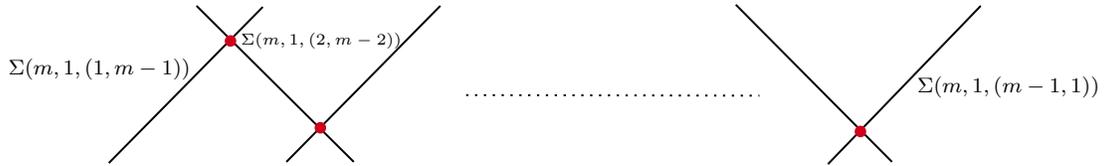

\section{The moment map and its combinatorics}\label{sec:moment map}

 We keep the same notation as in \cref{sec:irred comp punctual}. Our next aim is to describe how $\Hilb^m_{\mathbf{0}}(X_n)$ is obtained by gluing the Grassmannians $\Sigma(m,l,\uu)$ using the combinatorial framework developed in \cref{app: The hypersimplicial complex}. To encode the combinatorics of these Grassmannians, we make use of the moment map associated with the natural action of the algebraic torus on $X_n$, cf. e.g. \cite{AudinTorisActions,KirwanCohomologyofQuotients} for more details about moment maps in symplectic and algebraic geometry. We consider the Plücker coordinates $q_A$ for $A$ in $\binom{[n]}{l}$. For $l<n$ the moment map $\mu_{l,n}$ is: 
\begin{equation}\label{def:momentmap}
    \begin{array}{cccc}
    \mu_{l,n}:&\mathrm{Gr}(l,n)&\longrightarrow &\R^n\\ \displaystyle 
    & (q_A)_{A\in  \binom{[n]}{l}}&\longmapsto& \displaystyle \frac{1}{\displaystyle\sum_{A}|q_A|^2}\left(\sum_{A} |q_A|^2 \ee_A\right).
    \end{array}    
\end{equation}

Note that the moment map is not algebraic and it is defined over the $\C$--points of $\mathrm{Gr}(l,n)$.
 The image of the moment map is the hypersimplex $\Delta_{l,n}$, which
 lies in $l\cdot\Delta_{n-1}$. Here, $l\cdot\Delta_{n-1}\subset\R^n$ is the dilation by $l$ of the $(n-1)$--dimensional simplex. The definition of $\Delta_{l,n}$ and the description of its faces can be found in  \cref{app: The hypersimplicial complex}.
The vertices of $\Delta_{l,n}$ are exactly the vectors $\mathbf{e}_{i_1}+\cdots+\mathbf{e}_{i_l}$ in $\R^n$ for $1\leq i_1<\cdots <i_l\leq n$, which correspond via $\mu_{l,n}$ to the point $[\langle \mathbf{e}_{i_1},\cdots,\mathbf{e}_{i_l}\rangle_\C]$ in $\mathrm{Gr}(l,n)$. More generally, a face 
$\Delta_{l,n}(S_1,S_2)$ for $S_1\sqcup S_2\subseteq [n]$ (see \cref{eq:faces hypersimplex}) 
is isomorphic to the hypersimplex $\Delta_{l-|S_2|,n-r+1}$, and via the moment map, it corresponds to the variety
\[
\left\{[E]\in\mathrm{Gr}(l,n):\langle \mathbf{e}_i:i\in S_2\rangle_\C\subseteq E\subseteq \langle \mathbf{e}_i:i\not\in S_1\rangle_\C\right\},
\]
which is isomorphic to $\mathrm{Gr}(l-|S_2|,n-r+1)$.
The goal of this section is to provide a complete combinatorial description of $\Hilb^m_{\mathbf{0}}(X_n)$ by constructing a moment map on it. To do so, we first construct a moment map on each irreducible component. On $\Sigma(m,l,\uu)\simeq\mathrm{Gr}(l,\Lambda_{\uu})\simeq\mathrm{Gr}(n-l,\Lambda_{\uu}^{*})$, define the following moment map:
\begin{equation}
    \label{eq:piece of moment map}
    \mu_{\uu,l}:\Sigma(m,l,\uu)\simeq\mathrm{Gr}(n-l,\Lambda_{\uu}^{*})\longrightarrow \Delta_{n-l,n}+\uu-\mathbf{1}
\end{equation}
sending a point $\Gamma$ to $\mu_{n-l,n}(\Gamma^{*})+\uu-\mathbf{1}$. In other words, the map $\mu_{\uu,l}$ is the composition of the isomorphism $\Sigma(m,l,\uu)\simeq\mathrm{Gr}(n-l,n)$ with the moment map $\mu_{n-l,n}$ and the translation by the vector $\uu-\mathbf{1}$. In the Plücker coordinates of $\mathrm{Gr}(l,\Lambda_{\uu})$, the moment map $\mu_{\uu,l}$ is given by
\begin{equation}
    \label{eq:moment map Plucker}
    \mu_{\uu,l}((q_A)_{A\in\binom{[n]}{l} })=\displaystyle  \frac{1}{\displaystyle\sum_{I}|q_A|^2}\left(\sum_{A} |q_A|^2 \ee_{[n]\setminus I}\right)+\uu -\mathbf{1}.
\end{equation}

\begin{Thm}\label{thm: moment map}
    For distinct pairs $(\uu,l)$ and $(\vv,l')$, the moment maps $\mu_{\uu,l}$ and $\mu_{\vv,l'}$ coincide in the intersection of $\Sigma(m,l,\uu)$ and $\Sigma(m,l',\vv)$. Therefore, there is a well-defined moment map  
    \begin{equation}
        \mu_m:\Hilb^m_{\mathbf{0}}(X_n)\longrightarrow (m-1)\Delta_{n-1}    
    \end{equation}
    whose restriction to each irreducible component $\Sigma(m,l,\uu)$ is $\mu_{\uu,l}$.
\end{Thm}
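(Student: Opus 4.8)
The plan is to verify the compatibility of the maps $\mu_{\uu,l}$ pointwise on every overlap and then assemble them by the pasting lemma on the finite closed cover of $\Hilb^m_{\mathbf 0}(X_n)$ by its irreducible components. If $\Sigma(m,l,\uu)\cap\Sigma(m,l',\vv)=\emptyset$, or if $(\uu,l)=(\vv,l')$, there is nothing to check, so suppose the intersection is nonempty and the pairs distinct. By \cref{prop: intersections}, $\uu-\vv\in\{0,1,-1\}^n\setminus\{\mathbf 1,-\mathbf 1\}$; set $K_0:=\kappa(\uu-\vv,0)$ and $K_{\pm}:=\kappa(\uu-\vv,\pm 1)$, so that $[n]=K_0\sqcup K_+\sqcup K_-$ and, tautologically, $\uu-\vv=\ee_{K_+}-\ee_{K_-}$. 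Since $u_i=v_i$ on $K_0$, the monomials $x_i^{u_i}$ and $x_i^{v_i}$ agree there, so $U:=\langle x_i^{u_i}:i\in K_0\rangle_{\C}$ is a common subspace of $\Lambda_{\uu}$ and $\Lambda_{\vv}$; and by \cref{Cor:intersect} a point $z$ of $\Sigma(m,l,\uu)\cap\Sigma(m,l',\vv)$ is the datum of a subspace $\Gamma\in\gr(r,U)$, $r=l-|K_+|=l'-|K_-|$, carried into $\Sigma(m,l,\uu)\simeq\gr(l,\Lambda_{\uu})$ as $E_{\uu}:=\Gamma\oplus\langle x_i^{u_i}:i\in K_+\rangle_{\C}$ and into $\Sigma(m,l',\vv)\simeq\gr(l',\Lambda_{\vv})$ as $E_{\vv}:=\Gamma\oplus\langle x_i^{v_i}:i\in K_-\rangle_{\C}$.

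Next I would evaluate the Plücker formula \eqref{eq:moment map Plucker} at $E_{\uu}$, using the monomial basis to identify $\Lambda_{\uu}\simeq\C^n$ (so $x_i^{u_i}\leftrightarrow\ee_i$). A basis of $E_{\uu}$ consists of a basis of $\Gamma$ (vectors supported on the coordinates $K_0$) together with $\{\ee_i:i\in K_+\}$; an elementary determinant computation using the block structure of the associated $l\times n$ matrix shows that the Plücker coordinate $q_A$ of $E_{\uu}$ vanishes unless $A=B\sqcup K_+$ for some $B\in\binom{K_0}{r}$, in which case $q_{B\sqcup K_+}=\pm p_B$, where $(p_B)_{B\in\binom{K_0}{r}}$ are the Plücker coordinates of $\Gamma\in\gr(r,U)$. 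Since $[n]\setminus(B\sqcup K_+)=(K_0\setminus B)\sqcup K_-$, \eqref{eq:moment map Plucker} gives
\[
\mu_{\uu,l}(z)=\frac{\sum_{B}|p_B|^2\,\ee_{K_0\setminus B}}{\sum_{B}|p_B|^2}+\ee_{K_-}+\uu-\mathbf 1 .
\]
The first summand, call it $w(\Gamma)$, is a point of $\R^{K_0}$ depending only on $\Gamma\in\gr(r,U)$, not on the ambient Grassmannian. Repeating the computation for $E_{\vv}$ in $\gr(l',\Lambda_{\vv})$ — which involves the \emph{same} numbers $(p_B)$, because the monomials $x_i^{v_i}=x_i^{u_i}$ agree on $K_0$ — yields $\mu_{\vv,l'}(z)=w(\Gamma)+\ee_{K_+}+\vv-\mathbf 1$. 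Hence $\mu_{\uu,l}(z)=\mu_{\vv,l'}(z)$ is equivalent to $\ee_{K_-}+\uu=\ee_{K_+}+\vv$, i.e.\ to $\uu-\vv=\ee_{K_+}-\ee_{K_-}$, which holds by the definition of $K_0,K_{\pm}$. This proves the compatibility on every overlap.

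Finally, $\Hilb^m_{\mathbf 0}(X_n)$ is the union of its finitely many irreducible components, each of which is a $\Sigma(m,l,\uu)$ with $1\le l\le n-1$ (\cref{cor:big union}, \cref{punctual irred comp}), and on each such component $\mu_{\uu,l}$ is defined and continuous on the analytic topology; since these maps agree on all pairwise intersections, the pasting lemma produces a unique continuous $\mu_m\colon\Hilb^m_{\mathbf 0}(X_n)\to\R^n$ with $\mu_m|_{\Sigma(m,l,\uu)}=\mu_{\uu,l}$. Its image lies in $(m-1)\Delta_{n-1}$: by \eqref{eq:piece of moment map} the image of $\mu_{\uu,l}$ is $\Delta_{n-l,n}+\uu-\mathbf 1$, whose points have coordinate sum $(n-l)+|\uu|-n=m-1$ and $i$-th coordinate at least $u_i-1\ge 0$, since the coordinates of a point of the hypersimplex $\Delta_{n-l,n}$ lie in $[0,1]$ and $u_i\ge 1$. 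The only step requiring genuine care is the middle one — the Plücker description of the two embeddings and the observation that the $\Gamma$-dependent term $w(\Gamma)$ is literally identical in both computations (which is exactly what forces the difference of the two translations to cancel against $\uu-\vv$); everything else is formal, so I do not expect a real obstacle.
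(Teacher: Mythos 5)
Your proposal is correct and follows essentially the same route as the paper's proof: identify a point of the overlap with a subspace $\Gamma$ of the common space $U=\langle x_i^{u_i}:i\in\kappa(\uu-\vv,0)\rangle_\C$, compute the Plücker coordinates via the block structure to see that only $q_{B\sqcup\kappa(\uu-\vv,1)}$ survive, and observe that the $\Gamma$-dependent term is identical on both sides so that the discrepancy reduces to $\uu-\vv=\ee_{\kappa(\uu-\vv,1)}-\ee_{\kappa(\uu-\vv,-1)}$. The only additions (the pasting-lemma assembly and the coordinate-sum check that the image lands in $(m-1)\Delta_{n-1}$) are routine details the paper leaves implicit.
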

\begin{proof}
Let $[I]\in\Sigma(m,l,\uu)\cap\Sigma(m,l',\vv)$.
By \cref{prop: intersections}, $\uu-\vv\in\{0,1,-1\}^n$ and 
\[
 I =  \langle f_1,\ldots, f_r\rangle+\langle x_i^{u_i}:i\in \kappa(\uu-\vv,1)\rangle+\langle  x_i^{u_i+1}:i\in\kappa(\uu-\vv,-1)\rangle,
   \]
  with $f_1,\ldots, f_r\in\langle x_i^{u_i}:i\in\kappa(\uu-\vv,0)\rangle_\C$ linearly independent and  $r=l-|\kappa(\uu-\vv,1)|$. As an element of $\mathrm{Gr}(l,\Lambda_{\uu})$, $[I]$ corresponds to the linear subspace 
  \[
   \langle f_1,\ldots, f_r\rangle_\C+\langle x_i^{u_i}:i\in \kappa(\uu-\vv,1)\rangle_\C.
  \]
We will compute explicitly the moment map in coordinates. Let $M$ be the $l\times n$ matrix whose rows consists on the coefficients of $x_1^{u_1},\ldots,x_n^{u_n}$ in $f_1,\ldots,f_r$ and $x_i^{u_i}$ for $i\in \kappa(\uu-\vv,1)$. In other words, $M$ is a matrix of the form
   \[
M = \begin{pNiceArray}{w{c}{1cm}w{c}{1cm}|w{c}{1cm}w{c}{1cm}|w{c}{1cm}w{c}{1cm}}
\Block{2-2}<\Large>{\mathrm{Id}_{|\kappa(\uu-\vv,1)|}} & &\Block{2-2}<\Large>{\mathbf{0}} & &\Block{2-2}<\Large>{\mathbf{0}} & \\
 & & & & & \\

  \hline
\Block{2-2}<\Large>{\mathbf{0}} & &\Block{2-2}<\Large>{N} & &\Block{2-2}<\Large>{\mathbf{0}} & \\
 & & & & & \\
  \CodeAfter
 \UnderBrace[shorten,yshift=3pt]{4-1}{4-2}{\kappa(\uu-\vv,1)}
 \UnderBrace[shorten,yshift=3pt]{4-3}{4-4}{\kappa(\uu-\vv,0)}
 \UnderBrace[shorten,yshift=3pt]{4-5}{4-6}{\kappa(\uu-\vv,-1)}
\end{pNiceArray},
   \]
   
   \vspace{6mm}
   
   where $N$ is the $r\times |\kappa(\uu-\vv,0)|$ matrix 
   \[
   N= \begin{pmatrix}
       \cdots & f_1 &\cdots\\ & \vdots & \\  \cdots & f_r &\cdots
   \end{pmatrix}
   \]
   whose rows are the coefficients of $x_i^{u_i}$ for $i\in\kappa(\uu-\vv,0)$ in $f_1,\ldots,f_r$. Thus, the Plücker coordinate $q_A$ of $[I]$, where $A\in\binom{[n]}{l}$, corresponds to the $l\times A$ minor of $M$. We denote this minor by $\det (M,A)$. Note that if $A\cap \kappa(\uu-\vv,-1)\neq \emptyset$, the submatrix of $M$ given by the columns in $A$ has a vanishing column. Thus, we have that $q_A= 0$ if $A\cap \kappa(\uu-\vv,-1)\neq \emptyset$. Similarly, if $\kappa(\uu-\vv,1)$ is not contained in $A$, the submatrix of $M$ given by the columns in $A$ has a vanishing row. Hence we get $q_A= 0$ if $\kappa(\uu-\vv,1) \not\subset A$. Thus, the only nonvanishing Plücker coordinates of $[I]$ in $\mathrm{Gr}(l,\Lambda_{\uu})$ are such $q_A$ with $A=\kappa(\uu-\vv,1)\cup B$ for some $B\in\binom{\kappa(\uu-\vv,0)}{r}$. In this case, $q_A$ is the minor $r\times B$ of $N$, i.e. $q_A=\det(N,B)$. Using \eqref{eq:piece of moment map}, we get that
   \[
   \begin{split}
    \mu_{\uu,l}([I]) &= \displaystyle  \frac{1}{\displaystyle\sum_{B\in\binom{\kappa(\uu-\vv,0)}{r}}|\det(N,B)|^2}\left(\sum_{B\in\binom{\kappa(\uu-\vv,0)}{r}} |\det(N,B)|^2 \ee_{[n]\setminus (B\cup \kappa(\uu-\vv,1))}\right)+\uu -\mathbf{1}\\[-4pt]
    &= \displaystyle  \frac{1}{\displaystyle\sum_{B}|\det(N,B)|^2}\left(\sum_{B} |\det(N,B)|^2 \ee_{(\kappa(\uu-\vv,0)\setminus B)\cup \kappa(\uu-\vv,-1)}\right)+\uu -\mathbf{1}\\[-4pt]
    &= \displaystyle  \frac{1}{\displaystyle\sum_{B}|\det(N,B)|^2}\left(\sum_{B} |\det(N,B)|^2 (\ee_{\kappa(\uu-\vv,0)\cup \kappa(\uu-\vv,-1)}-\ee_B)\right)+\uu -\mathbf{1}\\[-6pt]
    &= \ee_{\kappa(\uu-\vv,0)\cup \kappa(\uu-\vv,-1)}-
  \frac{1}{\displaystyle\sum_{B}|\det(N,B)|^2}\left(\sum_{B} |\det(N,B)|^2 \ee_B)\right)
  +\uu -\mathbf{1}\\[-6pt]
    &= \ee_{\kappa(\uu-\vv,0)}+\ee_{\kappa(\uu-\vv,-1)}-
  \frac{1}{\displaystyle\sum_{B}|\det(N,B)|^2}\left(\sum_{B} |\det(N,B)|^2 \ee_B)\right)
  +\uu -\mathbf{1}.  
   \end{split}
   \]
   A similar computation shows that 
   \[
   \mu_{\vv,l'}([I])=  \ee_{\kappa(\uu-\vv,0)}+\ee_{\kappa(\uu-\vv,1)}-
  \frac{1}{\displaystyle\sum_{B}|\det(N,B)|^2}\left(\sum_{B} |\det(N,B)|^2 \ee_B)\right)
  +\vv -\mathbf{1}.
   \]
   The proof follows from the fact that $\uu-\vv = \ee_{\kappa(\uu-\vv,1)}-\ee_{\kappa(\uu-\vv,-1)}$.
\end{proof}

\begin{Ex}
   Continuing \cref{ex:n=2 part 1}, for $n=2$, the $m-1$ irreducible components of $\Hilb^m_{\mathbf{0}}(X_2)$ are $\Sigma(m,1,(1,m-1)),\ldots,\Sigma(m,1,(m-1))$. Each of them is isomorphic to $\P^1$. The moment map $\mu_{(i,m-i),1}$ is defined as
   \[
   \begin{array}{cccccl}
   \mu_{(i,m-i),1}:&\Sigma(m,1,(i,m-i))\simeq \P^1&\longrightarrow&(\P^1)^{*}&\longrightarrow& (m-1)\cdot\Delta_1\\
   &[a_0,a_1]&\longmapsto&[a_1,a_0]&\longmapsto&(\frac{|a_1|^2}{|a_0|^2+|a_1|^2}+i-1,\frac{|a_0|^2}{|a_0|^2+|a_1|^2}+m-i-1)
   \end{array}.
   \]
Using the above formula a direct computation yields
   \[\begin{array}{c}
   \mu_{(i,m-i),1}(\langle x_1^{i+1},x_2^{m-i}\rangle)=\mu_{(i,m-i),1}([0,1])=(i,m-i-1)=\mu_{(i+1,m-i-1),1}([1,0])\\=\mu_{(i+1,m-i-1),1}(\langle x_1^{i+1},x_2^{m-i}\rangle).
   \end{array}
   \]
In particular, $ \mu_{(i,m-i),1}$ and $ \mu_{(i+1,m-i-1),1}$ coincide in the intersection of $\Sigma(m,1,(i,m-i))$ and $\Sigma(m,1,(i+1,m-i))$. The image of $\mu_{(i,m-i),1}$ is the segment between $(i,m-i-1)$ and $(i-1,m-i)$. These segments form a subdivision of $(m-1)\cdot\Delta_1$, which is the image of the moment map $\mu_m$. In \cref{fig:moment n 2}, the image of $\mu_2$, $\mu_3$ and $\mu_4$ is depicted.
\end{Ex}

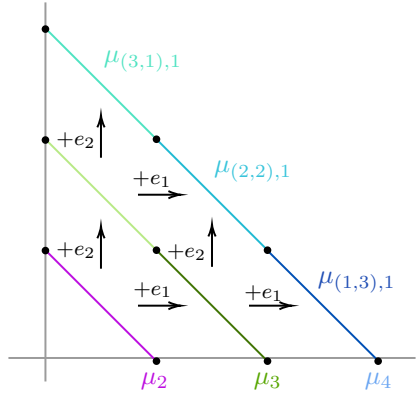
\begin{figure}[h]
    \centering

\tikzset{every picture/.style={line width=0.75pt}} 

\begin{tikzpicture}[x=0.7pt,y=0.7pt,yscale=-1,xscale=1]

\draw [color={rgb, 255:red, 155; green, 155; blue, 155 }  ,draw opacity=1 ]   (100,260) -- (320,260) ;
\draw [color={rgb, 255:red, 155; green, 155; blue, 155 }  ,draw opacity=1 ]   (120,67.71) -- (120,272.71) ;
\draw [color={rgb, 255:red, 189; green, 16; blue, 224 }  ,draw opacity=1 ]   (120,201.71) -- (180,261.71) ;
\draw [color={rgb, 255:red, 184; green, 233; blue, 134 }  ,draw opacity=1 ]   (120,141.71) -- (180,201.71) ;
\draw [color={rgb, 255:red, 65; green, 117; blue, 5 }  ,draw opacity=1 ]   (180,201.71) -- (240,261.71) ;
\draw  [fill={rgb, 255:red, 0; green, 0; blue, 0 }  ,fill opacity=1 ] (178.49,201.71) .. controls (178.49,200.88) and (179.16,200.2) .. (180,200.2) .. controls (180.84,200.2) and (181.51,200.88) .. (181.51,201.71) .. controls (181.51,202.55) and (180.84,203.23) .. (180,203.23) .. controls (179.16,203.23) and (178.49,202.55) .. (178.49,201.71) -- cycle ;
\draw [color={rgb, 255:red, 80; green, 227; blue, 194 }  ,draw opacity=1 ]   (120,81.71) -- (180,141.71) ;
\draw [color={rgb, 255:red, 33; green, 186; blue, 211 }  ,draw opacity=1 ]   (180,141.71) -- (240,201.71) ;
\draw [color={rgb, 255:red, 0; green, 80; blue, 181 }  ,draw opacity=1 ]   (239.78,201.71) -- (300,261.71) ;
\draw  [fill={rgb, 255:red, 0; green, 0; blue, 0 }  ,fill opacity=1 ] (238.49,201.71) .. controls (238.49,200.88) and (239.16,200.2) .. (240,200.2) .. controls (240.84,200.2) and (241.51,200.88) .. (241.51,201.71) .. controls (241.51,202.55) and (240.84,203.23) .. (240,203.23) .. controls (239.16,203.23) and (238.49,202.55) .. (238.49,201.71) -- cycle ;
\draw  [fill={rgb, 255:red, 0; green, 0; blue, 0 }  ,fill opacity=1 ] (178.49,141.71) .. controls (178.49,140.88) and (179.16,140.2) .. (180,140.2) .. controls (180.84,140.2) and (181.51,140.88) .. (181.51,141.71) .. controls (181.51,142.55) and (180.84,143.23) .. (180,143.23) .. controls (179.16,143.23) and (178.49,142.55) .. (178.49,141.71) -- cycle ;
\draw [color={rgb, 255:red, 0; green, 0; blue, 0 }  ,draw opacity=1 ][line width=0.75]    (170,231.71) -- (191,231.71) ;
\draw [shift={(193,231.71)}, rotate = 180] [color={rgb, 255:red, 0; green, 0; blue, 0 }  ,draw opacity=1 ][line width=0.75]    (6.56,-1.97) .. controls (4.17,-0.84) and (1.99,-0.18) .. (0,0) .. controls (1.99,0.18) and (4.17,0.84) .. (6.56,1.97)   ;
\draw [color={rgb, 255:red, 0; green, 0; blue, 0 }  ,draw opacity=1 ][line width=0.75]    (230,231.71) -- (251,231.71) ;
\draw [shift={(253,231.71)}, rotate = 180] [color={rgb, 255:red, 0; green, 0; blue, 0 }  ,draw opacity=1 ][line width=0.75]    (6.56,-1.97) .. controls (4.17,-0.84) and (1.99,-0.18) .. (0,0) .. controls (1.99,0.18) and (4.17,0.84) .. (6.56,1.97)   ;
\draw [color={rgb, 255:red, 0; green, 0; blue, 0 }  ,draw opacity=1 ][line width=0.75]    (170,171.71) -- (191,171.71) ;
\draw [shift={(193,171.71)}, rotate = 180] [color={rgb, 255:red, 0; green, 0; blue, 0 }  ,draw opacity=1 ][line width=0.75]    (6.56,-1.97) .. controls (4.17,-0.84) and (1.99,-0.18) .. (0,0) .. controls (1.99,0.18) and (4.17,0.84) .. (6.56,1.97)   ;
\draw [color={rgb, 255:red, 0; green, 0; blue, 0 }  ,draw opacity=1 ][line width=0.75]    (150,211.71) -- (150,190.71) ;
\draw [shift={(150,188.71)}, rotate = 90] [color={rgb, 255:red, 0; green, 0; blue, 0 }  ,draw opacity=1 ][line width=0.75]    (6.56,-1.97) .. controls (4.17,-0.84) and (1.99,-0.18) .. (0,0) .. controls (1.99,0.18) and (4.17,0.84) .. (6.56,1.97)   ;
\draw [color={rgb, 255:red, 0; green, 0; blue, 0 }  ,draw opacity=1 ][line width=0.75]    (150,151.71) -- (150,130.71) ;
\draw [shift={(150,128.71)}, rotate = 90] [color={rgb, 255:red, 0; green, 0; blue, 0 }  ,draw opacity=1 ][line width=0.75]    (6.56,-1.97) .. controls (4.17,-0.84) and (1.99,-0.18) .. (0,0) .. controls (1.99,0.18) and (4.17,0.84) .. (6.56,1.97)   ;
\draw [color={rgb, 255:red, 0; green, 0; blue, 0 }  ,draw opacity=1 ][line width=0.75]    (210,210.71) -- (210,189.71) ;
\draw [shift={(210,187.71)}, rotate = 90] [color={rgb, 255:red, 0; green, 0; blue, 0 }  ,draw opacity=1 ][line width=0.75]    (6.56,-1.97) .. controls (4.17,-0.84) and (1.99,-0.18) .. (0,0) .. controls (1.99,0.18) and (4.17,0.84) .. (6.56,1.97)   ;
\draw  [fill={rgb, 255:red, 0; green, 0; blue, 0 }  ,fill opacity=1 ] (178.49,261.71) .. controls (178.49,260.88) and (179.16,260.2) .. (180,260.2) .. controls (180.84,260.2) and (181.51,260.88) .. (181.51,261.71) .. controls (181.51,262.55) and (180.84,263.23) .. (180,263.23) .. controls (179.16,263.23) and (178.49,262.55) .. (178.49,261.71) -- cycle ;
\draw  [fill={rgb, 255:red, 0; green, 0; blue, 0 }  ,fill opacity=1 ] (238.49,261.71) .. controls (238.49,260.88) and (239.16,260.2) .. (240,260.2) .. controls (240.84,260.2) and (241.51,260.88) .. (241.51,261.71) .. controls (241.51,262.55) and (240.84,263.23) .. (240,263.23) .. controls (239.16,263.23) and (238.49,262.55) .. (238.49,261.71) -- cycle ;
\draw  [fill={rgb, 255:red, 0; green, 0; blue, 0 }  ,fill opacity=1 ] (298.27,261.71) .. controls (298.27,260.88) and (298.94,260.2) .. (299.78,260.2) .. controls (300.61,260.2) and (301.29,260.88) .. (301.29,261.71) .. controls (301.29,262.55) and (300.61,263.23) .. (299.78,263.23) .. controls (298.94,263.23) and (298.27,262.55) .. (298.27,261.71) -- cycle ;
\draw  [fill={rgb, 255:red, 0; green, 0; blue, 0 }  ,fill opacity=1 ] (118.49,201.91) .. controls (118.49,201.08) and (119.16,200.4) .. (120,200.4) .. controls (120.84,200.4) and (121.51,201.08) .. (121.51,201.91) .. controls (121.51,202.75) and (120.84,203.43) .. (120,203.43) .. controls (119.16,203.43) and (118.49,202.75) .. (118.49,201.91) -- cycle ;
\draw  [fill={rgb, 255:red, 0; green, 0; blue, 0 }  ,fill opacity=1 ] (118.49,142.14) .. controls (118.49,141.3) and (119.16,140.62) .. (120,140.62) .. controls (120.84,140.62) and (121.51,141.3) .. (121.51,142.14) .. controls (121.51,142.97) and (120.84,143.65) .. (120,143.65) .. controls (119.16,143.65) and (118.49,142.97) .. (118.49,142.14) -- cycle ;
\draw  [fill={rgb, 255:red, 0; green, 0; blue, 0 }  ,fill opacity=1 ] (118.49,82.14) .. controls (118.49,81.3) and (119.16,80.62) .. (120,80.62) .. controls (120.84,80.62) and (121.51,81.3) .. (121.51,82.14) .. controls (121.51,82.97) and (120.84,83.65) .. (120,83.65) .. controls (119.16,83.65) and (118.49,82.97) .. (118.49,82.14) -- cycle ;

\draw (170,266.11) node [anchor=north west][inner sep=0.75pt]    {$\textcolor[rgb]{0.74,0.06,0.88}{\mu }\textcolor[rgb]{0.74,0.06,0.88}{_{2}}$};
\draw (231,266.11) node [anchor=north west][inner sep=0.75pt]    {$\textcolor[rgb]{0.34,0.65,0}{\mu }\textcolor[rgb]{0.34,0.65,0}{_{3}}$};
\draw (291,266.11) node [anchor=north west][inner sep=0.75pt]    {$\textcolor[rgb]{0.42,0.65,0.95}{\mu }\textcolor[rgb]{0.42,0.65,0.95}{_{4}}$};
\draw (124.43,195.86) node [anchor=north west][inner sep=0.75pt]   [align=left] {{\footnotesize $+e_2$}};
\draw (165.51,157.51) node [anchor=north west][inner sep=0.75pt]   [align=left] {{\footnotesize  $+e_1$}};
\draw (165.91,217.17) node [anchor=north west][inner sep=0.75pt]   [align=left] {{\footnotesize  $+e_1$}};
\draw (225.91,218.94) node [anchor=north west][inner sep=0.75pt]   [align=left] {{\footnotesize  $+e_1$}};
\draw (184.14,196.71) node [anchor=north west][inner sep=0.75pt]   [align=left] {{\footnotesize  $+e_2$}};
\draw (124.43,136.43) node [anchor=north west][inner sep=0.75pt]   [align=left] {{\footnotesize  $+e_2$}};
\draw (150.03,92.09) node [anchor=north west][inner sep=0.75pt]    {$\textcolor[rgb]{0.31,0.89,0.76}{\mu }\textcolor[rgb]{0.31,0.89,0.76}{_{( 3,1) ,1}}$};
\draw (209.63,151.69) node [anchor=north west][inner sep=0.75pt]    {$\textcolor[rgb]{0.22,0.77,0.86}{\mu _{( 2,2) ,1}}$};
\draw (265.63,211.69) node [anchor=north west][inner sep=0.75pt]    {$\textcolor[rgb]{0.13,0.31,0.71}{\mu _{( 1,3) ,1}}$};

\end{tikzpicture}

    \caption{Decomposition of $(m-1)\cdot\Delta_1$ according to the moment map $\mu_m$ for $m=2,3,4$ and $n=2$. It corresponds to the hypersimplicial complexes $\mathcal{K}^{[2]}_2$, $\mathcal{K}^{[3]}_2$, and $\mathcal{K}^{[4]}_2$.}
    \label{fig:moment n 2}
\end{figure}

The image of the moment map $\mu$ is the 
union of all hypersimplices $\Delta_{n-l,n}+\uu-\mathbf{1}$ for $\max\{1,n+1-m\}\leq l\leq n-1 $ and $\uu\in\Z_{\geq 1}^n$ with $\uu=m+l-1$.

\begin{Def}\label{def:hypersimplicial complex K_n^m}
    The hypersimplices $\Delta_{n-l,n}+\uu-\mathbf{1}$ for $\max\{1,n+1-m\}\leq l\leq n-1 $ and $\uu\in\Z_{\geq 1}^n$ with $\uu=m+l-1$ form a hypersimplicial complex called the \emph{$(n,m)$--hypersimplicial complex}. We denote this hypersimplicial complex by $\mathcal{K}_n^{[m]}$.
\end{Def}

By \cref{prop: hypersimplicial complex}, the $(n,m)$--hypersimplicial complex $\mathcal{K}_n^{[m]}$ is indeed a hypersimplicial complex, and it subdivides $(m-1)\cdot\Delta_{n-1}$. 
Further properties of this complex are discussed in \cref{app: The hypersimplicial complex}.
By the proof of \cref{thm: moment map}, we have that 
\[
\mu\left(
\Sigma(m,l,\uu)\cap\Sigma(m,l',\mathbf{v}')
\right)= \left(\Delta_{n-l,n}+\uu-\mathbf{1}\right)\cap\left(\Delta_{n-l',n}+\mathbf{v}-\mathbf{1}\right).
\]
Therefore, the faces of $\mathcal{K}_n^{[m]}$ encode the intersection of the distinct Grassmannian components of $\Hilb^m_{\mathbf{0}}(X_n)$. \cref{fig:moment n 2} depicts this hypersimplicial complex for the case $n=2$.

\begin{Ex}\label{ex: n 3 part 1}
For $n=3$, the parameter $l$ can be either be $1$ or $2$. In the case $l=2$, we get that $\uu=(u_1,u_2,u_3)$ is a partition of $m+1$. This leads to components of the form $\Sigma(m,2,\uu)\simeq \mathrm{Gr}(2,3)=(\P^2)^{*}$. In  Plücker coordinates the moment map $\mu_{\uu,2}$ of these components is
\[
\begin{array}{cccl}
\mu_{\uu,2}:&\Sigma(m,2,\uu)\simeq (\P^2)^{*}&\longrightarrow&(m-1)\Delta_{1,3}+\uu-\mathbf{1}\\
 & [a_{23},a_{13},a_{12}]&\longrightarrow& \left(
\frac{|a_{23}|^2}{|a_{23}|^2+|a_{13}|^2+|a_{12}|^2},\frac{|a_{13}|^2}{|a_{23}|^2+|a_{13}|^2+|a_{12}|^2},\frac{|a_{12}|^2}{|a_{23}|^2+|a_{13}|^2+|a_{12}|^2}
 \right)+\uu-\mathbf{1}
    
\end{array}
\]
The image of $\mu_{\uu,2}$ is the triangle defined by the vertices $(u_1,u_2-1,u_3-1)$, $(u_1-1,u_2,u_3-1)$ and $(u_1-1,u_2-1,u_3)$. These triangles are illustrated in blue in \cref{fig: n=3 simplex}.

For $l=1$, $\uu$ is a partition of $m$ and we get components of the form $\Sigma(m,1,\uu)\simeq \mathrm{Gr}(1,3)=\P^2$. Since $\uu$ can not have zero entries, these type of components only appear for $m\geq 3$. In Plücker coordinates, the moment map $\mu_{\uu,1}$ of these components is
\[
\begin{array}{cccl}
\mu_{\uu,1}:&\Sigma(m,1,\uu)\simeq \P^2&\longrightarrow&(m-1)\Delta_{2,3}+\uu-\mathbf{1}\\
 & [a_{1},a_{2},a_{3}]&\longrightarrow& \left(
\frac{|a_{2}|^2+|a_{3}|^2}{|a_{1}|^2+|a_{2}|^2+|a_{3}|^2},\frac{|a_{1}|^2+|a_{3}|^2}{|a_{1}|^2+|a_{2}|^2+|a_{3}|^2},\frac{|a_{1}|^2+|a_{2}|^2}{|a_{1}|^2+|a_{2}|^2+|a_{3}|^2}
 \right)+\uu-\mathbf{1}.
    
\end{array}
\]

\begin{figure}[h]
    \centering

\tikzset{every picture/.style={line width=0.75pt}} 

\begin{tikzpicture}[x=0.65pt,y=0.65pt,yscale=-1,xscale=1]

\draw [color={rgb, 255:red, 155; green, 155; blue, 155 }  ,draw opacity=1 ]   (118.67,120) -- (118.67,79.96) ;
\draw [color={rgb, 255:red, 155; green, 155; blue, 155 }  ,draw opacity=1 ][line width=0.75]    (198.67,160) -- (158.67,160) ;
\draw [color={rgb, 255:red, 155; green, 155; blue, 155 }  ,draw opacity=1 ]   (78.67,200) -- (98.67,180) ;
\draw [color={rgb, 255:red, 155; green, 155; blue, 155 }  ,draw opacity=1 ]   (98.67,180) -- (118.67,160) ;
\draw [color={rgb, 255:red, 155; green, 155; blue, 155 }  ,draw opacity=1 ]   (158.67,160) -- (118.67,160) ;
\draw [color={rgb, 255:red, 155; green, 155; blue, 155 }  ,draw opacity=1 ]   (118.67,160) -- (118.67,119.96) ;
\draw  [color={rgb, 255:red, 74; green, 144; blue, 226 }  ,draw opacity=1 ][fill={rgb, 255:red, 74; green, 144; blue, 226 }  ,fill opacity=0.18 ] (118.67,120) -- (158.67,160) -- (98.67,180) -- cycle ;
\draw [color={rgb, 255:red, 155; green, 155; blue, 155 }  ,draw opacity=1 ]   (280.67,120) -- (280.67,79.96) ;
\draw [color={rgb, 255:red, 155; green, 155; blue, 155 }  ,draw opacity=1 ]   (220.67,220) -- (240.67,200) ;
\draw [color={rgb, 255:red, 155; green, 155; blue, 155 }  ,draw opacity=1 ]   (400.67,160) -- (360.67,160) ;
\draw [color={rgb, 255:red, 155; green, 155; blue, 155 }  ,draw opacity=1 ][line width=0.75]    (360.67,160) -- (320.67,160) ;
\draw [color={rgb, 255:red, 155; green, 155; blue, 155 }  ,draw opacity=1 ]   (240.67,200) -- (260.67,180) ;
\draw [color={rgb, 255:red, 155; green, 155; blue, 155 }  ,draw opacity=1 ]   (260.67,180) -- (280.67,160) ;
\draw [color={rgb, 255:red, 155; green, 155; blue, 155 }  ,draw opacity=1 ]   (320.67,160) -- (280.67,160) ;
\draw [color={rgb, 255:red, 155; green, 155; blue, 155 }  ,draw opacity=1 ]   (280.67,160) -- (280.67,119.96) ;
\draw  [color={rgb, 255:red, 74; green, 144; blue, 226 }  ,draw opacity=1 ][fill={rgb, 255:red, 74; green, 144; blue, 226 }  ,fill opacity=0.18 ] (260.67,140) -- (300.67,180) -- (240.67,200) -- cycle ;
\draw  [color={rgb, 255:red, 74; green, 144; blue, 226 }  ,draw opacity=1 ][fill={rgb, 255:red, 74; green, 144; blue, 226 }  ,fill opacity=0.18 ] (280.67,79.96) -- (320.67,119.96) -- (260.67,139.96) -- cycle ;
\draw  [color={rgb, 255:red, 74; green, 144; blue, 226 }  ,draw opacity=1 ][fill={rgb, 255:red, 74; green, 144; blue, 226 }  ,fill opacity=0.18 ] (320.67,120) -- (360.67,160) -- (300.67,180) -- cycle ;
\draw  [color={rgb, 255:red, 189; green, 16; blue, 224 }  ,draw opacity=1 ][fill={rgb, 255:red, 189; green, 16; blue, 224 }  ,fill opacity=0.25 ] (260.67,140) -- (320.67,120) -- (300.67,180) -- cycle ;
\draw [color={rgb, 255:red, 155; green, 155; blue, 155 }  ,draw opacity=1 ]   (482,120) -- (482,79.96) ;
\draw [color={rgb, 255:red, 155; green, 155; blue, 155 }  ,draw opacity=1 ]   (422,220) -- (442,200) ;
\draw [color={rgb, 255:red, 155; green, 155; blue, 155 }  ,draw opacity=1 ]   (602,160) -- (562,160) ;
\draw [color={rgb, 255:red, 155; green, 155; blue, 155 }  ,draw opacity=1 ][line width=0.75]    (562,160) -- (522,160) ;
\draw [color={rgb, 255:red, 155; green, 155; blue, 155 }  ,draw opacity=1 ]   (442,200) -- (462,180) ;
\draw [color={rgb, 255:red, 155; green, 155; blue, 155 }  ,draw opacity=1 ]   (462,180) -- (482,160) ;
\draw [color={rgb, 255:red, 155; green, 155; blue, 155 }  ,draw opacity=1 ]   (522,160) -- (482,160) ;
\draw [color={rgb, 255:red, 155; green, 155; blue, 155 }  ,draw opacity=1 ]   (482,160) -- (482,119.96) ;
\draw  [color={rgb, 255:red, 74; green, 144; blue, 226 }  ,draw opacity=1 ][fill={rgb, 255:red, 74; green, 144; blue, 226 }  ,fill opacity=0.18 ] (462,100) -- (502,140) -- (442,160) -- cycle ;
\draw  [color={rgb, 255:red, 74; green, 144; blue, 226 }  ,draw opacity=1 ][fill={rgb, 255:red, 74; green, 144; blue, 226 }  ,fill opacity=0.18 ] (482,39.92) -- (522,79.92) -- (462,99.92) -- cycle ;
\draw  [color={rgb, 255:red, 74; green, 144; blue, 226 }  ,draw opacity=1 ][fill={rgb, 255:red, 74; green, 144; blue, 226 }  ,fill opacity=0.18 ] (522,80) -- (562,120) -- (502,140) -- cycle ;
\draw  [color={rgb, 255:red, 189; green, 16; blue, 224 }  ,draw opacity=1 ][fill={rgb, 255:red, 189; green, 16; blue, 224 }  ,fill opacity=0.25 ] (462,100) -- (522,80) -- (502,140) -- cycle ;
\draw [color={rgb, 255:red, 155; green, 155; blue, 155 }  ,draw opacity=1 ]   (482,79.96) -- (482,39.92) ;
\draw  [color={rgb, 255:red, 74; green, 144; blue, 226 }  ,draw opacity=1 ][fill={rgb, 255:red, 74; green, 144; blue, 226 }  ,fill opacity=0.18 ] (562,120) -- (602,160) -- (542,180) -- cycle ;
\draw  [color={rgb, 255:red, 74; green, 144; blue, 226 }  ,draw opacity=1 ][fill={rgb, 255:red, 74; green, 144; blue, 226 }  ,fill opacity=0.18 ] (502,140) -- (542,180) -- (482,200) -- cycle ;
\draw  [color={rgb, 255:red, 74; green, 144; blue, 226 }  ,draw opacity=1 ][fill={rgb, 255:red, 74; green, 144; blue, 226 }  ,fill opacity=0.18 ] (442,160) -- (482,200) -- (422,220) -- cycle ;
\draw  [color={rgb, 255:red, 189; green, 16; blue, 224 }  ,draw opacity=1 ][fill={rgb, 255:red, 189; green, 16; blue, 224 }  ,fill opacity=0.25 ] (442,160) -- (502,140) -- (482,200) -- cycle ;
\draw  [color={rgb, 255:red, 189; green, 16; blue, 224 }  ,draw opacity=1 ][fill={rgb, 255:red, 189; green, 16; blue, 224 }  ,fill opacity=0.25 ] (502,140) -- (562,120) -- (542,180) -- cycle ;
\draw [color={rgb, 255:red, 155; green, 155; blue, 155 }  ,draw opacity=1 ]   (280.67,79.96) -- (280.67,39.92) ;

\end{tikzpicture}

    \caption{Hypersimplicial complex $\mathcal{K}^{[m]}_3$ subdividing $(m-1)\Delta_2$ for $m=1,2$ and $3$. The blue triangles correspond to the image of the maps $\mu_{\uu,2}$ whereas the purple triangles correspond to the image of the maps $\mu_{\uu,1}$.}
    \label{fig: n=3 simplex}
\end{figure}
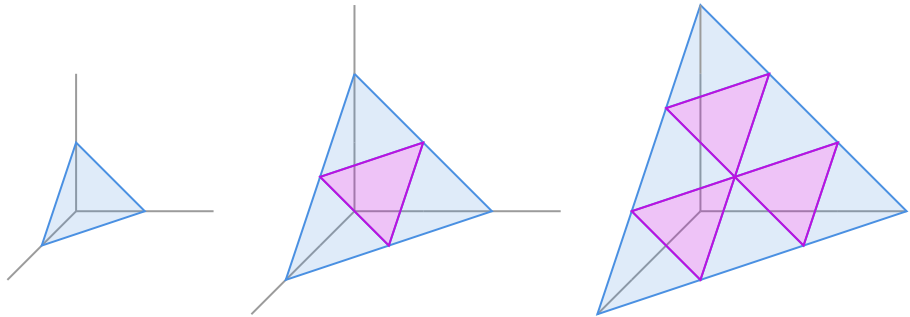

The image of $\mu_{\uu,1}$ is the triangle defined by the vertices $(u_1,u_2,u_3-1)$, $(u_1-1,u_2,u_3)$ and $(u_1,u_2-1,u_3)$. These triangles are illustrated in purple in \cref{fig: n=3 simplex}. 
These $2$--dimensional hypersimplices form the complex $\mathcal{K}_3^{[m]}$ that subdivides $(m-1)\cdot\Delta_2$. \cref{fig: n=3 simplex} depicts this complex for $m=2,3,4$ and \cref{fig:n 3 and m 5} the case $m=5$. We deduce that $\Hilb^m_\mathbf{0}(X_3)$ has $\binom{m}{2}$
components of the form $\Sigma(m,2,\uu)$ and $\binom{m-1}{2}$ components of the form $\Sigma(m,1,\uu)$. All these components are toric and they intersect each other in the closure of toric orbits. The complex $\mathcal{K}^{[m]}_3$ encodes the toric representation of $\Hilb^m_\mathbf{0}(X_3)$ and how its components intersect.
\end{Ex}

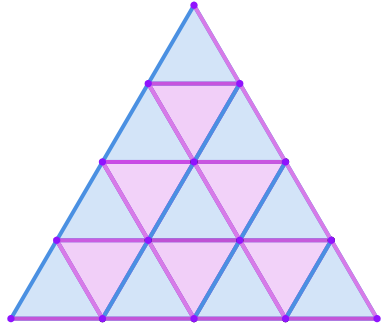
\begin{figure}[h]
    \centering

\tikzset{every picture/.style={line width=0.75pt}} 

\begin{tikzpicture}[x=0.5pt,y=0.5pt,yscale=-1,xscale=1]

\draw  [color={rgb, 255:red, 74; green, 144; blue, 226 }  ,draw opacity=1 ][fill={rgb, 255:red, 74; green, 144; blue, 226 }  ,fill opacity=0.24 ][line width=1.5]  (389.04,210.68) -- (423.67,270) -- (354.42,270) -- cycle ;
\draw  [color={rgb, 255:red, 74; green, 144; blue, 226 }  ,draw opacity=1 ][fill={rgb, 255:red, 74; green, 144; blue, 226 }  ,fill opacity=0.24 ][line width=1.5]  (319.8,210.68) -- (354.42,270) -- (285.17,270) -- cycle ;
\draw  [color={rgb, 255:red, 74; green, 144; blue, 226 }  ,draw opacity=1 ][fill={rgb, 255:red, 74; green, 144; blue, 226 }  ,fill opacity=0.24 ][line width=1.5]  (250.55,210.68) -- (285.17,270) -- (215.93,270) -- cycle ;
\draw  [color={rgb, 255:red, 74; green, 144; blue, 226 }  ,draw opacity=1 ][fill={rgb, 255:red, 74; green, 144; blue, 226 }  ,fill opacity=0.24 ][line width=1.5]  (285.17,151.37) -- (319.8,210.68) -- (250.55,210.68) -- cycle ;
\draw  [color={rgb, 255:red, 74; green, 144; blue, 226 }  ,draw opacity=1 ][fill={rgb, 255:red, 74; green, 144; blue, 226 }  ,fill opacity=0.24 ][line width=1.5]  (354.42,151.37) -- (389.04,210.68) -- (319.8,210.68) -- cycle ;
\draw  [color={rgb, 255:red, 74; green, 144; blue, 226 }  ,draw opacity=1 ][fill={rgb, 255:red, 74; green, 144; blue, 226 }  ,fill opacity=0.24 ][line width=1.5]  (181.3,210.68) -- (215.93,270) -- (146.68,270) -- cycle ;
\draw  [color={rgb, 255:red, 74; green, 144; blue, 226 }  ,draw opacity=1 ][fill={rgb, 255:red, 74; green, 144; blue, 226 }  ,fill opacity=0.24 ][line width=1.5]  (215.93,151.37) -- (250.55,210.68) -- (181.3,210.68) -- cycle ;
\draw  [color={rgb, 255:red, 74; green, 144; blue, 226 }  ,draw opacity=1 ][fill={rgb, 255:red, 74; green, 144; blue, 226 }  ,fill opacity=0.24 ][line width=1.5]  (250.55,92.05) -- (285.17,151.37) -- (215.93,151.37) -- cycle ;
\draw  [color={rgb, 255:red, 74; green, 144; blue, 226 }  ,draw opacity=1 ][fill={rgb, 255:red, 74; green, 144; blue, 226 }  ,fill opacity=0.24 ][line width=1.5]  (319.8,92.05) -- (354.42,151.37) -- (285.17,151.37) -- cycle ;
\draw  [color={rgb, 255:red, 74; green, 144; blue, 226 }  ,draw opacity=1 ][fill={rgb, 255:red, 74; green, 144; blue, 226 }  ,fill opacity=0.24 ][line width=1.5]  (285.17,32.74) -- (319.8,92.05) -- (250.55,92.05) -- cycle ;
\draw  [color={rgb, 255:red, 198; green, 84; blue, 221 }  ,draw opacity=1 ][fill={rgb, 255:red, 189; green, 16; blue, 224 }  ,fill opacity=0.2 ][line width=1.5]  (285.17,151.37) -- (250.55,92.05) -- (319.8,92.05) -- cycle ;
\draw  [color={rgb, 255:red, 201; green, 86; blue, 224 }  ,draw opacity=1 ][fill={rgb, 255:red, 189; green, 16; blue, 224 }  ,fill opacity=0.19 ][line width=1.5]  (319.8,210.68) -- (285.17,151.37) -- (354.42,151.37) -- cycle ;
\draw  [color={rgb, 255:red, 203; green, 74; blue, 229 }  ,draw opacity=1 ][fill={rgb, 255:red, 189; green, 16; blue, 224 }  ,fill opacity=0.19 ][line width=1.5]  (250.55,210.68) -- (215.93,151.37) -- (285.17,151.37) -- cycle ;
\draw  [color={rgb, 255:red, 202; green, 93; blue, 224 }  ,draw opacity=1 ][fill={rgb, 255:red, 189; green, 16; blue, 224 }  ,fill opacity=0.2 ][line width=1.5]  (215.93,270) -- (181.3,210.68) -- (250.55,210.68) -- cycle ;
\draw  [color={rgb, 255:red, 190; green, 82; blue, 212 }  ,draw opacity=1 ][fill={rgb, 255:red, 189; green, 16; blue, 224 }  ,fill opacity=0.2 ][line width=1.5]  (285.17,270) -- (250.55,210.68) -- (319.8,210.68) -- cycle ;
\draw  [color={rgb, 255:red, 197; green, 91; blue, 219 }  ,draw opacity=1 ][fill={rgb, 255:red, 189; green, 16; blue, 224 }  ,fill opacity=0.19 ][line width=1.5]  (354.42,270) -- (319.8,210.68) -- (389.04,210.68) -- cycle ;
\draw  [fill={rgb, 255:red, 0; green, 0; blue, 0 }  ,fill opacity=1 ] (352.43,151.37) .. controls (352.43,150.27) and (353.32,149.37) .. (354.42,149.37) .. controls (355.52,149.37) and (356.41,150.27) .. (356.41,151.37) .. controls (356.41,152.47) and (355.52,153.36) .. (354.42,153.36) .. controls (353.32,153.36) and (352.43,152.47) .. (352.43,151.37) -- cycle ;
\draw  [fill={rgb, 255:red, 0; green, 0; blue, 0 }  ,fill opacity=1 ] (387.05,210.68) .. controls (387.05,209.58) and (387.94,208.69) .. (389.04,208.69) .. controls (390.14,208.69) and (391.04,209.58) .. (391.04,210.68) .. controls (391.04,211.78) and (390.14,212.68) .. (389.04,212.68) .. controls (387.94,212.68) and (387.05,211.78) .. (387.05,210.68) -- cycle ;
\draw  [fill={rgb, 255:red, 0; green, 0; blue, 0 }  ,fill opacity=1 ] (283.18,151.37) .. controls (283.18,150.27) and (284.07,149.37) .. (285.17,149.37) .. controls (286.27,149.37) and (287.17,150.27) .. (287.17,151.37) .. controls (287.17,152.47) and (286.27,153.36) .. (285.17,153.36) .. controls (284.07,153.36) and (283.18,152.47) .. (283.18,151.37) -- cycle ;
\draw  [fill={rgb, 255:red, 0; green, 0; blue, 0 }  ,fill opacity=1 ] (248.56,210.68) .. controls (248.56,209.58) and (249.45,208.69) .. (250.55,208.69) .. controls (251.65,208.69) and (252.54,209.58) .. (252.54,210.68) .. controls (252.54,211.78) and (251.65,212.68) .. (250.55,212.68) .. controls (249.45,212.68) and (248.56,211.78) .. (248.56,210.68) -- cycle ;
\draw  [fill={rgb, 255:red, 0; green, 0; blue, 0 }  ,fill opacity=1 ] (317.8,210.68) .. controls (317.8,209.58) and (318.7,208.69) .. (319.8,208.69) .. controls (320.9,208.69) and (321.79,209.58) .. (321.79,210.68) .. controls (321.79,211.78) and (320.9,212.68) .. (319.8,212.68) .. controls (318.7,212.68) and (317.8,211.78) .. (317.8,210.68) -- cycle ;
\draw  [fill={rgb, 255:red, 0; green, 0; blue, 0 }  ,fill opacity=1 ] (283.18,270) .. controls (283.18,268.9) and (284.07,268.01) .. (285.17,268.01) .. controls (286.27,268.01) and (287.17,268.9) .. (287.17,270) .. controls (287.17,271.1) and (286.27,271.99) .. (285.17,271.99) .. controls (284.07,271.99) and (283.18,271.1) .. (283.18,270) -- cycle ;
\draw  [fill={rgb, 255:red, 0; green, 0; blue, 0 }  ,fill opacity=1 ] (352.43,270) .. controls (352.43,268.9) and (353.32,268.01) .. (354.42,268.01) .. controls (355.52,268.01) and (356.41,268.9) .. (356.41,270) .. controls (356.41,271.1) and (355.52,271.99) .. (354.42,271.99) .. controls (353.32,271.99) and (352.43,271.1) .. (352.43,270) -- cycle ;
\draw  [fill={rgb, 255:red, 0; green, 0; blue, 0 }  ,fill opacity=1 ] (213.93,270) .. controls (213.93,268.9) and (214.83,268.01) .. (215.93,268.01) .. controls (217.03,268.01) and (217.92,268.9) .. (217.92,270) .. controls (217.92,271.1) and (217.03,271.99) .. (215.93,271.99) .. controls (214.83,271.99) and (213.93,271.1) .. (213.93,270) -- cycle ;
\draw [color={rgb, 255:red, 74; green, 144; blue, 226 }  ,draw opacity=1 ][line width=1.5]    (319.8,92.05) -- (215.93,270) ;
\draw [color={rgb, 255:red, 74; green, 144; blue, 226 }  ,draw opacity=1 ][line width=1.5]    (354.42,151.37) -- (285.17,270) ;
\draw [color={rgb, 255:red, 74; green, 144; blue, 226 }  ,draw opacity=1 ][line width=1.5]    (389.04,210.68) -- (354.42,270) ;
\draw [color={rgb, 255:red, 195; green, 85; blue, 217 }  ,draw opacity=1 ][line width=1.5]    (423.67,270) -- (146.68,270) ;
\draw [color={rgb, 255:red, 215; green, 123; blue, 232 }  ,draw opacity=1 ][line width=1.5]    (285.17,32.74) -- (423.67,270) ;
\draw [color={rgb, 255:red, 215; green, 123; blue, 232 }  ,draw opacity=1 ][line width=1.5]    (250.55,92.05) -- (354.42,270) ;
\draw [color={rgb, 255:red, 215; green, 123; blue, 232 }  ,draw opacity=1 ][line width=1.5]    (215.93,151.37) -- (285.17,270) ;
\draw [color={rgb, 255:red, 215; green, 123; blue, 232 }  ,draw opacity=1 ][line width=1.5]    (181.3,210.68) -- (215.93,270) ;
\draw  [color={rgb, 255:red, 144; green, 19; blue, 254 }  ,draw opacity=1 ][fill={rgb, 255:red, 144; green, 19; blue, 254 }  ,fill opacity=1 ] (283.18,32.74) .. controls (283.18,31.63) and (284.07,30.74) .. (285.17,30.74) .. controls (286.27,30.74) and (287.17,31.63) .. (287.17,32.74) .. controls (287.17,33.84) and (286.27,34.73) .. (285.17,34.73) .. controls (284.07,34.73) and (283.18,33.84) .. (283.18,32.74) -- cycle ;
\draw  [color={rgb, 255:red, 144; green, 19; blue, 254 }  ,draw opacity=1 ][fill={rgb, 255:red, 144; green, 19; blue, 254 }  ,fill opacity=1 ] (317.8,92.05) .. controls (317.8,90.95) and (318.7,90.06) .. (319.8,90.06) .. controls (320.9,90.06) and (321.79,90.95) .. (321.79,92.05) .. controls (321.79,93.15) and (320.9,94.05) .. (319.8,94.05) .. controls (318.7,94.05) and (317.8,93.15) .. (317.8,92.05) -- cycle ;
\draw  [color={rgb, 255:red, 144; green, 19; blue, 254 }  ,draw opacity=1 ][fill={rgb, 255:red, 144; green, 19; blue, 254 }  ,fill opacity=1 ] (179.31,210.68) .. controls (179.31,209.58) and (180.2,208.69) .. (181.3,208.69) .. controls (182.41,208.69) and (183.3,209.58) .. (183.3,210.68) .. controls (183.3,211.78) and (182.41,212.68) .. (181.3,212.68) .. controls (180.2,212.68) and (179.31,211.78) .. (179.31,210.68) -- cycle ;
\draw  [color={rgb, 255:red, 144; green, 19; blue, 254 }  ,draw opacity=1 ][fill={rgb, 255:red, 144; green, 19; blue, 254 }  ,fill opacity=1 ] (352.43,151.37) .. controls (352.43,150.27) and (353.32,149.37) .. (354.42,149.37) .. controls (355.52,149.37) and (356.41,150.27) .. (356.41,151.37) .. controls (356.41,152.47) and (355.52,153.36) .. (354.42,153.36) .. controls (353.32,153.36) and (352.43,152.47) .. (352.43,151.37) -- cycle ;
\draw  [color={rgb, 255:red, 144; green, 19; blue, 254 }  ,draw opacity=1 ][fill={rgb, 255:red, 144; green, 19; blue, 254 }  ,fill opacity=1 ] (213.93,151.37) .. controls (213.93,150.27) and (214.83,149.37) .. (215.93,149.37) .. controls (217.03,149.37) and (217.92,150.27) .. (217.92,151.37) .. controls (217.92,152.47) and (217.03,153.36) .. (215.93,153.36) .. controls (214.83,153.36) and (213.93,152.47) .. (213.93,151.37) -- cycle ;
\draw  [color={rgb, 255:red, 144; green, 19; blue, 254 }  ,draw opacity=1 ][fill={rgb, 255:red, 144; green, 19; blue, 254 }  ,fill opacity=1 ] (283.18,151.37) .. controls (283.18,150.27) and (284.07,149.37) .. (285.17,149.37) .. controls (286.27,149.37) and (287.17,150.27) .. (287.17,151.37) .. controls (287.17,152.47) and (286.27,153.36) .. (285.17,153.36) .. controls (284.07,153.36) and (283.18,152.47) .. (283.18,151.37) -- cycle ;
\draw  [color={rgb, 255:red, 144; green, 19; blue, 254 }  ,draw opacity=1 ][fill={rgb, 255:red, 144; green, 19; blue, 254 }  ,fill opacity=1 ] (248.56,92.05) .. controls (248.56,90.95) and (249.45,90.06) .. (250.55,90.06) .. controls (251.65,90.06) and (252.54,90.95) .. (252.54,92.05) .. controls (252.54,93.15) and (251.65,94.05) .. (250.55,94.05) .. controls (249.45,94.05) and (248.56,93.15) .. (248.56,92.05) -- cycle ;
\draw  [color={rgb, 255:red, 144; green, 19; blue, 254 }  ,draw opacity=1 ][fill={rgb, 255:red, 144; green, 19; blue, 254 }  ,fill opacity=1 ] (283.18,270) .. controls (283.18,268.9) and (284.07,268.01) .. (285.17,268.01) .. controls (286.27,268.01) and (287.17,268.9) .. (287.17,270) .. controls (287.17,271.1) and (286.27,271.99) .. (285.17,271.99) .. controls (284.07,271.99) and (283.18,271.1) .. (283.18,270) -- cycle ;
\draw  [color={rgb, 255:red, 144; green, 19; blue, 254 }  ,draw opacity=1 ][fill={rgb, 255:red, 144; green, 19; blue, 254 }  ,fill opacity=1 ] (248.56,210.68) .. controls (248.56,209.58) and (249.45,208.69) .. (250.55,208.69) .. controls (251.65,208.69) and (252.54,209.58) .. (252.54,210.68) .. controls (252.54,211.78) and (251.65,212.68) .. (250.55,212.68) .. controls (249.45,212.68) and (248.56,211.78) .. (248.56,210.68) -- cycle ;
\draw  [color={rgb, 255:red, 144; green, 19; blue, 254 }  ,draw opacity=1 ][fill={rgb, 255:red, 144; green, 19; blue, 254 }  ,fill opacity=1 ] (213.93,270) .. controls (213.93,268.9) and (214.83,268.01) .. (215.93,268.01) .. controls (217.03,268.01) and (217.92,268.9) .. (217.92,270) .. controls (217.92,271.1) and (217.03,271.99) .. (215.93,271.99) .. controls (214.83,271.99) and (213.93,271.1) .. (213.93,270) -- cycle ;
\draw  [color={rgb, 255:red, 144; green, 19; blue, 254 }  ,draw opacity=1 ][fill={rgb, 255:red, 144; green, 19; blue, 254 }  ,fill opacity=1 ] (144.69,270) .. controls (144.69,268.9) and (145.58,268.01) .. (146.68,268.01) .. controls (147.78,268.01) and (148.67,268.9) .. (148.67,270) .. controls (148.67,271.1) and (147.78,271.99) .. (146.68,271.99) .. controls (145.58,271.99) and (144.69,271.1) .. (144.69,270) -- cycle ;
\draw  [color={rgb, 255:red, 144; green, 19; blue, 254 }  ,draw opacity=1 ][fill={rgb, 255:red, 144; green, 19; blue, 254 }  ,fill opacity=1 ] (421.67,270) .. controls (421.67,268.9) and (422.56,268.01) .. (423.67,268.01) .. controls (424.77,268.01) and (425.66,268.9) .. (425.66,270) .. controls (425.66,271.1) and (424.77,271.99) .. (423.67,271.99) .. controls (422.56,271.99) and (421.67,271.1) .. (421.67,270) -- cycle ;
\draw  [color={rgb, 255:red, 144; green, 19; blue, 254 }  ,draw opacity=1 ][fill={rgb, 255:red, 144; green, 19; blue, 254 }  ,fill opacity=1 ] (387.05,210.68) .. controls (387.05,209.58) and (387.94,208.69) .. (389.04,208.69) .. controls (390.14,208.69) and (391.04,209.58) .. (391.04,210.68) .. controls (391.04,211.78) and (390.14,212.68) .. (389.04,212.68) .. controls (387.94,212.68) and (387.05,211.78) .. (387.05,210.68) -- cycle ;
\draw  [color={rgb, 255:red, 144; green, 19; blue, 254 }  ,draw opacity=1 ][fill={rgb, 255:red, 144; green, 19; blue, 254 }  ,fill opacity=1 ] (352.43,270) .. controls (352.43,268.9) and (353.32,268.01) .. (354.42,268.01) .. controls (355.52,268.01) and (356.41,268.9) .. (356.41,270) .. controls (356.41,271.1) and (355.52,271.99) .. (354.42,271.99) .. controls (353.32,271.99) and (352.43,271.1) .. (352.43,270) -- cycle ;
\draw  [color={rgb, 255:red, 144; green, 19; blue, 254 }  ,draw opacity=1 ][fill={rgb, 255:red, 144; green, 19; blue, 254 }  ,fill opacity=1 ] (317.8,210.68) .. controls (317.8,209.58) and (318.7,208.69) .. (319.8,208.69) .. controls (320.9,208.69) and (321.79,209.58) .. (321.79,210.68) .. controls (321.79,211.78) and (320.9,212.68) .. (319.8,212.68) .. controls (318.7,212.68) and (317.8,211.78) .. (317.8,210.68) -- cycle ;

\end{tikzpicture}

    \caption{Hypersimplicial complex $\mathcal{K}^{[5]}_3$ enconding the toric representation of $\Hilb^5_0(X_3)$ obtained by gluing together $16$ copies of $\P^2$.}
    \label{fig:n 3 and m 5}
\end{figure}

\begin{Ex}
 For $n=4$, the possible values of $l$ are $l=1,2,3$. 
 For $l=1$,  $\uu$ is a partition $m$ and we get components of the form $\Sigma(m,1,\uu)\simeq\mathrm{Gr}(1,4)\simeq\P^3$. The image of $\mu_{\uu,1}$ is the translated hypersimplex $\Delta_{3,4}+\uu-\mathbf{1}$. 
 Note that since $\uu$ has no vanishing entries, these components only appear for $m\geq 4$. For $m=4$, the only choice of $\uu$ is $\mathbf{1}$. In this case, the hypersimplex $\Delta_{3,4}$ is illustrated in purple in \cref{fig:n 4 m 4}.
 
 For $l=2$,  $\uu$ is a partition of $m+1$ and we get components of the form $\Sigma(m,2,\uu)\simeq\mathrm{Gr}(2,4)$. The image of $\mu_{\uu,2}$ is the translated hypersimplex $\Delta_{2,4}+\uu-\mathbf{1}$. These components appear for $m\geq 3$. For $m=3$ we have that $\uu=\mathbf{1}$. The hypersimplex $\Delta_{2,4}$ is depicted in purple in \cref{fig:n 4 m 3}. For $m=4$, the possible choices of $\uu$ are $\mathbf{1}+\mathbf{e}_i$ for $i\in[4]$. The hypersimplices $\Delta_{2,4}+\mathbf{e}_i$ are illustrated in green in \cref{fig:n 4 m 4}.

 Finally, for $l=3$, $\uu$ is a partition of $m+2$ and we get components of the form $\Sigma(m,3,\uu)\simeq\mathrm{Gr}(3,4)\simeq\P^3$. The image of $\mu_{\uu,3}$ is the translated hypersimplex $\Delta_{1,4}+\uu-\mathbf{1}$. For $m=2$, we have that $\mathbf{u}=\mathbf{1}$ and $\Delta_{1,4}$ is the usual simplex $\Delta_{3}$. For $m=3$, $\uu=\mathbf{1}+\mathbf{e}_i$ for $i\in[4]$. The hypersimplices $\Delta_{1,4}+\mathbf{e}_i$ are illustrated in blue in \cref{fig:n 4 m 3}. For $m=4$ the possible choices of $\uu$ are $\uu=\mathbf{1}+\mathbf{e}_i+\mathbf{e}_j$ for $i,j\in[4]$. The $10$ hypersimplices $\Delta_{1,4}+\mathbf{e}_i+\mathbf{e}_j$ for $i,j\in[4]$ are depicted in blue in \cref{fig:n 4 m 4}. 

 \begin{figure}[h]
     \centering

  
\tikzset {_10qj243rh/.code = {\pgfsetadditionalshadetransform{ \pgftransformshift{\pgfpoint{0 bp } { 0 bp }  }  \pgftransformscale{1 }  }}}
\pgfdeclareradialshading{_h21rpgd2u}{\pgfpoint{0bp}{0bp}}{rgb(0bp)=(0.49,0.83,0.13);
rgb(0bp)=(0.49,0.83,0.13);
rgb(25bp)=(0,0,0);
rgb(400bp)=(0,0,0)}
\tikzset{_u4vbe2pa5/.code = {\pgfsetadditionalshadetransform{\pgftransformshift{\pgfpoint{0 bp } { 0 bp }  }  \pgftransformscale{1 } }}}
\pgfdeclareradialshading{_2lmcxllsa} { \pgfpoint{0bp} {0bp}} {color(0bp)=(transparent!40);
color(0bp)=(transparent!40);
color(25bp)=(transparent!0);
color(400bp)=(transparent!0)} 
\pgfdeclarefading{_rgq9dvoam}{\tikz \fill[shading=_2lmcxllsa,_u4vbe2pa5] (0,0) rectangle (50bp,50bp); } 

  
\tikzset {_niy9s8pxh/.code = {\pgfsetadditionalshadetransform{ \pgftransformshift{\pgfpoint{0 bp } { 0 bp }  }  \pgftransformscale{1 }  }}}
\pgfdeclareradialshading{_2chw11kv6}{\pgfpoint{0bp}{0bp}}{rgb(0bp)=(0.49,0.83,0.13);
rgb(0bp)=(0.49,0.83,0.13);
rgb(25bp)=(0,0,0);
rgb(400bp)=(0,0,0)}
\tikzset{_6qv16zwts/.code = {\pgfsetadditionalshadetransform{\pgftransformshift{\pgfpoint{0 bp } { 0 bp }  }  \pgftransformscale{1 } }}}
\pgfdeclareradialshading{_cxiv1xz59} { \pgfpoint{0bp} {0bp}} {color(0bp)=(transparent!40);
color(0bp)=(transparent!40);
color(25bp)=(transparent!0);
color(400bp)=(transparent!0)} 
\pgfdeclarefading{_k3vsvihp3}{\tikz \fill[shading=_cxiv1xz59,_6qv16zwts] (0,0) rectangle (50bp,50bp); } 

  
\tikzset {_jq0sp0kra/.code = {\pgfsetadditionalshadetransform{ \pgftransformshift{\pgfpoint{0 bp } { 0 bp }  }  \pgftransformscale{1 }  }}}
\pgfdeclareradialshading{_112r4hh53}{\pgfpoint{0bp}{0bp}}{rgb(0bp)=(0.49,0.83,0.13);
rgb(0bp)=(0.49,0.83,0.13);
rgb(25bp)=(0,0,0);
rgb(400bp)=(0,0,0)}
\tikzset{_task4ukxs/.code = {\pgfsetadditionalshadetransform{\pgftransformshift{\pgfpoint{0 bp } { 0 bp }  }  \pgftransformscale{1 } }}}
\pgfdeclareradialshading{_gna4zgivx} { \pgfpoint{0bp} {0bp}} {color(0bp)=(transparent!40);
color(0bp)=(transparent!40);
color(25bp)=(transparent!0);
color(400bp)=(transparent!0)} 
\pgfdeclarefading{_7r82lrqto}{\tikz \fill[shading=_gna4zgivx,_task4ukxs] (0,0) rectangle (50bp,50bp); } 

  
\tikzset {_pljgi783y/.code = {\pgfsetadditionalshadetransform{ \pgftransformshift{\pgfpoint{0 bp } { 0 bp }  }  \pgftransformscale{1 }  }}}
\pgfdeclareradialshading{_gn6bfzwfa}{\pgfpoint{0bp}{0bp}}{rgb(0bp)=(0.49,0.83,0.13);
rgb(0bp)=(0.49,0.83,0.13);
rgb(25bp)=(0,0,0);
rgb(400bp)=(0,0,0)}
\tikzset{_r9sxjnl12/.code = {\pgfsetadditionalshadetransform{\pgftransformshift{\pgfpoint{0 bp } { 0 bp }  }  \pgftransformscale{1 } }}}
\pgfdeclareradialshading{_l7nfpuies} { \pgfpoint{0bp} {0bp}} {color(0bp)=(transparent!40);
color(0bp)=(transparent!40);
color(25bp)=(transparent!0);
color(400bp)=(transparent!0)} 
\pgfdeclarefading{_9fx2zw7qt}{\tikz \fill[shading=_l7nfpuies,_r9sxjnl12] (0,0) rectangle (50bp,50bp); } 
\tikzset{every picture/.style={line width=0.75pt}} 

\begin{tikzpicture}[x=0.75pt,y=0.75pt,yscale=-1,xscale=1]

\draw [color={rgb, 255:red, 74; green, 144; blue, 226 }  ,draw opacity=1 ] [dash pattern={on 4.5pt off 4.5pt}]  (205.07,101.4) -- (205.07,141.4) ;
\draw [color={rgb, 255:red, 189; green, 16; blue, 224 }  ,draw opacity=1 ] [dash pattern={on 4.5pt off 4.5pt}]  (185.07,161.4) -- (205.07,141.4) ;
\draw [color={rgb, 255:red, 189; green, 16; blue, 224 }  ,draw opacity=1 ] [dash pattern={on 4.5pt off 4.5pt}]  (245.07,141.4) -- (205.07,141.4) ;
\draw [color={rgb, 255:red, 126; green, 211; blue, 33 }  ,draw opacity=1 ]   (285.07,181.4) -- (254.86,151.57) -- (245.07,141.4) ;
\draw [color={rgb, 255:red, 189; green, 16; blue, 224 }  ,draw opacity=1 ]   (225.07,201.4) -- (185.07,161.4) ;
\draw [color={rgb, 255:red, 189; green, 16; blue, 224 }  ,draw opacity=1 ]   (225.07,201.4) -- (245.07,141.4) ;
\draw [color={rgb, 255:red, 189; green, 16; blue, 224 }  ,draw opacity=1 ]   (185.07,161.4) -- (245.07,141.4) ;
\draw [color={rgb, 255:red, 189; green, 16; blue, 224 }  ,draw opacity=1 ] [dash pattern={on 4.5pt off 4.5pt}]  (185.07,161.4) -- (185.07,201.4) ;
\draw [color={rgb, 255:red, 189; green, 16; blue, 224 }  ,draw opacity=1 ] [dash pattern={on 4.5pt off 4.5pt}]  (245.07,141.4) -- (245.07,181.4) ;
\draw [color={rgb, 255:red, 189; green, 16; blue, 224 }  ,draw opacity=1 ] [dash pattern={on 4.5pt off 4.5pt}]  (185.07,201.4) -- (205.07,141.4) ;
\draw [color={rgb, 255:red, 189; green, 16; blue, 224 }  ,draw opacity=1 ] [dash pattern={on 4.5pt off 4.5pt}]  (245.07,181.4) -- (205.07,141.4) ;
\draw [color={rgb, 255:red, 74; green, 144; blue, 226 }  ,draw opacity=1 ] [dash pattern={on 4.5pt off 4.5pt}]  (205.07,181.4) -- (205.07,141.36) ;
\draw [color={rgb, 255:red, 126; green, 211; blue, 33 }  ,draw opacity=1 ]   (285.07,181.4) -- (225.07,201.4) ;
\draw [color={rgb, 255:red, 74; green, 144; blue, 226 }  ,draw opacity=1 ] [dash pattern={on 4.5pt off 4.5pt}]  (205.07,181.4) -- (245.07,181.4) ;
\draw [color={rgb, 255:red, 74; green, 144; blue, 226 }  ,draw opacity=1 ] [dash pattern={on 4.5pt off 4.5pt}]  (245.07,181.4) -- (285.07,181.4) ;
\draw [color={rgb, 255:red, 189; green, 16; blue, 224 }  ,draw opacity=1 ] [dash pattern={on 4.5pt off 4.5pt}]  (185.07,201.4) -- (225.07,201.4) ;
\draw [color={rgb, 255:red, 74; green, 144; blue, 226 }  ,draw opacity=1 ] [dash pattern={on 4.5pt off 4.5pt}]  (185.07,201.4) -- (165.07,221.36) ;
\draw [color={rgb, 255:red, 74; green, 144; blue, 226 }  ,draw opacity=1 ] [dash pattern={on 4.5pt off 4.5pt}]  (205.07,181.4) -- (185.07,201.36) ;
\draw [color={rgb, 255:red, 189; green, 16; blue, 224 }  ,draw opacity=1 ] [dash pattern={on 4.5pt off 4.5pt}]  (245.07,181.4) -- (225.07,201.36) ;
\draw [color={rgb, 255:red, 189; green, 16; blue, 224 }  ,draw opacity=1 ] [dash pattern={on 4.5pt off 4.5pt}]  (245.07,181.4) -- (185.07,201.4) ;
\draw  [color={rgb, 255:red, 74; green, 144; blue, 226 }  ,draw opacity=1 ][fill={rgb, 255:red, 74; green, 144; blue, 226 }  ,fill opacity=0.18 ] (205.07,141.44) -- (245.07,181.44) -- (185.07,201.4) -- cycle ;
\draw  [color={rgb, 255:red, 189; green, 16; blue, 224 }  ,draw opacity=1 ][fill={rgb, 255:red, 189; green, 16; blue, 224 }  ,fill opacity=0.25 ] (185.07,161.48) -- (205.07,141.44) -- (245.07,141.48) -- (245.07,181.44) -- (225.07,201.44) -- (185.07,201.48) -- cycle ;
\draw  [color={rgb, 255:red, 74; green, 144; blue, 226 }  ,draw opacity=1 ][fill={rgb, 255:red, 74; green, 144; blue, 226 }  ,fill opacity=0.18 ] (205.07,101.48) -- (245.07,141.48) -- (185.07,161.48) -- cycle ;
\draw  [color={rgb, 255:red, 74; green, 144; blue, 226 }  ,draw opacity=1 ][fill={rgb, 255:red, 74; green, 144; blue, 226 }  ,fill opacity=0.18 ] (245.07,141.4) -- (285.07,181.4) -- (225.07,201.4) -- cycle ;
\draw  [color={rgb, 255:red, 74; green, 144; blue, 226 }  ,draw opacity=1 ][fill={rgb, 255:red, 74; green, 144; blue, 226 }  ,fill opacity=0.18 ] (185.07,161.48) -- (225.07,201.48) -- (165.07,221.48) -- cycle ;
\draw [color={rgb, 255:red, 189; green, 16; blue, 224 }  ,draw opacity=1 ]   (481,134) -- (421,154) ;
\draw [color={rgb, 255:red, 189; green, 16; blue, 224 }  ,draw opacity=1 ]   (461,194) -- (481,134.5) ;
\draw [color={rgb, 255:red, 189; green, 16; blue, 224 }  ,draw opacity=1 ]   (461,194) -- (421,154.5) ;
\draw [color={rgb, 255:red, 189; green, 16; blue, 224 }  ,draw opacity=1 ]   (461,194) -- (481,174) ;
\draw [color={rgb, 255:red, 189; green, 16; blue, 224 }  ,draw opacity=1 ]   (481,134) -- (481,174) ;
\draw [color={rgb, 255:red, 189; green, 16; blue, 224 }  ,draw opacity=1 ] [dash pattern={on 4.5pt off 4.5pt}]  (481,174) -- (441,134) ;
\draw [color={rgb, 255:red, 189; green, 16; blue, 224 }  ,draw opacity=1 ] [dash pattern={on 4.5pt off 4.5pt}]  (481,174) -- (421,194) ;
\draw [color={rgb, 255:red, 189; green, 16; blue, 224 }  ,draw opacity=1 ] [dash pattern={on 4.5pt off 4.5pt}]  (481,174) -- (441,174) ;
\draw [color={rgb, 255:red, 189; green, 16; blue, 224 }  ,draw opacity=1 ]   (461,194) -- (421,194) ;
\draw [color={rgb, 255:red, 189; green, 16; blue, 224 }  ,draw opacity=1 ]   (481,134) -- (441,134) ;
\draw [color={rgb, 255:red, 189; green, 16; blue, 224 }  ,draw opacity=1 ] [dash pattern={on 4.5pt off 4.5pt}]  (421,194) -- (441,134) ;
\draw [color={rgb, 255:red, 189; green, 16; blue, 224 }  ,draw opacity=1 ]   (421,154) -- (441,134) ;
\draw [color={rgb, 255:red, 189; green, 16; blue, 224 }  ,draw opacity=1 ]   (421,154) -- (421,194) ;
\draw  [color={rgb, 255:red, 189; green, 16; blue, 224 }  ,draw opacity=1 ][fill={rgb, 255:red, 189; green, 16; blue, 224 }  ,fill opacity=0.25 ] (441,134) -- (481,134) -- (481,174) -- (461,194) -- (421,194) -- (421,154) -- cycle ;
\draw [color={rgb, 255:red, 74; green, 144; blue, 226 }  ,draw opacity=1 ][shading=_h21rpgd2u,_10qj243rh,path fading= _rgq9dvoam ,fading transform={xshift=2}] [dash pattern={on 4.5pt off 4.5pt}]  (482.62,112.29) -- (442.62,112.29) ;
\draw [color={rgb, 255:red, 74; green, 144; blue, 226 }  ,draw opacity=1 ] [dash pattern={on 0.84pt off 2.51pt}]  (474.79,167.94) -- (498.06,167.93) ;
\draw [color={rgb, 255:red, 74; green, 144; blue, 226 }  ,draw opacity=1 ] [dash pattern={on 0.84pt off 2.51pt}]  (411,204.27) -- (433.73,180.99) ;
\draw [color={rgb, 255:red, 74; green, 144; blue, 226 }  ,draw opacity=1 ] [dash pattern={on 0.84pt off 2.51pt}]  (443.73,119.32) -- (444.06,141.32) ;
\draw [color={rgb, 255:red, 74; green, 144; blue, 226 }  ,draw opacity=1 ] [dash pattern={on 0.84pt off 2.51pt}]  (387.73,160.61) -- (445.06,167.32) ;
\draw  [color={rgb, 255:red, 74; green, 144; blue, 226 }  ,draw opacity=1 ][fill={rgb, 255:red, 74; green, 144; blue, 226 }  ,fill opacity=0.18 ] (442.62,72.29) -- (482.62,112.29) -- (422.62,132.29) -- cycle ;
\draw [color={rgb, 255:red, 74; green, 144; blue, 226 }  ,draw opacity=1 ] [dash pattern={on 4.5pt off 4.5pt}]  (442.62,72.29) -- (442.62,112.29) ;
\draw [color={rgb, 255:red, 74; green, 144; blue, 226 }  ,draw opacity=1 ] [dash pattern={on 4.5pt off 4.5pt}]  (442.62,112.29) -- (422.62,132.25) ;
\draw [color={rgb, 255:red, 74; green, 144; blue, 226 }  ,draw opacity=1 ][shading=_2chw11kv6,_niy9s8pxh,path fading= _k3vsvihp3 ,fading transform={xshift=2}] [dash pattern={on 4.5pt off 4.5pt}]  (543.95,174.62) -- (503.95,174.62) ;
\draw  [color={rgb, 255:red, 74; green, 144; blue, 226 }  ,draw opacity=1 ][fill={rgb, 255:red, 74; green, 144; blue, 226 }  ,fill opacity=0.18 ] (503.95,134.62) -- (543.95,174.62) -- (483.95,194.62) -- cycle ;
\draw [color={rgb, 255:red, 74; green, 144; blue, 226 }  ,draw opacity=1 ] [dash pattern={on 4.5pt off 4.5pt}]  (503.95,134.62) -- (503.95,174.62) ;
\draw [color={rgb, 255:red, 74; green, 144; blue, 226 }  ,draw opacity=1 ] [dash pattern={on 4.5pt off 4.5pt}]  (503.95,174.62) -- (483.95,194.58) ;
\draw [color={rgb, 255:red, 74; green, 144; blue, 226 }  ,draw opacity=1 ][shading=_112r4hh53,_jq0sp0kra,path fading= _7r82lrqto ,fading transform={xshift=2}] [dash pattern={on 4.5pt off 4.5pt}]  (437.29,216.29) -- (397.29,216.29) ;
\draw  [color={rgb, 255:red, 74; green, 144; blue, 226 }  ,draw opacity=1 ][fill={rgb, 255:red, 74; green, 144; blue, 226 }  ,fill opacity=0.18 ] (397.29,176.29) -- (437.29,216.29) -- (377.29,236.29) -- cycle ;
\draw [color={rgb, 255:red, 74; green, 144; blue, 226 }  ,draw opacity=1 ] [dash pattern={on 4.5pt off 4.5pt}]  (397.29,176.29) -- (397.29,216.29) ;
\draw [color={rgb, 255:red, 74; green, 144; blue, 226 }  ,draw opacity=1 ] [dash pattern={on 4.5pt off 4.5pt}]  (397.29,216.29) -- (377.29,236.25) ;
\draw [color={rgb, 255:red, 74; green, 144; blue, 226 }  ,draw opacity=1 ][shading=_gn6bfzwfa,_pljgi783y,path fading= _9fx2zw7qt ,fading transform={xshift=2}] [dash pattern={on 4.5pt off 4.5pt}]  (416.95,164.95) -- (376.95,164.95) ;
\draw  [color={rgb, 255:red, 74; green, 144; blue, 226 }  ,draw opacity=1 ][fill={rgb, 255:red, 74; green, 144; blue, 226 }  ,fill opacity=0.18 ] (376.95,124.95) -- (416.95,164.95) -- (356.95,184.95) -- cycle ;
\draw [color={rgb, 255:red, 74; green, 144; blue, 226 }  ,draw opacity=1 ] [dash pattern={on 4.5pt off 4.5pt}]  (376.95,124.95) -- (376.95,164.95) ;
\draw [color={rgb, 255:red, 74; green, 144; blue, 226 }  ,draw opacity=1 ] [dash pattern={on 4.5pt off 4.5pt}]  (376.95,164.95) -- (356.95,184.91) ;

\end{tikzpicture}

     \caption{Hypersimplicial complex $\mathcal{K}^{[3]}_4$ encoding the intersection of the irreducible components of $\Hilb^3_0(X_4)$.}
     \label{fig:n 4 m 3}
 \end{figure}

 For instance, for $m=3$ there is no component of the form $\Sigma(3,1,\uu)$, there is a component of the form $\Sigma(3,2,\mathbf{1})\simeq\mathrm{Gr}(2,4)$ and there are four components of the form $\Sigma(3,3,\uu)$ for $\uu =(2,1,1,1),\ldots,(1,1,1,2)$. \cref{fig:n 4 m 3} illustrates the hypersimplicial complex $\mathcal{K}_4^{[3]}$ subdividing $2\Delta_3$. The maximal faces of the complex are the octahedron $\Delta_{2,4}$ in purple and the simplices $\Delta_3+\mathbf{e}_1,\ldots,\Delta_3+\mathbf{e}_4$ in blue. As illustrated in \cref{fig:n 4 m 3}, the gluing of the distinct components of $\Hilb^3_0(X_4)$ is done through projective planes. In $\mathrm{Gr}(2,4)$, these projective planes correspond to the $2$--dimensional linear subspaces in  Equation \eqref{eq:base locus}. In the components isomorphic to $\P^3$, the gluing is done through the torus-invariant projective planes.

  For $m=4$, $\Hilb^4_0(X_4)$ has $15$ irreducible components. One of them is $\Sigma(4,1,\mathbf{1})\simeq \P^3$. Four of them are of the form $\Sigma(4,2,\uu)\simeq \mathrm{Gr}(2,4)$ for $\uu =(2,1,1,1),\ldots,(1,1,1,2)$. Finally, there are $10$ irreducible components of the form $\Sigma(4,3,\uu)\simeq \P^3$ where $\uu=\mathbf{1}+\ee_i+\ee_j$ for $1\leq i\leq j\leq 4$. The hypersimplicial complex $\mathcal{K}_4^{[4]}$ encoding the intersection of these components is illustrated in \cref{fig:n 4 m 4}. The blue tetrahedron corresponds to the images of through the moment map of the components of the form $\Sigma(4,3,\mathbf{1}+\ee_i+\ee_j)$. The four green octahedrons correspond to the images of the components $\Sigma(4,2,\mathbf{1}+\ee_i$. Finally, the purple tetrahedron corresponds to the image of the component $\Sigma(4,1,\mathbf{1})$.

  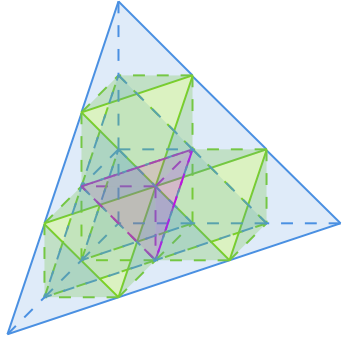
\begin{figure}
      \centering

\tikzset{every picture/.style={line width=0.75pt}} 

\begin{tikzpicture}[x=0.7pt,y=0.7pt,yscale=-1,xscale=1]

\draw [color={rgb, 255:red, 74; green, 144; blue, 226 }  ,draw opacity=1 ]   (290,120) -- (250,80) ;
\draw [color={rgb, 255:red, 74; green, 144; blue, 226 }  ,draw opacity=1 ]   (230,140) -- (250,80) ;
\draw [color={rgb, 255:red, 74; green, 144; blue, 226 }  ,draw opacity=1 ] [dash pattern={on 4.5pt off 4.5pt}]  (250,80) -- (250,120) ;
\draw [color={rgb, 255:red, 126; green, 211; blue, 33 }  ,draw opacity=1 ] [dash pattern={on 4.5pt off 4.5pt}]  (230,140) -- (250,120) ;
\draw [color={rgb, 255:red, 126; green, 211; blue, 33 }  ,draw opacity=1 ] [dash pattern={on 4.5pt off 4.5pt}]  (290,120) -- (250,120) ;
\draw [color={rgb, 255:red, 74; green, 144; blue, 226 }  ,draw opacity=1 ]   (330,160) -- (290,120.5) ;
\draw [color={rgb, 255:red, 126; green, 211; blue, 33 }  ,draw opacity=1 ]   (270,180) -- (230,140) ;
\draw [color={rgb, 255:red, 126; green, 211; blue, 33 }  ,draw opacity=1 ]   (270,180) -- (290,120.5) ;
\draw [color={rgb, 255:red, 74; green, 144; blue, 226 }  ,draw opacity=1 ]   (210,200) -- (230,140.5) ;
\draw [color={rgb, 255:red, 126; green, 211; blue, 33 }  ,draw opacity=1 ]   (230,140) -- (290,120) ;
\draw [color={rgb, 255:red, 126; green, 211; blue, 33 }  ,draw opacity=1 ]   (330,160) -- (270,180) ;
\draw [color={rgb, 255:red, 126; green, 211; blue, 33 }  ,draw opacity=1 ]   (270,180) -- (210,200) ;
\draw [color={rgb, 255:red, 126; green, 211; blue, 33 }  ,draw opacity=1 ] [dash pattern={on 4.5pt off 4.5pt}]  (330,160) -- (290,160) ;
\draw [color={rgb, 255:red, 189; green, 16; blue, 224 }  ,draw opacity=1 ] [dash pattern={on 4.5pt off 4.5pt}]  (270,180) -- (230,180) ;
\draw [color={rgb, 255:red, 126; green, 211; blue, 33 }  ,draw opacity=1 ] [dash pattern={on 4.5pt off 4.5pt}]  (210,200) -- (230,180) ;
\draw [color={rgb, 255:red, 126; green, 211; blue, 33 }  ,draw opacity=1 ] [dash pattern={on 4.5pt off 4.5pt}]  (230,140) -- (230,180) ;
\draw [color={rgb, 255:red, 126; green, 211; blue, 33 }  ,draw opacity=1 ] [dash pattern={on 4.5pt off 4.5pt}]  (290,120) -- (290,160) ;
\draw [color={rgb, 255:red, 189; green, 16; blue, 224 }  ,draw opacity=1 ] [dash pattern={on 4.5pt off 4.5pt}]  (270,180) -- (290,160) ;
\draw [color={rgb, 255:red, 126; green, 211; blue, 33 }  ,draw opacity=1 ] [dash pattern={on 4.5pt off 4.5pt}]  (230,180) -- (250,120) ;
\draw [color={rgb, 255:red, 126; green, 211; blue, 33 }  ,draw opacity=1 ] [dash pattern={on 4.5pt off 4.5pt}]  (290,159.96) -- (250,119.96) ;
\draw [color={rgb, 255:red, 189; green, 16; blue, 224 }  ,draw opacity=1 ] [dash pattern={on 4.5pt off 4.5pt}]  (290,160) -- (230,180) ;
\draw [color={rgb, 255:red, 74; green, 144; blue, 226 }  ,draw opacity=1 ] [dash pattern={on 4.5pt off 4.5pt}]  (250,160) -- (290,160) ;
\draw [color={rgb, 255:red, 74; green, 144; blue, 226 }  ,draw opacity=1 ] [dash pattern={on 4.5pt off 4.5pt}]  (250,160) -- (250,119.96) ;
\draw [color={rgb, 255:red, 74; green, 144; blue, 226 }  ,draw opacity=1 ] [dash pattern={on 4.5pt off 4.5pt}]  (250,160) -- (230,179.96) ;
\draw [color={rgb, 255:red, 74; green, 144; blue, 226 }  ,draw opacity=1 ]   (370,200) -- (330,160.5) ;
\draw [color={rgb, 255:red, 126; green, 211; blue, 33 }  ,draw opacity=1 ]   (250,240) -- (270,180.5) ;
\draw [color={rgb, 255:red, 74; green, 144; blue, 226 }  ,draw opacity=1 ]   (190,260) -- (210,200.5) ;
\draw [color={rgb, 255:red, 74; green, 144; blue, 226 }  ,draw opacity=1 ]   (250,240) -- (190,260) ;
\draw [color={rgb, 255:red, 74; green, 144; blue, 226 }  ,draw opacity=1 ]   (310,220) -- (250,240) ;
\draw [color={rgb, 255:red, 74; green, 144; blue, 226 }  ,draw opacity=1 ]   (370,200) -- (310,220) ;
\draw [color={rgb, 255:red, 126; green, 211; blue, 33 }  ,draw opacity=1 ]   (310,220) -- (330,160.5) ;
\draw [color={rgb, 255:red, 126; green, 211; blue, 33 }  ,draw opacity=1 ]   (310,220) -- (270,180) ;
\draw [color={rgb, 255:red, 126; green, 211; blue, 33 }  ,draw opacity=1 ]   (250,240) -- (210,200.5) ;
\draw [color={rgb, 255:red, 74; green, 144; blue, 226 }  ,draw opacity=1 ] [dash pattern={on 4.5pt off 4.5pt}]  (190,260) -- (210,240) ;
\draw [color={rgb, 255:red, 126; green, 211; blue, 33 }  ,draw opacity=1 ] [dash pattern={on 4.5pt off 4.5pt}]  (250,240) -- (270,220) ;
\draw [color={rgb, 255:red, 126; green, 211; blue, 33 }  ,draw opacity=1 ] [dash pattern={on 4.5pt off 4.5pt}]  (310,220) -- (330,200) ;
\draw [color={rgb, 255:red, 74; green, 144; blue, 226 }  ,draw opacity=1 ] [dash pattern={on 4.5pt off 4.5pt}]  (370,200) -- (330,200) ;
\draw [color={rgb, 255:red, 126; green, 211; blue, 33 }  ,draw opacity=1 ] [dash pattern={on 4.5pt off 4.5pt}]  (310,220) -- (270,220) ;
\draw [color={rgb, 255:red, 126; green, 211; blue, 33 }  ,draw opacity=1 ] [dash pattern={on 4.5pt off 4.5pt}]  (250,240) -- (210,240) ;
\draw [color={rgb, 255:red, 126; green, 211; blue, 33 }  ,draw opacity=1 ] [dash pattern={on 4.5pt off 4.5pt}]  (210,200) -- (210,240) ;
\draw [color={rgb, 255:red, 189; green, 16; blue, 224 }  ,draw opacity=1 ] [dash pattern={on 4.5pt off 4.5pt}]  (270,180) -- (270,220) ;
\draw [color={rgb, 255:red, 126; green, 211; blue, 33 }  ,draw opacity=1 ] [dash pattern={on 4.5pt off 4.5pt}]  (330,160) -- (330,200) ;
\draw [color={rgb, 255:red, 126; green, 211; blue, 33 }  ,draw opacity=1 ] [dash pattern={on 4.5pt off 4.5pt}]  (330,200) -- (290,160) ;
\draw [color={rgb, 255:red, 189; green, 16; blue, 224 }  ,draw opacity=1 ] [dash pattern={on 4.5pt off 4.5pt}]  (270,220) -- (230,180) ;
\draw [color={rgb, 255:red, 126; green, 211; blue, 33 }  ,draw opacity=1 ] [dash pattern={on 4.5pt off 4.5pt}]  (210,240) -- (230,180) ;
\draw [color={rgb, 255:red, 126; green, 211; blue, 33 }  ,draw opacity=1 ] [dash pattern={on 4.5pt off 4.5pt}]  (330,200) -- (270,220) ;
\draw [color={rgb, 255:red, 126; green, 211; blue, 33 }  ,draw opacity=1 ] [dash pattern={on 4.5pt off 4.5pt}]  (270,219.96) -- (210,239.96) ;
\draw [color={rgb, 255:red, 189; green, 16; blue, 224 }  ,draw opacity=1 ] [dash pattern={on 4.5pt off 4.5pt}]  (270,220) -- (290,160) ;
\draw [color={rgb, 255:red, 126; green, 211; blue, 33 }  ,draw opacity=1 ] [dash pattern={on 4.5pt off 4.5pt}]  (230,219.96) -- (230,179.92) ;
\draw [color={rgb, 255:red, 126; green, 211; blue, 33 }  ,draw opacity=1 ] [dash pattern={on 4.5pt off 4.5pt}]  (290,200) -- (290,159.96) ;
\draw [color={rgb, 255:red, 74; green, 144; blue, 226 }  ,draw opacity=1 ] [dash pattern={on 4.5pt off 4.5pt}]  (330,200) -- (290,200) ;
\draw [color={rgb, 255:red, 126; green, 211; blue, 33 }  ,draw opacity=1 ] [dash pattern={on 4.5pt off 4.5pt}]  (270,219.96) -- (230,219.96) ;
\draw [color={rgb, 255:red, 126; green, 211; blue, 33 }  ,draw opacity=1 ] [dash pattern={on 4.5pt off 4.5pt}]  (270,220) -- (290,200) ;
\draw [color={rgb, 255:red, 74; green, 144; blue, 226 }  ,draw opacity=1 ] [dash pattern={on 4.5pt off 4.5pt}]  (210,240) -- (230,220) ;
\draw [color={rgb, 255:red, 126; green, 211; blue, 33 }  ,draw opacity=1 ] [dash pattern={on 4.5pt off 4.5pt}]  (230,219.96) -- (250,159.96) ;
\draw [color={rgb, 255:red, 126; green, 211; blue, 33 }  ,draw opacity=1 ] [dash pattern={on 4.5pt off 4.5pt}]  (290,200) -- (230,220) ;
\draw [color={rgb, 255:red, 74; green, 144; blue, 226 }  ,draw opacity=1 ] [dash pattern={on 4.5pt off 4.5pt}]  (230,220) -- (250,200) ;
\draw [color={rgb, 255:red, 74; green, 144; blue, 226 }  ,draw opacity=1 ] [dash pattern={on 4.5pt off 4.5pt}]  (290,200) -- (250,200) ;
\draw [color={rgb, 255:red, 74; green, 144; blue, 226 }  ,draw opacity=1 ] [dash pattern={on 4.5pt off 4.5pt}]  (250,200) -- (250,159.96) ;
\draw  [color={rgb, 255:red, 74; green, 144; blue, 226 }  ,draw opacity=1 ][fill={rgb, 255:red, 74; green, 144; blue, 226 }  ,fill opacity=0.18 ][dash pattern={on 4.5pt off 4.5pt}] (250,159.96) -- (290,199.96) -- (230,219.96) -- cycle ;
\draw  [color={rgb, 255:red, 74; green, 144; blue, 226 }  ,draw opacity=1 ][fill={rgb, 255:red, 74; green, 144; blue, 226 }  ,fill opacity=0.18 ][dash pattern={on 4.5pt off 4.5pt}] (290,159.92) -- (330,199.92) -- (270,219.92) -- cycle ;
\draw  [draw opacity=0][fill={rgb, 255:red, 126; green, 211; blue, 33 }  ,fill opacity=0.28 ] (270,180) -- (290,159.96) -- (330,160) -- (330,199.96) -- (310,219.96) -- (270,220) -- cycle ;
\draw  [color={rgb, 255:red, 74; green, 144; blue, 226 }  ,draw opacity=1 ][fill={rgb, 255:red, 74; green, 144; blue, 226 }  ,fill opacity=0.18 ][dash pattern={on 4.5pt off 4.5pt}] (230,180.04) -- (270,220.04) -- (210,240.04) -- cycle ;
\draw  [color={rgb, 255:red, 74; green, 144; blue, 226 }  ,draw opacity=1 ][fill={rgb, 255:red, 74; green, 144; blue, 226 }  ,fill opacity=0.18 ][dash pattern={on 4.5pt off 4.5pt}] (250,119.92) -- (290,159.92) -- (230,179.92) -- cycle ;
\draw  [color={rgb, 255:red, 189; green, 16; blue, 224 }  ,draw opacity=1 ][fill={rgb, 255:red, 189; green, 16; blue, 224 }  ,fill opacity=0.25 ][dash pattern={on 4.5pt off 4.5pt}] (230,180) -- (290,160) -- (270,220) -- cycle ;
\draw  [draw opacity=0][fill={rgb, 255:red, 126; green, 211; blue, 33 }  ,fill opacity=0.28 ][dash pattern={on 4.5pt off 4.5pt}] (210,199.96) -- (230,179.92) -- (270,179.96) -- (270,219.92) -- (250,239.92) -- (210,239.96) -- cycle ;
\draw  [draw opacity=0][fill={rgb, 255:red, 126; green, 211; blue, 33 }  ,fill opacity=0.28 ] (230,140.04) -- (250,120) -- (290,120.04) -- (290,160) -- (270,180) -- (230,180.04) -- cycle ;
\draw  [draw opacity=0][fill={rgb, 255:red, 74; green, 144; blue, 226 }  ,fill opacity=0.18 ] (250,80) -- (267.03,97.03) -- (290,120) -- (230,140) -- cycle ;
\draw  [draw opacity=0][fill={rgb, 255:red, 74; green, 144; blue, 226 }  ,fill opacity=0.18 ] (290,120.5) -- (330,160.5) -- (270,180.5) -- cycle ;
\draw  [draw opacity=0][fill={rgb, 255:red, 74; green, 144; blue, 226 }  ,fill opacity=0.18 ] (230,140.04) -- (270,180.04) -- (210,200.04) -- cycle ;
\draw  [draw opacity=0][fill={rgb, 255:red, 74; green, 144; blue, 226 }  ,fill opacity=0.18 ] (210,200.5) -- (250,240.5) -- (190,260.5) -- cycle ;
\draw  [draw opacity=0][fill={rgb, 255:red, 74; green, 144; blue, 226 }  ,fill opacity=0.18 ] (270,180) -- (310,220) -- (250,240) -- cycle ;
\draw  [draw opacity=0][fill={rgb, 255:red, 74; green, 144; blue, 226 }  ,fill opacity=0.18 ] (330,160) -- (370,200) -- (310,219.96) -- cycle ;

\end{tikzpicture}

      \caption{Hypersimplicial complex $\mathcal{K}^{[4]}_4$ encoding the intersection of the irreducible components of $\Hilb^4_0(X_4)$.}
      \label{fig:n 4 m 4}
  \end{figure}

\end{Ex}

The faces of the hypersimplicial complex $\mathcal{K}_n^{[m]}$ are characterized in \cref{app: The hypersimplicial complex}. The $(n-r)$--faces may be described as follows. Let $ \max\{ n-m+1,1\}\leq l\leq n-1$, $\mathbf{u}\in \Z_{\geq 1}^n$ with $|\uu|=m+l-1$, and let
$S_1$ and $S_2$ be two disjoint subsets of $[n]$ with $|S_1|+|S_2|= r-1$ and $l\leq |S_1|$ and $|S_2|\leq n-l$. Then, the codimension $r$ faces of $\mathcal{K}_n^{[m]}$ are of the form
\begin{equation}\label{eq:hyper faces}
\begin{array}{c}
\displaystyle\mathcal{K}_n^{[m]}(S_1,S_2,l,\uu)\!:=\!\mathrm{Conv}(\ee_{i_1}\!+\cdots+\ee_{i_{n-l-|S_2|}}\!: i_1,\ldots,i_{n-l-|S_2|}\in [n]\setminus(S_1\sqcup S_2) \text{ distinct})\!+\!\!\sum_{i\in S_2}\!\ee_i\!+\!\uu\!-\!\mathbf{1}\\ = 
\displaystyle
\mathcal{K}_n^{[m]}(S_1,S_2,l,\mathbf{u})=
\left\{
\sum_{i\not\in S_1\sqcup S_2}\lambda_i\ee_i+\sum_{i\in S_2}\ee_i:0\leq \lambda_i\leq 1\text{ and }\sum_{i\not\in S_1\sqcup S_2}\lambda_i=n-l-|S_2|
\right\}+\uu-\mathbf{1}.
\end{array}
\end{equation}
Note that that the face $\mathcal{K}_n^{[m]}(S_1,S_2,l,\mathbf{u})$ is obtained by setting $\lambda_i=0$ for $i\in S_1$ and $\lambda_i=1$ for $i\in S_2$.
The ideals $[J]$ in $\mathrm{Hilb}_0^m(X_n)$ lying on the face $\mathcal{K}_n^{[m]}(S_1,S_2,l,\uu)$ are of the form
\begin{equation}\label{eq: ideal faces}
J=\langle x_i^{u_i}:i\in S_1\rangle +\langle x_i^{u_i+1}:i\in S_2\rangle +\langle g_1,\ldots,g_{l-|S_1|}\rangle,
\end{equation}
where $g_1,\ldots,g_{l-|S_1|}$ are linearly independent polynomials in $\langle x_i^{u_i}:i\in[n]\setminus (S_1\cup S_2)\rangle_{\mathbb{C}}$. 
Geometrically, these ideals form a Grassmannian $\mathrm{Gr}(l-|S_1|,n-r+1)$. 
Note that the ideals in \eqref{eq: ideal faces} are of the same form as those described in \cref{prop: intersections}. In particular, we have that 
\[
\begin{array}{c}
\displaystyle
\mu\left(\Sigma(m,l,\mathbf{u})\cap \Sigma(m,l',\mathbf{v})\right)=\left(\Delta_{n-l,n}+\mathbf{u}-\mathbf{1} \right)\cap \left(\Delta_{n-l',n}+\mathbf{v}-\mathbf{1} \right)=\mathcal{K
}_n^{[m]}(\kappa(\mathbf{u}-\mathbf{v},1),\kappa(\mathbf{u}-\mathbf{v},-1),l,\mathbf{u})\\ \\
\displaystyle = \mathcal{K
}_n^{[m]}(\kappa(\mathbf{u}-\mathbf{v},-1),\kappa(\mathbf{u}-\mathbf{v},-1),l',\mathbf{v}).
\end{array}
\]
We conclude that the hypersimplicial complex $\mathcal{K}_n^{[m]}$ encodes the geometry of irreducible components of $\mathrm{Hilb}_0^m(X_n)$: these components correspond to the Grassmannians associated with hypersimplices, and their intersections are likewise recorded. However, $\mathcal{K}_n^{[m]}$ cannot describe whether the intersection of these components is transversal or not. In \cref{sec:local hilbert scheme}, this question will be addressed. To do so, we associate to
 the hypersimplicial complex $\mathcal{K}_n^{[m]}$ the variety $\mathcal{G}_n^m$ 
 defined as follows. For $\max\{1,n-m+1\}\leq l\leq n-1$ and $\mathbf{u}\in\Z_{\geq1}$ with $|\mathbf{u}|=m+l-1$, we consider the Grassmannian $\mathrm{Gr}(l,\Lambda_\mathbf{u})$ (see \cref{eq:grass map}). Recall that $\mathrm{Gr}(l,\Lambda_\mathbf{u})$ is isomorphic to $\Sigma(m,l,\mathbf{u})$ via the map $\varphi_{l,\mathbf{u}}$ (see \cref{eq:grass map} and \cref{irr comp Grass} ). Using this map, we can consider the equivalence relation in the disjoint union
\begin{equation}\label{eq:disjoint union}
\bigsqcup_{l=\max\{1,n-m+1\}}^{n-1}\bigsqcup_{{\tiny\begin{array}{c}\uu\in\Z_{\geq 1}^n\\ |\uu|=m+l-1\end{array}}} \mathrm{Gr}(l,\Lambda_\mathbf{u})
\end{equation}
given by $[E]\sim [E']$ for $[E]\in \mathrm{Gr}(l,\Lambda_\mathbf{u})$
and $[E']\in\mathrm{Gr}(l',\Lambda_\mathbf{v})$ if and only if $\varphi_{l,\uu}([E])=\varphi_{l',\mathbf{v}}([E'])$. 
This condition occurs only in the intersection of $\Sigma(m,l,\uu)$ and $\Sigma(m,l',\mathbf{v})$, which by \cref{Cor:intersect} is a Grassmannian. The variety $\mathcal{G}_n^m$ is the variety obtained by quotienting \eqref{eq:disjoint union} by this equivalence relation. In other words, $\mathcal{G}_n^m$ is obtained by gluing the Grassmannians $\mathrm{Gr}(l,\Lambda_\mathbf{u})$ via smaller Grassmannians. Since the intersections of the Grassmannians $\Sigma(m,l,\uu)$ are described by $\mathcal{K}_n^{[m]}$, we obtain that $\mathcal{G}_n^m$ is obtained by gluing the Grassmannians $\mathrm{Gr}(l,\Lambda_\mathbf{u})$ via the smaller Grassmannians corresponding to the faces of $\mathcal{K}_n^{[m]}$. Formally, this gluing may be done iteratively. In \eqref{eq:disjoint union} we first glue together the points that corresponds to the same vertex in $\mathcal{K}^{[m]}_n$. Then, we glue the lines that correspond to the same edges of $\mathcal{K}^{[m]}_n$. Inductively on the dimension of the faces, we glue together the subgrassmannians that corresponds to the same face of the complex. 

\begin{Ex}\label{ex: gr n 3}
    Fix $n=3$. For $m=2$, the hypersimplicial complex $\mathcal{K}_3^{[2]}$ coincides with $\Delta_{1,3}$, and hence, $\mathcal{G}_3^{2}\simeq \P^2$. For $m=3$, $\mathcal{K}_3^{[3]}$ has four $2$--dimensional hypersimplices as shown in \cref{ex: n 3 part 1}. These hypersimplices are $\Delta_{2,3}$ and $\Delta_{1,3}+\mathbf{e}_i$ for $i\in[3]$, which are depicted in \cref{fig: n=3 simplex}. The variety $\mathcal{G}_3^{3}$ is obtained by gluing $4$ copies of $\P^2$ following the intersections of the corresponding hypersimplices in $\mathcal{K}_3^{[3]}$: at each of the three $(\C^{*})^3$--invariants lines of $\P^2$ we glue a copy of $\P^2$ through one of its invariant lines. Similarly, for $m=4$, $\mathcal{G}_3^{4}$ is obtained by gluing $9$ copies of $\P^2$ through torus invariant lines following the hypersimplicial complex $\mathcal{K}_3^{[4]}$ depicted in \cref{fig: n=3 simplex}. 
\end{Ex}

\begin{Ex}\label{ex: gr n 4}
    Fix $n=4$. For $m=2$, the hypersimplicial complex $\mathcal{K}_4^{[2]}$ coincides with $\Delta_{1,4}$, and hence, $\mathcal{G}_4^{2}\simeq \P^3$. For $m=3$, $\mathcal{K}_4^{[3]}$ has five $3$ dimensional hypersimplices as shown in \cref{ex: gr n 4}. These hypersimplices are $\Delta_{2,4}$ and $\Delta_{1,4}+\mathbf{e}_i$ for $i\in[4]$, which are depicted in \cref{fig:n 4 m 3}. The variety $\mathcal{G}_4^{3}$ is obtained by gluing $4$ copies of $\P^2$ to $\gr(2,\langle x_1,x_2,x_3,x_4\rangle_\C)\simeq\mathrm{Gr}(2,4)$
    following the intersections of the corresponding hypersimplices in $\mathcal{K}_4^{[3]}$ (see  \cref{fig:n 4 m 3}). The hypersimplex $\Delta_{2,4}$ has eight $2$--dimensional faces isomorphic to the simplex $\Delta_2$. Four of these faces coincide with a faces of each of the hypersimplices $\Delta_{1,4}+\mathbf{e}_i$ for $i\in[4]$. Geometrically, the associated Grassmannian  $\mathrm{Gr}(2,4)$ has eight $\P^2$ embedded which are of the form:
     \[
     \begin{array}{ccc}
    Y_i:=\displaystyle \left\{[E]\in\gr(2,4): E\subseteq \langle x_j:j\neq i\rangle_\C
    \right\} & \text{ and } & \displaystyle Y_i^{*}:= \left\{[E]\in\gr(2,4): \langle x_i\rangle_\C\subseteq E
    \right\},
     \end{array}
     \]
     for $i\in[4]$. The ideals associated to $Y_i$ are of the form $\langle x_i^2, f_1,f_2\rangle$ for $f_1,f_2\in \langle x_j:j\neq i\rangle_\C$, and the ideals associated to $Y_i^*$ are of the form $\langle x_i,  f\rangle$ for $f\in \langle x_j:j\neq i\rangle_\C$. The varieties $Y_i$ correspond to the four faces $\Delta_{2,4}(\emptyset,\{i\},2,\mathbf{1})$ of $\Delta_{2,4}$ that intersect with the hypersimplices $\Delta_{1,4}+\mathbf{e}_i$. For each $i\in [4]$ we glue $\gr(2,4)$ and $\P^3$ through $Y_i$ and a toric invariant plane of $\P^3$. The variety obtained by this glue is $\mathcal{G}_4^{3}$.
\end{Ex}

\begin{figure}[h]
    \centering

\tikzset{every picture/.style={line width=0.75pt}} 

\begin{tikzpicture}[x=0.47pt,y=0.47pt,yscale=-1,xscale=1]

\draw  [color={rgb, 255:red, 245; green, 166; blue, 35 }  ,draw opacity=1 ][fill={rgb, 255:red, 247; green, 202; blue, 130 }  ,fill opacity=0.95 ][line width=1.5]  (488.85,153.87) -- (454.22,94.56) -- (523.47,94.56) -- cycle ;
\draw  [color={rgb, 255:red, 245; green, 166; blue, 35 }  ,draw opacity=1 ][fill={rgb, 255:red, 247; green, 202; blue, 130 }  ,fill opacity=0.95 ][line width=1.5]  (454.22,213.19) -- (419.6,153.87) -- (488.85,153.87) -- cycle ;
\draw  [color={rgb, 255:red, 245; green, 166; blue, 35 }  ,draw opacity=1 ][fill={rgb, 255:red, 247; green, 202; blue, 130 }  ,fill opacity=0.95 ][line width=1.5]  (523.47,213.19) -- (488.85,153.87) -- (558.09,153.87) -- cycle ;
\draw  [color={rgb, 255:red, 206; green, 6; blue, 245 }  ,draw opacity=0.55 ][fill={rgb, 255:red, 189; green, 16; blue, 224 }  ,fill opacity=0.29 ][line width=1.5]  (236.85,34.24) -- (271.47,93.56) -- (202.22,93.56) -- cycle ;
\draw  [color={rgb, 255:red, 206; green, 6; blue, 245 }  ,draw opacity=0.55 ][line width=1.5]  (236.85,152.87) -- (202.22,93.56) -- (271.47,93.56) -- cycle ;
\draw  [color={rgb, 255:red, 206; green, 6; blue, 245 }  ,draw opacity=0.55 ][line width=1.5]  (202.22,212.19) -- (167.6,152.87) -- (236.85,152.87) -- cycle ;
\draw  [color={rgb, 255:red, 206; green, 6; blue, 245 }  ,draw opacity=0.55 ][line width=1.5]  (271.47,212.19) -- (236.85,152.87) -- (306.09,152.87) -- cycle ;
\draw  [color={rgb, 255:red, 206; green, 6; blue, 245 }  ,draw opacity=0.55 ][fill={rgb, 255:red, 189; green, 16; blue, 224 }  ,fill opacity=0.29 ][line width=1.5]  (271.47,93.56) -- (306.09,152.87) -- (236.85,152.87) -- cycle ;
\draw  [color={rgb, 255:red, 206; green, 6; blue, 245 }  ,draw opacity=0.55 ][fill={rgb, 255:red, 189; green, 16; blue, 224 }  ,fill opacity=0.29 ][line width=1.5]  (306.09,152.87) -- (340.71,212.19) -- (271.47,212.19) -- cycle ;
\draw  [color={rgb, 255:red, 206; green, 6; blue, 245 }  ,draw opacity=0.55 ][fill={rgb, 255:red, 189; green, 16; blue, 224 }  ,fill opacity=0.29 ][line width=1.5]  (167.6,152.87) -- (202.22,212.19) -- (132.98,212.19) -- cycle ;
\draw  [color={rgb, 255:red, 206; green, 6; blue, 245 }  ,draw opacity=0.55 ][fill={rgb, 255:red, 189; green, 16; blue, 224 }  ,fill opacity=0.29 ][line width=1.5]  (202.22,93.56) -- (236.85,152.87) -- (167.6,152.87) -- cycle ;
\draw  [color={rgb, 255:red, 206; green, 6; blue, 245 }  ,draw opacity=0.55 ][fill={rgb, 255:red, 189; green, 16; blue, 224 }  ,fill opacity=0.29 ][line width=1.5]  (236.85,152.87) -- (271.47,212.19) -- (202.22,212.19) -- cycle ;

\end{tikzpicture}

    \caption{Hypersimplicial complex $\mathcal{K}_{3,1}^{[4]}$ and $\mathcal{K}_{3,2}^{[4]}.$}
    \label{fig:hypercomplex 3 1 4}
\end{figure}
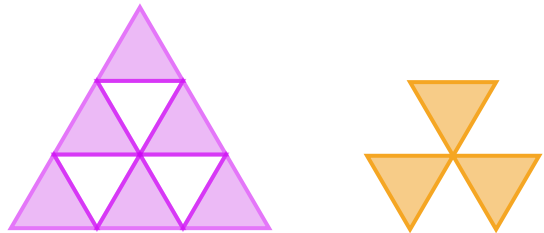

For $\max\{1,n-m+1\}\leq l\leq n-1$, we consider the subvariety $\mathcal{G}_{l,n}^{m}$  of $\mathcal{G}_n^{m}$ given only by the components of $\mathcal{G}_n^{m}$ that are Grassmannians of the form $\gr(l,n)$. Analogously, we consider the hypersimplicial complex $\mathcal{K}_{n-l,n}^{[m]}$, which is the hypersimplicial subcomplex of $\mathcal{K}_{n}^{[m]}$ given by the hypersimplices of the form $\Delta_{n-l,n}+\uu$. We refer to \cref{app: The hypersimplicial complex} for further details on $\mathcal{K}_{n-l,n}^{[m]}$. The variety $\mathcal{G}_{l,n}^{m}$ is the variety obtained by gluing the Grassmannian $\Sigma(m,l,\uu)$ via the faces of $\mathcal{K}_{n-l,n}^{[m]}$.

\begin{Ex}
    The variety $\mathcal{G}_{2,3}^{4}$ consists of $6$ copies of $\P^2$ that are glued together via torus invariant points as in the vertices of the hypersimplicial complex $\mathcal{K}_{1,3}^{[4]}$ illustated in \cref{fig:hypercomplex 3 1 4}. Similarly, $\mathcal{G}_{1,3}^{4}$ is obtained by taking $3$ copies of $\P^2$ and gluing together a torus invariant point on each of them. The corresponding hypersimplicial complex is $\mathcal{K}_{2,3}^{[4]}$ illustrated in \cref{fig:hypercomplex 3 1 4}.

\end{Ex}

\begin{Ex}
 \cref{fig:hypercomplex 4 1 3} depicts the hypersimplicial complexes $\mathcal{K}_{1,4}^{[3]}$ and $\mathcal{K}_{2,4}^{[4]}$. These two complexes represent the varieties $\mathcal{G}_{1,3}^{3}$ and $\mathcal{G}_{2,4}^{4}$. The variety $\mathcal{G}_{1,3}^{3}$ consists of $4$ copies of $\P^3$ glued together via invariant torus points. Similarly, $\mathcal{G}_{2,4}^{4}$  consists of $4$ copies of $\gr(2,4)$ that are glued together via torus invariant lines.

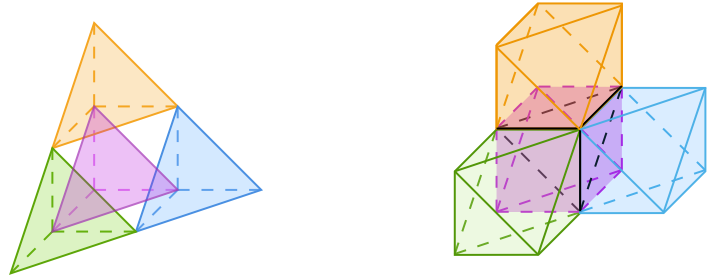
\begin{figure}[h]
    \centering

\tikzset{every picture/.style={line width=0.75pt}} 

\begin{tikzpicture}[x=0.79pt,y=0.79pt,yscale=-1,xscale=1]

\draw [color={rgb, 255:red, 0; green, 0; blue, 0 }  ,draw opacity=1 ]   (445.67,137.67) -- (405.67,137.67) ;
\draw [color={rgb, 255:red, 82; green, 151; blue, 8 }  ,draw opacity=1 ] [dash pattern={on 4.5pt off 4.5pt}]  (385.67,197.86) -- (405.67,137.86) ;
\draw [color={rgb, 255:red, 82; green, 151; blue, 8 }  ,draw opacity=1 ] [dash pattern={on 4.5pt off 4.5pt}]  (385.67,197.86) -- (445.67,177.86) ;
\draw [color={rgb, 255:red, 189; green, 16; blue, 224 }  ,draw opacity=1 ] [dash pattern={on 4.5pt off 4.5pt}]  (405.67,137.67) -- (425.67,117.67) ;
\draw [color={rgb, 255:red, 189; green, 16; blue, 224 }  ,draw opacity=1 ] [dash pattern={on 4.5pt off 4.5pt}]  (465.67,117.67) -- (425.67,117.67) ;
\draw [color={rgb, 255:red, 0; green, 0; blue, 0 }  ,draw opacity=1 ] [dash pattern={on 4.5pt off 4.5pt}]  (445.67,177.67) -- (405.67,137.67) ;
\draw [color={rgb, 255:red, 0; green, 0; blue, 0 }  ,draw opacity=1 ] [dash pattern={on 4.5pt off 4.5pt}]  (405.67,137.67) -- (465.67,117.67) ;
\draw [color={rgb, 255:red, 189; green, 16; blue, 224 }  ,draw opacity=1 ] [dash pattern={on 4.5pt off 4.5pt}]  (405.67,137.67) -- (405.67,177.67) ;
\draw [color={rgb, 255:red, 189; green, 16; blue, 224 }  ,draw opacity=1 ] [dash pattern={on 4.5pt off 4.5pt}]  (465.67,117.67) -- (465.67,157.67) ;
\draw [color={rgb, 255:red, 189; green, 16; blue, 224 }  ,draw opacity=1 ] [dash pattern={on 4.5pt off 4.5pt}]  (405.67,177.67) -- (425.67,117.67) ;
\draw [color={rgb, 255:red, 189; green, 16; blue, 224 }  ,draw opacity=1 ] [dash pattern={on 4.5pt off 4.5pt}]  (465.67,157.63) -- (425.67,117.63) ;
\draw  [draw opacity=0][fill={rgb, 255:red, 189; green, 16; blue, 224 }  ,fill opacity=0.33 ] (405.67,137.78) -- (425.67,117.74) -- (465.67,117.78) -- (465.67,157.74) -- (445.67,177.74) -- (405.67,177.78) -- cycle ;
\draw [color={rgb, 255:red, 189; green, 16; blue, 224 }  ,draw opacity=1 ] [dash pattern={on 4.5pt off 4.5pt}]  (445.67,177.63) -- (465.67,157.63) ;
\draw [color={rgb, 255:red, 189; green, 16; blue, 224 }  ,draw opacity=1 ] [dash pattern={on 4.5pt off 4.5pt}]  (445.67,177.67) -- (405.67,177.67) ;
\draw [color={rgb, 255:red, 189; green, 16; blue, 224 }  ,draw opacity=1 ] [dash pattern={on 4.5pt off 4.5pt}]  (405.67,177.63) -- (465.67,157.63) ;
\draw [color={rgb, 255:red, 0; green, 0; blue, 0 }  ,draw opacity=1 ]   (445.67,137.71) -- (445.67,177.71) ;
\draw [color={rgb, 255:red, 0; green, 0; blue, 0 }  ,draw opacity=1 ] [dash pattern={on 4.5pt off 4.5pt}]  (445.67,177.63) -- (465.67,117.63) ;
\draw [color={rgb, 255:red, 73; green, 176; blue, 231 }  ,draw opacity=0.97 ] [dash pattern={on 4.5pt off 4.5pt}]  (505.67,157.63) -- (445.67,177.63) ;
\draw [color={rgb, 255:red, 73; green, 176; blue, 231 }  ,draw opacity=0.97 ] [dash pattern={on 4.5pt off 4.5pt}]  (505.67,157.78) -- (465.67,118.28) ;
\draw [color={rgb, 255:red, 238; green, 151; blue, 4 }  ,draw opacity=1 ]   (405.67,97.67) -- (425.67,77.67) ;
\draw [color={rgb, 255:red, 0; green, 0; blue, 0 }  ,draw opacity=1 ]   (445.67,137.71) -- (465.67,117.71) ;
\draw [color={rgb, 255:red, 238; green, 151; blue, 4 }  ,draw opacity=1 ]   (405.67,97.67) -- (405.67,137.67) ;
\draw [color={rgb, 255:red, 238; green, 151; blue, 4 }  ,draw opacity=1 ] [dash pattern={on 4.5pt off 4.5pt}]  (405.67,137.67) -- (425.67,77.67) ;
\draw [color={rgb, 255:red, 238; green, 151; blue, 4 }  ,draw opacity=1 ]   (465.67,77.63) -- (465.67,117.63) ;
\draw [color={rgb, 255:red, 238; green, 151; blue, 4 }  ,draw opacity=1 ] [dash pattern={on 4.5pt off 4.5pt}]  (465.67,117.17) -- (425.67,77.67) ;
\draw  [color={rgb, 255:red, 82; green, 151; blue, 8 }  ,draw opacity=1 ][fill={rgb, 255:red, 184; green, 233; blue, 134 }  ,fill opacity=0.25 ] (385.67,158.32) -- (405.67,138.28) -- (445.67,138.32) -- (445.67,178.28) -- (425.67,198.28) -- (385.67,198.32) -- cycle ;
\draw  [color={rgb, 255:red, 73; green, 176; blue, 231 }  ,draw opacity=0.97 ][fill={rgb, 255:red, 59; green, 164; blue, 255 }  ,fill opacity=0.19 ] (445.67,138.32) -- (465.67,118.28) -- (505.67,118.32) -- (505.67,158.28) -- (485.67,178.28) -- (445.67,178.32) -- cycle ;
\draw  [color={rgb, 255:red, 238; green, 151; blue, 4 }  ,draw opacity=1 ][fill={rgb, 255:red, 245; green, 166; blue, 35 }  ,fill opacity=0.34 ] (405.67,97.86) -- (425.67,77.82) -- (465.67,77.86) -- (465.67,107.35) -- (465.67,117.82) -- (445.67,137.82) -- (405.67,137.86) -- cycle ;
\draw [color={rgb, 255:red, 0; green, 0; blue, 0 }  ,draw opacity=1 ][line width=0.75]    (445.67,137.78) -- (405.67,137.78) ;
\draw [color={rgb, 255:red, 0; green, 0; blue, 0 }  ,draw opacity=1 ][line width=0.75]    (445.67,137.38) -- (465.67,117.38) ;
\draw [color={rgb, 255:red, 82; green, 151; blue, 8 }  ,draw opacity=1 ]   (425.67,197.78) -- (445.67,138.28) ;
\draw [color={rgb, 255:red, 82; green, 151; blue, 8 }  ,draw opacity=1 ]   (385.67,158.28) -- (445.67,138.28) ;
\draw [color={rgb, 255:red, 82; green, 151; blue, 8 }  ,draw opacity=1 ]   (425.67,198.28) -- (385.67,158.28) ;
\draw [color={rgb, 255:red, 73; green, 176; blue, 231 }  ,draw opacity=0.97 ]   (485.67,177.78) -- (445.67,138.28) ;
\draw [color={rgb, 255:red, 73; green, 176; blue, 231 }  ,draw opacity=0.97 ]   (485.67,177.78) -- (505.67,118.28) ;
\draw [color={rgb, 255:red, 0; green, 0; blue, 0 }  ,draw opacity=1 ][line width=0.75]    (445.67,137.38) -- (445.67,177.38) ;
\draw [color={rgb, 255:red, 238; green, 151; blue, 4 }  ,draw opacity=1 ]   (465.67,78.82) -- (405.67,98.82) ;
\draw [color={rgb, 255:red, 238; green, 151; blue, 4 }  ,draw opacity=1 ]   (445.67,138.32) -- (465.67,78.82) ;
\draw [color={rgb, 255:red, 238; green, 151; blue, 4 }  ,draw opacity=1 ]   (445.67,138.32) -- (405.67,98.82) ;
\draw [color={rgb, 255:red, 73; green, 176; blue, 231 }  ,draw opacity=0.97 ]   (505.67,118.28) -- (445.67,138.28) ;
\draw [color={rgb, 255:red, 245; green, 166; blue, 35 }  ,draw opacity=1 ] [dash pattern={on 4.5pt off 4.5pt}]  (213,87.14) -- (213,127.14) ;
\draw [color={rgb, 255:red, 74; green, 144; blue, 226 }  ,draw opacity=1 ][fill={rgb, 255:red, 133; green, 192; blue, 255 }  ,fill opacity=0.35 ] [dash pattern={on 4.5pt off 4.5pt}]  (253,167.18) -- (293,167.18) ;
\draw [color={rgb, 255:red, 92; green, 165; blue, 8 }  ,draw opacity=1 ][fill={rgb, 255:red, 126; green, 211; blue, 33 }  ,fill opacity=0.27 ] [dash pattern={on 4.5pt off 4.5pt}]  (193,187.14) -- (173,207.11) ;
\draw [color={rgb, 255:red, 215; green, 78; blue, 237 }  ,draw opacity=0.67 ] [dash pattern={on 4.5pt off 4.5pt}]  (213,167.18) -- (253,167.18) ;
\draw [color={rgb, 255:red, 215; green, 78; blue, 237 }  ,draw opacity=0.67 ] [dash pattern={on 4.5pt off 4.5pt}]  (213,167.18) -- (193,187.14) ;
\draw [color={rgb, 255:red, 215; green, 78; blue, 237 }  ,draw opacity=0.67 ] [dash pattern={on 4.5pt off 4.5pt}]  (213,127.18) -- (213,167.18) ;
\draw [color={rgb, 255:red, 245; green, 166; blue, 35 }  ,draw opacity=1 ] [dash pattern={on 4.5pt off 4.5pt}]  (213,127.18) -- (253,127.18) ;
\draw [color={rgb, 255:red, 92; green, 165; blue, 8 }  ,draw opacity=1 ][fill={rgb, 255:red, 126; green, 211; blue, 33 }  ,fill opacity=0.27 ] [dash pattern={on 4.5pt off 4.5pt}]  (193,187.14) -- (233,187.14) ;
\draw [color={rgb, 255:red, 245; green, 166; blue, 35 }  ,draw opacity=1 ] [dash pattern={on 4.5pt off 4.5pt}]  (213,127.14) -- (193,147.11) ;
\draw [color={rgb, 255:red, 74; green, 144; blue, 226 }  ,draw opacity=1 ] [dash pattern={on 4.5pt off 4.5pt}]  (253,167.18) -- (233,187.14) ;
\draw [color={rgb, 255:red, 92; green, 165; blue, 8 }  ,draw opacity=1 ][fill={rgb, 255:red, 126; green, 211; blue, 33 }  ,fill opacity=0.27 ] [dash pattern={on 4.5pt off 4.5pt}]  (193,147.14) -- (193,187.14) ;
\draw [color={rgb, 255:red, 74; green, 144; blue, 226 }  ,draw opacity=1 ] [dash pattern={on 4.5pt off 4.5pt}]  (253,127.18) -- (253,167.18) ;
\draw  [color={rgb, 255:red, 245; green, 166; blue, 35 }  ,draw opacity=1 ][fill={rgb, 255:red, 245; green, 166; blue, 35 }  ,fill opacity=0.27 ] (213,87.18) -- (253,127.18) -- (193,147.18) -- cycle ;
\draw  [color={rgb, 255:red, 74; green, 144; blue, 226 }  ,draw opacity=1 ][fill={rgb, 255:red, 133; green, 192; blue, 255 }  ,fill opacity=0.35 ] (253,127.18) -- (293,167.18) -- (233,187.18) -- cycle ;
\draw  [color={rgb, 255:red, 93; green, 166; blue, 8 }  ,draw opacity=1 ][fill={rgb, 255:red, 126; green, 211; blue, 33 }  ,fill opacity=0.27 ] (193,147.18) -- (233,187.18) -- (173,207.18) -- cycle ;
\draw  [color={rgb, 255:red, 161; green, 44; blue, 180 }  ,draw opacity=0.7 ][fill={rgb, 255:red, 189; green, 16; blue, 224 }  ,fill opacity=0.26 ] (213,127.14) -- (253,167.14) -- (193,187.14) -- cycle ;

\end{tikzpicture}

    \caption{Hypersimplicial complexes $\mathcal{K}_{1,4}^{[3]}$ and $\mathcal{K}_{2,4}^{[4]}$.}
    \label{fig:hypercomplex 4 1 3}
\end{figure}

\end{Ex}

 \cref{prop: linear and hypersimplices} described the intersection of $\mathcal{K}_{n}^{[m]}$ with a linear subspace of the form 
\begin{equation}\label{eq:linear subspace cut 2}
H(S,\mathbf{a}) :=\{\lambda_i=a_i: i\in S\}\ \text{ for } S\subseteq [n] \text{ and } \aa\in\Z_{\geq 0}^n\text{ with }|\aa|\leq m-1.
\end{equation}
Such intersection is isomorphic to the hypersimplicial complex $\mathcal{K}_{n-|S|}^{[m-|a|]}$. On the levels of ideals, 
$\mu([J])$ for $[J]\in\Hilb_{\mathbf{0}}^m(X_n)$ is contained in 
$H(S,\aa)$  if and only if $x_i^{a_i+1}$ for $i\in S$ among its minimal generators. Therefore, the intersection of $\Hilb_{\mathbf{0}}^m(X_n)$ with this condition is isomorphic to $\Hilb_{\mathbf{0}}^{m-\sum a_i}(X_{n-|S|})$. This intersection coincides with the image of the map
\begin{equation}\label{eq:inductive punctual map}
\begin{array}{cccc}
\iota_{S,\aa}&\Hilb_{\mathbf{0}}^{m-|\aa|}(X_{n-|S|})&\longrightarrow& \Hilb_{\mathbf{0}}^m(X_n)
\\ 

&[J] & \longmapsto & \left[J+\langle x_i^{a_i+1}:i\in S\rangle \right]
\end{array}
\end{equation}
which is an isomorphism onto its image. Moreover, such a map fits in the following commutative diagram
\begin{equation}\label{eq:diagram inductive}
\begin{tikzcd}
\Hilb_{\mathbf{0}}^{m-|\aa|}(X_{n-|S|}) \arrow[r] \arrow[d] & \Hilb_{\mathbf{0}}^{m}(X_{n}) \arrow[d] \\
(m-|\aa|-1)\Delta_{n-|S|-1} \arrow[r]              & (m-1)\Delta_{n-1}              
\end{tikzcd},
\end{equation}
where the vertical maps are the moment map and
the map below is the translation by $\sum_{i\in S}a_i\mathbf{e}_i$.

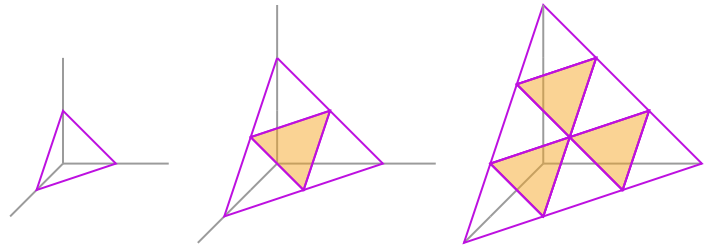
\begin{figure}[h]
    \centering
    \

\tikzset{every picture/.style={line width=0.75pt}} 

\begin{tikzpicture}[x=0.5pt,y=0.5pt,yscale=-1,xscale=1]

\draw [color={rgb, 255:red, 155; green, 155; blue, 155 }  ,draw opacity=1 ]   (138.67,140) -- (138.67,99.96) ;
\draw [color={rgb, 255:red, 155; green, 155; blue, 155 }  ,draw opacity=1 ][line width=0.75]    (218.67,180) -- (178.67,180) ;
\draw [color={rgb, 255:red, 155; green, 155; blue, 155 }  ,draw opacity=1 ]   (98.67,220) -- (118.67,200) ;
\draw [color={rgb, 255:red, 155; green, 155; blue, 155 }  ,draw opacity=1 ]   (118.67,200) -- (138.67,180) ;
\draw [color={rgb, 255:red, 155; green, 155; blue, 155 }  ,draw opacity=1 ]   (178.67,180) -- (138.67,180) ;
\draw [color={rgb, 255:red, 155; green, 155; blue, 155 }  ,draw opacity=1 ]   (138.67,180) -- (138.67,139.96) ;
\draw  [color={rgb, 255:red, 189; green, 16; blue, 224 }  ,draw opacity=1 ] (138.67,140) -- (178.67,180) -- (118.67,200) -- cycle ;
\draw [color={rgb, 255:red, 155; green, 155; blue, 155 }  ,draw opacity=1 ]   (300.67,140) -- (300.67,99.96) ;
\draw [color={rgb, 255:red, 155; green, 155; blue, 155 }  ,draw opacity=1 ]   (240.67,240) -- (260.67,220) ;
\draw [color={rgb, 255:red, 155; green, 155; blue, 155 }  ,draw opacity=1 ]   (420.67,180) -- (380.67,180) ;
\draw [color={rgb, 255:red, 155; green, 155; blue, 155 }  ,draw opacity=1 ][line width=0.75]    (380.67,180) -- (340.67,180) ;
\draw [color={rgb, 255:red, 155; green, 155; blue, 155 }  ,draw opacity=1 ]   (260.67,220) -- (280.67,200) ;
\draw [color={rgb, 255:red, 155; green, 155; blue, 155 }  ,draw opacity=1 ]   (280.67,200) -- (300.67,180) ;
\draw [color={rgb, 255:red, 155; green, 155; blue, 155 }  ,draw opacity=1 ]   (340.67,180) -- (300.67,180) ;
\draw [color={rgb, 255:red, 155; green, 155; blue, 155 }  ,draw opacity=1 ]   (300.67,180) -- (300.67,139.96) ;
\draw  [color={rgb, 255:red, 189; green, 16; blue, 224 }  ,draw opacity=1 ] (280.67,160) -- (320.67,200) -- (260.67,220) -- cycle ;
\draw  [color={rgb, 255:red, 189; green, 16; blue, 224 }  ,draw opacity=1 ] (300.67,99.96) -- (340.67,139.96) -- (280.67,159.96) -- cycle ;
\draw  [color={rgb, 255:red, 189; green, 16; blue, 224 }  ,draw opacity=1 ] (340.67,140) -- (380.67,180) -- (320.67,200) -- cycle ;
\draw  [color={rgb, 255:red, 189; green, 16; blue, 224 }  ,draw opacity=1 ][fill={rgb, 255:red, 245; green, 166; blue, 35 }  ,fill opacity=0.49 ] (280.67,160) -- (340.67,140) -- (320.67,200) -- cycle ;
\draw [color={rgb, 255:red, 155; green, 155; blue, 155 }  ,draw opacity=1 ]   (502,140) -- (502,99.96) ;
\draw [color={rgb, 255:red, 155; green, 155; blue, 155 }  ,draw opacity=1 ]   (442,240) -- (462,220) ;
\draw [color={rgb, 255:red, 155; green, 155; blue, 155 }  ,draw opacity=1 ]   (622,180) -- (582,180) ;
\draw [color={rgb, 255:red, 155; green, 155; blue, 155 }  ,draw opacity=1 ][line width=0.75]    (582,180) -- (542,180) ;
\draw [color={rgb, 255:red, 155; green, 155; blue, 155 }  ,draw opacity=1 ]   (462,220) -- (482,200) ;
\draw [color={rgb, 255:red, 155; green, 155; blue, 155 }  ,draw opacity=1 ]   (482,200) -- (502,180) ;
\draw [color={rgb, 255:red, 155; green, 155; blue, 155 }  ,draw opacity=1 ]   (542,180) -- (502,180) ;
\draw [color={rgb, 255:red, 155; green, 155; blue, 155 }  ,draw opacity=1 ]   (502,180) -- (502,139.96) ;
\draw  [color={rgb, 255:red, 189; green, 16; blue, 224 }  ,draw opacity=1 ] (482,120) -- (522,160) -- (462,180) -- cycle ;
\draw  [color={rgb, 255:red, 189; green, 16; blue, 224 }  ,draw opacity=1 ] (502,59.92) -- (542,99.92) -- (482,119.92) -- cycle ;
\draw  [color={rgb, 255:red, 189; green, 16; blue, 224 }  ,draw opacity=1 ] (542,100) -- (582,140) -- (522,160) -- cycle ;
\draw  [color={rgb, 255:red, 189; green, 16; blue, 224 }  ,draw opacity=1 ] (482,120) -- (542,100) -- (522,160) -- cycle ;
\draw [color={rgb, 255:red, 155; green, 155; blue, 155 }  ,draw opacity=1 ]   (502,99.96) -- (502,59.92) ;
\draw  [color={rgb, 255:red, 189; green, 16; blue, 224 }  ,draw opacity=1 ] (582,140) -- (622,180) -- (562,200) -- cycle ;
\draw  [color={rgb, 255:red, 189; green, 16; blue, 224 }  ,draw opacity=1 ] (522,160) -- (562,200) -- (502,220) -- cycle ;
\draw  [color={rgb, 255:red, 189; green, 16; blue, 224 }  ,draw opacity=1 ] (462,180) -- (502,220) -- (442,240) -- cycle ;
\draw  [color={rgb, 255:red, 189; green, 16; blue, 224 }  ,draw opacity=1 ] (462,180) -- (522,160) -- (502,220) -- cycle ;
\draw  [color={rgb, 255:red, 189; green, 16; blue, 224 }  ,draw opacity=1 ] (522,160) -- (582,140) -- (562,200) -- cycle ;
\draw [color={rgb, 255:red, 155; green, 155; blue, 155 }  ,draw opacity=1 ]   (300.67,99.96) -- (300.67,59.92) ;
\draw  [color={rgb, 255:red, 189; green, 16; blue, 224 }  ,draw opacity=1 ][fill={rgb, 255:red, 245; green, 166; blue, 35 }  ,fill opacity=0.49 ] (482,120) -- (542,100) -- (522,160) -- cycle ;
\draw  [color={rgb, 255:red, 189; green, 16; blue, 224 }  ,draw opacity=1 ][fill={rgb, 255:red, 245; green, 166; blue, 35 }  ,fill opacity=0.49 ] (462,180) -- (522,160) -- (502,220) -- cycle ;
\draw  [color={rgb, 255:red, 189; green, 16; blue, 224 }  ,draw opacity=1 ][fill={rgb, 255:red, 245; green, 166; blue, 35 }  ,fill opacity=0.49 ] (522,160) -- (582,140) -- (562,200) -- cycle ;

\end{tikzpicture}

    \caption{Smoothable faces of $\mathcal{K}_3^{[2]}$, $\mathcal{K}_3^{[3]}$ and $\mathcal{K}_3^{[4]}$}
    \label{fig:Smoothable faces n 3}
\end{figure}

Now, we describe the ideals in $\Hilb_{\mathbf{0}}^m(X_n)$ that are mapped to a smoothable face of $\mathcal{K}_n^{[m]}$ via the moment map. We recall that a face $\Gamma$ of $\mathcal{K}_n^{[m]}$ is smoothable face (see  \cref{def: smoothable face}) if one of the following conditions is satisfied
\begin{itemize}
    \item $n=0$ or $n=1$.
    \item The face $\Gamma$ is contained in $\Delta_{n-1,n}+\mathbf{v}-\mathbf{1}$ for certain $\mathbf{v}$.
    \item The face $\Gamma$ is contained in a linear subspace $H(S,\mathbf{a})$ as in \eqref{eq:linear subspace cut 2}, and in the intersection of $H(S,\mathbf{a})$ and $\mathcal{K}_n^{[m]}$, the face $\Gamma$ is smoothable.
\end{itemize} 

In \cref{app: The hypersimplicial complex} equivalent definitions of smoothable faces are given. \cref{fig:Smoothable faces n 3} illustrates the smoothable faces for $\mathcal{K}_3^{[m]}$ for $m=2,3,4$. The smoothable faces of $\mathcal{K}_4^{[m]}$ for $m=3,4$ are illustrated in  \cref{fig:smoothable faces}. 

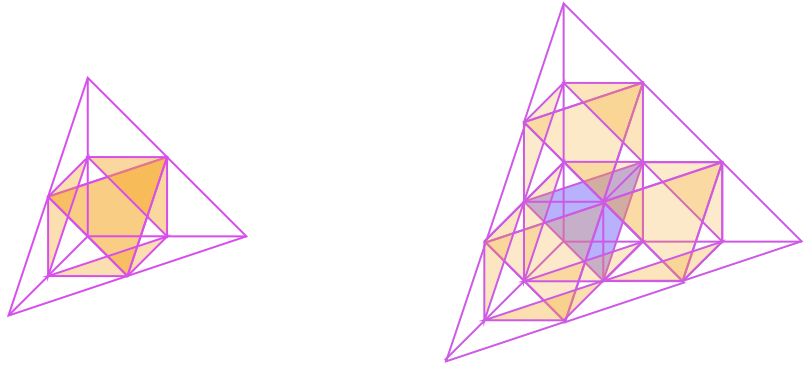
\begin{figure}
    \centering

\tikzset{every picture/.style={line width=0.75pt}} 

\begin{tikzpicture}[x=0.75pt,y=0.75pt,yscale=-1,xscale=1]

\draw [color={rgb, 255:red, 215; green, 78; blue, 237 }  ,draw opacity=1 ]   (187.57,82.85) -- (187.57,122.85) ;
\draw [color={rgb, 255:red, 215; green, 78; blue, 237 }  ,draw opacity=1 ]   (227.57,162.89) -- (267.57,162.89) ;
\draw [color={rgb, 255:red, 215; green, 78; blue, 237 }  ,draw opacity=1 ]   (167.57,182.93) -- (147.57,202.89) ;
\draw  [color={rgb, 255:red, 215; green, 78; blue, 237 }  ,draw opacity=1 ][fill={rgb, 255:red, 74; green, 144; blue, 226 }  ,fill opacity=0 ] (187.57,82.85) -- (227.57,122.85) -- (167.57,142.85) -- cycle ;
\draw  [color={rgb, 255:red, 215; green, 78; blue, 237 }  ,draw opacity=1 ][fill={rgb, 255:red, 245; green, 166; blue, 35 }  ,fill opacity=0.56 ] (227.57,122.89) -- (207.57,182.89) -- (167.57,142.89) -- cycle ;
\draw  [color={rgb, 255:red, 215; green, 78; blue, 237 }  ,draw opacity=1 ][fill={rgb, 255:red, 245; green, 166; blue, 35 }  ,fill opacity=0.44 ] (227.57,122.89) -- (227.57,162.89) -- (187.57,122.85) -- cycle ;
\draw  [color={rgb, 255:red, 215; green, 78; blue, 237 }  ,draw opacity=1 ][fill={rgb, 255:red, 245; green, 166; blue, 35 }  ,fill opacity=0.36 ] (187.57,122.93) -- (167.57,142.93) -- (167.57,182.93) -- cycle ;
\draw  [color={rgb, 255:red, 215; green, 78; blue, 237 }  ,draw opacity=1 ][fill={rgb, 255:red, 245; green, 166; blue, 35 }  ,fill opacity=0.36 ] (207.57,182.97) -- (227.57,162.97) -- (167.57,182.93) -- cycle ;
\draw  [color={rgb, 255:red, 215; green, 78; blue, 237 }  ,draw opacity=1 ][fill={rgb, 255:red, 74; green, 144; blue, 226 }  ,fill opacity=0 ] (167.57,142.89) -- (207.57,182.89) -- (147.57,202.89) -- cycle ;
\draw  [color={rgb, 255:red, 215; green, 78; blue, 237 }  ,draw opacity=1 ][fill={rgb, 255:red, 74; green, 144; blue, 226 }  ,fill opacity=0 ] (227.57,122.89) -- (267.57,162.89) -- (249,169.08) -- (207.57,182.89) -- cycle ;
\draw [color={rgb, 255:red, 215; green, 78; blue, 237 }  ,draw opacity=1 ]   (187.57,162.85) -- (227.57,162.85) ;
\draw [color={rgb, 255:red, 215; green, 78; blue, 237 }  ,draw opacity=1 ]   (187.57,162.85) -- (167.57,182.81) ;
\draw [color={rgb, 255:red, 215; green, 78; blue, 237 }  ,draw opacity=1 ]   (187.57,122.85) -- (187.57,162.85) ;
\draw  [color={rgb, 255:red, 205; green, 89; blue, 229 }  ,draw opacity=1 ][fill={rgb, 255:red, 183; green, 177; blue, 252 }  ,fill opacity=0.98 ] (407.57,145.31) -- (467.57,125.31) -- (447.57,185.31) -- cycle ;
\draw [color={rgb, 255:red, 205; green, 89; blue, 229 }  ,draw opacity=1 ]   (427.57,125.55) -- (427.57,165.55) ;
\draw [color={rgb, 255:red, 205; green, 89; blue, 229 }  ,draw opacity=1 ]   (467.57,125.39) -- (507.57,125.39) ;
\draw [color={rgb, 255:red, 205; green, 89; blue, 229 }  ,draw opacity=1 ]   (407.57,145.43) -- (387.57,165.39) ;
\draw  [color={rgb, 255:red, 205; green, 89; blue, 229 }  ,draw opacity=1 ][fill={rgb, 255:red, 245; green, 166; blue, 35 }  ,fill opacity=0.25 ] (467.57,85.27) -- (447.57,145.27) -- (407.57,105.27) -- cycle ;
\draw  [color={rgb, 255:red, 205; green, 89; blue, 229 }  ,draw opacity=1 ][fill={rgb, 255:red, 245; green, 166; blue, 35 }  ,fill opacity=0.22 ] (467.57,125.31) -- (467.57,165.31) -- (427.57,125.27) -- cycle ;
\draw  [color={rgb, 255:red, 205; green, 89; blue, 229 }  ,draw opacity=1 ][fill={rgb, 255:red, 245; green, 166; blue, 35 }  ,fill opacity=0.23 ] (427.57,85.43) -- (407.57,105.43) -- (407.57,145.43) -- cycle ;
\draw  [color={rgb, 255:red, 205; green, 89; blue, 229 }  ,draw opacity=1 ][fill={rgb, 255:red, 74; green, 144; blue, 226 }  ,fill opacity=0 ] (408.29,105.98) -- (448.29,145.98) -- (388.29,165.98) -- cycle ;
\draw  [color={rgb, 255:red, 205; green, 89; blue, 229 }  ,draw opacity=1 ][fill={rgb, 255:red, 74; green, 144; blue, 226 }  ,fill opacity=0 ] (467.57,85.39) -- (507.57,125.39) -- (489,131.58) -- (447.57,145.39) -- cycle ;
\draw [color={rgb, 255:red, 205; green, 89; blue, 229 }  ,draw opacity=1 ]   (427.57,165.55) -- (467.57,165.55) ;
\draw [color={rgb, 255:red, 205; green, 89; blue, 229 }  ,draw opacity=1 ]   (427.57,165.31) -- (407.57,185.27) ;
\draw [color={rgb, 255:red, 205; green, 89; blue, 229 }  ,draw opacity=1 ]   (427.57,85.35) -- (427.57,125.35) ;
\draw [color={rgb, 255:red, 205; green, 89; blue, 229 }  ,draw opacity=1 ]   (447.57,145.31) -- (447.57,185.31) ;
\draw [color={rgb, 255:red, 205; green, 89; blue, 229 }  ,draw opacity=1 ]   (507.57,125.39) -- (507.57,165.39) ;
\draw [color={rgb, 255:red, 205; green, 89; blue, 229 }  ,draw opacity=1 ]   (467.57,165.55) -- (507.57,165.55) ;
\draw [color={rgb, 255:red, 205; green, 89; blue, 229 }  ,draw opacity=1 ]   (507.57,165.55) -- (487.57,185.51) ;
\draw  [color={rgb, 255:red, 205; green, 89; blue, 229 }  ,draw opacity=1 ][fill={rgb, 255:red, 74; green, 144; blue, 226 }  ,fill opacity=0 ] (507.57,125.39) -- (547.57,165.39) -- (487.57,185.39) -- cycle ;
\draw  [color={rgb, 255:red, 205; green, 89; blue, 229 }  ,draw opacity=1 ][fill={rgb, 255:red, 245; green, 166; blue, 35 }  ,fill opacity=0.24 ] (448.29,145.98) -- (428.29,205.98) -- (388.29,165.98) -- cycle ;
\draw  [color={rgb, 255:red, 205; green, 89; blue, 229 }  ,draw opacity=1 ][fill={rgb, 255:red, 245; green, 166; blue, 35 }  ,fill opacity=0.23 ] (507.57,125.51) -- (507.57,165.51) -- (467.57,125.47) -- cycle ;
\draw  [color={rgb, 255:red, 205; green, 89; blue, 229 }  ,draw opacity=1 ][fill={rgb, 255:red, 245; green, 166; blue, 35 }  ,fill opacity=0.24 ] (427.57,125.55) -- (407.57,145.55) -- (407.57,185.55) -- cycle ;
\draw  [color={rgb, 255:red, 205; green, 89; blue, 229 }  ,draw opacity=1 ][fill={rgb, 255:red, 245; green, 166; blue, 35 }  ,fill opacity=0.25 ] (447.57,185.59) -- (467.57,165.59) -- (407.57,185.55) -- cycle ;
\draw  [color={rgb, 255:red, 205; green, 89; blue, 229 }  ,draw opacity=1 ][fill={rgb, 255:red, 74; green, 144; blue, 226 }  ,fill opacity=0 ] (388.29,165.98) -- (417.57,195.25) -- (428.29,205.98) -- (368.29,225.98) -- cycle ;
\draw [color={rgb, 255:red, 205; green, 89; blue, 229 }  ,draw opacity=1 ]   (447.57,185.31) -- (487.57,185.31) ;
\draw [color={rgb, 255:red, 205; green, 89; blue, 229 }  ,draw opacity=1 ]   (467.57,165.31) -- (447.57,185.27) ;
\draw [color={rgb, 255:red, 205; green, 89; blue, 229 }  ,draw opacity=1 ]   (467.57,125.31) -- (467.57,165.31) ;
\draw [color={rgb, 255:red, 205; green, 89; blue, 229 }  ,draw opacity=1 ]   (407.57,145.43) -- (447.57,145.43) ;
\draw [color={rgb, 255:red, 205; green, 89; blue, 229 }  ,draw opacity=1 ]   (467.57,125.47) -- (447.57,145.43) ;
\draw  [color={rgb, 255:red, 205; green, 89; blue, 229 }  ,draw opacity=1 ][fill={rgb, 255:red, 245; green, 166; blue, 35 }  ,fill opacity=0.24 ] (507.57,125.51) -- (487.57,185.51) -- (447.57,145.51) -- cycle ;
\draw [color={rgb, 255:red, 205; green, 89; blue, 229 }  ,draw opacity=1 ]   (507.57,165.55) -- (547.57,165.55) ;
\draw  [color={rgb, 255:red, 205; green, 89; blue, 229 }  ,draw opacity=1 ][fill={rgb, 255:red, 74; green, 144; blue, 226 }  ,fill opacity=0 ] (448.29,146.21) -- (477.57,175.49) -- (488.29,186.21) -- (428.29,206.21) -- cycle ;
\draw [color={rgb, 255:red, 205; green, 89; blue, 229 }  ,draw opacity=1 ]   (387.57,205.51) -- (367.57,225.47) ;
\draw [color={rgb, 255:red, 205; green, 89; blue, 229 }  ,draw opacity=1 ]   (427.57,45.43) -- (427.57,85.43) ;
\draw  [color={rgb, 255:red, 205; green, 89; blue, 229 }  ,draw opacity=1 ][fill={rgb, 255:red, 74; green, 144; blue, 226 }  ,fill opacity=0 ] (427.57,45.31) -- (467.57,85.31) -- (407.57,105.31) -- cycle ;
\draw [color={rgb, 255:red, 205; green, 89; blue, 229 }  ,draw opacity=1 ]   (407.57,185.55) -- (387.57,205.51) ;
\draw [color={rgb, 255:red, 205; green, 89; blue, 229 }  ,draw opacity=1 ]   (427.57,125.55) -- (407.57,145.51) ;
\draw  [color={rgb, 255:red, 205; green, 89; blue, 229 }  ,draw opacity=1 ][fill={rgb, 255:red, 245; green, 166; blue, 35 }  ,fill opacity=0.28 ] (407.57,145.55) -- (387.57,165.55) -- (387.57,205.55) -- cycle ;
\draw  [color={rgb, 255:red, 205; green, 89; blue, 229 }  ,draw opacity=1 ][fill={rgb, 255:red, 245; green, 166; blue, 35 }  ,fill opacity=0.3 ] (467.57,85.47) -- (467.57,125.47) -- (427.57,85.43) -- cycle ;
\draw  [color={rgb, 255:red, 205; green, 89; blue, 229 }  ,draw opacity=1 ][fill={rgb, 255:red, 245; green, 166; blue, 35 }  ,fill opacity=0.3 ] (487.57,185.51) -- (507.57,165.51) -- (447.57,185.47) -- cycle ;
\draw  [color={rgb, 255:red, 205; green, 89; blue, 229 }  ,draw opacity=1 ][fill={rgb, 255:red, 245; green, 166; blue, 35 }  ,fill opacity=0.36 ] (427.57,205.27) -- (447.57,185.27) -- (387.57,205.23) -- cycle ;

\end{tikzpicture}

    \caption{Smoothable faces of the complexes $\mathcal{K}_4^{[3]}$ and $\mathcal{K}_4^{[4]}$.}
    \label{fig:smoothable faces}
\end{figure}

\begin{Prop}
    \label{prop: ideals to smoothable faces}
    Let $[J]\in \Hilb_{\mathbf{0}}^m(X_n)$, then, $\mu([J])$ lies in a smoothable face of $\mathcal{K}_n^{[m]}$ if and only if $J=\langle x_i^{u_i}:i\in S\rangle +\langle f\rangle$ for $S\subset [n]$, $\uu\in\Z_{\geq 1}^n$ with $|\uu|=m+|S|$, and $f\in\langle x_i^{u_i}:i\not\in S\rangle_\C$.
\end{Prop}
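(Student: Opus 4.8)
The plan is to prove both implications by induction on $n$, tracking the recursive structure of \cref{def: smoothable face}. The base cases $n=0,1$ are immediate: then $\Hilb^m_{\mathbf 0}(X_n)$ is a single point, every face of $\mathcal{K}_n^{[m]}$ is smoothable, and the only ideal $\langle x_1^m\rangle$ has the asserted shape with empty monomial part. The conceptual pivot is that the operation $[J']\mapsto[J'+\langle x_i^{a_i+1}:i\in S\rangle]=\iota_{S,\aa}([J'])$ of adjoining monomial generators, see \eqref{eq:inductive punctual map}, corresponds on the combinatorial side exactly to passing from the slice $H(S,\aa)\cap\mathcal{K}_n^{[m]}\cong\mathcal{K}_{n-|S|}^{[m-|\aa|]}$ up to $\mathcal{K}_n^{[m]}$, the two moment maps being intertwined by the commutative diagram \eqref{eq:diagram inductive}. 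I would also use two facts belonging to the combinatorics of \cref{app: The hypersimplicial complex}: a subface of a smoothable face is smoothable, and the recursion in \cref{def: smoothable face} is well founded because its third clause strictly lowers $n$.

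\textbf{The implication $(\Leftarrow)$.} Write $J=\langle x_i^{u_i}:i\in S\rangle+\langle f\rangle$ with $f\in\langle x_i^{u_i}:i\notin S\rangle_\C$ and $|\uu|=m+|S|$. A colength count shows that finiteness of $R_n/J$ forces $f$ to involve $x_i^{u_i}$ for \emph{every} $i\notin S$, since otherwise $\C[x_i]$ would embed into $R_n/J$; in particular $\langle f\rangle$ has colength $\sum_{i\notin S}u_i$ in $R_{n-|S|}$, so $[n]\setminus S\neq\emptyset$. If $S=\emptyset$ then $J=\langle f\rangle$ is principal with $f$ involving every variable, hence $[J]\in\Sigma(m,1,\uu)$ and $\mu([J])$ lies in the relative interior of $\Delta_{n-1,n}+\uu-\mathbf 1$, which is smoothable by the second clause of \cref{def: smoothable face}. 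If $S\neq\emptyset$, put $a_i:=u_i-1$ for $i\in S$; then $|\aa|=m-\sum_{i\notin S}u_i\le m-1$ and $J=\iota_{S,\aa}(J')$ with $J'=\langle f\rangle\in\Hilb^{m-|\aa|}_{\mathbf 0}(X_{n-|S|})$ again of the asserted shape (empty monomial part). By the inductive hypothesis the moment-map image of $[J']$ lies in a smoothable face of $\mathcal{K}_{n-|S|}^{[m-|\aa|]}$, and transporting this along \eqref{eq:diagram inductive} places $\mu([J])$ on a face $\Gamma\subseteq H(S,\aa)$ that is smoothable inside the slice; by the third clause of \cref{def: smoothable face}, $\Gamma$ is smoothable in $\mathcal{K}_n^{[m]}$.

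\textbf{The implication $(\Rightarrow)$.} Suppose $\mu([J])$ lies in a smoothable face; replacing it by the unique face $\Gamma_J$ whose relative interior contains $\mu([J])$, which is smoothable because subfaces of smoothable faces are, I would decide which clause of \cref{def: smoothable face} applies to $\Gamma_J$. If $\Gamma_J\subseteq\Delta_{n-1,n}+\vv-\mathbf 1$, then $\Gamma_J$ is a face $\mathcal{K}_n^{[m]}(S_1,S_2,1,\vv)$ of this maximal hypersimplex, so by \eqref{eq: ideal faces} $J=\langle x_i^{v_i}:i\in S_1\rangle+\langle x_i^{v_i+1}:i\in S_2\rangle+\langle g_1,\dots,g_{1-|S_1|}\rangle$ with $|S_1|\le 1$; when $|S_1|=0$ this is already of the asserted shape with $S=S_2$ and $f=g_1$, and when $|S_1|=1$ finiteness forces $S_1\cup S_2=[n]$, so $J=\langle x_i^{u_i}:i\in[n]\rangle$, which we write as $\langle x_i^{u_i}:i\in[n]\setminus S_1\rangle+\langle x_k^{u_k}\rangle$ for $S_1=\{k\}$. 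If instead $\Gamma_J\subseteq H(S,\aa)$ with $S\neq\emptyset$ and $\Gamma_J$ smoothable in $\mathcal{K}_{n-|S|}^{[m-|\aa|]}$, then $\mu([J])\in H(S,\aa)$ forces $x_i^{a_i+1}$ to be a minimal generator of $J$ for each $i\in S$, whence $[J]=\iota_{S,\aa}([J'])$; by \eqref{eq:diagram inductive} the image of $[J']$ lies in the smoothable face $\Gamma_J$ of $\mathcal{K}_{n-|S|}^{[m-|\aa|]}$, so by induction $J'=\langle x_i^{u'_i}:i\in S'\rangle+\langle f\rangle$, and then $J=J'+\langle x_i^{a_i+1}:i\in S\rangle=\langle x_i^{u_i}:i\in S\sqcup S'\rangle+\langle f\rangle$ (with $u_i=a_i+1$ on $S$ and $u_i=u'_i$ elsewhere), the colength identity reading $|\uu|=(|\aa|+|S|)+|\uu'|=m+|S\sqcup S'|$.

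\textbf{Where the difficulty lies.} The routine parts are the colength counts, the verification that $\iota_{S,\aa}$ matches the slicing isomorphism, and the elementary bookkeeping in the second clause. The main obstacle, and the only genuinely delicate point, is keeping the dictionary between the three clauses of smoothability and the three shapes of $J$ (principal, monomial, monomial-plus-$f$) exact across the induction, in particular handling cleanly the degenerate situations where $f$ is itself a monomial or where adjoining monomial generators collapses $\Gamma_J$ to a lower-dimensional face, possibly a vertex.
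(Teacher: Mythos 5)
Your proof is correct and follows essentially the same route as the paper: both directions reduce via the map $\iota_{S,\aa}$ and the commutative diagram \eqref{eq:diagram inductive} to the principal-ideal case, where $[J]\in\Sigma(m,1,\uu)$ corresponds to the smoothable hypersimplex $\Delta_{n-1,n}+\uu-\mathbf{1}$. The only presentational difference is that you unwind the recursive definition of smoothable face by explicit induction on $n$ (handling the vertex/monomial degeneration separately), whereas the paper invokes the appendix characterization that smoothable faces are exactly hypersimplices of the form $\Delta_{n'-1,n'}$; the content is the same.
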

\begin{proof}
    Assume first that $J=\langle x_i^{u_i}:i\in S\rangle +\langle f\rangle$ and consider the integer vectors $\uu_S=\sum_{i\in S} u_i\mathbf{e}_i$ and  $\uu_{[n]\setminus S}=\sum_{i\not\in S} u_i\mathbf{e}_i$. Consider the ideal $J'=\langle f\rangle \subset\C[x_i:i\not\in S]$. Then, $J'$ is a length $|\uu_{[n]\setminus S}|$ and via the map \eqref{eq:map induct}, we get that $[J]=\iota_{S,\uu_S}([J'])$. Now, $[J']\in \Hilb_{\mathbf{0}}^{|\uu_{[n]\setminus S}|}(X_{n-|S|})$ is contained in the Grassmannian $\Sigma(|\uu_{[n]\setminus S}|,1,\uu_{[n]\setminus S})$. Therefore, $\mu([J'])$ lies in $\Delta_{n-1,n}+\uu_{[n]\setminus S}-\mathbf{1}$, and hence,  $\mu([J'])$ is contained in a smoothable face. Using the commutative diagram \eqref{eq:diagram inductive}, we deduce that $\mu([J])$ is contained in a smoothable face.

    Assume now that $\mu([J])$ lies in a smoothable face $\Gamma$. Then, $\Gamma$ is a hypersimplex of the form $\Delta_{n'-1,n'}$  for $n'\leq n$ (see \cref{app: The hypersimplicial complex}). By the commutative diagram \eqref{eq:diagram inductive}, it is enough to check the case when $n'=n$. In other words, $\Gamma =\Delta_{n-1,n}+\uu-\mathbf{1}$.
    In this case,  $\mu([J])$ lies in $\Gamma$ if and only if $[J]\in \Sigma(m,1,\uu)$. The proof follows from the fact that any ideal in $\Sigma(m,1,\uu)$ is of the form  $\langle x_i^{u_i+1}:i\in S\rangle +\langle f\rangle$ for $S\subsetneq [n]$ and $f\in\langle x_i^{u_i}:i\not\in S\rangle_\C$.
\end{proof}

Analogously to the notion of smoothable face, the notion of singular face is introduced in \cref{app: The hypersimplicial complex}, which we recall here for convenience: A face $\Gamma$ of $\mathcal{K}_n^{[m]}$ is \emph{singular} if one of the following conditions is satisfied:
    \begin{itemize}
        \item The face $\Gamma$ is in the intersection of two distinct maximal faces.
        \item The face $\Gamma$ is smoothable of dimension at most $n-2$, i.e. at most codimension $1$.
    \end{itemize}

In the following Proposition we describe the ideals that are contained in a singular face via the moment map.

\begin{Prop}
    \label{prop: ideal singular face}
    Let $[J]\in \Hilb_{\mathbf{0}}^{m}(X_n)$. Then $\mu([J])$ is contained in a singular face if and only if one of the following conditions is satisfied
    \begin{itemize}
    \item $J$ admits a minimal generator of the form $x_i^{u_i}$ for $u_i\geq 2$. 
    \item  $J$ admits a minimal representation of the form $\langle f,x_i:i\in S\rangle$ for $f\in \langle x_i^{u_i};i\not\in S\rangle_\C$ and $\emptyset\subsetneq S\subset[n]$.
\end{itemize}
\end{Prop}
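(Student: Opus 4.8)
The plan is to combine the explicit description of faces of $\mathcal{K}_n^{[m]}$ in \eqref{eq:hyper faces}, the ideal-theoretic dictionary in \eqref{eq: ideal faces}, and the inductive machinery \eqref{eq:inductive punctual map}--\eqref{eq:diagram inductive} with the already proven \cref{prop: ideals to smoothable faces}. First I would unwind the definition of singular face into ideal-theoretic terms using the two bullets. For the first bullet, if $\Gamma$ lies in the intersection of two distinct maximal faces $\Delta_{n-l,n}+\uu-\mathbf{1}$ and $\Delta_{n-l',n}+\vv-\mathbf{1}$, then by the description following \eqref{eq:hyper faces} we have $\mu(\Sigma(m,l,\uu)\cap\Sigma(m,l',\vv)) = \mathcal{K}_n^{[m]}(\kappa(\uu-\vv,1),\kappa(\uu-\vv,-1),l,\uu)$, and by \cref{prop: intersections} the ideals $[J]$ mapping into this face are exactly those admitting a presentation \eqref{eq:ideal intersect} with $\uu\neq\vv$; since $\uu-\vv\in\{0,1,-1\}^n\setminus\{\mathbf 1,-\mathbf 1\}$ is nonzero, at least one of the sets $\kappa(\uu-\vv,1)$, $\kappa(\uu-\vv,-1)$ is nonempty, which forces $J$ either to have a generator $x_i^{u_i}$ with $u_i\geq 2$ (when $i\in\kappa(\uu-\vv,1)$, using $|\uu|=m+l-1\geq m+1$ so that some exponent exceeds $1$ — here one must be careful, see the obstacle below) or to have a generator $x_i^{u_i+1}$ with $u_i+1\geq 2$. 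Conversely, if $J$ has a minimal generator $x_i^{u_i}$ with $u_i\geq 2$, then $[J]\in\Sigma(m,l,\uu)$ for the appropriate $(l,\uu)$ with $u_i\geq 2$, and one exhibits a second component $\Sigma(m,l,\uu-\mathbf{e}_i)$ (if $\uu-\mathbf{e}_i$ still has all entries $\geq 1$, i.e.\ $u_i\geq 2$, which holds) or $\Sigma(m,l-1,\uu-\mathbf{e}_i)$ containing $[J]$, exactly as in the argument producing the family $J_\lambda$ in the proof of \cref{punctual irred comp}; thus $\mu([J])$ lies in the intersection face, which is singular.

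For the second bullet — $\Gamma$ smoothable of codimension $\geq 1$ — I would invoke \cref{prop: ideals to smoothable faces}: $\mu([J])$ lies in a smoothable face iff $J = \langle x_i^{u_i}:i\in S\rangle + \langle f\rangle$ with $|\uu|=m+|S|$ and $f\in\langle x_i^{u_i}:i\notin S\rangle_\C$. The face is the hypersimplex $\Delta_{n'-1,n'}$ with $n' = n-|S|$ (using the reduction \eqref{eq:diagram inductive} and stripping off the variables $x_i$, $i\in S$, for which $J$ contains $x_i^{u_i}$), sitting inside $\mathcal{K}_n^{[m]}$ as the maximal face of $\mathcal{K}_{1,n'}^{[\,\cdot\,]}$ translated by $\uu$. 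This face has dimension $n'-1 = n-|S|-1$, hence codimension $|S|$ in $\mathcal{K}_n^{[m]}$. So $\Gamma$ being smoothable of codimension $\geq 1$ corresponds exactly to $|S|\geq 1$, and then among the generators $x_i^{u_i}$, $i\in S$: if some $u_i\geq 2$ we are in the first bullet of the conclusion; if all $u_i=1$ for $i\in S$, then $J = \langle x_i:i\in S\rangle + \langle f\rangle$ with $f\in\langle x_i^{u_i}:i\notin S\rangle_\C$ and $\emptyset\subsetneq S$, which is the second bullet. Conversely a $J$ of the form $\langle f, x_i:i\in S\rangle$ with $\emptyset\subsetneq S\subsetneq[n]$ is manifestly of the shape in \cref{prop: ideals to smoothable faces} with all exponents in $S$ equal to $1$, so $\mu([J])$ is in a smoothable face, and its dimension computation shows codimension $|S|\geq 1$.

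To finish I would assemble the two directions: the "if" direction merges the two constructions above (generator $x_i^{u_i}$, $u_i\geq2$ gives a face in a double-intersection; generator-set $\langle f, x_i:i\in S\rangle$ gives a smoothable face of codimension $|S|\geq 1$); the "only if" direction shows that every singular face — whether arising as a double intersection or as a low-dimensional smoothable face — forces $J$ into one of the two stated shapes, using \cref{prop: intersections}, \cref{prop: ideals to smoothable faces}, and the face dictionary \eqref{eq: ideal faces}. \textbf{The main obstacle} I anticipate is the bookkeeping in the first bullet: matching up "$\Gamma$ lies in the intersection of two distinct maximal faces" with "$J$ has a minimal generator $x_i^{u_i}$, $u_i\geq 2$" requires showing that the relevant index $i\in\kappa(\uu-\vv,1)$ genuinely produces a \emph{minimal} generator of $J$ (not one absorbed into the $f_j$'s) and that conversely the component $\Sigma(m,l,\uu-\mathbf{e}_i)$ one needs to exhibit a second maximal face actually contains $[J]$ and has $\uu-\mathbf{e}_i\in\Z_{\geq1}^n$ — this is the place where the degenerating family $J_\lambda$ from the proof of \cref{punctual irred comp} must be adapted, and where one must track carefully which of the boundary ideals \eqref{eq:ideals boundary} lie on a face of codimension exactly $1$ versus higher codimension, since only the former are "in the intersection of two distinct maximal faces" as opposed to deeper strata.
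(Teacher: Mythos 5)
Your proposal is correct and follows essentially the same route as the paper: the codimension-$\geq 1$ smoothable faces are matched with the second bullet via \cref{prop: ideals to smoothable faces} (with the case of some $u_i\geq 2$ absorbed into the first bullet), and membership in two distinct maximal faces is matched with the first bullet via \cref{prop: intersections} together with exhibiting the second component $\Sigma(m,\cdot,\uu-\mathbf{e}_i)$. The obstacle you flag is already resolved by the explicit presentation \eqref{eq:ideal intersect}, in which the generators $x_i^{u_i}=x_i^{v_i+1}$ for $i\in\kappa(\uu-\vv,1)$ are genuinely minimal and have exponent $\geq 2$ because $v_i\geq 1$; moreover the first bullet of \cref{def:singular face} does not require the intersection face to have codimension exactly one, so no stratification by codimension is needed.
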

\begin{proof}
Let $[J]\in \Hilb^m_\mathbf{0}(X_n)$ lying in the component $\Sigma(m,l,\uu)$.
    By \cref{prop: ideals to smoothable faces}, $\mu([J])$ lies in a smoothable face if and only if $J=\langle f,x_i:i\in S\rangle$ for $f\in \langle x_i^{u_i};i\not\in S\rangle_\C$ and $ S\subset[n]$. Moreover, such a face has dimension $n-1$ if and only if $S=\emptyset$. Therefore, the second condition of the definition of singular face corresponds to the second condition in  \cref{prop: ideal singular face}. 
    
    Assume now that $J$ contains a generator of the form $x_i^{u_i}$ with $u_i\geq 2$. Then $[J]$ also lies in the Grassmannian $\Sigma(m,r+|S|-1,\uu-\mathbf{e}_i)$. Therefore, $\mu([J])$ is singular, since it lies in the intersection of the hypersimplices corresponding to these two Grassmannians. Now assume that $\mu([J])$ is contained in the intersection of two distinct hypersimplices $\Delta_{l,n}+\uu-\mathbf{1}$ and $\Delta_{l',n}+\mathbf{v}-\mathbf{1}$. Then, $[J]$ is contained in the intersection of the Grassmannians $\Sigma(m,l,\uu)$ and $\Sigma(m,l',\mathbf{v})$. 
    By \cref{prop: intersections}, $J$ is as in \eqref{eq:ideal intersect}. Since $\uu\neq \mathbf{v}$, either $\kappa(\uu-\mathbf{v},1)$ or $\kappa(\uu-\mathbf{v},-1)$ is nonempty. Without loss of generality assume that $\kappa(\uu-\mathbf{v},1)\neq \emptyset$. Then, there exists $i\in [n]$ such that $x_i^{u_i}=x_i^{v_i+1}$ is a minimal generator of $J$. Since $v_i\geq1$, we deduce that $J$ admits a minimal generators of the form $x_i^{u_i}$ with $u_i\geq 2$.
\end{proof}

In \cref{sec:singularites and local hilbert scheme} and \cref{sec:sm and non sm}, we relate the notions of smoothable and singular faces with the smoothable ideals and the singular locus of $\sHilb^m(X_n)$.

\section{From the punctual to the global Hilbert scheme}\label{sec:localtoglobal}

With the notation and definitions of the previous sections, we will study the relation behind the combinatorics and the geometry of the Hilbert scheme $\Hilb^m(X_n)$. From this interplay we will get all the irreducible components of $\Hilb^m(X_n)$. 

For  $2\leq m'\leq \min\{m,n-1\}$, $l=n+1-m'$, and for $\uu\in\Z_{\geq 0}^n$ with $|\uu|=m-m'$, consider the isomorphism
\begin{equation}\label{eq: new map grass}
\phi_{m',\uu}:\Sigma(m',l,\mathbf{1})\rightarrow \Sigma(m,l,\uu+\mathbf{1})
\end{equation}
given by the composition
\[
\begin{array}{ccccccc}
\Sigma(m',l,\mathbf{1})&\overset{\varphi_{l,\mathbf{1}}^{-1}}{\longrightarrow }&\gr(l,\Lambda_{\mathbf{1}})&\longrightarrow&\gr(l,\Lambda_{\uu+\mathbf{1}})&\overset{\varphi_{l,\uu}}{\longrightarrow} &\Sigma(m,l,\uu+\mathbf{1}),
\end{array}
\]
where the middle map is induced by the isomorphism of vector spaces $\Lambda_{\mathbf{1}}\rightarrow \Lambda_{\uu+\mathbf{1}}$ that sends $x_i$ to $x_i^{u_i+1}$.
In other words, for $[J]\in\Sigma(m',l,\mathbf{1})$, its image $\phi_{m',\uu}([J])$ is the ideal whose generators are obtained by replacing $x_i$ by $x_i^{u_i+1}$. Moreover, 
the following diagram commutes
\[
\begin{tikzcd}
{\Sigma(m',l,\mathbf{1})} \arrow[d, "\mu"'] \arrow[rr, "{\phi_{m',\mathbf{u}}}"] &  & {\Sigma(m',l,\mathbf{u}+\mathbf{1})} \arrow[d, "\mu"] \\
{\Delta_{n-l,n}} \arrow[rr, "+\mathbf{u}"']                                      &  & {\Delta_{n-l,n}+\mathbf{u}}                          
\end{tikzcd}.
\]
In particular, the map $\phi_{m',\uu-\mathbf{1}}$ is the geometric analogous to the translation $+\uu-\mathbf{1}$ in the definition of the moment map.

\begin{Prop}\label{prop:limits}
    For $2\leq m'\leq \min\{m,n-1\}$ and for $\uu\in\Z_{\geq 0}^n$ with $|\uu|=m-m'$, consider the rational map
    \begin{equation}
        \label{eq:induc map lines}
        \begin{array}{ccc}
 \displaystyle\Sigma(m',n+1-m',\mathbf{1})\times\prod_{i\in[n]}\Sym^{u_i}L_i & \dashrightarrow&\Hilb^m(X_n)\\
         ([J],q_1,\ldots,q_n)&\longmapsto&\mathbb{V}(J)\cup q_1\cup\cdots \cup q_n
        \end{array}.
    \end{equation}
   Let $[J]\in \Sigma(m',n+1-m',\mathbf{1})$ and let $Z$ be a one parameter family in the domain of \eqref{eq:induc map lines} that contains the point $z_0=([J],u_1\cdot\mathbf{0},\ldots,u_n\cdot\mathbf{0})$ and the domain of definition of \eqref{eq:induc map lines} intersect $Z$ in a dense open subset. Then,
    the restriction of the map \eqref{eq:induc map lines} to $Z$ can be uniquely extended to $z_0$ and the image of $z_0$ is $\phi_{m',\uu}([J])$, which does not depend on the family $Z$.
\end{Prop}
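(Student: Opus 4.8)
The plan is to argue via the valuative criterion together with an explicit computation of the flat limit. Replacing $Z$ by $\mathrm{Spec}\,R$ for a discrete valuation ring $R$ with fraction field $K$ and residue field $\C$, whose closed point maps to $z_0$, the map \eqref{eq:induc map lines} produces over the generic point the subscheme
\[
\mathcal Z_K=\V(J_K)\sqcup q_1\sqcup\dots\sqcup q_n\subseteq X_n\otimes K,
\]
where $q_i=\V(p_i(x_i))\subseteq L_i$ with $p_i\in R[x_i]$ monic of degree $u_i$, $p_i(0)\neq0$ and $p_i\equiv x_i^{u_i}\pmod{\mathfrak m_R}$ (the $u_i$ points of $q_i$ colliding onto $\mathbf 0$), and $[J_K]$ is a $K$--point of $\Sigma(m',n+1-m',\mathbf 1)$ specialising to $[J]$. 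After a ramified base change one may assume $[J_K]$ extends to an $R$--point, so that $\varphi_{n+1-m',\mathbf 1}^{-1}([J_K])$ is cut out by a saturated sublattice $\Gamma_R\subseteq\Lambda_{\mathbf 1}\otimes R$ whose reduction is $\Gamma_0:=\varphi_{n+1-m',\mathbf 1}^{-1}([J])$. Since $X_n$ is closed in $\A^n$ and the supports of the $\mathcal Z_K$ stay in a bounded region, the flat closure $\overline{\mathcal Z}$ of $\mathcal Z_K$ inside $X_n\otimes R$ exists and is $R$--flat of degree $m$; its special fibre is the sought limit, and it is unique because $\Hilb^m(X_n)$ is separated. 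It therefore suffices to prove $\overline{\mathcal Z}_0=\V(\phi_{m',\uu}([J]))$, a statement visibly independent of $Z$.

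The second step is to make $I_{\mathcal Z_K}=J_K\cap\bigcap_i\mathfrak p_{q_i}$ explicit. As $[J_K]\in\Sigma(m',n+1-m',\mathbf 1)$, the ideal $J_K$ contains $x_i^2$ for all $i$ (cf. the proof of \cref{ideals punctual}), and an element $g=\sum_i g_i(x_i)$ of $R_n\otimes K$ with $g_i\in x_iK[x_i]$ lies in $I_{\mathcal Z_K}$ exactly when $p_i(x_i)\mid g_i(x_i)$ for all $i$ and the linear part of $g$ lies in $\Gamma_K$; since $p_i(0)\neq0$ this forces $g_i=x_ip_i(x_i)k_i(x_i)$ with $k_i\in K[x_i]$, with linear part $\sum_i p_i(0)k_i(0)\,x_i$. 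The flat closure $\mathcal I=I_{\mathcal Z_K}\cap(R_n\otimes R)$ is the same description with $k_i\in R[x_i]$, and $\overline{\mathcal Z}_0=\V(\overline{\mathcal I})$, where $\overline{\mathcal I}$ is the image of $\mathcal I$ in $R_n$. One then identifies $\overline{\mathcal I}$ with $\phi_{m',\uu}([J])$: by \cref{lem:extension} the latter equals $\langle\widetilde{\Gamma_0}\rangle_{R_n}+\langle x_i^{u_i+2}:i\in[n]\rangle$, where $\widetilde{\Gamma_0}$ is the image of $\Gamma_0$ under $x_i\mapsto x_i^{u_i+1}$; taking $k_i=x_i$ and $k_j=0$ for $j\neq i$ shows $x_i^2p_i(x_i)\in\mathcal I$, reducing to $x_i^{u_i+2}$, and for each basis vector $(a^0_{ij})_i$ of $\Gamma_0$ one constructs an element of $\mathcal I$ reducing to $\sum_i a^0_{ij}x_i^{u_i+1}$ by an appropriate choice of the $k_i\in R[x_i]$. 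This gives $\phi_{m',\uu}([J])\subseteq\overline{\mathcal I}$, and since both ideals have colength $m$ (the left one as a point of $\Sigma(m,n+1-m',\uu+\mathbf 1)$, the right one by $R$--flatness of $\overline{\mathcal Z}$), they coincide.

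When $[J]$ lies on the boundary of $\Sigma(m',n+1-m',\mathbf 1)$, so that $J$ has some generator $x_i^2$, one reduces to the previous case using the inductive closed immersions $\iota_{S,\aa}$ of \eqref{eq:inductive punctual map} and the compatibility diagram \eqref{eq:diagram inductive}, or reruns the computation with $\Gamma_0$ of lower dimension and the extra monomials carried through both $\mathcal I$ and $\phi_{m',\uu}([J])$.

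The step I expect to be the main obstacle is producing, for each basis vector of $\Gamma_0$, the element of $\mathcal I$ whose reduction is $\sum_i a^0_{ij}x_i^{u_i+1}$: the linear constraint is phrased through $\Gamma_K$, while the monomials $x_ip_i(x_i)$ available in $\mathcal I$ carry the coordinatewise rescaling factors $p_i(0)$, whose valuations depend on $Z$. One must use the higher-order coefficients of the $k_i$ together with the fact that the pure powers $x_i^{u_i+2}$ already lie in $\overline{\mathcal I}$ to absorb the resulting error, and it is precisely this cancellation that makes the rescaling invisible in the limit, hence forces $\overline{\mathcal Z}_0$ to depend only on $[J]$ and $\uu$. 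A related point requiring care is that $\overline{\mathcal I}$ is the saturation of the naively specialised ideal rather than its termwise reduction, so a priori it could be larger; the colength count is what closes this gap. Finally, the commutative square between \eqref{eq: new map grass} and the statement, relating $\phi_{m',\uu}$ to the translation by $\uu$ on moment polytopes, gives a useful consistency check on the identification.
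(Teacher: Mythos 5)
Your overall strategy---passing to a DVR, describing $I_{\mathcal Z_K}$ as $\{\sum_i x_ip_ik_i : k_i\in K[x_i],\ \sum_i p_i(0)k_i(0)x_i\in\Gamma_K\}$, intersecting with $R_n\otimes R$, and reducing modulo $\mathfrak m_R$---is sound, and is in fact more careful than the argument the paper gives. But the step you flag as ``the main obstacle'' is not a technical difficulty to be absorbed; it is a genuine obstruction. The reduction of $\sum_i x_ip_ik_i$ modulo $\langle x_i^{u_i+2}: i\in[n]\rangle$ is $\sum_i\bar k_i(0)x_i^{u_i+1}$, so the higher-order coefficients of the $k_i$ contribute nothing to the target class: you must solve $k_i(0)\equiv a^0_{ij}\pmod{\mathfrak m_R}$ subject to $\sum_ip_i(0)k_i(0)x_i\in\Gamma_R$, and when the valuations of the $p_i(0)$ differ this system can have no solution. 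Concretely, take $n=3$, $m'=2$, $J=\langle x_1+x_2,\,x_1+x_3\rangle$, $\uu=(1,1,1)$, and the family $Z$ placing the three free points at $x_1=t$, $x_2=t^2$, $x_3=t^2$. The membership condition reads $tk_1(0)=t^2k_2(0)+t^2k_3(0)$, forcing $k_1(0)\in \mathfrak m_R$ and hence $\bar k_1(0)=0$ for \emph{every} element of $\mathcal I$. The flat limit is therefore $\langle x_1^3,x_2^2,x_3^2\rangle$, whereas $\phi_{2,\uu}([J])=\langle x_1^2+x_2^2,\,x_1^2+x_3^2\rangle$; these are distinct colength-$5$ ideals ($x_2^2$ lies in the first but not the second). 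So the inclusion $\phi_{m',\uu}([J])\subseteq\overline{\mathcal I}$ is exactly what fails, the colength count cannot close the gap, and the limit genuinely depends on $Z$.

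You should know that the paper's own proof makes the same unjustified leap: it asserts that the limit ideal ``must contain'' $\sum_j a_{i,j}x_j^{u_j+1}$, but the element $\sum_j a_{i,j}x_jp_j$ lies in $I_z$ only if its linear part $\sum_j a_{i,j}p_j(0)x_j$ lies in $\Gamma$, which is precisely the condition that fails once the points approach $\mathbf 0$ at different rates. What your computation does correctly establish is the weaker statement that every limit lies in the (closed) Grassmannian $\Sigma(m,n+1-m',\uu+\mathbf 1)$---in the example above, $\langle x_1^3,x_2^2,x_3^2\rangle$ is a boundary point of $\Sigma(5,2,(2,2,2))$---and that the ``diagonal'' family (all $p_i(0)$ of equal valuation) does limit to $\phi_{m',\uu}([J])$. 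That suffices for the component count in \cref{theo: irred comp reduced structure}, but not for the path-independence asserted here, on which \cref{thm:normalization} later relies. If you want a correct statement to prove, restrict the class of families $Z$ (e.g.\ require the $v(p_i(0))$ to be equal), or weaken the conclusion to ``the limit lies in $\Sigma(m,n+1-m',\uu+\mathbf 1)$ and equals $\phi_{m',\uu}([J])$ for generic $Z$.''
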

\begin{proof}
    Let $z=([J],q_1,\ldots,q_n)\in Z$ lying in the domain of definition of \eqref{eq:induc map lines}. In particular, $q_i$ represents $u_i$ points in $L_i\setminus\{\mathbf{0}\}$. Let $I_i=\langle x_j:j\neq i\rangle +\langle f_i\rangle$ be the ideal of $q_i$. In other words, we can write $f_i$ as $f_i=1+f_i'$ where $f_i'$ is a degree $u_i$ polynomial in $x_i$ with $f_i'(0)=0$. Then, the ideal of $q_1\cup\cdots \cup q_n$ in $R_n$ is $\langle f_1\cdots f_n\rangle =\langle 1+\sum f_j'\rangle$. Let $g_1,\ldots,g_{n+1-m'}$ be the generators of $J$, where $g_i=a_{i,1}x_1+\ldots,a_{i,n}x_n$. Therefore, the ideal of $\mathbb{V}(J)\cup q_1\cup\cdots \cup q_n$ is generated by 
    \[
    \langle g_1 \left(  1+\sum f_j\right) ,\ldots, g_{n+1-m'} \left(  1+\sum f_j\right)\rangle.
    \]
    Now, we have that 
    \[
    g_i \left(  1+\sum_{j=1}^n f_j\right)=g_i+\sum_{j=1}^n a_{i,j}x_jf_j'=\sum_{j=1}^n a_{i,j}x_j + a_{i,j}x_jf_j'=\sum_{j=1}^n a_{i,j}x_j(1+f_j')=\sum_{j=1}^n a_{i,j}x_jf_j.
    \]
    Now, consider any limit of the form $\underset{z\rightarrow z_0}{\lim} z$. In other words, we are taking the limit when $f_j$ goes to $x_j^{u_j}$ for all $j\in[n]$.
Then, the image of such a limit is a length $m$ ideal that must contained the ideal generated by
\[
\sum_{j=1}^n a_{i,j}x_j^{u_j+1} \text{ for all } i\in[n].
\]
Note that this ideal is exactly $\phi_{m',\uu}([J])$. In particular, 
the length of this ideal is $|\uu|+n+1-(n+1-m')=m$, and we conclude that $\phi_{m',\uu}([J])$ is the image of the limit.
\end{proof}

\begin{Rem}\label{remark: comb vs geometry}
     \cref{prop:limits} allows us to relate the combinatorics studied in \cref{sec:moment map} and the geometry of the Hilbert scheme. Mainly, it shows the relation between the map \eqref{eq:induc map lines} and the translations made in the definition of the moment map and in the hypersimplices of $\mathcal{K}_n^{[m]}$. Consider the translation by $\uu$ between the hypersimplices $\Delta_{m'-1,n}$ and  $\Delta_{m'-1,n}+\uu$ for $\uu\in\Z_{\geq 0}^{n}$ and $|\uu|=m-m'$. Let $[J]\in\Sigma(m',n+1-m',\mathbf{1})$, then $\mu([J])$ is in $\Delta_{m'-1,n}$. The translation $\mu([J])+\uu$ can be interpreted geometrically as follows. Let $Z$ be the length $m$ ideal obtained by adding $u_i$ nonzero points in the line $L_i$  to the scheme $\mathbb{V}(J)$ for all $i$. By \cref{prop:limits}, collapsing all these nonzero points to the singularity leads to the length $m$ ideal $\phi_{m',\uu}([J])$ in $\Hilb_{\mathbf{0}}^{m}(X_n)$. The image via the moment map of this ideal is exactly $\mu([J])+\uu$. Thus, the translation in the definition of the moment map is interpreted geometrically as adding to $\mathbb{V}(J)$ extra nonzero points in the lines of $X_n$ and collapsing them to the singularity. 
     This relation is explored in more detail in \cref{sec: nonsm}.
\end{Rem}


We now calculate the irreducible components of $\Hilb^m(X_n)$. The strategy is to use \cref{prop:limits} to distinguish which irreducible components of the punctual Hilbert scheme $\Hilb_{\mathbf{0}}^m(X_n)$ lift to an elementary component of $\Hilb^m(X_n)$. To do so, we first introduce the candidates to irreducible components of $\Hilb^{m}(X_n)$.
 For $\uu\in \Z_{\geq 1}^{n}$ with $|\uu|=m+l-1$,  we define the rational map 
\begin{equation}
    \label{eq:induc map}
    X_n\times \Sigma(m,l,\uu)\dashrightarrow \Hilb^{m+1}
(X_n)
\end{equation}
that sends a point $q\in X_n$ and an ideal $[J]\in \Sigma(m,l,\uu)$
to the length-$m$ subscheme $\{q\}\cup \V(J)$. The map \eqref{eq:induc map} is not defined in $\mathbf{0}\times \Sigma(m,l,\uu)$. As a consequence, the image of \eqref{eq:induc map} is not a closed subvariety. \cref{prop:limits} allows us to extend \eqref{eq:induc map} to a well-defined map. The image of the base locus is the union of the Grassmannians $\Sigma(m+1,l,\uu+\mathbf{e}_i)$ for $i\in[n]$. In general, for $ 2\leq m'\leq \min\{n-1,m\}$, we define the  map 
\begin{equation}\label{eq:induc map gen}
    \begin{array}{ccc}
    \Hilb_{\mathrm{sm}}^{m-m'}(X_n\setminus\{\mathbf{0}\})\times \Sigma(m',n+1-m',\mathbf{1}) & \longrightarrow &\Hilb^m(X_{n})\\
     (Y,[J])&\longmapsto& Y\cup \V(J) .
    \end{array}
\end{equation}
The extension of \eqref{eq:induc map gen} to the closure of its domain will be studied in  \cref{sec: nonsm}.
We denote the closure of the image of \eqref{eq:induc map gen} by $\Hilb^{m,m'}(X_n)$.  In other words, $\Hilb^{m,m'}(X_n)$ is the reduced version of $\sHilb^{m,m'}(X_n)$, defined as the closure of the locus of points $[J]\in\Hilb^m(X_n)$ such that there exists $J_0$ in the primary decomposition of $J$ supported at $\mathbf{0}$ with $[J_0]\in \Sigma(m',n+1-m',\mathbf{1})$. 
The following result describes which irreducible components of $\Hilb^m_\mathbf{0}(X_n)$ lift to elementary components of $\Hilb^m(X_n)$ and which not. This allows us to compute the irreducible components of $\Hilb^m(X_n)$.

\begin{Thm}\label{theo: irred comp reduced structure}
    Fix $n\geq 1$ and $m\geq 1$. Let $l\in[n-1]$ and let $\uu\in\Z_{\geq 1}^n$ be partition of $m+l-1$.
    \begin{enumerate}
        \item If $\uu = \mathbf{1} = (1,\ldots,1)$ and $m=n$, then $l=1$ and $\Sigma(n,1,\mathbf{1})$ is contained in $\Hilb^m_{\mathrm{sm}}(X_n)$.
        
        \item If $\uu = \mathbf{1} = (1,\ldots,1)$ and $2\leq m\leq n-1$, then $2\leq l=n+1-m\leq n-1$. In this case, $\Sigma(m,n+1-m,\mathbf{1})$ is an irreducible component of $\Hilb^m(X_n)$. In particular, $\Sigma(m,n+1-m,\mathbf{1})$ is an elementary component of $\Hilb^m(X_n)$.
        
        \item If there exists $1\leq i\leq n$ such that $u_i\geq 2$ then $\Sigma(m,l,\uu)$ is contained in the closure of the image of the map 
        \begin{equation}\label{eq:map point in a line}
        (L_i\setminus\{\mathbf{0}\})\times \Sigma(m-1,l,\uu-\ee_I)\longrightarrow \Hilb^m(X_n)
        \end{equation}
        sending a pair $(q,[I])\in(L_i\setminus\{\mathbf{0}\})\times \Sigma(n,l,\uu-\ee_I)$ to the length $m$ scheme $\{q\}\cup\V(I)$. In particular, $\Sigma(m,l,\uu)$ is not an elementary component of $\Hilb^m(X_n)$.
    \end{enumerate}
    Therefore, the irreducible components of $\Hilb^m(X_n)$ are either an irreducible component of the smoothable component or an irreducible component of $\Hilb^{m,m'}(X_n)$ for $2\leq m'\leq \min\{m,n-1\}$.
\end{Thm}
\begin{proof}

    Assume first that $u_i\geq 2$ for some $1\leq i\leq n$. Without loss of generality, we can assume that $u_1\geq 2$. 
        Let $J$ be a generic ideal in $\Sigma(m,l,\uu)$. Then $J$ is generated by $l$ polynomials $f_1,\ldots,f_l$ of the form 
    \[  f_i=a_{i,1}x_1^{u_1}+\cdots+a_{i,n}x_n^{u_n}.
    \]
    We can assume that $a_{1,1}= 1$. Replacing $f_i$ by $a_{1,1}f_i-a_{i,1}f_1$ for $2\leq i\leq l$, we can also assume that $a_{i,1}=0$ for $2\leq i\leq l$. We consider the ideal 
    \[
    J_\lambda=\langle f_1+\lambda x_1^{u_1-1},f_2,\ldots,f_l\rangle
    \]
    for $\lambda\in\C$. Note that $J_0=J$ and $J_\lambda$ is a length $m$ ideal in $R_n$  for all $\lambda\in \C$.
    As in Proposition \ref{prop:limits}, for $\lambda\neq 0$, we may write $J_\lambda$ as
    \[
    J_\lambda = \langle
    (\lambda+x_1)(x_1^{u_1-1}+\lambda^{-1}a_{1,2}x_2^{u_2}+\cdots+\lambda^{-1}a_{1,n}x_2^{u_n}),(\lambda+x_1)f_2,\ldots,(\lambda+x_1)f_n
    \rangle.
    \]
    We deduce that for $\lambda\neq 0$, $\V(J_\lambda)$ is the union of the point $(-\lambda,0,\ldots,0)$ and a length $m-1$ subscheme in $\Sigma(m,l,\uu-\mathbf{e}_1)$ given by the ideal $$J'_\lambda=\langle x_1^{u_1-1}+\lambda^{-1}a_{1,2}x_2^{u_2}+\cdots+\lambda^{-1}a_{1,n}x_2^{u_n},f_2,\ldots,f_n \rangle .$$
    Therefore, for $\lambda\neq 0$, $[J_\lambda]$ is the image via \eqref{eq:map point in a line} of the tuple $((-\lambda,0\ldots,0),[J'_\lambda])$. By \cref{prop:limits}, we deduce that $[J]$ lies in the closure of the image of the map \eqref{eq:map point in a line}. Since $[J]$ is a generic element of $\Sigma(m,l,\uu)$, we conclude that $\Sigma(m,l,\uu)$ is also contained in this closure.

        Now, assume that $l=1$, $m=n$ and $\uu= \mathbf{1}$. 
        In this case, ideals in $\Sigma(n,1,\mathbf{1})$ are generated by a linear form, so they correspond to hyperplanes passing through $\mathbf{0}$. Let $J$ be a generic element in $\Sigma(n,1,\mathbf{1})$  and let $H$ be the corresponding hyperplane. Moreover, let $v$ be a generic vector supported at $\mathbf{0}$ and consider the family of affine hyperplanes $H_t=H+tv$. Since $H$  and $v$ are generic, for $t\neq 0$, $H_t$ intersects each of the lines $L_1,\ldots,L_n$ in a point distinct than $\mathbf{0}$. In particular, $H_t\cap X_n$ consists of $n$ distinct points in $X_n$ and we obtain a family of length $n$ schemes $H_t\cap X_n$ that are smoothable for $t\neq 0$. We conclude that the scheme $H_0\cap X_n=\V(J)$ is smoothable.

        Assume that $2\leq m\leq n-1$, $\uu=(1,\ldots,1)$ and $l=n+1-m$. This implies that $n\geq 3$. We show that $\Sigma(m,n+1-m,\mathbf{1})$ is an irreducible component of $\Hilb^m(X_n)$. For $m=2$, we saw that $\Sigma(2,n-1,\mathbf{1})$ is an irreducible component of $\Hilb^2(X_n)$ in \cref{ex:m=2} since the tangent star is properly contained in the tangent space $\mathrm{T}_\mathbf{0}X_n$. Now assume that $m\geq3$. For $0\leq k\leq m$, we consider the map
        \begin{equation}\label{eq:map induct}
        \Hilb^k(X_n\setminus\{\mathbf{0}\})\times \Hilb^{m-k}_\mathbf{0}(X_n)\dashrightarrow \Hilb^m(X_n)
        \end{equation}
        sending $k$ distinct points $\{q_1,\ldots,q_k\}$ in $X_n\setminus\{\mathbf{0}\}$ and a length $m-k$ scheme $Z$ supported at $\mathbf{0}$ to $\{q_1,\ldots,q_k\}\cup Z$.  We denote the closure of the image of this map by $Y^m(n,k)$. Note that $Y^m(n,m)$ is the smoothable component of $\Hilb^m(X_n)$. 

        To check that $\Sigma(m,n+1-m,\mathbf{1})$ is an elementary component of $\Hilb^m(X_n)$ it is enough to show that it is not contained in  $Y^m(n,k)$ for all $1\leq k\leq m$. We argue by contradiction as follows. Let $[J]$ be a generic element in $\Sigma(m,n+1-m,\mathbf{1})$ and assume that $[J]$ lies in $Y^m(n,k)$ for some $1\leq k\leq m$.
        Note that generic ideals in $\Sigma(m,n+1-m,\mathbf{1})$ are generated by $n+1-m$ linear forms and they correspond to generic $(m-1)$--dimensional linear subspaces in $\mathbb{A}^n$ passing through $\mathbf{0}$. Let $\Gamma$ be the $(m-1)$--dimensional subspace associated to $J$. Then there exists an irreducible reduced curve $C$, a one--dimensional family of length $m$ schemes $\mathcal{Z}\rightarrow C$ and $t_0\in C$ such that the fiber $\mathcal{Z}_{t_0}$ is $\V(J)$ and the fibers $Z_t$ for $t\neq t_0$ lie in the image of \eqref{eq:map induct}.  Since $1\leq k$, there exists $1\leq i\leq n$ such that $\mathcal{Z}_t$ is contained in the image of the map
        \[
        (L_i\setminus\{\mathbf{0}\})\times \Hilb^{k-1}(X_n\setminus\{\mathbf{0}\})\times  \Hilb^{m-k}_\mathbf{0}(X_n)\dashrightarrow \Hilb^m(X_n).
        \]
        for $t\neq t_0$. Let $\Gamma_t$ be the smallest linear subspace containing $Z_t$. Assume first that $1\leq k\leq m-1$. Since $k\leq m-1$, $Z_t$ contains $\mathbf{0}$ and a point in $L_i\setminus\{\mathbf{0}\}$ for $t\neq t_0$. Therefore, $\Gamma_t$ is contained in $L_i$ for $t\neq 0$. We deduce that $\Gamma_0=\Gamma$ contains $L_i$. This is a contradiction since $\Gamma$ is a generic linear subspace of dimension $m-1$ containing $\mathbf{0}$.  Next, assume that $[J]$ lies in $Y^m(n,m)$, i.e., assume that $J$ is smoothable. Then, $\mathcal{Z}_t$ consists of $m$ distinct points for $t\neq t_0$. Moreover, there exists $i_1,\ldots,i_m$ such that $\mathcal{Z}_t$ is contained in the image of 
           \[
        (L_{i_1}\setminus\{\mathbf{0}\})\times\cdots\times (L_{i_m}\setminus\{\mathbf{0}\})\dashrightarrow \Hilb^m(X_n)
        \]
        for $t\neq 0$.
        Let $M_{i_1,\ldots,i_k}$ be the affine subspace expand by $L_{i_1},\ldots,L_{i_k}$. Then $\mathcal{Z}_t$ is contained in $M_{i_1,\ldots,i_k}$ for any $t\in C$. Therefore, $\Gamma$ is contained in $M_{i_1,\ldots,i_k}$. Since $m\leq n-1$, we have that $M_{i_1,\ldots,i_k}\subsetneq \mathbb{A}^n$. We deduce that if $J$ is smoothable, $\Gamma$ would be contained in 
        \[
        \bigcup_{1\leq i_1,\ldots,i_k\leq n} M_{i_1,\ldots,i_k} \subsetneq \mathbb{A}^n.
        \]
        This is a contradiction since $J$ and $\Gamma$ are generic.

        By the first statement of the theorem, we deduce that for $m\geq n$, $\Hilb^m(X_n)$ does not have any elementary component. Similarly, the second statament of the theorem implies that for $2\leq m\leq n-1$, the only elementary component of $\Hilb^m(X_n)$ is $\Sigma(m,n+1-m,\mathbf{1})$. Therefore, we conclude that the irreducible components of $\Hilb^m(X_n)$  are the smoothable components of the irreducible components of $\Hilb^{m,m'}(X_n)$ for $2\leq m'\leq \min\{m,n-1\}$.
\end{proof}

The main consequences of \cref{theo: irred comp reduced structure} are the following results.

\begin{Cor}\label{co: irred comp Xn}
    The number of irreducible components of $\Hilb^m(X_n)$ is
    \[
    \binom{m+n-1}{m}+\sum_{m'=2}^{\min\{m,n-1\}}\binom{m-m'+n-1}{m-m'}.
    \]
\end{Cor}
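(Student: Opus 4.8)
The plan is to invoke \cref{theo: irred comp reduced structure} and simply count the irreducible components in each of the two families it produces. That theorem tells us that every irreducible component of $\Hilb^m(X_n)$ is either an irreducible component of the smoothable locus $\Hilb^m_{\mathrm{sm}}(X_n)$ or an irreducible component of $\Hilb^{m,m'}(X_n)$ for some $2\leq m'\leq\min\{m,n-1\}$, and that all of these are maximal. I would first record that the two families share no component: the generic point of a component of $\Hilb^{m,m'}(X_n)$ contains the non-smoothable subscheme $\V(J)$ with $[J]\in\Sigma(m',n+1-m',\mathbf{1})$, so it is non-reduced and has length exactly $m'$ at $\mathbf{0}$; reducedness separates it from the smoothable components, and the length-at-$\mathbf{0}$ invariant separates $\Hilb^{m,m'}(X_n)$ from $\Hilb^{m,m''}(X_n)$ for $m'\neq m''$. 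It then remains to count within each family.

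For the smoothable locus I would use that $X_n\setminus\{\mathbf{0}\}$ is the disjoint union of the $n$ smooth affine lines $L_i\setminus\{\mathbf{0}\}$; the locus of $m$ distinct points of $X_n$ missing $\mathbf{0}$ is then open and dense in $\Hilb^m_{\mathrm{sm}}(X_n)$ and, up to the diagonals, decomposes as
\[
\bigsqcup_{\substack{(a_1,\dots,a_n)\in\Z_{\geq 0}^n\\ a_1+\cdots+a_n=m}}\ \prod_{i=1}^n\Sym^{a_i}(L_i\setminus\{\mathbf{0}\}),
\]
each summand being irreducible of dimension $m$. Since distinct distributions $(a_1,\dots,a_n)$ yield summands with distinct generic points and $\Hilb^m_{\mathrm{sm}}(X_n)$ has dimension $m$, I conclude that $\Hilb^m_{\mathrm{sm}}(X_n)$ has exactly $\binom{m+n-1}{n-1}=\binom{m+n-1}{m}$ irreducible components, one per distribution.

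For each $2\leq m'\leq\min\{m,n-1\}$ I would analyse the defining map \eqref{eq:induc map gen}, whose domain is $\Hilb^{m-m'}_{\mathrm{sm}}(X_n\setminus\{\mathbf{0}\})\times\Sigma(m',n+1-m',\mathbf{1})$: the second factor is $\mathrm{Gr}(n+1-m',n)$ by \cref{irr comp Grass}, hence irreducible, while the first factor equals $\Sym^{m-m'}(X_n\setminus\{\mathbf{0}\})$ (the curve being smooth), with $\binom{m-m'+n-1}{n-1}$ irreducible components by the previous paragraph. Observing that \eqref{eq:induc map gen} is injective on the dense locus where $Y$ avoids $\mathbf{0}$ (one recovers $\V(J)$ as the part of $Y\cup\V(J)$ supported at $\mathbf{0}$ and $Y$ as the rest), hence generically finite onto its image, I obtain that $\Hilb^{m,m'}(X_n)$ has exactly $\binom{m-m'+n-1}{n-1}=\binom{m-m'+n-1}{m-m'}$ irreducible components. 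Adding the contributions of the two families then gives
\[
\binom{m+n-1}{m}+\sum_{m'=2}^{\min\{m,n-1\}}\binom{m-m'+n-1}{m-m'}.
\]
The step I expect to require the most care is the bookkeeping that no component is counted twice — both the disjointness of the two families and the absence of repetitions among the $\Hilb^{m,m'}(X_n)$ — which is why the invariants \emph{reducedness} and \emph{length of the subscheme at $\mathbf{0}$} are isolated at the start; everything else is an immediate consequence of \cref{theo: irred comp reduced structure} together with the product structure of the Hilbert scheme of points on a disjoint union of smooth lines.
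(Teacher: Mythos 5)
Your proposal is correct and follows essentially the same route as the paper: invoke \cref{theo: irred comp reduced structure}, count the smoothable components as distributions of $m$ points among the $n$ lines, and count the components of $\Hilb^{m,m'}(X_n)$ via their correspondence with the components of $\Hilb^{m-m'}_{\mathrm{sm}}(X_n)$ under the map \eqref{eq:induc map gen}. The only difference is that you explicitly verify the disjointness of the families via the reducedness and length-at-$\mathbf{0}$ invariants, a detail the paper leaves implicit.
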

\begin{proof}
By \cref{theo: irred comp reduced structure}, the irreducible components of $\Hilb^m(X_n)$ are the irreducible components of $\Hilb^m_\mathrm{sm}(X_n)$ and the irreducible components of $\Hilb^{m,m'}(X_n)$ for $2\leq m'\leq \min\{n-1,m\}$. The irreducible components of $\Hilb^m_\mathrm{sm}(X_n)$ are given by the possible distribution of $m$ distinct points among the $n$ of $X_n$. Therefore, $\Hilb^m_\mathrm{sm}(X_n)$ has $\binom{m+n-1}{m}$ components. Similarly, the components of $\Hilb^{m,m'}(X_n)$ are in correspondence with the components of $\Hilb^{m-m'}_\mathrm{sm}(X_n)$ via the map \eqref{eq:induc map gen}. We deduce that the number of irreducible components of $\Hilb^{m,m'}(X_n)$ is $\binom{m-m'+n-1}{m-m'}$.
\end{proof}

\begin{Rem}\label{rem: number com X_n non smooth}
    The number of irreducible components of $\Hilb^{m,m'}(X_n)$ is $\binom{m-m'+n-1}{m-m'}$. These components are birational to 
    \[
   \Sym^{u_1-1}( L_1)\times \cdots\times  \Sym^{u_n-1}(L_n)\times \Sigma(m',n+1-m',\mathbf{1}),
    \]
    where $\uu\in \Z^n_{\geq 1}$ with $|\uu|=m-m'+n$.
    These components are in bijection with the hypersimplices of the form $\Delta_{m'-1,n}+\uu-\mathbf{1}$ in $\mathcal{K}^{[m]}_n$. 
\end{Rem}

\begin{Cor}
    \label{co: irred comp reduced structure}
    Let $C$ be an irreducible curve whose only singularity is a rational $n$-fold singularity. Then, the number of irreducible components of $\sHilb^m(C)$ is $\min\{n-1,m\}$. Moreover, these irreducible components are $\Hilb^{m}_{\mathrm{sm}}(C)$ and $\Hilb^{m,m'}(C)$ for $2\leq m'\leq \min\{n-1,m\}$, which are birational to
    \[
\begin{array}{ccc}
\Sym^m(C)&  \text{or} & \Sym^{m-m'}(C) \times \mathrm{Gr}(n+1-m',n)\,\,\, \text{ for } 2\leq m'\leq \min\{n-1,m\}.
\end{array}
\]   

\end{Cor}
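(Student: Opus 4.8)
The plan is to reduce the global statement for an arbitrary irreducible curve $C$ to the local model $X_n$ treated in \cref{theo: irred comp reduced structure}, by the standard ``Hilbert scheme decomposes according to support'' principle. First I would fix the unique rational $n$--fold singularity $p\in C$ and recall that any length $m$ subscheme $Z\subseteq C$ decomposes uniquely as $Z = Z_p\sqcup Z'$, where $Z_p$ is the part supported at $p$ and $Z'$ is supported on $C_{\mathrm{sm}} = C\setminus\{p\}$. Writing $\mathrm{length}(Z_p)=m''$, this gives a stratification of $\sHilb^m(C)$ indexed by $0\leq m''\leq m$, and the closure relations among strata are governed by how $Z'$ can collide into $p$. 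Since the completed local ring $\widehat{\mathcal O}_{C,p}$ is isomorphic to $\widehat{\mathcal O}_{X_n,\mathbf 0}$, the punctual locus $\sHilb^{m''}_p(C)$ is isomorphic to $\sHilb^{m''}_{\mathbf 0}(X_n)$, and more importantly the \emph{local-to-global} analysis of which punctual components survive as components of the global Hilbert scheme (and which get absorbed into larger strata by adding movable points on the branches) is formally identical to the one carried out in the proof of \cref{theo: irred comp reduced structure}: the maps \eqref{eq:induc map gen} and \eqref{eq:map point in a line} have evident analogues with $X_n\setminus\{\mathbf 0\}$ replaced by $C_{\mathrm{sm}}$ (which near $p$ looks like the $n$ punctured branches $L_i\setminus\{\mathbf 0\}$), and \cref{prop:limits} applies verbatim in a formal/\'etale neighborhood of $p$.

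Concretely, I would proceed in the following steps. (1) Introduce, for $2\leq m'\leq\min\{n-1,m\}$, the map
\[
\Hilb_{\mathrm{sm}}^{m-m'}(C_{\mathrm{sm}})\times\Sigma(m',n+1-m',\mathbf 1)\longrightarrow \sHilb^m(C),\qquad (Y,[J])\longmapsto Y\cup\V(J),
\]
where $\V(J)$ is the length $m'$ subscheme of $C$ supported at $p$ corresponding, via $\widehat{\mathcal O}_{C,p}\simeq\widehat{\mathcal O}_{X_n,\mathbf 0}$, to the ideal $J$ in $\Sigma(m',n+1-m',\mathbf 1)$. This is well-defined on the open locus where $Y$ misses $p$, and its closure is by definition $\Hilb^{m,m'}(C)$. (2) Show the analogue of part (3) of \cref{theo: irred comp reduced structure}: any punctual component $\Sigma(m'',l,\uu)$ with some $u_i\geq 2$ is not elementary, because the one-parameter family $J_\lambda$ used there is constructed purely in $R_n$ and, transported to $\widehat{\mathcal O}_{C,p}$, realizes $[\V(J)]$ as a limit of subschemes of the form $\{q\}\cup\V(J')$ with $q\in L_i\setminus\{\mathbf 0\}$ a smooth point of $C$; hence such punctual strata are absorbed into $\Hilb^{m-1,m'}(C)$-type loci. (3) Show the analogue of parts (1)--(2): the smoothable stratum $\Sigma(m'',1,\mathbf 1)$ (the ``hyperplane section'' one) is smoothable globally — deform the linear section as in the proof, using that $C$ near $p$ is a transverse union of smooth branches so a generic nearby ``section'' meets each branch in a distinct smooth point — and the remaining component $\Sigma(m',n+1-m',\mathbf 1)$ is genuinely elementary, the obstruction to its being swept out by movable points being exactly that $m'\leq n-1$ forces the spanned linear subspace to be proper. (4) Conclude that the irreducible components of $\sHilb^m(C)$ are $\Hilb_{\mathrm{sm}}^m(C)$, which is irreducible and birational to $\Sym^m(C)$ since $C_{\mathrm{sm}}$ is a smooth curve, together with $\Hilb^{m,m'}(C)$ for $2\leq m'\leq\min\{n-1,m\}$; this gives the count $\min\{n-1,m\}$. (5) For the birational statement, note that on the open dense locus where the movable part $Y$ consists of $m-m'$ distinct smooth points and $[J]\in\Sigma(m',n+1-m',\mathbf 1)$ is generic, the map in step (1) is birational onto $\Hilb^{m,m'}(C)$, so $\Hilb^{m,m'}(C)$ is birational to $\Sym^{m-m'}(C_{\mathrm{sm}})\times\Sigma(m',n+1-m',\mathbf 1)\simeq\Sym^{m-m'}(C)\times\mathrm{Gr}(n+1-m',n)$, using \cref{irr comp Grass} for the last isomorphism.

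The main obstacle I anticipate is \textbf{step (3)}, specifically verifying that $\Sigma(m',n+1-m',\mathbf 1)$ remains an elementary (irreducible) component of the \emph{global} $\sHilb^m(C)$ rather than being absorbed — and that $\Hilb^{m,m'}(C)$ is genuinely irreducible of the expected dimension and does not coincide with or lie inside $\Hilb_{\mathrm{sm}}^m(C)$ or another $\Hilb^{m,m''}(C)$. In the local case this used the global linear structure of $\mathbb A^n$ (the spanned linear subspace $\Gamma$ of a smoothable or ``movable'' length $m$ scheme is forced to lie in a proper coordinate subspace or in some $L_i$), which does not literally make sense on an abstract curve $C$. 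The fix is to work in a formal or \'etale neighborhood $U\ni p$ with $U\cong X_n$-germ and argue that if a generic $[\V(J)]$ with $[J]\in\Sigma(m',n+1-m',\mathbf 1)$ were a limit of subschemes supported partly on $C_{\mathrm{sm}}$, then — since only finitely many of the $m$ points can escape any fixed neighborhood of $p$ and the rest stay inside $U\cong X_n$-germ — one gets a contradiction with the local statement of \cref{theo: irred comp reduced structure} after base-changing to $\widehat{\mathcal O}_{C,p}$. One must also separately check that distinct $\Hilb^{m,m'}(C)$ are distinct components, which follows because the scheme-theoretic length of the maximal subscheme supported at $p$ whose Grassmannian-type is $\Sigma(\bullet,n+1-\bullet,\mathbf 1)$ is a constructible invariant distinguishing them — this is the content of the ``$\sHilb^{m,m',u}(C)$ strata'' already flagged in the notation list, so it should be a routine bookkeeping argument once set up carefully.
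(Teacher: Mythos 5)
Your proposal is correct and follows essentially the same route as the paper: the paper's own proof of this corollary is a one-line deduction from \cref{theo: irred comp reduced structure} together with the observation that for an irreducible curve the smoothable component is irreducible, and your steps (1)--(5) simply make explicit the local-to-global transfer (via $\widehat{\mathcal{O}}_{C,p}\simeq\widehat{\mathcal{O}}_{X_n,\mathbf{0}}$ and continuity of the Hilbert--Chow support map) that the paper leaves implicit. The obstacle you flag in step (3) is resolved exactly as you suggest---elementarity is an analytic-local question because the support of any degenerating family converges to $p$---so there is no gap.
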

\begin{proof}
    Follows from \cref{theo: irred comp reduced structure}, since in this case the smoothable component is irreducible.
\end{proof}

\begin{Rem}
    To the best of our knowledge, this is the first example of Hilbert schemes of $m$ points where the number of irreducible components initially increases and then remains constant as $m$ varies. The graph of the number of irreducible components for $n$ fixed as $m$ varies is illustrated in \cref{fig:graph irred comp}.
\end{Rem}

\begin{Ex}
    Let $C$ be an irreducible curve whose unique singularity is a rational $3$--fold singularity. The number of irreducible components of $\Hilb^m(C)$ is $\mathrm{min}\{2,m\}.$ Assuming that $m\geq 2$, the two irreducible components of $\Hilb^m(C)$ are the smoothable component and $\Hilb^{m,2}(C)$, which is birational to $\P^2\times\Sym^{m-2}(C)$.
\end{Ex}

\begin{Ex}
    Let $C$ be an irreducible curve whose unique singularity is a rational $4$--fold singularity. The number of irreducible components of $\Hilb^m(C)$ is $\mathrm{min}\{3,m\}.$ For $m=2$, the irreducible components are the smoothable component $\Hilb^2(C_{\mathrm{sm}})$ and a $\Hilb^{2,2}(C)\simeq\P^3$. For $m\geq 3$, the three irreducible components of $\Hilb^m(C)$ are $\Hilb^3(C_{\mathrm{sm}})\simeq_{\mathrm{bir}}\Sym^m(C)$, $\Hilb^{m,2}(C)\simeq_{\mathrm{bir}}\Sym^{m-2}C\times\P^3$ and $\Hilb^{m,3}(C)\simeq_{\mathrm{bir}}\Sym^{m-3}C\times\mathrm{Gr}(2,4)$. Note that two of the first two irreducible components have dimension $m$, while the third irreducible component has dimension $m+1$.
\end{Ex}

\section{The local Hilbert scheme and its schematic structure}\label{sec:local hilbert scheme}

In the previous sections we have carried out a study of $\sHilb^m(X_n)$ from the perspective of algebraic varieties, and not considering the possibility of a non--reduced structure. This is reflected in \cref{theo: irred comp reduced structure} where the reduced structure of the irreducible components of $\sHilb^m(X_n)$ is given. The next goal is to analyse the reducedness of $\sHilb^m(X_n)$. We use the notation and classical results from deformation theory as presented in \cite{Sernesi2006}, to which we refer the reader for further details. Consider the local Hilbert scheme $H_J^n$ of $\mathbb{V}(J)$ in $X_n$. This is defined as the functor 
\[\begin{array}{cccc}
  H_J^n:   & \mathcal{Art}&\longrightarrow&\text{Sets} \\
     & A&\longmapsto &\left\{ \begin{array}{c}\text{ Deformations of } \mathbb{V}(J) \text{ in } X_n \\ \text{ over } A\end{array}\right\}.
\end{array}
\]
Then $H_J^n$ is prorepresented by $
\widehat{\mathcal{O}}_{\sHilb^m(X_n),[J]}
$. We compute $
\widehat{\mathcal{O}}_{\sHilb^m(X_n),[J]}
$ by calculating the complete local ring prorepresenting the local Hilbert scheme using the same strategy as \cite{Ran2005}. 
We carry out this computation for the most singular points of $\sHilb^m(X_n)$, which are the ideals $J$ such that $\mu_m(J)$ is a vertex of $\mathcal{K}_n^{[m]}$. In other words, $J=\langle x_1^{u_1},\ldots,x_n^{u_n}\rangle$. Up to labeling of the variables, we may assume $J=\langle x_1^{u_1},\ldots,x_k^{u_k},x_{k+1},\ldots,x_n\rangle$, where $u_i\geq 2$ for $i\leq k$. Note that $k\geq1$, since for $k=0$, the ideal $J$ has length $1$. The value of $k$ has the following combinatorial meaning. The hypersimplicial complex $\mathcal{K}_n^{[m]}$ fills the simplex $(m-1)\Delta_{n-1}$, which it can also be seen as a simplicial complex. Then, $k-1$ is the dimension of the face of $(m-1)\Delta_{n-1}$ where $\mu([J])$ lies. For instance, if $k=1$, then $\mu([J])$ is a vertex of $(m-1)\Delta_{n-1}$. If  $k=n$, then $\mu([J])$ is an interior point of $(m-1)\Delta_{n-1}$.

Consider the ring 
\[
S_{k} = \C\left[A_1,\ldots,A_n,\alpha_{i,j,l}:i\in[n],j\in[k] \text{ and } l\in[u_j-1]\,\right]
\]
and the ideal of $S_k$ given by 
\begin{equation}\label{eq:gens ideal def 1}
\begin{array}{ll}
J_k=&\langle A_i A_j: \text{ for } k+1\leq j\leq n \text{ and } i\in [n]\setminus\{j\}\rangle\\
&+\langle A_i\alpha_{j,r,s}: \text{ for } k+1\leq j\leq n,i\in [n]\setminus\{j\}, r\in [k] \text{ and } s\in[u_r-1]\rangle\\
&+\langle \alpha_{i,j,u_j-1}\alpha_{j,j,1}-A_i:\text{ for }j\in[k]\text{ and }i\in[n]\setminus\{j\}\rangle\\
&+\langle \alpha_{i,j,u_j-1}A_j:\text{ for }j\in[k]\text{ and }i\in[n]\setminus\{j\}\rangle\\
&+\langle \alpha_{i,j,u_j-1}\alpha_{j,j,l+1}-\alpha_{i,j,l}:\text{ for }j\in[k],i\in[n]\setminus\{j\}\text{ and }l\in [u_j-2]\rangle\\
&+\langle \alpha_{i,j,u_j-1}\alpha_{j,r,l}:\text{ for }j\in[k],i\in[n]\setminus\{j\},r\in[k]\setminus\{j\}\text{ and }l\in [u_j-1]\rangle.\\
\end{array}
\end{equation}
Using this ideal we compute $
\widehat{\mathcal{O}}_{\sHilb^m(X_n),[J]}
$ in the following:
\begin{Thm}
    \label{theo:def theory}
    Let $0\leq k\leq n$ and let $\uu=\in\Z_{\geq 1}^n$ such that $|\uu|=m+n-1$, $u_i\geq 2 $ for $1\leq i\leq k$ and $u_i=1$ for $k+1\leq i\leq n$. Then, the local Hilbert scheme of 
    $J=\langle x_1^{u_1},\ldots,x_k^{u_k},x_{k+1},\ldots,x_n\rangle$ in $X_n$ is prorepresented by the completion of the quotient $S_k/J_k$ localized at the origin.
\end{Thm}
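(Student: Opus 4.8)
The plan is to follow the strategy of \cite{Ran2005}: exhibit an explicit family of ideals in $R_n$ deforming $J$, identify the locus over which the family is flat with $\V(J_k)$, and conclude that the completion of $S_k/J_k$ at the origin prorepresents the local Hilbert functor $H^n_J$, hence coincides with $\widehat{\mathcal{O}}_{\sHilb^m(X_n),[J]}$. Write $J=\langle x_1^{u_1},\dots,x_k^{u_k},x_{k+1},\dots,x_n\rangle$ with $u_i\ge 2$ for $i\le k$ and $u_i=1$ for $i>k$, so that $R_n/J$ is $\C$-free on the monomial basis $\mathcal B=\{1\}\cup\{x_j^{\,l}:j\in[k],\ 1\le l\le u_j-1\}$, of cardinality $1+\sum_{j\le k}(u_j-1)=m$. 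For an Artinian local $\C$-algebra $A$, a deformation of $\V(J)$ inside $X_n$ over $A$ is an ideal $\widetilde J\subseteq R_n\otimes_\C A$ with $(R_n\otimes A)/\widetilde J$ free over $A$ on $\mathcal B$; by Nakayama such a $\widetilde J$ is generated by the unique elements
\[
\widetilde f_i=x_i^{u_i}-A_i-\sum_{j\in[k]}\sum_{l=1}^{u_j-1}\alpha_{i,j,l}\,x_j^{\,l}\in\widetilde J,\qquad i\in[n],
\]
with uniquely determined $A_i,\alpha_{i,j,l}\in\mathfrak m_A$. This gives a natural bijection between $H^n_J(A)$ and the set of tuples $(A_i,\alpha_{i,j,l})$ in $\mathfrak m_A$ for which the family $\{\widetilde f_i\}$ is $A$-flat, so the whole content is to show that flatness is exactly the vanishing of the generators of $J_k$.

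The initial ideal of $J$ in $R_n$ equals $J$ itself, and using the direct-sum decomposition $R_n=\C\cdot 1\oplus\bigoplus_i x_i\C[x_i]$ one sees that a syzygy $\sum_i c_i x_i^{u_i}=0$ in $R_n$ forces each $c_i\in\langle x_j:j\ne i\rangle$; hence the syzygies among $x_1^{u_1},\dots,x_n^{u_n}$ are generated by the ``collapsing'' relations $x_j\mathbf e_i$ for $i\ne j$ (coming from $x_jx_i^{u_i}=0$), and there are no pure-power ambiguities. By the standard-basis flatness criterion for deformations of monomial ideals (as used in \cite{Ran2005}), $\{\widetilde f_i\}$ is flat over $A$ if and only if each critical element $x_j\widetilde f_i$ $(i\ne j)$ reduces to $0$ under the rewriting $x_i^{u_i}\mapsto A_i+\sum_{j,l}\alpha_{i,j,l}x_j^{\,l}$ (a single substitution, which never cascades, since its output already lies in $\C\cdot\mathcal B$). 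Performing these reductions and reading off the coefficient of each monomial of $\mathcal B$ yields, up to sign changes of some of the variables (a harmless automorphism of $S_k$), precisely the six families of generators of $J_k$ in \eqref{eq:gens ideal def 1}: when the multiplier index $j>k$, the element $x_j\widetilde f_i$ contributes $A_iA_j$ (coefficient of $1$) and $A_i\alpha_{j,r,s}$ (coefficients of $x_r^{\,s}$, $r\in[k]$); when $j\in[k]$, it contributes $\alpha_{i,j,u_j-1}A_j$ (coefficient of $1$), $\alpha_{i,j,u_j-1}\alpha_{j,j,1}-A_i$ (coefficient of $x_j$), $\alpha_{i,j,u_j-1}\alpha_{j,j,l+1}-\alpha_{i,j,l}$ (coefficients of $x_j^{\,l+1}$, $l\in[u_j-2]$), and $\alpha_{i,j,u_j-1}\alpha_{j,r,l}$ (coefficients of $x_r^{\,l}$, $r\in[k]\setminus\{j\}$).

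Consequently $\{\widetilde f_i\}$ is flat over $A$ exactly when the structure map $S_k\to A$ annihilates $J_k$, i.e.\ factors as a local $\C$-algebra homomorphism from the completion of $S_k/J_k$ at the origin to $A$. Functorially this says that $H^n_J$ is prorepresented by that completion, and since $H^n_J$ is also prorepresented by $\widehat{\mathcal{O}}_{\sHilb^m(X_n),[J]}$ (\cite{Sernesi2006}), the two rings agree, which is the assertion.

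The delicate point is the flatness step: one must be sure that the collapsing relations $x_j\widetilde f_i$ $(i\ne j)$ genuinely exhaust the flatness obstructions — this is what the syzygy computation in $R_n$ above is for — and then carry the (heavy) index bookkeeping through the normal-form reductions accurately enough to recognise each of the six families of generators of $J_k$; everything else is formal. These reductions are lengthy but routine, and are carried out in \cref{app: commutative algebra}.
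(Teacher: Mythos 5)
Your proposal is correct and follows essentially the same route as the paper: both arguments put the generators of a deformation into the normal form \eqref{eq:gens def} via flatness and Nakayama, identify the flatness obstructions with liftings of the syzygies $x_jx_i^{u_i}=0$ (the paper via the explicit relations \eqref{eq:local 1}--\eqref{eq:local 2} and \cite[Corollary A.11]{Sernesi2006}, you via the equivalent standard-basis/critical-pair criterion), and read off the generators of $J_k$ from the coefficients of the reductions. The only cosmetic difference is your sign convention, which as you note is absorbed by an automorphism of $S_k$.
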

\begin{proof}
Let $S\in\mathcal{A}$ be a local Artinian $\C$--algebra with residue field $\C$, and let $\mathfrak{m}_S$ its maximal ideal.
Let $R_S:=R\otimes_{\C} S$.
A deformation of $\mathbb{V}(J)$ in $\sp(R_n)$ over $\sp(S) $ is an ideal $J_S$ of $R_S$ such that $R_S/J_S$ is flat over $S$ and $(R_S/J_S)\otimes_S\C=R/J $. In other words, $J_S$ is generated by 
\[
\begin{array}{ccl}
\displaystyle    f_1& = &x_1^{u_1}+A_1+f_{1,1}(x_1)+\cdots+f_{1,n}(x_n),\\
&\vdots &\\
\displaystyle    f_k &=& x_k^{u_k}+A_k+f_{k,1}(x_1)+\cdots+f_{1,n}(x_n),\\
\displaystyle    f_{k+1}& =& x_{k+1}+A_{k+1}+f_{k+1,1}(x_1)+\cdots+f_{k+1,n}(x_n),\\
&\vdots&\\
\displaystyle    f_n&=& x_n+A_n+f_{n,1}(x_1)+\cdots+f_{n,n}(x_n),\\
\end{array}
\]
where $A_i$ lies in $\mathfrak{m}_S$ and $f_{i,j}(x_j)$ is a polynomial in $x_j$ with coefficients in $\mathfrak{m}_S$ and with no independent coefficient.
By \cite[\href{https://stacks.math.columbia.edu/tag/051G}{Tag 051G}]{stacks-project}, $R_S/J_S$ is a free $S$-module of rank $m$. By Nakayama's Lemma, $R_S/J_S$ is freely generated by
\[
1,x_1,\ldots,x_1^{u_1-1},\cdots,x_k,\ldots,x_k^{u_k-1}.
\]
We can write $x_i^d$ with $d_i\geq u_i$ as a linear combination of $1,x_1,\ldots,x_1^{u_1-1},\cdots,x_k,\ldots,x_k^{u_k-1}$ with coefficients in $S$. Therefore, we may assume that the degree of $f_{i,j}$ is at most $u_j-1$. In particular, $f_{i,j}=0$ for $k+1\leq j\leq n$ and we get
\[
f_{i,j}=\sum_{l=1}^{u_j-1} \alpha_{i,j,l}x_j^l
\]
for $1\leq j\leq k$. We deduce that $J_S$ is generated by 
\begin{equation}\label{eq:gens def}
\begin{array}{ccl}
\displaystyle    f_1& =&\displaystyle   x_1^{u_1}+A_1+ \sum_{l=1}^{u_1-1} \alpha_{1,1,l}x_1^l+\cdots+\sum_{l=1}^{u_k-1} \alpha_{1,k,l}x_k^l,\\
&\vdots& \\
\displaystyle    f_k& = &\displaystyle  x_k^{u_k}+A_k+ \sum_{l=1}^{u_1-1} \alpha_{k,1,l}x_1^l+\cdots+\sum_{l=1}^{u_k-1} \alpha_{k,k,l}x_k^l,\\
\displaystyle    f_{k+1} &=&\displaystyle   x_{k+1}+A_{k+1}+ \sum_{l=1}^{u_1-1} \alpha_{k+1,1,l}x_1^l+\cdots+\sum_{l=1}^{u_k-1} \alpha_{k+1,k,l}x_k^l,\\
&\vdots&\\
\displaystyle    f_n& =& \displaystyle  x_n+A_n+ \sum_{l=1}^{u_1-1} \alpha_{n,1,l}x_1^l+\cdots+\sum_{l=1}^{u_k-1} \alpha_{n,k,l}x_k^l.\\
\end{array}
\end{equation}
For  $k+1\leq j\leq n$ and $i\in[n]\setminus\{ j\}$,we consider in $R_S/J_S$ the relation  
\begin{equation}\label{eq:local 1}
\begin{array}{c}
\displaystyle0=A_if_j-x_jf_i=A_i\left(x_j+A_j+ \sum_{l=1}^{u_1-1} \alpha_{j,1,l}x_1^l+\cdots+\sum_{l=1}^{u_k-1} \alpha_{j,k,l}x_k^l\right)-A_ix_j=\\
\displaystyle
A_iA_j+ \sum_{l=1}^{u_1-1}A_i \alpha_{j,1,l}x_1^l+\cdots+\sum_{l=1}^{u_k-1} A_i\alpha_{j,k,l}x_k^l.
\end{array}
\end{equation}
Note that \eqref{eq:local 1} is a relation among the free generators of $R_S/J_S$. We deduce that the coefficients of \eqref{eq:local 1} vanish in $S$, and we get that for $1\leq i\leq n$ and $k+1\leq j\neq i\leq n$ 
\begin{equation}
    \label{eq:local 3}
    \begin{array}{ll}
      A_iA_j=0   &  \\
     A_i\alpha_{j,a,b}=0 & \text{ for } 1\leq a\leq k \text{ and } 1\leq b\leq u_a-1.
    \end{array}
\end{equation}

Similarly, for $1\leq j\leq k$ and $i\in[n]\setminus\{j\}$, we consider in $R_S/J_S$ the relation 
\begin{equation}
    \label{eq:local 2}
    \begin{array}{c}
    0\!= \!\displaystyle \alpha_{i,j,u_j-1}f_j-x_jf_i=\alpha_{i,j,u_j-1} \left(\!x_j^{u_j}+A_j+ \!\sum_{l=1}^{u_1-1} \alpha_{j,1,l}x_1^l+\cdots+\sum_{l=1}^{u_k-1} \alpha_{j,k,l}x_k^l\!\right)\!-\!A_ix_j\!-\!\sum_{l=1}^{u_j-1}\alpha_{i,j,l}x_j^{l+1}\\
      = \displaystyle(\alpha_{i,j,u_j-1}\alpha_{j,j,1}-A_i)x_j +\!\sum_{l=1}^{u_j-2}(\alpha_{i,j,u_j-1}\alpha_{j,j,l+1}-\alpha_{i,j,l})x_j^{l+1}+\alpha_{i,j,u_j-1}A_j+\!\sum_{a\neq j}\sum_{l=1}^{u_a-1}\alpha_{i,j,u_j-1}\alpha_{j,a,l}x_a^l.
      
    \end{array}
\end{equation}
Again, \eqref{eq:local 2} is a relation among the free generators of $R_S/J_S$. We deduce that for $1\leq i\leq n$ and $1\leq j\leq k$ with $i\neq j$ we have
\begin{equation}\label{eq:local 4}
    \begin{array}{ll}
    \alpha_{i,j,u_j-1}\alpha_{j,j,1}-A_i=0& \\
    \alpha_{i,j,u_j-1}A_j = 0 & \\
    \alpha_{i,j,u_j-1}\alpha_{j,j,l+1}-\alpha_{i,j,l}=0& \text{ for } 1\leq l\leq u_j-2\\
    \alpha_{i,j,u_j-1}\alpha_{j,a,l}= 0& \text{ for } 1\leq a\neq j\leq k, \text{ and } 1\leq l\leq u_a-1.
    \end{array}
\end{equation}
Therefore, if the $R_S/J_S$ is a deformation over $S$, then the coefficients of $f_1,\ldots,f_n$ satisfy equations \eqref{eq:local 3} and \eqref{eq:local 4}. Conversely, assume that the coefficients of $f_1,\ldots,f_n$ satisfy equations \eqref{eq:local 3} and \eqref{eq:local 4}. Then, $f_1,\ldots,f_n$ satisfy the relations \eqref{eq:local 1} and \eqref{eq:local 2}. Modulo $S/\mathfrak{m}_s\simeq \C$, these relations are exactly
\[
x_ix_j^{u_j} = 0
\]
for $i\neq j$, which are exactly the syzygies of $J$. By \cite[Corollary A.11]{Sernesi2006}, $R_S/J_S$ is flat over $S$ and we conclude that $R_S/J_S$ is a deformation over $S$. Therefore, the local Hilbert scheme of $[J]$ is prorepresented by the completion of the stalk at the origin of the quotient of
\[
S_k=\C[A_1,\ldots,A_n,\alpha_{i,j,l}: 1\leq i\leq n,\,\, 1\leq j\leq k,\,\, 1\leq l\leq u_j-1]
\]
by the ideal generated by the relations in \eqref{eq:local 3} and \eqref{eq:local 4}, which coincide with $J_k$. 
\end{proof}

 \cref{theo:def theory} computes the stalk $
\widehat{\mathcal{O}}_{\sHilb^m(X_n),[J]}
$ as the completion of the quotient $S_k/J_k$. However, the representation of this quotient is quite complicated. In \cref{app: commutative algebra} we give a better representation of this ring. In particular, \cref{lemma: primary k1} and  \cref{lemma:primary k2} together with \cref{lemma: new rings} compute the irreducible components of the stalk of $\sHilb^m(X_n)$ at the point $[J] = [\langle x_1^{u_1},\ldots,x_k^{u_k},x_{k+1},\ldots,x_n\rangle]$. Now, we identify each of these components with the corresponding components of $\sHilb^m(X_n)$. 

\begin{Prop}\label{prop: ideal translation}
 For $k=1$, $
\widehat{\mathcal{O}}_{\sHilb^m(X_n),[J]}
$ has $n+1$ irreducible components and they correspond to the ideals in \cref{lemma: primary k1}. A generic point in the component corresponding to  $\langle A_1,\alpha_{1,1}\rangle$ represents a length $m$ scheme that is a point in 
\begin{equation}\label{eq:irred comp 1}
\sHilb^{m-2}(L_1)\times\Sigma (2,n-1,\mathbf{1}).
\end{equation}
A generic point in the component associated to  $\langle \alpha_2,\ldots,\alpha_n\rangle$ represents a smoothable length $m$ scheme which is a point in $\sHilb^{m}(L_1)$. A generic point in $\langle A_1,\alpha_j: 2\leq j\leq n \text{ and } j\neq i\rangle$ for $2\leq i\leq n$ represents a smoothable length $m$ scheme that 
is a point in $\sHilb^{m-1}(L_1)\times L_i$.
\end{Prop}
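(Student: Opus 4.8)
The plan is to work out the $n+1$ minimal primes of the ring $S_1/J_1$ from \cref{theo:def theory} using the commutative algebra of \cref{app: commutative algebra} (\cref{lemma: primary k1}), and then to produce, for a generic point of each component, an explicit flat one-parameter family whose special fiber is of the asserted type. First I would specialize \cref{theo:def theory} to $k=1$: here $J=\langle x_1^{u_1},x_2,\ldots,x_n\rangle$ with $u_1=m+n-1-(n-1)=m$, the variables are $A_1,\ldots,A_n$ and $\alpha_{i,1,l}$ for $i\in[n]$, $l\in[u_1-1]$, and by \cref{lemma: primary k1} the ideal $J_1$ has exactly $n+1$ minimal primes, namely $\langle A_1,\alpha_{1,1,\cdot}\rangle$ (writing $\alpha_{1,1}$ for the relevant block), $\langle \alpha_{2,1,\cdot},\ldots,\alpha_{n,1,\cdot}\rangle$, and the $n-1$ primes $\langle A_1,\alpha_{j,1,\cdot}: j\in\{2,\ldots,n\}\setminus\{i\}\rangle$ for $2\leq i\leq n$. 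The point is that each prime dictates which of the coordinate directions the deformation is allowed to move in, and I must match this to the geometric description.

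The key identification step is to read off, for each component, the shape of the generic ideal $J_S$ from \eqref{eq:gens def} subject to the vanishing prescribed by that prime, and then recognize the resulting length-$m$ subscheme. For the component $\langle A_1,\alpha_{1,1,\cdot}\rangle$: here $A_1=0$ and all $\alpha_{1,1,l}=0$, so $f_1=x_1^{u_1}$ remains a pure power while $f_2,\ldots,f_n$ acquire terms $A_j+\sum_l\alpha_{j,1,l}x_1^l$; the generic such ideal is of the form $\langle x_1^{u_1-1}\rangle\cdot\langle\text{linear forms}\rangle$-type, i.e.\ it is the ideal of $u_1-2=m-2$ (generic, hence distinct, nonzero) points on $L_1$ union a length-$2$ subscheme supported at $\mathbf 0$ whose tangent direction lies in the tangent star --- that is, a point of $\sHilb^{m-2}(L_1)\times\Sigma(2,n-1,\mathbf 1)$, exactly \eqref{eq:irred comp 1}. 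I would make this precise by exhibiting the one-parameter family as in \cref{prop:limits}: start from $m-2$ points on $L_1$ plus an ideal in $\Sigma(2,n-1,\mathbf 1)$, degenerate the points into $\mathbf 0$, and observe the limit ideal matches the generic member of this component. For $\langle \alpha_{2,1,\cdot},\ldots,\alpha_{n,1,\cdot}\rangle$: now $f_2,\ldots,f_n$ involve only $x_j+A_j$, so $\mathbb V(J_S)$ is cut out inside $L_1$ and is a length-$m$ subscheme of $L_1\cong\A^1$, hence smoothable and a point of $\sHilb^m(L_1)$; here I would invoke case (1) of \cref{theo: irred comp reduced structure} (or directly the classical smoothability of length-$m$ schemes on a smooth curve) to conclude smoothability. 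For $\langle A_1,\alpha_{j,1,\cdot}: j\neq i\rangle$ with $2\leq i\leq n$: exactly one $f_i$ retains a nonzero $A_i$-term and an $x_1$-dependence, while the rest force the subscheme to lie in the plane $L_1\cup L_i$; generically the $f_i$-relation splits off one reduced point on $L_i\setminus\{\mathbf 0\}$, leaving a length-$(m-1)$ subscheme on $L_1$, so the component is birational to $\sHilb^{m-1}(L_1)\times L_i$, and smoothability follows since $L_1\cup L_i$ is a nodal curve (locally planar), whose Hilbert scheme of points is irreducible by \cite{AltmanIarrobinoKleinman,BGS1981}; alternatively one produces the explicit family directly as above.

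The main obstacle will be the bookkeeping in the identification step: reading the correct birational model of each component off the prime ideal requires carefully tracking which $\alpha_{j,1,l}$ survive and showing that a generic choice of the surviving parameters indeed yields the stated product variety (in particular, that the factor of $m-2$, resp.\ $m$, resp.\ $m-1$ points is generically reduced and distributed on the correct line, and that the $\Sigma(2,n-1,\mathbf 1)$-factor in \eqref{eq:irred comp 1} really appears rather than some degeneration of it). I expect the cleanest route is not to stare at $S_1/J_1$ but to use \cref{prop:limits} in reverse: for each claimed model, build the family whose limit sits at $[J]$, check its tangent space lands in the corresponding minimal prime of \cref{lemma: primary k1} (so the family is tangent to that component), and then invoke that the component is irreducible of the expected dimension to conclude the family dominates it. The dimension count --- $\dim\sHilb^{m-2}(L_1)\times\Sigma(2,n-1,\mathbf 1)=(m-2)+(n-1)$, $\dim\sHilb^m(L_1)=m$, $\dim\sHilb^{m-1}(L_1)\times L_i=(m-1)+1=m$ --- should match the dimensions of the components of $S_1/J_1$ computed via \cref{lemma: new rings}, which both confirms the correspondence and rules out any component being spurious.
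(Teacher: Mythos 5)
Your overall strategy is the same as the paper's: pull the conditions defined by each minimal prime of \cref{lemma: primary k1} back through the isomorphism of \cref{lemma: new rings}, write the resulting generators as in \eqref{eq:gens def}, and read off the primary decomposition of the generic ideal. The descriptions of the second and third families of components are essentially right, and the dimension bookkeeping you propose at the end is a sound consistency check.

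There is, however, a genuine error in your treatment of the key component $\langle A_1,\alpha_{1,1}\rangle$. You read $\alpha_{1,1}$ as ``the relevant block,'' i.e.\ you set \emph{all} the $\alpha_{1,1,l}$ to zero, so that $f_1=x_1^{u_1}$ is a pure power. Under the isomorphism of \cref{lemma: new rings} the variable $\alpha_{1,1}\in\mathcal{S}_1$ corresponds only to $\alpha_{1,1,1}$; the variables $\alpha_{1,2},\ldots,\alpha_{1,u_1-1}$ (i.e.\ $\alpha_{1,1,l}$ for $l\geq 2$) remain free on this component. The correct generic $f_1$ is therefore $x_1^2\bigl(x_1^{u_1-2}+\sum_{l=2}^{u_1-1}\alpha_{1,1,l}x_1^{l-2}\bigr)$, and it is precisely the degree-$(u_1-2)$ factor with generically nonzero roots that produces the $m-2$ reduced points on $L_1$, while the $x_1^2$ factor combines with the $f_j$ to give the length-$2$ piece in $\Sigma(2,n-1,\mathbf{1})$. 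From your setup ($f_1=x_1^{u_1}$) the scheme would contain a length-$u_1$ subscheme of $L_1$ supported at $\mathbf{0}$, which is incompatible with the conclusion you then assert; the identification with \eqref{eq:irred comp 1} does not follow. Your own proposed dimension check would have caught this: killing the whole block gives a component of dimension $n-1$, not $(m-2)+(n-1)$, so it fails for $m\geq 3$. The alternative route you sketch via \cref{prop:limits} (build the degenerating family from $\sHilb^{m-2}(L_1)\times\Sigma(2,n-1,\mathbf{1})$ and match tangent directions) is viable and close in spirit to what \cref{prop:limits} already provides, but as written it is only a sketch and does not repair the incorrect algebraic identification.
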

\begin{proof}
    Let $K$ be an ideal in the irreducible component corresponding to the ideal  $\langle \alpha_2,\ldots,\alpha_n\rangle$. We write the generators of $K$ as in \eqref{eq:gens def}. Via the isomorphism in \cref{lemma: new rings}, we deduce that $A_2=\cdots=A_n=0$ and $\alpha_{2,1,u_1-1}=\cdots =\alpha_{n,1,u_1-1} = 0$. Using the ideal \eqref{eq:gens ideal def 1} we deduce that the generators of $K$ are 
    \[\begin{array}{cll}
    f_1 &=& \displaystyle A_1+\sum_{l=1}^{u_1-1}\alpha_{1,1,l}x_1^l+x_1^{u_1},\\
    f_2&=&x_2,\\
    \vdots& &\vdots \\
    f_n&=&x_n.
    \end{array}
    \]
    Hence, $K$ represents a length $m$ ideal in $\sHilb^{m}(L_1)$. Assume now that $K$ corresponds to a point in the ideal  $\langle A_1,\alpha_j: 2\leq j\leq n \text{ and } j\neq i\rangle$ for $2\leq i\leq n$. As before, pulling back these conditions via the isomorphism build in \cref{lemma: new rings}, we deduce that the generators of $K$ are of the form 
    \[\begin{array}{cll}
    f_1 &=& \displaystyle  \sum_{l=1}^{u_1-1}\alpha_{1,1,l}x_1^l+x_1^{u_1}=x_1\left(\sum_{l=0}^{u_1-2}\alpha_{1,1,l+1}x_1^l+x_1^{u_1-1} \right),\\
    f_2&=&x_2,\\
    \vdots& &\vdots \\
    f_i&=&\displaystyle x_i+ \alpha_{i,1,u_1-1}\left(\sum_{l=0}^{u_1-2}\alpha_{1,1,l+1}x_1^l+x_1^{u_1-1} \right) ,  \\
    \vdots& &\vdots \\
    f_n&=&x_n.
    \end{array}
    \]
    We deduce that the primary decomposition of $K$ consists on the ideals $\langle \sum_{l=0}^{u_1-2}\alpha_{1,1,l+1}x_1^l+x_1^{u_1-1},x_2,\ldots,x_n \rangle$, which represents a point in $\sHilb^{m-1}(L_1)$ and $\langle x_1,x_2,\ldots,x_i+ \alpha_{i,1,u_1-1} \alpha_{1,1,1},\ldots,x_n\rangle$, which represents a point in $L_i$.  Similarly, if $K$ is in the component corresponding to  $\langle A_1,\alpha_{1,1}\rangle$, the generators of $K$ are
    \[\begin{array}{cll}
    f_1 &=& \displaystyle  \sum_{l=2}^{u_1-1}\alpha_{1,1,l}x_1^l+x_1^{u_1}=x_1^2\left(\sum_{l=0}^{u_1-3}\alpha_{1,1,l+2}x_1^l+x_1^{u_1-2} \right),\\
    f_2&=&\displaystyle x_2+\sum_{l=1}^{u_1-1}\alpha_{2,1,l}x_1^l= x_2+\alpha_{2,1,u_1-1}x_1\left(\sum_{l=0}^{u_1-3}\alpha_{1,1,l+2}x_1^l\right),\\
    \vdots& &\vdots \\
    f_n&=&\displaystyle x_n+\sum_{l=1}^{u_1-1}\alpha_{n,1,l}x_1^l=x_n+\alpha_{n,1,u_1-1}x_1\left(\sum_{l=0}^{u_1-3}\alpha_{1,1,l+2}x_1^l\right).
    \end{array}
    \]
    We deduce that the primary decomposition of $K$ consists of two ideals. First, the ideal $\langle \sum_{l=0}^{u_1-3}\alpha_{1,1,l+2}x_1^l,x_2,\ldots,x_n\rangle$ which leads to length $m-2$ scheme in $L_1$. The second ideal is $$\langle x_1^2,x_2-\alpha_{2,1,u_1-1}\alpha_{1,1,2}x_1,\ldots,x_n-\alpha_{n,1,u_1-1}\alpha_{1,1,2}x_1\rangle,$$ which represent a length $2$ scheme in $\Sigma(2,n-1,\mathbf{1})$.
\end{proof}

\begin{Prop}\label{prop: ideal translation 2}
 For $k\geq 2$, the number of irreducible components of $
\widehat{\mathcal{O}}_{\sHilb^m(X_n),[J]}
$ is 
\begin{equation}\label{eq:numb irred compo  local}
n+\sum_{i=1}^{\min\{k,n-2\}}\binom{k}{i} = \left\{
\begin{array}{ll}
n+2^k-1 & \text{ if }k\leq n-2\\
n+2^{n-1}-2& \text{ if }k= n-1\\
2^k-2 & \text{ if }k = n
\end{array}
\right.,
\end{equation}
and each irreducible component  corresponds to the ideals in \cref{lemma:primary k2}. A generic point in the components corresponding to $\mathcal{K}_a$ represents length $m$ schemes that are points in 
\[
\sHilb^{u_a}(L_a)\times\prod_{i\in[n]\setminus\{a\}}
\sHilb^{u_i-1}(L_i).
\]
In particular, points in the components corresponding to $\mathcal{K}_a$ are smoothable.
A generic point in the components corresponding to $\mathcal{J}_S$ represents length $m$ schemes that are points in 
\begin{equation}\label{eq: local components}
\prod_{i\in S}\sHilb^{u_i-2}(L_i)\times \prod_{i\in[k]\setminus S}\sHilb^{u_i-1}(L_i)\times \Sigma(|S|+1,n-|S|,\mathbf{1}).
\end{equation}

\end{Prop}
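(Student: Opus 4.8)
The plan is to mirror the proof of \cref{prop: ideal translation}, now relying on the refined description of $\widehat{\mathcal{O}}_{\sHilb^m(X_n),[J]}$ developed in \cref{app: commutative algebra}. By \cref{theo:def theory} this local ring is the completion at the origin of $S_k/J_k$; I would invoke \cref{lemma:primary k2} together with the change of presentation in \cref{lemma: new rings} to identify its minimal primes with the ideals $\mathcal{K}_a$, $a\in[n]$, and $\mathcal{J}_S$, where $S$ ranges over the subsets of $[k]$ with $1\leq|S|\leq\min\{k,n-2\}$. The count then follows from $\sum_{i=0}^{k}\binom{k}{i}=2^{k}$ and $\binom{k}{k-1}=k$: the sum $\sum_{i=1}^{\min\{k,n-2\}}\binom{k}{i}$ equals $2^{k}-1$ when $k\leq n-2$, equals $2^{k}-2$ when $k=n-1$, and equals $2^{k}-k-2$ when $k=n$, so adding the $n$ primes of the first kind yields $n+2^{k}-1$, $n+2^{n-1}-2$, and $2^{k}-2$ respectively, as claimed.

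Next, for each component I would take a general ideal $K$ lying on it, written in the normal form \eqref{eq:gens def} subject to the relations \eqref{eq:gens ideal def 1}, pull the vanishing conditions defining the component back through the isomorphism of \cref{lemma: new rings}, and substitute; the generators $f_1,\dots,f_n$ of $K$ then simplify just as in the $k=1$ computation in the proof of \cref{prop: ideal translation}. For a component $\mathcal{K}_a$ this annihilates the relevant $A_i$ and $\alpha_{i,a,u_a-1}$, and the generators factor so that, away from $\mathbf{0}$, $\mathbb{V}(K)$ is the disjoint union of a length-$u_a$ subscheme of $L_a$ and, for each $i\neq a$, a length-$(u_i-1)$ subscheme of $L_i$. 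One then concludes that $\mathbb{V}(K)$ is a point of $\sHilb^{u_a}(L_a)\times\prod_{i\neq a}\sHilb^{u_i-1}(L_i)$, which, since every line $L_i\cong\mathbb{A}^{1}$ is smooth, is a product of symmetric powers of smooth curves; it is irreducible, parametrizes smoothable schemes, and every point of it arises as such a $\mathbb{V}(K)$, so $\mathcal{K}_a$ is the closure of this locus and its generic point is a smoothable length-$m$ scheme.

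For a component $\mathcal{J}_S$ the substitution should instead produce generators carrying a factor $x_i^{2}$ for $i\in S$, a factor $x_i$ for $i\in[k]\setminus S$, and otherwise linear. Reading off the primary decomposition of $K$ I expect to obtain a length-$(u_i-2)$ subscheme of $L_i$ for each $i\in S$, a length-$(u_i-1)$ subscheme of $L_i$ for each $i\in[k]\setminus S$, and a primary ideal $K_0$ supported at $\mathbf{0}$ that is generated by $n-|S|$ linearly independent linear forms. Since $|\mathbf{1}|=(|S|+1)+(n-|S|)-1$, the isomorphism $\Sigma(|S|+1,n-|S|,\mathbf{1})\cong\gr(n-|S|,\Lambda_{\mathbf{1}})$ of \cref{irr comp Grass} identifies $[K_0]$ with a point of $\Sigma(|S|+1,n-|S|,\mathbf{1})$, of colength $|S|+1$; the lengths then add up to $\sum_{i\in S}(u_i-2)+\sum_{i\in[k]\setminus S}(u_i-1)+(|S|+1)=m$. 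Thus $\mathbb{V}(K)$ is a point of $\prod_{i\in S}\sHilb^{u_i-2}(L_i)\times\prod_{i\in[k]\setminus S}\sHilb^{u_i-1}(L_i)\times\Sigma(|S|+1,n-|S|,\mathbf{1})$, every point of which arises in this way, so $\mathcal{J}_S$ is the closure of that locus.

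The step I expect to be the main obstacle is the bookkeeping required to verify that no degeneration occurs in the punctual part $K_0$ for any admissible $S$ — namely that it is generated by exactly $n-|S|$ independent linear forms and has colength exactly $|S|+1$ — and, dually, that the vanishing loci in \cref{lemma:primary k2} stay pairwise incomparable after completion, so that the $\mathcal{K}_a$ and $\mathcal{J}_S$ are genuinely distinct components; both should reduce, as in \cref{prop: ideal translation}, to tracing the indices through the isomorphism of \cref{lemma: new rings}. The restriction $|S|\leq n-2$ enters exactly where it should: once $|S|=n-1$ the Grassmannian $\Sigma(|S|+1,n-|S|,\mathbf{1})=\Sigma(n,1,\mathbf{1})$ is a projective space parametrizing a smoothable locus (part (1) of \cref{theo: irred comp reduced structure}), so the corresponding stratum is absorbed into a $\mathcal{K}_a$ and contributes no new component.
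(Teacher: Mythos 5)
Your proposal is correct and follows essentially the same route as the paper: invoke \cref{theo:def theory}, \cref{lemma: new rings} and \cref{lemma:primary k2} to list the minimal primes, do the binomial arithmetic, and then substitute the vanishing conditions of each prime into the generators \eqref{eq:gens def} to read off the primary decomposition of a generic $K$ and the length bookkeeping. The only difference is one of emphasis — the paper's written proof carries out the substitution explicitly only for the $\mathcal{J}_S$ components, whereas you also sketch the (routine) $\mathcal{K}_a$ case — so there is nothing to correct.
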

\begin{proof}
We proceed as in the proof of \cref{prop: ideal translation}. We use the isomorphism of \cref{lemma: new rings} to translate the conditions imposed by the ideals in \cref{lemma:primary k2} to the ring $S_k/J_k$ and the generators \eqref{eq:gens def}. Let $K$ be an ideal in the irreducible component corresponding to the ideal $\mathcal{J}_S$ for $S$ as in \cref{lemma:primary k2}. We may write the generators of $K$ as in  \eqref{eq:gens def}. To simplify these generators, we consider the polynomials 
\[
f_i':= \left\{
\begin{array}{ll}\displaystyle
x_i^{u_i-2}+\sum_{l=2}^{u_i-1}\alpha_{i,i,l}x_i^{l-2}\,\,\,  \text{ for } i\in S,\\  \displaystyle
x_i^{u_i-1}+\sum_{l=1}^{u_i-1}\alpha_{i,i,l}x_i^{l-1}\,\,\,  \text{ for } i\in [k]\setminus S.\\
\end{array}
\right.
\]
Modulo the pullback of $\mathcal{J}_S$ via the isomorphism in \cref{lemma: new rings}, we may write the generators of $K$ as 
\[
\begin{array}{ll}
\displaystyle f_i = x_i^{u_i}+\sum_{l=2}^{u_i-1}\alpha_{i,i,l}x_i^{l}=x_i^2f_i'&\text{ for }i\in S,\\

\displaystyle f_i = x_i^{u_i}+\sum_{l=1}^{u_i-1}\alpha_{i,i,l}x_i^{l} +\sum_{j\in S}\alpha_{i,j,u_j-1}x_jf_j' 

&\text{ for }i\in [k]\setminus S,\\
\displaystyle f_i = x_i +\sum_{j\in S}\alpha_{i,j,u_j-1}x_jf_j'

&\text{ for }i\not \in [k].
\end{array}
\]

Now, we analyze the scheme defined by these equations. Assume first that $f_i'=0$ for some $i\in S$. This implies that $0=x_jf_i' = \alpha_{i,i,2}x_j$  for $j\neq i$. We get a component of $K$ the form $\langle f_i',x_j:j\neq i\rangle$ for $i\in S$ which represent a point in $\sHilb^{u_i-2}(L_i)$. Assume on the contrary that $f_i'\neq 0$ for every $i\in S$. Then, $x_i^2 = 0 $ for every $i\in S$. The generators of $K$ modulo this condition are
\[
\begin{array}{ll}

\displaystyle f_i = x_i^{u_i}+\sum_{l=1}^{u_i-1}\alpha_{i,i,l}x_i^{l} +\sum_{j\in S}\alpha_{i,j,u_j-1}\alpha_{i,i,2}x_j

&\text{ for }i\in [k]\setminus S,\\
\displaystyle f_i = x_i  +\sum_{j\in S}\alpha_{i,j,u_j-1}\alpha_{i,i,2}x_j

&\text{ for }i\not \in [k].
\end{array}
\]
For $i\in [k]\setminus S$ we get that 
$
0 = x_if_i =x_i^2f_i'.
$
If $f_i'=0$, then, multiplying by $x_j$ for $j\neq i$ we get that $x_j=0 $ for $j\neq i$. Therefore, we obtained the ideal $\langle f_i',x_j:j\neq i\rangle $ for $i\in [k]\setminus S$, which represent a point in $\sHilb^{u_i-1}(L_i)$. If on the contrary $f_i'\neq 0$ for every $i\in[k]$. Since $x_i^2f_i'=0$, we deduce that $x_i^2 = 0$ for every $i\in [k]$. Therefore, we obtain the ideal 
\[\begin{array}{c}
\displaystyle
\langle x_i^2:i\in[k]\rangle +\langle \alpha_{i,i,1}x_i+\sum_{j\in S}\alpha_{i,j,u_j-1}\alpha_{i,i,2}x_j: i\in[k]\setminus S \rangle +\langle x_i  +\sum_{j\in S}\alpha_{i,j,u_j-1}\alpha_{i,i,2}x_j:i\not\in [k]\rangle\\
\displaystyle=
\langle \alpha_{i,i,1}x_i+\sum_{j\in S}\alpha_{i,j,u_j-1}\alpha_{i,i,2}x_j: i\in[k]\setminus S \rangle +\langle x_i  +\sum_{j\in S}\alpha_{i,j,u_j-1}\alpha_{i,i,2}x_j:i\not\in [k]\rangle,
\end{array}
\]
which represents a point in $\Sigma( |S|+1,n-|S| ,\mathbf{1} )$. We conclude that $K$ defines a length $m$ scheme which corresponds to a point in 
\[
\prod_{i\in S}\sHilb^{u_i-2}(L_i)\times \prod_{i\in[k]\setminus S}\sHilb^{u_i-1}(L_i)\times \Sigma(|S|+1,n-|S|,\mathbf{1}).
\]
\end{proof}

We can relate \cref{prop: ideal translation} and \cref{prop: ideal translation 2} with the combinatorics in \cref{sec:moment map}. For $k=1$, the ideal $J=\langle x_1^m,x_2,\ldots,x_n\rangle$ is the intersection of the $n+1$ irreducible components of the Hilbert scheme described in \cref{prop: ideal translation}.
The ideal $J$ corresponds to the vertex $(m-1) e_1$ of the simplex $(m-1)\Delta_{n-1}$. The only hypersimplex of the hypersimplicial complex $\mathcal{K}^{[m]}_n$ containing this vertex is $\Delta_{1,n}+(m-2) e_1$. 
The component of the form \eqref{eq:irred comp 1} corresponds to this translated hypersimplex. The translation by $(m-2) e_1$  geometrically corresponds to the factor $\sHilb^{m-2}(L_1)$ of the irreducible components.  The rest of the irreducible components containig the point $[J]$ are smoothable and cannot be seen from the complex $\mathcal{K}_n^{[m]}$. Therefore, these components do not come from the punctual Hilbert scheme. In particular, the number of irreducible components of $\sHilb^m(X_n)$ and $\sHilb_{\mathbf{0}}^m(X_n)$ that contain $[J]$ is different. Similarly, for $k\geq 2$, we can associate to the components of the form \eqref{eq: local components} corresponding to $\mathcal{J}_S$ the hypersimplex $\Delta_{n,|S|}+\uu-\mathbf{e}_S-\mathbf{1}$. Such a hypersimplex corresponds to the component $\Sigma(m,n-|S|,\uu-\mathbf{e}_S)$ of the punctual Hilbert scheme. This component is exactly the intersection of the punctual Hilbert scheme and the component \eqref{eq: local components}. For $k=n-1$ and  $a\not\in [k]$, we can associate to the component corresponding to $\mathcal{K}_a$ in \cref{prop: ideal translation 2} the hypersimplex $\Delta_{n-1,n}+\uu-\mathbf{e}_{[n]\setminus\{a\}}-\mathbf{1}$. Similarly, for $n=k$ and $a\in[k]$, we can associate to the component corresponding to $\mathcal{K}_a$ in  \cref{prop: ideal translation 2} the hypersimplex $\Delta_{n-1,n}+\uu-\mathbf{e}_{[n]\setminus\{a\}}-\mathbf{1}$.
\begin{Rem}
    Fix the ideal $J=\langle x_1^{u_1},\ldots, x_k^{u_k},x_{k+1},\ldots,x_n\rangle$ with $u_i\geq 2$ for $i\in[k]$ and $|\uu|=m+n-1$. Then,  $\mu([J])=\uu$ is a vertex of $\mathcal{K}_n^{[m]}$ that is contained in the relative interior of a $(k-1)$--dimensional face of $(m-1)\cdot\Delta_{n-1}$. The number of hypersimplices containing the vertex $\mu$ is exactly
   $ 2^k-1$ if $k<n$  and  $2^k-2$  if  $k=n$. Therefore, this number is the number of irreducible components of $\sHilb_{\mathbf{0}}^m(X_n)$ containing $[J]$. On the other hand,
    by \cref{prop: ideal translation} and \cref{prop: ideal translation 2}, the number of irreducible components of $\sHilb^m(X_n)$ containing $[J]$ is given by \eqref{eq:numb irred compo  local}. For $k\leq n-1$, this number of irreducible components of $\sHilb^m(X_n)$ and $\sHilb_{\mathbf{0}}^m(X_n)$ differs by $n$ if $k\leq n-2$ and by $n-1$ if $k=n-1$. These extra components that do not appear in $\sHilb_{\mathbf{0}}^m(X_n)$ are described in \cref{prop: ideal translation 2} and they are smoothable irreducible components of $\sHilb^m(X_n)$ that contain $[J]$.  For $k=n$, the number of irreducible components of the local and punctual Hilbert scheme coincides.

\end{Rem}

Now, we show the local structure of $\sHilb_{\mathbf{0}}^m(X_n)$ inside the stalk
 $
\widehat{\mathcal{O}}_{\sHilb^m(X_n),[J]}
$.

\begin{Prop}\label{prop:local punctual}
Let $1\leq k\leq n$ and let $\uu=\in\Z_{\geq 1}^n$ such that $|\uu|=m+n-1$, $u_i\geq 2 $ for $1\leq i\leq k$ and $u_i=1$ for $k+1\leq i\leq n$. Then,  the completion of the stalk of the punctual Hilbert scheme  
at $J=\langle x_1^{u_1},\ldots,x_k^{u_k},x_{k+1},\ldots,x_n\rangle$, denoted by $ 
\widehat{\mathcal{O}}_{\sHilb_{\mathbf{0}}^m(X_n),[J]}$
is isomorphic to the completion of the quotient
\begin{equation}\label{eq:ring punctual}
\frac{\Z[\alpha_{i,j}:i\in[n],j\in[k]\setminus\{i\}]}{\langle 
\alpha_{i,j}\alpha_{j,r}:j\in[k],\,i\in[n]\setminus\{i\},\,r\in[k]\setminus\{j\}
\rangle}
\end{equation}
localized at the origin. Through this isomorphism, the irreducible components of $ 
\widehat{\mathcal{O}}_{\sHilb_{\mathbf{0}}^m(X_n),[J]}$ corresponds to the ideals of \eqref{eq:ring punctual}
\begin{equation}\label{eq:punctual primary decomposition}
\mathcal{J}_{[k],T}:=\langle \alpha_{i,j}:i,j\in[k]\setminus T,\,i\neq j\rangle +
\langle\alpha_{i,j}: j\in T \text{ and } i \in [n]\setminus\{j\} \rangle
\end{equation}
for $T\subsetneq [k]$ if $k\neq n$, or  $\emptyset\neq T\subsetneq [k]$ if $k= n$. Moreover,the ideal $\mathcal{J}_{[k],T}$ represents the irreducible component $$\Sigma(m, n-k+|T|, \mathbf{u}-\mathbf{e}_{[k]\setminus T})$$ of the punctual Hilbert scheme.
\end{Prop}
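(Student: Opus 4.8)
The plan is to repeat the deformation–theoretic computation of \cref{theo:def theory} under the extra requirement that the deformation remain punctual, and then to extract the minimal primes of the resulting ring, which turns out to be a Stanley--Reisner ring.

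By \cref{theo:def theory} a deformation of $J=\langle x_1^{u_1},\dots,x_k^{u_k},x_{k+1},\dots,x_n\rangle$ over an Artinian $S$ has generators as in \eqref{eq:gens def} and $\widehat{\mathcal{O}}_{\sHilb^m(X_n),[J]}$ is the completion of $S_k/J_k$. The punctual Hilbert scheme is (with its natural scheme structure) the fibre of the Hilbert--Chow morphism over $m[\mathbf{0}]$, so $R_S/J_S$ lies in it precisely when multiplication by each $x_i$ on the free rank-$m$ module $R_S/J_S$ has characteristic polynomial $T^m$. First I would compute these characteristic polynomials in the monomial basis $1,x_1,\dots,x_1^{u_1-1},\dots,x_k,\dots,x_k^{u_k-1}$: for $i>k$ multiplication by $x_i$ has a single nonzero column, with characteristic polynomial $T^{m-1}(T+A_i)$, while for $i\le k$ the span of the blocks $x_j^l$ with $j\ne i$ lies in its kernel and it acts on the remaining $u_i$ basis vectors by a companion matrix, giving characteristic polynomial $T^{m-u_i}\bigl(T^{u_i}+\alpha_{i,i,u_i-1}T^{u_i-1}+\dots+\alpha_{i,i,1}T+A_i\bigr)$. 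Forcing all non-leading coefficients to vanish shows that, inside $S_k/J_k$, the punctual locus is cut out exactly by $\langle A_1,\dots,A_n\rangle+\langle \alpha_{i,i,l}: i\in[k],\,l\in[u_i-1]\rangle$.

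Next I would substitute these vanishing conditions into the relations \eqref{eq:gens ideal def 1}: all of them become trivial except the one in the fifth line, which forces $\alpha_{i,j,l}=0$ for every $i\ne j$ and $l\le u_j-2$, and the one in the last line, which becomes $\alpha_{i,j,u_j-1}\alpha_{j,r,u_r-1}=0$. Hence only the variables $\alpha_{i,j}:=\alpha_{i,j,u_j-1}$ ($i\ne j$, $j\in[k]$) survive, subject to $\alpha_{i,j}\alpha_{j,r}=0$, which identifies $\widehat{\mathcal{O}}_{\sHilb^m_{\mathbf{0}}(X_n),[J]}$ with the completion at the origin of the ring $B$ in \eqref{eq:ring punctual}. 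Since all relations of $B$ are products of two distinct variables, $B$ is a Stanley--Reisner ring, so its minimal primes are generated by the complements of the facets of its complex: for each $j\in[k]$ a face must omit either all variables ``entering'' $j$ or all variables $\alpha_{j,r}$ ``leaving'' $j$ inside $[k]$, and recording the first choice by $j\in T$ produces exactly the ideals $\mathcal{J}_{[k],T}$ of \eqref{eq:punctual primary decomposition}. One then checks that $B/\mathcal{J}_{[k],T}$ is a polynomial ring in the variables $\alpha_{i,j}$ with $j\in[k]\setminus T$ and $i\in T\cup([n]\setminus[k])$ (hence $\mathcal{J}_{[k],T}$ is prime), that these ideals are pairwise incomparable, and that they are nontrivial and distinct exactly for $T\subsetneq[k]$, and only for $\emptyset\ne T\subsetneq[k]$ when $k=n$; the count then matches the number of components of type $\mathcal{J}_S$ found in \cref{prop: ideal translation 2}.

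For the identification with $\Sigma(m,n-k+|T|,\uu-\ee_{[k]\setminus T})$ I would trace a deformation living in the component $\mathcal{J}_{[k],T}$ through \eqref{eq:gens def}: its generators become $x_i^{u_i}$ for $i\in[k]\setminus T$, $x_i^{u_i}+\sum_{r\in[k]\setminus T}\alpha_{i,r}x_r^{u_r-1}$ for $i\in T$, and $x_i+\sum_{r\in[k]\setminus T}\alpha_{i,r}x_r^{u_r-1}$ for $i>k$. Writing $\vv=\uu-\ee_{[k]\setminus T}$ and $l=n-k+|T|$, these are precisely the image under $\varphi_{l,\vv}$ (see \eqref{eq:grass map} and \cref{irr comp Grass}) of the standard affine chart of $\gr(l,\Lambda_{\vv})$ centred at the coordinate point $[J]=\langle x_j^{v_j}: j\in T\cup([n]\setminus[k])\rangle$, with the $\alpha_{i,r}$ as chart coordinates; so $\mathcal{J}_{[k],T}$ is the germ at $[J]$ of $\Sigma(m,n-k+|T|,\uu-\ee_{[k]\setminus T})$, consistently with \cref{punctual irred comp}. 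The step I expect to be the real obstacle is the first one---pinning down the scheme structure of $\sHilb^m_{\mathbf{0}}(X_n)$ as the Hilbert--Chow fibre and computing the characteristic polynomials of the multiplication operators---since it is this, rather than mere set-theoretic support, that produces the two families of equations $A_i=0$ and $\alpha_{i,i,l}=0$; everything afterwards is bookkeeping with \eqref{eq:gens ideal def 1} and elementary Stanley--Reisner combinatorics.
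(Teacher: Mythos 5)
Your proposal is correct and follows the same overall skeleton as the paper's proof: restrict the deformation ring of \cref{theo:def theory} to the punctual locus, obtain the ring \eqref{eq:ring punctual}, compute its minimal primes, and match each to a Grassmannian component via the generators \eqref{eq:gens def}. Two local steps are handled by genuinely different (and arguably cleaner) means. First, for the equations $A_i=\alpha_{i,i,l}=0$ cutting out the punctual locus, the paper argues informally that ``$x_i=0$ must be the only solution of $x_if_i=0$'' for an ideal supported at the origin, whereas you pin down the scheme structure as the Hilbert--Chow fibre and extract the same equations from the characteristic polynomials of the multiplication operators; your version is the more rigorous one over an Artinian base, and you are right to flag it as the step carrying the real content. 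Second, for the primary decomposition, the paper invokes its auxiliary \cref{lemma: primary technical} (proved by a hands-on stratification into open sets $U_{S,T}$), while you observe that \eqref{eq:ring punctual} is a Stanley--Reisner ring and read the minimal primes off the facets of its complex; this buys a shorter, more conceptual derivation of the ideals \eqref{eq:punctual primary decomposition} and of the constraints $T\subsetneq[k]$ (resp.\ $\emptyset\neq T\subsetneq[k]$ when $k=n$), at the cost of the small bookkeeping check that the $F_T$ are pairwise incomparable, which you correctly note must be done. The final identification of $\mathcal{J}_{[k],T}$ with the chart of $\gr(n-k+|T|,\Lambda_{\uu-\ee_{[k]\setminus T}})$ centred at $[J]$ via $\varphi_{l,\vv}$ (using \cref{irr comp Grass} and \cref{lem:extension}) matches the paper's computation exactly.
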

\begin{proof}
By \cref{theo:def theory},  $ 
\widehat{\mathcal{O}}_{\sHilb_{\mathbf{0}}^m(X_n),[J]}$ corresponds to the ideal of $S_k/I_k$ describing the deformations of $\mathbb{V}(J)$ supported at the origin. 
Let $K$ be an ideal corresponding to a point in  $S_k/J_k$. In other words, the generators of $K$ are given by \eqref{eq:gens def}. We need to check when the ideal $K$ is an ideal supported at the origin. The polynomial $x_if_i = x_i(x_i^{u_i}+A_i+\sum_{l=1}^{u_i-1}\alpha_{i,i,l}x_i^l)$ is a polynomial in $K$. If $K$ is supported only at the origin, then $x_i=0$ must be the only solution of $x_if_i=0$. We deduce that $A_i=\alpha_{i,i,1}=\cdots = \alpha_{i,i,u_i-1}=0$.
Modulo this relation, we get that the generators of $K$ are 
\[
f_i= x_i^{u_i}+\sum_{i\in [k]\setminus\{i\}}\alpha_{i,j,u_j-1}x_j^{u_j-1},
\]
where $\alpha_{i,j,u_j-1}\alpha_{j,r,u_r-1}=0$ for every $i\in[n]$,  $j,r\in[k]\setminus\{i\}$ and $j\neq r$.  Together with the generators of $J_k$ and identifying  $\alpha_{i,j,u_j-1}$ with $\alpha_{i,j}$ we get the ring \eqref{eq:ring punctual}. The primary decomposition of ideal in \eqref{eq:ring punctual} is given by \cref{lemma: primary technical} for $S=[k]$. Now fix an ideal $\mathcal{J}_{[k],T}$ in the primary decomposition of \eqref{eq:ring punctual}. Modulo $\mathcal{J}_{[k],T}$ , the generatos of the length $m$ ideal $K$ are
\[
f_i=\left\{
\begin{array}{ll}
\displaystyle x_i^{u_i} & \text{ for } i\in [k]\setminus T,\\
\displaystyle x_i^{u_i}+\sum_{j\in[k]\setminus{T}}\alpha_{i,j}x_j^{u_j-1} & \text{ for } i\in T,\\
\displaystyle x_i^{u_i}+\sum_{j\in[k]\setminus{T}}\alpha_{i,j}x_j^{u_j-1} & \text{ for } i\not\in[k].\\
\end{array}
\right.
\]
Note that the generators $f_i$ for $i\in [k]\setminus T$ are not required since the are recovered from the multiplication $x_if_j$ for $j\not\in [k]\setminus T$.
We deduce that the ideals in the component corresponding to $\mathcal{J}_{[k],T}$ are given by $n-k+|T|$ linearly independent polynomials in the vector space
\[
\langle x_i^{u_i}:i\in T\rangle_\C+
\langle x_i^{u_i-1}:i\in [k]\setminus T\rangle_\C+
\langle x_i:i\not\in [k]\rangle_\C.
\]
Therefore, the component $\mathcal{J}_{[k],T}$ correspond with the irreducible component 
$$\Sigma(m, n-k+|T|, \mathbf{u}-\mathbf{e}_{[k]\setminus T})$$
of the punctual Hilbert scheme.
\end{proof}

 \cref{prop:local punctual} allows us to carry out the local study of $\sHilb_{\mathbf{0}}^m(X_n)$, for the reducedness of this scheme and the transversality of the intersection of the irreducible components. To derive these results,  we need first the following lemmas.

\begin{Lem}\label{lem: reduced vertex}
    Let $[J]\in\sHilb_{\mathbf{0}}^m(X_n)$ such that $\mu([J])$ is a vertex of $\mathcal{K}_n^{[m]}$. Then, $[J]$ is a reduced point of $\sHilb^m(X_n)$ and $\sHilb_{\mathbf{0}}^m(X_n)$. 

\end{Lem}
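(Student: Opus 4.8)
The plan is to read off both assertions from the local models established in \cref{theo:def theory} and \cref{prop:local punctual}. First I would record what the hypothesis means: by the discussion preceding \cref{theo:def theory}, $\mu_m([J])$ is a vertex of $\mathcal{K}_n^{[m]}$ precisely when $J$ is a monomial ideal, so after relabelling the variables $J=\langle x_1^{u_1},\ldots,x_k^{u_k},x_{k+1},\ldots,x_n\rangle$ with $u_i\ge 2$ for $i\le k$, $u_i=1$ for $i>k$, $1\le k\le n$, and $|\uu|=m+n-1$. Since $\sHilb^m(X_n)$ and $\sHilb^m_{\mathbf 0}(X_n)$ are of finite type, the local rings at $[J]$ inject into their completions, and these rings are excellent; hence it suffices to check that the two rings whose completions appear in \cref{theo:def theory} and \cref{prop:local punctual} --- namely $S_k/J_k$ and the ring \eqref{eq:ring punctual} --- are reduced, i.e.\ that $J_k$ and the defining ideal of \eqref{eq:ring punctual} are radical.

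For $\sHilb^m_{\mathbf 0}(X_n)$ this is immediate: the defining ideal of \eqref{eq:ring punctual} is generated by the elements $\alpha_{i,j}\alpha_{j,r}$, and in each of these the two factors are distinct indeterminates because the index range forces $i\ne j$ and $r\ne j$, hence $(i,j)\ne(j,r)$. Thus that ideal is a squarefree monomial ideal, the ring \eqref{eq:ring punctual} is a Stanley--Reisner ring and in particular reduced, its associated primes are generated by subsets of the variables and therefore remain prime after localizing and completing at the origin; so the completed local ring is an intersection of primes, hence reduced, and $[J]$ is a reduced point of $\sHilb^m_{\mathbf 0}(X_n)$.

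For $\sHilb^m(X_n)$ I would instead exploit the binomial generators of $J_k$ in \eqref{eq:gens ideal def 1}. The relations $\alpha_{i,j,u_j-1}\alpha_{j,j,1}-A_i$ and $\alpha_{i,j,u_j-1}\alpha_{j,j,l+1}-\alpha_{i,j,l}$ (for $l\le u_j-2$) let one eliminate the variables $A_i$ occurring in such a relation together with all $\alpha_{i,j,l}$ having $l<u_j-1$; this is exactly the simplification performed in \cref{lemma: new rings}, and after it the minimal primary components of the resulting ideal are the coordinate primes listed in \cref{lemma: primary k1} (when $k=1$) and \cref{lemma:primary k2} (when $k\ge 2$), whose intersection is the whole ideal. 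Hence $J_k$ is radical, $S_k/J_k$ is reduced, its completion at the origin is reduced, and $[J]$ is a reduced point of $\sHilb^m(X_n)$. The only point requiring genuine work is this last step --- the verification in \cref{app: commutative algebra} that after eliminating the $A_i$ and the lower $\alpha_{i,j,l}$ the ideal one is left with is indeed an intersection of primes with no embedded component; for the punctual scheme, by contrast, nothing beyond the squarefree shape of \eqref{eq:ring punctual} is needed.
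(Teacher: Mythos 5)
Your proposal is correct and follows essentially the same route as the paper: reduce to the completed local models of \cref{theo:def theory} and \cref{prop:local punctual}, then observe that the defining ideals are radical because they are intersections of the prime ideals computed in \cref{lemma: primary k1}, \cref{lemma:primary k2} and \cref{prop:local punctual}. Your Stanley--Reisner observation for the punctual case (the ideal in \eqref{eq:ring punctual} is a squarefree monomial ideal since each generator $\alpha_{i,j}\alpha_{j,r}$ is a product of two distinct variables) is a slightly more direct way to see reducedness there, but it is only a cosmetic variant of the paper's argument.
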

\begin{proof}
    To show that $[J]$ is a reduced point of $\Hilb^m(X_n)$, it is enough to show that the stalk $\mathcal{O}_{\Hilb^m(X_n),[J]}$ is reduced. By Lemma \cite[Lemma 07NZ]{stacks-project}, it is enough to check that the completion of this stalk is reduced. By \cref{theo:def theory} and \cref{lemma: new rings}, this completion is the quotient $\mathcal{S}_k/\mathcal{J}_k$ (see \eqref{eq:ideal 1},\eqref{eq:ring Sk} and \eqref{eq:ideal Ik} for the relevant definitions). Since the primary decomposition of the ideal $\mathcal{J}_k$ calculated in \cref{lemma: primary k1} and \cref{lemma:primary k2} is given by prime ideals, the ideal $\mathcal{J}_k$ is radical. Hence, $\mathcal{S}_k/\mathcal{J}_k$ is a reduced ring. 

    Similarly,  to show that $[J]$ is a reduced point of $\Hilb_{\mathbf{0}}^m(X_n)$, it is enough to check that the ring \eqref{eq:ring punctual} is reduced. The primary decomposition of the ideal in \eqref{eq:ring punctual} is given in Equation \eqref{eq:punctual primary decomposition} in \cref{prop:local punctual}. The proof follows from the fact that the ideals \eqref{eq:punctual primary decomposition}  are prime.
\end{proof}

\begin{Lem}\label{lemma:one parameter family}
   Let $[J]\in \sHilb^m(X_n)$, then there exists $[J_0]\in\sHilb_\mathbf{0}^m(X_n)$ in the closure of the $(\C^{*})^n$--orbit of $[J]$ such that $\mu([J_0])$ is a vertex of $\mathcal{K}_n^{[m]}$. 
\end{Lem}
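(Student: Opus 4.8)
The plan is to exhibit a one-parameter subgroup of the torus $(\C^*)^n$ acting on $X_n$ whose limit at $0$ pushes any $[J]$ to a monomial ideal supported at the origin. First I would recall that the natural action of $T=(\C^*)^n$ on $\C^n$, given by $t\cdot(x_1,\dots,x_n)=(t_1x_1,\dots,t_nx_n)$, preserves $X_n$ (each axis $L_i$ is $T$-invariant) and hence induces an action on $\sHilb^m(X_n)$. Since $\sHilb^m(X_n)$ is projective, for any cocharacter $\lambda\colon\C^*\to T$ the limit $\lim_{s\to 0}\lambda(s)\cdot[J]$ exists and is a $\C^*$-fixed point for the $\lambda$-action; a generic choice of $\lambda$ makes these fixed points precisely the $T$-fixed points of $\sHilb^m(X_n)$. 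So the first step is: (i) identify the $T$-fixed points of $\sHilb^m(X_n)$, and (ii) check that the relevant ones are supported at $\mathbf 0$ with moment image a vertex of $\mathcal K_n^{[m]}$.

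For step (i), a $T$-fixed point of $\sHilb^m(X_n)$ is a $T$-invariant ideal $J_0\subseteq R_n$ of colength $m$; since $R_n$ is a monomial algebra and a $T$-invariant ideal is generated by monomials, the fixed locus consists of the monomial ideals of colength $m$. A monomial ideal $J_0\subseteq R_n$ either contains some $x_i$ (so $\mathbb V(J_0)$ has a component away from a neighborhood avoiding... no—rather it contains the whole line $L_i$ only if $x_i\notin J_0$), so more carefully: $J_0$ is supported at $\mathbf0$ iff $x_i^{N}\in J_0$ for some $N$ and each $i$, i.e. iff $J_0\supseteq\langle x_1^{u_1},\dots,x_n^{u_n}\rangle$ for suitable $u_i\ge 1$; the monomial ideals supported at $\mathbf 0$ of colength $m$ are exactly the ideals $\langle x_1^{u_1},\dots,x_n^{u_n}\rangle$ with $\sum u_i = m+n-1$, which by Theorem~\ref{thm: moment map} have moment image $\uu$, a vertex of $\mathcal K_n^{[m]}$. (Monomial ideals not supported at the origin correspond to $T$-fixed points where some points have run off to infinity along an axis; for the purpose of this lemma we only need that \emph{some} fixed point in the orbit closure of $[J]$ is of the good type.) So step (i)–(ii) reduce to a careful bookkeeping of monomial ideals in $R_n$.

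Now for the core argument: given $[J]\in\sHilb^m(X_n)$, I would first apply a cocharacter $\lambda_1$ contracting all of $X_n$ toward $\mathbf 0$—concretely $\lambda_1(s)=(s,\dots,s)$, which acts on each line $L_i$ as scaling by $s$—so that $J_1:=\lim_{s\to 0}\lambda_1(s)\cdot[J]$ lies in $\sHilb^m_{\mathbf 0}(X_n)$: indeed for $s$ close to $0$ the support of $\lambda_1(s)\cdot\mathbb V(J)$ is concentrated near the origin, and being a closed point of the punctual Hilbert scheme (a closed condition, by the argument already used in the proof of \cref{lem:extension}), the limit is supported at $\mathbf0$. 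Then $[J_1]\in\sHilb^m_{\mathbf0}(X_n)$, and applying a \emph{generic} cocharacter $\lambda_2$ of $T$ I take $[J_0]:=\lim_{s\to 0}\lambda_2(s)\cdot[J_1]$, which by Bialynicki-Birula is a $T$-fixed point of $\sHilb^m_{\mathbf0}(X_n)$, hence a monomial ideal supported at $\mathbf 0$, i.e. $J_0=\langle x_1^{u_1},\dots,x_n^{u_n}\rangle$ with $|\uu|=m+n-1$; by \cref{thm: moment map} (and the description of vertices of $\Delta_{l,n}$) the point $\mu([J_0])=\uu$ is a vertex of $\mathcal K_n^{[m]}$. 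Composing, $[J_0]$ lies in the closure of the $T$-orbit of $[J]$. The main obstacle I anticipate is making rigorous the claim that $\lim_{s\to0}\lambda_1(s)\cdot[J]$ actually lands in the punctual locus rather than only having support converging to $\mathbf0$ set-theoretically; this is handled by noting (as in \cref{lem:extension}) that "$\langle x_1^{N},\dots,x_n^N\rangle\subseteq J'$ for $N\gg0$" is a closed condition on $\sHilb^m(X_n)$ that holds on the dense orbit $\{\lambda_1(s)\cdot[J]: s\ne0\}$ once we note the support has run into the origin, hence holds on the limit—so the limit is genuinely in $\sHilb^m_{\mathbf0}(X_n)$.
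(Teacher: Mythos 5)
Your argument is correct, and its overall two-step strategy coincides with the paper's: first contract with a cocharacter whose limit is $\mathbf{0}$ to land in the punctual Hilbert scheme, then take a further degeneration inside the punctual locus to reach a point whose moment image is a vertex. The difference lies in how the second step is carried out. The paper works component by component: using \cref{punctual irred comp} it reduces to a single Grassmannian $\Sigma(m,l,\uu)\cong\gr(l,n)$ and computes the limit explicitly in matrix form, sending a subspace spanned by the columns of $\bigl(\begin{smallmatrix}\mathrm{Id}_l\\ A'\end{smallmatrix}\bigr)$ to the coordinate subspace $\langle\ee_1,\dots,\ee_l\rangle$. You instead invoke the general fact that the limit along a generic cocharacter is a $T$-fixed point, and classify the $T$-fixed points of $\sHilb^m_{\mathbf 0}(X_n)$ as the monomial ideals $\langle x_1^{u_1},\dots,x_n^{u_n}\rangle$ with $|\uu|=m+n-1$, whose moment images are exactly the vertices of $\mathcal K_n^{[m]}$. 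Your route is more conceptual and avoids the reduction to individual components, at the cost of needing genericity of the cocharacter and the (correct) observation that $T$-invariant finite-colength ideals of $R_n$ are monomial, hence automatically supported at $\mathbf 0$. One small inaccuracy: $\sHilb^m(X_n)$ is not projective, since $X_n$ is affine; only the punctual Hilbert scheme is proper. This does not create a gap, because the existence of the limit for the contracting cocharacter $\lambda_1(s)=(s,\dots,s)$ is justified exactly by the mechanism you describe (pass to a projective closure of $X_n$, note the supports converge to $\mathbf 0$, and use that containing $\langle x_1^N,\dots,x_n^N\rangle$ is a closed condition), after which the second limit takes place in the projective scheme $\sHilb^m_{\mathbf 0}(X_n)$.
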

\begin{proof}
  The statement of the Lemma is independent of the schematic structure of the Hilbert scheme. Therefore, we can replace $\sHilb^m(X_n)$ by $\Hilb^m(X_n)$. Assume first that $J$ is supported at $\mathbf{0}$, i.e. $[J]\in \Hilb^m_\mathbf{0}(X_n)$.
  By \cref{punctual irred comp}, it is enough to check the analogous statement for Grassmannians. Let $[E]\in\gr(l,n)$ be generated by the image of an $n\times l$ matrix $A=(a_{i,j})$. Without loss of generality, we may assume that $A$ is a block matrix of the form 
  \[
 A =  \begin{pmatrix}
      \mathrm{Id}_l\\
      \hline
      A'
  \end{pmatrix}.
  \]
  For $t=(t_1,\ldots,t_n)\in(\C^{*})^n$, $t\cdot [E]$ is the vector subspace generated by image of the matrix $t\cdot A=\mathrm{Diag}(t_1,\ldots,t_n)\cdot A$. Therefore, taking the limit when $t_{l+1},\ldots,t_n$ goes to zero we get the linear subspace $\langle \mathbf{e}_1,\ldots,\mathbf{e}_l\rangle$, which is a torus invariant point. Therefore, it corresponds to a vertex of $\Delta_{l,n}$ via the moment map, and it lies in the closure of the orbit of $[E]$.

  Assume now that $[J]\not\in\Hilb_\mathbf{0}^m(X_n)$. 
  Since the statement of the lemma holds for ideals in the punctual Hilbert scheme, it is enough to check that in the closure of the $(\C^{*})^n$--orbit of $[J]$ there is a point in the punctual Hilbert scheme.
  Consider a one parameter family $\lambda:\C^{*}\rightarrow(\C^{*})^n$ such that 
  the limit of $\lambda(t)$ when $t$ goes to $0$ is $\mathbf{0}$. Then, for any point $q$ in $X_n$, the limit of $\lambda(t)\cdot q$ when $t$ goes to $0$ is the singularity $\mathbf{0}$. Therefore, the limit of $\lambda(t)\cdot [J]$ when $t$ goes to $0$ is a length $m$ ideal supported at $\mathbf{0}$. We conclude that in the closure of the $(\C^{*})^n$--orbit of $[J]$ there is a point in $\Hilb_\mathbf{0}^m(X_n)$.
\end{proof}

\begin{Rem}
      \cref{prop: ideal translation} and \cref{prop: ideal translation 2} and  \cref{lemma:one parameter family} provide an alternative proof to \cref{theo: irred comp reduced structure}: Let $Z$ be an elementary component of $\sHilb^m(X_n)$. By   \cref{lemma:one parameter family}, $Z$ contains a point $[J]\in\Hilb_\mathbf{0}^m(X_n)$ corresponding to a vertex of $\mathcal{K}^{[m]}_n$. Since $Z$ is an irreducible component of $\sHilb^m(X_n)$, it should correspond to an irreducible component of the completion of the stalk of $[J]$ calculated in \cref{theo:def theory}. \cref{prop: ideal translation} and \cref{prop: ideal translation 2} give a geometrical interpretation to each of the irreducible components of this stalk. The only cases where these irreducible components are entirely contained in $\sHilb_\mathbf{0}^m(X_n)$ are exactly the ones described in \cref{theo: irred comp reduced structure}.
\end{Rem}

\begin{Prop}
    \label{prop: cohen Mac}
    Let $C$ be an irreducible curve whose singularities are all rational $n$--fold singularities. Then, 
    for $m\geq 2$, the Hilbert scheme $\sHilb^m(C)$ is Cohen-Macaulay if and only if $n\leq 3$.
\end{Prop}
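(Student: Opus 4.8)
The plan is to reduce the global question about $\sHilb^m(C)$ to the local study of $\sHilb^m(X_n)$ at its most singular points, and then to two explicit commutative-algebra facts established earlier. First I would argue that $\sHilb^m(C)$ is Cohen-Macaulay if and only if $\sHilb^m(X_n)$ is Cohen-Macaulay in a neighbourhood of the punctual locus over each singular point: away from the singular points the Hilbert scheme is locally the Hilbert scheme of points on a smooth curve, hence smooth, and Cohen-Macaulayness is a local property, so only the points $[J]$ of $\sHilb^m(X_n)$ supported at $\mathbf{0}$ matter. By \cref{lemma:one parameter family}, every such point degenerates under the torus action to a point $[J_0]$ with $\mu([J_0])$ a vertex of $\mathcal{K}_n^{[m]}$, i.e.\ $J_0=\langle x_1^{u_1},\ldots,x_k^{u_k},x_{k+1},\ldots,x_n\rangle$; since Cohen-Macaulayness is an open condition on a variety and the torus orbit of $[J]$ accumulates at $[J_0]$, it suffices to test the vertex ideals $[J_0]$. (Here one uses that $\sHilb^m(X_n)$ is reduced, which will be proved in \cref{theo: reduced}, so that the scheme-theoretic and reduced structures agree and \cref{theo:def theory} describes the honest local ring.)

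Next I would invoke \cref{theo:def theory}, which identifies $\widehat{\mathcal{O}}_{\sHilb^m(X_n),[J_0]}$ with the completion of $S_k/J_k$, equivalently (via \cref{lemma: new rings}) of the simpler ring $\mathcal{S}_k/\mathcal{J}_k$, whose minimal primes are listed in \cref{lemma: primary k1} (case $k=1$) and \cref{lemma:primary k2} (case $k\geq 2$). The key point is that being Cohen-Macaulay fails precisely when two irreducible components of different dimensions meet at $[J_0]$: a reduced ring all of whose associated primes have the same dimension can still be Cohen-Macaulay, but if the local ring has components of different dimensions then by \cite[Corollary 18.11]{Eisenbud1995} (equidimensionality of the associated primes is necessary) it is not Cohen-Macaulay. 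So the plan is to read off from \cref{prop: ideal translation} and \cref{prop: ideal translation 2} the dimensions of the components through $[J_0]$: the smoothable components through $[J_0]$ all have dimension $m$, while the non-smoothable component attached to $\mathcal{J}_S$ has the dimension of $\prod_{i\in S}\sHilb^{u_i-2}(L_i)\times\prod_{i\in[k]\setminus S}\sHilb^{u_i-1}(L_i)\times\Sigma(|S|+1,n-|S|,\mathbf{1})$, namely $(m-|S|-1)+\dim\mathrm{Gr}(n-|S|,n)=(m-|S|-1)+|S|(n-|S|)$, which exceeds $m$ exactly when $|S|(n-|S|-1)>0$, i.e.\ when $1\leq|S|\leq n-2$. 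Such an $S$ exists (for $m$ large enough to produce a $k\geq 2$ vertex, i.e.\ $m\geq 2$) precisely when $n\geq 4$.

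For the converse direction, when $n\leq 3$ one checks directly that all components through every vertex ideal have the same dimension $m$ (for $n=2$ there are only the chain-of-$\mathbb{P}^1$'s of \cref{ex:n=2 part 1} glued along smoothable pieces, all one-dimensional more than needed but equidimensional; for $n=3$ one has the smoothable component and the $\Sigma(m,2,\uu)\simeq\mathbb{P}^2$ and $\Sigma(m,1,\uu)\simeq\mathbb{P}^2$ components, and a short computation with \cref{prop: ideal translation 2} shows every $\mathcal{J}_S$-component again has dimension $m$), and then one must promote equidimensionality to actual Cohen-Macaulayness. For this I would show that each vertex local ring $\mathcal{S}_k/\mathcal{J}_k$ is Cohen-Macaulay: it is reduced and equidimensional, it is visibly a union of smooth branches (affine spaces and cones over Grassmannians of the form $\mathrm{Gr}(l,n)$ with $l\in\{1,n-1\}$, hence projective spaces, so the cones are affine spaces) meeting along smooth subvarieties, and a reduced connected union of Cohen-Macaulay branches of the same dimension whose pairwise intersections are Cohen-Macaulay of codimension one in each branch is Cohen-Macaulay by the Mayer--Vietoris / gluing criterion for Cohen-Macaulayness of unions; one verifies from \cref{Cor:intersect} that the pairwise intersections are single smaller Grassmannians $\mathrm{Gr}(l',n')$ which are again (cones over) projective spaces, hence Cohen-Macaulay, and that the intersection has the right codimension. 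The main obstacle I anticipate is precisely this last gluing step: one needs a clean statement that a reduced scheme which is a ``normal-crossings-like'' union of Cohen-Macaulay pieces along Cohen-Macaulay divisors in each piece is Cohen-Macaulay, and one must check the combinatorial hypothesis (that the nerve of the cover has no ``bad'' higher intersections forcing depth to drop) using the hypersimplicial complex $\mathcal{K}_n^{[m]}$ for $n=3$; everything else is bookkeeping with the dimension count $(m-|S|-1)+|S|(n-|S|)$ versus $m$.
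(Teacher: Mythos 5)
Your reduction steps match the paper's: away from the singularities the Hilbert scheme is smooth, one passes to $X_n$, and by \cref{lemma:one parameter family} together with openness of the Cohen--Macaulay locus and the torus action one only needs to test the vertex ideals $[J_0]=[\langle x_1^{u_1},\ldots,x_k^{u_k},x_{k+1},\ldots,x_n\rangle]$, whose local rings are the $\mathcal{S}_k/\mathcal{J}_k$ of \cref{theo:def theory} and \cref{lemma: new rings}. Your treatment of $n\geq 4$ via non-equidimensionality is also the paper's argument (modulo an off-by-one: the component attached to $\mathcal{J}_S$ has dimension $(m-|S|-1)+|S|(n-|S|)$, which exceeds $m$ exactly when $|S|(n-|S|-1)>1$, not $>0$; with your threshold you would wrongly conclude that $n=3$, $|S|=1$ already gives a component of dimension $>m$, contradicting the equidimensionality you then correctly assert for $n=3$. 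Also note that for $m=2$ every vertex has $k=1$, so the non-equidimensionality there comes from \cref{prop: ideal translation} rather than from a $\mathcal{J}_S$ with $k\geq 2$; the dimension count is the same).

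The genuine gap is in the converse direction, and it is exactly where you anticipated trouble: the gluing criterion you propose does not apply, because its hypotheses fail. The components of $\mathcal{S}_k/\mathcal{J}_k$ do \emph{not} all meet pairwise in codimension one. Concretely, for $n=3$, $k=2$ one has $\mathcal{Q}_1=\langle\alpha_{2,1},\alpha_{3,1},\alpha_{3,2}\rangle$ and $\mathcal{J}_{\{1\}}=\langle\beta_1,\alpha_{1,2},\alpha_{3,2}\rangle$ inside $\C[\beta_1,\beta_2,\alpha_{1,2},\alpha_{2,1},\alpha_{3,1},\alpha_{3,2}]$; both components are $3$-dimensional, but their intersection is $\mathrm{Spec}\,\C[\beta_2]$, of codimension two in each (this is the computation recorded in \cref{prop:complex singularity}). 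So the union is not a ``normal-crossings-like'' union along divisors, the depth estimate $\mathrm{depth}(A)\geq\min\{\mathrm{depth}(B),\mathrm{depth}(D),\mathrm{depth}(C)+1\}$ for a fiber product only yields $\mathrm{depth}\geq d-1$, and no off-the-shelf Mayer--Vietoris statement closes the argument; one would need a genuinely finer analysis of how the five (or more) branches assemble. The paper does not attempt this: it simply verifies Cohen--Macaulayness of the finitely many rings $\mathcal{S}_k/\mathcal{J}_k$ for $n\leq 3$ and $k\in[n]$ by a direct computation in \texttt{Macaulay2}. If you want a computation-free proof you would have to replace your gluing step by an argument adapted to these specific non-divisorial intersections; as written, the $n\leq 3$ half of your proof is incomplete.
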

\begin{proof}
    First, note that for $n\geq 4$, the Hilbert scheme $\sHilb^m(C)$ is not equidimensional, and hence, it is not Cohen-Macaulay. Assume that $n\leq 3$. Without loss of generality, we may assume that $C=X_n$. Let $[J]\in\Hilb^m(X_n)$, we need to check that the completion of the stalk of $[J]$ is Cohen-Macaulay.
    By \cref{lemma:one parameter family}, we may assume that $[J]\in\Hilb^m_\mathbf{0}(X_n)$ and $\mu([J])$ is a vertex of $\mathcal{K}^{[m]}_n$. By \cref{theo:def theory} and \cref{lemma: new rings}, it is enough to check that the ring $\mathcal{S}_k/\mathcal{J}_k$ is Cohen-Macaulay for $n\leq 3$ and $k\in[n]$. A computation in \texttt{Macaulay2} \cite{macaulay2} shows that $\mathcal{S}_k/\mathcal{J}_k$  is Cohen-Macaulay for $n\leq 3$  and $k\in[n]$.
\end{proof}

\begin{Thm}\label{theo:reduced punctual}
    The punctual Hilbert scheme $\sHilb_{\mathbf{0}}^m(X_n)$ is reduced and isomorphic to $\mathcal{G}_n^{m}$. 
\end{Thm}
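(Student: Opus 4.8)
The plan is to prove the two assertions separately, driving both by the natural $(\C^*)^n$-action and the following principle: a nonempty closed $(\C^*)^n$-invariant subset $Z\subseteq\sHilb^m_{\mathbf{0}}(X_n)$ must contain a \emph{vertex point}, i.e. a point $[J]$ with $\mu_m([J])$ a vertex of $\mathcal{K}_n^{[m]}$ (equivalently $J=\langle x_1^{u_1},\dots,x_n^{u_n}\rangle$). Indeed, picking any $[J]\in Z$, \cref{lemma:one parameter family} provides $[J_0]$ in the closure of the $(\C^*)^n$-orbit of $[J]$ with $\mu_m([J_0])$ a vertex, and $[J_0]\in Z$ since $Z$ is closed and invariant. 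Applying this to $Z=$ the non-reduced locus of $\sHilb^m_{\mathbf{0}}(X_n)$ (closed, being the support of the nilradical, and $(\C^*)^n$-invariant), we find that if it were nonempty it would contain a vertex point $[J_0]$; but \cref{lem: reduced vertex} says $[J_0]$ is a reduced point, a contradiction. Hence $\sHilb^m_{\mathbf{0}}(X_n)$ is reduced and equals $\Hilb^m_{\mathbf{0}}(X_n)$.

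Next I would construct the comparison morphism. For each pair $(l,\uu)$ indexing the disjoint union \eqref{eq:disjoint union}, compose the isomorphism $\varphi_{l,\uu}\colon\gr(l,\Lambda_{\uu})\xrightarrow{\sim}\Sigma(m,l,\uu)$ of \cref{irr comp Grass} with the closed immersion $\Sigma(m,l,\uu)\hookrightarrow\sHilb^m_{\mathbf{0}}(X_n)$. By \cref{prop: intersections} and \cref{Cor:intersect} these morphisms agree, as morphisms of schemes, on the subgrassmannians that are glued in \eqref{eq:disjoint union}, so they descend to a morphism $\Phi\colon\mathcal{G}_n^m\to\sHilb^m_{\mathbf{0}}(X_n)$. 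The map $\Phi$ is surjective, since $\Hilb^m_{\mathbf{0}}(X_n)$ is the union of all the $\Sigma(m,l,\uu)$ by \cref{cor:big union} and the pieces with $l=n$ are redundant by \cref{punctual irred comp}; and it is injective, since each $\varphi_{l,\uu}$ is injective and points in distinct Grassmannians are glued exactly when they define the same ideal. As $\mathcal{G}_n^m$ is proper — being glued from proper Grassmannians along closed subschemes — the bijective morphism $\Phi$ is finite.

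It then remains to upgrade $\Phi$ to an isomorphism; since both schemes are reduced it suffices to show $\Phi$ is a local isomorphism everywhere. The map $\mathcal{O}_{\sHilb^m_{\mathbf{0}}(X_n)}\to\Phi_*\mathcal{O}_{\mathcal{G}_n^m}$ is injective (reducedness of $\sHilb^m_{\mathbf{0}}(X_n)$, plus surjectivity of $\Phi$ onto each component), so the non-isomorphism locus is the support of the cokernel, which is closed and $(\C^*)^n$-invariant. By the principle above, if it is nonempty it contains a point over a vertex ideal $[J]=[\langle x_1^{u_1},\dots,x_k^{u_k},x_{k+1},\dots,x_n\rangle]$. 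Now invoke \cref{prop:local punctual}: the completed stalk $\widehat{\mathcal O}_{\sHilb^m_{\mathbf{0}}(X_n),[J]}$ is the completion at the origin of $\Z[\alpha_{i,j}:i\in[n],\,j\in[k]\setminus\{i\}]/\langle\alpha_{i,j}\alpha_{j,r}\rangle$, with irreducible components the Grassmannians $\Sigma(m,n-k+|T|,\uu-\ee_{[k]\setminus T})$ identified via the maps $\varphi$. On the other side, in a neighbourhood of the unique preimage of $[J]$, the space $\mathcal{G}_n^m$ is the gluing of the affine Schubert charts at the relevant torus-fixed points of exactly these Grassmannians, glued along the coordinate subspaces prescribed by \cref{Cor:intersect}; one checks that this gluing has the same completed local ring and that $\Phi$ induces the identity on it. Hence vertex points lie in the isomorphism locus, contradicting the assumption, so $\Phi$ is an isomorphism.

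The main obstacle is precisely this last check: verifying that the iterative gluing defining $\mathcal{G}_n^m$ reproduces locally the coordinate-subspace arrangement $V(\alpha_{i,j}\alpha_{j,r})$ rather than some scheme with extra relations, which amounts to bookkeeping of which Schubert cells of which component Grassmannians meet, along which linear subspaces, and comparing with the explicit presentation of \cref{prop:local punctual}. Everything else is formal given the results already established and the repeated use of the torus-degeneration principle.
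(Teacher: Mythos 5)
Your proposal is correct and follows essentially the same route as the paper: reducedness via the torus-degeneration principle (\cref{lemma:one parameter family}) combined with \cref{lem: reduced vertex}, and the isomorphism with $\mathcal{G}_n^m$ by building the comparison map from the $\varphi_{l,\uu}$, reducing to vertex points by the $(\C^*)^n$-action, and matching the local structure against \cref{prop:local punctual}. The final local check you flag as the "main obstacle" is exactly the step the paper also resolves by observing that all components of the completed stalk at a vertex are affine spaces.
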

\begin{proof}
    First, we prove that $\sHilb_{\mathbf{0}}^m(X_n)$ is reduced. Assume on the contrary that 
    $[J]$ is a nonreduced point in  $\sHilb_{\mathbf{0}}^m(X_n)$. This implies that $(\C^{*})^n$--orbit of $[J]$ is nonreduced, and hence, the closure of the $(\C^{*})^n$--orbit of $[J]$ is nonreduced. Bby \cref{lemma:one parameter family}, there exists a ideal $[J']$ in the closure of the orbit such that $\mu([J'])$ is a vertex of $\mathcal{K}_n^{[m]}$. This is a contradiction since by \cref{lem: reduced vertex}, $[J']$ is a reduced point.

    We know check that $\sHilb_{\mathbf{0}}^m(X_n)$ is isomorphic to $\mathcal{G}^m_n$. Note that by the universal property of pushouts, we have a map $\varphi:\mathcal{G}_n^m\rightarrow\sHilb_\mathbf{0}^{m}(X_n)$ that on each component $\gr(l,\Lambda_\uu)$ of $\mathcal{G}_n^m$ it is the map $\varphi_{l,\uu}$. Therefore, $\varphi$ is injective and its restriction to each irreducible component of $\mathcal{G}_n^m$ and $\sHilb_\mathbf{0}^{m}(X_n)$ leads to an isomorphism. Therefore, to check that $\varphi$ is an isomorphism, it is enough to check what happens at the intersections. In other words, we need to check that in $\sHilb_{\mathbf{0}}^m(X_n)$ the intersection of the Grassmannians $\Sigma(m,l,\uu)$ is locally the intersection of affine spaces. By  \cref{lemma:one parameter family} and using the torus action, it is enough to check this condition around an ideal $[J]$ corresponding to a vertex of $\mathcal{K}_n^{[m]}$. Then the proof follows from \cref{prop:local punctual} since all the components of the completion of the stalk at $[J]$ are affine spaces.
\end{proof}

Using the same technique as in  \cref{theo:reduced punctual} we derive that $\sHilb^m(X_n)$ is reduced.

\begin{Thm}\label{theo: reduced}
   The Hilbert scheme of points $\sHilb^m(X_n)$ is reduced.
\end{Thm}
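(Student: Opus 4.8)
The plan is to reduce the global statement to the local one already established in \cref{theo:reduced punctual}, using the torus action exactly as in the proof of that theorem. First I would recall that reducedness is a local property: $\sHilb^m(X_n)$ is reduced if and only if the completed local ring $\widehat{\mathcal{O}}_{\sHilb^m(X_n),[J]}$ is reduced for every closed point $[J]$, and by \cite[Lemma 07NZ]{stacks-project} it suffices to check this after completion. The natural torus $(\C^{*})^n$ acts on $X_n$ and hence on $\sHilb^m(X_n)$, so for any $[J]$ the reducedness of the stalk at $[J]$ is equivalent to the reducedness of the stalk at any point in the closure of its $(\C^{*})^n$-orbit; more precisely, if $[J]$ were a non-reduced point, then every point of its orbit would be non-reduced, and by semicontinuity of the nilradical the orbit closure would contain a non-reduced point.

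Next I would invoke \cref{lemma:one parameter family}: in the closure of the $(\C^{*})^n$-orbit of any $[J]\in\sHilb^m(X_n)$ there is a point $[J_0]\in\sHilb_\mathbf{0}^m(X_n)$ with $\mu([J_0])$ a vertex of $\mathcal{K}_n^{[m]}$, i.e.\ $J_0 = \langle x_1^{u_1},\dots,x_k^{u_k},x_{k+1},\dots,x_n\rangle$ up to relabeling. For such $J_0$, \cref{lem: reduced vertex} tells us that $[J_0]$ is a reduced point of $\sHilb^m(X_n)$: indeed, by \cref{theo:def theory} together with \cref{lemma: new rings}, the completed stalk is the quotient ring $\mathcal{S}_k/\mathcal{J}_k$, and the primary decompositions computed in \cref{lemma: primary k1} and \cref{lemma:primary k2} exhibit $\mathcal{J}_k$ as an intersection of prime ideals, hence radical. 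Combining these facts: if some $[J]$ were non-reduced, its orbit closure would contain the non-reduced point $[J_0]$ — contradicting \cref{lem: reduced vertex}. Therefore every point of $\sHilb^m(X_n)$ is reduced, and $\sHilb^m(X_n)$ is reduced.

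The only subtlety — and the step I expect to require the most care — is the passage from ``$[J]$ non-reduced'' to ``the orbit closure contains a non-reduced point.'' One must argue that the locus of non-reduced points is $(\C^{*})^n$-invariant (clear, since the torus acts by automorphisms) and closed, i.e.\ that the non-reduced locus of a scheme of finite type is closed; this is standard, as the nilradical defines a closed subscheme whose support is exactly the non-reduced locus. Granting this, the orbit of a non-reduced point lies entirely in the non-reduced locus, and so does its closure, which by \cref{lemma:one parameter family} meets $\sHilb_\mathbf{0}^m(X_n)$ in a vertex ideal $[J_0]$; the contradiction with \cref{lem: reduced vertex} then closes the argument.
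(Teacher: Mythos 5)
Your proposal is correct and follows essentially the same route as the paper's proof: contradiction via the torus action, \cref{lemma:one parameter family} to land on a vertex ideal in the orbit closure, and \cref{lem: reduced vertex} to conclude. The extra care you take with the closedness and torus-invariance of the non-reduced locus is exactly the (implicit) justification behind the paper's phrase ``the closure of the orbit is also nonreduced.''
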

\begin{proof}

Let $[I]\in\sHilb^m(X_n)$ be a nonreduced point. Then the $(\C^{*})^n$--orbit of $[I]$ is nonreduced, and hence, the closure of the $(\C^{*})^n$--orbit is also nonreduced. By  \cref{lemma:one parameter family}, the closure of this orbit contains an ideal $[J]$ in $\sHilb_{\mathbf{0}}^{m}(X_n)$ associated to a vertex of $\mathcal{K}_n^{m}$ via the moment map.  
 This is a contradiction since by \cref{lem: reduced vertex}, $[J]$ is a reduced point of $\sHilb^m(X_n)$. Thus, we conclude that $[I]$ is a reduced point of $\sHilb^m(X_n)$.
\end{proof}

As a consequence of Theorems \ref{theo:reduced punctual} and \ref{theo: reduced}, we have that $\Hilb^m_\mathbf{0}(X_n)=\sHilb^m_\mathbf{0}(X_n)$ and $\Hilb^m(X_n)=\sHilb^m(X_n)$. Therefore, for the rest of the paper, we will use the notation $\sHilb^m_\mathbf{0}(X_n)$ and $\sHilb^m(X_n)$.
The following improvement of \cref{co: irred comp reduced structure} where we no longer require to take the reduced structure follows from \cref{theo: reduced} together with \cref{theo: irred comp reduced structure} . 

\begin{Cor}
    \label{co: irred comp structure}
    Let $C$ be an irreducible curve with a unique rational $n$-fold singularity. Then, the irreducible components of  $\sHilb^m(C)$ are 
    \[
    \sHilb_{\mathrm{sm}}^{m}(C) \text{ and }\sHilb^{m,m'}(C) \text{ for } 2\leq m' \leq \min\{m,n-1\}.
    \]
The number of irreducible components is $\min\{n-1,m\}$. Moreover, these irreducible components are birational to
    \[
\begin{array}{ccc}
\Sym^m(C)&  \text{or} & \Sym^{m-m'}(C) \times \mathrm{Gr}(n+1-m',n)\,\,\, \text{ for }  2\leq m' \leq \min\{m,n-1\}.
\end{array}
\]   
\end{Cor}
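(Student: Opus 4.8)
\textbf{Proof proposal for \cref{co: irred comp structure}.}
The plan is to reduce the statement about an arbitrary irreducible curve $C$ with a unique rational $n$--fold singularity to the local model $X_n$, for which everything has already been established in \cref{theo: irred comp reduced structure} and \cref{theo: reduced}. First I would observe that \cref{theo: reduced} gives $\Hilb^m(X_n) = \sHilb^m(X_n)$, so the reducedness of the local model is settled; the task is to transfer this to $\sHilb^m(C)$. Since being reduced is a local property and $C$ is smooth away from its singular point $p$, the only place where non-reducedness could occur is in the punctual locus $\sHilb^m_p(C)$ and its interaction with the rest, i.e.\ at ideals $[I]$ some of whose primary component at $p$ is nontrivial. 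Using the analytic isomorphism $\widehat{\mathcal{O}}_{C,p}\simeq\widehat{\mathcal{O}}_{X_n,\mathbf{0}}$ and the étale-local nature of the Hilbert scheme, the completed local ring $\widehat{\mathcal{O}}_{\sHilb^m(C),[I]}$ agrees with the corresponding completed local ring of $\sHilb^m(X_n)$ (more precisely, with an analytic product of the smooth factor coming from the points away from $p$ and the local Hilbert scheme of the part of $I$ supported at $p$). Concretely, every $[I]\in\sHilb^m(C)$ decomposes as $[I_0]\cup[I_{\mathrm{sm}}]$ where $I_0$ is supported at $p$, of some length $m_0\le m$, and $I_{\mathrm{sm}}$ is a reduced-locus subscheme; étale-locally near $[I]$ the Hilbert scheme looks like $\sHilb^{m_0}_{\mathbf{0}}$-type local model $\times$ (a smooth $(m-m_0)$-dimensional factor), and by \cref{theo: reduced} (together with \cref{theo:def theory}, \cref{prop:local punctual}) that local model is reduced. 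Hence $\sHilb^m(C)$ is reduced.

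Next I would pin down the irreducible components. By \cref{co: irred comp reduced structure}, the components of the reduced scheme $\Hilb^m(C)$ are exactly $\Hilb^m_{\mathrm{sm}}(C)$ and $\Hilb^{m,m'}(C)$ for $2\le m'\le\min\{m,n-1\}$, giving the count $\min\{n-1,m\}$, and the stated birational descriptions. Since we have just shown $\sHilb^m(C)$ is reduced, its underlying reduced scheme $\Hilb^m(C)$ coincides with it, so the same list of irreducible components and birational models applies verbatim to $\sHilb^m(C)$ with no further argument; this is the content of invoking \cref{theo: irred comp reduced structure} and \cref{theo: reduced} together, exactly as the paper signals. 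I would also remark that $\Hilb^m_{\mathrm{sm}}(C)$ is a single irreducible component here (not a union) precisely because $C$ is irreducible, so that the union $\Sym^m(C)$ of "distributions of $m$ points" collapses to one piece, in contrast to the $X_n$ case where $\Hilb^m_{\mathrm{sm}}(X_n)$ has $\binom{m+n-1}{m}$ components indexed by how the points spread among the branches $L_1,\dots,L_n$.

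The main obstacle I anticipate is making the étale-local comparison between $\sHilb^m(C)$ near $[I]$ and $\sHilb^{m_0}_{\mathbf{0}}(X_n)$ near $[I_0]$ fully rigorous, including the factorization of the local Hilbert functor into a "punctual at $p$" part times a "smooth points" part. This requires the standard but slightly delicate fact that the Hilbert scheme of points, near a zero-dimensional subscheme $Z = Z_0 \sqcup Z_1$ with disjoint supports, is étale-locally the product $\mathrm{Hilb}$-of-$Z_0 \times \mathrm{Hilb}$-of-$Z_1$, plus the observation that passing to completed local rings is insensitive to the difference between $\widehat{\mathcal{O}}_{C,p}$ and $\widehat{\mathcal{O}}_{X_n,\mathbf{0}}$ once these are identified. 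With those two inputs in hand, the argument is essentially bookkeeping: reducedness descends from the local model, and then the component structure is precisely \cref{co: irred comp reduced structure} read on the reduced $=$ non-reduced scheme. I would keep the write-up short, citing \cref{theo: reduced}, \cref{theo: irred comp reduced structure}, and \cref{co: irred comp reduced structure}, and spending the few extra lines only on the local isomorphism $\widehat{\mathcal{O}}_{\sHilb^m(C),[I]}\simeq \widehat{\mathcal{O}}_{\sHilb^m(X_n),[I']}$ that licenses the transfer.
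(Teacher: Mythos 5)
Your proposal is correct and follows essentially the same route as the paper: the paper derives this corollary directly from \cref{theo: reduced} together with \cref{theo: irred comp reduced structure} (via \cref{co: irred comp reduced structure}), and the transfer of reducedness from $X_n$ to $C$ through the étale-local product decomposition and the identification of completed stalks $\widehat{\mathcal{O}}_{\sHilb^m(C),[I]}\simeq\widehat{\mathcal{O}}_{\sHilb^m(X_n),[I']}$ is exactly the argument the paper spells out in the more general \cref{cor: component more singularities}. No gaps.
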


We can generalize \cref{co: irred comp structure} to irreducible curves with several rational fold like singularities. 
Given integers $k\in\N$, and $m,n_1,\ldots,n_k\in Z_{\geq 2}$, we define the number $\rho(k,m,n_1,\ldots,n_k)$ as the cardinality of the set 
\[
S(k,m,n_1,\ldots,n_k):=\left\{\mathbf{m}=(m_1,\ldots,m_k)\in\Z_{\geq0}^k: |\mathbf{m}|\leq m, \,m_i\neq 1 \text{ and } 0\leq m_i\leq \min\{m,n_i-1\}\right\}.
\]

\begin{Cor}\label{cor: component more singularities}
    Let $C$ be an irreducible curve whose singularities are $p_1,\ldots,p_k\in C$ where $p_i$ is a rational $n_i$-fold singularity. Then, the number of irreducible components of $\sHilb^m(C)$ is $\rho(k,m,n_1,\ldots,n_k)$. Moreover, $\sHilb^m(C)$ is reduced and its irreducible components of $\sHilb^m(C)$ are birational to 
    \[\begin{array}{ccc}
    \Sym^m(C) &\text{ and }&\displaystyle\Sym^{m-|\mathbf{m}|}(C)\times\prod_{i=1}^k\Sigma(m_i,n_i+1-m_i,\mathbf{1})
    \end{array}
    \]
    for $\mathbf{m}\in S(k,m,n_1,\ldots,n_k)$.
\end{Cor}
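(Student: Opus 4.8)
The plan is to reduce the statement for a curve $C$ with several rational fold singularities $p_1,\dots,p_k$ (of multiplicities $n_1,\dots,n_k$) to the already-established one-singularity case, \cref{co: irred comp structure}, together with the local computations of \cref{sec:local hilbert scheme}. The key observation is that the Hilbert scheme $\sHilb^m(C)$ decomposes according to how the $m$ points distribute among the smooth locus $C_{\mathrm{sm}}$ and the singular points: a length $m$ subscheme $Z\subset C$ admits a primary decomposition $Z = Z_0 \sqcup Z_1 \sqcup \cdots \sqcup Z_k$ where $Z_0$ is supported on $C_{\mathrm{sm}}$ and $Z_i$ is supported at $p_i$, of some length $m_i$, with $|\mathbf m| = m_1 + \cdots + m_k \le m$ and $\deg Z_0 = m - |\mathbf m|$. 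Since the $p_i$ are analytically isomorphic to the $n_i$-fold point of $X_{n_i}$, the locus of $Z_i$ supported at $p_i$ of length $m_i$ is (étale-locally, hence up to the combinatorial data of components) identified with $\sHilb_{\mathbf 0}^{m_i}(X_{n_i})$.

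First I would set up the stratification map. For each $\mathbf m = (m_1,\dots,m_k)$ with $0 \le m_i$ and $|\mathbf m| \le m$, there is a locally closed stratum of $\sHilb^m(C)$ consisting of those $Z$ with $\deg(Z\cap\{p_i\}) = m_i$, and a natural surjection
\[
\sHilb^{m-|\mathbf m|}(C_{\mathrm{sm}}\setminus\{p_1,\dots,p_k\}) \times \prod_{i=1}^k \sHilb_{p_i}^{m_i}(C) \longrightarrow \overline{(\text{this stratum})}
\]
sending $(Y, Z_1,\dots,Z_k) \mapsto Y \cup Z_1 \cup \cdots \cup Z_k$, whose closure in $\sHilb^m(C)$ is the relevant piece. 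This is the global analogue of the maps \eqref{eq:induc map gen} and \eqref{eq:map induct}; the argument of \cref{theo: irred comp reduced structure} applies verbatim at each singular point because that argument is entirely local at $\mathbf 0 \in X_n$. Consequently, the same reasoning shows: an irreducible component of the punctual part $\sHilb_{p_i}^{m_i}(C) \cong \sHilb_{\mathbf 0}^{m_i}(X_{n_i})$ lifts to an elementary component locally at $p_i$ precisely when it is $\Sigma(m_i, n_i+1-m_i,\mathbf 1)$ with $2\le m_i \le n_i - 1$ (and the smoothable part gives the Hilbert scheme of $m_i$ points moving on the $n_i$ branches, which sweeps out $\Sym^{m_i}$ of a neighbourhood). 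Combining across all $i$, an irreducible component of $\sHilb^m(C)$ is given by a choice of $\mathbf m \in S(k,m,n_1,\dots,n_k)$ — where $m_i \ne 1$ exactly because $\Sigma(1,n_i,\mathbf 1)$ is a single smoothable point not an elementary component, and $m_i \le \min\{m, n_i-1\}$ because beyond that no new elementary component appears — together with the data of $\Sym^{m-|\mathbf m|}(C)$ on the smooth locus and $\Sigma(m_i, n_i+1-m_i,\mathbf 1)$ at each $p_i$. This gives the birational description and the count $\rho(k,m,n_1,\dots,n_k) = \#S(k,m,n_1,\dots,n_k)$.

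For reducedness I would use the torus-degeneration argument of \cref{theo: reduced} localized at each singularity. A point $[I]\in\sHilb^m(C)$ being non-reduced is a local property; by the analytic isomorphism $\widehat{\mathcal O}_{C,p_i}\cong\widehat{\mathcal O}_{X_{n_i},\mathbf 0}$, the completed local ring of $\sHilb^m(C)$ at $[I]$ factors as a completed tensor product over the completed local rings of $\sHilb^{m-|\mathbf m|}(C_{\mathrm{sm}})$ at $Z_0$ (which is smooth, being a symmetric product of a smooth curve) and of $\sHilb_{\mathbf 0}^{m_i}(X_{n_i})$-type deformation problems at each $p_i$ — more precisely, of the local Hilbert functors, which are independent at distinct points of the support. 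Each factor is reduced: the smooth-curve factor trivially, and the factors at $p_i$ by \cref{theo: reduced} (via \cref{lemma:one parameter family} and \cref{lem: reduced vertex}, applied to $X_{n_i}$). A completed tensor product of reduced (indeed, geometrically reduced, being finite-type reduced $\C$-algebras) local $\C$-algebras over $\C$ is reduced, so $\widehat{\mathcal O}_{\sHilb^m(C),[I]}$ is reduced, hence so is the stalk, hence $[I]$ is a reduced point.

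The main obstacle I expect is making the "local Hilbert functor at a finite subscheme splits as a product over its support" step precise and rigorous: one must check that deforming $Z = Z_0 \sqcup Z_1 \sqcup \cdots \sqcup Z_k$ inside $C$ over an Artinian base is the same as independently deforming each $Z_j$ inside the corresponding (completed, or étale) neighbourhood, which is standard but requires citing the right comparison — that $\sHilb^m$ commutes with the étale-local structure and that deformations of a subscheme with disconnected support decompose. Once that formal splitting is in hand, everything else is an assembly of the already-proven single-singularity statements: \cref{co: irred comp structure} for the birational types, \cref{theo: reduced} for reducedness of the local factors, and the definition of $S(k,m,n_1,\dots,n_k)$ to bookkeep which combinations of local elementary components occur. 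A minor point to verify is that distinct choices of $\mathbf m$ genuinely give distinct irreducible components (no inclusions among the closures of the strata), which follows because the generic point of the stratum for $\mathbf m$ has its $i$-th local piece a generic ideal in $\Sigma(m_i,n_i+1-m_i,\mathbf 1)$, and such ideals are not limits of the other listed families — exactly as in the proof of \cref{theo: irred comp reduced structure}.
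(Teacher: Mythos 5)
Your proposal follows essentially the same route as the paper: decompose a length-$m$ subscheme according to its support, use the \'etale-local product splitting of the Hilbert scheme at a point with disconnected support (the paper cites \cite{JelisiejewElementaryComponents} for exactly the step you flag as the main technical obstacle), reduce each local factor to the single-singularity results (\cref{theo: reduced}, \cref{theo: irred comp reduced structure}), and index the components by the admissible distributions $\mathbf{m}\in S(k,m,n_1,\ldots,n_k)$. The argument is correct and matches the paper's proof in both structure and the supporting lemmas invoked.
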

\begin{proof}
We first show that $\sHilb^m(C)$ is reduced. Given a point $[J]\in\sHilb^m(C)$, we can decompose $\mathbb{V}(J)$ as 
$\mathbb{V}(J)=\mathbb{V}(J_0)\cup\mathbb{V}(J_1)\cup\cdots \cup\mathbb{V}(J_k)$ where $J_0$ is supported at the smooth locus of $C$ and $J_i$ is supported at $p_i$ for every $i\in[k]$. Let $m_i$ be the length of $J_i$.
Around $[J]$, étale locally $\sHilb^{m}(C)$ is  isomorphic to the product $\sHilb^{m_0}_\mathrm{sm}(C)\times\sHilb^{m_1}(C)\times\cdots\times\sHilb^{m_k}(C)$ for $([J_0],[J_1],\ldots,[J_k])$ (cf. \cite{JelisiejewElementaryComponents}). Note that $[J_0]$ is reduced in $\sHilb^{m_0}_\mathrm{sm}(C)$. The punctual Hilbert scheme $\sHilb^{m_i}_{p_i}(C)$ is isomorphic to $\sHilb^{m_i}_\mathbf{0}(X_{n_i})$. Therefore, we may see $[J_i]$ also as a point in  $\sHilb^{m_i}_\mathbf{0}(X_{n_i})\subset \sHilb^{m_i}(X_{n_i})$ through this isomorphism. Then,
the completion of the stalk of $[J_i]$ in $\sHilb^{m_i}(C)$ is isomorphic to the completion of the stalk of $[J_i]$ in  $\sHilb^{m_i}(X_{n_i})$ which is reduced by \cref{theo: reduced}.

Next, we calculate the number of irreducible components. An elementary component of  $\sHilb^m(C)$ parametrizes length $m$ subschemes supported at one fixed singular point $p_i$. Therefore, elementary components of $\sHilb^m(C)$ are in bijection with elementary components of $\sHilb^m(X_{n_i})$ for $i\in[k]$. These components correspond to the vectors in $S(k,m,n_1,\ldots,n_k)$ of the form $m\mathbf{e}_i$. By \eqref{eq:induc map gen} we obtain that the non-elementary components of $\sHilb^m(C)$ correspond to the closure of the image of the map
\begin{equation}\label{eq:recursion more sing}
\begin{array}{ccc}
\displaystyle\sHilb^{m-|\mathbf{m}|}_{\mathrm{sm}}(C\setminus\{p_1,\ldots,p_k\})\times\prod_{i=1}^k\Sigma(m_i,n_i+1-m_i,\mathbf{1})&\longrightarrow&\sHilb^m(C)\\
([J_0],[J_1],\dots,[J_k])&\longmapsto&\left[\mathbb{V}(J_0)\cup\cdots\cup\mathbb{V}(J_k)\right]
\end{array}
\end{equation}
for $\mathbf{m}\in S(k,m,n_1,\ldots,n_k)$ with $\mathbf{m}\neq m\mathbf{e}_i$ and $i\in[k]$. Here, $\Sigma(m_i,n_i+1-m_i,\mathbf{1})$ is seen as the corresponding elementary component in the punctual Hilbert scheme $\sHilb^{m_i}_{p_i}(C)$. Moreover, the map \eqref{eq:recursion more sing} is birational onto its image.
Note that for $\mathbf{m}=\mathbf{0}$, the corresponding component is the smoothable component. In particular, we conclude that the irreducible components of $\sHilb^m(C)$ are in bijection with vectors in $S(k,m,n_1,\ldots,n_k)$.    
\end{proof}

\begin{Rem}
    One can derive a formula for the cardinality of $\rho(k,m,n_1,\ldots,n_k)$. To do so, we introduce the number $\chi(k,m,n_1,\ldots,n_k)$ as the cardinality of the set
    \[
    \{\mathbf{m}\in\Z_{\geq 0}^k: 0\leq |\mathbf{m}|\leq m \text{ and } 0\leq m_i\leq n_i\}.
    \]
    We set $\chi(0,m)=1$ for $k=0$.
    Using the Exclusion-Inclusion formula, one may check that 
    \[
    \chi(k,m,n_1,\ldots,n_k)= \displaystyle\sum_{J\subseteq[k]}(-1)^{|J|}\binom{m-\sum_{j\in J}(n_j+1)+k}{k}.
    \]
    Decompose the set $S(k,m,n_1,\ldots,n_k)$ as 
    \[
    S=\bigsqcup_{J\subseteq[k]}S(k,m,n_1,\ldots,n_k)\cap\{m_i=0:i\not \in J\}\cap\{m_i\geq2:i\in J\}.
    \]
    For $J\subseteq[k]$, the  cardinality of the corresponding set in the above disjoint union is 
    $\chi(|J|,m-2,n_i-2:i\in J)$. We conclude that the number of irreducible components in \cref{cor: component more singularities} is 
    \[
    \rho(k,m,n_1,\ldots,n_k)=\displaystyle\sum_{J\subseteq[k]}
    \chi(|J|,m-2,n_i-2:i\in J)=\displaystyle\sum_{J\subseteq[k]}
     \displaystyle\sum_{I\subseteq J}(-1)^{|I|}\binom{ m-2-\sum_{i\in I}(n_i-1)+|J|}{|J|}.
    \]
\end{Rem}

\section{Singularities}\label{sec:singularites and local hilbert scheme}
 
The goal of this section is to describe the singular locus of $\sHilb^m(X_n)$. For doing this, we will rely heavily in Combinatorics, in particular the hypersimplicial complex, as given in \cref{sec:moment map,app: The hypersimplicial complex}.

\subsection{Singular Locus}

We will first compute the singular locus of $\sHilb^m(X_n)$. By a classical result in deformation theory (cf. \cite{Sernesi2006}), the dimension of the tangent space of $\sHilb^m(X_n)$ at the point $[J]$ is given by 
\[
\dim_\C \mathrm{T}_{[J]}\sHilb^m(X_n) = \dim_\C \mathrm{Hom}_R( J, R/J).
\]
To apply this formula, we need the syzygies of $J$, which are computed in \cref{lemma:syz}.
Using this lemma, we will describe the singular locus of $\sHilb^m(X_n)$ by the combinatorics of $\mathcal{K}_n^{[m]}$ using the notion of singular face introduced in \cref{sec:moment map}.

\begin{Prop}\label{prop:singlocus}
    Let $[J]\in \sHilb^m_{\mathbf{0}}(X_n)$. Then the following are equivalent:
    \begin{enumerate}
        \item $[J]$ is a singular point of $\sHilb^m(X_n)$.
        \item $\mu_m([J])$ lies in a singular face of $\mathcal{K}_n^{[m]}$.
        \item $J$ admits a minimal generator of the form $x_i^{u_i}$ with $u_i\geq 2$ or $J$ is minimally generated by $\langle f,x_{i}:i\in S\rangle $ for $f\in\langle x_j^{u_j}: j\not\in S\rangle_\C$ and $\emptyset\subsetneq S\subset[n]$.
    \end{enumerate}
\end{Prop}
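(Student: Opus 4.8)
The plan is to prove the three-way equivalence by first establishing $(2)\Leftrightarrow(3)$, which is essentially a restatement of \cref{prop: ideal singular face}, and then doing the genuine work in $(1)\Leftrightarrow(2)$ (equivalently $(1)\Leftrightarrow(3)$) via a tangent space computation. For $(2)\Leftrightarrow(3)$, I would simply invoke \cref{prop: ideal singular face}: there it is shown that $\mu_m([J])$ lies in a singular face precisely when $J$ admits a minimal generator $x_i^{u_i}$ with $u_i\geq 2$ or a minimal presentation $\langle f, x_i:i\in S\rangle$ with $f\in\langle x_j^{u_j}:j\notin S\rangle_\C$ and $\emptyset\subsetneq S\subsetneq [n]$. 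So the content is really in relating these combinatorial/ideal-theoretic conditions to smoothness.

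For the hard direction, the strategy is to compare $\dim_\C \mathrm{Hom}_R(J,R/J)$ with the local dimension of $\sHilb^m(X_n)$ at $[J]$. Since $\sHilb^m(X_n)$ is reduced (\cref{theo: reduced}), the point $[J]$ is smooth if and only if $[J]$ lies on a unique irreducible component and $\dim_\C \mathrm{T}_{[J]}\sHilb^m(X_n)$ equals that component's dimension. I would use \cref{lemma:syz} to write down an explicit free presentation of $J$, then compute $\mathrm{Hom}_R(J,R/J)$ directly from the syzygy matrix; because $J$ lies in $\Sigma(m,l,\uu)$ and the syzygies of such ideals are monomial relations of the shape $x_ix_j^{u_j}$, this Hom-module splits as a direct sum indexed by the variables, and its dimension can be read off combinatorially in terms of $l$, $\uu$ and the support data $S$. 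The key computation will be to show: (a) when $J=\langle f_1,\dots,f_l\rangle$ with the $f_i$ generic in $\Lambda_\uu$ and $\uu=\mathbf 1$ (the non-smoothable elementary component case), the tangent space dimension equals $\dim\Sigma(m,l,\mathbf 1)=\dim\gr(l,n)$, so $[J]$ is smooth — matching that these are smooth away from their intersection with other components; (b) when $\uu$ has some $u_i\geq 2$ but the only generator "in the $x_i$-direction" is a pure power $x_i^{u_i}$, or when $J=\langle f, x_i:i\in S\rangle$, the tangent space is strictly larger than every component dimension through $[J]$, so $[J]$ is singular; and (c) for $[J]$ with $\mu_m([J])$ in the interior of a maximal smoothable or non-smoothable face and no pure-power generator, the tangent space dimension matches the unique component dimension.

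The main obstacle I expect is the bookkeeping in case (c) — showing that for an ideal $[J]$ lying in the relative interior of a maximal face (and hence, by \cref{prop: ideal singular face}, on exactly one maximal component of the complex and, one hopes, on exactly one irreducible component of $\sHilb^m(X_n)$), the dimension of $\mathrm{Hom}_R(J,R/J)$ equals precisely the dimension of that component, which for a non-smoothable component $\Hilb^{m,m'}(X_n)$ is $(m-m')+\dim\gr(n+1-m',n)$ by \cref{rem: number com X_n non smooth}, and for the smoothable component is $m$. This requires carefully separating the deformations of $J$ that move the support off the origin (the "$\Sym^{u_i-1}(L_i)$" directions) from the deformations internal to the Grassmannian factor, and checking they are independent and together span the tangent space with no extra dimensions. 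Here I would lean on the explicit local model $S_k/J_k$ from \cref{theo:def theory} and its geometric interpretation in \cref{prop: ideal translation,prop: ideal translation 2}, together with the torus action and \cref{lemma:one parameter family} to reduce a general $[J]\in\sHilb^m_\mathbf{0}(X_n)$ to the vertex case $J=\langle x_1^{u_1},\dots,x_n^{u_n}\rangle$: indeed, a point is singular if and only if some point in the closure of its torus orbit is singular, and for vertex ideals the singular/smooth dichotomy is transparent from the primary decompositions already computed.

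Finally, to close the loop on the non-punctual points implicitly (though the statement only concerns $[J]\in\sHilb^m_\mathbf{0}(X_n)$, one wants consistency), I note the reduction via the torus action means the whole analysis propagates: if $\mu_m([J])$ is in a singular face, then by \cref{lemma:one parameter family} the orbit closure of $[J]$ contains a vertex ideal $[J_0]$ whose corresponding face is still singular (the moment map image of the limit is a vertex of the face containing $\mu_m([J])$), and by the vertex computation $[J_0]$ is singular, hence so is $[J]$; conversely if $\mu_m([J])$ lies in the relative interior of a maximal face then $[J]$ is a smooth point by the matching tangent-space computation, so $[J]$ singular forces $\mu_m([J])$ into a singular face. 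This gives $(1)\Leftrightarrow(2)$ and completes the proof.
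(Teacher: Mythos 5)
Your overall architecture is the right one and matches the paper's: $(2)\Leftrightarrow(3)$ is indeed just \cref{prop: ideal singular face}, and the substance of $(1)\Leftrightarrow(3)$ is a computation of $\dim_\C\Hom_R(J,R/J)$ from the syzygies of \cref{lemma:syz}, compared against the dimension of the component $\sHilb^{m,n+1-l}(X_n)$ containing $[J]$. The paper carries this out directly at $[J]$ in three cases: no pure-power minimal generator (tangent dimension equals component dimension, smooth); a generator $x_r^{u_r}$ with $u_r\geq 2$ (here no tangent computation is needed, since $[J]$ lies on two irreducible components and is therefore automatically singular); and generators $x_r$ of degree one, where the count splits according to whether the number $a$ of non-monomial generators is $1$ (tangent dimension exceeds the component dimension by one, singular) or $a\geq 2$ (equality, smooth). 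Your cases (a)--(c) are compatible with this, and your claim that the Hom-module decouples variable by variable is exactly what makes the count tractable.

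The genuine gap is in how you propose to handle the hard case (c) and the implication $(2)\Rightarrow(1)$: the asserted equivalence ``a point is singular if and only if some point in the closure of its torus orbit is singular'' is false in the direction you use it. The singular locus is closed and torus-invariant, so $[J]$ singular implies the vertex ideal $[J_0]$ in its orbit closure is singular; but $[J_0]$ singular tells you nothing about $[J]$ (only the open condition propagates backwards: $[J_0]$ \emph{smooth} would imply $[J]$ smooth). This is not a harmless slip here, because by \cref{punctual irred comp} and \cref{prop: ideal translation 2} the vertex ideals $\langle x_1^{u_1},\dots,x_n^{u_n}\rangle$ lie on several irreducible components whenever $m\geq 2$, hence are essentially always singular; your reduction would therefore declare every point of $\sHilb^m_{\mathbf 0}(X_n)$ singular. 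The degeneration-to-vertices technique of \cref{lemma:one parameter family} works in the paper for \emph{reducedness} precisely because non-reducedness is a closed invariant condition pulled back from the limit in the correct direction; smoothness does not admit the same reduction. To repair case (c) you must either perform the tangent-space count directly at $[J]$ (as the paper does, using that the relations \eqref{eq:relations tangent space} impose exactly $l-1$ independent conditions in each variable direction because the matrix $A$ has no vanishing column), or, if you insist on using the local model $S_k/J_k$ at the vertex, you must identify the precise stratum of $\sp(S_k/J_k)$ on which the nearby points of the orbit of $[J]$ sit and verify smoothness of that stratum --- the mere singularity of the origin of $\sp(S_k/J_k)$ decides nothing.
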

 \begin{proof}
By Proposition \cref{prop: ideal singular face}, (2) and (3) are equivalent.
So it is enough to show that (1) and (3) are equivalent. By \cref{ideals punctual} there exists $1\leq l\leq n$ and $\uu\in\Z_{\geq 1}$ such that  $J$ is minimally generated by $f_1,\ldots,f_l$ where 
       \[
    \begin{pmatrix}
    f_1\\ \vdots \\ f_l
    \end{pmatrix} = A\,\,\begin{pmatrix}
    x_1^{u_1}\\ \vdots\\ x_n^{u_n}
    \end{pmatrix}.
    \]
    and $A$ is a size $l\times n$ matrix as in  \cref{ideals punctual}. Every $\varphi\in\mathrm{Hom}_R( J, R/J)$ is uniquely determined by $l$ elements $\alpha_1,\ldots,\alpha_l\in R/J$ satisfying the syzygies \eqref{eq:syz} of $J$, where $\alpha_j:=\varphi(f_j)$ for $1\leq j\leq l$. Write $\alpha_j$ as follows 
    \[
    \alpha_j = \alpha_{j,0}+\displaystyle\sum_{r=1}^n\sum_{s=1}^{u_r}\alpha_{j,r,s}x_r^s.
    \]
    For $1\leq i\leq n$ and $1\leq j<k\leq l$ we have the following equalities:
    \begin{equation}
        \label{eq:rel sing locus}
        \begin{array}{c}
    0= A_{k,i}x_i\alpha_j-A_{j,i}x_i\alpha_k = A_{k,i}\alpha_{j,0}x_i+\displaystyle\sum_{s=1}^{u_i-1}A_{k,i}\alpha_{j,i,s}x_i^{s+1}-
    \sum_{s=1}^{u_i-1}A_{j,i}\alpha_{k,i,s}x_i^{s+1}= \\
    (A_{k,i}\alpha_{j,0}-A_{j,i}\alpha_{k,0})x_i+\displaystyle\sum_{s=1}^{u_i-1}(A_{k,i}\alpha_{j,i,s}-A_{j,i}\alpha_{k,i,s})x_i^{s+1}.
    \end{array}
    \end{equation}
   We first assume that $J$ does not admit a minimal generator of the form $x_i^{u_i}$. Then, none of the terms in \eqref{eq:rel sing locus} vanishes in $R/J$, and we
    obtain that $\alpha_1,\ldots,\alpha_l$ satisfy the relations
    \begin{equation}
        \label{eq:relations tangent space}
            \begin{array}{ll}
       A_{k,i}\alpha_{j,0}-A_{j,i}\alpha_{k,0} = 0  & \text{ for } 1\leq j<k\leq l \text{ and } 1\leq i \leq n, \\
       A_{k,i}\alpha_{j,i,s}-A_{j,i}\alpha_{k,i,s} = 0  &  \text{ for } 1\leq j<k\leq l, \text{ } 1\leq i\leq n \text{ and } 2\leq s \leq u_i.
    \end{array}
    \end{equation}
    Rewrite the first type of relations in  \eqref{eq:relations tangent space} as 
    \[
    (\alpha_{k,0},-\alpha_{j,0})
    \begin{pmatrix}
        A_{j,1}&\cdots&A_{j,n}\\ A_{k,1}&\cdots&A_{k,n}
    \end{pmatrix} =\begin{pmatrix}
        0\\ 0
    \end{pmatrix}.
    \]
    Since the matrix $A$ has maximum rank we get that the matrix 
    \[
     \begin{pmatrix}
        A_{j,1}&\cdots&A_{j,n}\\ A_{k,1}&\cdots&A_{k,n}
    \end{pmatrix}
    \]
    has rank $2$. This implies that $\alpha_{j,0} = 0$ for all $1\leq j\leq l$. The second type of relations in \eqref{eq:relations tangent space} can be written as the $2\times 2$ minors of the matrix
    \begin{equation}
        \label{eq:mat rel sing}
           \begin{pmatrix}
        A_{1,i} & \alpha_{1,i,s} \\ \vdots&\vdots \\ A_{l,i} &\alpha_{l,i,s}
    \end{pmatrix}
    \end{equation}

    for $1\leq i\leq n$ and $2\leq s \leq u_i$.
    Since $A$ has no vanishing columns, there exists $1\leq j\leq l$ such that $A_{j,i}$ is nonzero. Thus, the $2\times 2$ minors of the matrix \eqref{eq:mat rel sing} give $l-1$ relations among $\alpha_{1,i,s},\ldots,\alpha_{l,i,s}$ for $1\leq i\leq n$ and $2\leq s\leq u_i$. Hence $\alpha_1,\ldots,\alpha_l$ satisfy $l+\sum_{i=1}^n(u_i-1)(l-1)=l+(|\uu|-n)(l-1)$ relations. We conclude that the dimension of the tangent space at $[J]$ is
    \[
    l\, m-l-(|\uu|-n)(l-1) = l\, m-l-(m+l-1-n)(l-1)=l(n-l)+(m+l-1-n).
    \]
    On the other hand, $[J]$ is contained in 
    $\sHilb^{m,n+1-l}(X_n)$, which has dimension $l(n-l)+(m+l-1-n)$. Thus $[J]$ is a smooth point of $\sHilb^m(X_n)$.

    Now, assume that $J$ admits a generator of the form $x_r^{u_r}$.  If $u_r \geq 2$, then $J$ is in the intersection of two irreducible components of $\sHilb^m(X_n)$ and therefore $J$ is singular a singular point of $\sHilb^m(X_n)$. Hence we can assume  that $u_r =1$ for every such generator.
    Let $f_1,\ldots,f_a$ be the minimal generators of $J$ that are not of the form $x_r$. In other words, the minimal generators of $J$ are $f_1,\ldots, f_a,x_{n-l+a+1},\ldots,x_n$, and therefore we can write these generators as
    \[
    \begin{pmatrix}
        f_1\\ \vdots \\ f_a\\ x_{n-l+a+1}\\ \vdots\\x_n
    \end{pmatrix}
    =
\begin{pNiceArray}{ccc|ccc}
  \Block{3-3}<\Large>{A} &  &  & \Block{3-3}<\Large>{\mathbf{0}} & & \\
   &  \\
 &  \\
  \hline
  \Block{3-3}<\Large>{\mathbf{0}} && &\Block{3-3}<\Large>{\mathrm{Id}_{l-a}} & &   \\
  &
  \\
  &
\end{pNiceArray}
    \begin{pmatrix}
        x_1^{u_1}\\ \vdots\\ x_{n-l+a}^{u_{n-l+a}} \\x_{n-l+a+1}\\ \vdots \\ x_{n}
    \end{pmatrix},
    \]
    where $A$ has maximal rank and it has no vanishing row or column. Hence the relations among $\alpha_1,\ldots,\alpha_l$ in $R/J$ are 
    \begin{equation}
        \label{eq:const case 2}
         \begin{array}{ll}
     x_i \alpha_j =0  &  \text{ for } a+1 \leq j\leq l \text{ and } i \neq n-l+j, \\
     x_i\alpha_j = 0 &   \text{ for } 1 \leq j\leq a \text{ and } n-l+a+1\leq i \leq n, \\
     A_{k,i}x_i\alpha_j-A_{j,i}x_i\alpha_k = 0 &    \text{ for } 1 \leq j<k\leq a \text{ and } 1\leq i \leq n-l+a. \\
    \end{array}
    \end{equation}
    Since $x_i\in J$ for $n-l+a+1\leq i \leq n$, the left-hand side of the second equation in \eqref{eq:const case 2} is zero in $R/J$ and is therefore redundant. Similarly, the first relation is identically zero in $R/J$ for $n-l+a+1\leq i\leq n$. We obtain
     \begin{equation}
        \label{eq:const case 2.2}
         \begin{array}{ll}
     x_i \alpha_j =0  &  \text{ for } a+1 \leq j\leq l \text{ and } 1\leq i\leq n-l+a, \\
     A_{k,i}x_i\alpha_j-A_{j,i}x_i\alpha_k = 0 &    \text{ for } 1 \leq j<k\leq a \text{ and } 1\leq i \leq n-l+a. \\
    \end{array}
    \end{equation}
    From the first equation in \eqref{eq:const case 2.2}, we get that 
    \[
    \alpha_{j,0}= 0 \text{ and } \alpha_{j,i,s} = 0 \text{ for } a+1\leq j\leq l,\,\, 1\leq i\leq n-l+a \text{ and } 1\leq s\leq u_i-1.
    \]
    In particular, there are $(l-a)(m+l-n)$ linear equations on $\alpha_{a+1},\ldots,\alpha_l$. As in the first part of the proof, the number of linearly independent equations obtained from the second equations in \eqref{eq:const case 2.2} is
    \[
    a\displaystyle+\sum_{i=1}^{n-l+a}(a-1)(u_i-1)=a+(a-1)(m+l-n-1).
    \]
    Now, assume first that $a=1$. In other words, $J$ is satisfy condition (3) of  \cref{prop:singlocus}. Then, we have no equation of the second type in \eqref{eq:const case 2.2}. In particular, the dimension of the tangent space is
    \[
    ml-(l-1)(m+l-n)=l(n-l)+m+l-n.
    \]
    On the other hand, $J$ lies in 
    $\sHilb^{m,n+1-l}(X_n)$, which has dimension $l(n-l)+m+l-n-1$. We conclude that $[J]$ is a singular point of $\sHilb^m(X_n)$.

    Assume now that $a\geq 2$. Then, the dimension of the tangent space at $[J]$ is
    \[
    ml-(l-a)(m+l-n) -a-(a-1)(m+l-n-1)=l(n-l)+m+l-n-1.
    \]
    Moreover, $[J]$ is contained in $\sHilb^{m,n+1-l}(X_n)$ which has also dimension $l(n-l)+m+l-n-1$. We conclude that $[J]$ is a smooth point of $\sHilb^m(X_n)$.
 \end{proof}

 \cref{prop:singlocus} characterizes the points in $\sHilb^m_{\mathbf{0}}(X_n)$ that are singular in $\sHilb^m(X_n)$. The study done in \cref{sec:localtoglobal}, allow us to move from the punctual Hilbert scheme to the global Hilbert scheme, giving a  characterization of the singular locus of $\sHilb^m(X_n)$.

\begin{Thm}\label{theo:sing locus}
   Let $J$ be a length $m$ ideal of $R$ and let $J_1,\ldots,J_k$ be its primary decomposition, where $J_a$ has length $m_a$. Then, $[J]$ is singular in $\sHilb^m(X_n)$
       if and only if  there exists $1\leq a\leq k$ such that $[J_a]$ is contained in $\sHilb^{u_a}_0(X_n)$ and $J_a$ satisfies one of the conditions in  \cref{prop:singlocus}.
\end{Thm}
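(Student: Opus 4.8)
The plan is to reduce the statement about the global Hilbert scheme to the punctual case already handled in \cref{prop:singlocus}, using the étale-local product structure of the Hilbert scheme of points away from a single closed point. First I would recall the standard fact (see \cite{JelisiejewElementaryComponents}, or the classical decomposition of $\sHilb^m$ into loci with prescribed support) that if $[J]\in\sHilb^m(X_n)$ has primary decomposition $J=J_1\cap\cdots\cap J_k$ with $\mathbb V(J_a)$ supported at distinct points $q_a\in X_n$ and $J_a$ of length $m_a$, then there is an open neighbourhood of $[J]$ in $\sHilb^m(X_n)$ which is étale-locally isomorphic to the product $\prod_{a=1}^k \sHilb^{m_a}_{q_a}(X_n)$, where each factor is a local Hilbert scheme at $q_a$. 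Since being a smooth point is an étale-local property and a product of schemes is smooth at a point if and only if each factor is smooth at the corresponding point, $[J]$ is singular in $\sHilb^m(X_n)$ if and only if some factor $[J_a]$ is singular in $\sHilb^{m_a}_{q_a}(X_n)$.

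Next I would dispose of the factors supported at smooth points of $X_n$: if $q_a\in C_{\mathrm{sm}}=X_n\setminus\{\mathbf 0\}$, then $\widehat{\mathcal O}_{X_n,q_a}\simeq\C[[t]]$, so $\sHilb^{m_a}_{q_a}(X_n)$ is the punctual Hilbert scheme of a smooth curve, which is a single reduced point, hence smooth. Therefore such a factor never contributes a singularity, and the only factors that can make $[J]$ singular are those supported at $\mathbf 0$, i.e. those with $q_a=\mathbf 0$, in which case $\sHilb^{m_a}_{\mathbf 0}(X_n)$ is precisely the local punctual Hilbert scheme studied in \cref{sec:local hilbert scheme}. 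For such a factor, $[J_a]$ is singular in $\sHilb^{m_a}(X_n)$ exactly when it is singular in the local Hilbert scheme (again by étale-locality and \cref{theo: reduced}, which ensures the scheme structures agree), and by \cref{prop:singlocus} this happens if and only if $J_a$ admits a minimal generator $x_i^{u_i}$ with $u_i\geq 2$ or has a minimal presentation $\langle f,x_i:i\in S\rangle$ as in condition (3). Putting these two observations together yields the stated equivalence: $[J]$ is singular iff some primary component $J_a$ is supported at $\mathbf 0$ (so $[J_a]\in\sHilb^{u_a}_0(X_n)$, with $u_a=m_a$ the length of $J_a$) and satisfies one of the conditions of \cref{prop:singlocus}.

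The main obstacle I anticipate is making the étale-local product decomposition fully rigorous in this singular-ambient setting: one must argue that the open subscheme of $\sHilb^m(X_n)$ parametrizing subschemes with a fixed support pattern (some length at $\mathbf 0$, the rest at prescribed smooth points) is étale-locally the product of the corresponding local Hilbert schemes, and that this holds even though $X_n$ itself is singular at $\mathbf 0$. This is standard when the underlying scheme is smooth, but here the factor at $\mathbf 0$ lives on a singular curve; the correct statement is that the local Hilbert functor of $X_n$ at a point depends only on $\widehat{\mathcal O}_{X_n,q}$, and the glueing of disjoint supports is harmless, so the argument goes through, but I would spell out this reduction carefully, citing \cite{JelisiejewElementaryComponents} and the deformation-theoretic description from \cref{theo:def theory}. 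A secondary (minor) point is bookkeeping: identifying the length $u_a$ appearing in \cref{prop:singlocus} with the length $m_a$ of the primary component $J_a$, and noting that a length-one primary component (a reduced point) is automatically smooth and satisfies neither condition, so it may be safely ignored.
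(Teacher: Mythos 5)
Your overall strategy is sound and genuinely different from the paper's. The paper does not invoke an \'etale-local product decomposition of the Hilbert scheme in this proof; instead it uses additivity of the tangent space over the primary components,
$\mathrm{T}_{[J]}\sHilb^m(X_n)=H^0(Z,N_{Z/X})=\bigoplus_a H^0(Z_a,N_{Z_a/X})=\bigoplus_a\mathrm{T}_{[J_a]}\sHilb^{m_a}(X_n)$
(citing Sernesi), and then compares with the likewise additive local dimension using the explicit counts from \cref{prop:singlocus}. Your product-decomposition argument reaches the same conclusion more structurally --- a product is smooth at a point iff each factor is --- at the cost of justifying the decomposition over a singular ambient curve; as you note this reduces to the fact that the local Hilbert functor depends only on $\widehat{\mathcal O}_{X_n,q}$ and that disjoint supports deform independently, and the paper itself uses exactly this decomposition later (proof of \cref{cor: component more singularities}), so it is an acceptable input. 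One imprecision to fix: the factors in the decomposition are germs of the \emph{full} Hilbert schemes $\sHilb^{m_a}(X_n)$ at $[J_a]$ (equivalently, Hilbert schemes of disjoint neighbourhoods of the supports), not the punctual Hilbert schemes. For $q_a$ smooth the factor is the germ of $\Sym^{m_a}$ of a smooth curve, which is smooth but certainly not ``a single reduced point''; and the distinction matters at the origin, since e.g.\ $\sHilb^2_{\mathbf 0}(X_n)\simeq\P^{n-1}$ is smooth while its points can be singular in $\sHilb^2(X_n)$.

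The one genuine error is your closing remark that a length-one primary component ``is automatically smooth and satisfies neither condition, so it may be safely ignored.'' This is false when that component is supported at $\mathbf 0$: the corresponding factor is then the germ of $\sHilb^1(X_n)=X_n$ at $\mathbf 0$, i.e.\ the $n$-fold point itself, which is singular, so such a component always forces $[J]$ to be singular. This is precisely the content of the Remark following \cref{theo:sing locus}, which places $\Sym^{m-1}(X_n\setminus\{p\})\times\{p\}$ inside the singular locus. Consistently with the theorem, the maximal ideal $\langle x_1,\dots,x_n\rangle$ \emph{does} satisfy condition (3) of \cref{prop:singlocus}: write it as $\langle f,\,x_i:i\in S\rangle$ with $S=[n]\setminus\{1\}$ and $f=x_1$. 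As written, your bookkeeping would declare, say, $[\mathfrak m_{\mathbf 0}\cap I_q]$ with $q$ a smooth point to be a smooth point of $\sHilb^2(X_n)$, contradicting the statement. The fix is simply to apply your own reduction uniformly: every factor supported at $\mathbf 0$, including a length-one one, must be tested against \cref{prop:singlocus}.
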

\begin{proof}
    Let $[J]\in\sHilb^m(X_n)$ and let $J_1,\ldots,J_k$ be its primary decomposition. Let $Z$ be the subscheme of $X_n$ defined by $J$ and let $Z_i$ be the subscheme defined by $J_i$ for $1\leq i\leq k$. 
   Using \cite[Section 4.6.5]{Sernesi2006}, we get that 
    \[
    \mathrm{T}_{[J]} \sHilb^m(X_n) = H^0(Z,N_{Z/X})=\bigoplus_{1\leq i\leq k}H^0(Z_i,N_{Z_i/X}) = \bigoplus_{1\leq i\leq k}\mathrm{T}_{[J_i]} \sHilb^m(X_n).
    \]
    The proof then follows from \cref{prop:singlocus}.
\end{proof}

\begin{Rem}
    Using the notation in \cref{theo:sing locus}, if $m_a=1$, then $[J_a]\in \sHilb_{\mathbf{0}}^{1}(X_n)=\{p\}$, which is singular. In particular, $\Sym^{m-1}\left(X_n\setminus\{p\}\right)\times \{p\}$ is a subset of the singular locus of $\sHilb^m(X_n)$.
\end{Rem}

\subsection{Local picture of the singularities}

Next, we give a description of the singularities of $\sHilb^m(X_n)$. By \cref{lemma:one parameter family}, the closure of any torus orbit in $\sHilb^m(X_n)$ contains a point $[\mathbb{V}(J)]\in\sHilb^m_{\mathbf{0}}(X_n)$ that corresponds to a vertex of $\mathcal{K}_n^{[m]}$ via the moment map. Therefore, it is enough to describe the singularities of $\sHilb^m(X_n)$ at points of the form $[J]=[\langle x_1^{u_1},\ldots,x_k^{u_k},x_{k+1},\ldots,x_n\rangle]$ for $1\leq k \leq n$ and $u_i\geq 2$ for $i\in[k]$.
Using \cref{theo:def theory} and \cref{lemma: new rings}, it is enough to analyze the singularty at the origin of the variety defined by the ideal $\mathcal{J}_k$. By \cref{lemma: primary k1}, \cref{lemma:primary k2} and \cref{lem:normal polytope}, locally at the origin $\mathbb{V}(\mathcal{J}_k)$ is the union of normal toric varieties. We describe the singularity through the gluing of the polytopes associated to these toric varieties. To do so, we first do some simplification on the coordinate ring $\mathcal{S}_k/\mathcal{J}_k$. For $k=1$, the variables $\alpha_{1,2},\ldots,\alpha_{1,m-1}$ do not appear in the ideal $\mathcal{J}_1$. Therefore, we have that 
$$\mathcal{S}_1/\mathcal{J}_1\simeq \C[\alpha_{1,2},\ldots,\alpha_{1,m-1}]\otimes (\mathcal{S}_1'/\mathcal{J}_1),$$ where $\mathcal{S}_1'=\C[\alpha_2,\ldots,\alpha_n,A_1,\alpha_{1,1}]$  is the polynomial ring in the rest of the variables.
Similarly, for $k\geq 2$, the variables $\beta_{1,s}$ for $i\in[k]$ and $2\leq s\leq u_i-1$ do not appear in the ideal $\mathcal{J}_K$. Therefore, we have that 
$$\mathcal{S}_k/\mathcal{J}_k\simeq \C[\beta_{i,j}:1\leq i\leq k \text{ and }2\leq j\leq u_i-1]\otimes (\mathcal{S}_k'/\mathcal{J}_k),$$ 
where $\mathcal{S}_k'=\C[\beta_1,\ldots,\beta_k,\alpha_{i,j}:i\in[n],j\in[k],i\neq j]$  is the polynomial ring in the rest of the variables. In particular, the singularity type of $\sHilb^m(X_n)$ at $[J]$ is the same as the affine variety $\mathrm{spec}(\mathcal{S}'_k/\mathcal{J}_k)$ at the origin.

We start by the case $k=1$. In other words, assume that $J=\langle x_1^m,x_2,\ldots,x_n\rangle $ and $\mu([J])$ is an edge of $(m-1)\cdot\Delta_{n-1}$.
    By \cref{lemma: primary k1}, the primary decomposition of $\mathcal{J}_1$ is given by the ideals $J_0=\langle A_1,\alpha_{1,1}\rangle$, $J_1=\langle \alpha_2,\ldots,\alpha_n\rangle$ and $J_i=\langle A_,\alpha_j:2\leq j\leq n, \,j\neq i\rangle$. Note that $\mathbb{V}(J_0)$ is a linear subspace of dimension $n-1$, and $\mathbb{V}(J_i)$ for $1\leq i\leq n$ is a linear subspace of dimension $2$. By \cref{prop: ideal translation}, $J_0$ corresponds to the component $\sHilb^{m,2}(X_n)$ and $J_i$ for $i\geq 1$ correspond to the smoothable component.  
    Now, $\mathbb{V}(J_0)$ and $\mathbb{V}(J_1)$ intersect in the origin. Similarly,  $\mathbb{V}(J_0)$ and $\mathbb{V}(J_i)$ for $2\leq i\leq n$  intersect in the line corresponding to $\C[\alpha_{i}]$. Finally,  $\mathbb{V}(J_i)$ and $\mathbb{V}(J_j)$ for $1\leq i<j\leq n$  intersect in the line $\C[\alpha_{1,1}]$. 
    We can associate to $J_0$ the simplex $\Delta_{n}$ whose vertices are labeled by $v_0,v_2\ldots,v_n$. Moreover, we associate to $J_i$ the simplex $M_i:=\Delta_{2}$ whose vertices are labeled by $w_{i,0}$, $w_{i,1}$ and $w_{i,2}$. We construct the simplicial complex $ \mathbb{S}_1$ obtained by the following gluing:
    \begin{itemize}
        \item For $2\leq i\leq n$, glue the edge $\overline{v_0,v_i}$ of $\Delta_{n}$ with the edge $\overline{w_{i,0},w_{i,2}} $ of $M_i$.
        \item For $1\leq i<j\leq n$, glue the edge $\overline{w_{i,0},w_{i,1}} $ of $M_i$ with the edge $\overline{w_{j,0},w_{j,1}} $ of $M_j$.
    \end{itemize}
    In \cref{fig:simplex 3} and \cref{fig:simplex 4}, the simplicial complex $\mathbb{S}_1$ is depicted for $n=3$ and $n=4$ respectively. In these figures, the simplex in blue corresponds to $\Delta_{n}$ and $J_0$, the simplex in orange represents $M_1$ and $J_1$,
 the simplices in purple are $M_i$ and $J_i$ for $2\leq i\leq n$.
    The simplicial complex $\mathbb{S}_1$ around the origin $\mathbf{0}$ describes how the components of $\mathcal{J}_1$ intersect at the origin. Here the origin $\mathbf{0}$ is the vertex obtained from the gluing of $v_0,w_{1,0},\ldots, w_{n,0}$.  In particular, the singularity type is described by the complex $\widehat{\mathbb{S}}_1$.

   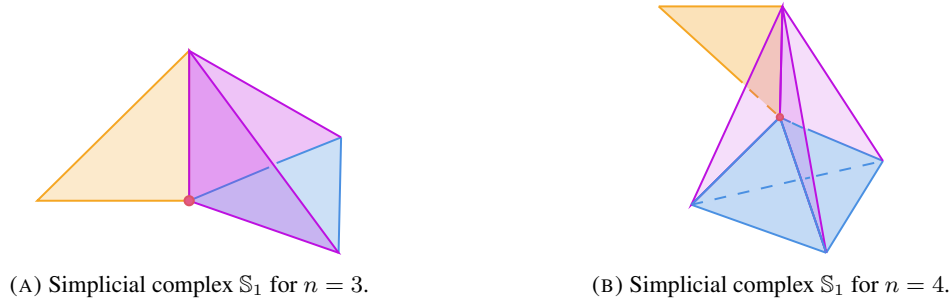
\begin{figure}
\begin{subfigure}{.45\textwidth}
  \centering

\tikzset{every picture/.style={line width=0.75pt}} 
\begin{tikzpicture}[x=0.95pt,y=0.95pt,yscale=-1,xscale=1]

\draw  [color={rgb, 255:red, 245; green, 166; blue, 35 }  ,draw opacity=1 ][fill={rgb, 255:red, 245; green, 166; blue, 35 }  ,fill opacity=0.23 ] (223.67,173.5) -- (284.17,113.75) -- (284.17,173.5) -- cycle ;
\draw  [draw opacity=0][fill={rgb, 255:red, 189; green, 16; blue, 224 }  ,fill opacity=0.25 ] (284.17,113.75) -- (344.67,148.25) -- (284.17,173.5) -- cycle ;
\draw  [draw opacity=0][fill={rgb, 255:red, 74; green, 144; blue, 226 }  ,fill opacity=0.28 ] (284.17,173.5) -- (344.67,148.25) -- (343.67,194.25) -- cycle ;
\draw [color={rgb, 255:red, 74; green, 144; blue, 226 }  ,draw opacity=1 ]   (284.17,173.5) -- (316,160.17) ;
\draw [color={rgb, 255:red, 74; green, 144; blue, 226 }  ,draw opacity=1 ]   (344.67,148.25) -- (319,158.5) ;
\draw [color={rgb, 255:red, 74; green, 144; blue, 226 }  ,draw opacity=1 ]   (343.67,194.25) -- (344.67,148.25) ;
\draw [color={rgb, 255:red, 189; green, 16; blue, 224 }  ,draw opacity=1 ]   (344.67,148.25) -- (284.17,113.75) ;
\draw  [color={rgb, 255:red, 189; green, 16; blue, 224 }  ,draw opacity=1 ][fill={rgb, 255:red, 189; green, 16; blue, 224 }  ,fill opacity=0.21 ] (284.17,113.75) -- (343.67,194.25) -- (284.17,173.5) -- cycle ;
\draw  [color={rgb, 255:red, 220; green, 84; blue, 103 }  ,draw opacity=1 ][fill={rgb, 255:red, 227; green, 90; blue, 135 }  ,fill opacity=1 ] (282.33,173.5) .. controls (282.33,172.49) and (283.15,171.67) .. (284.17,171.67) .. controls (285.18,171.67) and (286,172.49) .. (286,173.5) .. controls (286,174.51) and (285.18,175.33) .. (284.17,175.33) .. controls (283.15,175.33) and (282.33,174.51) .. (282.33,173.5) -- cycle ;
\end{tikzpicture}

  \caption{Simplicial complex $\mathbb{S}_1$ for $n=3$.}
  \label{fig:simplex 3}
\end{subfigure}
\begin{subfigure}{.45\textwidth}
  \centering

\tikzset{every picture/.style={line width=0.75pt}} 

\begin{tikzpicture}[x=0.65pt,y=0.65pt,yscale=-1,xscale=1]

\draw  [draw opacity=0] (249.5,178.42) -- (361,152.92) -- (301,127.42) -- cycle ;
\draw  [draw opacity=0] (328,206.42) -- (361,152.92) -- (301,127.42) -- cycle ;
\draw  [draw opacity=0] (249.5,178.42) -- (301,127.42) -- (328,206.42) -- cycle ;
\draw  [draw opacity=0][fill={rgb, 255:red, 74; green, 144; blue, 226 }  ,fill opacity=0.33 ] (301,127.42) -- (361,152.92) -- (328,206.42) -- (249.5,178.42) -- cycle ;
\draw [color={rgb, 255:red, 74; green, 144; blue, 226 }  ,draw opacity=1 ]   (249.5,178.42) -- (328,206.42) ;
\draw [color={rgb, 255:red, 74; green, 144; blue, 226 }  ,draw opacity=1 ]   (328,206.42) -- (361,152.92) ;
\draw [color={rgb, 255:red, 74; green, 144; blue, 226 }  ,draw opacity=1 ]   (301,127.42) -- (328,206.42) ;
\draw [color={rgb, 255:red, 74; green, 144; blue, 226 }  ,draw opacity=1 ]   (301,127.42) -- (249.5,178.42) ;
\draw [color={rgb, 255:red, 74; green, 144; blue, 226 }  ,draw opacity=1 ] [dash pattern={on 4.5pt off 4.5pt}]  (301,127.42) -- (308.11,130.78) ;
\draw [color={rgb, 255:red, 74; green, 144; blue, 226 }  ,draw opacity=1 ]   (317.29,134.52) -- (361,152.92) ;
\draw  [draw opacity=0][fill={rgb, 255:red, 245; green, 166; blue, 35 }  ,fill opacity=0.32 ] (302.5,62.92) -- (301,127.42) -- (230.5,62.92) -- cycle ;
\draw [color={rgb, 255:red, 245; green, 166; blue, 35 }  ,draw opacity=1 ][fill={rgb, 255:red, 245; green, 166; blue, 35 }  ,fill opacity=0.32 ]   (230.5,62.92) -- (302.5,62.92) ;
\draw [color={rgb, 255:red, 245; green, 166; blue, 35 }  ,draw opacity=1 ] [dash pattern={on 4.5pt off 4.5pt}]  (283.29,111.38) -- (301,127.42) ;
\draw [color={rgb, 255:red, 245; green, 166; blue, 35 }  ,draw opacity=1 ][fill={rgb, 255:red, 245; green, 166; blue, 35 }  ,fill opacity=0.32 ]   (230.5,62.92) -- (279.29,107.38) ;
\draw  [draw opacity=0][fill={rgb, 255:red, 189; green, 16; blue, 224 }  ,fill opacity=0.14 ][dash pattern={on 4.5pt off 4.5pt}] (302.5,62.92) -- (361,152.92) -- (301,127.42) -- cycle ;
\draw [color={rgb, 255:red, 189; green, 16; blue, 224 }  ,draw opacity=1 ]   (302.5,62.92) -- (361,152.92) ;
\draw  [color={rgb, 255:red, 189; green, 16; blue, 224 }  ,draw opacity=1 ][fill={rgb, 255:red, 189; green, 16; blue, 224 }  ,fill opacity=0.14 ] (302.5,62.92) -- (328,206.42) -- (301,127.42) -- cycle ;
\draw [color={rgb, 255:red, 74; green, 144; blue, 226 }  ,draw opacity=1 ] [dash pattern={on 4.5pt off 4.5pt}]  (249.5,178.42) -- (311.57,163.95) ;
\draw [color={rgb, 255:red, 74; green, 144; blue, 226 }  ,draw opacity=1 ] [dash pattern={on 4.5pt off 4.5pt}]  (322.11,161.67) -- (361,152.92) ;
\draw [color={rgb, 255:red, 74; green, 144; blue, 226 }  ,draw opacity=1 ]   (315.67,163.22) -- (318.56,162.56) ;
\draw  [color={rgb, 255:red, 189; green, 16; blue, 224 }  ,draw opacity=1 ][fill={rgb, 255:red, 189; green, 16; blue, 224 }  ,fill opacity=0.14 ] (302.5,62.92) -- (249.5,178.42) -- (301,127.42) -- cycle ;
\draw [color={rgb, 255:red, 74; green, 144; blue, 226 }  ,draw opacity=1 ]   (301,127.42) -- (328,206.42) ;
\draw [color={rgb, 255:red, 74; green, 144; blue, 226 }  ,draw opacity=1 ]   (301,127.42) -- (249.5,178.42) ;
\draw [color={rgb, 255:red, 74; green, 144; blue, 226 }  ,draw opacity=1 ] [dash pattern={on 4.5pt off 4.5pt}]  (308.11,130.78) -- (315.22,134.14) ;
\draw  [color={rgb, 255:red, 220; green, 84; blue, 103 }  ,draw opacity=1 ][fill={rgb, 255:red, 227; green, 90; blue, 135 }  ,fill opacity=1 ] (299.17,127.42) .. controls (299.17,126.4) and (299.99,125.58) .. (301,125.58) .. controls (302.01,125.58) and (302.83,126.4) .. (302.83,127.42) .. controls (302.83,128.43) and (302.01,129.25) .. (301,129.25) .. controls (299.99,129.25) and (299.17,128.43) .. (299.17,127.42) -- cycle ;

\end{tikzpicture}

  \caption{Simplicial complex $\mathbb{S}_1$ for $n=4$.}
  \label{fig:simplex 4}
\end{subfigure}
\caption{Simplicial complex $\mathbb{S}_1$ for $n=3,4$.}
\end{figure}

For $k\geq 2$, the situation is a bit more complicated. The primary decomposition of $\mathcal{J}_k$ is given in \cref{lemma:primary k2}, and it has two types of ideals: $\mathcal{Q}_i$ for $i\in[n]$ and $\mathcal{J}_S$ for $S\subsetneq[k]$ with $1\leq |S|\leq \min\{k,n-2\}$.  By \cref{prop: ideal translation 2}, $\mathcal{Q}_i$  corresponds to the smoothable component and $\mathcal{J}_S$ corresponds to the component $\sHilb^{m,|S|+1}(C)$. First, we construct a simplicial complex that describes how the ideals $\mathcal{J}_S$ intersect. 
We can associate to the ring $\mathcal{S}'_k$ the affine space $\mathbb{A}^{nk}$ and the real vector space $\mathbb{R}^{nk}$. We denote the standard vectors of $\R^{nk}$ by $\mathbf{b}_1,\ldots,\mathbf{b}_k,\mathbf{a}_{i,j}$ for $i\in[n]$, $j\in[k]$ with $i\neq j$, where $\mathbf{b}_j$ and $\mathbf{a}_{i,j}$ are the standard vectors associated to the variables $\beta_j$ and $\alpha_{i,j}$ respectively.  Then, we have that 
\[
\mathbb{V}_\mathbb{R}(\mathcal{J}_S)=\mathrm{Span}(\mathbf{b}_j:j\in[k]\setminus S,\mathbf{a}_{i,j}:i\in[n]\setminus S,j\in S).
\]
as a real linear subspace.
In $\mathbb{R}^{nk}$, we consider the simplices $\Delta_{nk-1}:=\mathrm{Conv}(\mathbf{b}_j:j\in[k],\mathbf{a}_{i,j}: i\in[n]\,j\in[k],\,i\neq j)$
and 
$\widehat{\Delta}_{nk}:=\mathrm{Conv}(\mathbf{0},\Delta_{nk-1})$.
Moreover, we associated to $\mathcal{J}_S$ the simplices 
\begin{equation}
    \label{eq: simplex S}
    \begin{array}{l}
    \Delta_S:= \mathrm{Conv}(\mathbf{b}_j:j\in[k]\setminus S,\mathbf{a}_{i,j}:i\in[n]\setminus S,j\in S)=\mathbb{V}(\mathcal{J}_S)\cap \Delta_{nk-1},\\
   \widehat{ \Delta}_S:=\mathrm{Conv}(\mathbf{0},\Delta_S)=\mathbb{V}(\mathcal{J}_S)\cap\widehat{\Delta}_{nk}.
    \end{array}
\end{equation}
Note that $ \widehat{ \Delta}_S$ is the cone of $\Delta_S$. Geometrically, $\Delta_S$ is the polytope associated to the projective space defined by $\mathcal{J}_S$ in $\P^{nk-1}$ and $ \widehat{ \Delta}_S$ is the polytope associated to the cone over this projective space, i.e. the polytope associated to $\mathbb{V}(\mathcal{J}_S)$ in $\A^{nk}$.

\begin{Lem}
    \label{lem:simplicial singularity}
    The simplices $\Delta_S$ for $S\subsetneq[k]$ with $1\leq |S|\leq \min\{k,n-2\}$ form a simplicial complex denoted by $\mathbb{S}_k'$. Similarly, the simplices $\widehat{\Delta}_S$ for $S\subsetneq[k]$ with $1\leq |S|\leq \min\{k,n-2\}$ form a simplicial complex denoted by $\widehat{\mathbb{S}}_k'$.
    Moreover, we have that 
    $$
    \begin{array}{ccc}
    \Delta_S\cap \Delta_{S'} = \mathbb{V}_\mathbb{R}(\mathcal{J}_S+\mathcal{J}_{S'})\cap \Delta_{nk-1} &\text{ and }&
    \widehat{\Delta}_S\cap \widehat{\Delta}_{S'} = \mathbb{V}_\mathbb{R}(\mathcal{J}_S+\mathcal{J}_{S'})\cap \widehat{\Delta}_{nk}
    \end{array}$$
    for $S,S'\subsetneq[k]$ with $1\leq |S|,|S'|\leq \min\{k,n-2\}$.
\end{Lem}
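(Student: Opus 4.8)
The plan is to prove the two claims — that the $\Delta_S$ (respectively $\widehat\Delta_S$) form a simplicial complex, and that their pairwise intersections are cut out by $\mathcal{J}_S+\mathcal{J}_{S'}$ — at the purely combinatorial level of subsets of the standard basis of $\R^{nk}$, and then transport everything back through the dictionary between faces and coordinate subspaces. The key observation is that each $\Delta_S$ is the convex hull of a specified subset $B_S\subseteq\{\mathbf b_1,\dots,\mathbf b_k\}\cup\{\mathbf a_{i,j}\}$ of the standard basis vectors, namely $B_S=\{\mathbf b_j:j\notin S\}\cup\{\mathbf a_{i,j}:i\notin S,\ j\in S\}$; similarly $\widehat\Delta_S=\mathrm{Conv}(\{\mathbf 0\}\cup B_S)$. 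Since any collection of convex hulls of subsets of the vertex set of a fixed simplex automatically forms a simplicial complex (each is a face of $\Delta_{nk-1}$, and the intersection of two faces of a simplex is the face spanned by the intersection of their vertex sets), the simplicial-complex assertion will follow once we identify $B_S\cap B_{S'}$ with the vertex set of the claimed intersection.

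The main step is therefore the identification $\mathrm{Conv}(B_S)\cap\mathrm{Conv}(B_{S'})=\mathrm{Conv}(B_S\cap B_{S'})$ together with the matching algebraic statement $\mathbb{V}_\R(\mathcal{J}_S+\mathcal{J}_{S'})\cap\Delta_{nk-1}=\mathrm{Conv}(B_S\cap B_{S'})$. For the first equality I would invoke the elementary fact about faces of a simplex just mentioned; the only thing to verify is that $\mathcal{J}_S+\mathcal{J}_{S'}$ is again a monomial (in fact coordinate) ideal whose vanishing locus in $\R^{nk}$ is exactly $\mathrm{Span}(B_S\cap B_{S'})$. Recalling from the construction that $\mathcal{J}_S=\langle\beta_j:j\in S\rangle+\langle\alpha_{i,j}:i\in S\text{ or }j\notin S\rangle$ — i.e. $\mathbb{V}_\R(\mathcal{J}_S)=\mathrm{Span}(B_S)$ — the sum $\mathcal{J}_S+\mathcal{J}_{S'}$ is the ideal generated by the coordinate functions vanishing on $B_S\cap B_{S'}$, so $\mathbb{V}_\R(\mathcal{J}_S+\mathcal{J}_{S'})=\mathrm{Span}(B_S)\cap\mathrm{Span}(B_{S'})=\mathrm{Span}(B_S\cap B_{S'})$, and intersecting with $\Delta_{nk-1}$ (resp. $\widehat\Delta_{nk}$) gives the face spanned by $B_S\cap B_{S'}$ (resp. $\{\mathbf 0\}\cup(B_S\cap B_{S'})$). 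This simultaneously proves the displayed formulas and, since any intersection of two of our simplices is then a common face, confirms that $\mathbb{S}_k'$ and $\widehat{\mathbb{S}}_k'$ are honest simplicial complexes.

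The one point requiring genuine care — and the step I expect to be the real obstacle — is keeping the bookkeeping of the index sets consistent: one must check that the generating set of $\mathcal{J}_S$ recorded in \cref{lemma:primary k2}, after the change of variables $\alpha_{i,j,u_j-1}\rightsquigarrow\alpha_{i,j}$ and the passage to $\mathcal S_k'$, really does cut out precisely $\mathrm{Span}(B_S)$, with no spurious generators involving the variables $\beta_{i,s}$ for $2\le s\le u_i-1$ that were split off. This is where the earlier simplification $\mathcal S_k/\mathcal J_k\simeq\C[\beta_{i,s}]\otimes(\mathcal S_k'/\mathcal J_k)$ is used: those variables do not appear in $\mathcal J_k$, hence not in any $\mathcal J_S$, so $\mathbb{V}_\R(\mathcal J_S)\subseteq\R^{nk}$ is genuinely a coordinate subspace of the stated form. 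Once this is in hand, I would write the cone statements for $\widehat\Delta_S$ verbatim parallel to the $\Delta_S$ case, observing only that coning with $\mathbf 0$ commutes with taking convex hulls of basis subsets and with intersection, so nothing new is needed. The proof then concludes by assembling these observations into the two sentences asserting the simplicial-complex property and the intersection formulas.

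\begin{proof}
    For $S\subsetneq[k]$ with $1\le|S|\le\min\{k,n-2\}$, set
    \[
    B_S:=\{\mathbf b_j:j\in[k]\setminus S\}\cup\{\mathbf a_{i,j}:i\in[n]\setminus S,\ j\in S\},
    \]
    a subset of the vertex set of the simplex $\Delta_{nk-1}$. By \eqref{eq: simplex S} we have $\Delta_S=\mathrm{Conv}(B_S)$ and $\widehat\Delta_S=\mathrm{Conv}(\{\mathbf 0\}\cup B_S)$, so each $\Delta_S$ is a face of $\Delta_{nk-1}$ and each $\widehat\Delta_S$ is a face of $\widehat\Delta_{nk}$. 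Any finite collection of faces of a fixed simplex forms a simplicial complex, since the intersection of two faces is the face spanned by the intersection of their vertex sets; hence $\mathbb{S}_k'$ and $\widehat{\mathbb{S}}_k'$ are simplicial complexes, and for $S,S'$ as above
    \[
    \Delta_S\cap\Delta_{S'}=\mathrm{Conv}(B_S\cap B_{S'}),\qquad
    \widehat\Delta_S\cap\widehat\Delta_{S'}=\mathrm{Conv}(\{\mathbf 0\}\cup(B_S\cap B_{S'})).
    \]

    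It remains to match these with $\mathcal{J}_S+\mathcal{J}_{S'}$. Recall that in $\mathcal S_k'$ the ideal $\mathcal{J}_S$ from \cref{lemma:primary k2} is, after the identification $\alpha_{i,j,u_j-1}=\alpha_{i,j}$, a coordinate ideal whose zero locus is the coordinate subspace spanned by exactly the basis vectors in $B_S$; in particular $\mathbb{V}_\R(\mathcal{J}_S)=\mathrm{Span}_\R(B_S)$ (the split-off variables $\beta_{i,s}$ for $2\le s\le u_i-1$ do not occur in $\mathcal{J}_k$, hence not in $\mathcal{J}_S$). Consequently $\mathcal{J}_S+\mathcal{J}_{S'}$ is generated by the coordinate functions vanishing on $B_S$ together with those vanishing on $B_{S'}$, so
    \[
    \mathbb{V}_\R(\mathcal{J}_S+\mathcal{J}_{S'})=\mathrm{Span}_\R(B_S)\cap\mathrm{Span}_\R(B_{S'})=\mathrm{Span}_\R(B_S\cap B_{S'}).
    \]
    Intersecting with $\Delta_{nk-1}$ yields the face $\mathrm{Conv}(B_S\cap B_{S'})=\Delta_S\cap\Delta_{S'}$, and intersecting with $\widehat\Delta_{nk}$ yields $\mathrm{Conv}(\{\mathbf 0\}\cup(B_S\cap B_{S'}))=\widehat\Delta_S\cap\widehat\Delta_{S'}$. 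This proves the displayed identities.
\end{proof}
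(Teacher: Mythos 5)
Your proof is correct and follows essentially the same route as the paper's: both identify $\mathbb{V}_{\mathbb{R}}(\mathcal{J}_S)$ as the coordinate subspace spanned by the vertex set of $\Delta_S$, compute $\mathbb{V}_{\mathbb{R}}(\mathcal{J}_S+\mathcal{J}_{S'})$ as the span of the common vertices, and conclude that the pairwise intersections are common faces, with the coned statement following verbatim. The only difference is cosmetic — you state the ``faces of a fixed simplex form a complex'' principle up front, whereas the paper derives the intersection formula first — and your bookkeeping of which $\alpha_{i,j}$ lie in $\mathcal{J}_S$ matches \cref{lemma:primary k2} exactly.
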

\begin{proof}
   Let $S,S'\subsetneq[k]$ with $1\leq |S|,|S'|\leq \min\{k,n-2\}$. Then, the linear subspace $\mathbb{V}(\mathcal{J}_S+\mathcal{J}_{S'})$ is generated by the vectors $e_{i}$ for $i\in[k]\setminus(S\cup S')$ and $f_{i,j}$ for $i\in[n]\setminus(S\cup S')$ and $j\in S\cap S'$. Using \eqref{eq: simplex S}, we get that
   \[\begin{array}{c}
   \Delta_S\cap\Delta_{S'}=\mathbb{V}(\mathcal{J}_S)\cap\mathbb{V}(\mathcal{J}_{S'})\cap \Delta_{nk-1}= 
   \mathbb{V}(\mathcal{J}_S+\mathcal{J}_{S'})\cap \Delta_{nk-1}=\\
   \mathrm{Conv}(\mathbf{b}_i:i\in[k]\setminus(S\cup S'),\mathbf{a}_{i,j}:i\in[n]\setminus(S\cup S')\text{ and }j\in S\cap S'). 
   \end{array}
   \]
   In particular, $\Delta_S\cap\Delta_{S'}$ is a face of both $\Delta_S$ and $\Delta_{S'}$, and hence they form a simplicial complex. For $\widehat{\mathbb{S}}_k'$, the proof follows from the fact that it is the simplicial complex obtained by taking the cone of $\mathbb{S}_k'$ over the origin.
\end{proof}

From \cref{lem:simplicial singularity}, we conclude that the simplicial complex $\widehat{\mathbb{S}}_k'$ around $\mathbf{0}$ describes how the linear spaces $\mathbb{V}(\mathcal{J}_S)$ intersect. Since $\widehat{\mathbb{S}}_k'$ is the cone of $\mathbb{S}_k'$ over the origin, such combinatorics are also encoded in $\mathbb{S}_k'$.

\begin{Ex}
    Fix $n=3$, $m=3$ and $k=2$, and focus on the singular point $[\langle x_1^2,x_2^2,x_3\rangle]$. This point corresponds to the middle point in the bottom edge of $\mathcal{K}_3^{[3]}$ in \cref{fig: n=3 simplex}. The  corresponding ring is $\mathcal{S}_k=\C[\beta_1,\beta_2,\alpha_{1,2},\alpha_{2,1},\alpha_{3,1},\alpha_{3,2}]$, and we have two possible ideals of the type $\mathcal{J}_S$: 
    \[\begin{array}{ccc}
    \mathcal{J}_{\{1\}}=\langle \beta_1,\alpha_{1,2},\alpha_{3,2}\rangle &\text{and} &\mathcal{J}_{\{2\}}=\langle \beta_2,\alpha_{2,1},\alpha_{3,1}\rangle.
    \end{array}
    \]
    Then, $\mathbb{V}(\mathcal{J}_{\{1\}})\cap \mathbb{V}(\mathcal{J}_{\{2\}})$ is the origin $\mathbf{0}$. Similarly, the simplices associated to them are
    \[\begin{array}{ccc}
    \widehat{\Delta}_{\{1\}}=\mathrm{Conv}(\mathbf{0},\mathbf{b}_2,\mathbf{a}_{2,1},\mathbf{a}_{3,1})& \text{and} &\widehat{\Delta}_{\{2\}}=\mathrm{Conv}(\mathbf{0},\mathbf{b}_1,\mathbf{a}_{2,1},\mathbf{a}_{3,2}).
    \end{array}
    \]
    In this case,
    the corresponding simplicial complex $\widehat{\mathbb{S}}_2'$ is depicted in \cref{fig:simplex 2 3}. The simplicial complex $\mathbb{S}_2'$ is illustrated in dark blue in  \cref{fig:simplex 2 3} as a subcomplex of $\widehat{\mathbb{S}}_2'$.
\end{Ex}

   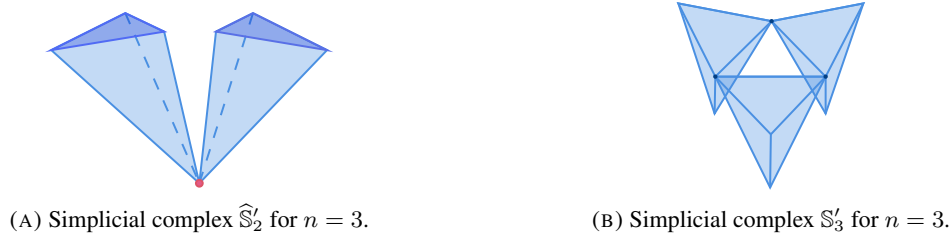
\begin{figure}
\begin{subfigure}{.45\textwidth}
  \centering

\tikzset{every picture/.style={line width=0.75pt}} 

\begin{tikzpicture}[x=0.70pt,y=0.70pt,yscale=-1,xscale=1]

\draw  [color={rgb, 255:red, 74; green, 144; blue, 226 }  ,draw opacity=1 ][fill={rgb, 255:red, 74; green, 144; blue, 226 }  ,fill opacity=0.33 ] (310,182) -- (291,100) -- (270,90) -- (230,110) -- cycle ;
\draw [color={rgb, 255:red, 74; green, 144; blue, 226 }  ,draw opacity=1 ] [dash pattern={on 4.5pt off 4.5pt}]  (270,90) -- (310,182) ;
\draw  [color={rgb, 255:red, 74; green, 144; blue, 226 }  ,draw opacity=1 ][fill={rgb, 255:red, 74; green, 144; blue, 226 }  ,fill opacity=0.33 ] (339,90) -- (319,100) -- (310,182) -- (379,110) -- cycle ;
\draw  [color={rgb, 255:red, 81; green, 107; blue, 244 }  ,draw opacity=1 ][fill={rgb, 255:red, 5; green, 39; blue, 214 }  ,fill opacity=0.27 ] (291,100) -- (270,90) -- (230,110) -- cycle ;
\draw [color={rgb, 255:red, 74; green, 144; blue, 226 }  ,draw opacity=1 ] [dash pattern={on 4.5pt off 4.5pt}]  (339,90) -- (310,182) ;
\draw  [color={rgb, 255:red, 89; green, 112; blue, 230 }  ,draw opacity=1 ][fill={rgb, 255:red, 5; green, 39; blue, 214 }  ,fill opacity=0.27 ] (379,110) -- (339,90) -- (319,100) -- cycle ;
\draw  [color={rgb, 255:red, 220; green, 84; blue, 103 }  ,draw opacity=1 ][fill={rgb, 255:red, 227; green, 90; blue, 135 }  ,fill opacity=1 ] (308.17,182) .. controls (308.17,180.99) and (308.99,180.17) .. (310,180.17) .. controls (311.01,180.17) and (311.83,180.99) .. (311.83,182) .. controls (311.83,183.01) and (311.01,183.83) .. (310,183.83) .. controls (308.99,183.83) and (308.17,183.01) .. (308.17,182) -- cycle ;

\end{tikzpicture}

  \caption{Simplicial complex $\widehat{\mathbb{S}}_2'$ for $n=3$.}
  \label{fig:simplex 2 3}
\end{subfigure}
\begin{subfigure}{.45\textwidth}
  \centering

\tikzset{every picture/.style={line width=0.75pt}} 

\begin{tikzpicture}[x=0.70pt,y=0.70pt,yscale=-1,xscale=1]

\draw  [color={rgb, 255:red, 74; green, 144; blue, 226 }  ,draw opacity=1 ][fill={rgb, 255:red, 74; green, 144; blue, 226 }  ,fill opacity=0.33 ] (231,60.2) -- (260.2,110.2) -- (280.6,50.6) -- cycle ;
\draw  [color={rgb, 255:red, 74; green, 144; blue, 226 }  ,draw opacity=1 ] (231,60.2) -- (280.6,50.6) -- (260.2,90.2) -- cycle ;
\draw [color={rgb, 255:red, 74; green, 144; blue, 226 }  ,draw opacity=1 ]   (260.2,90.2) -- (260.2,110.2) ;
\draw  [color={rgb, 255:red, 74; green, 144; blue, 226 }  ,draw opacity=1 ][fill={rgb, 255:red, 74; green, 144; blue, 226 }  ,fill opacity=0.33 ] (180.6,50.6) -- (200.2,110.2) -- (231,60.2) -- cycle ;
\draw  [color={rgb, 255:red, 74; green, 144; blue, 226 }  ,draw opacity=1 ] (180.6,50.6) -- (231,60.2) -- (200.6,90.2) -- cycle ;
\draw [color={rgb, 255:red, 74; green, 144; blue, 226 }  ,draw opacity=1 ]   (200.6,90.2) -- (200.2,110.2) ;
\draw  [color={rgb, 255:red, 74; green, 144; blue, 226 }  ,draw opacity=1 ][fill={rgb, 255:red, 74; green, 144; blue, 226 }  ,fill opacity=0.33 ] (200.6,90.2) -- (230.2,150.2) -- (260.2,90.2) -- cycle ;
\draw  [color={rgb, 255:red, 74; green, 144; blue, 226 }  ,draw opacity=1 ] (200.6,90.2) -- (260.2,90.2) -- (230.6,121.4) -- cycle ;
\draw [color={rgb, 255:red, 74; green, 144; blue, 226 }  ,draw opacity=1 ]   (230.6,121.4) -- (230.2,150.2) ;
\draw  [color={rgb, 255:red, 2; green, 63; blue, 133 }  ,draw opacity=1 ][fill={rgb, 255:red, 1; green, 65; blue, 144 }  ,fill opacity=1 ] (259.58,90.2) .. controls (259.58,89.86) and (259.86,89.58) .. (260.2,89.58) .. controls (260.54,89.58) and (260.82,89.86) .. (260.82,90.2) .. controls (260.82,90.54) and (260.54,90.82) .. (260.2,90.82) .. controls (259.86,90.82) and (259.58,90.54) .. (259.58,90.2) -- cycle ;
\draw  [color={rgb, 255:red, 2; green, 63; blue, 133 }  ,draw opacity=1 ][fill={rgb, 255:red, 1; green, 65; blue, 144 }  ,fill opacity=1 ] (230.38,60.2) .. controls (230.38,59.86) and (230.66,59.58) .. (231,59.58) .. controls (231.34,59.58) and (231.62,59.86) .. (231.62,60.2) .. controls (231.62,60.54) and (231.34,60.82) .. (231,60.82) .. controls (230.66,60.82) and (230.38,60.54) .. (230.38,60.2) -- cycle ;
\draw  [color={rgb, 255:red, 2; green, 63; blue, 133 }  ,draw opacity=1 ][fill={rgb, 255:red, 1; green, 65; blue, 144 }  ,fill opacity=1 ] (199.98,90.2) .. controls (199.98,89.86) and (200.26,89.58) .. (200.6,89.58) .. controls (200.94,89.58) and (201.22,89.86) .. (201.22,90.2) .. controls (201.22,90.54) and (200.94,90.82) .. (200.6,90.82) .. controls (200.26,90.82) and (199.98,90.54) .. (199.98,90.2) -- cycle ;

\end{tikzpicture}

  \caption{Simplicial complex $\mathbb{S}_3'$ for $n=3$.}
  \label{fig:simplex 3 3}
\end{subfigure}
\caption{Simplicial complex $\widehat{\mathbb{S}'}_2$ and $\mathbb{S}'_3$ for $n=3$.}
\end{figure}

\begin{Ex} 
    Fix $n=3$, $m=4$ and $k=3$, and focus on the singular point $[\langle x_1^2,x_2^2,x_3^2\rangle]$. Such a point corresponds to the middle vertex of $\mathcal{K}_3^{[3]}$ in \cref{fig: n=3 simplex}. The  corresponding ring is $\mathcal{S}_k=\C[\beta_1,\beta_2,\beta_3,\alpha_{1,2},\alpha_{1,3},\alpha_{2,1},\alpha_{2,3},\alpha_{3,1},\alpha_{3,2}]$, and we have three possible ideals of the type $\mathcal{J}_S$: 
    \[\begin{array}{cccc}
    \mathcal{J}_{\{1\}}=\langle \beta_1,\alpha_{1,2},\alpha_{1,3},\alpha_{2,3},\alpha_{3,2}\rangle ,&\mathcal{J}_{\{2\}}=\langle \beta_2,\alpha_{2,1},\alpha_{2,3},\alpha_{1,3},\alpha_{3,1}\rangle&\text{and} &\mathcal{J}_{\{3\}}=\langle \beta_3,\alpha_{3,1},\alpha_{3,2},\alpha_{1,2},\alpha_{2,1}\rangle.
    \end{array}
    \]
    Then $\mathbb{V}(\mathcal{J}_{\{a\}})\cap \mathbb{V}(\mathcal{J}_{\{b\}})$ is $\sp(\C[\beta_c])$ for $\{a,b,c\}=[3]$. Similarly, the simplices associate to them are
    \[\begin{array}{cccc}
    \Delta_{\{1\}}=\mathrm{Conv}(\mathbf{b}_2,\mathbf{b}_3,\mathbf{a}_{2,1},\mathbf{a}_{3,1}),&\Delta_{\{2\}}=\mathrm{Conv}(\mathbf{b}_1,\mathbf{b}_3,\mathbf{a}_{1,2},\mathbf{a}_{3,2})& \text{and} &\Delta_{\{3\}}=\mathrm{Conv}(\mathbf{b}_1,\mathbf{b}_2,\mathbf{a}_{1,3},\mathbf{a}_{2,3}).
    \end{array}
    \]
    In this case,
    the simplicial complex $\mathbb{S}_2$ is depicted in \cref{fig:simplex 3 3}. The simplicial complex $\widehat{\mathbb{S}'}_3$ is the cone over $\mathbb{S}_2'$.
\end{Ex}

Next, we add to our simplicial complexes the polytopes associated to the ideals of the form $\mathcal{Q}_i$ in \cref{lemma: primary k1}. For $i\not \in [k]$, the ring $\mathcal{S}_k'/\mathcal{Q}_i$ is the ring
\[
\C[\alpha_{i,1},\ldots,\alpha_{i,k},\beta_1,\ldots,\beta_k]/\langle\alpha_{i,r}\beta_r-\alpha_{i,s}\beta_s:1\leq r<s\leq k\rangle.
\]
The ideal of this quotient corresponds to the ideal \eqref{eq:toric ideal}. Since the ideal is homogeneous and toric, we can associate to $\mathcal{Q}_i$ the polytopes $P_{i,k}$, corresponding to the projective toric variety, and 
$\widehat{P}_{i,k}$, which is the cone over $P_{i,k}$ and corresponds to the corresponding affine toric variety.
By \cref{prop: toric variety}, $\widehat{P}_{i,k}$ is the polytope $P_k$ and $\widehat{P}_{i,k}$ is the cone over $P_k$. We refer to \eqref{eq:conv hull} for the definition of $P_k$.
We label the vertices of $P_{i,k}$ and $\widehat{P}_{i,k}$ slightly different than the label done for $P_k$. The vertex $\mathbf{0}$ of $P_k$ is denoted by $a_{i,1}$ in $P_{k,i}$. Similarly, the vertices $\mathbf{e}_1$, $\mathbf{e}_2$ and $\mathbf{e}_1+\mathbf{e}_2$ are labeled by $b_2$, $a_{i,2}$ and $b_1$ respectively. The vertices $\mathbf{e}_j$ and $\mathbf{e}_1+\mathbf{e}_2-\mathbf{e}_j$ are labeled by $a_{i,j}$ and $b_{j}$ for $3\leq j\leq k$. The extra point we get after taking the cone over $P_k$ is denoted by $\mathbf{0}$.

\begin{Ex}
    Fix $n=3$ and $k=2$. Then, $P_2$ is the convex hull of $\mathbf{0}$, $\mathbf{e}_1$, $\mathbf{e}_2$ and  $\mathbf{e}_1+\mathbf{e}_2$. The polytope $P_{2,3}$ associated to the ideal $\mathcal{Q}_3$ is the cone over $P_2$. In  \cref{fig:polytope 2 3}, the polytope $P_{2,3}$ is illustrated together with the labeling of its vertices. Moreover, the polytope $P_2$ is also illustrated in dark orange in \cref{fig:polytope 2 3}.

    For $n=4$ and $k=3$, the polytope $P_3$ the convex hull of $\mathbf{0}$, $\mathbf{e}_1$, $\mathbf{e}_2$  $\mathbf{e}_1+\mathbf{e}_2$, $\mathbf{e}_3$  and $\mathbf{e}_1+\mathbf{e}_2-\mathbf{e}_3$. The polytope $P_{3,4}$ associated to the ideal $\mathcal{Q}_4$ is the cone associated to $P_3$. In \cref{fig:polytope 3 4}, the polytope $P_3$ is depicted with the labelling induced by $P_{3,4}$.
\end{Ex}

   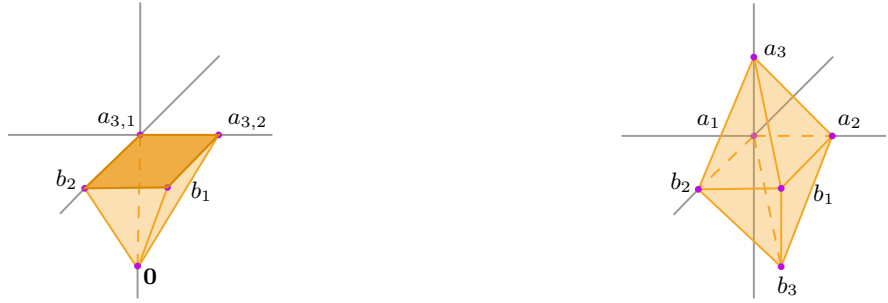
\begin{figure}

\begin{subfigure}{.45\textwidth}
  \centering

\tikzset{every picture/.style={line width=0.75pt}} 

\begin{tikzpicture}[x=1pt,y=1pt,yscale=-1,xscale=1]

\draw [color={rgb, 255:red, 155; green, 155; blue, 155 }  ,draw opacity=1 ]   (262,103.33) -- (312,103.33) ;
\draw [color={rgb, 255:red, 155; green, 155; blue, 155 }  ,draw opacity=1 ]   (312,103.33) -- (312,53.17) ;
\draw [color={rgb, 255:red, 155; green, 155; blue, 155 }  ,draw opacity=1 ]   (312,103.33) -- (342,73.33) ;
\draw  [color={rgb, 255:red, 245; green, 166; blue, 35 }  ,draw opacity=1 ][fill={rgb, 255:red, 245; green, 166; blue, 35 }  ,fill opacity=0.34 ] (312,103.33) -- (341.67,103.33) -- (311,153.88) -- (291,123.55) -- cycle ;
\draw [color={rgb, 255:red, 245; green, 166; blue, 35 }  ,draw opacity=1 ]   (322.33,123.17) -- (311,153.88) ;
\draw [color={rgb, 255:red, 245; green, 166; blue, 35 }  ,draw opacity=1 ] [dash pattern={on 4.5pt off 4.5pt}]  (312,103.33) -- (311,153.88) ;
\draw [color={rgb, 255:red, 245; green, 166; blue, 35 }  ,draw opacity=1 ]   (341.67,103.33) -- (322.33,123.17) ;
\draw [color={rgb, 255:red, 245; green, 166; blue, 35 }  ,draw opacity=1 ]   (322.33,123.17) -- (291,123.55) ;
\draw [color={rgb, 255:red, 155; green, 155; blue, 155 }  ,draw opacity=1 ]   (311,165.21) -- (311,153.88) ;
\draw [color={rgb, 255:red, 155; green, 155; blue, 155 }  ,draw opacity=1 ]   (281.67,133.17) -- (291,123.55) ;
\draw [color={rgb, 255:red, 155; green, 155; blue, 155 }  ,draw opacity=1 ]   (341.67,103.33) -- (362,103.33) ;
\draw  [color={rgb, 255:red, 189; green, 16; blue, 224 }  ,draw opacity=1 ][fill={rgb, 255:red, 189; green, 16; blue, 224 }  ,fill opacity=1 ] (311.08,103.33) .. controls (311.08,102.83) and (311.49,102.42) .. (312,102.42) .. controls (312.51,102.42) and (312.92,102.83) .. (312.92,103.33) .. controls (312.92,103.84) and (312.51,104.25) .. (312,104.25) .. controls (311.49,104.25) and (311.08,103.84) .. (311.08,103.33) -- cycle ;
\draw  [color={rgb, 255:red, 189; green, 16; blue, 224 }  ,draw opacity=1 ][fill={rgb, 255:red, 189; green, 16; blue, 224 }  ,fill opacity=1 ] (290.08,123.55) .. controls (290.08,123.04) and (290.49,122.63) .. (291,122.63) .. controls (291.51,122.63) and (291.92,123.04) .. (291.92,123.55) .. controls (291.92,124.05) and (291.51,124.46) .. (291,124.46) .. controls (290.49,124.46) and (290.08,124.05) .. (290.08,123.55) -- cycle ;
\draw  [color={rgb, 255:red, 189; green, 16; blue, 224 }  ,draw opacity=1 ][fill={rgb, 255:red, 189; green, 16; blue, 224 }  ,fill opacity=1 ] (321.42,123.17) .. controls (321.42,122.66) and (321.83,122.25) .. (322.33,122.25) .. controls (322.84,122.25) and (323.25,122.66) .. (323.25,123.17) .. controls (323.25,123.67) and (322.84,124.08) .. (322.33,124.08) .. controls (321.83,124.08) and (321.42,123.67) .. (321.42,123.17) -- cycle ;
\draw  [color={rgb, 255:red, 189; green, 16; blue, 224 }  ,draw opacity=1 ][fill={rgb, 255:red, 189; green, 16; blue, 224 }  ,fill opacity=1 ] (340.75,103.33) .. controls (340.75,102.83) and (341.16,102.42) .. (341.67,102.42) .. controls (342.17,102.42) and (342.58,102.83) .. (342.58,103.33) .. controls (342.58,103.84) and (342.17,104.25) .. (341.67,104.25) .. controls (341.16,104.25) and (340.75,103.84) .. (340.75,103.33) -- cycle ;
\draw  [color={rgb, 255:red, 189; green, 16; blue, 224 }  ,draw opacity=1 ][fill={rgb, 255:red, 189; green, 16; blue, 224 }  ,fill opacity=1 ] (310.08,152.96) .. controls (310.08,152.46) and (310.49,152.05) .. (311,152.05) .. controls (311.51,152.05) and (311.92,152.46) .. (311.92,152.96) .. controls (311.92,153.47) and (311.51,153.88) .. (311,153.88) .. controls (310.49,153.88) and (310.08,153.47) .. (310.08,152.96) -- cycle ;

\draw  [color={rgb, 255:red, 215; green, 137; blue, 0 }  ,draw opacity=1 ][fill={rgb, 255:red, 232; green, 144; blue, 0 }  ,fill opacity=0.63 ] (312,103.33) -- (341.67,103.33) -- (322.33,123.17) -- (291,123.55) -- cycle ;

\draw (344,94.33) node [anchor=north west][inner sep=0.75pt]   [align=left] {{\small $a_{3,2}$}};
\draw (294.67,93.92) node [anchor=north west][inner sep=0.75pt]   [align=left] {{\small $a_{3,1}$}};
\draw (279,115.33) node [anchor=north west][inner sep=0.75pt]   [align=left] {{\small $b_{2}$}};
\draw (330,120) node [anchor=north west][inner sep=0.75pt]   [align=left] {{\small $b_1$}};
\draw (312,152.67) node [anchor=north west][inner sep=0.75pt]   [align=left] {{\small $\mathbf{0}$}};
\end{tikzpicture}
  \caption{Polytope $P_{2,3}$ with the labeling of the vertices.}
  \label{fig:polytope 2 3}
\end{subfigure}
\begin{subfigure}{.5\textwidth}
  \centering

\tikzset{every picture/.style={line width=0.75pt}} 

\begin{tikzpicture}[x=1pt,y=1pt,yscale=-1,xscale=1]

\draw [color={rgb, 255:red, 155; green, 155; blue, 155 }  ,draw opacity=1 ]   (255.67,119.33) -- (305.67,119.33) ;
\draw [color={rgb, 255:red, 155; green, 155; blue, 155 }  ,draw opacity=1 ]   (305.67,180.83) -- (305.67,69.17) ;
\draw [color={rgb, 255:red, 155; green, 155; blue, 155 }  ,draw opacity=1 ]   (305.67,119.33) -- (335.67,89.33) ;
\draw  [color={rgb, 255:red, 245; green, 166; blue, 35 }  ,draw opacity=1 ][fill={rgb, 255:red, 245; green, 166; blue, 35 }  ,fill opacity=0.34 ] (305.67,89.5) -- (335.33,119.33) -- (316,168.83) -- (284.67,139.55) -- cycle ;
\draw [color={rgb, 255:red, 245; green, 166; blue, 35 }  ,draw opacity=1 ]   (316,139.17) -- (316,168.83) ;
\draw [color={rgb, 255:red, 245; green, 166; blue, 35 }  ,draw opacity=1 ] [dash pattern={on 4.5pt off 4.5pt}]  (305.67,119.33) -- (316,168.83) ;
\draw [color={rgb, 255:red, 245; green, 166; blue, 35 }  ,draw opacity=1 ]   (335.33,119.33) -- (316,139.17) ;
\draw [color={rgb, 255:red, 245; green, 166; blue, 35 }  ,draw opacity=1 ]   (316,139.17) -- (284.67,139.55) ;
\draw [color={rgb, 255:red, 155; green, 155; blue, 155 }  ,draw opacity=1 ]   (275.33,149.17) -- (284.67,139.55) ;
\draw [color={rgb, 255:red, 155; green, 155; blue, 155 }  ,draw opacity=1 ]   (335.33,119.33) -- (355.67,119.33) ;
\draw [color={rgb, 255:red, 245; green, 166; blue, 35 }  ,draw opacity=1 ]   (305.67,89.17) -- (316,139.17) ;
\draw [color={rgb, 255:red, 245; green, 166; blue, 35 }  ,draw opacity=1 ] [dash pattern={on 4.5pt off 4.5pt}]  (305.67,119.33) -- (284.67,139.55) ;
\draw [color={rgb, 255:red, 245; green, 166; blue, 35 }  ,draw opacity=1 ] [dash pattern={on 4.5pt off 4.5pt}]  (305.67,119.33) -- (335.33,119.33) ;
\draw  [color={rgb, 255:red, 189; green, 16; blue, 224 }  ,draw opacity=1 ][fill={rgb, 255:red, 189; green, 16; blue, 224 }  ,fill opacity=1 ] (304.75,89.5) .. controls (304.75,88.99) and (305.16,88.58) .. (305.67,88.58) .. controls (306.17,88.58) and (306.58,88.99) .. (306.58,89.5) .. controls (306.58,90.01) and (306.17,90.42) .. (305.67,90.42) .. controls (305.16,90.42) and (304.75,90.01) .. (304.75,89.5) -- cycle ;
\draw  [color={rgb, 255:red, 189; green, 16; blue, 224 }  ,draw opacity=0.54 ][fill={rgb, 255:red, 189; green, 16; blue, 224 }  ,fill opacity=0.49 ] (304.75,119.33) .. controls (304.75,118.83) and (305.16,118.42) .. (305.67,118.42) .. controls (306.17,118.42) and (306.58,118.83) .. (306.58,119.33) .. controls (306.58,119.84) and (306.17,120.25) .. (305.67,120.25) .. controls (305.16,120.25) and (304.75,119.84) .. (304.75,119.33) -- cycle ;
\draw  [color={rgb, 255:red, 189; green, 16; blue, 224 }  ,draw opacity=1 ][fill={rgb, 255:red, 189; green, 16; blue, 224 }  ,fill opacity=1 ] (315.08,168.83) .. controls (315.08,168.33) and (315.49,167.92) .. (316,167.92) .. controls (316.51,167.92) and (316.92,168.33) .. (316.92,168.83) .. controls (316.92,169.34) and (316.51,169.75) .. (316,169.75) .. controls (315.49,169.75) and (315.08,169.34) .. (315.08,168.83) -- cycle ;
\draw  [color={rgb, 255:red, 189; green, 16; blue, 224 }  ,draw opacity=1 ][fill={rgb, 255:red, 189; green, 16; blue, 224 }  ,fill opacity=1 ] (334.42,119.33) .. controls (334.42,118.83) and (334.83,118.42) .. (335.33,118.42) .. controls (335.84,118.42) and (336.25,118.83) .. (336.25,119.33) .. controls (336.25,119.84) and (335.84,120.25) .. (335.33,120.25) .. controls (334.83,120.25) and (334.42,119.84) .. (334.42,119.33) -- cycle ;
\draw  [color={rgb, 255:red, 189; green, 16; blue, 224 }  ,draw opacity=1 ][fill={rgb, 255:red, 189; green, 16; blue, 224 }  ,fill opacity=1 ] (283.75,139.55) .. controls (283.75,139.04) and (284.16,138.63) .. (284.67,138.63) .. controls (285.17,138.63) and (285.58,139.04) .. (285.58,139.55) .. controls (285.58,140.05) and (285.17,140.46) .. (284.67,140.46) .. controls (284.16,140.46) and (283.75,140.05) .. (283.75,139.55) -- cycle ;
\draw  [color={rgb, 255:red, 189; green, 16; blue, 224 }  ,draw opacity=1 ][fill={rgb, 255:red, 189; green, 16; blue, 224 }  ,fill opacity=1 ] (315.08,139.17) .. controls (315.08,138.66) and (315.49,138.25) .. (316,138.25) .. controls (316.51,138.25) and (316.92,138.66) .. (316.92,139.17) .. controls (316.92,139.67) and (316.51,140.08) .. (316,140.08) .. controls (315.49,140.08) and (315.08,139.67) .. (315.08,139.17) -- cycle ;

\draw (336.33,110.33) node [anchor=north west][inner sep=0.75pt]   [align=left] {{\small $a_2$}};
\draw (283,110.33) node [anchor=north west][inner sep=0.75pt]   [align=left] {{\small $a_1$}};
\draw (308.33,83.59) node [anchor=north west][inner sep=0.75pt]   [align=left] {{\small $a_3$}};
\draw (327.0,136.67) node [anchor=north west][inner sep=0.75pt]   [align=left] {{\small $b_1$}};
\draw (273,132) node [anchor=north west][inner sep=0.75pt]   [align=left] {{\small $b_2$}};
\draw (313.33,171.33) node [anchor=north west][inner sep=0.75pt]   [align=left] {{\small $b_3$}};

\end{tikzpicture}

  \caption{Polytope $P_3$ with the labeling of the vertices induced by $P_{3,3}$.}
  \label{fig:polytope 3 4}
\end{subfigure}
\caption{Polytopes $P_{2,3}$ and $P_3$ with the labeling of their vertices.}

\end{figure}

Now, let $i\in[k]$. Without loss of generality, we may assume that $i=k$. Then, the coordinate ring $\mathcal{S}/\mathcal{Q}_k$ is the ring
\[
\frac{
\C[\alpha_{k,1},\ldots,\alpha_{k,k-1},\beta_1,\ldots,\beta_k]}{\langle \alpha_{k,1}\beta_1-\alpha_{k,s}\beta_s: r\in[k-1]\rangle}
\simeq 
\frac{
\C[\alpha_{k,1},\ldots,\alpha_{k,k-1},\beta_1,\ldots,\beta_{k-1}]}{\langle \alpha_{k,1}\beta_1-\alpha_{k,s}\beta_s: r\in[k-1]\rangle}
\otimes \C[\beta_k].
\]
Therefore, the variety defined by $\mathcal{Q}_i$ is isomorphic to $\mathbb{V}(I_{k-1})\times\A^{1}_\C$ where $I_{k-1}$ is defined in \eqref{eq:toric ideal}. Note that $I_a$ is defined in \eqref{eq:toric ideal} for $a\geq 2$. We set $I_1=\langle 0\rangle$. In particular, the polytope $P_1$ associated to $I_1$ is the one dimensional simplex.
As before, let $P_{k,k}$ and $\widehat{P}_{k,k}$ be the polytopes associated to the affine and projective varieties defined by $\mathcal{Q}_i$ respectively. By construction, $P_{k,k}$ is the cone of the polytope $P_{k-1}$.
Explicitly, we embed $P_{k-1}\subset\R^{k-1}$ in $\R^{k}$. Then, $P_{k,k}$ is the convex hull of $P_{k-1}$ and $\mathbf{e}_k$.
Then, $\widehat{P}_{k,k}$ is the cone of $P_{k,k}$.
 As before, we slightly change the labeling of the vertices of $P_{k,k}$ and $\widehat{P}_{k,k}$ for $i\in[k]$. We label the vertices corresponding to $\mathbf{0}$, $\mathbf{e}_1$, $\mathbf{e}_2$ and $\mathbf{e}_1+\mathbf{e}_2$ by $a_{k,1}$, $b_2$, $a_{k,2}$, and $b_1$ respectively. Similarly, the vertices $\mathbf{e}_j$ and $\mathbf{e}_1+\mathbf{e}_2-\mathbf{e}_j$ for $3\leq j\leq k-1$ are labeled by $a_{k,j}$ and $b_j$ respectively. The extra vertex $\mathbf{e}_k$ is labeled by $b_k$, and the vertex of the cone is denoted by $\mathbf{0}$. In \cref{fig:poyltope 3 3}, the convex hull of $P_2$ and $\mathbf{e}_3$ is depicted with the labeling induced by $\mathcal{Q}_3$. The polytope $P_{3,3}$ is the cone over the polytope in \cref{fig:poyltope 3 3}.

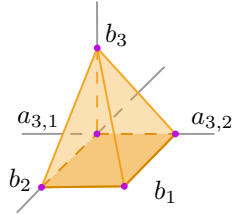
\begin{figure}[h]
    \centering

\tikzset{every picture/.style={line width=0.75pt}} 

\begin{tikzpicture}[x=1pt,y=1pt,yscale=-1,xscale=1]

\draw [color={rgb, 255:red, 155; green, 155; blue, 155 }  ,draw opacity=1 ] [dash pattern={on 4.5pt off 4.5pt}]  (301.92,103.33) -- (312,103.33) ;
\draw [color={rgb, 255:red, 155; green, 155; blue, 155 }  ,draw opacity=1 ]   (312.09,70.81) -- (312,53.17) ;
\draw [color={rgb, 255:red, 155; green, 155; blue, 155 }  ,draw opacity=1 ] [dash pattern={on 4.5pt off 4.5pt}]  (312,103.33) -- (327.5,87.81) ;
\draw [color={rgb, 255:red, 245; green, 166; blue, 35 }  ,draw opacity=1 ] [dash pattern={on 4.5pt off 4.5pt}]  (312.09,70.82) -- (312,103.33) ;
\draw [color={rgb, 255:red, 245; green, 166; blue, 35 }  ,draw opacity=1 ]   (312.09,70.82) -- (322.33,123.17) ;
\draw [color={rgb, 255:red, 155; green, 155; blue, 155 }  ,draw opacity=1 ]   (281.67,133.17) -- (291,123.55) ;
\draw [color={rgb, 255:red, 155; green, 155; blue, 155 }  ,draw opacity=1 ]   (341.67,103.33) -- (356.45,103.33) ;
\draw  [draw opacity=0][fill={rgb, 255:red, 232; green, 144; blue, 0 }  ,fill opacity=0.33 ] (312,103.33) -- (341.67,103.33) -- (322.33,123.17) -- (291,123.55) -- cycle ;
\draw [color={rgb, 255:red, 212; green, 133; blue, 0 }  ,draw opacity=1 ] [dash pattern={on 4.5pt off 4.5pt}]  (312,103.33) -- (341.67,103.33) ;
\draw [color={rgb, 255:red, 212; green, 133; blue, 0 }  ,draw opacity=1 ] [dash pattern={on 4.5pt off 4.5pt}]  (291,123.55) -- (312,103.33) ;
\draw [color={rgb, 255:red, 155; green, 155; blue, 155 }  ,draw opacity=1 ]   (327.5,87.81) -- (337.55,77.91) ;
\draw [color={rgb, 255:red, 155; green, 155; blue, 155 }  ,draw opacity=1 ]   (283.55,103.33) -- (299.25,103.31) ;
\draw  [color={rgb, 255:red, 245; green, 166; blue, 35 }  ,draw opacity=1 ][fill={rgb, 255:red, 245; green, 166; blue, 35 }  ,fill opacity=0.34 ] (322.33,123.17) -- (341.67,103.33) -- (312.09,70.82) -- (291,123.55) -- cycle ;
\draw [color={rgb, 255:red, 212; green, 133; blue, 0 }  ,draw opacity=1 ]   (291,123.55) -- (322.33,123.17) ;
\draw [color={rgb, 255:red, 212; green, 133; blue, 0 }  ,draw opacity=1 ]   (322.33,123.17) -- (341.67,103.33) ;
\draw  [color={rgb, 255:red, 189; green, 16; blue, 224 }  ,draw opacity=1 ][fill={rgb, 255:red, 189; green, 16; blue, 224 }  ,fill opacity=1 ] (311.18,70.82) .. controls (311.18,70.31) and (311.59,69.9) .. (312.09,69.9) .. controls (312.6,69.9) and (313.01,70.31) .. (313.01,70.82) .. controls (313.01,71.32) and (312.6,71.73) .. (312.09,71.73) .. controls (311.59,71.73) and (311.18,71.32) .. (311.18,70.82) -- cycle ;
\draw  [color={rgb, 255:red, 189; green, 16; blue, 224 }  ,draw opacity=1 ][fill={rgb, 255:red, 189; green, 16; blue, 224 }  ,fill opacity=1 ] (290.08,123.55) .. controls (290.08,123.04) and (290.49,122.63) .. (291,122.63) .. controls (291.51,122.63) and (291.92,123.04) .. (291.92,123.55) .. controls (291.92,124.05) and (291.51,124.46) .. (291,124.46) .. controls (290.49,124.46) and (290.08,124.05) .. (290.08,123.55) -- cycle ;
\draw  [color={rgb, 255:red, 189; green, 16; blue, 224 }  ,draw opacity=1 ][fill={rgb, 255:red, 189; green, 16; blue, 224 }  ,fill opacity=1 ] (321.42,123.17) .. controls (321.42,122.66) and (321.83,122.25) .. (322.33,122.25) .. controls (322.84,122.25) and (323.25,122.66) .. (323.25,123.17) .. controls (323.25,123.67) and (322.84,124.08) .. (322.33,124.08) .. controls (321.83,124.08) and (321.42,123.67) .. (321.42,123.17) -- cycle ;
\draw  [color={rgb, 255:red, 189; green, 16; blue, 224 }  ,draw opacity=1 ][fill={rgb, 255:red, 189; green, 16; blue, 224 }  ,fill opacity=1 ] (340.75,103.33) .. controls (340.75,102.83) and (341.16,102.42) .. (341.67,102.42) .. controls (342.17,102.42) and (342.58,102.83) .. (342.58,103.33) .. controls (342.58,103.84) and (342.17,104.25) .. (341.67,104.25) .. controls (341.16,104.25) and (340.75,103.84) .. (340.75,103.33) -- cycle ;
\draw  [color={rgb, 255:red, 189; green, 16; blue, 224 }  ,draw opacity=1 ][fill={rgb, 255:red, 189; green, 16; blue, 224 }  ,fill opacity=1 ] (311.08,103.33) .. controls (311.08,102.83) and (311.49,102.42) .. (312,102.42) .. controls (312.51,102.42) and (312.92,102.83) .. (312.92,103.33) .. controls (312.92,103.84) and (312.51,104.25) .. (312,104.25) .. controls (311.49,104.25) and (311.08,103.84) .. (311.08,103.33) -- cycle ;

\draw (344,94.33) node [anchor=north west][inner sep=0.75pt]   [align=left] {{ $a_{3,2}$}};
\draw (280.67,93.92) node [anchor=north west][inner sep=0.75pt]    [align=left] {{$a_{3,1}$}};
\draw (275,115.33) node [anchor=north west][inner sep=0.75pt]   [align=left] {{ $b_{2}$}};
\draw (314,60.33) node [anchor=north west][inner sep=0.75pt]   [align=left] {{$b_{3}$}};
\draw (330,120) node [anchor=north west][inner sep=0.75pt]   [align=left] {{ $b_1$}};

\end{tikzpicture}

    \caption{The convex hull of $P_2$ and $\mathbf{e}_3$ with the labeling induced by $P_{3,3}$.}
    \label{fig:poyltope 3 3}
\end{figure}

Now, we build the polyhedral complexes $\mathbb{S}_k$ and $\widehat{\mathbb{S}}_k$ by adding to the complex  $\mathbb{S}_k'$ and $\widehat{\mathbb{S}}_k'$ the polytopes $P_{i,k}$ and $\widehat{P}_{i,k}$ respectively.
By \cref{prop: toric variety}, for $i\not\in [k]$, the facets of $P_{i,k}$ are simplices whose vertices are 
\begin{equation}
    \label{eq:face polytope labeled}
    \{a_{i,j}: j\in S \}\cup\{b_j:j\in [k]\setminus S\}
\end{equation}
for $S\subseteq\{1,\ldots,k\}$. We denote such a facet by $F_S$.
We can associate to the facet $F_S$ of $P_{i,k}$ a face of the simplicial complex $\mathbb{S}_k'$ as follows. We consider the face of $\Delta_{S}$ spanned by the vectors
$\mathbf{b}_j$ for $j\in [k]\setminus S$ and $\mathbf{a}_{i,j}$ for $j\in S$. Such face is isomorphic to $F_S$ by identifying the vertices $b_j$ and $a_{i,j}$ of $F_S$ with the vectors $\mathbf{b}_j$ and $\mathbf{a}_{i,j}$ respectively. Similarly, for $i\in[k]$ we can identify each each facet of $P_{i,k}$ with an isomorphic face of the complex $\mathbb{S}_k'$ by identifying the vertices $a_{i,j}$ and $b_j$ with the vectors $\mathbf{a}_{i,j}$ and $\mathbf{b}_j$ respectively. 
The polyhedral complex $\mathbb{S}_k$ is the complex obtained by adding to the simplicial complex $\mathbb{S}_k'$ the polytopes $P_{i,k}$ through the above identification. Finally, the polyhedral complex $\widehat{\mathbb{S}}_k$ is the complex obtained by taking the cone over the complex $\mathbb{S}_k$. The complex $\widehat{\mathbb{S}}_k$ can also be constructed by adding, similarly to the construction of $\mathbb{S}_k$, the polytopes $\widehat{P}_{i,k}$ to the complex $\widehat{\mathbb{S}}_k'$. 

\begin{Prop}\label{prop:complex singularity}
    Let  $I,J$ be two ideals among the ideals in the primary decomposition of $\mathcal{J}_k$, and let $ Q_I$ and $Q_J$ be their corresponding polytopes in the complex $\widehat{\mathbb{S}}_k$. Then, 
    the intersection of $\mathbb{V}(I)$ and $\mathbb{V}(J)$ is the closure of a toric orbit that corresponds to the intersection of $Q_I$ and $Q_J$ in $\widehat{\mathbb{S}}_k$.
\end{Prop}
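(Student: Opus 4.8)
The plan is to reduce the statement to a purely toric/combinatorial fact about unions of affine toric varieties glued along torus-invariant subvarieties. The key point is that each $\mathbb{V}(I)$ for $I$ a prime in the decomposition of $\mathcal{J}_k$ (given in \cref{lemma: primary k1} and \cref{lemma:primary k2}) is, by \cref{lem:normal polytope} and \cref{prop: toric variety}, a normal affine toric variety for the torus $T$ acting on $\mathrm{spec}(\mathcal{S}_k'/\mathcal{J}_k)$ diagonally on the coordinates; moreover the polytope $Q_I$ attached to $I$ in $\widehat{\mathbb{S}}_k$ is precisely the polytope whose associated affine toric variety is $\mathbb{V}(I)$, with vertices labeled by the standard basis vectors indexing the variables \emph{not} in $I$. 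Since both $\mathbb{V}(I)$ and $\mathbb{V}(J)$ are coordinate-subspace-type or toric subvarieties cut out by monomial-and-binomial ideals that are $T$-stable, the scheme-theoretic intersection $\mathbb{V}(I)\cap\mathbb{V}(J)=\mathbb{V}(I+J)$ is again $T$-stable.

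First I would make explicit, using the descriptions of $\mathcal{J}_S$, $\mathcal{Q}_i$ (for $i\in[k]$ and $i\notin[k]$) recalled in the text just before the proposition, that $I+J$ is generated by a subset of the coordinate functions together with the binomial relations, so that $\mathbb{V}(I+J)$ is the affine toric variety associated to the face $Q_I\cap Q_J$ of $\widehat{\mathbb{S}}_k$. Concretely: for two ideals of type $\mathcal{J}_S,\mathcal{J}_{S'}$ this is exactly \cref{lem:simplicial singularity}, which already identifies $\widehat{\Delta}_S\cap\widehat{\Delta}_{S'}=\mathbb{V}_\R(\mathcal{J}_S+\mathcal{J}_{S'})\cap\widehat{\Delta}_{nk}$; for the mixed cases $\mathcal{J}_S\cap\mathcal{Q}_i$ and $\mathcal{Q}_i\cap\mathcal{Q}_{i'}$ one checks that adding the binomial generators of $\mathcal{Q}_i$ to the linear generators of $\mathcal{J}_S$ (resp.\ combining the two sets of binomials) corresponds under the vertex identifications set up in the paragraphs constructing $P_{i,k}$ and $\widehat{P}_{i,k}$ to passing to the common facet $F_S$ of the polytopes, i.e.\ to the shared face of $\widehat{\mathbb{S}}_k$. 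The statement that the intersection is ``the closure of a toric orbit'' then follows because a face of the polytope of an affine toric variety corresponds to the closure of a single $T$-orbit, and this closure is the toric variety of that face — which is exactly $Q_I\cap Q_J$.

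The order of steps: (1) Record that every component $\mathbb{V}(I)$ is a $T$-stable affine toric subvariety of $\mathbb{A}^{nk}$ whose polytope is $Q_I\subseteq\widehat{\mathbb{S}}_k$, citing \cref{lem:normal polytope}, \cref{prop: toric variety}, and the explicit primary decompositions. (2) Observe $\mathbb{V}(I)\cap\mathbb{V}(J)=\mathbb{V}(I+J)$ is $T$-stable, hence a union of orbit closures. (3) Case analysis ($\mathcal{J}_S$--$\mathcal{J}_{S'}$, handled by \cref{lem:simplicial singularity}; $\mathcal{J}_S$--$\mathcal{Q}_i$; $\mathcal{Q}_i$--$\mathcal{Q}_{i'}$) showing $I+J$ is generated by coordinates and binomials that cut out precisely the toric variety of the face $Q_I\cap Q_J$, using the vertex labelings. (4) Conclude that this face is a single polytope, hence its toric variety is irreducible and is the closure of one $T$-orbit, matching the claim. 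I expect the main obstacle to be step (3) in the genuinely mixed case $\mathcal{J}_S\cap\mathcal{Q}_i$ with $i\in[k]$: there the ring $\mathcal{S}_k'/\mathcal{Q}_i$ already has a nontrivial toric (binomial) structure and one must verify carefully that imposing the linear equations of $\mathcal{J}_S$ kills exactly the binomials indexed outside the face and leaves a toric variety isomorphic to the one of $Q_I\cap Q_J$ — i.e.\ that no ``extra'' binomial relations or embedded components appear. This is a finite but slightly delicate bookkeeping check with the generators in \eqref{eq:gens ideal def 1} and \eqref{eq:toric ideal}, and it is where I would spend the bulk of the write-up; everything else is formal toric geometry.
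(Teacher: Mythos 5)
Your proposal is correct and follows essentially the same route as the paper: a case-by-case computation of $I+J$ for the three types of pairs ($\mathcal{J}_S$--$\mathcal{J}_{S'}$ via \cref{lem:simplicial singularity}, $\mathcal{Q}_i$--$\mathcal{Q}_j$, and $\mathcal{Q}_i$--$\mathcal{J}_S$), identifying each sum with the ideal of the common face under the vertex labelings. The mixed case you flag as delicate is in fact the easiest part: since $i\in S$ (resp.\ $i\notin S$) forces the linear generators of $\mathcal{J}_S$ to kill one variable in every binomial of $\mathcal{Q}_i$, the sum $\mathcal{Q}_i+\mathcal{J}_S$ is always a coordinate ideal and no residual binomial relations survive.
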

\begin{proof}
    If $I$ and $J$ are both of the form $\mathcal{J}_S$ for $S\subseteq[k]$, then the proof follows from \cref{lem:simplicial singularity}. Assume now that $I=\mathcal{Q}_i$ and $J=\mathcal{Q}_j$ for $i,j\in[n]$. Then 
    \[\mathcal{Q}_i+\mathcal{Q}_j=\langle \alpha_{i,j}:i\in[n],j\in[k]\setminus \{i\}\rangle.\]
    Therefore, the coordinate ring of $\mathbb{V}(\mathcal{Q}_i)\cap \mathbb{V}(\mathcal{Q}_j)$ is $\C[\beta_1,\ldots,\beta_k]$. The face in $\widehat{P}_{i,k}$ corresponding to this intersection is the convex hull of $b_1,\dots,b_k$, which is exactly the intersection between $\widehat{P}_{i,k}$ and $\widehat{P}_{j,k}$ in $\widehat{\mathbb{S}}_k$.

    Assume now that $I=\mathcal{Q}_i$ and $J=\mathcal{J}_S$. We distinguish two cases. Assume first that $i\in S$, then $\mathcal{Q}_i+\mathcal{J}_S =\langle \beta_s:s\in[k]\setminus S, \,\alpha_{r,s}:r\in[n],s\in[k]\setminus\{r\}\rangle$
    and the coordinate ring of the intersection is $\C[\beta_r:r\in[k]\setminus S]$. Therefore, the face of $\Delta_S$ corresponding to this intersection is the convex hull of the vertices $\mathbf{b}_r$ for $r\in[k]\setminus S$. Such a face in $P_{i,k}$ is the one given by the vertices $b_r$ for $r\in[k]\setminus S$. This face coincides with the intersection of $\Delta_S$ and $P_{i,k}$ in $\mathbb{S}_k$. 

    Assume now that $i\not\in S$. Then, the coordinate ring of the intersection
    $\mathbb{V}(\mathcal{Q}_i+\mathcal{J}_S) $ is $\C[\beta_i:i\in[k]\setminus S,\alpha_{i,s}:s\in S$.
The face of $\Delta_S$ corresponding to this intersection is the convex hull of the vertices $\mathbf{b}_r$ for $r\in[k]\setminus S$ and $\mathbf{a}_{i,s}$ for $s\in S$. Such a face in $P_{i,k}$ is the one given by the vertices $b_r$ for $r\in[k]\setminus S$ and $a_{i,s}$ for $s\in S$. As before, this face coincides with the intersection of $\Delta_S$ and $P_{i,k}$ in $\mathbb{S}_k$. 
\end{proof}

From \cref{prop:complex singularity}, we deduce that singularity type of $\sHilb^m(X_n)$ at $[J]$ is described via the complex $\widehat{\mathbb{S}}_k$ locally around the vertex of the cone.

\begin{Ex}\label{ex: complex S 2}
Fix $n=3$ and $k=2$. The complex $\widehat{\mathbb{S}}_2$ describes the singularity type of $\sHilb^m(X_3)$ at a point of the form $[\langle x_1^{m-i+1},x_2^{i},x_3\rangle]$ for $1\leq i\leq m$.
The ring $\mathcal{S}_k'$ is $\C[\beta_1,\beta_2,\alpha_{1,2},\alpha_{2,1},\alpha_{3,1},\alpha_{3,2}]$. The primary decomposition of $\mathcal{J}_2$ is given by the ideals
\[
\begin{array}{c}
\mathcal{Q}_1=\langle\alpha_{2,1},\alpha_{3,1},\alpha_{3,2}\rangle,\,\,
\mathcal{Q}_2=\langle\alpha_{1,2},\alpha_{3,1},\alpha_{3,2}\rangle,\,\,
\mathcal{Q}_3=\langle\alpha_{1,2},\alpha_{2,1},\alpha_{3,1}\beta_1-\alpha_{3,2}\beta_2\rangle,\\
\mathcal{J}_{\{1\}}=\langle\beta_1,\alpha_{1,2},\alpha_{3,2}\rangle,\,\,\text{ and }\,\,
\mathcal{J}_{\{2\}}=\langle\beta_2,\alpha_{2,1},\alpha_{3,1}\rangle,\\
\end{array}
\]
To each of these ideals, we associate two polytopes. To $\mathcal{Q}_i$, we associate the polytopes $P_{i,2}$ and $\widehat{P}_{i,2}$. The polytopes $P_{1,2}$ and $P_{2,2}$ are the $2$--dimensional simplices with set of vertices $\{b_1,b_2,a_{1,2}\}$ and $\{b_1,b_2,a_{2,1}\}$ respectively. These polytopes are illustrated in purple in \cref{fig: complex S 2}. On the other hand, $P_{3,2}$ is an square and its set of vertices is $\{b_1,b_2,a_{3,1},a_{3,2}\}$. In \cref{fig: complex S 2}, $P_{3,2}$ is depicted in orange. The polytope $\widehat{P}_{i,2}$ is the cone over $P_{i,k}$.
Similarly,  $\mathcal{J}_{\{i\}}$ we associate the polytopes $\Delta_{\{i\}}$ and $\widehat{\Delta}_{\{i\}}$.  The polytope $\Delta_{\{1\}}$ is the $2$ dimensional simplex spanned by the vertices $\mathbf{b}_2,\mathbf{a}_{3,1}$ and $\mathbf{a}_{2,1}$. Analogously, $\Delta_{\{2\}}$ is the $2$ dimensional simplex spanned by the vertices $\mathbf{b}_1,\mathbf{a}_{3,2}$ and $\mathbf{a}_{1,2}$. Both $\Delta_{\{1\}}$ and $\Delta_{\{2\}}$ are illustrated in blue in \cref{fig: complex S 2}.
The complex $\mathbb{S}_2$, which is illustrated in   \cref{fig: complex S 2}, is obtained by gluing the polytopes $P_{1,2},P_{2,2},P_{3,2},\Delta_{\{1\}}$ and $\Delta_{\{2\}}$ through the faces spanned by vertices with the same labeling. The complex $\widehat{\mathbb{S}}_2$ is the cone over the complex $\mathbb{S}_2$.

    \begin{figure}
        \centering

\tikzset{every picture/.style={line width=0.75pt}} 

\begin{tikzpicture}[x=0.75pt,y=0.75pt,yscale=-1,xscale=1]

\draw  [color={rgb, 255:red, 144; green, 19; blue, 254 }  ,draw opacity=1 ][fill={rgb, 255:red, 144; green, 19; blue, 254 }  ,fill opacity=0.2 ] (279.58,71.92) -- (395.67,105.09) -- (325.67,105.09) -- cycle ;
\draw  [color={rgb, 255:red, 74; green, 144; blue, 226 }  ,draw opacity=1 ][fill={rgb, 255:red, 74; green, 144; blue, 226 }  ,fill opacity=0.28 ] (209.18,71.92) -- (279.58,71.92) -- (279.58,149.33) -- cycle ;
\draw  [color={rgb, 255:red, 245; green, 166; blue, 35 }  ,draw opacity=1 ][fill={rgb, 255:red, 245; green, 166; blue, 35 }  ,fill opacity=0.39 ] (325.67,105.09) -- (325.67,182.5) -- (279.58,149.33) -- (279.58,71.92) -- cycle ;
\draw  [color={rgb, 255:red, 74; green, 144; blue, 226 }  ,draw opacity=1 ][fill={rgb, 255:red, 74; green, 144; blue, 226 }  ,fill opacity=0.28 ] (325.67,105.09) -- (395.67,105.09) -- (325.67,182.5) -- cycle ;
\draw  [color={rgb, 255:red, 189; green, 16; blue, 224 }  ,draw opacity=1 ][fill={rgb, 255:red, 189; green, 16; blue, 224 }  ,fill opacity=0.25 ] (209.18,71.92) -- (325.67,105.09) -- (279.58,71.92) -- cycle ;
\draw [color={rgb, 255:red, 74; green, 144; blue, 226 }  ,draw opacity=1 ][line width=1.9]    (209.18,71.92) -- (279.58,149.33) ;
\draw [color={rgb, 255:red, 74; green, 144; blue, 226 }  ,draw opacity=1 ][line width=1.9]    (395.67,105.09) -- (325.67,182.5) ;
\draw [color={rgb, 255:red, 245; green, 166; blue, 35 }  ,draw opacity=1 ][line width=1.9]    (279.58,149.33) -- (325.67,182.5) ;
\draw  [color={rgb, 255:red, 220; green, 84; blue, 103 }  ,draw opacity=1 ][fill={rgb, 255:red, 227; green, 90; blue, 135 }  ,fill opacity=1 ] (277.75,149.33) .. controls (277.75,148.31) and (278.57,147.49) .. (279.58,147.49) .. controls (280.6,147.49) and (281.42,148.31) .. (281.42,149.33) .. controls (281.42,150.34) and (280.6,151.16) .. (279.58,151.16) .. controls (278.57,151.16) and (277.75,150.34) .. (277.75,149.33) -- cycle ;
\draw  [color={rgb, 255:red, 220; green, 84; blue, 103 }  ,draw opacity=1 ][fill={rgb, 255:red, 227; green, 90; blue, 135 }  ,fill opacity=1 ] (323.83,182.5) .. controls (323.83,181.49) and (324.65,180.67) .. (325.67,180.67) .. controls (326.68,180.67) and (327.5,181.49) .. (327.5,182.5) .. controls (327.5,183.51) and (326.68,184.33) .. (325.67,184.33) .. controls (324.65,184.33) and (323.83,183.51) .. (323.83,182.5) -- cycle ;
\draw  [color={rgb, 255:red, 220; green, 84; blue, 103 }  ,draw opacity=1 ][fill={rgb, 255:red, 227; green, 90; blue, 135 }  ,fill opacity=1 ] (323.83,105.09) .. controls (323.83,104.08) and (324.65,103.26) .. (325.67,103.26) .. controls (326.68,103.26) and (327.5,104.08) .. (327.5,105.09) .. controls (327.5,106.1) and (326.68,106.93) .. (325.67,106.93) .. controls (324.65,106.93) and (323.83,106.1) .. (323.83,105.09) -- cycle ;
\draw  [color={rgb, 255:red, 220; green, 84; blue, 103 }  ,draw opacity=1 ][fill={rgb, 255:red, 227; green, 90; blue, 135 }  ,fill opacity=1 ] (393.83,105.09) .. controls (393.83,104.08) and (394.65,103.26) .. (395.67,103.26) .. controls (396.68,103.26) and (397.5,104.08) .. (397.5,105.09) .. controls (397.5,106.1) and (396.68,106.93) .. (395.67,106.93) .. controls (394.65,106.93) and (393.83,106.1) .. (393.83,105.09) -- cycle ;
\draw  [color={rgb, 255:red, 220; green, 84; blue, 103 }  ,draw opacity=1 ][fill={rgb, 255:red, 227; green, 90; blue, 135 }  ,fill opacity=1 ] (277.75,71.92) .. controls (277.75,70.9) and (278.57,70.08) .. (279.58,70.08) .. controls (280.6,70.08) and (281.42,70.9) .. (281.42,71.92) .. controls (281.42,72.93) and (280.6,73.75) .. (279.58,73.75) .. controls (278.57,73.75) and (277.75,72.93) .. (277.75,71.92) -- cycle ;
\draw  [color={rgb, 255:red, 220; green, 84; blue, 103 }  ,draw opacity=1 ][fill={rgb, 255:red, 227; green, 90; blue, 135 }  ,fill opacity=1 ] (207.35,71.92) .. controls (207.35,70.9) and (208.17,70.08) .. (209.18,70.08) .. controls (210.2,70.08) and (211.02,70.9) .. (211.02,71.92) .. controls (211.02,72.93) and (210.2,73.75) .. (209.18,73.75) .. controls (208.17,73.75) and (207.35,72.93) .. (207.35,71.92) -- cycle ;

\draw (315.17,186.29) node [anchor=north west][inner sep=0.75pt]   [align=left] {{\footnotesize $a_{31}$}};
\draw (260.45,152.19) node [anchor=north west][inner sep=0.75pt]   [align=left] {{\footnotesize $a_{32}$}};
\draw (327.74,106.62) node [anchor=north west][inner sep=0.75pt]   [align=left] {{\footnotesize $b_{2}$}};
\draw (273.17,57.32) node [anchor=north west][inner sep=0.75pt]   [align=left] {{\footnotesize $b_{1}$}};
\draw (401.74,100.48) node [anchor=north west][inner sep=0.75pt]   [align=left] {{\footnotesize $a_{21}$}};
\draw (186.86,66.19) node [anchor=north west][inner sep=0.75pt]   [align=left] {{\footnotesize $a_{12}$}};

\end{tikzpicture}

        \caption{Complex $\mathbf{S}_2$ for $n=3$. The simplices in blue correspond to $\Delta_{\{1\}}$ and $\Delta_{\{2\}}$. The simplices $P_{1,2}$ and $P_{2,2}$ are illustrated in purple. The orange square corresponds to $P_{3,2}$. }
        \label{fig: complex S 2}
    \end{figure}
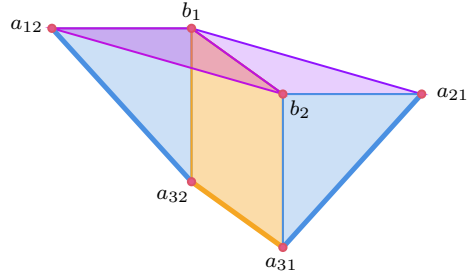

\end{Ex}

\begin{Rem}
    Since the punctual Hilbert scheme $\sHilb_\mathbf{0}^m(X_n)$ is invariant by the torus action, we can also illustrate it locally around $[J]$  through the complexes $\mathbf{S}_k$ and $\widehat{\mathbb{S}}_k$. The irreducible components of $\sHilb_\mathbf{0}^m(X_n)$ that contain $[J]$ correspond to faces of the complex. \cref{prop:local punctual}, allows us to carry out such identification. Given a maximal face $\Delta_S$ of $\mathbb{S}_k$ (see \eqref{eq: simplex S} for the definition of $\Delta_S$), the face $\mathrm{Conv}(\mathbf{a}_{i,j}:i\in[n]\setminus S,j\in S )$ corresponds to the hypersimplex $\Delta_{|S|,n}+\uu-\mathbf{e}_S-\mathbf{1}$ and the Grassmannian $\Sigma(m,n-|S|,\uu-\mathbf{e}_S)$. Similarly, given a maximal face of the form $P_{i,k}$, then the face spanned by the vertices $a_{i,j}$ for $j\in[k]\setminus\{i\}$ corresponds to the hypersimplex $\Delta_{n-1,n}+\uu+\mathbf{e}_i-\mathbf{1}$ and the Grassmannian $\Sigma(m,1,\uu+\mathbf{e}_i-\mathbf{1})$. In particular, the part of $\mathbb{S}_k$ associated to the punctual Hilbert scheme is the subcomplex formed by all the faces spanned by vertices of the form $a_{i,j}$. Analogously, for $\widehat{\mathbb{S}}_k$, these faces are those spanned by the vertex of the cone and the vertices of the form $a_{i,j}$. For instance, following \cref{ex: complex S 2}, for $n=3$ and $k=2$, the faces of $\mathbb{S}_2$ corresponding to the punctual Hilbert scheme are the edges $\overline{a_{1,2}a_{3,2}}$, $\overline{a_{3,2}a_{3,1}}$ and $\overline{a_{2,1}a_{3,1}}$. These edges are represented in \cref{fig: complex S 2} with a thick line. The cone over these three edges is exactly the local picture of $\mathcal{K}^{[m]}_2$ around the vertex corresponding to $[J]$. Such a vertex is an interior point on an edge of $(m-1)\cdot\Delta_2$.
\end{Rem}

Note that the complex $\mathbb{S}_k$ and the ring $\mathcal{S}'_k/\mathcal{J}_k$ depend only on $n$ and $k$. Therefore, the singularity types that appear in $\sHilb^m(X_n)$ depend only on $k$. 
We conclude this section with the following result.

\begin{Cor}
    For $m\geq 2$, any singularity type appearing in $\sHilb^m(X_n)$ also appears in $\sHilb^{n+1}(X_n)$
\end{Cor}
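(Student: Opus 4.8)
The plan is to use the torus action to reduce to the torus-fixed points — the vertices of $\mathcal{K}_n^{[m]}$ — and then to count which of the combinatorial parameters $k$ can be realized for a given $m$. First I would recall that $\sHilb^m(X_n)$ carries a $(\C^*)^n$-action whose singular locus is closed and invariant, and that by \cref{lemma:one parameter family} the closure of the orbit of any point $[I]$ contains a point $[J]$ with $\mu_m([J])$ a vertex of $\mathcal{K}_n^{[m]}$, i.e. $J=\langle x_1^{u_1},\dots,x_k^{u_k},x_{k+1},\dots,x_n\rangle$ with $u_i\geq 2$ for $i\in[k]$, $u_i=1$ for $i>k$, $|\uu|=m+n-1$, and $1\leq k\leq n$. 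Since an orbit is dense in its closure and the whole situation is $(\C^*)^n$-invariant, $[I]$ lies in every invariant neighbourhood of $[J]$; combining this with \cref{theo:def theory}, \cref{prop:complex singularity}, and the reduction of the local model to $\mathcal{S}'_k/\mathcal{J}_k$ carried out above (the extra coordinates of $\mathcal{S}_k$ relative to $\mathcal{S}'_k$ form a polynomial, hence smooth, factor), I would conclude that the singularity type of $\sHilb^m(X_n)$ at $[I]$ is the type of $\mathrm{Spec}(\mathcal{S}'_k/\mathcal{J}_k)$ at one of its points, for some $1\leq k\leq n$.

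Next I would pin down, for fixed $m\geq 2$, exactly which values of $k$ occur. A vertex with parameter $k$ exists in $\sHilb^m(X_n)$ if and only if one can choose $u_1,\dots,u_k\geq 2$ and $u_{k+1}=\dots=u_n=1$ with $\sum_i u_i=m+n-1$; the minimal value of this sum under these constraints is $2k+(n-k)=n+k$, so the condition is exactly $k\leq m-1$ (together with $k\leq n$). Writing $\mathcal{T}_k$ for the set of analytic germ types occurring at points of $\mathrm{Spec}(\mathcal{S}'_k/\mathcal{J}_k)$, the previous paragraph shows that the set of singularity types of $\sHilb^m(X_n)$ equals $\bigcup_{k=1}^{\min\{n,\,m-1\}}\mathcal{T}_k$. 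By the remark preceding this statement, $\mathcal{S}'_k/\mathcal{J}_k$ — and the polyhedral complex $\widehat{\mathbb{S}}_k$ encoding its toric structure — depends only on $n$ and $k$, so $\mathcal{T}_k$ is independent of $m$.

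I would then finish by specialising to $m=n+1$, where $\min\{n,m-1\}=n$, so that the set of singularity types of $\sHilb^{n+1}(X_n)$ is exactly $\bigcup_{k=1}^{n}\mathcal{T}_k$. For any $m\geq 2$ one has $\min\{n,m-1\}\leq n$, hence $\bigcup_{k=1}^{\min\{n,m-1\}}\mathcal{T}_k\subseteq\bigcup_{k=1}^{n}\mathcal{T}_k$, which is precisely the assertion. (This also explains why $n+1$ is the relevant threshold: realizing $k=n$ requires $m-1\geq n$.)

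The hard part will be the first paragraph — namely making precise that the affine model $\mathrm{Spec}(\mathcal{S}'_k/\mathcal{J}_k)$ governs not merely the formal germ at the vertex point $[J]$ itself but the germs at all nearby, not-necessarily-fixed, points as well. Concretely, one wants that in a $(\C^*)^n$-invariant analytic neighbourhood of $[J]$ the scheme $\sHilb^m(X_n)$ is isomorphic to a neighbourhood of the cone point in $\mathrm{Spec}(\mathcal{S}'_k/\mathcal{J}_k)$ times a polydisc. This should follow from the explicit toric description of the components of $\mathbb{V}(\mathcal{J}_k)$ and of their intersections in \cref{prop:complex singularity}, together with the orbit-closure argument; everything after this reduction is just bookkeeping with the inequality $k\leq m-1$.
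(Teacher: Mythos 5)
Your proof is correct and follows essentially the same route as the paper: reduce to a torus-fixed vertex via \cref{lemma:one parameter family}, observe that the local model $\mathcal{S}'_k/\mathcal{J}_k$ (equivalently the complex $\widehat{\mathbb{S}}_k$) depends only on $n$ and $k$, and note that the admissible values of $k$ are exactly $1\leq k\leq\min\{n,m-1\}$, all of which are realized precisely when $m\geq n+1$ — your inequality $n+k\leq m+n-1$ is the same criterion the paper phrases as ``every face of $n\cdot\Delta_{n-1}$ has an interior integer point.'' The reduction you flag as the hard part (that the local model governs germs at nearby non-fixed points, not just the formal germ at the vertex) is also implicitly relied upon, without further elaboration, in the paper's own proof.
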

\begin{proof}
Let $[J]\in \sHilb^m(X_n)$ be a singular point. By  \cref{lemma:one parameter family}, there exists $[J_0]\in\sHilb^m_\mathbf{0}(X_n)$ lying in the closure of the $(\C^{*})^n$--orbit of $[J]$ such that $\mu([J_0])$ is a vertex of $\mathcal{K}^{[m]}_n$. 
Let $k$ be the dimension of the face of $(m-1)\cdot \Delta_{n}$ where $\mu([J_0])$ lies. Then, we can see $[J]$ as a point in the complex $\widehat{\mathbb{S}}_k$ and the singularity type of $[J]$ only depends on the relative interior cell of $\widehat{\mathbb{S}}_k$ where it lies. Since for fixed $n$, the complex $\widehat{\mathbb{S}}_k$ depends only on $k$, the possible singularity types of $\sHilb^m(X_n)$ depend only on the possible values of $k$. 
So, it is enough to check that for every $k\in[n]$, there exists $[J_0]\in\sHilb^{n+1}_{\mathbf{0}}(X_n)$ such that 
$\mu([J_0])$ is a vertex of $\mathcal{K}^{[n+1]}_n$ lying in the interior of a $k$--dimensional face of $n\cdot\Delta_{n-1}$. Note that the vertices of $\mathcal{K}^{[n+1]}_n$ coincide with the integer points of $n\cdot\Delta_{n-1}$.
Then, the proof follows from the fact that the interior of every face of $n\cdot \Delta_{n-1}$ has an integer point. Indeed, the point $(n-k+1)\mathbf{e}_1+\mathbf{e}_2+\cdots+\mathbf{e}_k$ lies in the interior of a $k$--dimensional face of  $n\cdot \Delta_{n-1}$. Note that $n\cdot \Delta_{n-1}$ is the first simplex of the form $m\cdot \Delta_{n-1}$ with such a property.
\end{proof}

\section{Smoothable and non-smoothable components}\label{sec:sm and non sm}
With the results of the previous sections at hand, we proceed to analyze the irreducible components of the Hilbert scheme, which display strikingly different behaviors. Interestingly, the smoothable component turns out to be the most singular, whereas the non-smoothable ones exhibit remarkably well-behaved geometry.
\subsection{Non-smoothable components}\label{sec: nonsm}

 Let $C$ be a genus $g$ irreducible curve whose unique singularity $p\in C$ is a rational $n$--fold singularity. In this Subsection, we give a detailed description of the non-smoothable components of $\sHilb^m(C)$ and their normalization.
For $2\leq m'\leq \min\{n-1,m\}$, we consider the irreducible component
$\sHilb^{m,m'}(C)$
of $\sHilb^m(C)$. By \cref{co: irred comp structure}, we may see $\Sym^{m-m'}\left(C\setminus\{p\}\right)\times \Sigma(m',n+1-m' , \mathbf{1})$ as an open subset of this component. We now give a stratification of such a component where this open subset is the biggest strata.

 Let $\nu:\widetilde{C}\rightarrow C$ be the normalization of $C$, and let $p_1,\ldots,p_n$ be the $n$ preimages of the singularity $p$. In particular, $\nu$ gives an isomorphism between $\widetilde{C}\setminus\{p_1,\ldots,p_n\}$ and $C\setminus \{p\}$. We consider the following stratification of the symmetric product $\Sym^{m}(\widetilde{C})$. For $0\leq u\leq m$ and for a partition $\uu\in\Z_{\geq 0}^n$ with $|\uu|=u$, we consider the locus 
 \[
 \Sym^{m}(\widetilde{C})_\uu=\left\{
\sum_{i=1}^nu_ip_i+q: \text{ for } q\in \Sym^{m-u}\left(\widetilde{C}\setminus\{p_1,\ldots,p_n\}\right)
 \right\}\simeq \Sym^{m-u}\left(\widetilde{C}\setminus\{p_1,\ldots,p_n\}\right).
 \]
 Then, the locally closed subvarieties $ \Sym^{m}(\widetilde{C})_\uu$ form a stratification of $\Sym^{m}(\widetilde{C})$.
 For fixed $0\leq u\leq m$, we also consider the variety
 \[
\displaystyle\Sym^{m}(\widetilde{C})_u=\bigcup_{\uu\in\Z_{\geq 0}^n,\, |\uu|=u} \Sym^{m}(\widetilde{C})_\uu.
 \]
Note that this union is actually a disjoint union.
We can use this stratification of $\widetilde{C}$ to describe the irreducible component $\sHilb^{m,m'}(C)$.  For $\uu\in\Z_{\geq 0}^n$ with $0\leq|\uu|\leq m-m'$, we consider the map 
\[
\begin{array}{cccc}
\psi_{m',\uu}:&\Sym^{m-m'}(\widetilde{C})_\uu\times \Sigma(m',n+1-m',\mathbf{1})&\longrightarrow&\sHilb^{m,m'}(C)\\
 & (\sum_{i=1}^nu_ip_i+q,[J])&\longmapsto &\nu(q)\cup \mathbb{V}(\phi_{m',\uu}([J]) ).
\end{array}
\]
Here $\nu$ also denotes the lift of the normalization map $\nu$ to the symmetric product of the curve, and
the map $\phi_{m',\uu}$ is defined in \eqref{eq: new map grass}.
The image of $\psi_{m',\uu}$ is
\[
\sHilb^{m,m',\uu}(C):=\left\{
\left[Z\cup\mathbb{V}(J)\right]: Z\in \Sym^{m-m'-|\uu|}(C\setminus\{\mathbf{0}\}) \text{ and }[J]\in\Sigma(m'+|\uu|,n+1-m',\uu+\mathbf{1})
\right\},
\]
which is isomorphic to $\Sym^{m-m'-|\uu|}(C\setminus\{\mathbf{0}\})\times \Sigma(m'+|\uu|,n+1-m',\uu+\mathbf{1})$. Using that $C\setminus\{0\}\simeq \widetilde{C}\setminus\{p_1,\ldots,p_n\}$, we deduce that $\psi_{m',\uu}$ is an isomorphism onto $\sHilb^{m,m',\uu}(C)$. Note that the varieties $\sHilb^{m,m',\uu}(C)$ do not provide a stratification of $\sHilb^{m,m'}(C)$ since they are not disjoint. Indeed, for $\uu,\mathbf{v}\in\Z^n_{\geq 0}$ with $|\uu|=|\mathbf{v}|$, we have that the intersection $\sHilb^{m,m',\uu}(C)\cap \sHilb^{m,m',\mathbf{v}}(C)$ is the product of $\Sym^{m-m'-|\uu|}(C\setminus\{\mathbf{0}\})$ with 
\[
\Sigma(m'+|\uu|,n+1-m',\uu+\mathbf{1})\cap \Sigma(m'+|\mathbf{v}|,n+1-m',\mathbf{v}+\mathbf{1}).
\]
The above intersection is done in $ \mathcal{G}_{n+1-m',n}^{m'+|\uu|}\subseteq \sHilb_\mathbf{0}^m(C)$ and might not be empty.
To solve this problem, for $0\leq u\leq m-m'$, we consider the map 
\[
\psi_{m',u}:\Sym^{m-m'}(\widetilde{C})_u\times \Sigma(m',n+1-m',\mathbf{1})\longrightarrow\sHilb^{m,m'}(C)
\]
whose restriction to each connected component $\Sym^{m-m'}(\widetilde{C})_\uu$ is $\psi_{m',\uu}$. Note that $\psi_{m',0}$ is the birational morphism between $\Sym^{m-m'}(C)\times\Sigma(m',n+1-m',\mathbf{1})$ and $\sHilb^{m,m'}(C)$. The image of $\psi_{m',u}$ is the locally closed subvariety
\begin{equation}\label{eq:strata component}
\sHilb^{m,m',u}(C)\!:=\!\left\{[Z\cup\mathbb{V}(J)]\!\in\! \sHilb^{m,m'}(X_n): Z\in \Sym^{m-m'-u}(\widetilde{C}\!\setminus\!\{p_1,\ldots,p_n\})\text{ and }[J]\in \mathcal{G}_{n+1-m',n}^{m'+u}\right\}
\end{equation}
Using \cref{theo:reduced punctual}, we may see $\mathcal{G}_{n+1-m',n}^{m'+u}$ as the subvariety of $\sHilb_\mathbf{0}^{m'+u}(C)$ given by the union of the components of the form $\Sigma(m',n+1-m',\uu+\mathbf{1})$ with $|\uu|=u$.
Note that for $u=m-m'$, we get that $\sHilb^{m,m',m-m'}(C)\simeq\mathcal{G}^{m}_{n+1-m',n}$.
From \eqref{eq:strata component}, we get that
\begin{equation}\label{eq: iso strata}
\sHilb^{m,m',u}(C)\simeq \Sym^{m-m'-u}(\widetilde{C}\setminus\{p_1,\ldots,p_n\})\times \mathcal{G}_{n+1-m',n}^{m'+u}.
\end{equation}
Moreover, note that $\sHilb^{m,m',u}(C)$ and $\sHilb^{m,m',v}(C)$ are disjoint for $0\leq u<v\leq m-m'$ and, by \cref{prop:limits}, we deduce that 
\[
\sHilb^{m,m'}(C)=\bigsqcup_{0\leq u\leq m-m'} \sHilb^{m,m',u}(C).
\]
Therefore, the varieties $\sHilb^{m,m',u}(C)$ provide a stratification of $\sHilb^{m,m'}(C)$.

   \begin{figure}

\begin{subfigure}{.45\textwidth}
  \centering

\tikzset{every picture/.style={line width=0.75pt}} 

\begin{tikzpicture}[x=0.5pt,y=0.5pt,yscale=-1,xscale=1]

\draw  [color={rgb, 255:red, 189; green, 16; blue, 224 }  ,draw opacity=1 ][line width=0.75]  (394.8,44) -- (429.42,103.32) -- (360.18,103.32) -- cycle ;
\draw [color={rgb, 255:red, 189; green, 16; blue, 224 }  ,draw opacity=1 ]   (265.63,173.85) -- (288.08,151.28) -- (334.36,104.76) -- (365.48,73.48) -- (394.8,44) ;
\draw [color={rgb, 255:red, 189; green, 16; blue, 224 }  ,draw opacity=1 ]   (300.25,233.17) -- (429.42,103.32) ;
\draw [color={rgb, 255:red, 189; green, 16; blue, 224 }  ,draw opacity=1 ]   (231.01,233.17) -- (251.84,212.14) ;
\draw [color={rgb, 255:red, 189; green, 16; blue, 224 }  ,draw opacity=1 ]   (231.01,233.17) -- (295.38,168.45) -- (343.26,120.32) -- (360.18,103.32) ;
\draw [color={rgb, 255:red, 189; green, 16; blue, 224 }  ,draw opacity=1 ]   (295.38,168.45) -- (299.74,164.07) ;
\draw [color={rgb, 255:red, 189; green, 16; blue, 224 }  ,draw opacity=1 ]   (343.26,120.32) -- (345.52,118.05) ;
\draw  [color={rgb, 255:red, 189; green, 16; blue, 224 }  ,draw opacity=1 ][line width=0.75]  (265.63,173.85) -- (300.25,233.17) -- (231.01,233.17) -- cycle ;
\draw  [color={rgb, 255:red, 189; green, 16; blue, 224 }  ,draw opacity=1 ][fill={rgb, 255:red, 189; green, 16; blue, 224 }  ,fill opacity=0.1 ] (394.8,44) -- (429.42,103.32) -- (300.25,233.17) -- (231.01,233.17) -- (265.63,173.85) -- cycle ;
\draw  [color={rgb, 255:red, 2; green, 23; blue, 232 }  ,draw opacity=1 ][fill={rgb, 255:red, 0; green, 50; blue, 237 }  ,fill opacity=0.4 ][line width=0.75]  (380.14,58.74) -- (414.76,118.05) -- (345.52,118.05) -- cycle ;
\draw  [color={rgb, 255:red, 80; green, 217; blue, 227 }  ,draw opacity=1 ][fill={rgb, 255:red, 80; green, 222; blue, 227 }  ,fill opacity=0.49 ][line width=0.75]  (334.36,104.76) -- (368.98,164.07) -- (299.74,164.07) -- cycle ;
\draw  [color={rgb, 255:red, 74; green, 144; blue, 226 }  ,draw opacity=1 ][fill={rgb, 255:red, 74; green, 144; blue, 226 }  ,fill opacity=0.47 ][line width=0.75]  (286.46,152.82) -- (321.08,212.14) -- (251.84,212.14) -- cycle ;
\draw  [color={rgb, 255:red, 245; green, 166; blue, 35 }  ,draw opacity=1 ][fill={rgb, 255:red, 245; green, 166; blue, 35 }  ,fill opacity=1 ] (284.74,152.82) .. controls (284.74,151.87) and (285.51,151.1) .. (286.46,151.1) .. controls (287.41,151.1) and (288.18,151.87) .. (288.18,152.82) .. controls (288.18,153.77) and (287.41,154.54) .. (286.46,154.54) .. controls (285.51,154.54) and (284.74,153.77) .. (284.74,152.82) -- cycle ;
\draw  [color={rgb, 255:red, 245; green, 166; blue, 35 }  ,draw opacity=1 ][fill={rgb, 255:red, 245; green, 166; blue, 35 }  ,fill opacity=1 ] (344.22,118.05) .. controls (344.22,117.34) and (344.8,116.76) .. (345.52,116.76) .. controls (346.23,116.76) and (346.81,117.34) .. (346.81,118.05) .. controls (346.81,118.77) and (346.23,119.35) .. (345.52,119.35) .. controls (344.8,119.35) and (344.22,118.77) .. (344.22,118.05) -- cycle ;
\draw  [color={rgb, 255:red, 245; green, 35; blue, 64 }  ,draw opacity=1 ][fill={rgb, 255:red, 245; green, 35; blue, 101 }  ,fill opacity=1 ] (319.36,212.14) .. controls (319.36,211.19) and (320.13,210.42) .. (321.08,210.42) .. controls (322.03,210.42) and (322.8,211.19) .. (322.8,212.14) .. controls (322.8,213.09) and (322.03,213.86) .. (321.08,213.86) .. controls (320.13,213.86) and (319.36,213.09) .. (319.36,212.14) -- cycle ;
\draw  [color={rgb, 255:red, 245; green, 35; blue, 64 }  ,draw opacity=1 ][fill={rgb, 255:red, 245; green, 35; blue, 101 }  ,fill opacity=1 ] (298.02,164.07) .. controls (298.02,163.12) and (298.79,162.35) .. (299.74,162.35) .. controls (300.69,162.35) and (301.46,163.12) .. (301.46,164.07) .. controls (301.46,165.02) and (300.69,165.79) .. (299.74,165.79) .. controls (298.79,165.79) and (298.02,165.02) .. (298.02,164.07) -- cycle ;
\draw  [color={rgb, 255:red, 105; green, 195; blue, 5 }  ,draw opacity=1 ][fill={rgb, 255:red, 102; green, 191; blue, 5 }  ,fill opacity=1 ] (332.64,104.76) .. controls (332.64,103.81) and (333.41,103.04) .. (334.36,103.04) .. controls (335.31,103.04) and (336.08,103.81) .. (336.08,104.76) .. controls (336.08,105.71) and (335.31,106.48) .. (334.36,106.48) .. controls (333.41,106.48) and (332.64,105.71) .. (332.64,104.76) -- cycle ;
\draw  [color={rgb, 255:red, 105; green, 195; blue, 5 }  ,draw opacity=1 ][fill={rgb, 255:red, 102; green, 191; blue, 5 }  ,fill opacity=1 ] (413.04,118.05) .. controls (413.04,117.1) and (413.81,116.33) .. (414.76,116.33) .. controls (415.71,116.33) and (416.48,117.1) .. (416.48,118.05) .. controls (416.48,119) and (415.71,119.77) .. (414.76,119.77) .. controls (413.81,119.77) and (413.04,119) .. (413.04,118.05) -- cycle ;

\end{tikzpicture}

  \caption{Strata $\sHilb^{3,2,0}(C)\simeq \P^1\setminus\{p_1,p_2,p_3\}\times \P^2$.}
  \label{fig:strata 3 2 0}
\end{subfigure}
\begin{subfigure}{.45\textwidth}
  \centering

\tikzset{every picture/.style={line width=0.75pt}} 

\begin{tikzpicture}[x=0.65pt,y=0.65pt,yscale=-1,xscale=1]

\draw  [color={rgb, 255:red, 80; green, 217; blue, 227 }  ,draw opacity=1 ][fill={rgb, 255:red, 80; green, 222; blue, 227 }  ,fill opacity=0.49 ][line width=0.75]  (355.56,137.36) -- (390.18,196.67) -- (320.94,196.67) -- cycle ;
\draw  [color={rgb, 255:red, 74; green, 144; blue, 226 }  ,draw opacity=1 ][fill={rgb, 255:red, 74; green, 144; blue, 226 }  ,fill opacity=0.47 ][line width=0.75]  (286.31,137.36) -- (320.94,196.67) -- (251.69,196.67) -- cycle ;
\draw  [color={rgb, 255:red, 2; green, 23; blue, 232 }  ,draw opacity=1 ][fill={rgb, 255:red, 0; green, 50; blue, 237 }  ,fill opacity=0.4 ][line width=0.75]  (320.94,78.04) -- (355.56,137.36) -- (286.31,137.36) -- cycle ;
\draw  [color={rgb, 255:red, 245; green, 166; blue, 35 }  ,draw opacity=1 ][fill={rgb, 255:red, 245; green, 166; blue, 35 }  ,fill opacity=1 ] (284.6,137.36) .. controls (284.6,136.41) and (285.37,135.64) .. (286.31,135.64) .. controls (287.26,135.64) and (288.03,136.41) .. (288.03,137.36) .. controls (288.03,138.31) and (287.26,139.08) .. (286.31,139.08) .. controls (285.37,139.08) and (284.6,138.31) .. (284.6,137.36) -- cycle ;
\draw  [color={rgb, 255:red, 245; green, 35; blue, 64 }  ,draw opacity=1 ][fill={rgb, 255:red, 245; green, 35; blue, 101 }  ,fill opacity=1 ] (319.22,196.67) .. controls (319.22,195.72) and (319.99,194.95) .. (320.94,194.95) .. controls (321.89,194.95) and (322.66,195.72) .. (322.66,196.67) .. controls (322.66,197.62) and (321.89,198.39) .. (320.94,198.39) .. controls (319.99,198.39) and (319.22,197.62) .. (319.22,196.67) -- cycle ;
\draw  [color={rgb, 255:red, 105; green, 195; blue, 5 }  ,draw opacity=1 ][fill={rgb, 255:red, 102; green, 191; blue, 5 }  ,fill opacity=1 ] (353.84,137.36) .. controls (353.84,136.41) and (354.61,135.64) .. (355.56,135.64) .. controls (356.51,135.64) and (357.28,136.41) .. (357.28,137.36) .. controls (357.28,138.31) and (356.51,139.08) .. (355.56,139.08) .. controls (354.61,139.08) and (353.84,138.31) .. (353.84,137.36) -- cycle ;

\end{tikzpicture}

  \caption{Strata $\sHilb^{3,2,1}(C)\simeq \mathcal{G}_{1,3}^{3}$ of $\sHilb^{3,2}(C)$.}
  \label{fig:strata 3 2 1}
\end{subfigure}
\caption{Stratification of $\sHilb^{3,2}(C)$ for a genus $3$ curve with a rational $3$--fold singularity.}
\end{figure}
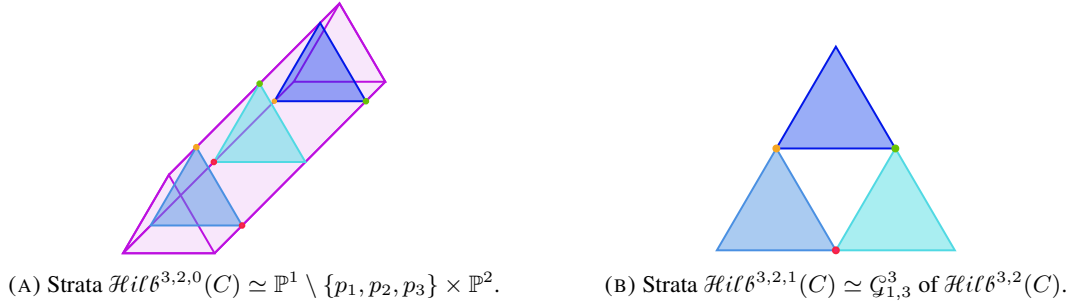

\begin{Ex}\label{ex: strata 3 2}
Let $C$ be an irreducible genus $3$ curve with a rational $3$--fold singularity at $p\in C$. The normalization of $C$ is $\P^1$ and the preimage of $p$ consists of $3$ points $p_1,p_2,p_3\in \P^1$.
   By \cref{co: irred comp structure}, the component $\sHilb^{3,2}(C)$ is birational to $\P^1\times \P^2$. We can stratify $\sHilb^{3,2}(C)$ by $\sHilb^{3,2,0}(C)$ and $\sHilb^{3,2,1}(C)$.  The strata $\sHilb^{3,2,0}(C)$ is the open subset of $\sHilb^{3,2}(C)$ of the form $\left(\P^1\setminus\{p_1,p_2,p_3\}\right)\times \P^2$, which is depicted in \cref{fig:strata 3 2 0}. The strata $\sHilb^{3,2,1}(C)$ is the variety $\mathcal{G}_{2,3}^{3}$, that is obtained by gluing three $\P^2$ by toric invariant points as illustrated in \cref{fig:strata 3 2 1}. 
\end{Ex}

\begin{Ex} 
    Let $C$ be as in \cref{ex: strata 3 2}, then $\sHilb^{4,2}(C)$ is stratified by $\sHilb^{4,2,0}(C)$, $\sHilb^{4,2,1}(C)$ and $\sHilb^{4,2,2}(C)$. The strata $\sHilb^{4,2,0}(C)$ is $\Sym^2(\P^1\setminus\{p_1,p_2,p_3\})\times \P^2 \simeq \P^2\setminus(l_1\cup l_2\cup l_3)\times \P^2$ where $l_i$ corresponds to the line in $\Sym^2(\P^1)\simeq \P^2$ of the form $p_i+q$ for $q\in\P^1$. 
    These lines $l_i$ and $l_j$ intersect in the point $p_i+p_j$. Moreover, the line $l_i$ has an extra marked point $2p_i$. 
    Then, the strata $\sHilb^{4,2,1}(C)$ is 
    \[
    \sHilb^{4,2,1}(C) =\left(\P^1\setminus\{p_1,p_2,p_3\}\right) \times \mathcal{G}_{2,3}^{3}.
    \]
We see that $\sHilb^{4,2,1}(C)$ has three components of the form $\left(\P^1\setminus\{p_1,p_2,p_3\}\right) \times\Sigma(3,2,\mathbf{1}+\mathbf{e}_i)$ for $i\in[3]$ coming from the three components of $\mathcal{G}_{2,3}^{3}$ (see \cref{fig:strata 3 2 1}). Via the map $\psi_{2,1}$, these three components are in correspondence with the three components of 
\[
\Sym^2(\P^1)_1 =  l_1\cup l_2\cup l_3\setminus \{p_i+p_j:1\leq i\leq j\leq 3\}.
\]
     The final strata is $\sHilb^{4,2,1}(C)=\mathcal{G}_{2,3}^4$ which is obtained by gluing $6$ projective planes as illustrated in  \cref{fig:hypercomplex 3 1 4}. Each of these $6$ projective planes corresponds to a hypersimplex in $\mathcal{K}^{[4]}_3$ of the form $\Delta_{1,3}+\mathbf{e}_i+\mathbf{e}_j$. We can associate such hypersimplex to the point $p_i+p_j$ among the $6$ special points in the lines $l_1,l_2,l_3$ via the map $\psi_{2,2}$.
\end{Ex}

The above stratification of $\sHilb^{m,m'}(C)$ allows us to calculate its normalization.

\begin{Thm}\label{thm:normalization}
    The birational map $\psi_{m',0}:\Sym^{m-m'}(\widetilde{C})\times\Sigma(m',n+1-m',\mathbf{1})\dashrightarrow\sHilb^{m,m'}(C)$ extends uniquely to a finite map
    \begin{equation}\label{eq:normalization}
    \psi_{m'}:\Sym^{m-m'}(\widetilde{C})\times \Sigma(m',n+1-m',\mathbf{1})\rightarrow\sHilb^{m,m'}(C)
    \end{equation}
    such that the restriction of $\psi_{m'}$  to $\Sym^{m-m'}(\widetilde{C})_u\times\Sigma(m',n+1-m',\mathbf{1})$ is $\psi_{m',u}$ for $0\leq u\leq m-m'$. In particular, the map \eqref{eq:normalization} is the normalization of $\sHilb^{m,m'}(C)$.
\end{Thm}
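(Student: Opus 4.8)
\textbf{Proof strategy for Theorem \ref{thm:normalization}.}

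The plan is to build the morphism $\psi_{m'}$ by gluing the pieces $\psi_{m',u}$ along the stratification and then to check that the resulting map is finite and birational onto its image, hence equals the normalization because the source is normal. First I would recall that $\Sym^{m-m'}(\widetilde{C})$ is smooth (being a symmetric product of a smooth curve) and $\Sigma(m',n+1-m',\mathbf{1})\simeq\gr(n+1-m',n)$ is smooth, so the product is a normal (indeed smooth) variety; any finite birational morphism onto $\sHilb^{m,m'}(C)$ from this source is automatically its normalization. So the real content is: (i) the set-theoretic maps $\psi_{m',u}$ assemble into a \emph{morphism} $\psi_{m'}$ on the whole product, (ii) this morphism is proper with finite fibers, and (iii) it is birational.

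For step (i), the key input is \cref{prop:limits}: given a point of $\Sym^{m-m'}(\widetilde{C})_\uu\times\Sigma(m',n+1-m',\mathbf{1})$, lying in the boundary stratum indexed by $\uu$ with $|\uu|=u$, I would exhibit it as a limit $z_0$ of points in the open stratum (moving the $u_i$ preimage points $p_i$ slightly off the singular fibers), and \cref{prop:limits} identifies the limit of the family as $\nu(q)\cup\V(\phi_{m',\uu}([J]))$, which is exactly $\psi_{m',\uu}(z_0)$, independent of the chosen one-parameter family. Since $\psi_{m',0}$ is already a morphism on the open stratum and the candidate extension agrees with all these limits, the extension is continuous along each stratum; to promote this to a morphism of schemes one uses that the source is smooth (hence normal), the target is separated, and the map is defined and agrees with a morphism on a dense open set whose complement has codimension $\geq 1$ — here I would invoke the valuative criterion / the fact that a rational map from a normal variety to a proper scheme is defined in codimension one, together with \cref{prop:limits} pinning down the values on the boundary divisors, and then iterate over the codimension-$u$ strata. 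The compatibility $\psi_{m'}|_{\Sym^{m-m'}(\widetilde{C})_u\times\Sigma} = \psi_{m',u}$ is then exactly the content of \cref{prop:limits} applied stratum by stratum.

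For steps (ii) and (iii): properness follows since $\Sym^{m-m'}(\widetilde{C})\times\Sigma(m',n+1-m',\mathbf{1})$ is proper (here one passes to the projective normalization $\widetilde{C}$ and uses that $\sHilb^m(C)$ is separated), so $\psi_{m'}$ is proper; finiteness of fibers then follows from the explicit description of the strata together with the isomorphisms \eqref{eq: iso strata}: on $\sHilb^{m,m',u}(C)\simeq \Sym^{m-m'-u}(\widetilde{C}\setminus\{p_1,\ldots,p_n\})\times \mathcal{G}_{n+1-m',n}^{m'+u}$, the preimage under $\psi_{m'}$ is the finite-to-one cover of $\mathcal{G}_{n+1-m',n}^{m'+u}$ by the disjoint union of the $\Sigma(m',n+1-m',\uu+\mathbf{1})\simeq\gr(n+1-m',n)$ over $|\uu|=u$ (times the identity on the symmetric-product factor). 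A proper quasi-finite morphism is finite. Birationality is clear since $\psi_{m',0}$ is birational onto the open stratum $\sHilb^{m,m',0}(C)$. Finally, a finite birational morphism onto a reduced scheme ($\sHilb^{m,m'}(C)$ is reduced by \cref{co: irred comp structure}/\cref{theo: reduced}) from a normal variety is the normalization. The main obstacle I anticipate is step (i): rigorously upgrading the pointwise/limit description from \cref{prop:limits} to a genuine morphism defined everywhere — one must be careful that the extension is regular across the \emph{intersections} of several boundary strata (where $\uu$ varies), not merely along each divisor separately, and this is where one really uses normality of the source plus the uniqueness-of-limit clause in \cref{prop:limits} to glue the locally-defined extensions consistently.
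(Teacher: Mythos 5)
Your overall strategy matches the paper's: define the extension stratum by stratum via \cref{prop:limits}, upgrade it to a morphism, deduce finiteness from properness plus quasi-finiteness on the strata (counting, over a point of $\sHilb^{m,m',u}(C)$, the components $\Sigma(m',n+1-m',\uu+\mathbf{1})$ of $\mathcal{G}_{n+1-m',n}^{m'+u}$, i.e.\ the hypersimplices of $\mathcal{K}_{m'-1,n}^{[m'+u]}$ containing the image under the moment map), and conclude it is the normalization since the source is normal and the map is finite and birational onto a reduced target. Steps (ii) and (iii) are essentially identical to the paper's argument.

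The one genuine gap is in step (i), and while you correctly flag it as the main obstacle, the mechanism you propose does not close it. Knowing that a rational map from a normal variety to a proper variety has indeterminacy locus of codimension $\geq 2$ gives no way to extend \emph{across} that locus, and iterating over the codimension-$u$ strata is not an operation that shrinks it: after the codimension-one extension you are left with a fixed closed indeterminacy set of codimension $\geq 2$ and no further tool, even though you know the intended values pointwise. What actually works, and is what the paper does, is a Zariski-main-theorem-type argument on the closure of the graph: let $\overline{\Gamma}$ be the closure of the graph of $\psi_{m',0}$ and $\pi:\overline{\Gamma}\to \Sym^{m-m'}(\widetilde{C})\times\Sigma(m',n+1-m',\mathbf{1})$ the first projection. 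The uniqueness-of-limit clause of \cref{prop:limits} shows that every fiber of $\pi$ over a point $y$ outside the domain of definition is the single point $(y,\psi_{m',\mathrm{top}}(y))$; since the source is normal, this forces $\pi$ to be an isomorphism (the paper invokes \cite[Theorem 2]{Kanev2021} here), and the second projection composed with $\pi^{-1}$ is the desired morphism, which then restricts to $\psi_{m',u}$ on each stratum by construction. Replacing your codimension-one-plus-iteration step with this graph-closure argument makes the proof complete.
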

\begin{proof}
First, we construct a map $\psi_{m',\mathrm{top}}$
at the level of topological spaces that extends continuously $\psi_{m',0}$. We construct $\psi_{m',\mathrm{top}}$ as the map from topological spaces whose restriction to $\Sym^{m-m'}(\widetilde{C})_\uu\times\Sigma(m',n+1-m',\mathbf{1})$ is $\psi_{m',\uu}$. We claim that $\psi_{m',\mathrm{top}}$ is continuous.
Let $z_0=(q,[J])$ be a point in $\Sym^{m-m'}(\widetilde{C})_\uu\times\Sigma(m',n+1-m',\mathbf{1})$, and let $Z$ be a one parameter family in $\Sym^{m-m'}(\widetilde{C})\times\Sigma(m',n+1-m',\mathbf{1})$ passing through $z_0$. Let $\mathbf{v}\in\Z_{\geq 0}^n$ be  the integer vector with highest $|\mathbf{v}|$ such that $Z\subseteq \Sym^{m-m'}(\widetilde{C})_\mathbf{v}\times\Sigma(m',n+1-m',\mathbf{1})$. Then $|\mathbf{v}|\leq |\uu|$ and $\uu-\mathbf{v}\in \Z_{\geq 0 }^n$. By \cref{prop:limits}, we get that 
\[
\underset{z\rightarrow z_0}{\lim}\psi_{m',\mathrm{top}}(z)=
\underset{z\rightarrow z_0}{\lim}\psi_{m',\mathbf{v}}(z)= q\,\cup\,\mathbb{V}\left(\phi_{m',\uu}([J])\right)=\psi_{m',\uu}(z_0).
\]
Therefore, we conclude that $\psi_{m',\mathrm{top}}$ is continuous. Moreover, by \cref{prop:limits}, $\psi_{m'\mathrm{top}}$ is the only possible extension of $\psi_{m',0}$ at the level of topological spaces.

Next, we show that $\psi_{m',0}$ extends not only at the level of topological spaces, but also at the scheme-theoretical level. Let $\Gamma$ be the graph of $\psi_{m',0}$ and let $\overline{\Gamma}$ be its closure. Consider the projection $\pi:\overline{\Gamma}\rightarrow \Sym^{m-m'}(\widetilde{C})\times \Sigma(m',n+1-m',\mathbf{1})$.
Now, let $y\in \Sym^{m-m'}(\widetilde{C})\times \Sigma(m',n+1-m',\mathbf{1})$ be a point outside the domain of definition of $\psi_{m',0}$. Since $\psi_{m',\mathrm{top}}$ is the unique topological extension of $\psi_{m',0}$, we deduce that $\pi^{-1}(y)$ is the point $(y, \psi_{m',\mathrm{top}}(y))$. Then, by \cite[Theorem 2]{Kanev2021}, $\psi_{m',0}$ extends to the map \eqref{eq:normalization}

Now, we show that \eqref{eq:normalization} is finite. By \cite[Tag 01W6]{stacks-project}, $\psi_{m'}$ is proper. Therefore, it is enough to check that $\psi_{m'}$ is quasi-finite. 
First of all, note that $\psi^{-1}_{m'}(\sHilb^{m,m',u}(C))=\Sym^{m-m'}(\widetilde{C})_{u}\times\Sigma(m',n+1-m',\mathbf{1})$. Since $\sHilb^{m,m',u}(C)$ is a stratification of $\sHilb^{m,m'}(C)$, it is enough to check that the map $\psi_{m',u}$ has finite fibers for all $0\leq u\leq m-m'$. 
Let $(q,[J])\in \sHilb^{m,m',u}(C)$. Since for each $\uu\in\Z_{\geq 0}^{n}$ with $|\uu|=u$, $\psi_{m',\uu}$ is an isomorphism onto $\sHilb^{m,m',\uu}(C)$ the number of fibers of $(q,[J])$ is exactly the number of distinct $\sHilb^{m,m',\uu}(C)$ containing this point. This is equivalent to the number of Grassmannians in $\mathcal{G}_{n+1-m',n}^{m'+u}$ containing $[J]$. In terms of the combinatorics, this is equivalent to counting how many hypersimplices of the form $\Delta_{m'-1,n}+\uu$ contain $\mu([J])$. We conclude that $\psi_{m'}$ is finite. Moreover, since $\psi_{m'}$ is finite, birational and its domain is normal, we conclude that \eqref{eq:normalization} is the normalization of $\sHilb^{m,m'}(C)$.

\end{proof}

\begin{Rem}
     \cref{thm:normalization} states that $\Sym^{m-m'}(\widetilde{C})\times \Sigma(m',n+1-m',\mathbf{1})$ is the normalization of $\sHilb^{m,m'}(C)$. Since this normalization is smooth, the map \eqref{eq:normalization} is also a resolution of singularities of $\sHilb^{m,m'}(C)$. Moreover, we can describe the locus where the normalization map $\psi_{m'}$ is not injective. Consider the strata $\sHilb^{m,m',u}(C)$ of $\sHilb^{m,m'}(C)$. By \eqref{eq: iso strata}, a point in $ \sHilb^{m,m',u}(C)$ is a tuple $(q,[J])$ where $q\in\Sym^{m-m'}(\widetilde{C}\setminus\{p_1,\ldots,p_n\})$ and $[J]\in\mathcal{G}^{m'+u}_{n+1-m',n}$.
     Then $(q,[J])$ lies in the birational locus of $\psi_{m'}$ if and only if $[J]$ is contained in only one irreducible component of $\mathcal{G}^{m'+u}_{n+1-m',n}$. In other words, $\mu([J])$ is contained in a unique hypersimplex of $\mathcal{K}^{[m'+u]}_{n+1-m',n}$. In terms of the fiber, the degree of the fiber $\psi_{m'}^{-1}((q,[J]))$ is the number of hypersimplices containing $\mu([J])$ in $\mathcal{K}^{[m'+u]}_{n+1-m',n}$.
\end{Rem}

Now, we replace $C$ by $X_n$. In this case, the geometry of the non-smoothable components of $\sHilb^m(X_n)$ is simpler than for irreducible curves. 

\begin{Prop}\label{prop: nonsmooth comp Xn}
    The non-smoothable components of $\sHilb^m(X_n)$ are isomorphic to 
    \begin{equation}\label{eq:irred comp X_n}
    \Sym^{u_1}( L_1)\times \cdots \times  \Sym^{u_1}( L_n)\times \Sigma(m',n+1-m',\mathbf{1})
    \end{equation}
    for $2\leq m'\leq \min\{m,n-1\}$ and $\mathbf{u}=(u_1,\dots, u_n)$ partition of $m-m'$. In particular, the non-smoothable components of $\sHilb^m(X_n)$ are smooth.
\end{Prop}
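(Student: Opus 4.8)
The plan is to leverage the description of the non-smoothable components already obtained in \cref{co: irred comp structure} together with the explicit realization of $X_n$ as the union of the lines $L_1,\dots,L_n$ meeting at $\mathbf 0$. Fix $2\le m'\le\min\{m,n-1\}$ and a partition $\uu=(u_1,\dots,u_n)$ of $m-m'$; by \cref{co: irred comp structure} the corresponding non-smoothable component $\sHilb^{m,m'}(X_n)$ is birational to $\Sym^{m-m'}(X_n)\times\Sigma(m',n+1-m',\mathbf 1)$, and by \cref{rem: number com X_n non smooth} each of its irreducible pieces is birational to $\Sym^{u_1}(L_1)\times\cdots\times\Sym^{u_n}(L_n)\times\Sigma(m',n+1-m',\mathbf 1)$. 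The point of the proposition is to upgrade this birational statement to an \emph{isomorphism}: the key geometric input is that, unlike a general irreducible curve $C$ (where the preimages $p_1,\dots,p_n$ of the singularity all lie on one irreducible normalization $\widetilde C$ and get glued), in $X_n$ the normalization is already the disjoint union $L_1\sqcup\cdots\sqcup L_n$, so the stratification of \cref{sec: nonsm} degenerates. Concretely, I would invoke \cref{thm:normalization} with $C$ replaced by $X_n$: its normalization map $\psi_{m'}:\Sym^{m-m'}(\widetilde{X_n})\times\Sigma(m',n+1-m',\mathbf 1)\to\sHilb^{m,m'}(X_n)$ is finite and birational, and the domain is smooth. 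So it remains only to check that $\psi_{m'}$ is injective, i.e.\ that on $X_n$ no actual gluing occurs.

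First I would make explicit the map realizing the claimed isomorphism. Given points $q_i=q_{i,1}+\cdots+q_{i,u_i}\in\Sym^{u_i}(L_i)$ (with multiplicity allowed, including at $\mathbf 0$) and an ideal $[J]\in\Sigma(m',n+1-m',\mathbf 1)$, send this datum to the length-$m$ subscheme obtained by taking $\V(\phi_{m',\uu'}([J]))$ where $\uu'$ records how many of the $q_{i,\bullet}$ equal $\mathbf 0$, and then adjoining the nonzero $q_{i,\bullet}$'s as reduced points on the respective lines — exactly the composite $\psi_{m',\uu'}$ of \cref{sec: nonsm}. By \cref{prop:limits} this assignment is well defined and continuous, and as each $q_{i,\bullet}$ collides with $\mathbf 0$ the limit is governed by $\phi_{m',\uu}$, which replaces the variable $x_i$ in the generators of $J$ by $x_i^{u_i+1}$. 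The crucial claim is injectivity: because $L_i\cap L_j=\{\mathbf 0\}$ for $i\ne j$, given a length-$m$ subscheme $Z$ in the image one recovers canonically its component supported at $\mathbf 0$ and its components on each $L_i\setminus\{\mathbf 0\}$; and the multiplicity of $Z$ along $L_i$ at $\mathbf 0$ is unambiguously read off as the exponent bump in the $x_i$-direction, so the partition $\uu$ and the ideal $[J]$ are determined. This is where the contrast with a general $C$ matters: for $C$ the points $p_1,\dots,p_n$ sit on a single irreducible curve and a length-$m$ scheme cannot distinguish which branch an embedded/colliding point came from once things degenerate, whereas on $X_n$ the branches are globally separated.

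The second half is to verify smoothness directly — which is essentially immediate once the isomorphism in \eqref{eq:irred comp X_n} is established, since $\Sym^{u_i}(L_i)\cong\Sym^{u_i}(\mathbb A^1)\cong\mathbb A^{u_i}$ is smooth and $\Sigma(m',n+1-m',\mathbf 1)\cong\gr(n+1-m',n)$ is smooth by \cref{irr comp Grass}, so the product is smooth. Alternatively, and perhaps cleaner as a cross-check, one can avoid proving the full isomorphism and argue: $\psi_{m'}$ of \cref{thm:normalization} is the normalization, it is finite birational with smooth source, and injectivity (established as above) plus Zariski's main theorem forces $\psi_{m'}$ to be an isomorphism onto its image, whence the component is smooth and isomorphic to the displayed product after identifying $\widetilde{X_n}=L_1\sqcup\cdots\sqcup L_n$ so that $\Sym^{m-m'}(\widetilde{X_n})_{\uu}\cong\prod_i\Sym^{u_i}(L_i)$.

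The main obstacle I anticipate is the injectivity/gluing check — making rigorous the claim that on $X_n$ (as opposed to a smooth-pointed irreducible curve) distinct tuples $(\uu,[J])$ give distinct length-$m$ subschemes, i.e.\ that the strata $\sHilb^{m,m',\uu}(X_n)$ for different $\uu$ are genuinely disjoint \emph{and} that $\psi_{m',\uu}$ remains injective on the boundary. The point is that the ``extra gluing'' present for $C$, which by \cref{thm:normalization} is measured by how many hypersimplices of $\mathcal K_{n+1-m',n}^{[m'+u]}$ contain $\mu([J])$, is \emph{not} an obstruction here because the global splitting of $X_n$ into its branches lets the subscheme remember, for each line $L_i$ separately, how much length was pushed into $\mathbf 0$ along that branch; I would spell this out using the primary decomposition of the ideal along with the fact that $x_ix_j=0$ in $R_n$ for $i\ne j$, so the $x_i$-adic and $x_j$-adic behaviours of any ideal in $R_n$ are independent. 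Everything else (well-definedness, finiteness, birationality) is supplied by \cref{prop:limits}, \cref{thm:normalization}, and \cref{rem: number com X_n non smooth}.
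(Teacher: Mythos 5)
Your set-theoretic analysis is essentially right: restricted to a fixed connected component $\prod_i\Sym^{u_i}(L_i)\times\Sigma(m',n+1-m',\mathbf{1})$ of the source, the extension of $\psi_{m',0}$ is injective, because the branches of $X_n$ are globally separated, so the vector $\mathbf{v}$ recording how much length has collapsed onto $\mathbf{0}$ along each $L_i$ is recovered from the subscheme together with the index $\uu$ of the component. The gap is in the final step: a finite, birational, \emph{bijective} morphism onto a variety need not be an isomorphism unless the target is already known to be normal. The normalization $\mathbb{A}^1\to\{y^2=x^3\}$ of the cuspidal cubic is finite, birational and bijective, yet not an isomorphism; Zariski's main theorem in the form you invoke presupposes normality of the target, which is precisely what is at stake. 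Your injectivity argument rules out gluing of branches but cannot rule out a unibranch non-normal (cusp-like) point on the boundary of the component, and the remark that ``smoothness is immediate once the isomorphism is established'' is therefore circular.

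The paper closes this gap in the opposite order: it first proves that the non-smoothable component $Z$ is \emph{smooth} by a local computation, and only then invokes uniqueness of the normalization to identify $Z$ with the product. Concretely, by \cref{lemma:one parameter family} one reduces to a point $[J]\in Z$ with $\mu([J])$ a vertex of $\mathcal{K}_n^{[m]}$; \cref{theo:def theory} computes $\widehat{\mathcal{O}}_{\sHilb^m(X_n),[J]}$, and \cref{prop: ideal translation 2} shows that the unique branch of this completed local ring belonging to $Z$ is cut out by an ideal $\mathcal{J}_S$, which by \cref{lemma:primary k2} defines an affine space. Hence $Z$ is smooth, in particular normal, and the finite birational map of \cref{thm:normalization} is then an isomorphism onto it. If you want to keep your route, you must supplement injectivity with unramifiedness of $\psi_{m'}$ along the boundary (a finite, universally injective, unramified morphism is a closed immersion), but verifying unramifiedness there is essentially the same tangent-space and deformation-theoretic computation that the paper performs.
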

\begin{proof}
Let $Z$ be a non-smoothable component of $\sHilb^m(X_n)$.
    By \cref{theo: irred comp reduced structure}, the non-smoothable components of $\sHilb^m(X_n)$ are birational to \eqref{eq:irred comp X_n}. By \cref{lemma:one parameter family}, it is enough to check the smoothness at a point $[J]$ supported at $\mathbf{0}$ such that $\mu([J])$ is a vertex of $\mathcal{K}_n^{[m]}$. By  \cref{prop:limits}, $[J]$ lies in  $\Sigma(m,n+1-m',\mathbf{1}+\uu)$. By \cref{prop: ideal translation} and \cref{prop: ideal translation 2}, the only component in the completion of the stalk of $\sHilb^m(X_n)$ at $[J]$ corresponding to $Z$ is a component associated to an ideal of the form $\mathcal{J}_S$. Since the ideal $\mathcal{J}_S$ defines an affine space (see  \cref{lemma:primary k2}), we conclude that $Z$ is smooth. 

    Now, in this case, the normalization map \eqref{eq:normalization} in \cref{thm:normalization} maps \eqref{eq:irred comp X_n}
    to $Z$. By the uniqueness of the normalization we deduce that $Z$ is isomorphic to \eqref{eq:irred comp X_n}.
\end{proof}

Note that \cref{prop: nonsmooth comp Xn} is no longer true if we replace $X_n$ by an irreducible curve $C$ with a rational $n$--fold singularity at $p$. From the same arguments used in \cref{prop: nonsmooth comp Xn}, we obtain the following.

\begin{Cor}\label{cor: sing nonsmooth}
Let $C$ be an irreducible curve whose only singularity $p$ is a rational $n$--fold singularity. Then, the singularities of the non-smoothable components $\sHilb^{m,m'}(C)$ are locally union of affine spaces.
\end{Cor}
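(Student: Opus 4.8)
\textbf{Proof proposal for Corollary \ref{cor: sing nonsmooth}.}

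The plan is to follow exactly the strategy used in \cref{prop: nonsmooth comp Xn}, but now keeping track of the fact that the curve $C$ may have positive genus, so that $\sHilb^m_{\mathrm{sm}}(C)$ is no longer a point-union of affine spaces but rather $\Sym^m(C)$. The local structure of $\sHilb^m(C)$ at a point $[J]$ decomposes, by the étale-local product description (cf.\ \cite{JelisiejewElementaryComponents}), into a product of the symmetric-product factor coming from the primary components of $J$ supported at the smooth locus of $C$, and the factors coming from the primary component supported at $p$. The first factor is smooth, so it does not contribute to the singularities; hence it suffices to understand the local geometry of $\sHilb^{m'}(X_n)$ at the point corresponding to the primary component supported at the $n$-fold singularity, for the appropriate $m'$. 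This reduces the problem entirely to the computations already carried out over $X_n$.

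First I would invoke \cref{co: irred comp structure} to identify the non-smoothable component $\sHilb^{m,m'}(C)$ and \cref{thm:normalization} to describe it as the image of $\Sym^{m-m'}(\widetilde C)\times\Sigma(m',n+1-m',\mathbf{1})$ under a finite birational map. The key point is that, étale-locally at a point $[J]$, the completed local ring of $\sHilb^{m,m'}(C)$ at $[J]$ is a product of the (smooth, hence regular) completed local ring of $\Sym^{m-|\mathbf{m}|}(C)$ at the smooth part of $Z$ with the completed local ring of $\sHilb^{m',m'}(X_n)$ at the primary component supported at the singularity. By \cref{prop:limits}, that primary component lies in $\Sigma(m',n+1-m',\mathbf{1}+\uu)$ for some $\uu$, and after applying the torus action and \cref{lemma:one parameter family} we may assume it corresponds to a vertex of $\mathcal{K}_n^{[m']}$, i.e.\ is of the form $\langle x_1^{u_1},\ldots,x_k^{u_k},x_{k+1},\ldots,x_n\rangle$. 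Then \cref{theo:def theory}, \cref{lemma: new rings}, \cref{prop: ideal translation} and \cref{prop: ideal translation 2} show that the component of the completed stalk of $\sHilb^{m'}(X_n)$ at this vertex corresponding to a non-smoothable component is precisely $\mathbb{V}(\mathcal{J}_S)$ for some $S$, which by \cref{lemma:primary k2} is an affine space. Putting the two factors together, the completed local ring of $\sHilb^{m,m'}(C)$ at $[J]$ is (a completion of) a product of a regular local ring with a union of affine spaces, and such a union is again a union of affine spaces (the regular factor does not affect this); this is exactly the assertion.

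I expect the only genuine point requiring care is the passage from the global curve $C$ to the local model $X_n$ via the étale-local product decomposition of the Hilbert scheme of points. This was already used in the proof of \cref{cor: component more singularities}, so I would cite it in the same way; the remaining content is a direct transcription of the $X_n$ computation and presents no real obstacle. A short remark should also note that, unlike in \cref{prop: nonsmooth comp Xn}, the component $\sHilb^{m,m'}(C)$ is now genuinely singular whenever $m-m'\geq 1$ and the relevant stratum $\mathcal{G}^{m'+u}_{n+1-m',n}$ has more than one component containing $\mu([J])$, since the $\Sym$-factor contributes positive-dimensional regular directions but the gluing of the Grassmannians persists; this is why the statement speaks of singularities that are \emph{locally} unions of affine spaces rather than asserting smoothness.
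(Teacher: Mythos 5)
Your proposal is correct and follows essentially the same route as the paper, whose proof of this corollary is simply the instruction to repeat the argument of \cref{prop: nonsmooth comp Xn}: reduce via the torus action and \cref{lemma:one parameter family} to a vertex of the hypersimplicial complex, apply \cref{theo:def theory}, \cref{lemma: new rings}, \cref{prop: ideal translation}, \cref{prop: ideal translation 2} and \cref{lemma:primary k2} to see that the branches of the completed stalk lying on a non-smoothable component are the linear subspaces $\mathbb{V}(\mathcal{J}_S)$, and observe that for an irreducible $C$ several such branches belong to the single component $\sHilb^{m,m'}(C)$, so one gets a union of affine spaces rather than smoothness. Your closing remark correctly pinpoints why the conclusion weakens from smoothness (the $X_n$ case) to a local union of affine spaces.
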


\subsection{Smoothable components}

For this subsection, we will use the notion of smoothable face, cf. \cref{def: smoothable face}. We start with the following.

\begin{Prop}\label{prop: smooth punctual}
    Let $[J]\in\sHilb_{\mathbf{0}}^{m}(X_n)$. Then $[J]$ is smoothable if and only if $\mu([J])$ lies in a smoothable face of $\mathcal{K}_n^{[m]}$. 
\end{Prop}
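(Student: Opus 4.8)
The statement is an ``if and only if'' where the left side (smoothability of $[J]$) is intrinsic and the right side is the combinatorial condition on $\mu([J])$. The plan is to reduce both sides to the simplest possible form and match them via \cref{prop: ideals to smoothable faces}, which already identifies exactly which ideals $[J]\in\sHilb^m_\mathbf{0}(X_n)$ map to a smoothable face: namely those of the form $J=\langle x_i^{u_i}:i\in S\rangle+\langle f\rangle$ with $S\subset[n]$, $\uu\in\Z_{\geq1}^n$ with $|\uu|=m+|S|$, and $f\in\langle x_i^{u_i}:i\notin S\rangle_\C$. So the whole task becomes: show that $[J]$ is smoothable as a point of $\sHilb^m(X_n)$ precisely when $J$ has this shape.

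First I would treat the ``if'' direction. Suppose $J=\langle x_i^{u_i}:i\in S\rangle+\langle f\rangle$ as above. Set $J'=\langle f\rangle\subset\C[x_i:i\notin S]$, which defines a length-$|\uu_{[n]\setminus S}|$ subscheme supported at $\mathbf{0}$ on $X_{n-|S|}$, and observe that $[J']$ lies in the component $\Sigma(|\uu_{[n]\setminus S}|,1,\uu_{[n]\setminus S})$ of the punctual Hilbert scheme of $X_{n-|S|}$ --- but this component is exactly $\sHilb^{\,|\uu_{[n]\setminus S}|}(L_j)$ for the single remaining coordinate line, hence $[J']$ is a length subscheme of a smooth curve and is therefore smoothable. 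Applying the map $\iota_{S,\uu_S}$ of \eqref{eq:inductive punctual map} and the first statement of \cref{theo: irred comp reduced structure} (for $\uu=\mathbf{1}$, $m=n$, where $\Sigma(n,1,\mathbf{1})\subseteq\sHilb^m_{\mathrm{sm}}(X_n)$) together with the commutativity of \eqref{eq:diagram inductive}, one deforms $\mathbb{V}(J)$ to a union of $m$ distinct points: deform the hyperplane-section part $\mathbb{V}(f)$ inside $L_j$ generically to $|\uu_{[n]\setminus S}|$ distinct points of $L_j$, and for each $i\in S$ deform the fat point $x_i^{u_i}$ to $u_i$ distinct points of $L_i$; these stay distinct and away from $\mathbf{0}$. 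Hence $[J]$ lies in the smoothable component. (Alternatively, \cref{prop: ideal translation} and \cref{prop: ideal translation 2} already geometrically identify the smoothable local components, so this can be phrased as: $[J]$ lies in the closure of the image of the smoothing maps \eqref{eq:map induct}.)

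For the ``only if'' direction I would use the torus action exactly as in \cref{theo:reduced punctual}. If $[J]$ is smoothable, so is every point in the closure of its $(\C^*)^n$-orbit, because smoothability is a closed condition and the orbit closure stays inside the smoothable component. By \cref{lemma:one parameter family} this closure contains an ideal $[J_0]$ with $\mu([J_0])$ a vertex of $\mathcal{K}_n^{[m]}$, i.e. $J_0=\langle x_1^{u_1},\ldots,x_k^{u_k},x_{k+1},\ldots,x_n\rangle$ for suitable $k$; and $[J_0]$ is smoothable. Now \cref{prop: ideal translation} (case $k=1$) and \cref{prop: ideal translation 2} (case $k\geq2$) enumerate the irreducible local components through $[J_0]$ and tell us which are smoothable: for $k=1$ the smoothable ones are $\langle\alpha_2,\dots,\alpha_n\rangle$ and the $\langle A_1,\alpha_j:j\neq i\rangle$, while the non-smoothable one is $\langle A_1,\alpha_{1,1}\rangle$, corresponding to $\Sigma(m,n+1-2,\ldots)$; for $k\geq2$ the smoothable ones are those labelled $\mathcal{K}_a$, and the $\mathcal{J}_S$-components are non-smoothable. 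So $[J_0]$ being smoothable forces it to lie on one of the $\mathcal{K}_a$-type components only when it is a \emph{limit} of such; but since $[J_0]$ is the torus-fixed point it lies on \emph{all} local components through it, so the real content is: if $[J]$ itself is smoothable then $[J]$ is not a generic point of any $\Sigma(m,l,\uu)$ with $(\uu,l)\neq(\mathbf{1},n+1-m)$. This is where I would instead argue directly via the singular/smoothable face dichotomy: by \cref{prop: ideals to smoothable faces} it suffices to show that if $\mu([J])$ does \emph{not} lie in a smoothable face, then $[J]$ is not smoothable, and for that I would show $[J]$ lies only on non-smoothable components --- a generic such $[J]$ lies in some $\Sigma(m,l,\uu)$ with $l\neq n+1-m$ or $\uu\neq\mathbf 1$, and by statement (3) of \cref{theo: irred comp reduced structure} together with the dimension bookkeeping, such $\Sigma(m,l,\uu)$ is contained in $\Hilb^{m,n+1-l}(X_n)$ which is non-smoothable for $l<n$, while the $\uu\neq\mathbf1$ translated copies are handled by \cref{prop:limits} (collapsing extra line-points does not make a non-smoothable scheme smoothable).

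\textbf{Main obstacle.} The delicate point is the ``only if'' direction: one must rule out that a smoothable subscheme has $\mu([J])$ on a non-smoothable face, i.e. that $[J]$ sits in the smoothable component despite its local model at a vertex being governed by a $\mathcal J_S$-type (non-smoothable) component. The clean way around this is precisely the torus-degeneration-plus-local-analysis used for reducedness: reduce to a vertex $[J_0]$, invoke \cref{prop: ideal translation} / \cref{prop: ideal translation 2} to list local components and their smoothability, and note that smoothability of $[J]$ propagates to $[J_0]$ along the orbit closure; then a $[J]$ with $\mu([J])$ not on a smoothable face would have to lie on the smoothable component, hence degenerate into a vertex lying on the smoothable local component $\mathcal K_a$, contradicting that its moment image lands in the $\mathcal J_S$-region (which, by \eqref{eq: local components} and the paragraph after \cref{prop: ideal translation 2}, is exactly the part of the local picture visible in $\mathcal K_n^{[m]}$). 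I expect the proof to be short once \cref{prop: ideals to smoothable faces} is in hand; the only care needed is the faithful bookkeeping between the combinatorial "smoothable face" condition and the geometric list of local components.
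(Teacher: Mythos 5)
Your ``if'' direction follows the paper's route (reduce via \cref{prop: ideals to smoothable faces} and \cref{prop:limits} to the case of a hyperplane section, then invoke statement (1) of \cref{theo: irred comp reduced structure}), but it contains a geometric error along the way: $\Sigma(|\uu_{[n]\setminus S}|,1,\uu_{[n]\setminus S})$ is \emph{not} $\sHilb^{|\uu_{[n]\setminus S}|}(L_j)$ for a single line, and $\mathbb{V}(f)$ for $f=\sum_{i\notin S}a_ix_i^{u_i}$ cannot be deformed to distinct points all lying on one line $L_j$ --- the quotient $R_{n-|S|}/\langle f\rangle$ has a fat direction along \emph{every} line $L_i$, $i\notin S$. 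The correct smoothing first splits off points on each of those lines (Prop.~\ref{prop:limits}) to reduce to a linear $f'$, and then moves the hyperplane $\mathbb{V}(f')$ off the origin. This is repairable, but as written the explicit deformation you describe does not exist.

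The real gap is in the ``only if'' direction, and you essentially diagnose it yourself without closing it. Degenerating to a torus-fixed vertex $[J_0]$ destroys all information, since $[J_0]$ lies on every local component through it, smoothable and non-smoothable alike; so nothing about $[J]$ can be read off from the smoothability of $[J_0]$. Your fallback --- ``show $[J]$ lies only on non-smoothable components'' by noting that $\Sigma(m,l,\uu)\subseteq\sHilb^{m,n+1-l}(X_n)$ --- conflates membership in a non-smoothable component with non-smoothability: a point can lie on $\sHilb^{m,m'}(X_n)$ \emph{and} on the smoothable component simultaneously (indeed every point of a smoothable face does), and determining that intersection is precisely what the proposition asserts. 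Likewise, ``collapsing extra line-points does not make a non-smoothable scheme smoothable'' is the converse of what \cref{prop:limits} gives (that proposition shows limits of smoothable schemes are smoothable, not that limits of non-smoothable ones stay non-smoothable), so it cannot be cited for this. The paper closes this direction by a genuinely different argument: an induction on $n$ that tracks, along a one-parameter smoothing of $\mathbb{V}(J)$ into $m$ distinct points, which of the lines $L_1,\dots,L_n$ carry those points --- if some line is missed one inducts on the sublines, and if $m=n$ and every line is hit the configuration is a hyperplane section whose flat limit is computed explicitly to lie in $\Sigma(n,1,\mathbf{1})$. That support-tracking argument is the missing idea in your proposal.
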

\begin{proof}
Let $[J]\in\sHilb_\mathbf{0}^m(X_n)$ such that $\mu([J])$ lies in a smoothable face of $\mathcal{K}_n^{[m]}$. By \cref{prop: ideals to smoothable faces}, there exists $S\subset[n]$ and $\uu\in\Z_{\geq 1}^n$ with $|\uu|=m+|S|$ such that $J =\langle f,x_i^{u_i}:i\in S\rangle$ where $f=\sum_{i\not\in S}a_ix_i^{u_i}$. 
By \cref{prop:limits}, $[J]$ can be obtained as a limit of length $m$ schemes of the form $q\cup\mathbb{V}(J')$ where $q\in \left(X_n\setminus\{\mathbf{0}\}\right)^{m-n+|S|}$ and $J'=\langle f',x_i:i\in S\rangle$ and $f'=\sum_{i\not\in S}a_ix_i$. Therefore, to check that $[J]$ is smoothable, it is enough to check that $[J']$ is smoothable. Consider the ideal $\tilde{J}=\langle f'\rangle$ of the ring $R_{n-|S|}=\C[x_i:i\not\in S]/\langle x_ix_j:i\neq j\rangle$. Then, $[J']$ is smoothable if and only if $[\tilde{J}]$ is smoothable in $\sHilb^{n}(X_{n-|S|})$. Now, by  \cref{theo: irred comp reduced structure}, $[J']$ is  smoothable since it lies in $\Sigma(n,1,\mathbf{1})$.

Assume now that $[J]$ is a smoothable ideal. 
We apply induction on $n$. For $n=2$, $\mu([J])$ lies in a smoothable face since all faces of $\mathcal{K}_2^{[m]}$ are smoothable.
Assume now that the statement holds for all $n'<n$. Let $\Delta_{l,n}+\uu-\mathbf{1}$ be a hypersimplex containing $[J]$. By \cref{theo: irred comp reduced structure} and \cref{remark: comb vs geometry}, we may assume that $\uu=\mathbf{1}$. In particular, we have that $m=l+1\leq n$. If $[J]$ is smoothable, then $[J]$ is the limit of $m$ distinct points $q_1,\ldots,q_m$ in $X_n$. If $m<n$, 
then, along the limit, $q_1,\ldots q_m$ are contained in at most $m$ of the lines of $X_n$. In particular, $[J]$ lies in the Hilbert scheme of points of those $m$ lines. By induction, $\mu([J])$ is contained in a smoothable face. Assume now that $m=n$. Then $[J]$ is the limit of $n$ distinct points $q_1,\ldots,q_n$ in $X_n$. As before, if along such limit, there is a line of $X_n$ not containing any of the points $q_1,\ldots,q_n$, then, we can apply induction. Therefore, we may assume that $q_i$ lies in $L_i$ for each $i\in [n]$. Then, $q_1,\ldots,q_n$ are the intersection of a hyperplane $\mathbb{V}(a_0+\sum a_ix_i)$ with $X_n$. As in the proof of \cref{theo: irred comp reduced structure}, we deduce that such limit is $\mathbb{V}(\sum a_ix_i)$. Therefore, $[J]=[\langle \sum a_ix_i\rangle]$ is contained in $\Sigma(n,1,\mathbf{1})$ and $\mu([J])$ lies in $\Delta_{n-1,n}$ which is smoothable.
\end{proof}

From \cref{prop: smooth punctual}, we derive the following result.

\begin{Cor}
Let $C$ be a curve whose unique singularity $p$ is a rational $n$--fold singularity.
    Let $\mathbb{V}(J)$ be a length $m$ subscheme of $C$, and let $J_0,J_1,\ldots,J_k$ be the ideals in the primary decomposition of $J$ such that $J_0$ is supported at the singularity $p$ and has length $m'$. Then $[J]$ is smoothable if and only if $\mu([J_0])$ lies in a smoothable face of $\mathcal{K}_n^{[m']}$.
\end{Cor}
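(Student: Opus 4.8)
The plan is to reduce the statement for a general curve $C$ to the already-established statement for $X_n$, exactly as was done for the singular locus in \cref{theo:sing locus}. The key structural input is the étale-local (or formal-local) decomposition of $\sHilb^m(C)$ near $[J]$ as a product of punctual Hilbert schemes at the support points of $\mathbb{V}(J)$, together with the observation that smoothability of a subscheme is a local condition at each connected component of its support.

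First I would write $\mathbb{V}(J) = \mathbb{V}(J_0)\cup \mathbb{V}(J_1)\cup\cdots\cup\mathbb{V}(J_k)$, where $J_0$ is supported at $p$ and has length $m'$, and each $J_i$ for $i\geq 1$ is supported at a smooth point $q_i\in C$. Since the $q_i$ are smooth points of a curve, each $\mathbb{V}(J_i)$ is a divisor on a smooth curve germ, hence automatically smoothable; concretely each $[J_i]$ lies in $\Sym^{m_i}(C)\subseteq \sHilb^{m_i}(C)$ which is irreducible and generically reduced points, so $[J_i]$ is smoothable. The content of the corollary is therefore that $[J]$ is smoothable precisely when the punctual part $[J_0]$ is smoothable, and then I invoke \cref{prop: smooth punctual} (applied to $\sHilb^{m'}_{\mathbf 0}(X_n)\cong \sHilb^{m'}_p(C)$) to translate "$[J_0]$ smoothable" into "$\mu([J_0])$ lies in a smoothable face of $\mathcal{K}_n^{[m']}$".

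The step I expect to require the most care is the claim that smoothability is compatible with the local product decomposition, i.e. that $[J]\in\sHilb^m(C)$ is smoothable if and only if each $[J_i]$ is smoothable in the appropriate punctual Hilbert scheme. One direction is immediate: if each $\mathbb{V}(J_i)$ deforms to a collection of $m_i$ distinct reduced points (staying near its support), then choosing these deformations in disjoint neighborhoods of the $q_i$ and of $p$ yields a deformation of $\mathbb{V}(J)$ to $m$ distinct points, so $[J]$ is smoothable. For the converse I would use the fact (already cited in the paper via \cite{JelisiejewElementaryComponents}, as in the proof of \cref{cor: component more singularities}) that étale-locally around $[J]$ the scheme $\sHilb^m(C)$ is isomorphic to $\sHilb^{m_0}_{\mathrm{sm}}(C)\times\sHilb^{m_1}_{q_1}(C)\times\cdots\times\sHilb^{m_k}_{q_k}(C)\times \sHilb^{m'}_p(C)$ near $([J_1],\dots,[J_k],[J_0])$, and that under this identification the smoothable component of $\sHilb^m(C)$ corresponds to the product of the smoothable components of the factors. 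Hence $[J]$ lies on the smoothable component of $\sHilb^m(C)$ iff $[J_0]$ lies on the smoothable component of $\sHilb^{m'}_p(C)$ (the factors at smooth points being entirely smoothable).

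Finally, I would combine these: $[J]$ smoothable $\iff$ $[J_0]$ smoothable $\iff$ (by \cref{prop: smooth punctual} and the isomorphism $\widehat{\mathcal O}_{C,p}\cong\widehat{\mathcal O}_{X_n,\mathbf 0}$) $\mu([J_0])$ lies in a smoothable face of $\mathcal{K}_n^{[m']}$. This is precisely the assertion, so the proof is short modulo the local decomposition input, which is standard and already used earlier in the paper.
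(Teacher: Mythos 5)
Your proposal is correct and follows essentially the route the paper intends: the corollary is stated as a direct consequence of \cref{prop: smooth punctual}, and the implicit argument is exactly your reduction — the primary components supported at smooth points are automatically smoothable, smoothability factors through the étale-local product decomposition of $\sHilb^m(C)$ at $[J]$ (as used in the proof of \cref{cor: component more singularities}), and the punctual component at $p$ is handled by \cref{prop: smooth punctual} via $\widehat{\mathcal{O}}_{C,p}\cong\widehat{\mathcal{O}}_{X_n,\mathbf{0}}$.
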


We finish this section by stating more properties of the singularities of the smoothable components.

\begin{Prop}\label{prop:sing smooth comp}
    Each smoothable component of $\sHilb^m(X_n)$ is normal and has toric singularities.
\end{Prop}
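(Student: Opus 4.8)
The plan is to reduce, as throughout the paper, to the local situation at a torus-fixed point. By \cref{lemma:one parameter family}, the closure of any $(\C^{*})^n$-orbit in a smoothable component $Z$ of $\sHilb^m(X_n)$ contains a point $[J]$ with $\mu_m([J])$ a vertex of $\mathcal{K}_n^{[m]}$, i.e.\ $J=\langle x_1^{u_1},\ldots,x_k^{u_k},x_{k+1},\ldots,x_n\rangle$ with $u_i\geq 2$ for $i\in[k]$. Since normality and the property of having toric singularities are local and are checked on completions of local rings, and since a $(\C^{*})^n$-invariant open cover of $Z$ is obtained by translating neighborhoods of such points, it suffices to prove that the completion of the local ring of $Z$ at each such $[J]$ is normal and toric. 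By \cref{theo:def theory} together with \cref{lemma: new rings}, this completion is the completion at the origin of $\mathcal{S}'_k/\mathfrak{p}$, where $\mathfrak{p}$ is the minimal prime of $\mathcal{J}_k$ cutting out the component corresponding to $Z$; by \cref{prop: ideal translation} (case $k=1$) and \cref{prop: ideal translation 2} (case $k\geq 2$) the smoothable component corresponds to a prime of the form $\mathcal{Q}_i$ in the notation of \cref{lemma: primary k1} and \cref{lemma:primary k2}.

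First I would treat $k=1$: here the ideals $\mathcal{Q}_i$ of \cref{lemma: primary k1} are generated by linear forms, so $\mathbb{V}(\mathcal{Q}_i)$ is an affine space, which is smooth and in particular normal and toric. For $k\geq 2$, I would invoke the explicit description in \cref{prop: ideal translation 2}: modulo the splitting off of a polynomial ring in the variables $\beta_{i,s}$ not appearing in $\mathcal{J}_k$ (the reduction to $\mathcal{S}'_k$ already carried out in \cref{sec:singularites and local hilbert scheme}), the ring $\mathcal{S}'_k/\mathcal{Q}_i$ is, for $i\notin[k]$, the ring
\[
\C[\alpha_{i,1},\ldots,\alpha_{i,k},\beta_1,\ldots,\beta_k]/\langle \alpha_{i,r}\beta_r-\alpha_{i,s}\beta_s:1\leq r<s\leq k\rangle,
\]
and for $i\in[k]$ it is the analogous ring with one fewer $\alpha$-variable, tensored with $\C[\beta_i]$. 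These are exactly the homogeneous toric ideals studied in \cref{app: commutative algebra}: by \cref{prop: toric variety} the associated affine toric variety is $\mathbb{V}(\widehat{P}_k)$ (the cone over $P_k$, cf.\ \eqref{eq:conv hull}), and by \cref{lem:normal polytope} the polytope $P_k$ is normal, so this toric variety is normal. Hence $\mathcal{S}'_k/\mathcal{Q}_i$, and therefore its completion at the origin, is a normal toric ring; the $\C[\beta_i]$ factor in the $i\in[k]$ case and the polynomial factor $\C[\beta_{i,s}]$ only change the toric variety by a product with an affine space, preserving normality and the toric property.

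The main obstacle is making precise that the completed local ring of the \emph{global} smoothable component $Z\subseteq\sHilb^m(X_n)$ at $[J]$ is exactly the completion of the component $\mathbb{V}(\mathcal{Q}_i)\subseteq\mathbb{V}(\mathcal{J}_k)$, rather than of $\mathbb{V}(\mathcal{J}_k)$ itself. This uses that $\sHilb^m(X_n)$ is reduced (\cref{theo: reduced}), so its local ring at $[J]$ is $\mathcal{S}_k/\mathcal{J}_k$ with $\mathcal{J}_k$ radical, that the irreducible components through $[J]$ are in bijection with the minimal primes of $\mathcal{J}_k$ via \cref{prop: ideal translation,prop: ideal translation 2}, and that the completed local ring of a reduced scheme along a component which is itself \emph{smooth in codimension one at that point}—or, more simply here, the observation that $Z$ meets every other component of $\sHilb^m(X_n)$ through $[J]$ in a proper closed subset, so that étale/formal-locally near the generic part of a neighborhood $Z$ is cut out by the single prime $\mathcal{Q}_i$—lets one replace $\mathcal{S}_k/\mathcal{J}_k$ by $\mathcal{S}_k/\mathcal{Q}_i$ after completion. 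Once this identification is in place, normality of $Z$ follows from normality of $\mathcal{S}'_k/\mathcal{Q}_i$ by Serre's criterion (or directly, since a completion of an excellent normal local ring is normal), and the toric description of the singularities is read off from the polytope $P_k$ as above. I would also record that when $Z$ is smooth at $[J]$ (which happens precisely when the relevant $P_k$ is a simplex, i.e.\ for small $k$), there is nothing to prove, so the statement is vacuously compatible with \cref{thm: singularities HilbX_n}.
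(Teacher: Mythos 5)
Your proposal follows essentially the same route as the paper's proof: reduce via the torus action (\cref{lemma:one parameter family}) to the vertex ideals $[\langle x_1^{u_1},\ldots,x_n^{u_n}\rangle]$, identify the completed local ring of the smoothable component with $\mathcal{S}'_k/\mathcal{Q}_i$ via \cref{theo:def theory}, \cref{lemma: new rings}, \cref{prop: ideal translation} and \cref{prop: ideal translation 2}, and conclude normality and toricness from \cref{Lem: toric ideal}, \cref{prop: toric variety} and \cref{lem:normal polytope}. The only small slip is your parenthetical justification for passing from the completion back to the stalk: the needed direction is that normality descends from $\widehat{\mathcal{O}}_{Z,[J]}$ to $\mathcal{O}_{Z,[J]}$ (the paper invokes excellence and \cite[Theorem 79]{matsumura1989commutative}; faithfully flat descent also works), not that completions of excellent normal local rings are normal.
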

\begin{proof}
    Let $Z$ be a smoothable component of $\sHilb^m(X_n)$. Then $Z$ is birational to 
    \[
    \Sym^{u_1}L_1\times \cdots \times \Sym^{u_n}L_n
    \]
    for some $\uu=(u_1,\ldots,u_n)$ partition of $m$. By \cref{lemma:one parameter family}, it is enough to check the statement for $[J]\in Z$ supported at $\mathbf{0}$ such that $\mu([J])$ is a vertex of $\mathcal{K}_n^{[m]}$. The completion of the stalk of $\sHilb^m(X_n)$ at $[J]$ is computed in \cref{theo:def theory}. The irreducible components of this stalk are calculated in \cref{lemma: primary k1} and \cref{lemma:primary k2}. In \cref{prop: ideal translation} and \cref{prop: ideal translation 2}, we identify which irreducible components of the stalk at $[J]$ correspond to each irreducible component of $\sHilb^m(X_n)$. For $X_n$, this correspondence associates $Z$ to a unique component of the stalk, which corresponds to an ideal of the form $\mathcal{Q}_i$. By \cref{Lem: toric ideal} and  \cref{lem:normal polytope}, $\mathcal{Q}_i$ is normal and toric. We conclude that the completion of the stalk of $Z$ at $[J]$ is normal.
    By \cite[Tag 07QU]{stacks-project},
    the stalk of $Z$ ar $[J]$ is an excellent ring, and
    we deduce that the stalk is normal  by \cite[Theorem 79]{matsumura1989commutative}.
    We conclude that  $Z$ is normal and its singularities are toric.
\end{proof}

As in \cref{cor: sing nonsmooth}, \cref{prop:sing smooth comp} is no longer true if we replace $X_n$ with an irreducible curve $C$ with a rational $n$--fold singularity. By the same techniques used in Proposition \ref{prop:sing smooth comp}, we obtain the following result.

\begin{Cor}\label{cor: sing smooth irred}
    Let $C$ be an irreducible curve whose only singularity $p$ is a rational $n$--fold singularity. Then, the singularities of the smoothable components are locally union of normal toric varieties.
\end{Cor}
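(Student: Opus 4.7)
The plan is to adapt the argument of Proposition \ref{prop:sing smooth comp} to the global setting of $C$, while keeping careful track of the fact that, although $C$ is irreducible and so $\sHilb^m_{\mathrm{sm}}(C)$ is irreducible by \cref{co: irred comp structure}, the local model $X_n$ admits several smoothable components. The local analysis around a singular point of $\sHilb^m_{\mathrm{sm}}(C)$ will thus produce not a single toric variety but a union of them.

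First I would reduce to a point $[J] \in \sHilb^m_{\mathrm{sm}}(C)$ whose support lies entirely at $p$. Indeed, writing the primary decomposition $J = J_0 \cap J_1 \cap \cdots \cap J_k$ with $J_0$ supported at $p$ and $J_i$ supported at distinct smooth points of $C$ for $i\geq 1$, the same étale-local splitting invoked in the proof of \cref{cor: component more singularities} gives that $\sHilb^m(C)$ is, étale-locally at $[J]$, a product of local Hilbert schemes at the $[J_i]$. The factors for $i \geq 1$ are smooth, being stalks of symmetric products of the smooth locus of $C$, and hence contribute no singularities, reducing the problem to the factor at $p$.

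Second, the analytic isomorphism $\widehat{\mathcal{O}}_{C,p} \simeq \widehat{\mathcal{O}}_{X_n,\mathbf{0}}$ induces an identification of completed local rings of the respective Hilbert schemes, so that
\[
\widehat{\mathcal{O}}_{\sHilb^m(C),[J_0]} \simeq \widehat{\mathcal{O}}_{\sHilb^m(X_n),[J_0]}.
\]
Using the torus action on $X_n$ together with \cref{lemma:one parameter family}, I may further move $[J_0]$ to a point whose moment image is a vertex of $\mathcal{K}_n^{[m]}$, i.e. of the form $\langle x_1^{u_1}, \ldots, x_k^{u_k}, x_{k+1}, \ldots, x_n\rangle$. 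Then \cref{theo:def theory}, combined with \cref{lemma: new rings}, \cref{lemma: primary k1}, and \cref{lemma:primary k2}, decomposes the completed stalk into its irreducible components; the smoothable components of $\sHilb^m(X_n)$ containing $[J_0]$ correspond precisely to the ideals $\mathcal{Q}_i$ by \cref{prop: ideal translation} and \cref{prop: ideal translation 2}, each of which is a normal toric variety by \cref{Lem: toric ideal} and \cref{lem:normal polytope}.

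Third, since $\sHilb^m_{\mathrm{sm}}(C)$ is irreducible, its stalk at $[J_0]$ under the formal isomorphism above must be the union of all branches of $\sHilb^m_{\mathrm{sm}}(X_n)$ passing through $[J_0]$, namely the branches cut out by the $\mathcal{Q}_i$ for $i \in [n]$. Consequently, the completed local ring of the smoothable component of $\sHilb^m(C)$ at $[J]$ is a union of normal toric varieties, and the same excellent ring / Matsumura argument used at the end of \cref{prop:sing smooth comp} transfers this statement from the completion to the stalk itself.

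The main obstacle will be verifying cleanly that the formal isomorphism identifies the stalk of $\sHilb^m_{\mathrm{sm}}(C)$ with the union of exactly the branches $\mathbb{V}(\mathcal{Q}_i)$ inside $\widehat{\mathcal{O}}_{\sHilb^m(X_n),[J_0]}$, and not with some larger or smaller set of branches. This amounts to checking that each $\mathcal{Q}_i$-branch genuinely parametrizes limits of smoothable subschemes of $C$ (which follows from the explicit description of its generic point in \cref{prop: ideal translation 2} as a product of symmetric products of the lines $L_i$, which are locally identified with $C$ near $p$), and that no other branch of the local model contributes to the smoothable locus (which follows from \cref{theo: irred comp reduced structure}).
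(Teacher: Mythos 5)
Your proposal is correct and follows essentially the same route as the paper, which simply invokes ``the same techniques used in Proposition~\ref{prop:sing smooth comp}'': reduce via the étale-local product structure and the formal isomorphism $\widehat{\mathcal{O}}_{C,p}\simeq\widehat{\mathcal{O}}_{X_n,\mathbf{0}}$ to the local model at a vertex of $\mathcal{K}_n^{[m]}$, and then observe that the branches $\mathcal{Q}_i$ — each normal and toric by \cref{Lem: toric ideal} and \cref{lem:normal polytope} — all belong to the single irreducible smoothable component of $\sHilb^m(C)$, so that component is locally a union of normal toric varieties rather than a single one. Your explicit attention to why exactly the $\mathcal{Q}_i$-branches (and no others) constitute the smoothable locus is a useful fleshing-out of what the paper leaves implicit.
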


\section{Ongoing work and future questions}\label{sec:ongoing work and future questions}

This paper sits in the theory of a combinatorial study of certain properties of Hilbert schemes of curves, not necessarily planar. It naturally leads to questions such as how to go beyond fold-like curves and investigate configurations of lines in projective space. This, in turn, gives rise to the following natural questions.

\begin{Ques}
    When the number of lines in $\C^n$ is greater than $n$, do we still have a combinatorial description of the Hilbert scheme of points?
\end{Ques}

The first interesting case would be four lines in $\C^3$. One possible idea to approach this question is to study degenerations of configurations of lines to a lower dimensional affine space. The authors plan to come back to this problem in the future. Following this line of thought, this leads to.

\begin{Ques}
    Are there combinatorial descriptions of Hilbert schemes of points for configurations of planes or higher dimensional linear subspaces in $\C^n$?
\end{Ques}

Again, by analogy with the case treated in this paper, we believe that for \emph{transversal} unions of planes, similarly to the situation of fold-like curves, rich geometric and combinatorial structures will emerge. Another direction of ongoing work concerns the study of the Quot scheme, which naturally arises in connection with the compactified Jacobian. Indeed, for curves with non--locally planar singularities, the appropriate source of the Abel map is the Quot scheme rather than the Hilbert scheme of points, leading us to investigate the following.

\begin{Ques}
    Is there a nice combinatorial description of the Quot scheme for fold-like curves?
\end{Ques}

The authors expect that such a description can be useful for a concrete study of the alternate compactification of the moduli space of curves described in \cite{HanKassSatrianoExtendingTorelli}.

\appendix

\section{The Hypersimplicial complex  }\label{app: The hypersimplicial complex}

In this Appendix we present the proofs of the combinatorial properties of hypersimplicial complex $\mathcal{K}_n^{[m]}$ used in \cref{sec:moment map}. We refer to \cite{GelfandMacpherson} for further details on the relation between Grassmannians, hypersimplices and their combinatorics.
The hypersimplex $\Delta_{l,n}$ is defined as
\[
\Delta_{l,n}:=\mathrm{Conv}\{\ee_{i_1}+\ldots+\ee_{i_l}:1\leq i_1<\cdots <i_l\leq n\}=\{\sum_{i=1}^n\lambda_i\ee_i:0\leq \lambda_i\leq 1, \text{ and } \sum_{i=1}^n\lambda_i=l\}.
\]
By definition, the hypersimplex $\Delta_{l,n}$ is contained in the dilated simplex $l\cdot \Delta_{n-1}$.
The vertices of $\Delta_{l,n}$ are exactly the vectors $e_{i_1}+\cdots+e_{i_l}$ for $1\leq i_1<\cdots <i_l\leq n$. The number of vertices of $\Delta_{l,n}$ is $\binom{n}{l}$.
For $0\leq r\leq n-1$, the $(n-r)$--faces of the hypersimplex $\Delta_{l,n}$ are of the form
\begin{equation}\label{eq:faces hypersimplex}
\begin{array}{c}
\Delta_{l,n}(S_1,S_2):=
\displaystyle
\left\{\sum_{i\not\in S_1\sqcup S_2}\lambda_ie_i +  \sum_{i\in S_2}e_i
: 0\leq \lambda_i\leq 1\text{ and } \sum_{i\not\in S_1\sqcup S_2}\lambda_i = l-|S_2|\right\} = \\ \\
\displaystyle 
\mathrm{Conv}\left(
e_{i_1}+\cdots+e_{i_{l-|S_2|}}: i_1,\ldots,i_{l-|S_2|}\not\in S_1\sqcup S_2\text{ distinct}
\right)+\sum_{i\in S_2}e_i
\end{array}
\end{equation}
for $S_1\sqcup S_2\subseteq [n]$ and $|S_1|+|S_2| = r-1$. The face $\Delta_{l,n}(S_1,S_2)$ is obtained by setting $\lambda_i=0$ for all $i\in S_1$ and $\lambda_i=1$ for all $i\in S_2$. Moreover, such a face is isomorphic to the hypersimplex $\Delta_{n-r+1,l-|S_2|}$. In particular, every hypersimplex can be seen as a hypersimplicial complex. Recall that a hypersimplicial complex is a polyhedral complex whose faces are hypersimplices. See \cite[Section 2.1.3]{GelfandMacpherson} for these details on hypersimplices.

\begin{Lem}\label{lem: intersection faces}
    Let $m\geq 2$ and $1\leq l,l'\leq \min\{n-1,m-1\}$. Consider two integer vectors $\uu,\mathbf{v}\in\Z_{\geq 0}^{n}$ with $|\uu| = m-1-l$ and $|\mathbf{v}|=m-1-l'$. Then, the intersection of $\left(\Delta_{l,n}+\uu\right)$ and $\left(\Delta_{l',n}+\mathbf{v}\right)$  is nonempty if and only if $\uu-\mathbf{v}\in   
    \{0,1,-1\}^n$. Moreover, in this case we have that 
    \[\left(\Delta_{l,n}+\uu\right)\cap\left(\Delta_{l',n}+\mathbf{v}\right):= \Delta_{l,n}\left(\kappa(\uu-\mathbf{v},1),\kappa(\uu-\mathbf{v},-1)\right) +\uu =\Delta_{l',n}\left(\kappa(\uu-\mathbf{v},-1),\kappa(\uu-\mathbf{v},1)\right) +\mathbf{v}.
    \]
    where the function $\kappa$ is defined as in \eqref{def:kappa indexes}.
\end{Lem}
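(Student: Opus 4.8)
\textbf{Proof plan for Lemma~\ref{lem: intersection faces}.}
The plan is to reduce the problem to a coordinate-wise analysis, exactly as is done on the geometric side in the proof of \cref{prop: intersections}. First I would unwind the definitions: a point of $\Delta_{l,n}+\uu$ is of the form $\sum_i(\lambda_i+u_i)\ee_i$ with $0\le\lambda_i\le 1$ and $\sum_i\lambda_i=l$, and similarly a point of $\Delta_{l',n}+\vv$ is $\sum_i(\mu_i+v_i)\ee_i$ with $0\le\mu_i\le 1$ and $\sum_i\mu_i=l'$. A common point forces $\lambda_i+u_i=\mu_i+v_i$ for every $i$, i.e. $u_i-v_i=\mu_i-\lambda_i\in[-1,1]$; since $u_i-v_i\in\Z$ this gives $u_i-v_i\in\{0,1,-1\}$, establishing the ``only if'' direction of the nonemptiness criterion. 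For the converse, assuming $\uu-\vv\in\{0,1,-1\}^n$, I would simply exhibit a point: for $i\in\kappa(\uu-\vv,1)$ set $\lambda_i=0$; for $i\in\kappa(\uu-\vv,-1)$ set $\lambda_i=1$; and distribute the remaining mass $l-|\kappa(\uu-\vv,-1)|$ among the coordinates in $\kappa(\uu-\vv,0)$ with values in $[0,1]$. This is possible precisely because $0\le l-|\kappa(\uu-\vv,-1)|\le|\kappa(\uu-\vv,0)|$, which I would need to check using $|\uu|-|\vv|=l'-l$ together with $|\kappa(\uu-\vv,1)|-|\kappa(\uu-\vv,-1)|=|\uu|-|\vv|$; the corresponding $\mu_i$ are then determined and automatically lie in $[0,1]$.

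For the description of the intersection, I would argue that the above analysis is in fact an ``if and only if'' at the level of \emph{every} point. A point $\sum_i(\lambda_i+u_i)\ee_i$ of $\Delta_{l,n}+\uu$ lies in $\Delta_{l',n}+\vv$ iff the numbers $\mu_i:=\lambda_i+(u_i-v_i)$ satisfy $0\le\mu_i\le 1$ for all $i$ (the sum condition $\sum_i\mu_i=l+(|\uu|-|\vv|)=l'$ being automatic). Examining this coordinate by coordinate: if $u_i-v_i=1$ then $\mu_i=\lambda_i+1\le 1$ forces $\lambda_i=0$; if $u_i-v_i=-1$ then $\mu_i=\lambda_i-1\ge 0$ forces $\lambda_i=1$; if $u_i-v_i=0$ there is no constraint. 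This is exactly the set of $\lambda$ defining the face $\Delta_{l,n}(\kappa(\uu-\vv,1),\kappa(\uu-\vv,-1))$ via the face description \eqref{eq:faces hypersimplex}, so $(\Delta_{l,n}+\uu)\cap(\Delta_{l',n}+\vv)=\Delta_{l,n}(\kappa(\uu-\vv,1),\kappa(\uu-\vv,-1))+\uu$. The second equality $=\Delta_{l',n}(\kappa(\uu-\vv,-1),\kappa(\uu-\vv,1))+\vv$ follows by the symmetric computation with the roles of $\uu,\vv$ (and hence of $\kappa(\uu-\vv,1),\kappa(\uu-\vv,-1)$) interchanged, using $\kappa(\vv-\uu,1)=\kappa(\uu-\vv,-1)$.

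I would also note the bookkeeping point that the face $\Delta_{l,n}(S_1,S_2)$ with $S_1=\kappa(\uu-\vv,1)$, $S_2=\kappa(\uu-\vv,-1)$ is legitimate, i.e. $|S_2|\le l$ and $S_1\sqcup S_2\subseteq[n]$ with room to spare; $S_1$ and $S_2$ are disjoint by construction, and $|S_2|=|\kappa(\uu-\vv,-1)|\le l$ is what was checked above for nonemptiness. Under the identification $\Delta_{l,n}(S_1,S_2)\cong\Delta_{l-|S_2|,\,n-|S_1|-|S_2|}$ this matches the Grassmannian picture $\gr(l-|\kappa(\uu-\vv,1)|,|\kappa(\uu-\vv,0)|)$ of \cref{Cor:intersect} only after dualizing, but that compatibility is not needed here.

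The argument is essentially all bookkeeping with the hypersimplex inequalities, so I do not anticipate a genuine obstacle; the one place demanding a little care is the chain of elementary identities relating $l,l',|\uu|,|\vv|$ and the cardinalities $|\kappa(\uu-\vv,\pm1)|$, which must be assembled correctly to make both the nonemptiness bound $|\kappa(\uu-\vv,-1)|\le l$ and the automatic-sum-condition step go through. Everything else reduces to reading off \eqref{eq:faces hypersimplex}.
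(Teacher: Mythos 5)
Your approach is the same as the paper's: rewrite $\Delta_{l,n}+\uu$ via the coordinatewise interval constraints $u_i\le\lambda_i\le u_i+1$ plus the sum condition, intersect intervals for the ``only if'' direction, and identify the intersection with the face $\Delta_{l,n}(\kappa(\uu-\vv,1),\kappa(\uu-\vv,-1))+\uu$ by the pointwise argument. Those parts are correct (and your observation that the sum condition $\sum_i\mu_i=l'$ is automatic is right). The gap is exactly at the step you flagged as needing care: the inequality $0\le l-|\kappa(\uu-\vv,-1)|\le|\kappa(\uu-\vv,0)|$ does \emph{not} follow from $\uu-\vv\in\{0,1,-1\}^n$ together with the identities $|\uu|-|\vv|=l'-l=|\kappa(\uu-\vv,1)|-|\kappa(\uu-\vv,-1)|$, and in fact it can fail. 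Take $n=4$, $m=5$, $l=l'=1$, $\uu=(1,1,0,1)$, $\vv=(0,0,1,2)$. Then $\uu-\vv=(1,1,-1,-1)\in\{0,1,-1\}^4$, but $|\kappa(\uu-\vv,-1)|=2>l=1$, so your construction has negative mass to distribute; and the intersection really is empty, since the interval constraints force the unique candidate $(1,1,1,2)$, whose coordinate sum is $5$ rather than the required $m-1=4$. Hence the ``if'' direction of the nonemptiness criterion is false as stated and cannot be proved: one needs the extra hypotheses $|\kappa(\uu-\vv,-1)|\le l$ and $|\kappa(\uu-\vv,1)|\le n-l$ (equivalently, that the face $\Delta_{l,n}(\kappa(\uu-\vv,1),\kappa(\uu-\vv,-1))$ is a genuine nonempty face), under which your construction does go through.

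Two remarks for context. First, the displayed set equality survives even in the bad cases, since both sides are then empty, so only the ``nonempty if and only if'' sentence is wrong; in particular the application to \cref{prop: hypersimplicial complex} (two cells meet in a common face, possibly empty) is unaffected. Second, the paper's own proof has the same lacuna --- it writes down the intersection as a set after assuming $\uu-\vv\in\{0,1,-1\}^n$ without checking that this set is nonempty --- and the analogous issue appears in \cref{prop: intersections}, where the claimed generators require $r=l-|\kappa(\uu-\vv,1)|$ independent elements of a space of dimension $|\kappa(\uu-\vv,0)|$, which can be impossible for the same reason. So you have not done worse than the paper; but since you explicitly isolated the inequality as the thing to verify, you must actually verify it, and doing so reveals the counterexample rather than completing the proof.
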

\begin{proof}
    We can write the translated hypersimplices $\Delta_{l,n}+\uu$ and $\Delta_{l',n}+\mathbf{v}$ as
    \begin{equation}\label{eq:trans hypersimplex}
    \begin{array}{c}
    \displaystyle \Delta_{l,n}+\uu=\left\{\sum_{i=1}^n\lambda_i\mathbf{e}_i: \sum \lambda_i = |\uu|-l\, \text{ and }\, u_i\leq \lambda_i\leq u_i+1 \text{ for all }i\in [n]
     \right\},\\
     \displaystyle \Delta_{l',n}+\mathbf{v}=\left\{\sum_{i=1}^n\lambda_i\mathbf{e}_i: \sum \lambda_i = |\mathbf{v}|-l'\, \text{ and }\, v_i\leq \lambda_i\leq v_i+1 \text{ for all }i\in [n]
     \right\}. 
    \end{array}
    \end{equation}
    Let $\lambda=\sum \lambda_i\mathbf{e}_i$ be a point in the intersection of  $\Delta_{l,n}+\uu$ and $\Delta_{l',n}+\mathbf{v}$.
    By \eqref{eq:trans hypersimplex}, we have that $\lambda_i$ is contained in the intersection of the intervals $[u_i,u_i+1]\cap[v_i,v_i+1]$. This intersection is nonempty if and only if $u_i-v_i\in\{0,1,-1\}$. We deduce that the intersection of  $\Delta_{l,n}+\uu$ and $\Delta_{l',n}+\mathbf{v}$ is nonempty if and only if $\uu-\mathbf{v}\in\{0,1,-1\}^n$.

    Assume now that $\uu-\mathbf{v}\in\{0,1,-1\}^n$. Then, we can write the intersection of the hypersimplices as
    \[
    \begin{array}{c}
\left(\Delta_{l,n}+\mathbf{u}\right)\cap\left(\Delta_{l',n}+\mathbf{v}\right)=\left\{
\sum_{i=1}^n\lambda_i\mathbf{e}_i: \sum\lambda_i=m-1 \text{ and }
\begin{array}{l}
\lambda_i=u_i \text{ for }i\in\kappa(\uu-\mathbf{v},1),
\\
\lambda_i=u_i+1 \text{ for }i\in\kappa(\uu-\mathbf{v},-1),
\\
 u_i\leq \lambda_i\leq u_i+1 \text{ for }i\in\kappa(\uu-\mathbf{v},0)
\end{array}
\right\} =\\ \displaystyle
\left\{
\sum_{i\in\kappa(\uu-\mathbf{v},0)}\lambda_i\mathbf{e}_i:\,\sum_{i\in\kappa(\uu-\mathbf{v},0)}\lambda_i=m-1-|\uu|-|\kappa(\uu-\mathbf{v},-1)|\text{ and } 0\leq \lambda_i\leq 1
\right\}+\mathbf{e}_{\kappa(\uu-\mathbf{v},-1)}+\uu
=\\ \displaystyle
\left\{
\sum_{i\in\kappa(\uu-\mathbf{v},0)}\lambda_i\mathbf{e}_i:\,\sum_{i\in\kappa(\uu-\mathbf{v},0)}\lambda_i=l-|\kappa(\uu-\mathbf{v},-1)|\text{ and } 0\leq \lambda_i\leq 1
\right\}+\mathbf{e}_{\kappa(\uu-\mathbf{v},-1)}+\uu.
\end{array}
    \]
    Using \eqref{eq:faces hypersimplex}, we conclude that 
    \[
\left(\Delta_{l,n}+\mathbf{u}\right)\cap\left(\Delta_{l',n}+\mathbf{v}\right)= \Delta_{l,n}(\kappa(\uu-\mathbf{v},1),\kappa(\uu-\mathbf{v},-1))+\uu.
    \]
\end{proof}

Using \cref{lem: intersection faces}, we can construct a polyhedral complex using the translated hypersimplices $\Delta_{l,n}+\uu$. 

\begin{PropDef}\label{prop: hypersimplicial complex}
For $n\geq 2$ and $m\geq 2$, the set of all faces of 
the translated hypersimplices $\Delta_{l,n}+\uu$ for $1\leq l\leq \min\{n-1,m-1\}$ and $\uu\in \Z_{\geq 0}^n$ with $|u|=m-1-l$ form a hypersimplicial complex $\mathcal{K}_n^{[m]}$ called the $(n,m)$--hypersimplicial complex. Moreover, $\mathcal{K}_n^{[m]}$ forms a subdivision of $(m-1)\cdot\Delta_{n-1}$.
\end{PropDef}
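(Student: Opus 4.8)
\textbf{Proof plan for Proposition and Definition~\ref{prop: hypersimplicial complex}.}

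The plan is to verify the two defining properties of a hypersimplicial complex — that the collection of faces is closed under taking faces, and that any two members intersect in a common face — and then to check that the union of the maximal cells is exactly $(m-1)\cdot\Delta_{n-1}$. The first property is essentially built in: by the description of faces in \eqref{eq:faces hypersimplex}, every face of a translated hypersimplex $\Delta_{l,n}+\uu$ is again of the form $\Delta_{l,n}(S_1,S_2)+\uu$, which is isomorphic to a hypersimplex $\Delta_{n-r+1,\,l-|S_2|}$ suitably embedded and translated; one only needs to observe that this translated smaller hypersimplex can itself be written as $\Delta_{l',n}+\uu'$ for an appropriate $l'$ and $\uu'\in\Z_{\geq0}^n$ (push the $1$'s of $S_2$ into the translation vector), so it again lies in the generating family. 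I would write this reduction out carefully since it is the bookkeeping that makes "all faces" a legitimate closed family.

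The heart of the argument is the intersection property, and here I would simply invoke \cref{lem: intersection faces}: it states precisely that $(\Delta_{l,n}+\uu)\cap(\Delta_{l',n}+\vv)$ is either empty or equals $\Delta_{l,n}(\kappa(\uu-\vv,1),\kappa(\uu-\vv,-1))+\uu$, which is simultaneously a face of $\Delta_{l,n}+\uu$ and of $\Delta_{l',n}+\vv$ by the symmetric expression given there. Extending this from maximal cells to arbitrary faces is routine: a face of $\Delta_{l,n}+\uu$ is again a translated hypersimplex in the family, so the same lemma applies verbatim to any pair of faces. This shows $\mathcal{K}_n^{[m]}$ is a polyhedral complex all of whose cells are hypersimplices, i.e.\ a hypersimplicial complex.

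For the subdivision claim I would argue as follows. Each generating cell $\Delta_{l,n}+\uu$ with $|\uu|=m-1-l$ is contained in $(m-1)\cdot\Delta_{n-1}$, since $\Delta_{l,n}\subseteq l\cdot\Delta_{n-1}$ and translating by $\uu\in\Z_{\geq0}^n$ with $|\uu|=m-1-l$ lands inside the dilate by $l+|\uu|=m-1$. For the reverse inclusion, take a point $\lambda=\sum\lambda_i\ee_i$ with $\lambda_i\geq 0$ and $\sum\lambda_i=m-1$; set $u_i:=\lfloor\lambda_i\rfloor$ and $\mu_i:=\lambda_i-u_i\in[0,1)$, so $\lambda=\uu+\sum\mu_i\ee_i$ with $\uu\in\Z_{\geq0}^n$. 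Then $l:=\sum\mu_i = (m-1)-|\uu|$ is an integer with $0\le l\le n-1$ (and also $l\le m-1$), and $\sum\mu_i\ee_i\in\Delta_{l,n}$ when $l\geq 1$, so $\lambda\in\Delta_{l,n}+\uu$ (for $l=0$ the point $\lambda=\uu$ is the vertex, which lies in the closure of any incident cell with $l\ge 1$, using $m\geq 2$). This exhibits $(m-1)\cdot\Delta_{n-1}$ as the union of the maximal cells. Finally, the interiors of these cells are disjoint: two distinct cells $\Delta_{l,n}+\uu$ and $\Delta_{l',n}+\vv$ with $(l,\uu)\neq(l',\vv)$ have an intersection which by \cref{lem: intersection faces} is a \emph{proper} face of each (the defining indices $\kappa(\uu-\vv,\pm1)$ cannot both be empty, else $\uu=\vv$ and then $l=l'$), hence meets neither relative interior. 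The main obstacle is not any single step but keeping the index gymnastics in \eqref{eq:faces hypersimplex} and \cref{lem: intersection faces} consistent — in particular verifying that a face of a cell in the family is itself presented as $\Delta_{l',n}+\uu'$ in the family, which is the point that makes the closed-under-faces condition genuine rather than tautological.
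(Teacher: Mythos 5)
Your proposal is correct and follows essentially the same route as the paper: the intersection property is exactly \cref{lem: intersection faces}, and the covering of $(m-1)\cdot\Delta_{n-1}$ uses the same floor-function decomposition $\lambda=\uu+\sum(\lambda_i-\lfloor\lambda_i\rfloor)\ee_i$, with the integer-point case handled separately via an incident cell with $l=1$ (which you actually state more cleanly than the paper). One minor imprecision: a proper face of $\Delta_{l,n}+\uu$ is a hypersimplex sitting in a coordinate subspace, not literally another $\Delta_{l',n}+\uu'$ of the generating family, but this is immaterial since the complex is defined as the set of all faces and the intersection property for the maximal cells suffices.
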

\begin{proof}
    To verify that the translated hypersimplices $\Delta_{l,n}+\uu$ form a polyhedral complex, it is enough to show that any two such hypersimplices intersect in a face of both hypersimplices. This follows from \cref{lem: intersection faces}. It remains to show that all these hypersimplices cover $(m-1)\cdot\Delta_{l,n}$. Let $\lambda=\sum\lambda_i\mathbf{e}_i$ be a point in $(m-1)\cdot\Delta_{l,n}$. In other words, $0\leq \lambda_i\leq m-1$ and $\sum \lambda_i=m-1$. Now, consider the integer vector $\uu=(u_1,\ldots,u_n)$ where $u_i=\lfloor \lambda_i\rfloor$ for every $i\in[n]$. Then  $\uu$ is in $\Z_{\geq 0}^n$ and $|\uu|\leq m-1$. We write $\lambda$ as 
    \[
    \lambda= \sum (\lambda_i-u_i)\mathbf{e}_i+\uu.
    \]
    Note that $0\leq \lambda_i-u_i\leq 1$ and $\sum (\lambda_i-u_i)=m-1-|\uu|$. By construction $m-1-|\uu|\leq n$. Assume first that $m-1-|\uu|\leq n-1$. In this case $m-1-|\uu|\leq \min\{n-1,m-1\}$ and $\lambda$ is contained in the translated hypersimplex $\Delta_{m-1-|\uu|,n}+\uu$. If $m-1-|\uu| = n$, then $\lambda=\uu$ is an integer point in $(m-1)\cdot\Delta_{n-1}$, and there exists $i\in[n]$ such that $\lambda_i\geq 1$. Then  $\lambda$ is contained in $\Delta_{1,n}+\lambda -\mathbf{e_i}$. Therefore, we conclude that any point in $(m-1)\Delta_{n-1}$ is contained in a face of $\mathcal{K}_n^{[m]}$.
\end{proof}

\begin{Ex}\label{Ex:hypercomplex n=2}
    For $n=2$, the only hypersimplex is $\Delta_{1,2}=\Delta_1$. Therefore, the maximal faces $\mathcal{K}_2^{[m]}$ are the segments between the points $(m-1-i,i)$ and $(m-1-i-1,i+1)$ for $0\leq i\leq m-2$. \cref{fig:moment n 2} illustrates the hypersimplicial complexes $\mathcal{K}_2^{[2]}$, $\mathcal{K}_2^{[3]}$, and $\mathcal{K}_2^{[4]}$.
\end{Ex}

\begin{Ex}
    \label{ex: hypercomplex n=3}
    For $n=3$, we have two types of hypersimplices: $\Delta_{1,3}=\Delta_2$ and $\Delta_{2,3}$. For $m=2$, $\mathcal{K}_3^{[2]}$ coincide with $\Delta_{1,3}$ as a simplicial complex. For $m=3$, $\mathcal{K}_3^{[3]}$ has $4$ maximal faces: $\Delta_{2,3}$ and  $\Delta_{1,3}+\mathbf{e}_i$ for $i\in[3]$. Similarly,  $\mathcal{K}_3^{[4]}$ has $9$ maximal faces: $\Delta_{1,3}+\mathbf{e}_i+\mathbf{e}_j$ for $i,j\in[3]$ and $\Delta_{2,3}+\mathbf{e}_i$ for $i\in[3]$. In \cref{fig: n=3 simplex}, the hypersimplicial complexes  $\mathcal{K}_3^{[2]}$, $\mathcal{K}_3^{[3]}$,  and $\mathcal{K}_3^{[4]}$ are depicted.

In general, $\mathcal{K}_3^{[m]}$ has $\binom{m}{2}$ hypersimplices of the form $\Delta_{1,3}$, and $\binom{m-1}{2}$ hypersimplices of the form $\Delta_{2,3}$. The first type of hypersimplices corresponds to the 
 triangles given by the vertices $(u_1,u_2,u_3)+\mathbf{e}_i$ for $i\in[3]$ and $u_1+u_2+u_3=m-2$. Similarly, the hypersimplices of the form $\Delta_{2,3}$ are triangles whose vertices are $(u_1,u_2,u_3)+\mathbf{e}_i+\mathbf{e}_j$ for $1\leq i<j\leq 3$ and $u_1+u_2+u_3=m-3$. 
\end{Ex}

\begin{Ex}
    \label{ex:hypercomplex n=4}
    For $n=4$, there are $3$ possible hypersimplices: $\Delta_{1,4}$, $\Delta_{2,4}$ and $\Delta_{3,4}$. For instance, the maximal faces of $\mathcal{K}_4^{[3]}$ are $\Delta_{2,4}$ and $\Delta_{1,4}+\mathbf{e}_i$ for $i\in [4]$. \cref{fig:n 4 m 3} illustrates $\mathcal{K}_4^{[3]}$.  Similarly, the maximal faces of $\mathcal{K}_4^{[4]}$ are the ten simplices $\Delta_{1,4}+\mathbf{e}_i+\mathbf{e}_j$ for $i,j\in[4]$, the four hypersimplices $\Delta_{2,4}+\mathbf{e}_i$ for $i\in[4]$ and the simplex $\Delta_{3,4}$.  The hypersimplicial complex $\mathcal{K}_4^{[4]}$ is illustrated in  \cref{fig:n 4 m 4}.
\end{Ex}

For $1\leq l\leq \min\{n-1,m-1\}$ fixed, we also consider the hypersimplicial subcomplex $\mathcal{K}_{l,n}^{[m]}$ of $\mathcal{K}_n^{[m]}$ whose maximal faces are the translated hypersimplices $\Delta_{l,n}+\uu$ for $\uu\in\Z_{\geq 0}^n$ with $|\uu|=m-1-l$. In particular, $\mathcal{K}_n^{[m]}$ is the union of the complexes $\mathcal{K}_{l,n}^{[m]}$ for $1\leq l\leq \min\{n-1,m-1\}$. The number of maximal faces in  $\mathcal{K}_{l,n}^{[m]}$ coincides with the number of integer vectors $\uu\in\Z_{\geq 0}^n$ with $|\uu|=m-1-l$, which is 
$
\binom{m+n-l-2}{n-1}
$. Therefore, the number of maximal faces of $\mathcal{K}_n^{[m]}$ is 
\begin{equation}
    \label{eq: num of hypersimplex}
    \displaystyle\sum_{l=1}^{\min\{n-1,m-1\}}\binom{m+n-l-2}{n-1}.
\end{equation}

\begin{Ex}
For $n=3$ and $m=4$, we have two possible hypersimplicial complexes: $\mathcal{K}^{[4]}_{1,3}$ and $\mathcal{K}^{[4]}_{2,3}$.
    The maximal cells of $\mathcal{K}^{[4]}_{1,3}$ are the $6$ hypersimplices $\Delta_{1,3}+\mathbf{e}_i+\mathbf{e}_j$ for $i,j\in [3]$. \cref{fig:hypercomplex 3 1 4} illustrates in purple the complex $\mathcal{K}^{[4]}_{1,3}$. Similarly, the complex  $\mathcal{K}^{[4]}_{2,3}$ is depicted in orange in \cref{fig:hypercomplex 3 1 4} and its maximal cells are the hypersimplices $\Delta_{2,3}+\mathbf{e}_i$ for $i\in[3]$.
\end{Ex}

\begin{Ex}
    The maximal cells of $\mathcal{K}^{[3]}_{1,4}$ are the $3$ hypersimplices $\Delta_{1,4}+\mathbf{e}_i$ for $i\in [4]$.  \cref{fig:hypercomplex 3 1 4} illustrates on the left the complex $\mathcal{K}^{[4]}_{1,3}$. The complex  $\mathcal{K}^{[3]}_{2,4}$ is exactly the hypersimplex $\Delta_{2,4}$. Similarly, the maximal cells of $\mathcal{K}^{[4]}_{2,4}$ are the four hypersimplices $\Delta_{2,4}+\mathbf{e}_i$ for $i\in [4]$. This complex is depicted in the right side of in  \cref{fig:hypercomplex 3 1 4}.
\end{Ex}

By construction, the $(n-r)$--dimensional faces of the $(n,m)$--hypersimplicial complex are the $(n-r)$--dimensional faces of the translated hypersimplices. Using \eqref{eq:faces hypersimplex}, we may describe these faces as follows. For $1\leq l\leq \min\{n-1,m-1\}$, $\uu\in\Z_{\geq 0}^n$ with $|\uu|=m-1-l$, and for $S_1\sqcup S_2\subseteq[n]$ with $|S_1|+|S_2|=r-1$, we get the face
\[
\mathcal{K}_n^{[m]}(S_1,S_2,l,\uu):=\Delta_{l,n}(S_1,S_2)+\uu.
\]
Using this notation, we can write the intersection in \cref{lem: intersection faces} as 
\[
\begin{array}{c}
\displaystyle
\left(\Delta_{l,n}+\uu\right)\cap\left(\Delta_{l',n}+\mathbf{v}\right) = \mathcal{K}_n^{[m]}(\emptyset,\emptyset,l,\uu)\cap \mathcal{K}_n^{[m]}(\emptyset,\emptyset,l',\mathbf{v})=\mathcal{K}_n^{[m]}(\kappa(\uu-\mathbf{v},1),\kappa(\uu-\mathbf{v},-1),l,\uu)=\\ 
\displaystyle\mathcal{K}_n^{[m]}(\kappa(\uu-\mathbf{v},-1),\kappa(\uu-\mathbf{v},1),l',\mathbf{v}).
\end{array}
\]
Note that the face $\mathcal{K}_n^{[m]}(S_1,S_2,l,\uu)$ is contained in the intersection of $\mathcal{K}_n^{[m]}$ with the linear subspace $\{\lambda_i=u_i: \text{ for } i \in S_1\}$ and $\{\lambda_i=u_i+1: \text{ for } i \in S_2\}$. Here, $\lambda_1,\ldots,\lambda_n$ are the coordinates of $\R^n$. In the following result, we study the intersection of such type of linear subspaces with the hypersimplicial complex.

\begin{Prop}\label{prop: linear and hypersimplices}
    For $S\subseteq[n]$ and $\mathbf{a}=(a_i)_{i\in S}\in\Z_{\geq 0}^{|S|}$  with $|\mathbf{a}|\leq m-1$, consider the linear subspace
    \[H(S,\mathbf{a}):=\{ \lambda_i=a_i: \text{ for } i\in S\}.\]
    Then, the intersection of $\mathcal{K}_n^{[m]}$ is isomorphic to the hypersimplicial complex $\mathcal{K}_{n-|s|}^{\left[m-|a|\right]}$.
\end{Prop}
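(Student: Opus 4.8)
Proposal to prove Proposition \ref{prop: linear and hypersimplices}.

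\medskip

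The plan is to exhibit an explicit affine isomorphism between the ambient spaces that restricts to a bijection of faces, and then observe that this bijection respects the combinatorial structure of hypersimplicial complexes. First I would fix notation: write $\lambda_1,\ldots,\lambda_n$ for the coordinates of $\R^n$, let $S\subseteq[n]$ with $|S|=s$, and let $T=[n]\setminus S$ with $|T|=n-s$. Given $\mathbf{a}=(a_i)_{i\in S}\in\Z_{\geq0}^s$ with $|\mathbf{a}|\leq m-1$, the subspace $H(S,\mathbf{a})$ is naturally identified with $\R^{T}\cong\R^{n-s}$ via the projection forgetting the $S$-coordinates; I would compose this with the translation that subtracts nothing on $\R^T$, so that the identification sends a point $\sum_{i\in S}a_i\ee_i+\sum_{j\in T}\lambda_j\ee_j$ in $H(S,\mathbf{a})$ to $\sum_{j\in T}\lambda_j\ee_j$ in $\R^{n-s}$. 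Set $m':=m-|\mathbf{a}|$, so that on $H(S,\mathbf{a})$ the total-sum condition $\sum_i\lambda_i=m-1$ becomes $\sum_{j\in T}\lambda_j=m'-1$; this is exactly the ambient constraint defining $(m'-1)\cdot\Delta_{n-s-1}$.

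\medskip

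The key step is to match the maximal faces. A maximal face of $\mathcal{K}_n^{[m]}$ is a translated hypersimplex $\Delta_{l,n}+\uu$ with $1\leq l\leq\min\{n-1,m-1\}$ and $\uu\in\Z_{\geq0}^n$, $|\uu|=m-1-l$. By the description \eqref{eq:trans hypersimplex}, such a face is $\{\sum\lambda_i\ee_i:\sum\lambda_i=m-1,\ u_i\leq\lambda_i\leq u_i+1\}$. Intersecting with $H(S,\mathbf{a})$ forces, for each $i\in S$, the constraint $a_i\in[u_i,u_i+1]$, i.e. $u_i\in\{a_i-1,a_i\}$; and the face meets $H(S,\mathbf{a})$ in its interior iff $u_i=a_i$ or $u_i=a_i-1$ with the value $a_i$ achieved as an endpoint, which is precisely the face-selection mechanism of \eqref{eq:faces hypersimplex}. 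Writing $S_0=\{i\in S:u_i=a_i\}$ (so $\lambda_i$ is pinned to the lower endpoint $u_i$) and $S_1=\{i\in S:u_i=a_i-1\}$ (pinned to the upper endpoint $u_i+1=a_i$), the nonempty intersection $(\Delta_{l,n}+\uu)\cap H(S,\mathbf{a})$ equals the face $\Delta_{l,n}(S_0,S_1)+\uu$, which by \eqref{eq:faces hypersimplex} is $\Bigl\{\sum_{j\in T}\mu_j\ee_j:\ 0\leq\mu_j\leq1,\ \sum_{j\in T}\mu_j=l-|S_1|\Bigr\}+\uu|_{S_1\text{ shifted}}+\uu|_T$. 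Under the identification of the previous paragraph this is exactly the translated hypersimplex $\Delta_{l-|S_1|,\,n-s}+\uu|_T$ living in $\R^{T}$, with $|\uu|_T|=m-1-l-|\uu|_{S_0}-|\uu|_{S_1}=m-1-l-(\sum_{i\in S}a_i-|S_1|)=(m-|\mathbf{a}|)-1-(l-|S_1|)=m'-1-(l-|S_1|)$. So each maximal face of $\mathcal{K}_n^{[m]}$ meeting $H(S,\mathbf{a})$ maps to a translated hypersimplex $\Delta_{l',n-s}+\vv$ with $l'=l-|S_1|$ and $\vv=\uu|_T$, $|\vv|=m'-1-l'$ — precisely a maximal face of $\mathcal{K}_{n-s}^{[m']}$. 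Conversely, given such a maximal face of $\mathcal{K}_{n-s}^{[m']}$, any choice of $(S_0,S_1)$ partitioning $S$ together with the prescribed $\uu$-values reconstructs a maximal face of $\mathcal{K}_n^{[m]}$ that meets $H(S,\mathbf{a})$ in it; but since the intersection only depends on the resulting face in $\R^T$, and distinct $(S_0,S_1,\uu|_T)$ can give the same lower-dimensional face, I would instead argue directly that the collection of all intersections $\{F\cap H(S,\mathbf{a}):F\text{ a face of }\mathcal{K}_n^{[m]},\ F\cap\mathrm{relint}\,H(S,\mathbf{a})\neq\emptyset\}$ is exactly the face poset of $\mathcal{K}_{n-s}^{[m']}$: this follows because $\mathcal{K}_n^{[m]}$ subdivides $(m-1)\Delta_{n-1}$ (Proposition \ref{prop: hypersimplicial complex}), hence its restriction to the affine subspace $H(S,\mathbf{a})$ subdivides $H(S,\mathbf{a})\cap(m-1)\Delta_{n-1}=(m'-1)\Delta_{n-s-1}$, and I identify the restricted pieces with translated hypersimplices $\Delta_{l',n-s}+\vv$ by the endpoint-pinning computation above, covering all $l'$ with $1\leq l'\leq\min\{n-s-1,m'-1\}$ because the exponents $l$ and translations $\uu$ were free. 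Since a subdivision of $(m'-1)\Delta_{n-s-1}$ into translated hypersimplices with these index ranges is unique (it is forced face-by-face, or simply: it equals $\mathcal{K}_{n-s}^{[m']}$ by its defining construction), the restriction is isomorphic to $\mathcal{K}_{n-s}^{[m']}$ as a hypersimplicial complex.

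\medskip

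The main obstacle I anticipate is bookkeeping rather than anything conceptual: one must carefully track which faces of $\mathcal{K}_n^{[m]}$ actually meet $H(S,\mathbf{a})$ in their relative interior (versus in a proper boundary face), so as not to double-count or miss cells, and one must verify that the endpoint-pinning $(S_0,S_1)$-analysis is compatible across overlapping faces — i.e. that gluing data is preserved. The cleanest way around this is precisely the ``restriction of a subdivision is a subdivision'' argument invoked above, which lets me deduce the complex-isomorphism from the already-established fact (Proposition \ref{prop: hypersimplicial complex}) that $\mathcal{K}_n^{[m]}$ polyhedrally subdivides the dilated simplex, so that I only need the local computation identifying one restricted maximal cell with one translated hypersimplex of the right dimension, shape, and index. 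A small additional point to check is that the linear identification $H(S,\mathbf{a})\cong\R^{n-s}$ sends lattice structure to lattice structure and unit cube faces to unit cube faces, which is immediate since it is just coordinate projection followed by the relevant translation; this guarantees ``hypersimplex'' maps to ``hypersimplex'' and not merely to an affine image of one.
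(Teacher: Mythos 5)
Your proposal is correct and follows essentially the same route as the paper: identify $H(S,\mathbf{a})$ with $\R^{n-|S|}$ via the coordinate projection, compute the intersection of each translated hypersimplex $\Delta_{l,n}+\uu$ with $H(S,\mathbf{a})$ by pinning the $S$-coordinates to endpoints (which is exactly the face-selection of \eqref{eq:faces hypersimplex}), check the index arithmetic $l'=l-|S_1|$, $|\vv|=m'-1-l'$, and conclude via the fact that $\mathcal{K}_n^{[m]}$ subdivides $(m-1)\Delta_{n-1}$ so the restriction subdivides $(m'-1)\Delta_{n-|S|-1}$. Your explicit handling of the lower-dimensional-face bookkeeping is, if anything, slightly more careful than the paper's.
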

\begin{proof}
    Without loss of generality assume that $S=\{n-|S|+1,\ldots,n \}$,and consider the linear projection 
    \[
    \pi_S:\R^n\rightarrow \R^n/\langle \mathbf{e}_i: i\in S\rangle \simeq\R^{n-|S|}.
    \]
    Note that the restriction of $\pi_S$ to $H(S,\mathbf{a})$ is an isomorphism. We claim that the projection of the intersection of $\mathcal{K}_n^{[m]}$ and $H(S,\mathbf{a})$ is $\mathcal{K}_{n-|S|}^{[m-|\mathbf{a}|]}$. Note that the intersection of $\mathcal{K}_n^{[m]}$ and $H(S,\mathbf{a})$ consists of all the faces of the form $\mathcal{K}_n^{[m]}(S_1,S_2,l,\uu)$ such that $S\subseteq S_1\sqcup S_2$, and $u_i=a_i$ for every $i\in S\cap S_1$ and $u_i+1=a_i$ for every $i\in S\cap S_2$. The projection of such a face is 
    \[\begin{array}{c}
    \displaystyle
    \left\{\sum_{i=1}^{n-|S|}\lambda_i\mathbf{e}_i: \sum \lambda_i = m-1-|\mathbf{a}| \text{ and }
    \begin{array}{l} \displaystyle
\lambda_i=u_i \text{ for } i\in S_1\setminus S
    ,\\ \displaystyle
\lambda_i=u_i+1 \text{ for } i\in S_1\setminus S
    ,\\ \displaystyle
u_i\leq \lambda_i\leq  u_i+1 \text{ for } i\not \in S_1\sqcup S_2
    \end{array}
    \right\} = \\ \displaystyle
     \left\{\sum_{i\not \in S_1\sqcup S_2}\lambda_i\mathbf{e}_i: \sum \lambda_i = m-1-|\mathbf{a}|-|S_2\setminus S| \text{ and }
0\leq \lambda_i\leq 1 \text{ for } i\not \in S_1\sqcup S_2
    \right\} +\mathbf{e}_{S_2\setminus S} + \uu= \\
    \displaystyle \Delta_{ m-1-|a|-|S_2\setminus S|,n-|S|}(S_1\setminus S, S_2\setminus S)+\uu,
    \end{array}
    \]
    which is a maximal face of $\mathcal{K}_{n-|S|}^{[m-|\mathbf{a}|]}$. It remains to show that any face of $\mathcal{K}_{n-|S|}^{[m-|\mathbf{a}|]}$ is achieved from the projection $\pi_S$. This follows from the fact that the image of $H(S,\mathbf{a})\cap \mathcal{K}_n^{[m]]}$ through the projection $\pi_S$ is the simplex $(m-|\mathbf{a}|-1)\cdot \Delta_{n-| S|-1}$.
\end{proof}

Using \cref{prop: linear and hypersimplices}, we introduce the notion of smoothable face.

\begin{Def}\label{def: smoothable face}
    A face $\Gamma$ of $\mathcal{K}_n^{[m]}$ is smoothable if one of the following conditions holds:
    \begin{itemize}
        \item $n=1$ or $n=2$.
        \item $\Gamma$ is contained in $\Delta_{n-1,n}+\uu$ for $\uu\in\Z_{\geq 0}^n$ with $|\uu|=m-n$.
        \item $\Gamma$ is contained in a linear subspace $H(S,\uu)$ and in the intersection of $H(S,\uu)$ and $\mathcal{K}_n^{[m]}$, $\Gamma$ is a smoothable face.
    \end{itemize}
\end{Def}

A first remark from \cref{def: smoothable face} is that all vertices and edges of $\mathcal{K}_n^{[m]}$ are smoothable. 
The third condition in \cref{def: smoothable face} is recursive, since by \cref{prop: linear and hypersimplices} the intersection of $H(S,\uu)$ and $\mathcal{K}_n^{[m]}$ is the hypersimplicial complex $\mathcal{K}_{n-|S|}^{[m-|\uu|]}$. Note that the faces of $\Delta_{n-1,n}$ are all hypersimplices of the form $\Delta_{n'-1,n'}$ for $n'\leq n$. Therefore, a face of $\mathcal{K}_n^{[m]}$ is smoothable if and only if it is a hypersimplex of the form $\Delta_{n'-1,n'}$ for $n'\leq n$. In other words, a face of the form $\mathcal{K}_n^{[m]}(S_1,S_2,l,\uu)$ is smoothable if and only if $n-l-|S_2|=n-|S_1|-|S_2|-1$, which is equivalent to $l=|S_1|+1$.

\begin{Ex}
    \begin{itemize}
        \item For $n=3$ and $m=3$, $\Delta_{2,3}$ is a smoothable face of $\mathcal{K}_3^{[3]}$ by definition. Therefore, for $m\geq 3$, the smoothable faces are the vertices, the edges and the hypersimplices $\Delta_{2,3}+\mathbf{u}-\mathbf{1}$ for $|\uu|=m$, which are illustrated in  \cref{fig:Smoothable faces n 3}.
        
        \item For $n=4$ and $m = 2$, the smoothable faces of  $\mathcal{K}_4^{[2]}$ are the vertices and the edges. For $n=4$ and $m=3$, aside from the vertices and the edges, $\mathcal{K}_4^{[2]}$ has four $2$--dimensional faces that are smoothable. They are faces of $\Delta_{2,4}$ and they arise from the $2$--dimensional smoothable face of $\mathcal{K}_3^{[3]}$. In \cref{fig:smoothable faces}, these four faces are depicted in orange. For $n=4$ and $m=4$, $\mathcal{K}_4^{[4]}$ has $16$ smoothable faces of dimension $2$. They correspond to the translation y $\mathbf{e}_i$ for $i\in[4]$ of the four smoothable faces of $\Delta_{2,4}$ in $\mathcal{K}_4^{[3]}$. Moreover, $\Delta_{3,4}$ is a $3$--dimensional smoothable face of $\mathcal{K}_4^{[4]}$. In  \cref{fig:smoothable faces} all the smoothable faces of $\mathcal{K}_4^{[3]}$  and $\mathcal{K}_4^{[4]}$ are illustrated.
    \end{itemize}
\end{Ex}

Analogously to the notion of smoothable face, we introduce the notion of singular face.

\begin{Def}
    \label{def:singular face}
 We say that a face $\Gamma$ of $\mathcal{K}_n^{[m]}$ is \emph{singular} if one of the following conditions is satisfied:
    \begin{itemize}
        \item $\Gamma$ is in the intersection of two distinct maximal faces.
        \item $\Gamma$ is smoothable of dimension at most $n-2$.
    \end{itemize}
    
\end{Def}

Note that vertices are always singular faces and, for $n\geq 3$, edges are always singular. For example, for $n=3$ the vertices and edges are exactly the only singular faces.

\begin{Ex}
    For $n=4$ and $m=2$, the singular faces of $\mathcal{K}_4^{[2]}$ are the vertices and edges. For $n=4$ and $m=2$, the singular faces of $\mathcal{K}_4^{[2]}$ are the vertices, the edges and the two dimensional faces of $\Delta_{2,4}$.
\end{Ex}

\section{Some Commutative Algebra}\label{app: commutative algebra}

In this technical Appendix we will give first describe the first syzyigies of the ideal representing a point in $\sHilb^m_{\mathbf{0}}(X_n)$, secondly a presentation of the ideal in \eqref{eq:gens ideal def 1} that will be useful for thirdly compute the primary decompositions of the ideals we are interested in. We follow the notations of the previous sections.

\begin{Lem}\label{lemma:syz}
    Let $[J] \in \sHilb^m_{\mathbf{0}}(X_n)$ and let $f_1,\ldots,f_l$ minimal generators of $J$ as in \cref{ideals punctual}. The beginning of the minimal free resolution of $J$ is 
    \begin{equation}\label{eq:syz}
    \begin{array}{cclcc} 
    R(-1)^{n\cdot\binom{l}{2}}&\longrightarrow&R^{l}&\longrightarrow&0\\
     e_{i,j,k}&\longmapsto & A_{k,i}x_ie_j-A_{j,i}x_ie_k& &
   \end{array}   
    \end{equation}
    
    where $1\leq i\leq n , 1\leq j<k\leq l$ and $A_{k,i}$ denotes the entry $(k,i)$ of the matrix $A$ in \cref{ideals punctual}.

\end{Lem}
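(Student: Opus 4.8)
\textbf{Proof plan for \cref{lemma:syz}.}

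The strategy is to exhibit an explicit presentation matrix for $J$ and then verify it is minimal. First I would record the obvious relations: since $A$ has no vanishing column, for each $i\in[n]$ there is a $j$ with $A_{j,i}\neq 0$, and multiplying $f_j = \sum_s A_{j,s}x_s^{u_s}$ by $x_i$ in $R_n$ kills all terms except the $i$-th, giving $x_if_j = A_{j,i}x_i^{u_i+1}$. Hence for any two indices $j<k$ and any $i$, the element $A_{k,i}x_if_j - A_{j,i}x_if_k = (A_{k,i}A_{j,i} - A_{j,i}A_{k,i})x_i^{u_i+1} = 0$ vanishes in $R_n$, so each $e_{i,j,k}\mapsto A_{k,i}x_ie_j - A_{j,i}x_ie_k$ is genuinely a syzygy. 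This produces the claimed map $R(-1)^{n\binom{l}{2}}\to R^l$, and what remains is to show its image is \emph{all} first syzygies and that the generating set of syzygies is minimal (i.e.\ no redundancies, so the resolution is minimal at that spot).

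The key step is surjectivity onto the syzygy module. I would argue degree by degree, or more efficiently, use the free-module decomposition of $R_n$: as a $\C$-vector space (and compatibly with the grading) $R_n = \C\cdot 1 \oplus \bigoplus_{i=1}^n x_i\C[x_i]$, so any relation $\sum_j g_j f_j = 0$ can be analysed variable-by-variable. Writing $g_j = g_{j,0} + \sum_i g_{j,i}(x_i)$ with $g_{j,i}\in x_i\C[x_i]$, the product $g_j f_j$ contributes to the $x_i$-component only through $g_{j,0}\cdot A_{j,i}x_i^{u_i} + g_{j,i}(x_i)\cdot A_{j,i}x_i^{u_i} + (\text{terms already in }J)$; collecting the coefficient of each monomial $x_i^d$ in $R_n/(\text{lower order})$ and using that the column $(A_{1,i},\dots,A_{l,i})$ is nonzero, one finds that the vector $(g_{1,0},\dots,g_{l,0})$ and each $(g_{1,i},\dots,g_{l,i})$ must be a combination of the $2\times 2$ "Plücker-type" relations among the rows of that column, which is exactly what the $e_{i,j,k}$ generate. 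Concretely: modulo the obvious syzygies I can reduce any $g_j$ to have support avoiding all but one prescribed variable, then force it to be $0$ by length considerations (the quotient has the explicit monomial basis $1,x_1,\dots,x_1^{u_1},\dots$). I expect this bookkeeping — keeping track of which monomials already lie in $J$ and organising the reductions so they terminate — to be the main obstacle, though it is entirely elementary.

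Finally, for minimality I would check that the presentation matrix has all entries in the maximal ideal $\mathfrak m = \langle x_1,\dots,x_n\rangle$: indeed every entry is $\pm A_{j,i}x_i$, which lies in $\mathfrak m$ since the $f_j$ have no constant term and (by \cref{ideals punctual}, after possibly row-reducing $A$) the generators $f_1,\dots,f_l$ are a minimal set, so the columns of $A$ involved are nonzero. That the generators $f_1,\dots,f_l$ themselves are minimal is already part of the hypothesis. Combining surjectivity with the entries-in-$\mathfrak m$ property gives that \eqref{eq:syz} is the start of the minimal free resolution, with the stated ranks $l$ and $n\binom l2$ and the degree shift $R(-1)$ coming from the fact that each syzygy generator is linear in the $x_i$. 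One should also note that the $n\binom l2$ syzygies $e_{i,j,k}$ need not be a \emph{minimal} generating set of the syzygy module in general — but the lemma as stated only asserts that these elements give a presentation $R(-1)^{n\binom l2}\to R^l\to J\to 0$ that begins the resolution, so it suffices to have surjectivity together with the entries lying in $\mathfrak m$.
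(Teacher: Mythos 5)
Your proposal is correct and follows essentially the same route as the paper: verify that each $e_{i,j,k}$ is a syzygy via $x_if_j=A_{j,i}x_i^{u_i+1}$, then decompose an arbitrary relation $\sum_a g_af_a=0$ variable-by-variable and monomial-by-monomial to see that each coefficient vector $(g_{1,i,b},\dots,g_{l,i,b})$ lies in the kernel of the row $(A_{1,i},\dots,A_{l,i})$, which is spanned by the $2\times 2$ relations $A_{k,i}e_j-A_{j,i}e_k$. Your closing caveat about the $e_{i,j,k}$ not necessarily being a minimal generating set is well taken (the paper does not address it either), but it is harmless for the intended application, which only uses that these elements generate all first syzygies.
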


\begin{proof}
    By construction of the matrix $A$ we have $A_{k,i}x_if_j= A_{k,i}A_{j,i}x_i^{u_i+1}$ and $A_{j,i}x_if_k= A_{j,i}A_{k,i}x_i^{u_i+1}$. Hence  relations in \eqref{eq:syz} are syzygies of $J$. Consider a syzygy of the form 
    \[
    g_1f_1+\cdots+g_lf_l=0,
    \]
    where 
    \[
    g_a=\displaystyle\sum_{i=1}^n \sum_{b=1}^d B_{a,i,b}x_i^b.
    \]
    Note that $g_a$ has no independent coefficient since $f_1,\ldots,f_l$ are linearly independent. We write the syzygy as follows
    \[
    \displaystyle\sum_a g_af_a = \sum_{i=1}^n \sum_{b=1}^d\sum_{a=1}^l B_{a,i,b}A_{a,i} x_i^{u_i+b} = 0.
    \]
    We get that for every $1\leq i\leq n$ and $1\leq b\leq d$, 
    \begin{equation}
        \label{eq:syz constrains}
            \sum_{a=1}^l B_{a,i,b}A_{a,i} x_i^{u_i+b} = x_i^{u_i+b}\sum_{a=1}^l B_{a,i,b}A_{a,i}=0.
    \end{equation}
    Using that $A_{a,i} x_i^{u_i+1}= x_i^1f_a$, we get that 
    \begin{equation}
        \label{eq:syzygy proof}
        \sum_{a=1}^l B_{a,i,b}x_if_a =0
    \end{equation}
    
    for every $1\leq i\leq n$ and $1\leq b\leq d$. 
    Thus, to check that the syzygy $g_1f_1+\cdots+g_lf_l=0$ can be obtained from the syzygies of the form \eqref{eq:syz}, it is enough to show that \eqref{eq:syzygy proof} is a linear combination of the syzygies  \eqref{eq:syz} for every $1\leq i\leq n$ and $1\leq b\leq d$. This is equivalent to showing that the vector $(B_{1,i,b},\ldots,B_{l,i,b})$ is contained in the linear subspace spanned by the vectors $A_{k,i}e_k-A_{j,i}e_j$ for $1\leq j<k\leq l$. Note that this linear subspace is exactly the kernel of the matrix $(A_{1,i},\ldots A_{l,i})$. On the other hand, using \eqref{eq:syz constrains}, we get that 
    \[
    (A_{1,i},\ldots A_{l,i})\begin{pmatrix}
       B_{1,i,b}\\ \vdots \\B_{l,i,b} 
    \end{pmatrix}= 0.
    \]
    We conclude that $(B_{1,i,b},\ldots,B_{l,i,b})$ is a linear combination of the vectors $A_{k,i}e_k-A_{j,i}e_j$ for $1\leq j<k\leq l$, and hence, the syzygy $g_1f_1+\cdots+g_lf_l=0$ can be obtained from the syzygies \eqref{eq:syz}.
\end{proof}

Following the notation introduced in \cref{sec:local hilbert scheme} for $k=1$ we consider the ring 
\[
\mathcal{S}_1 = \C[\alpha_2,\ldots,\alpha_n,A_1,\alpha_{1,1}.\ldots,\alpha_{1,u_1-1}]
\]
and the ideal of $\mathcal{J}_1$ given by
\begin{equation}\label{eq:ideal 1}
\mathcal{J}_1=\langle A_1\alpha_{i}:2\leq i\leq n\rangle+\langle  \alpha_{i}\alpha_{j}\alpha_{1,1} \,:\text{ for } 2\leq i<j\leq n \rangle.
\end{equation}

Similarly, for $k\geq 2$, we consider the ring
\begin{equation}\label{eq:ring Sk}
\mathcal{S}_k = \C[ \alpha_{i,j}: 1\leq i\leq n, 1\leq j\leq k  \text{ and } i\neq j]\otimes \C[\beta_{i}:1\leq i\leq k]\otimes\C[\beta_{i,s}: 1\leq i\leq k \text{ and }2\leq s\leq u_i-1],    
\end{equation}
and the ideal of $S_k$ given by
\begin{equation}\label{eq:ideal Ik}
\begin{array}{ll}
\mathcal{J}_k:=&\left\langle \alpha_{i,j}\alpha_{j,r}: 1\leq i\leq n,\,\, 1\leq j, r\leq k\text{ and } i\neq j,\,\,j\neq r\right\rangle+\\ &
\left\langle \alpha_{i,j}\beta_j-\alpha_{i,r}\beta_r: 1\leq i\leq n,\,\, 1\leq j< r\leq k\text{ and } i\neq j,\,\, i\neq r\right\rangle+\\ &
\left\langle \alpha_{i,r}\alpha_{j,r}\beta_{r}: 1\leq i\leq n,\,\, k+1\leq j\leq n\text{ and } 1\leq r\leq k,\,\, i\neq k \,\, i\neq j \right\rangle.
\end{array}  
\end{equation}
\begin{Lem}\label{lemma: new rings}
    For $1\leq k\leq n$, $S_k/J_k$ is isomorphic to $\mathcal{S}_k/\mathcal{J}_k$.
\end{Lem}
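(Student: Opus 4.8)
The plan is to exhibit an explicit $\C$-algebra isomorphism between $S_k/J_k$ and $\mathcal{S}_k/\mathcal{J}_k$ by a change of variables that turns the somewhat opaque generators of $J_k$ in \eqref{eq:gens ideal def 1} into the cleaner generators of $\mathcal{J}_k$ in \eqref{eq:ideal Ik}. The point is that the relations in \eqref{eq:gens ideal def 1} are written in terms of the coefficients $A_i$ and $\alpha_{i,j,l}$ of the deforming generators $f_1,\ldots,f_n$, and many of them simply \emph{eliminate} variables: relations of the form $\alpha_{i,j,u_j-1}\alpha_{j,j,l+1}-\alpha_{i,j,l}$ for $l\in[u_j-2]$ express $\alpha_{i,j,l}$ (for $l<u_j-1$) in terms of the ``top'' coefficient $\alpha_{i,j,u_j-1}$ and the diagonal coefficients $\alpha_{j,j,\bullet}$, and the relation $\alpha_{i,j,u_j-1}\alpha_{j,j,1}-A_i$ expresses $A_i$ for $i\notin\{j\}$, $j\in[k]$. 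So the first step is to perform this elimination carefully, keeping track of which variables survive.

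Concretely, I would set $\alpha_{i,j}:=\alpha_{i,j,u_j-1}$ for $i\in[n]$, $j\in[k]$, $i\neq j$ (the surviving ``off-diagonal top'' coefficients), $\beta_j:=\alpha_{j,j,1}$ for $j\in[k]$ (the surviving diagonal linear coefficients), and $\beta_{j,s}:=\alpha_{j,j,s}$ for $j\in[k]$, $2\leq s\leq u_j-1$; note $A_i$ for $k+1\leq i\leq n$ and $A_j$ for $j\in[k]$ behave differently and have to be traced through the relations $A_iA_j$, $A_i\alpha_{j,r,s}$, $\alpha_{i,j,u_j-1}A_j$. The remaining variables $\alpha_{i,j,l}$ for $l<u_j-1$ and $A_i$ for $i$ not of the eliminated type get solved for and substituted away. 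I expect that after this substitution the first family of generators $A_iA_j$, $A_i\alpha_{j,r,s}$ collapses (the $A_i$ for $k+1\leq i\leq n$ with $i\neq j$ being set to products $\alpha_{i,j}\beta_j$ via the elimination, giving back the Veronese-type relations $\alpha_{i,j}\beta_j-\alpha_{i,r}\beta_r$), the relation $\alpha_{i,j,u_j-1}\alpha_{j,r,l}$ becomes $\alpha_{i,j}\alpha_{j,r}$ (after noting $\alpha_{j,r,l}$ is itself a multiple of $\alpha_{j,r}=\alpha_{j,r,u_r-1}$), and $\alpha_{i,j,u_j-1}A_j$ becomes the last family $\alpha_{i,r}\alpha_{j,r}\beta_r$. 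The map $S_k\to\mathcal{S}_k/\mathcal{J}_k$ built from these substitutions then descends to $S_k/J_k$, and one constructs the inverse $\mathcal{S}_k\to S_k/J_k$ by sending $\alpha_{i,j}\mapsto\alpha_{i,j,u_j-1}$, $\beta_j\mapsto\alpha_{j,j,1}$, $\beta_{j,s}\mapsto\alpha_{j,j,s}$; checking mutual inverseness is then a matter of verifying the substitutions round-trip modulo the ideals.

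For $k=1$ the statement is cleaner and should be handled first as a warm-up: here there are no $\beta_j$-type Veronese relations since $k=1$ leaves no pair $j<r$ in $[k]$, only a single diagonal variable $\alpha_{1,1}=\alpha_{1,1,1}$, the off-diagonal tops $\alpha_i:=\alpha_{i,1,u_1-1}$ for $2\leq i\leq n$, and $A_1$; the relations $A_1\alpha_i$ and $\alpha_i\alpha_j\alpha_{1,1}$ in \eqref{eq:ideal 1} should drop out directly from \eqref{eq:gens ideal def 1} after eliminating $A_i$ for $2\leq i\leq n$ (via $\alpha_{i,1,u_1-1}\alpha_{1,1,1}-A_i$) and the lower $\alpha_{i,1,l}$. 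The main obstacle I anticipate is purely bookkeeping: correctly tracking the triangular system of eliminations (the relations chain $\alpha_{i,j,l}\leftrightarrow\alpha_{i,j,l+1}$ down from $l=u_j-1$), making sure no relation is used twice or lost, and confirming that the $A_j$ for $j\in[k]$ (as opposed to $k+1\le i\le n$) are correctly forced to zero or into the ideal by the combination of $\alpha_{i,j,u_j-1}A_j$ and $A_iA_j$; in particular the asymmetry between indices in $[k]$ and indices in $\{k+1,\dots,n\}$ is where a careless substitution would break. Once the variable dictionary is fixed this is a finite, if tedious, verification, and I would present it as: (i) define the two maps explicitly on generators, (ii) check each generator of $J_k$ maps into $\mathcal{J}_k$ and vice versa, (iii) check the composites are the identity modulo the respective ideals.
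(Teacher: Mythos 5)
Your plan matches the paper's proof: the authors likewise use the triangular relations to eliminate the lower coefficients $\alpha_{i,j,l}$ ($l<u_j-1$) and the $A_i$, set $\alpha_{i,j}=\alpha_{i,j,u_j-1}$, $\beta_j=\alpha_{j,j,1}$, $\beta_{j,s}=\alpha_{j,j,s}$, exhibit the resulting surjection $\mathcal{S}_k\to S_k/J_k$ and identify its kernel with $\mathcal{J}_k$, treating $k=1$ separately exactly as you propose. One small bookkeeping slip you would catch in the verification: the relations $\alpha_{i,j}\beta_j-\alpha_{i,r}\beta_r$ arise from the two expressions for $A_i$ coming from $\alpha_{i,j,u_j-1}\alpha_{j,j,1}-A_i$, the family $\alpha_{i,r}\alpha_{j,r}\beta_r$ comes from the generators $A_i\alpha_{j,r,s}$ with $k+1\leq j\leq n$, and $\alpha_{i,j,u_j-1}A_j$ turns out to be redundant (absorbed into $\alpha_{i,j}\alpha_{j,r}$) rather than producing a new family.
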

\begin{proof}
    First, we give a simplified list of generators of $J_k$. First of all, using the generators of the form 
    \begin{equation}\label{eq:new gens 1}
    \alpha_{i,j,u_j-1}\alpha_{j,j,l+1}-\alpha_{i,j,l},
     \end{equation}
    we may reduce the last ideal in \eqref{eq:gens ideal def 1} to 
     \begin{equation}\label{eq:new gens 2}
    \langle \alpha_{i,j,u_j-1}\alpha_{j,r,u_r-1}:\text{ for }j\in[k],i\in[n]\setminus\{j\} \text{ and } r\in[k]\setminus\{j\}\rangle.
    \end{equation}
    Similarly, the generators of the second ideal in \eqref{eq:gens ideal def 1} are reduced to 
    \begin{equation}\label{eq:new gens 3}
    \langle A_i\alpha_{j,r,u_r-1}: \text{ for } k+1\leq j\leq n,i\in [n]\setminus\{j\}  \text{ and }  r\in [k]\rangle.
    \end{equation}
    Using the generators \eqref{eq:new gens 3} and the third ideal in \eqref{eq:gens ideal def 1},
    we get that the generators of the first ideal in \eqref{eq:gens ideal def 1} are redundant.
    Now, 
    we conclude that $\mathcal{I}_k$ can be written as 
    \begin{equation}\label{eq:gens ideal def 1 new}
\begin{array}{ll}
\mathcal{I}_k=&
\langle A_i\alpha_{j,r,u_r-1}: \text{ for } k+1\leq j\leq n,i\in [n]\setminus\{j\} \text{ and }  r\in [k]\rangle\\
&+\langle \alpha_{i,j,u_j-1}\alpha_{j,j,1}-A_i:\text{ for }j\in[k]\text{ and }i\in[n]\setminus\{j\}\rangle\\
&+\langle \alpha_{i,j,u_j-1}A_j:\text{ for }j\in[k]\text{ and }i\in[n]\setminus\{j\}\rangle\\
&+\langle \alpha_{i,j,u_j-1}\alpha_{j,j,l+1}-\alpha_{i,j,l}:\text{ for }j\in[k],i\in[n]\setminus\{j\}\text{ and }l\in [u_j-2]\rangle\\
&+\langle \alpha_{i,j,u_j-1}\alpha_{j,r,u_r-1}:\text{ for }j\in[k],i\in[n]\setminus\{j\} \text{ and } r\in[k]\setminus\{j\}\rangle.\\
\end{array}
\end{equation}
Now, assume first that $k=1$. In this case, the ideal $\mathcal{I}_1$ is 
    \begin{equation}\label{eq:gens ideal def 1 k 1}
\begin{array}{ll}
\mathcal{I}_1=&
\langle A_i\alpha_{j,1,u_1-1}: \text{ for } 2\leq j\leq n\text{ and }i\in [n]\setminus\{j\}\\
&+\langle \alpha_{i,1,u_1-1}\alpha_{1,1,1}-A_i:\text{ for }2\leq i\leq n\rangle\\
&+\langle \alpha_{i,1,u_1-1}A_1:\text{ for }2\leq i\leq n\rangle\\
&+\langle \alpha_{i,1,u_1-1}\alpha_{1,1,l+1}-\alpha_{i,1,l}:\text{ for }2\leq i\leq n\text{ and }l\in [u_1-2]\rangle.
\end{array}
\end{equation}
From the generators of $\mathcal{I}_1$, we see that the variables $A_2,\ldots, A_n$ and $\alpha_{i,1,l}$ for $2\leq i\leq n$ and $l\in[u_1-2]$ are redundant. By eliminating these variables we get the ideal
\[
\begin{array}{ll}
\langle\alpha_{i,1,u_1-1}\alpha_{j,1,u_1-1}\alpha_{1,1,1}: \text{ for } 2\leq i< j\leq n\rangle+\langle \alpha_{i,1,u_1-1}A_1:\text{ for }2\leq i\leq n\rangle.
\end{array}
\]
 The proof follows from the fact that this ideal is the kernel of the map $S_1/J_1$ to $\mathcal{S}_1/\mathcal{J}_1$ given by
\[
\begin{array}{cccc}
A_1&\longmapsto& A_1, & \\
\alpha_{i,1,u_1}&\longmapsto&\alpha_i  & \text{for }2\leq i\leq n,\\
\alpha_{1,1,l}&\longmapsto & \alpha_{1,l}& \text{for } l\in[u_1-1].
\end{array}
\]

Now assume that $k\geq2$. Using the second type of generators in \eqref{eq:gens ideal def 1 new}, we get that 
\[
\alpha_{i,j,u_j-1}A_j = \alpha_{i,j,u_j-1}\alpha_{j,r,u_r-1}\alpha_{r,r,1}
\]
for $i\in [n]$, $j\in[k]\setminus\{i\}$, $r\in[k]\setminus\{j\}$. Note that such $r$ exists since $k\geq 2$. Therefore, we deduce that the generators of the form $\alpha_{i,j,u_j-1}A_j$ are contained in the last ideal in \eqref{eq:gens ideal def 1 new}. Similarly,  using the second type of generators in \eqref{eq:gens ideal def 1 new},
we get that 
\[
A_i\alpha_{j,r,u_r-1} = \alpha_{j,r,u_r-1} \alpha_{i,j,u_j-1} \alpha_{j,j,1}
\]
for $k+1\leq j\leq n$, $i\in [n]\setminus\{j\}$ and $r\in[k]$. We conclude that for $k\geq 2 $, we have 
 \begin{equation}\label{eq:gens ideal def 2 new}
\begin{array}{ll}
\mathcal{I}_k=&
\langle  \alpha_{i,j,u_j-1}  \alpha_{j,r,u_r-1}\alpha_{j,j,1}: \text{ for } k+1\leq j\leq n,i\in [n]\setminus\{j\} \text{ and }  r\in [k]\rangle\\
&+\langle \alpha_{i,j,u_j-1}\alpha_{j,j,1}-A_i:\text{ for }j\in[k]\text{ and }i\in[n]\setminus\{j\}\rangle\\

&+\langle \alpha_{i,j,u_j-1}\alpha_{j,j,l+1}-\alpha_{i,j,l}:\text{ for }j\in[k],i\in[n]\setminus\{j\}\text{ and }l\in [u_j-2]\rangle\\

&+\langle \alpha_{i,j,u_j-1}\alpha_{j,r,u_r-1}:\text{ for }j\in[k],i\in[n]\setminus\{j\} \text{ and } r\in[k]\setminus\{j\}\rangle.\\
\end{array}
\end{equation}
Now, consider the map from $\mathcal{S}_k$ to $S_k/J_k$ given by 
\[
\begin{array}{ccc}
    \alpha_{i,j} &\longmapsto& \alpha_{i,j,u_j-1},\\
    \beta_{i} &\longmapsto& \alpha_{i,1,1},\\
    \beta_{i,l} & \longmapsto & \alpha_{i,i,l}.
\end{array}
\]
This map it is surjective and its kernel is exactly the ring $\mathcal{J}_k$

\end{proof}

By \cref{theo:def theory} and \cref{lemma: new rings}, the schematic structure $\sHilb^m(X_n)$ around $[J]$ can be studied through the analysis of the scheme $\sp(\mathcal{S}_k/\mathcal{J}_k)$ around the origin. Since the variables $\beta_{i,s}$ do not appear in the generators of the ideal $\mathcal{J}_k$. It is enough to study the quotient of 
\[
\mathcal{S}'_k:= \C[ \alpha_{i,j},\beta_{i}: 1\leq i\leq n, 1\leq j\leq k  \text{ and } i\neq j]
\]
by $\mathcal{J}_k$. We now compute the primary decomposition of the ideal $\mathcal{J}_k$ in the following technical lemmas. We start with the case $k=1$.

\begin{Lem}\label{lemma: primary k1} 
     The primary decomposition of the ideal $\mathcal{J}_1$ is given by the ideals
     \begin{itemize}
         \item $\langle A_1,\alpha_{1,1}\rangle.$
         \item $\langle \alpha_2,\ldots,\alpha_n\rangle$.
         \item $\langle A_1,\alpha_j:2\leq j\leq n \text{ and } j\neq i\rangle$ for $2\leq i\leq n$.
     \end{itemize}
\end{Lem}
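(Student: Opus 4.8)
The statement is purely algebraic: I need the primary decomposition of $\mathcal{J}_1 = \langle A_1\alpha_i : 2\le i\le n\rangle + \langle \alpha_i\alpha_j\alpha_{1,1} : 2\le i<j\le n\rangle$ in $\mathcal{S}_1 = \C[\alpha_2,\ldots,\alpha_n,A_1,\alpha_{1,1},\ldots,\alpha_{1,u_1-1}]$. Since the variables $\alpha_{1,2},\ldots,\alpha_{1,u_1-1}$ do not occur in $\mathcal{J}_1$, it suffices to decompose the analogous ideal in $\C[\alpha_2,\ldots,\alpha_n,A_1,\alpha_{1,1}]$ and then tensor with the polynomial ring in the missing variables, which preserves the primary decomposition. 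So I may assume we work in $R := \C[\alpha_2,\ldots,\alpha_n,A_1,\alpha_{1,1}]$. First I would verify that each of the $n+1$ listed ideals is prime: each is generated by a subset of the variables, hence $R$ modulo it is again a polynomial ring, so it is prime. The candidate decomposition is therefore
\[
\mathcal{J}_1 \overset{?}{=} \langle A_1,\alpha_{1,1}\rangle \cap \langle \alpha_2,\ldots,\alpha_n\rangle \cap \bigcap_{i=2}^n \langle A_1\rangle + \langle \alpha_j : 2\le j\le n,\ j\ne i\rangle.
\]

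The containment $\subseteq$ is the routine direction: I would check that each generator of $\mathcal{J}_1$ lies in each of the $n+1$ primes. For $A_1\alpha_i$: it lies in $\langle A_1,\alpha_{1,1}\rangle$ (has factor $A_1$), in $\langle \alpha_2,\ldots,\alpha_n\rangle$ (has factor $\alpha_i$), and in $\langle A_1,\alpha_j:j\ne i'\rangle$ for any $i'\ge 2$ (if $i'=i$ use the factor $A_1$, otherwise use the factor $\alpha_i$ with $i\ne i'$). For $\alpha_i\alpha_j\alpha_{1,1}$ ($i\ne j$, both $\ge 2$): it lies in $\langle A_1,\alpha_{1,1}\rangle$ (factor $\alpha_{1,1}$), in $\langle \alpha_2,\ldots,\alpha_n\rangle$ (factors $\alpha_i,\alpha_j$), and in each $\langle A_1,\alpha_t:t\ne i'\rangle$ because among the two distinct indices $i,j\ge 2$ at least one differs from $i'$. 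Hence $\subseteq$ holds.

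For the reverse containment $\supseteq$, which is the only genuinely substantive step, I would take an element $f$ of the intersection and argue it lies in $\mathcal{J}_1$ by a monomial/degree bookkeeping argument. Since all generators are monomials, $\mathcal{J}_1$ is a monomial ideal, and so is each of the $n+1$ primes; therefore their intersection is a monomial ideal, and it suffices to show every monomial $\mu$ in the intersection lies in $\mathcal{J}_1$. A monomial $\mu = A_1^a\,\alpha_{1,1}^b\,\prod_{i=2}^n \alpha_i^{c_i}$ lies in $\langle A_1,\alpha_{1,1}\rangle$ iff $a\ge 1$ or $b\ge 1$; in $\langle\alpha_2,\ldots,\alpha_n\rangle$ iff some $c_i\ge 1$; in $\langle A_1,\alpha_t:t\ne i'\rangle$ iff $a\ge 1$ or some $c_t\ge 1$ with $t\ne i'$. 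I would split into cases on whether $a\ge 1$. If $a\ge 1$: then from the second ideal some $c_i\ge 1$, so $\mu$ is divisible by $A_1\alpha_i\in\mathcal{J}_1$, done. If $a=0$: then from $\langle A_1,\alpha_{1,1}\rangle$ we get $b\ge 1$; and for every $i'\in\{2,\ldots,n\}$ the condition forces some $c_t\ge 1$ with $t\ne i'$, which (ranging $i'$ over two distinct values, using $n\ge 2$) means at least two distinct indices $i,j\ge 2$ have $c_i,c_j\ge 1$ — here I must be slightly careful when $n=2$, but then $i'$ can only be $2$ forcing... actually when $n=2$ there is only the index $2$ and the condition "$c_t\ge 1$ for some $t\ne i'=2$" is unsatisfiable, so in fact for $n=2$ the ideal $\langle A_1,\alpha_2\rangle$-type primes collapse and $\mathcal{J}_1 = \langle A_1\alpha_2\rangle = \langle A_1,\alpha_{1,1}\rangle\cap\langle\alpha_2\rangle$ trivially; I would dispose of $n=2$ separately at the outset. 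For $n\ge 3$: $\mu$ is divisible by $\alpha_i\alpha_j\alpha_{1,1}\in\mathcal{J}_1$, done. This exhausts the cases, proving $\supseteq$, hence equality. Finally, since each component is prime and none contains another (one checks containments among the generator-sets fail), the decomposition is irredundant and is the primary decomposition. I expect the case analysis for $n=2$ versus $n\ge 3$ to be the only real subtlety; the rest is bookkeeping.
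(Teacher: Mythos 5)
Your argument is correct in the main case and takes a genuinely different route from the paper. The paper "finds" the components by stratifying the ambient affine space according to which of $A_1,\alpha_{1,1},\alpha_i$ vanish and reading off the defining equations on each stratum; this identifies the components set-theoretically but leaves the ideal-theoretic identity $\mathcal{J}_1=\bigcap P_j$ and the irredundancy implicit. You instead observe that $\mathcal{J}_1$ and all candidate components are monomial ideals and verify the equality by direct monomial bookkeeping, which simultaneously proves that $\mathcal{J}_1$ is radical (a fact the paper needs later, in the proof of \cref{lem: reduced vertex}) and, via the pairwise non-containment of the prime components, that the decomposition is minimal. For $n\ge 3$ your case analysis ($a\ge 1$ versus $a=0$, forcing either $A_1\alpha_i\mid\mu$ or $\alpha_i\alpha_j\alpha_{1,1}\mid\mu$ with $i\ne j$) is complete and correct, and the reduction to the subring omitting $\alpha_{1,2},\dots,\alpha_{1,u_1-1}$ is legitimate since extension to a polynomial ring preserves primary decompositions.

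The one concrete error is in your $n=2$ aside: the identity $\langle A_1\alpha_2\rangle=\langle A_1,\alpha_{1,1}\rangle\cap\langle\alpha_2\rangle$ is false, since $\alpha_{1,1}\alpha_2$ lies in the right-hand side but not the left. What actually happens for $n=2$ is that the third family of components degenerates (for $i=2$ the index set $\{j: 2\le j\le 2,\ j\ne 2\}$ is empty) to $\langle A_1\rangle$, and the correct minimal decomposition is $\langle A_1\rangle\cap\langle\alpha_2\rangle$; the listed component $\langle A_1,\alpha_{1,1}\rangle$ then contains $\langle A_1\rangle$ and is redundant, so the lemma's list is not irredundant in that case. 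This is an edge case the paper's own proof also glosses over, but since you chose to treat $n=2$ explicitly you should state it correctly rather than via the false identity above.
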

\begin{proof}
Assume first that $A_1\neq 0$. Then, from \eqref{eq:ideal 1}, we deduce that $\alpha_2=\cdots=\alpha_n=0$ and we get the component $\langle \alpha_2,\ldots,\alpha_n\rangle$. On the contrary, suppose that $A_1=0$, and assume that $\alpha_{1,1}\neq 0$. Then, from \eqref{eq:ideal 1} we derive that $\alpha_i\alpha_j=0$ for every $2\leq i<j\leq n$. This leads to the $n-1$ components of the form $\langle A_1,\alpha_j:2\leq j\leq n \text{ and } j\neq i\rangle$ for $2\leq i\leq n$. Finally, assume that $A_1=\alpha_{1,1}=0$. Then, every generator of  $\mathcal{J}_1$  vanishes. Hence, we conclude that  $\langle A_1,\alpha_{1,1}\rangle$ is the last irreducible component of $\mathcal{J}_1$.
\end{proof}

Now we continue with the case $k\geq 2$. To compute the primary decomposition in this case, we need the following lemmas:

\begin{Lem}\label{lemma: primary technical}
   For $S\subseteq [k]$ and $S\neq [n]$, the primary decomposition of the ideal 
   \begin{equation}\label{eq:extra ideal}
   \langle \alpha_{i,j}\alpha_{j,r}: j,r\in S, i\in [n],i\neq j, j\neq r\rangle
   \end{equation}
   is given by the ideals
   \[
  \mathcal{J}_{S,T} = \langle \alpha_{r,s}:r,s\in S\setminus T,r\neq s\rangle + \langle \alpha_{i,j}: \text{ for }j\in T \text{ and }i\in[n]\setminus\{j\}\rangle
   \]
   for every $T\subsetneq S$. For $S=[n]$, the primary decomposition of \eqref{eq:extra ideal} is given by the ideals $\mathcal{J}_{S,T}$ for $T\subsetneq S$ and $T\neq \emptyset$.
\end{Lem}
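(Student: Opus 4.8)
The plan is to analyze the monomial ideal $\mathcal{I}_S := \langle \alpha_{i,j}\alpha_{j,r}: j,r\in S,\ i\in[n],\ i\neq j,\ j\neq r\rangle$ directly as a squarefree monomial ideal, since its primary decomposition is governed by combinatorics of minimal vertex covers. First I would rewrite the generators: for each pair of distinct $j,r\in S$ and each $i\neq j$, we have the monomial $\alpha_{i,j}\alpha_{j,r}$. Crucially the ``pivot'' variable $\alpha_{j,r}$ appears in \emph{every} generator indexed by a fixed $j$ (as either the first or second factor, depending on which index we call the pivot), so the combinatorial structure organizes itself around which indices $j\in S$ get ``switched off'' entirely. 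This suggests the natural guess: for a subset $T\subsetneq S$, setting $\alpha_{i,j}=0$ for all $j\in T$ and all $i\neq j$ kills every generator with either subscript-of-the-pivot in $T$; what remains are generators $\alpha_{i,j}\alpha_{j,r}$ with $j,r\in S\setminus T$, and to kill those while keeping the $\alpha_{i,j}$ with $i\notin S$ (which never appear as pivots) free, one sets $\alpha_{r,s}=0$ for $r,s\in S\setminus T$ with $r\neq s$. That is exactly the ideal $\mathcal{J}_{S,T}$.

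The key steps, in order: (i) verify each $\mathcal{J}_{S,T}$ is prime — immediate, since it is generated by a subset of the variables; (ii) verify $\mathcal{I}_S\subseteq \mathcal{J}_{S,T}$ for every $T\subsetneq S$ (for $S=[n]$, every nonempty $T$) — take a generator $\alpha_{i,j}\alpha_{j,r}$ and case on whether $j\in T$: if $j\in T$ then $\alpha_{i,j}\in\mathcal{J}_{S,T}$; if $j\notin T$ then both $j,r$ force $\alpha_{j,r}\in\mathcal{J}_{S,T}$ when $r\in S\setminus T$, while if $r\in T$ then $\alpha_{i,j}\alpha_{j,r}$ — wait, $r\in T$ gives $\alpha_{i,r}\in\mathcal{J}_{S,T}$ but our factor is $\alpha_{j,r}$ with $r$ the second index; here $\alpha_{j,r}$ has second index $r\in T$, and $\mathcal{J}_{S,T}$ contains $\alpha_{i,r}$ for \emph{all} $i\neq r$, in particular $\alpha_{j,r}$, so we are fine; (iii) show $\mathcal{I}_S = \bigcap_{T} \mathcal{J}_{S,T}$ by a minimal-prime argument: a prime $\mathfrak{p}\supseteq\mathcal{I}_S$ must, for each pair $j\neq r$ in $S$, contain $\alpha_{j,r}$ or all $\alpha_{i,j}$ ($i\neq j$); let $T=\{j\in S: \alpha_{i,j}\in\mathfrak{p}\ \forall i\neq j\}$, then check $\mathfrak{p}\supseteq\mathcal{J}_{S,T}$, hence the $\mathcal{J}_{S,T}$ are exactly the minimal primes; (iv) handle the boundary case: when $S=[n]$ one cannot take $T=\emptyset$ because $\mathcal{J}_{[n],\emptyset}$ would require $\alpha_{r,s}=0$ for all $r,s\in[n]$ distinct — but those are \emph{all} the variables $\alpha_{i,j}$, so $\mathcal{J}_{[n],\emptyset}$ is the irrelevant maximal ideal, which is not minimal over $\mathcal{I}_{[n]}$ (any $\mathcal{J}_{[n],T}$ with $|T|=1$ is strictly smaller), so it must be excluded; when $S\neq[n]$ there exist indices $i\notin S$, so $\mathcal{J}_{S,\emptyset}$ does not contain those variables and is a genuine minimal prime; (v) conclude that since $\mathcal{I}_S$ is a radical (squarefree monomial) ideal, its primary decomposition is the intersection of its minimal primes, i.e.\ exactly $\bigcap_T \mathcal{J}_{S,T}$.

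The main obstacle I anticipate is step (iii)–(iv): getting the minimality bookkeeping exactly right, in particular pinning down \emph{why} $T$ must be a proper subset of $S$ in general but additionally nonempty precisely when $S=[n]$. The subtle point is that the variables appearing in the generators are $\alpha_{i,j}$ with $j\in S$ and $i\in[n]$ arbitrary (so $i$ ranges over a potentially larger index set), and the asymmetry between ``pivot indices'' $j$ (which lie in $S$) and ``free indices'' $i$ (which range over $[n]$) is what produces the dichotomy. A clean way to organize it: observe that no generator ever involves two variables both of whose \emph{first} indices lie outside $S$, so such variables are never forced into a minimal prime; this immediately shows $\mathcal{J}_{S,T}$ is not the whole maximal ideal unless $S\setminus T=\emptyset$ and $S=[n]$, i.e.\ $T=\emptyset$ and $S=[n]$. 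I would present this as the one genuinely delicate verification and keep steps (i), (ii), (v) terse.

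Everything else — the inclusion checks and the radicality (squarefree monomial ideals are radical, so the primary decomposition coincides with the intersection of minimal primes, and primary components are just the minimal primes themselves) — is routine and I would not write it out in full detail.
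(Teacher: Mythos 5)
Your proposal is correct, and it takes a genuinely different route from the paper. The paper argues geometrically by stratifying the ambient affine space into locally closed pieces $U_{S,T}$ (where $\alpha_{i,j}\neq 0$ for some $i$ when $j\notin T$, and $\alpha_{i,j}=0$ for all $i$ when $j\in T$), restricting the ideal to each piece, and recursively reading off one component per stratum; radicality is absorbed into the set-theoretic reasoning. You instead treat \eqref{eq:extra ideal} as what it is — a squarefree monomial (edge) ideal — so it is automatically radical and its primary decomposition is exactly the intersection of its minimal monomial primes, which you extract by the standard vertex-cover argument: for each generator $\alpha_{i,j}\alpha_{j,r}$ a prime must contain one factor, the set $T=\{j\in S:\alpha_{i,j}\in\mathfrak{p}\ \forall i\neq j\}$ then forces $\mathfrak{p}\supseteq\mathcal{J}_{S,T}$, and the remaining work is the incomparability bookkeeping. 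Your approach is shorter and makes the logical role of radicality explicit, at the cost of being less parallel to the localization arguments used elsewhere in the paper (e.g.\ in the proof of \cref{lemma:primary k2}, which reuses the same stratification style). One small slip to fix when writing it up: in step (iv) the condition ``$S\setminus T=\emptyset$ and $S=[n]$'' is not equivalent to ``$T=\emptyset$ and $S=[n]$'' ($S\setminus T=\emptyset$ means $T=S$, which is already excluded by $T\subsetneq S$); the correct and sufficient justification is the one you give in parentheses, namely that $\mathcal{J}_{[n],\emptyset}$ consists of all the variables and strictly contains $\mathcal{J}_{[n],T}$ for $|T|=1$, while for $S\neq[n]$ any $i\in[n]\setminus S$ yields $\alpha_{i,j_0}\in\mathcal{J}_{S,T'}\setminus\mathcal{J}_{S,\emptyset}$ for $j_0\in T'\neq\emptyset$, so $\mathcal{J}_{S,\emptyset}$ is genuinely minimal there.
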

\begin{proof}
 Assume first that for every $j\in S$ there exists $i_j\in[n]\setminus\{j\}$ such that $\alpha_{i_j,j}\neq 0$. Note that this condition defines an open subset that we denote by $U_{S,\emptyset}$. From  $\alpha_{i_j,j}\alpha_{j,r}=0$, we deduce that $\alpha_{j,r}=0$ for every $j,r\in S$ and $r\neq j$. In particular, we get that the restriction of \eqref{eq:extra ideal} to $U_{S,\emptyset}$ is $\mathcal{J}_{S,\emptyset}$. In particular, since $U_{S,\emptyset}$ is open, we deduce that $\mathcal{J}_{S,\emptyset}$ is part of the primary decomposition

On the contrary, assume now that there exists $j_1\in S$ such that $\alpha_{i,j_0}=0$ for all $i\in[n]\setminus\{j_0\}$. 
We distinguish two cases. First, we assume that for every $j\in S\setminus\{j_0\}$ there exists $i_j\in [n]\setminus\{j\}$ such that $\alpha_{i_j,j} \neq 0$. We denote the set defined by these constraints by $U_{S,\{j_1\}}$. 
As before, we get that $\alpha_{j,r}=0$ for every $j\in S\setminus\{j_0\}$ and $r\in S\setminus\{j\}$. Therefore, the restriction of \eqref{eq:extra ideal} to these constraints is $\mathcal{J}_{S,\{j_1\}}$. Since the ideal $\mathcal{J}_{S,\emptyset}$ is not contained in $\mathcal{J}_{S,\{j_1\}}$ and $U_{S,\{j_1\}}$ is an open subset in the complement of $U_{S,\emptyset}$. we deduce that $\mathcal{J}_{S,\{j_1\}}$  is in the primary decomposition.

Secondly, we assume the contrary. In other words, we assume that there exists $j_2\in S\setminus\{j_1\}$ such that $\alpha_{i,j_2}=0$ for all $i\in[n]\setminus \{j_2\}$. We distinguish again two cases. First, we assume that for every $j\in S\setminus\{j_0\}$ there exists $i_j\in [n]\setminus\{j\}$ such that $\alpha_{i_j,j} \neq 0$. Arguing as before, we get that \eqref{eq:extra ideal} restricted to these constraints is $\mathcal{J}_{S,\{j_1,j_2\}}$ and it forms part of the primary decomposition. Secondly, we assume that there exists $j_3\in S\setminus \{j_1,j_2\}$ such that $\alpha_{i,j_3}=0$ for all $i\in[n]\setminus\{j_3\}$. Recursively, applying these restrictions we get that $\mathcal{J}_{S,T}$ for $T\subsetneq S$ appears in the primary decomposition of \eqref{eq:extra ideal}. Note that if $T=S$, the ideal $\mathcal{J}_{S,S}$ is generated by all $\alpha_{i,j}$ and therefore it is not in the primary decomposition.

Finally, for $S=[n]$, we get the same primary decomposition with one distinction. In this case, $\mathcal{J}_{[n],\emptyset}$ is also generated by all $\alpha_{i,j}$. Therefore, it does not appear in the primary decomposition.
\end{proof}

\begin{Lem}\label{Lem: toric ideal}
    For $k\geq 2$, the ideal 
    \begin{equation}\label{eq:toric ideal}
   I_k:= \langle a_1b_1-a_ib_i:2\leq i\leq k\rangle\end{equation}
    in $\C[a_1,\ldots,a_k,b_1,\ldots,b_k]$ is toric of Krull dimension $k+1$. 
\end{Lem}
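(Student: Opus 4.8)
The plan is to show that $\C[a_1,\ldots,a_k,b_1,\ldots,b_k]/I_k$ is isomorphic to an affine semigroup algebra, which will simultaneously establish that the ring is toric (in particular a domain, so $I_k$ is prime) and compute its Krull dimension. First I would exhibit an explicit monomial parametrization: consider the homomorphism
\[
\Phi:\C[a_1,\ldots,a_k,b_1,\ldots,b_k]\longrightarrow \C[s,t_1,\ldots,t_k,u]
\]
sending $a_i\mapsto s\,t_i$ for $1\le i\le k$, and $b_i\mapsto u\,t_i^{-1}$... but since we want a polynomial (not Laurent) target, I would instead set $a_i\mapsto s\,t_i$ and $b_i\mapsto \prod_{j\ne i} t_j$ (or another choice making $a_ib_i$ independent of $i$). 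The key point is that under $\Phi$ we have $a_ib_i\mapsto$ the same monomial for every $i$, so $I_k\subseteq\ker\Phi$, and one checks the image is the semigroup algebra $\C[\mathcal{S}]$ for the affine semigroup $\mathcal{S}$ generated by the corresponding lattice points.

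The main steps would be: (1) define $\Phi$ carefully so the target is a polynomial ring and $a_1b_1-a_ib_i\in\ker\Phi$ for all $i$; (2) prove $\ker\Phi=I_k$ by a standard argument — the binomials $a_1b_1-a_ib_i$ already let one rewrite any monomial in the $a$'s and $b$'s into a normal form (for instance, eliminating all $b_i$ with $i\ge 2$ in favour of $b_1$ at the cost of introducing $a$-factors), and then a monomial basis of the normal forms maps injectively into the target, giving $\ker\Phi=I_k$; (3) read off the Krull dimension as the rank of the lattice spanned by the exponent vectors of $\Phi(a_1),\ldots,\Phi(a_k),\Phi(b_1),\ldots,\Phi(b_k)$, which should be $k+1$. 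Alternatively, for the dimension one can avoid the toric computation entirely: $I_k$ is generated by $k-1$ elements in a polynomial ring of dimension $2k$, and if one shows these $k-1$ binomials form a regular sequence (e.g. because the quotient is Cohen–Macaulay of the expected dimension, or by a direct grading/initial-ideal argument showing $\dim = 2k-(k-1)=k+1$), the dimension count follows.

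A clean way to organize the whole thing: pick the grading $\deg a_i=\deg b_i=1$ so that $I_k$ is homogeneous and the binomials are differences of monomials of degree $2$; then $I_k$ is a lattice ideal, and lattice ideals whose defining lattice $L\subseteq\Z^{2k}$ satisfies $L\cap\Z^{2k}_{\ge0}=\{0\}$ (the "saturation in the positive orthant is trivial" condition) are prime and toric, with $\dim\big(\C[a,b]/I_k\big)=2k-\operatorname{rank} L$. Here $L$ is generated by the vectors $(e_{a_1}+e_{b_1})-(e_{a_i}+e_{b_i})$ for $i=2,\ldots,k$, which are clearly linearly independent, so $\operatorname{rank} L=k-1$ and the dimension is $k+1$. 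The only thing to check is that no nonzero nonnegative vector lies in $L$: any element of $L$ is $\sum_{i\ge2}\lambda_i\big((e_{a_1}+e_{b_1})-(e_{a_i}+e_{b_i})\big)$, whose $a_i$- and $b_i$-coordinates are $-\lambda_i$ while the $a_1$-coordinate is $\sum\lambda_i$; nonnegativity of all coordinates forces $\lambda_i\le 0$ for all $i$ and $\sum\lambda_i\ge0$, hence all $\lambda_i=0$.

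The step I expect to be the main (though still routine) obstacle is verifying $\ker\Phi=I_k$ — equivalently, that the binomials generate the full lattice ideal and not just a subideal, i.e. that $I_k$ is already saturated with respect to the variables. This is standard for such "Segre-like" binomial ideals but requires either the normal-form/rewriting argument sketched above or an appeal to the general theory of lattice ideals (as in Sturmfels' \emph{Gröbner Bases and Convex Polytopes}); I would cite the latter to keep the proof short, since the statement is only being used as a technical input. Everything else — defining $\Phi$, computing the lattice, checking $L\cap\Z^{2k}_{\ge0}=\{0\}$, and reading off $\dim=k+1$ — is a short direct computation.
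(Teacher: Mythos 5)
Your route is genuinely different from the paper's. The paper argues by induction on $k$: on the open set $a_1\neq 0$ it eliminates $b_1$ and identifies the localization with the case $k-1$ (base case $k=2$ being the cone over a Segre variety), and then checks that the locus $a_1=0$ contributes no extra irreducible components; primality and the dimension count come out of this localization/induction. You instead treat $I_k$ as a lattice ideal for the rank-$(k-1)$ lattice $L$ spanned by $(e_{a_1}+e_{b_1})-(e_{a_i}+e_{b_i})$ and read off $\dim = 2k-\mathrm{rank}\,L = k+1$, reducing everything to (i) primality of the lattice ideal $I_L$ and (ii) the identity $I_k=I_L$. Properly executed, your argument is arguably cleaner: the paper's localization argument as written really controls the irreducible components of $\V(I_k)$, i.e.\ of $\sqrt{I_k}$, whereas the normal-form/Gr\"obner argument you sketch for (ii) works directly with the ideal. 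For the record, (ii) does go through: with respect to a suitable term order the set $\{a_ib_i-a_jb_j: i<j\}$ (all differences of generators, so still inside $I_k$) is a Gr\"obner basis with squarefree initial ideal $\langle a_1b_1,\dots,a_{k-1}b_{k-1}\rangle$, the standard monomials map injectively to Laurent monomials under $a_i\mapsto st_i$, $b_i\mapsto ut_i^{-1}$, and this pins down $\ker\Phi=I_k$ and $\dim=k+1$ simultaneously. (There is no need to force the target of $\Phi$ to be a polynomial ring; a toric ideal is by definition the kernel of a map to a Laurent torus.)

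One genuine error in your write-up: the criterion you quote for primality of a lattice ideal is wrong. The condition $L\cap\Z^{2k}_{\geq 0}=\{0\}$ ensures positivity of the grading, not primality; e.g.\ $L=\Z\cdot(2,-2)\subseteq\Z^2$ satisfies it, yet $I_L=\langle x^2-y^2\rangle$ is not prime. The correct criterion is that $L$ be saturated, i.e.\ $\Z^{2k}/L$ torsion-free. This does hold here, but for a different reason than the one you check: the generating vectors of $L$ restrict to $-\mathrm{Id}_{k-1}$ on the coordinates $a_2,\dots,a_k$, so they extend to a $\Z$-basis of $\Z^{2k}$ and $L$ is a direct summand. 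With that correction (or by simply running the normal-form argument, which bypasses the criterion entirely), your proof is complete.
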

\begin{proof}
    It suffices to prove that the binomial ideal $I_k=\langle a_1b_1-a_ib_i:2\leq i\leq k\rangle$ is  prime of Krull dimension $k+1$, we will do it by induction on $k$. 
    For $k=2$, $I_k$  is the ideal of the cone over Segre variety in $\P^3$, and therefore, it is prime of Krull dimension $3$. Now, assume that $I_{k'}$ is prime for all $k'<k$. Assume first that $a_1\neq 0$, then $b_1=\frac{a_2b_2}{a_1}$. After eliminating the variable $b_1$, we get the ideal $\langle a_2b_2-a_ib_i:3\leq i\leq n\rangle$ which by induction is prime of dimension $k$. Therefore, in the open subset $a_1\neq 0$ we get a unique reduced irreducible component of dimension $k+1$.
    Assume now that $a_1 = 0$. Then, $a_ib_i=0$ for every $2\leq i\leq k$. The irreducible components of the corresponding variety are given by the equations $a_1=a_i=b_j=0$ for $i\in S_1$ and $j\in S_2$ with $S_1\sqcup S_2=\{2,\ldots ,k\}$. All these components lie in the boundary of the component obtained by assuming $a_1\neq 0$.
\end{proof}

We now give a description of the projective toric variety defined by the ideal in \cref{Lem: toric ideal}.

\begin{Prop}
    \label{prop: toric variety}
    The polytope $P_k$ associated to the toric ideal \eqref{eq:toric ideal} is 
    \begin{equation}
        \label{eq:conv hull}
       P_k = \mathrm{Conv}\left(
        \mathbf{0},\mathbf{e}_1,\mathbf{e}_2,\mathbf{e}_1+\mathbf{e}_2,\mathbf{e}_i,\mathbf{e}_1+\mathbf{e}_2-\mathbf{e}_i:\text{ for }i\in\{3,\ldots,k\}
        \right).
    \end{equation}
    The facets of $P_k$ are the convex hull of the vertices
    \[
    V_0\cup \{\mathbf{e}_i:i\in S_1\}\cup \{\mathbf{e}_1+\mathbf{e}_2-\mathbf{e}_i:i\in S_2\},
    \]
    for $S_1\sqcup S_2=\{3,\ldots,k\}$ and $V_0 = \{\mathbf{0},\mathbf{e}_1\}$, $\{\mathbf{0},\mathbf{e}_2\}$, $\{\mathbf{e}_1,\mathbf{e}_1+\mathbf{e}_2\}$ or  $\{\mathbf{e}_2,\mathbf{e}_1+\mathbf{e}_2\}$. In particular, the facets of $P_k$ are simplices.
\end{Prop}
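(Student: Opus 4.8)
The plan is to realize $P_k$ concretely as the convex hull of the exponent configuration of the torus parametrization of $\V(I_k)$, and then to recognize this convex hull as a linear image of a cross-polytope, from which the facet description is immediate.

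First I would fix the correspondence between the $2k$ variables and the $2k$ lattice points appearing in \eqref{eq:conv hull}: send $a_1\mapsto \mathbf{0}$, $b_2\mapsto \mathbf{e}_1$, $a_2\mapsto \mathbf{e}_2$, $b_1\mapsto \mathbf{e}_1+\mathbf{e}_2$, and $a_i\mapsto \mathbf{e}_i$, $b_i\mapsto \mathbf{e}_1+\mathbf{e}_2-\mathbf{e}_i$ for $3\leq i\leq k$. Under the associated (homogenized) monomial map, the defining binomial $a_1b_1-a_ib_i$ is carried to a genuine relation, since each of $\mathbf{v}_{a_1}+\mathbf{v}_{b_1}$, $\mathbf{v}_{a_2}+\mathbf{v}_{b_2}$ and $\mathbf{v}_{a_i}+\mathbf{v}_{b_i}$ (for $i\geq 3$) equals $\mathbf{e}_1+\mathbf{e}_2$. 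Hence $I_k$ is contained in the homogeneous toric ideal $I$ of this configuration. Both ideals are prime — $I_k$ by \cref{Lem: toric ideal}, and $I$ because it is toric — and both have Krull dimension $k+1$: for $I_k$ this is \cref{Lem: toric ideal}, and for $I$ it is the rank of the homogenized configuration, namely $\dim\mathrm{aff}\{\mathbf{v}_\bullet\}+1=k+1$, since $\mathbf{0},\mathbf{e}_1,\dots,\mathbf{e}_k$ already affinely span $\R^k$. Therefore $I_k=I$, and the polytope attached to $\V(I_k)$ is exactly $P_k=\mathrm{Conv}(\mathbf{0},\mathbf{e}_1,\mathbf{e}_2,\mathbf{e}_1+\mathbf{e}_2,\mathbf{e}_i,\mathbf{e}_1+\mathbf{e}_2-\mathbf{e}_i:3\leq i\leq k)$, which is \eqref{eq:conv hull}.

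For the facets, the key observation is that the $2k$ vertices of $P_k$ split into $k$ pairs that are antipodal with respect to the center $c=\tfrac12(\mathbf{e}_1+\mathbf{e}_2)$: the pair $\{\mathbf{0},\mathbf{e}_1+\mathbf{e}_2\}$, the pair $\{\mathbf{e}_1,\mathbf{e}_2\}$, and the pairs $\{\mathbf{e}_i,\mathbf{e}_1+\mathbf{e}_2-\mathbf{e}_i\}$ for $3\leq i\leq k$. Setting $\mathbf{w}_1=\mathbf{0}-c$, $\mathbf{w}_2=\mathbf{e}_1-c$ and $\mathbf{w}_i=\mathbf{e}_i-c$ for $i\geq 3$, one checks that $\mathbf{w}_1,\dots,\mathbf{w}_k$ are linearly independent (indeed $\mathbf{e}_1=\mathbf{w}_2-\mathbf{w}_1$, $\mathbf{e}_2=-\mathbf{w}_1-\mathbf{w}_2$, $\mathbf{e}_i=\mathbf{w}_i-\mathbf{w}_1$), so that $P_k-c=\mathrm{Conv}(\pm\mathbf{w}_1,\dots,\pm\mathbf{w}_k)$ is the image of the standard $k$-dimensional cross-polytope under the linear isomorphism $\mathbf{f}_i\mapsto\mathbf{w}_i$. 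Consequently $P_k$ is combinatorially a cross-polytope; in particular all $2k$ listed points are genuine vertices, and its facets are precisely the $2^k$ simplices $\mathrm{Conv}(c+\varepsilon_1\mathbf{w}_1,\dots,c+\varepsilon_k\mathbf{w}_k)$ for $\varepsilon\in\{\pm1\}^k$, each with $k$ vertices. Translating back, $c+\varepsilon_1\mathbf{w}_1$ is $\mathbf{0}$ or $\mathbf{e}_1+\mathbf{e}_2$, $c+\varepsilon_2\mathbf{w}_2$ is $\mathbf{e}_1$ or $\mathbf{e}_2$, and $c+\varepsilon_i\mathbf{w}_i$ is $\mathbf{e}_i$ or $\mathbf{e}_1+\mathbf{e}_2-\mathbf{e}_i$; the first two signs produce exactly the four possibilities for $V_0$ and the remaining signs produce the partition $S_1\sqcup S_2=\{3,\dots,k\}$, which is precisely the asserted list.

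The routine but slightly delicate point is the first step: computing the homogeneous toric ideal of the chosen configuration with the correct grading and making the dimension count airtight, so that the equality $I_k=I$ — and hence the identification of $P_k$ with that convex hull — is fully justified. Once $P_k$ is identified with $\mathrm{Conv}(\mathbf{v}_\bullet)$, the cross-polytope recognition makes the facet enumeration (that each facet is a simplex, that there are $2^k$ of them, and that no listed point is redundant) entirely formal.
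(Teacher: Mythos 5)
Your proposal is correct. The first half is essentially the paper's argument in a more formal dress: the paper identifies $P_k$ by writing down the monomial parametrization $t\mapsto[1,t_2,\dots,t_k,t_1t_2,t_1,t_1t_2t_3^{-1},\dots]$ on the chart $a_1\neq 0$ and reading off the exponents, whereas you package the same exponent configuration as a toric ideal $I\supseteq I_k$ and close the gap with the primality and dimension count from \cref{Lem: toric ideal}; your version makes the identification $I_k=I$ airtight rather than implicit, at the cost of invoking the standard facts that toric ideals of configurations are prime of dimension equal to the rank of the homogenized configuration. The second half is where you genuinely diverge: the paper determines the facets by a hands-on case analysis of which vertex sets can minimize a linear functional $\langle\uu,-\rangle$ (ruling out that $\mathbf{e}_1,\mathbf{e}_2$, or an antipodal pair $\mathbf{e}_i$, $\mathbf{e}_1+\mathbf{e}_2-\mathbf{e}_i$, lie on a common facet, then counting), while you observe that the $2k$ points form $k$ antipodal pairs about $c=\tfrac12(\mathbf{e}_1+\mathbf{e}_2)$ with linearly independent half-differences $\mathbf{w}_1,\dots,\mathbf{w}_k$, so that $P_k-c$ is a linear image of the standard cross-polytope. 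Your route buys the full facet list, the simplex property, the count $2^k$, and the fact that all $2k$ points are genuine vertices in one stroke, and it also explains structurally \emph{why} the facets are indexed by sign vectors (equivalently by the choice of $V_0$ and the partition $S_1\sqcup S_2$); the paper's route is more elementary but longer and does not isolate the cross-polytope structure. Both arguments are complete and compatible with how the proposition is used later (e.g.\ in \cref{lem:normal polytope} and \cref{prop:complex singularity}).
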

\begin{proof}
    We find a monomial parametrization of the projective toric variety defined by \eqref{eq:toric ideal}. Assume that $a_1=1$ and $a_i\neq 0$ for $i\neq 1$. Then, $b_1=a_2b_2$ and $b_i = b_1a_i^{-1}=a_2b_2a_i^{-1}$ for $3\leq i\leq k$. Fixing the coordinates of $\P^{2k-1}$ as $[a_1,\ldots,a_k,b_1,\ldots,b_k]$, the monomial parametrization is given by 
    \[
    \begin{array}{ccc}
    \left(\C^{*}\right)^{k}&\longrightarrow &\P^{2k-1}\\
    t=(t_1,\ldots,t_k)&\longmapsto&[1,t_2,\ldots,t_k,t_1t_2,t_1,t_1t_2t_3^{-1},\ldots,t_1t_2t_k^{-1}].
    \end{array}
    \]
    where $a_1=1$, $a_i=t_i$ for $2\leq i\leq k$ and $b_2=t_1$.
    The description of $P_k$ by \eqref{eq:conv hull} follows from the exponents of this monomial map. 

    To find the faces of the polytope $P_k$, we minimize the scalar product $<\uu,->$ by a vector $\uu=(u_1,\ldots,u_k)\neq \mathbf{0}$ over $P_k$. Let $V$ be the set of vertices among the ones in \eqref{eq:conv hull} where the minimum of $<\uu,->$ is achieved. Assume first that such minimum is $0$. In other words, $\mathbf{0}$ is contained in $V$. This implies that $u_i\geq 0$ for every $i\in [k]$. First, we claim that $\mathbf{e}_1$ and $\mathbf{e}_2$ are not contained simultaneously in $V$. Indeed, assume  that $\mathbf{e}_1,\mathbf{e}_2\in V$. Then, we have that $u_1=u_2=0$. Therefore, $\langle\uu, \mathbf{e}_1+\mathbf{e}_2-\mathbf{e}_i\rangle=-u_i\leq 0$ for $3\leq i\leq k$. Since the minimum is $0$, we deduce $u_i=0$ for $3\leq i\leq k$ and $\uu=\mathbf{0}$. Therefore, $\mathbf{e}_1$ and $\mathbf{e}_2$ are not contained simultaneously in $V$.
    Now, $\mathbf{e}_1+\mathbf{e}_2$ is not contained in $V$. 
    Indeed, if $\mathbf{e}_1+\mathbf{e}_2\in V$, then $\langle\uu, \mathbf{e}_1+\mathbf{e}_2\rangle = u_1+u_1=0$. Since $u_1,u_2\geq 0$, we get that  $u_1=u_2=0$, and hence, $\mathbf{e}_1,\mathbf{e}_2\in V$. We  conclude that $\mathbf{e}_1+\mathbf{e}_2$ is not contained in $V$. 
    Similarly, we claim that $\mathbf{e}_i$ and $\mathbf{e}_1+\mathbf{e}_2+\mathbf{e}_i$ for $3\leq i\leq k$ are not contained simultaneously in $V$. Assume that $\mathbf{e}_i,\mathbf{e}_1+\mathbf{e}_2+\mathbf{e}_i\in V$ for $3\leq i\leq k$. Then, $u_i=0$ and $u_1+u_2-u_i=u_1+u_2=0$. This implies that the minimum is also achieved at $\mathbf{e}_1+\mathbf{e}_2$, and thence, $\mathbf{e}_1+\mathbf{e}_2\in V$. 

    Now assume that the minimum of $\langle\uu,-\rangle$ is obtained at a facet of $P_k$. This implies that $V$ must contain at least $k$ vertices. Since $\mathbf{e}_i$ and $\mathbf{e}_1+\mathbf{e}_2-\mathbf{e}_i$ are not simultaneously contained in $V$, we deduce that among the vertices $\{\mathbf{e}_i,\mathbf{e}_1+\mathbf{e}_2-\mathbf{e}_i:3\leq i\leq k\}$ only $k-2$ can be simultaneously in $V$. Among the other $4$ vertices, only $\mathbf{0}$ and $\mathbf{e}_1$ or  $\mathbf{0}$ and $\mathbf{e}_2$ may be simultaneously contained in $V$. We conclude that $V$ must be of the form 
    \[
    V=V_0\cup \{\mathbf{e}_i:i\in S_1\}\cup \{\mathbf{e}_i:i\in S_2\},
    \]
    for $S_1\sqcup S_2=\{3,\ldots,n\}$ and $V_0 = \{\mathbf{0},\mathbf{e}_1\}$ or $\{\mathbf{0},\mathbf{e}_2\}$.

    Assume now that the minimum of $\langle\uu,-\rangle$ is strictly negative. Then, $\mathbf{0}\not \in V$. As before, if $\mathbf{e}_1$ and $\mathbf{e}_2$ are simultaneously contained in $V$, then the minimum is $u_1=u_2<0$. This is a contradiction since $\langle \uu,\mathbf{e}_1+\mathbf{e}_2\rangle =u_1+u_2<u_1$. Therefore, $\mathbf{e}_1$ and $\mathbf{e}_2$ are  not simultaneously contained in $V$. Similarly, assume that $\mathbf{e}_i$ and $ \mathbf{e}_1+\mathbf{e}_2-\mathbf{e}_i$ are simultaneously contained in $V$. Then, the minimum would be $u_i=u_1+u_2-u_i$. In particular, $\langle \uu, \mathbf{e}_1+\mathbf{e}_2\rangle u_1+u_2=2u_i<u_i $, which is a contradiction since the minimum is $u_i$. In this case, we conclude that $V$ must be of the form 
      \[
    V=V_0\cup \{\mathbf{e}_i:i\in S_1\}\cup \{\mathbf{e}_i:i\in S_2\},
    \]
    for $S_1\sqcup S_2=\{3,\ldots,n\}$ and $V_0 = \{\mathbf{e}_1,\mathbf{e}_1+\mathbf{e}_2\}$ or $\{\mathbf{e}_2,\mathbf{e}_1+\mathbf{e}_2\}$.
    Finally, the facets of $P_n$ are simplices since they have dimension $k-1$ and they are the convex hull of $k$ vertices.
\end{proof}

\begin{Lem}
    \label{lem:normal polytope}
    For $n\geq 2$, the polytope $P_k$ is normal. In particular, the affine cone of $\mathbb{V}(I_k)$ is a normal affine variety.
\end{Lem}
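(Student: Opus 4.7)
The plan is to prove that $P_k$ satisfies the Integer Decomposition Property (IDP): every lattice point in $nP_k$ decomposes as a sum of $n$ lattice points of $P_k$. This is equivalent to $P_k$ being a normal (integrally closed) lattice polytope, and by the standard dictionary between normal polytopes and projectively normal toric varieties, it then follows that the affine cone of $\mathbb{V}(I_k)$ is a normal affine variety.

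The first step is to extract explicit facet inequalities for $P_k$ from \cref{prop: toric variety}. Given the list of facets as simplices with explicit vertex sets, a short linear-algebra computation (determining the unique supporting hyperplane through each facet and checking on which side $P_k$ lies) shows that $P_k$ is cut out by
\[
0 \leq x_a + \sum_{i \in T} x_i \leq 1, \qquad a \in \{1,2\},\ T \subseteq \{3,\ldots,k\}.
\]
Indeed the four classes of facets coming from the four choices of $V_0$ in \cref{prop: toric variety} correspond, respectively, to $x_2 + \sum_{T} x_i \geq 0$, $x_1 + \sum_{T} x_i \geq 0$, $x_1 + \sum_{T} x_i \leq 1$, and $x_2 + \sum_{T} x_i \leq 1$. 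From these inequalities, a short case analysis on the possible values $x_1, x_2 \in \{0, 1\}$ shows that the lattice points of $P_k$ are exactly its $2k$ vertices.

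The main step is an explicit decomposition. Given $q \in nP_k \cap \Z^k$, by the symmetry $\mathbf{e}_1 \leftrightarrow \mathbf{e}_2$ I may assume $q_1 \leq q_2$. Setting $T^+ = \{i \geq 3 : q_i > 0\}$ and $T^- = \{i \geq 3 : q_i < 0\}$, the plan is to assign the following vertex multiplicities, with all others set to zero:
\[
\lambda_{\mathbf{e}_2} = q_2 - q_1, \quad
\lambda_{\mathbf{e}_1+\mathbf{e}_2} = q_1 + \sum_{i \in T^-} q_i, \quad
\lambda_{\mathbf{e}_i} = q_i\ (i \in T^+),
\]
\[
\lambda_{\mathbf{e}_1+\mathbf{e}_2-\mathbf{e}_i} = -q_i\ (i \in T^-), \quad
\lambda_{\mathbf{0}} = n - q_2 - \sum_{i \in T^+} q_i.
\]
The non-negativity of $\lambda_{\mathbf{e}_1+\mathbf{e}_2}$ and $\lambda_{\mathbf{0}}$ is precisely the content of the facet inequalities of $nP_k$ applied to $T = T^-$ (with $a = 1$) and $T = T^+$ (with $a = 2$), respectively. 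The remaining multiplicities are manifestly non-negative, and a direct calculation verifies both $\sum_v \lambda_v = n$ and $\sum_v \lambda_v v = q$, giving the desired decomposition of $q$ as a sum of $n$ vertices.

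The delicate part is identifying the right subsets $T$ to use in the facet inequalities so that the non-negativity of each multiplicity falls out cleanly; once this bookkeeping is set up, the argument is elementary. Given the IDP, the normality of $P_k$ as a lattice polytope is immediate, and normality of the affine cone over $\mathbb{V}(I_k)$ follows at once via the standard correspondence for projectively normal toric varieties.
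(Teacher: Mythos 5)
Your proof is correct, but it takes a genuinely different route from the paper's. The paper proves normality by exhibiting a unimodular simplicial subdivision of $P_k$, built inductively: each simplex of a unimodular triangulation of $P_{k-1}\subset\{x_k=0\}$ is coned off to the two new vertices $\mathbf{e}_k$ and $\mathbf{e}_1+\mathbf{e}_2-\mathbf{e}_k$, and one invokes the standard fact that a lattice polytope admitting a unimodular triangulation is normal. You instead verify the integer decomposition property directly. Your facet description $0\le x_a+\sum_{i\in T}x_i\le 1$ (for $a\in\{1,2\}$, $T\subseteq\{3,\ldots,k\}$) checks out against \cref{prop: toric variety}: these are $4\cdot 2^{k-2}$ valid inequalities whose equality sets are exactly the listed simplicial facets, so they cut out $P_k$; the case analysis on $x_1,x_2\in\{0,1\}$ does show the lattice points are the $2k$ vertices; and I verified that your multiplicities are non-negative (the two nontrivial ones being precisely the facet inequalities of $nP_k$ at $T^-$ with $a=1$ and at $T^+$ with $a=2$), sum to $n$, and reproduce $q$. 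What each approach buys: yours is entirely elementary and self-contained, and records as by-products the facet inequalities and the lattice-point count of $P_k$, which the paper never writes down; the paper's induction is shorter and avoids the bookkeeping, at the cost of citing the triangulation-implies-normal theorem. One point worth making explicit in your write-up: the IDP you prove is with respect to the ambient lattice $\Z^k$, and since $\mathbf{0},\mathbf{e}_1,\ldots,\mathbf{e}_k$ are among the lattice points of $P_k$ they generate all of $\Z^k$, so IDP really does give saturation of the graded semigroup of the parametrization in \cref{prop: toric variety} and hence normality of the affine cone of $\mathbb{V}(I_k)$.
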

\begin{proof}
We use the fact that a polytope admitting a unimodular simplicial subdivision is normal, and we show that $P_k$ admits such a decomposition by induction on $k$. 
For $k=2$, $P_2=\mathrm{Conv}(\mathbf{0},\mathbf{e}_1,\mathbf{e}_2,\mathbf{e}_1+\mathbf{e}_2)$,  which is normal. In this case, the subdivision is given by the simplices $\mathrm{Conv}(\mathbf{0},\mathbf{e}_1,\mathbf{e}_2)$ and $\mathrm{Conv}(\mathbf{e}_1,\mathbf{e}_2,\mathbf{e}_1+\mathbf{e}_2)$. Assume now that $P_{k-1}$ admits a unimodular simplicial subdivision $\mathcal{P}_{k-1}$ and let $\Delta$ be a simplex in the subdivision.
Since $P_{k-1}$ is contained in $P_{k}$, we consider the simplex $\Delta^+$ obtained by taking the convex hull of $\Delta$ and $\mathbf{e}_n$. Similarly, we consider the simplex $\Delta^{-}$ obtained by taking the convex hull of $\Delta$ and $\mathbf{e}_1+\mathbf{e}_2-\mathbf{e}_n$. We claim that the 
$
\mathcal{P}_k:=\{\Delta^+,\Delta^-:\Delta\in\mathcal{P}_{k-1}\}
$
is a unimodular simplicial subdivision of $P_k$. Indeed, 
by construction the normalized volume of $\Delta^+$ and $\Delta^-$ is one. Therefore, $\mathcal{P}_k$ is unimodular and simplicial. Now, $P_k$ is the union of the polytopes
\[
P_k^+:=\mathrm{Conv}(P_{k-1},\mathbf{e}_n) \text{ and } P_k^-:=\mathrm{Conv}(P_{k-1},\mathbf{e}_1+\mathbf{e}_2-\mathbf{e}_n).
\]
The polytopes $P_k^+$ and $P_k^-$ are subdivided by $\Delta^+$ and $\Delta^-$ respectively for $\Delta\in\mathcal{P}_{k-1}$. Therefore, $\mathcal{P}_k$ is a unimodular simplicial subdivision of $P_k$, and hence, $P_k$ is normal. 
\end{proof}

\begin{Lem}\label{lemma:primary k2}
 The primary decomposition of the ideal $\mathcal{J}_k$ for $k\geq 2$ is given by the ideals
 \begin{itemize}
     \item For every $i\in[n]$,
     \[
     \mathcal{Q}_i:=\langle \alpha_{i,r}\beta_{r}-\alpha_{i,s}\beta_{s}\,:\,r,s\in[k], \, r,s\neq i\rangle + \langle \alpha_{r,s}\,:\,r\in[n]\setminus \{i\},s\in[k],r\neq s\rangle.
     \]
     \item For every $S\subseteq [k]$ with $1\leq |S|\leq \min\{k,n-2\}$ 
     \[
     \mathcal{J}_S := \langle \beta_j:j\in S\rangle + \langle \alpha_{j,r}: j\in S, r\in[k], j\neq r\rangle +\langle \alpha_{r,s}:r\in [n]\setminus S, s\in[k]\setminus S, r\neq s\rangle.
     \]
 \end{itemize}
\end{Lem}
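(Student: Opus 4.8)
The plan is to compute the primary decomposition of $\mathcal{J}_k\subset\mathcal{S}'_k$ by the same ``case split on which coordinates vanish'' method used for $\mathcal{J}_1$ in \cref{lemma: primary k1} and for the ideal \eqref{eq:extra ideal} in \cref{lemma: primary technical}. I would organize the argument in three stages: (i) check that each $\mathcal{Q}_i$ and each $\mathcal{J}_S$ contains $\mathcal{J}_k$ and is prime; (ii) prove the set-theoretic equality $\mathbb{V}(\mathcal{J}_k)=\bigcup_{i\in[n]}\mathbb{V}(\mathcal{Q}_i)\cup\bigcup_{S}\mathbb{V}(\mathcal{J}_S)$ together with the fact that no member of this list of primes contains another, which forces exactly these ideals to be the minimal primes of $\mathcal{J}_k$; (iii) show that $\mathcal{J}_k$ is radical, so that it equals the intersection of its minimal primes.

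For stage (i): the inclusions $\mathcal{J}_k\subseteq\mathcal{Q}_i$ and $\mathcal{J}_k\subseteq\mathcal{J}_S$ are a routine check on the three families of generators in \eqref{eq:ideal Ik}; for $\mathcal{J}_S$ this is immediate because $\mathcal{J}_S$ is generated by variables, and for $\mathcal{Q}_i$ one uses that every generator of $\mathcal{J}_k$ either lies in $\langle \alpha_{r,s}:r\neq i\rangle$ or (for the binomials $\alpha_{i,j}\beta_j-\alpha_{i,r}\beta_r$) already appears among the generators of $\mathcal{Q}_i$. Primality of $\mathcal{J}_S$ is clear since $\mathcal{S}'_k/\mathcal{J}_S$ is a polynomial ring (this is also the fact used in \cref{prop: ideal translation 2} that $\mathbb{V}(\mathcal{J}_S)$ is an affine space). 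For $\mathcal{Q}_i$, after killing the variables $\alpha_{r,s}$ with $r\neq i$, the quotient becomes $\C[\beta_i]\otimes\C[a_1,\dots,a_{k-1},b_1,\dots,b_{k-1}]/I_{k-1}$ after relabelling $\alpha_{i,r}\mapsto a$, $\beta_r\mapsto b$ for $r\in[k]\setminus\{i\}$, which is a domain by \cref{Lem: toric ideal} (and trivially when $k=2$, since $I_1=0$).

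Stage (ii) is the combinatorial heart. I would encode a point of $\mathbb{V}(\mathcal{J}_k)$ by the directed graph $G$ on $[n]$ with an edge $i\to j$ whenever $\alpha_{i,j}\neq 0$ (so necessarily $j\in[k]$, $i\neq j$), together with the vanishing pattern of $\beta_1,\dots,\beta_k$. The first family of relations says exactly that $G$ has no directed walk of length two; the binomial family says that, for each fixed $i$, the products $\alpha_{i,j}\beta_j$ with $j\in[k]\setminus\{i\}$ all agree; and the third family gives the extra vanishings involving rows indexed by $[n]\setminus[k]$. One then argues: if all edges of $G$ emanate from a single vertex $i$ (in particular if $G$ has no edges), the point lies on $\mathbb{V}(\mathcal{Q}_i)$; otherwise there are at least two ``active rows'', and I claim one can produce a set $S\subseteq[k]$ containing all columns that receive an edge, disjoint from the active rows, and along which $\beta$ vanishes, so that the point lies on $\mathbb{V}(\mathcal{J}_S)$ --- the no-walk-of-length-two condition (forcing $2$-cycles such as $\alpha_{i_1,i_2}\alpha_{i_2,i_1}$ to vanish) and relation $3$ are what force enough of the $\beta_j$ to be zero when two rows are simultaneously active. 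I would then record the incomparabilities: $\mathbb{V}(\mathcal{J}_\emptyset)$ and every $\mathbb{V}(\mathcal{J}_S)$ with $|S|\geq n-1$ are contained in some $\mathbb{V}(\mathcal{Q}_i)$ --- this is precisely why those $S$ are excluded and where the cut-off $|S|\leq\min\{k,n-2\}$ comes from --- whereas for $1\leq|S|\leq n-2$ each $\mathcal{J}_S$ and each $\mathcal{Q}_i$ possesses a generator that does not vanish identically on the other members, so the list of primes is irredundant. Finally, for stage (iii) I would invoke the squarefree-initial-ideal criterion: all three families of generators in \eqref{eq:ideal Ik} have squarefree leading monomials for any monomial order, and the $S$-polynomial reductions among the binomials $\alpha_{i,j}\beta_j-\alpha_{i,r}\beta_r$ and the monomials $\alpha_{i,j}\alpha_{j,r}$, $\alpha_{i,r}\alpha_{j,r}\beta_r$ reduce to zero or to further squarefree monomials, so $\mathrm{in}(\mathcal{J}_k)$ is a squarefree monomial ideal and hence $\mathcal{J}_k$ is radical; with stage (ii) this yields $\mathcal{J}_k=\sqrt{\mathcal{J}_k}=\bigcap_i\mathcal{Q}_i\cap\bigcap_S\mathcal{J}_S$. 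Alternatively one may localize at $\beta_1\cdots\beta_k$ to isolate the $\mathcal{Q}_i$-components and run a noetherian induction over the loci $\{\beta_j=0\}$ to reach the $\mathcal{J}_S$-components, or simply verify radicality by computer as in \cref{prop: cohen Mac}. The main obstacle is stage (ii): the bookkeeping with the asymmetric index ranges $[k]$ versus $[n]\setminus[k]$ and the explicit construction of the set $S$ (together with the precise role of the bound $|S|\leq\min\{k,n-2\}$) require genuine care, whereas stages (i) and (iii) are comparatively mechanical.
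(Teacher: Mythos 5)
Your proposal is correct in outline but organized quite differently from the paper's argument. The paper proceeds by stratifying the ambient affine space by the vanishing pattern of the $\beta_j$'s (the locally closed sets $U_S$), computing the localization of $\mathcal{J}_k$ on each stratum, and running an induction on $|S|$ that leans on \cref{lemma: primary technical} to decompose the restricted ideal; the components $\mathcal{Q}_i$ emerge on $U_\emptyset$ and the $\mathcal{J}_S$ emerge inductively, with the exclusions $|S|=n-1,n$ handled by exhibiting the containment $\mathcal{Q}_{i_S}\subseteq\mathcal{J}_S$. You instead guess the full list of primes, verify containment and primality directly (your reduction of $\mathcal{Q}_i$ to the toric ideal $I_{k-1}$ or $I_k$ via \cref{Lem: toric ideal} matches the paper's use of that lemma; note only that for $i\in[n]\setminus[k]$ one gets $I_k$ with no spare $\C[\beta_i]$ factor), and then prove the set-theoretic covering by your graph encoding. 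That covering argument does work: with $T$ the set of columns receiving an edge, relations of the first type force every vertex of $T$ to be an inactive row, and once two distinct rows are active the binomial relations together with relations of the first and third type force $\beta_j=0$ for all $j\in T$, so the point lies on $\mathbb{V}(\mathcal{J}_T)$ with $1\leq|T|\leq n-2$ automatically (the two active rows lie outside $T$). This buys a cleaner separation between the combinatorics (which primes occur) and the algebra (why the decomposition is primary), at the cost of having to supply the radicality of $\mathcal{J}_k$ as an independent input, which the paper's localization computations absorb implicitly.

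The one place where your write-up is genuinely thinner than it should be is stage (iii). Having generators whose terms are squarefree does not make $\mathrm{in}(\mathcal{J}_k)$ a squarefree monomial ideal; that conclusion requires the generators to form (or to complete to) a Gr\"obner basis with squarefree leading terms, and the S-polynomial reductions among the binomials $\alpha_{i,j}\beta_j-\alpha_{i,r}\beta_r$ and the monomials of the first and third families have to be carried out, not merely asserted. If you prefer to avoid that computation, your alternative suggestion is the right one and is in fact what the paper does: localize at $\beta_1\cdots\beta_k$ and at one nonzero $\alpha_{i_0,j_0}$ to see that the localized ideal literally equals $\mathcal{Q}_{i_0}$, and descend through the loci $\{\beta_j=0\}$ to reach the $\mathcal{J}_S$; since every localization of $\mathcal{J}_k$ at a point of $\mathbb{V}(\mathcal{J}_k)$ is then an intersection of primes, radicality and the absence of embedded components follow together.
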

\begin{proof}
We stratify the affine space given by $\mathcal{S}_k$ through the subsets $U_S$ for $S\subseteq [k]$
\[
U_S = \{\beta_j= 0:j\in S\}\cap \{\beta_j\neq 0: j\not\in S\}
\]
and we check the irreducible components of $\mathbb{V}(\mathcal{J}_k)$ restricted to each $U_S$. We first focus on the case $U_\emptyset$. In other words, $S=\emptyset$ and we assume that $\beta_j\neq 0$ for every $j\in [k]$. Under this assumption, the ideal $\mathcal{J}_k$ becomes
\[
\begin{array}{ll}
\left\langle \alpha_{i,j}\alpha_{j,r}: 1\leq i\leq n,\,\, 1\leq j, r\leq k\text{ and } i\neq j,\,\,j\neq r\right\rangle+\\ 
\left\langle \alpha_{i,j}\beta_j-\alpha_{i,r}\beta_r: 1\leq i\leq n,\,\, 1\leq j< r\leq k\text{ and } i\neq j,\,\, i\neq r\right\rangle+\\ 
\left\langle \alpha_{i,r}\alpha_{j,r}: 1\leq i\leq n,\,\, k+1\leq j\leq n\text{ and } 1\leq r\leq k,\,\, i\neq k \,\, i\neq j \right\rangle.
\end{array}
\]
Without loss of generality, we may further assume that $\alpha_{i_0,j_0}\neq 0$ for some $i_0\in[n],j_0\in[k]$ with $i_0\neq j_0$. 
From the generators of $\mathcal{J}_k$, we deduce that $\alpha_{j_0,r}=0$ for $r\in[k]\setminus\{j_0\}$. For every $r\in [k]\setminus\{j_0\}$ we also have that 
\[
\alpha_{i_0,j_0}\beta_{j_0} = \alpha_{i_0,r}\beta_{r}.
\]
Since $\alpha_{i_0,j_0}\beta_{j_0},\beta_{r}\neq 0$, we deduce that $\alpha_{i_0,r}\neq 0$ for every $r\in[k]$. In particular, we obtain that $\alpha_{r,s}=0$ for $r,s\in [k]$ with $r\neq i_0,s$. Similarly, from the generators of $\mathcal{J}_k$, we also get that $\alpha_{i,j_0}=0$ for $i\geq k+1$ and $i\neq i_0$. Thus, we get that 
\[
0 = \alpha_{i,j_0}\beta_{j_0} = \alpha_{i,r}\beta_{r}.
\]
for $r\in[k]$, $i\geq k+1$ and $i\neq 0$. Since $\beta_r\neq 0$, we deduce that $\alpha_{i,r} = 0$ for $r\in [k]$, $i\geq k+1$ and $i\neq i_0$. From all these conditions, we deduce that the restriction of $\mathcal{J}_k$ to the open subsets $U_\emptyset\cap \{\alpha_{i_0,j_0}\}$ is $\mathcal{K}_{i_0}$ which is prime by  \cref{Lem: toric ideal}.

Next, we compute the primary components of $\mathcal{J}_k$ in $U_S$ with $|S|=1$. In other words, assume that there exists $j_0\in [k]$ such that $\beta_{j_0}=0$ and $\beta_j\neq 0$ for $j\in[k]\setminus\{j_0\}$. Since $k\geq 2$, we get that for every $j\in[k]\setminus\{j_0\}$ and $i\in[n]\setminus\{j,j_0\}$ 
\[
 0 = \alpha_{i,j}\beta_{j}-\alpha_{i,j_0}\beta_{j_0} = \alpha_{i,j}\beta_{j}.
\]
Since $j\neq j_0$, we deduce that $\alpha_{i,j}=0$ for $j\in[k]\setminus\{j_0\}$ and $i\in[n]\setminus\{j,j_0\}$. In particular, the restriction of $\mathcal{J}_k$ to $U_{\{j_0\}}$ is given by 
\begin{equation}\label{eq:sum ideals}
\begin{array}{c}
\langle\beta_{j_0}\rangle+ \left\langle \alpha_{i,j_0}\alpha_{j_0,r}: r\in[k]\setminus\{j_0\} \text{ and } i\in[n]\setminus\{j_0\}\right\rangle+
\left\langle \alpha_{j_0,j}\beta_j-\alpha_{j_0,r}\beta_r: r,j\in[k]\setminus\{j_0\}\right\rangle+\\\langle \alpha_{i,j}:i\in[n]\setminus\{j_0\},j\in[k]\setminus\{j_0\},i\neq j\rangle .
\end{array}
\end{equation}
We distinguish two cases:
\begin{itemize}
    \item Assume first that $\alpha_{j_0,r}\neq 0$ for every $r\in[k]\setminus\{j_0\}$. From \eqref{eq:sum ideals} we deduce that $\alpha_{i,j_0}=0$ for every $i\neq j_0$. One may check that under this assumption, $\mathcal{J}_k$ becomes the ideal $\mathcal{Q}_{j_0}+\langle\beta_{j_0}\rangle$. Therefore, it does not lead to a new component of the primary decomposition.
    \item On the contrary, assume that there exists $r\in[k]\setminus\{j_0\}$ with $\alpha_{j_0,r_0}=0$. Then, we get that 
    \[
    0=\alpha_{j_0,r}\beta_r- \alpha_{j_0,r_0}\beta_{r_0} = \alpha_{j_0,r}\beta_r
    \]
    for every $r\in[k]\setminus{j_0}$. Since $\beta_r\neq 0$, we deduce that $\alpha_{j_0,r}=0$ for every $r\in [k]\setminus\{j_0\}$. In particular, in this case, $\mathcal{J}_k$ becomes
    \[
    \langle \beta_{j_0}\rangle + \langle \alpha_{j_0,r}:r\in[k]\setminus\{j_0\}\rangle +\langle \alpha_{i,j}:i\in[n]\setminus\{j_0\},j\in[k]\setminus\{j_0\},i\neq j\rangle =\mathcal{J}_{\{j_0\}}.
    \]
    Since $\mathcal{J}_{\{j_0\}}$ does not contain any of the $\mathcal{Q}_i$, it forms part of the irreducible decomposition of $\mathcal{J}_k$.
\end{itemize}

Next, we apply induction on the cardinality of $S\subseteq [k]$ with $|S|\leq n-2$. Let $ 2\leq a\leq \min\{k,n-2\}$ and assume that $\mathcal{J}_S$ is part of the primary decomposition of $\mathcal{J}_k$ for every $S\subseteq[k]$ with $|S|<a$. Let $S\subseteq[k]$ with $|S|=a$ and restrict $\mathcal{J}_k$ to $U_S$. In other words, assume that $\beta_i=0$ for $i\in S$ and $\beta_i\neq 0$ for $i\not \in S$. We show that the only ideal in the primary decomposition of $\mathcal{J}_k$ appearing in $U_S$ is $\mathcal{J}_S$.
For $r\in[k]\setminus S$, $j\in S$ and $i\in[n]\setminus\{j,r\}$ we get 
\[
0=\alpha_{i,r}\beta_r-\alpha_{i,j}\beta_j = \alpha_{i,r}\beta_r.
\]
Since $r\not\in S$, $\beta_r\neq 0$ and we get $\alpha_{i,r}=0$ for every $r\in[k]\setminus S$ and $i\in[n]\setminus\{r\}$. Therefore, the restriction of $\mathcal{J}_k$ to $U_S$ is 
\begin{equation}\label{eq:restriction Us}
\langle\beta_j:j\in S\rangle +\langle \alpha_{i,r}:r\in[k]\setminus S,i\in [n]\setminus\{j\}\rangle +
\langle \alpha_{i,j}\alpha_{j,r}:j,r\in S,i\in[n], i\neq j\text{ and }j\neq r\rangle .
\end{equation}
By \cref{lemma: primary technical}, the primary decomposition of \eqref{eq:restriction Us} is given by the ideals
\begin{equation}\label{eq:primary restriction}
\langle\beta_j:j\in S\rangle +\langle \alpha_{i,r}:r\in[k]\setminus S,i\in [n]\setminus\{j\}\rangle + \mathcal{J}_{S,T}
\end{equation}
for $T\subsetneq S$. One may check that for $T\neq \emptyset$, we have that $\mathcal{J}_{S,T}$ contains the ideal $\mathcal{J}_{S\setminus T}$. In particular, the ideal \eqref{eq:primary restriction} does not appear in the primary decomposition for $T\neq \emptyset$
For $T=\emptyset$, we obtain that $\mathcal{J}_{S}=\mathcal{J}_{S,\emptyset}$. Moreover, $\mathcal{J}_S$ does not contain and it is not contained in any of the ideals $\mathcal{Q}_i$ and $\mathcal{J}_{S'}$ for $S'\subseteq[k]$ with $|S'|<a$. We conclude that $\mathcal{J}_{S}$ is in the primary decomposition of $\mathcal{J}_k$.

It remains to show that $\mathcal{J}_{S}$ does not appear in the primary decomposition for $|S|= n-1,n$. For $|S|=n-1$, we denote  by $i_S$ the only integer in $[n]\setminus S$. Then, we get that $\mathcal{J}_S$ contains $\mathcal{Q}_{i_S}$. Therefore, it does not appear in the primary decomposition. 
\end{proof}

\bibliography{bib.bib}{}
\bibliographystyle{alpha}
\end{document}